\newcommand{\commf}[1]{\ignorespaces}%
\newcommand{\commb}[1]{\ignorespaces}%
\newcommand{\commp}[1]{\ignorespaces}%
\newcommand{\compat}[1]{\ignorespaces}%
\newcommand{\compatnew}[1]{\ignorespaces}%
\newcommand{\Angle}{\mathit{Ang}}
\newcommand{\Aut}{\mathrm{Aut}}
\newcommand{\C}{\mathbb{C}}
\newcommand{\conj}[1]{\lsem #1 \rsem} 
\newcommand{\Cl}{\mathrm{Cl}}
\newcommand{\bbH}{\mathbb{H}}
\newcommand{\Homeo}{\mathrm{Homeo}}
\newcommand{\slope}{\mathrm{slope}}
\newcommand{\R}{\mathbb{R}}
\newcommand{\N}{\mathbb{N}}
\newcommand{\op}{o}
\newcommand{\prpi}{{\pi_1^{\mathrm{PR}}}}
\renewcommand{\plus}{+}
\newcommand{\Prongs}{\mathit{Pr}}
\newcommand{\sA}{{\mathcal{A}}} 
\newcommand{\sB}{{\mathcal{B}}} 
\newcommand{\sE}{{\mathcal{E}}} 
\newcommand{\sF}{{\mathcal{F}}} 
\newcommand{\sG}{{\mathcal{G}}} 
\newcommand{\sH}{{\mathcal{H}}} 
\newcommand{\sP}{{\mathcal{P}}} 
\newcommand{\sQ}{{\mathcal{Q}}} 
\newcommand{\sR}{{\mathcal{R}}} 
\newcommand{\sS}{{\mathcal{S}}} 
\newcommand{\sT}{{\mathcal{T}}} 
\newcommand{\TS}{\mathrm{TA}} 
\newcommand{\SL}{\mathrm{SL}}
\newcommand{\bbV}{\mathbb{V}} 
\newcommand{\bbS}{\mathbb{S}} 
\newcommand{\bbU}{\mathbb{U}} 
\newcommand{\TL}{\mathrm{TL}}
\newcommand{\sV}{{\mathcal{V}}} 
\newcommand{\Z}{\mathbb{Z}}
\newcommand{\PSL}{\mathrm{PSL}}
\newcommand{\GL}{\mathrm{GL}}
\newcommand{\0}{\boldsymbol{0}}
\newcommand{\seg}{\mathrm{ta}}
\renewcommand{\setminus}{\smallsetminus}
\newcommand{\gaussbonnet}{\hyperref[Gauss-Bonnet]{Gauss-Bonnet Theorem}\xspace}%
\newcommand{\surgery}{\hyperref[surgery]{Surgery Theorem}\xspace}%
\theoremstyle{plain}
\newtheorem{theorem}{Theorem}
\numberwithin{theorem}{section}
\newtheorem{corollary}[theorem]{Corollary}
\newtheorem{proposition}[theorem]{Proposition}
\newtheorem{conjecture}[theorem]{Conjecture}
\newtheorem{lemma}[theorem]{Lemma}
\newtheorem{definition}[theorem]{Definition}
\newtheorem{remark}[theorem]{Remark}
\newtheorem{question}[theorem]{Question}
\title[Zebra surfaces]{Stellar foliation structures on surfaces}
\date{\today}
\author{W. Patrick Hooper}
\address{Dept. of Mathematics, City College of New York and CUNY Graduate Center, New York, NY, USA}
\email{whooper@ccny.cuny.edu}
\urladdr{\url{http://wphooper.com}} 
\author{Ferr\'an Valdez}
\address{Centro de Ciencias Matem\'aticas, UNAM Campus Morelia, M\'exico}
\email{ferran@matmor.unam.mx}
\urladdr{\url{https://www.matmor.unam.mx/~ferran/}}
\author{Barak Weiss}
\address{Dept. of Mathematics, Tel Aviv University, Tel Aviv, Israel}
\email{barakw@tauex.tau.ac.il}
\urladdr{\url{http://www.math.tau.ac.il/~barakw/}}
\begin{document}

\begin{abstract}
We introduce the notion of a zebra structure on a surface, which is a more general geometric structure than a translation structure or a dilation structure that still gives a directional foliation of every slope. We are concerned with the question of when a free homotopy class of loops (or a homotopy class of arcs relative to endpoints) has a canonical representative or family of representatives, either as closed leaves or chains of leaves joining singularities. We prove that such representations exist if the surface has a triangulation with edges joining singularities (in the zebra structure sense). In the special case when the surface is closed, we describe several geometric conditions that are equivalent to the existence of canonical representations in every homotopy class of closed curves.
\end{abstract}

\maketitle

\section{Introduction}
\compat{{\bf December 5, 2022:}
\begin{itemize}
\item Added \Cref{trapezoid foliation} in response to concerns raised by Barak related to the proof of \Cref{approximating trail arcs}.
\end{itemize}
{\bf Recent changes:}
\begin{itemize}
\item In \Cref{thm:closed trails} I switched notation from $\bar C$ and $\bar \epsilon$ to $\ddot{C}$ and $\ddot{\epsilon}$. Here $\ddot{C}$ is a partial closure and $\ddot{\epsilon}$ is not an extension.
\item Section 9 is done (as a draft).
\end{itemize}
{\bf December 12-14, 2022:}
\begin{itemize}
\item Corrections were made to Proof of \Cref{torus cover convex} and part of the proof was rewritten.
\item I want to point out issues with the term ``closed surface''. I think you both want to make sure this is defined. But, its first use is in the first paragraph of the paper! I don't want to define it there. I put a definition in \Cref{surfaces and structures}. Is this satisfactory, or do you think the term needs to be defined in the introduction?
\item Comments for Ferran:
\begin{itemize}
\item In the third to last paragraph of the introductory section (should be on page 2), there is a comment that we should cite some dilation surface literature. You offered to write something. It would be great if you'd write something like this. I always have issues with this (citing things that aren't directly relevant to the proofs of a paper). So, also if you think it is not necessary, that is okay too. I'm also not clear if this is the best place to put things or not. We could alternately just refer to the sections discussing dilation structures later in the introduction.
\item I tried to satisfy your request for something explicit in the last paragraph of \Cref{sect:main results} without adding more information to \Cref{fig:amalgamated_cylinder}. Is it satisfactory?
\item Can you suggest an alternative to \cite{BL18}, which I cite in \Cref{sect:translation surfaces}?
\end{itemize}
\item Large parts of \Cref{sect:questions} were rewritten.
\item I rephrased the statement of \Cref{burp}.
\item I want to consistently use the terminology ``segment of a leaf'' and ``arc of a trail.'' I searched and tried to eliminate ``segment of a trail'' and ``arc of a leaf.''
\end{itemize}
{\bf Dec 31, 2022-Jan 1,2023:}
\begin{itemize}
\item Rewrote the paragraph describing \S 6 in the outline in \Cref{sect:outline}.
\item The issues with the wording of \Cref{cor: closed trails exist} were fixed. Please check that you feel the corollary is a sufficiently obvious consequence of the theorem.
\item Comments added below \Cref{cylinders closed}.
\item Comments added below \Cref{quotient lemma}.
\item The last paragraph of \Cref{lem:closed} was improved using ideas of Barak.
\item I added \Cref{vertices are finite degree} which explains why our leaf triangulations have only finitely many triangles meeting at each vertex. This is more generally true, and I'd prefer to replace this with a reference if you know of one.
\item The proof of (2) in the proof of \Cref{covering a vertex} was rewritten.
\end{itemize}
{\bf Jan 3, 2023:}
\begin{itemize}
\item Added a reference to the first paragraph of \Cref{sect:maximal cover}. The pole-resolved universal cover was previously used, and it seems like a good idea to cite it.
\end{itemize}
{\bf Jan 4, 2023:}
\begin{itemize}
\item Made big changes to \Cref{sect:convexity proof}, see the included comments for details.
\end{itemize}
}

A quadratic differential $q$ on a Riemann surface $X$, possibly with simple poles, naturally endows that surface with a half-translation structure via coordinate charts obtained by integration. Associated to $q$ is its divisor, which may be interpreted as a function
\begin{equation}
\label{eq:alpha}
\alpha:X \to \Z_{\geq -1} = \{-1,0,1, \ldots\}
\end{equation}
whose support is discrete and closed. \compat{Changed discretely supported to mention closed. See note below.}
We have $\alpha(x)=-1$ if $x$ is a simple pole, and $\alpha(x)=k \geq 1$ if $x$ is a zero of order $k$. The half-translation structure gives charts from a neighborhood of each $x \in X$ to the $\alpha(x)+2$-fold branched cover of $\C/\langle z \mapsto -z\rangle$ branched over the origin, and transition maps are given by translations or $180^\circ$ rotations in local coordinates.  Geometrically $x \in X$ has the local structure of a Euclidean cone point with cone angle $\pi\big(\alpha(x)+2\big)$. Thus the support $\Sigma$ of $\alpha$ is the collection of {\em singularities} of the half-translation structure. Throughout this paper, we allow our surfaces to be noncompact, though the closed surface case is of special interest and we prove new results in this setting as well.

Given a half-translation structure on an oriented topological surface $S$, for each slope  $m \in \hat \R=\R \cup \{\infty\}$, the foliation of the plane by lines of slope $m$ pulls back under charts to give a singular foliation $\sF_m$ of the surface by leaves of slope $m$. Formally $\sF_m$ is a foliation of $S \setminus \Sigma$ whose local behavior near a point $p$ is governed by the value $\alpha(p)$. In particular, each $\sF_m$ has $\alpha(p)+2$ prongs at each point $p \in S$.

More generally, a {\em singular foliation} $\sF$ of a topological surface $S$ is a foliation of $S \setminus \Sigma$, where $\Sigma \subset S$ is a closed discrete subset and such that there is a {\em singular data function} $\alpha:S \to \Z_{\geq -1}$ whose support is $\Sigma$ such that $\sF$ is locally homeomorphic at $p \in \Sigma$ to the horizontal foliation of a half-translation surface in a neighborhood of a cone point with cone angle $\pi\big(\alpha(p)+2\big)$. \commb{there is some discrepancy in the literature as to the precise meaning of the word discrete. Some (including Wikipedia) say it is a set on which the induced topology is discrete. Others say a set which intersects every compact subset in a finite set (according the previous definition, this would mean discrete and closed). We need the stronger definition. E.g. we don't want to allow singularities to be on the points {1/n: n a positive integer} on the real line. } \compat{I think I've only heard the first definition which means this is an error. I changed all occurrences I could find from discrete to discrete and closed.''}
Note that $\sF$ determines both $\Sigma$ and $\alpha$: The subsurface $S \setminus \Sigma$ is the union of leaves, and $\alpha$ can be determined by the number of prongs at a point.

Singular foliations need not come from quadratic differentials. Indeed, those that come from quadratic differentials carry the additional structure of a measured foliation, which appears because half-translation surfaces have a natural path metric obtained by pulling back the Euclidean metric on the plane. In this paper, we investigate what happens when these additional structures are not required, but where we still have a family of singular foliations that fit together nicely.

\begin{definition}
Let $S$ be an oriented topological surface. Consider a family $\{\sF_m:~m \in \hat \R\}$ of singular foliations on $S$ indexed by slope that determine the same singular set $\Sigma$ and the same singular data function $\alpha$.
We say such a family is {\em stellar} if each $p \in S$ has a neighborhood $U$ such that $U \setminus \{p\}$ is
foliated by segments of leaves containing $p$ in their closure in a manner homeomorphic to the standard half-translation model associated to $\alpha(p)$ as depicted in \Cref{fig:stellar}.
If $\{\sF_m\}$ is stellar, we say it induces a {\em stellar foliation structure} or a {\em zebra\footnote{The name zebra surface was inspired by the Zebra Slot Canyon in Grand Staircase-Escalante National Monument.} structure} on $(S,\alpha)$. We call a pair $(S, \{\sF_m\}_{m \in \hat \R})$ where $\{\sF_m\}$ is stellar a {\em zebra surface}.
\end{definition}

\begin{figure}[htb]
\centering
\includegraphics[width=\textwidth]{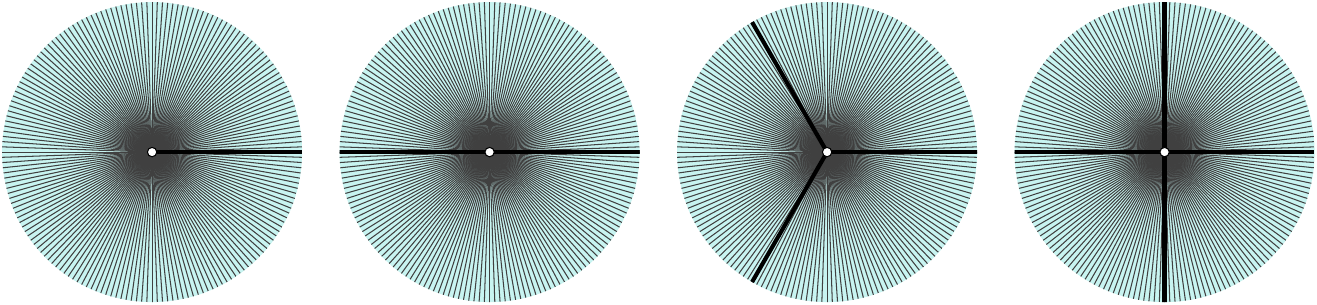}
\caption{The local structure of leaves through $p$, with $\alpha(p) \in \{-1,0,1,2\}$ from left to right. Here the bold leaves are of slope zero and slopes cyclically increase in the counterclockwise direction.}
\label{fig:stellar}
\end{figure}

See \Cref{sect:formal} for a more detailed and formal definition. Observe that half-translation structures induce zebra structures.
Another geometric structure on a surface that induces a zebra structure is a dilation structure; see \Cref{sect:contexts}.
This paper originated from our interest in dilation structures on surfaces. These structures have recently generated a lot of interest, see for example~\cites{ABW, BGT, DFG19, W21}. We discuss some relevant results in Sections \ref{dilation with cone singularities}, \ref{sect:dilation singularities}, and \ref{sect:questions}. In \Cref{sect: not dilation}, we show that there are zebra structures that do not arise from dilation or translation structures.
 	A broader discussion about foliations on surfaces can be found in~\cite{Nikolaev}. \compat{Rather than cite things here, maybe it is better to point out where we discuss results in the subject as I do here. Dec 20, 2022.}


It is a fundamental observation in Teichmüller theory that the bundle $\sQ_g$ of quadratic differentials (without poles) over the moduli space of closed Riemann surfaces of genus $g$ is naturally identified with the cotangent bundle of that space. The geodesic flow in the Teichmüller metric is conjugate via the identification of quadratic differentials with half-translation structures
to the diagonal action of $\PSL(2,\R)$ on the space of half-translation surfaces, acting affine-linearly by simultaneous post-composition with coordinate charts. This action of $\PSL(2,\R)$ on $\sQ_g$ is an active area of research \cite{WrightSurvey}. This action extends to an action of $\PSL(2,\R)$ on zebra surfaces: In fact, the $\PSL(2,\R)$-action extends to a $\Homeo_+(\hat \R)$-action given by reindexing the foliations; see \Cref{sect:homeo action}.

Our initial motivation when writing this paper was to extend fundamental facts about length-minimizing representatives of curves on translation surfaces to the more general context of dilation surfaces, where there is no natural notion of length.
One motivation for studying this question is Thurston's theory of simple closed curves and their relation with the classification of surface homeomorphisms.
When looking into this, we realized that surprisingly little is known about distinguished representatives of curves for related structures (such as dilation structures with cone-type singularities and translation structures on noncompact surfaces) and an answer can be given in the very general context of zebra surfaces. Working in this more general context makes some things challenging, but the limited tools available lead to a natural and general approach to the problems under consideration. \compat{I added mention of Thurston. Ferran had suggested we mention this as a motivation. He also suggested we say something along the lines of wanting to understand it in these contexts and extend the theory. I wasn't sure how to say this.}

\subsection{Main results}
\label{sect:main results}
We briefly introduce some important definitions so that we can state our results.

Let $(S, \{\sF_m\})$ be a zebra surface, with $S$ any oriented topological surface.
As indicated above, this information determines a singular set $\Sigma$ and a singular data function $\alpha:S \to \Z_{\geq -1}$ whose
support is $\Sigma$.

A {\em leaf} is a leaf of any of the foliations $\sF_m$. Leaves are contained in $S \setminus \Sigma$ and so do not contain singularities. A leaf is {\em closed} if it is homeomorphic to a circle. If a leaf is not closed, then it is homeomorphic to an open interval, and such a leaf can have singular endpoints in its closure. A {\em saddle connection} is a leaf together with two singular endpoints. \compatnew{Old phrasing: A {\em leaf triangulation} of $S$ is a triangulation of $S$ whose edges are saddle connections. We require that triangles to meet edge-to-edge and that the union of the triangles be all of $S$. In such a triangulation, only finitely many triangles can meet at each vertex.}

A {\em trail} is a maximal bi-infinite parameterized path that follows a sequence of leaves, transitioning between leaves only at singularities in such a way that the two angles made at the singular transitions are at least $\pi$. (Angles made between leaves meeting at a point can be measured using the stellar neighborhood of the point.) We call the angles made at singular transitions {\em bending angles}. Each bending angle appears either on the right or left side of the trail. We require trails to ``bounce off'' poles, returning along the leaf through which it arrived. (This ``bouncing off'' is not allowed at other singularities.) A trail is {\em closed} if it can be reparameterized to be periodic.  

A {\em zebra plane} is a zebra structure $(Z, \{\tilde \sF_m\})$ where $Z$ is an open disk and the singular data function $\tilde \alpha$ is nonnegative. (Examples of simply connected zebra surfaces which are not zebra planes can be found in \cite{Panov}.) If $(S, \{\sF_m\})$ is any connected zebra surface, and $\alpha$ is nonnegative, then the structure lifts to the universal cover to give a zebra plane. If $S$ has poles, then there is a larger cover which is a zebra plane, where we require double branching over poles. We call this the {\em pole-resolved universal cover} (the {\em PRU cover}), see \Cref{sect:maximal cover}. Note that because of the double branching, preimages of poles are nonsingular points in the zebra plane. The PRU cover coincides with the universal cover if $S$ has no poles.

\compatnew{This is the new phrasing. Note I introduce the concept of a ``leaf saddle connection.''}
A {\em leaf saddle connection} is a path $\gamma:[0,1] \to S$ that lifts to a saddle connection in the PRU cover. Because preimages of poles are nonsingular, a pole cannot be the endpoint of a leaf saddle connection. But a pole can lie in the {\em interior}, $\gamma\big((0,1)\big)$, of a leaf saddle connection that bounces off the pole as described above. If $S$ has no poles, then the notions of ``leaf saddle connection'' and ``saddle connection'' coincide. A {\em leaf triangulation} of a zebra surface $S$ a collection of leaf saddle connections with disjoint interiors such that the complementary components contain no singularities and are each bounded by three distinct leaf saddle connections. This forces each pole in $S$ to be contained in the interior of a unique leaf saddle connection in the collection. The collection of lifts of leaf saddle connections in a leaf triangulation of a zebra surface gives a leaf triangulation of its PRU cover.

A basic question in the geometry of metric spaces is whether any two points can be joined by a geodesic, and if so, whether this geodesic is unique. For example, the Cartan–Hadamard theorem guarantees that any two points can be connected by a unique geodesic segment in a complete nonpositively curved simply-connected metric space \cite{BH}. In our setting, we observe that distinct points on a zebra plane can be joined by at most one arc of a trail (\Cref{no bigons}). We say a subset of a zebra plane is {\em convex} if any two distinct points can be joined by an arc of a trail contained in that subset. We prove:

\begin{theorem}
\label{thm:convex}\label{convexity}
If a zebra plane $Z$ has a leaf triangulation, then $Z$ is convex.
\end{theorem}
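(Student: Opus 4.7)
The plan is to fix a basepoint $p \in Z$ and prove, by a connectedness argument, that the set $V_p$ of points joinable to $p$ by an arc of a trail (with $p$ itself included via the trivial arc) equals all of $Z$. Since $Z$ is connected, it suffices to show $V_p$ is non-empty (immediate), open, and closed. Uniqueness of a trail arc when it exists is already provided by the no-bigons observation mentioned earlier, so this framework is well-posed.

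Openness should follow from the stellar structure (\Cref{fig:stellar}). Given $q \in V_p$ witnessed by a trail arc $\tau$, for any $q'$ in a sufficiently small neighborhood of $q$ one can modify $\tau$ near its endpoint to produce a trail arc from $p$ to $q'$. A short case analysis, depending on whether $q$ and $q'$ are singular or regular and on the slope at which $\tau$ approaches $q$, handles the various configurations. The key point is that the leaves through any point of $Z$ sweep continuously through all slopes in $\hat{\R}$, and the bending-angle condition at any newly introduced singular transition can be checked directly from the stellar picture at $q$.

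Closedness is the core of the proof, and is where the leaf triangulation $\sT$ is essential. Given $q_n \to q$ with $q_n \in V_p$, let $\tau_n$ be the unique trail arc from $p$ to $q_n$. Each $\tau_n$ admits a combinatorial description in terms of $\sT$: the finite sequence of triangles it traverses, together with the edges crossed and the singular vertices at which it bends. Using that each vertex of $\sT$ meets only finitely many triangles, together with no-bigons and the geometric rigidity of a trail inside each single triangle, one bounds the combinatorial complexity of the $\tau_n$ uniformly on a compact neighborhood of $q$. A diagonal extraction then yields a subsequence whose combinatorial data stabilizes, and the corresponding trail arcs converge to a trail arc from $p$ to $q$, giving $q \in V_p$.

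The main obstacle is precisely this uniform combinatorial bound for closedness: one must rule out trail arcs accumulating arbitrarily many bends or edge-crossings in a fixed bounded region. The triangulation hypothesis is indispensable here; without it, the $\tau_n$ could in principle have unbounded combinatorial complexity, and no limiting trail arc need exist. I expect this bound to follow from combining the no-bigons property (which prevents any trail arc from spiraling back on itself) with a local analysis at each vertex of $\sT$ showing that a trail arc interacts with each triangle and each vertex in only boundedly many combinatorial ways.
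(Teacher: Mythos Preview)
Your openness step is essentially correct and corresponds to the paper's \Cref{union of rays}; the case analysis there is more delicate than your sketch suggests (particularly when the arc reaches $q$ through a chain of singularities), but the idea is right. The genuine gap is closedness. Your uniform combinatorial bound presupposes that the arcs $\tau_n = \overline{p q_n}$ remain in a fixed compact region, but nothing you have written establishes this: with $p$ fixed and $q_n \to q$, the $\tau_n$ could a priori traverse arbitrarily many triangles of $\sT$ before arriving near $q$. The no-bigons property and \Cref{no returning trails} do ensure that each $\tau_n$ meets any single triangle in at most one arc, but they give no control on how many distinct triangles the family $\{\tau_n\}$ visits. The paper does prove a closedness result (\Cref{lem:closed}), but only relative to a prescribed compact disk, and this, combined with openness, yields convexity of individual convex polygons (\Cref{thm:polygonal convexity}); extending to all of $Z$ is precisely the hard step, and the paper does not attempt it via a limiting argument.

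Instead, the paper proceeds constructively. It classifies how the foliation $\sR_p$ of rays from $p$ can enter a triangle through one edge and exit through the others (\Cref{entering a triangle}), then shows that once a suitable configuration appears at a vertex $v$, all triangles incident to $v$ are covered by rays (\Cref{covering a vertex}). A breadth-first traversal of the vertex set of $\sT$ then propagates coverage outward from $p$, showing inductively that every triangle lies in $U_p$. This never requires knowing in advance that the $\tau_n$ stay bounded. If you want to salvage your route, you would need an independent argument that $\overline{p q_n}$ is contained in a fixed finite subcomplex of $\sT$ as $q_n \to q$, but this appears to be at least as hard as the theorem itself.
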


The primary case of interest is when the zebra plane is the PRU cover of a closed surface. As a consequence if we can ``triangulate'' the closed surface in a manner that lifts to a leaf triangulation on the cover, then that cover is convex. \Cref{fig:amalgamated_cylinder} shows a surface with no leaf triangulation whose PRU cover is not convex.

\begin{figure}[htb]
\centering
\includegraphics[width=3in]{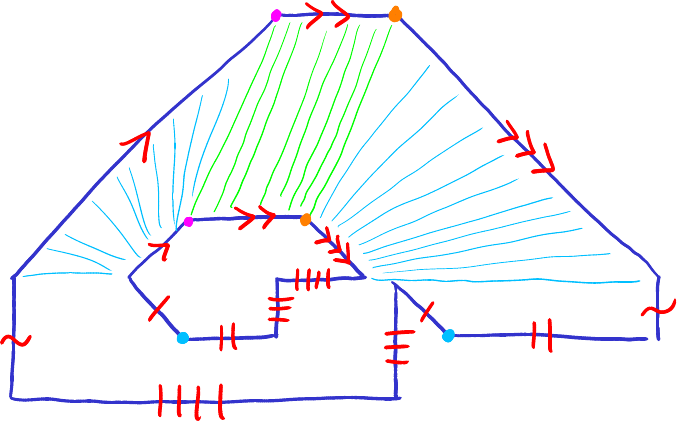}
\caption{A dilation surface: edges are glued by dilations and translations.
The orange, pink, and blue points are dilation singularities, which are not singularities in the induced zebra structure. The foliated region is a full zebra cylinder (see \Cref{sect:dilation singularities}), and so no closed trail can cross this cylinder by \Cref{full cylinder}.}
\label{fig:amalgamated_cylinder}
\end{figure}

Of course, it is only possible for a zebra plane to have a leaf triangulation when $\alpha$ takes positive values, (that is, when there are singularities). We also provide a criterion for convexity of the PRU cover of a closed surface when $\alpha$ is nonpositive; see \Cref{torus cover convex}. Some examples, like $\R^2$ (which covers the square torus) are convex, but others like the universal cover of a Hopf torus is not; see \Cref{dilation with cone singularities}.

We will need to extend the notion of homotopy of paths, to the case of zebra surfaces with poles. See \Cref{sect:curves and deck transformations} for details on this construction. We call this extended notion of homotopy {\em pole-resolved homotopy} or PR homotopy, and it coincides with the usual notion if there are no poles. The PR free homotopy classes of loops are in natural bijective correspondence with conjugacy classes in the deck group of the PRU covering. We say a PR free homotopy class of closed curves is {\em polar} if there is a simple closed curve in the class that bounds a disk whose only interior singularity is a pole.
We say that a PR free homotopy class $\conj{\gamma}$ of closed curves is {\em a power} if there is another PR free homotopy class $\conj{\beta}$ and a $k \geq 2$ such that $\conj{\gamma}$ is homotopic to the $k$-fold cover of $\conj{\beta}$. \compat{Added Nov 18, 2022. I'm doing this because if $\conj{\gamma}$ is a $k$th power, then for example a closed leaf in $\conj{\gamma}$ will be a $k$-fold cover of a simple curve. I don't want to worry about this in \Cref{thm:closed trails}. Also: I started using $\conj{\gamma}$ to denote free homotopy class or equivalently a conjugacy class of the (PR) fundamental group.}

The {\em closed standard cylinder} is $C = [-1,1] \times \bbS^1$ where $\bbS^1=\R/\Z$. This cylinder comes equipped with its {\em vertical foliation} by fibers of the projection $C \to [-1,1]$, and has two boundary components $\partial_\pm C=\{\pm 1\} \times \bbS^1$. Its interior is the subset $C^\circ = (-1,1) \times \bbS^1$. We split $C^\circ$ into two halves:
$$H^\circ_- = (-1, 0] \times \bbS^1 \quad \text{and} \quad H^\circ_+ = [0, 1) \times \bbS^1.$$
We have:

\begin{theorem}[Closed trails]
\label{thm:closed trails}
Let $(S, \{\sF_m\})$ be a zebra surface, where $S$ is any connected oriented topological surface. Fix a PR free homotopy class of closed curves $\conj{\gamma}$ which is nontrivial, non-polar, and not a power. Then one of the following mutually exclusive statements holds:
\begin{enumerate}
\item[(NR)] (Non-realization case) There is no closed trail in $\conj{\gamma}$.
\item[(TF)] (Toral foliation case) The surface $S$ is the torus, the zebra structure has no singularities, and the collection of all closed trails in $\conj{\gamma}$ is a collection of simple closed leaves that foliate $S$.
\item[(Cyl)] (Cylinder case) There is an embedding $\epsilon:C^\circ \to S$ such that the closed leaves of $S$ in $\conj{\gamma}$ are precisely the images under the embedding of the vertical closed leaves of $C^\circ$.
\item[(UT)] (Unique trail case) There is a unique closed trail in $\conj{\gamma}$ and this closed trail has at least one bending angle greater than $\pi$ on each side.
\end{enumerate}
Furthermore,
\begin{enumerate}
\item If the PRU cover of $S$ is convex, then case {\em (NR)} cannot occur.
\item In case {\em (Cyl)}, define $\sigma$ to be the collection of signs $s \in \{\pm \}$ such that
$\epsilon(H^\circ_s)$ has compact closure. Set
$$\ddot{C} = C^\circ \cup \bigcup_{s \in \sigma} \partial_s C.$$ Then there is a continuous map
$\ddot{\epsilon}: \ddot{C} \to S$ whose restriction $\ddot{\epsilon}|_{C^\circ}$ satisfies {\em (Cyl)} such that for each $s \in \sigma$, the curve $\ddot{\epsilon}|_{\partial_s C}$ is a closed trail in $\conj{\gamma}$ passing through a nonempty collection of singularities, and every bending angle made when passing through such a singularity on the side of $\ddot{\epsilon}(C^\circ)$ has measure $\pi$. Furthermore, all closed trails in $\conj{\gamma}$ are obtained as restrictions of $\ddot{\epsilon}$ to vertical circles in $\ddot{C}$.
\compatnew{Added the word ``continuous'' to the definition of $\ddot{\epsilon}$. June 7, 2023.}
\end{enumerate}
\end{theorem}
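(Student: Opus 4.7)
The plan is to lift to the PRU cover $\tilde{S}$, which is a zebra plane, and fix a deck transformation $g$ representing the conjugacy class $\conj{\gamma}$. The hypotheses that $\conj{\gamma}$ is nontrivial, non-polar, and not a power ensure that $g$ is a nontrivial element of the deck group which is not a proper power. Closed trails in $\conj{\gamma}$ on $S$ correspond bijectively (up to reparameterization) with $g$-invariant bi-infinite trails $\tilde{\tau} \subset \tilde{S}$, and the no-bigons lemma (\Cref{no bigons}) implies that any two $g$-invariant bi-infinite trails in $\tilde{S}$ either coincide as subsets or are disjoint.

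For implication (1), assume $\tilde{S}$ is convex and fix a non-singular point $p \in \tilde{S}$. For each $n \geq 1$, let $\tau_n$ be the unique trail arc from $g^{-n}(p)$ to $g^{n}(p)$ provided by convexity. Since trail arcs between two points are unique and $g$-translates of trail arcs are trail arcs, the subarc of $\tau_n$ between $g^{-m}(p)$ and $g^{m}(p)$ must equal $\tau_m$ for $m < n$. Hence the $\tau_n$ form a nested family whose union is a $g$-invariant bi-infinite trail $\tilde{\tau}$; since the deck group acts properly discontinuously and $g$ has infinite order, the endpoints $g^{\pm n}(p)$ escape to infinity, so $\tilde{\tau}$ is genuinely bi-infinite. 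The non-power hypothesis then prevents $\tilde{\tau}$ from being fixed by any proper root of $g$.

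For the four-way classification, assume (NR) fails and fix a $g$-invariant bi-infinite trail $\tilde{\tau}_1$. If $\tilde{\tau}_1$ is the unique such trail, I claim (UT) holds: if some bending angle of $\tilde{\tau}_1$ on a side $s \in \{+,-\}$ equaled $\pi$ at a singularity $q$, then inspecting the stellar picture (\Cref{fig:stellar}) shows that the trail is locally straight on side $s$ at $q$, and one can deform $\tilde{\tau}_1$ to side $s$ near $q$ and extend globally (using convexity or a limiting argument within the foliation $\sF_m$ through the incoming segment at $q$) to produce a distinct $g$-invariant trail — contradicting uniqueness. Thus every bending angle exceeds $\pi$ on both sides, giving (UT). Otherwise, let $\tilde{\tau}_2$ be a second $g$-invariant trail, necessarily disjoint from $\tilde{\tau}_1$; the two co-bound a strip $\tilde{A} \subset \tilde{S}$ which contains no singularities in its interior (an interior singularity would yield a third $g$-invariant trail crossing $\tilde{A}$, violating disjointness). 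The strip projects under $\tilde{S} \to S$ to an embedded open annulus, where injectivity follows from the no-bigons property together with $g$-equivariance; if this annulus exhausts $S$ then $S$ must be a torus without singularities, yielding (TF), and otherwise we obtain (Cyl).

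For conclusion (2), suppose (Cyl) holds, and let $s \in \sigma$, so $\overline{\epsilon(H^\circ_s)}$ is compact. The closed leaves $\epsilon(\{t\} \times \bbS^1)$ accumulate as $t \to s$ on a closed curve $\tau_s \subset S$ which must itself be a closed trail in $\conj{\gamma}$; maximality of the cylinder (no further extension is possible without contradicting the classification above) forces $\tau_s$ to pass through at least one singularity, and because $\tau_s$ is a limit of foliation leaves that are straight on its cylinder-facing side, the stellar geometry forces every bending angle on that side to equal $\pi$. A further application of the no-bigons principle shows that every closed trail in $\conj{\gamma}$ either lies in $\epsilon(C^\circ)$ or coincides with some boundary $\tau_s$, so the extension $\ddot{\epsilon}$ mapping $\partial_s C$ onto $\tau_s$ for $s \in \sigma$ satisfies the claim. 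The main obstacles throughout will be the coherence and limiting arguments — rigorously producing a $g$-invariant trail from convexity, verifying that the local deformation in the uniqueness argument truly yields a globally distinct $g$-invariant trail, and ensuring that the strip between two $g$-invariant trails descends to a genuinely embedded annulus rather than a self-overlapping immersion when $S$ is non-compact.
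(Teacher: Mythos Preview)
Your argument for statement (1) has a genuine gap. You claim that for $m<n$, the points $g^{-m}(p)$ and $g^{m}(p)$ lie on $\tau_n$ and that the subarc of $\tau_n$ between them equals $\tau_m$, so that the $\tau_n$ are nested. But nothing forces $\tau_n$ to pass through any of the intermediate points $g^k(p)$: the trail from $g^{-n}(p)$ to $g^n(p)$ can certainly hit singularities and bend away from the orbit of $p$. The $g$-translate $g^{k}(\tau_m)$ is the trail from $g^{k-m}(p)$ to $g^{k+m}(p)$; it shares an endpoint with $\tau_n$ only when $k=\pm(n-m)$, and even then \Cref{no bigons} only tells you the two trails agree on an initial subarc, not that one contains the other. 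So neither the nesting nor the $g$-invariance of $\bigcup\tau_n$ is established.

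The paper's route to (1) is genuinely different and more delicate. One works in the annulus $A=\tilde S/\langle\Delta_\gamma\rangle$ and, for each $p\in A$, considers the loop $\gamma_p$ obtained as the image of the trail arc from a lift $\tilde p$ to $\Delta_\gamma(\tilde p)$. This loop satisfies the trail angle condition everywhere except possibly at $p$. The existence argument (\Cref{closed trails exist}) then studies the family $\{\gamma_q:q\in\gamma_p\}$: a combinatorial lemma shows each $\gamma_p$ is either simple or a ``lollipop'' containing a simple $\gamma_q$; a nesting lemma (\Cref{nesting}) and Zorn's lemma produce a minimal intersection $\gamma_p\cap\gamma_q$; and a Gauss--Bonnet computation on the region between two such loops forces this minimal $\gamma_q$ to be a genuine closed trail. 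There is no direct ``limit of longer and longer geodesics'' argument available here; the absence of a metric means you cannot appeal to any compactness of the family $\{\tau_n\}$ in the way one would for CAT(0) spaces.

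Your sketches for (UT), (Cyl), and (2) are along the right lines but each suppresses the real work. For (UT) the paper makes your deformation precise via a trapezoid construction on the side where all bending angles equal $\pi$ (\Cref{open set foliated by closed leaves}); for the embedding in (Cyl)/(TF) the descent from $A$ to $S$ requires analyzing the deck group of $A\to S$ case by case (\Cref{closed trail image1}, \Cref{closed trail image2}, \Cref{quotient lemma}); and for (2) the compactness transfer from $S$ back up to $A$ needs a counting argument (\Cref{intersection bound}) bounding how many times closed leaves can cross a fixed generalized rectangle. You correctly flag these as obstacles, but the gap in (1) is the one that actually breaks the proposed proof.
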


In order to prove this theorem, we prove a criterion for existence of a closed trail that does not require convexity of the PRU cover; see \Cref{closed trails exist}.

In the context of closed zebra surfaces with singularities and convex PRU covers, \Cref{thm:closed trails} specializes to the following:

\begin{corollary}
\label{cor:closed trails}
Suppose that $S$ is a closed surface and $\{\sF_m\}$ is a zebra structure on $S$ with a nonempty singular set $\Sigma$. Suppose also that the PRU cover of $S$ is convex. Then, if $\conj{\gamma}$ is a PR free homotopy class of closed curves that is nontrivial, non-polar, and not a power, then either there is a unique closed trail in $\conj{\gamma}$ as in case {\em (UT)} or there is a continuous map from the closed standard cylinder $\ddot{\epsilon}:C \to S$ whose restriction to $C^\circ$ is an embedding as in case {\em (Cyl)} and whose restriction to each boundary component is a closed trail as described in {\em (2)}, and such that all closed trails in the homotopy class are given by restriction as in {\em (2)}.
\end{corollary}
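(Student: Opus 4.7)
The plan is to derive the corollary directly from \Cref{thm:closed trails} by ruling out two of its four cases and upgrading the conclusion in the cylinder case.

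First, I would apply \Cref{thm:closed trails} to the given free homotopy class $\conj{\gamma}$ (which is nontrivial, non-polar, and not a power by hypothesis), yielding exactly one of the four cases (NR), (TF), (Cyl), or (UT). The hypothesis that the PRU cover of $S$ is convex immediately excludes case (NR) by part (1) of the Furthermore clause in \Cref{thm:closed trails}. Case (TF) requires the zebra structure on $S$ to have no singularities, which contradicts the standing hypothesis $\Sigma \neq \emptyset$. Thus only (Cyl) and (UT) remain.

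Case (UT) already gives the stated conclusion. So I am left to show that in case (Cyl) the embedding $\epsilon: C^\circ \to S$ extends to a continuous map $\ddot{\epsilon}: C \to S$ defined on the entire closed cylinder, whose restriction to each boundary component is a closed trail of the type described in part (2) of \Cref{thm:closed trails}. For this, the key observation is that $S$ is compact: since $\overline{\epsilon(H^\circ_\pm)} \subseteq S$ is a closed subset of a compact space, both halves $\epsilon(H^\circ_+)$ and $\epsilon(H^\circ_-)$ automatically have compact closure in $S$. In the notation of part (2), this says $\sigma = \{+, -\}$, and therefore
\[
\ddot{C} \;=\; C^\circ \cup \partial_+ C \cup \partial_- C \;=\; C.
\]
Applying part (2) of \Cref{thm:closed trails} then produces the desired map $\ddot{\epsilon}: C \to S$, whose restriction to $C^\circ$ is the embedding of (Cyl), whose restriction to each boundary circle $\partial_\pm C$ is a closed trail in $\conj{\gamma}$ through at least one singularity with all bending angles on the $\ddot{\epsilon}(C^\circ)$-side equal to $\pi$, and which accounts for every closed trail in $\conj{\gamma}$ via restriction to vertical circles in $C$.

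There is no genuine obstacle here: the corollary is a packaging of \Cref{thm:closed trails} once the compactness of $S$ is used to identify the partial closure $\ddot{C}$ with the full closed cylinder $C$. The only subtlety to verify carefully is the compactness step for the two half-cylinders, which follows immediately from $S$ being closed.
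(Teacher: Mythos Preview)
Your proof is correct and matches the paper's intended derivation: the corollary is presented there as an immediate specialization of \Cref{thm:closed trails}, and you have made explicit exactly the two observations needed—convexity of the PRU cover rules out (NR) via part (1), nonemptiness of $\Sigma$ rules out (TF), and compactness of $S$ forces $\sigma=\{+,-\}$ so that $\ddot{C}=C$ in part (2).
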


However, there certainly are some closed surfaces for which there is a $\conj{\gamma}$ that falls into case (NR). The PRU cover of such a surface is not convex.
An example of this situation is depicted in \Cref{fig:amalgamated_cylinder}. The surface is constructed by starting with a polygonal annulus in $\R^2$ and making boundary identifications (which can be chosen to give the surface a dilation structure). The homotopy class of a homotopically nontrivial loop traveling around the interior of this annulus gives an example of a homotopy class satisfying (NR). The problem with this homotopy class is that it crosses a cylinder which can be foliated by leaves of all possible slopes. In what follows we explain the notion that captures this obstruction in the context of zebra surfaces.

A {\em zebra cylinder} in a zebra surface $S$ is the union of all closed trails in a PR free homotopy class of closed curves $\conj{\gamma}$ that contains at least two such closed trails. By \Cref{thm:closed trails}, these closed trails must have the structure of a cylinder, and each closed trail has constant slope (though these slopes vary with the closed trail). A zebra cylinder is {\em full} if every slope in $\hat \R$ is the slope of one of the closed trails in the cylinder.
The following Proposition states that full zebra cylinders are an obstruction for the convexity of the PRU cover of a zebra surface.

\begin{proposition}
\label{full cylinder}
Suppose $S$ is a zebra surface and that $S$ has a PR free homotopy class of closed curves $\conj{\gamma}$ whose closed trails constitute a full zebra cylinder. If $\conj{\beta}$ is any PR free homotopy class of closed curves whose geometric intersection number with $\conj{\gamma}$ is nonzero, then $\conj{\beta}$ contains no closed trails. Furthermore if such a $\conj{\beta}$ exists, then the PRU cover of $S$ is not convex.
\end{proposition}
\begin{proof}
Suppose $\conj{\gamma}$ and $\conj{\beta}$ are as stated. Let $\ddot{\varepsilon}:{\ddot C} \to S$ be the full zebra cylinder containing the closed trails in $\conj{\gamma}$.
Suppose to the contrary that $\beta$ is a closed trail in $\conj{\beta}$. Because of the intersection number condition, there must be an arc $\beta' \subset \ddot{\varepsilon}(\ddot C)$ of $\beta$ that crosses every closed trail in the cylinder. Since $\ddot \varepsilon(C^\circ)$ contains no singularities, the slope of $\beta'$ must be constant. Let $m$ denote this slope.
Let $\ell \subset \ddot C$ be a vertical closed leaf whose image $\ddot \varepsilon(\ell)$ is a closed trail whose constant slope is also $m$. We will derive a contradiction from the fact that $\beta'$ must both contain points in $\ddot \varepsilon(\ell)$ and not in $\ddot \varepsilon(\ell)$. This is clearly impossible when $\ell$ is contained in the interior of $C^\circ$, because in this case both $\ddot \varepsilon(\ell)$ and $\beta'$ are leaves of the same foliation of slope $m$ restricted to the cylinder. If $\ell$ is one of the two boundary curves of $C$, then $\ddot \varepsilon(\ell)$ is a trail of constant slope $m$ and the bending angles along $\ddot \varepsilon(\ell)$ on the side of $\ddot \varepsilon(C^\circ)$ are all $\pi$. Because of these bending angles and the local structure at the singular points, no leaf of slope $m$ emanating from a singularity on $\ddot \varepsilon(\ell)$ enters $\ddot \varepsilon(C^\circ)$, again contradicting the existence of $\beta'$. This completes the proof that $\conj{\beta}$ contains no closed trail. The last statement follows directly from statement (1) of  \Cref{thm:closed trails}.
\end{proof}

The last main result we present is a characterization of the convexity of the PRU cover for
closed zebra surfaces having at least one singularity that is not a pole.

\begin{theorem}
\label{conj:zebra case}
Suppose $S$ is a closed surface with a zebra structure and at least one singularity that is not a pole. Then, the following are equivalent:
\compatnew{Added ``that is not a pole.'' June 2023.}
\begin{enumerate}
\item[(a)] $S$ has a leaf triangulation.
\item[(b)] The PRU cover of $S$ is convex.
\item[(c)] Every PR free homotopy class $\conj{\gamma}$ of closed curves that is nontrivial and non-polar either contains a unique closed trail or contains closed leaves (as described in \Cref{cor:closed trails}). (In particular, case
{\em (NR)} of \Cref{thm:closed trails} does not occur.)
\item[(d)] $S$ contains no full cylinders.
\end{enumerate}
\end{theorem}

Section~\ref{sect:loop} is dedicated to the proof of this result.


\subsection{Contexts}
\label{sect:contexts}
Here we consider our main theorems in specific contexts moving roughly from more specific structures to more general structures. The chart below depicts the various geometric structures on surfaces that we consider, together with arrows from one structure to another to indicate that a surface with the first structure is also a surface with the second structure. (E.g., a translation surface atlas is also a half-translation surface atlas.)

\begin{center}
\begin{tikzpicture}
\matrix [column sep=7mm, row sep=5mm] {
  &
  \node (t) [draw, shape=rectangle] {Translation}; &
  \node (dc) [draw, shape=rectangle, align=left] {Dilation with\\ cone singularities}; &
  \node (dd) [draw, shape=rectangle, align=left] {Dilation with\\ dilation singularities}; &
  \\
  \node (ec) [draw, shape=rectangle, align=left] {Euclidean\\ cone}; &
  \node (ht) [draw, shape=rectangle] {Half-translation}; &
  \node (hdc) [draw, shape=rectangle, align=left] {Half-dilation with\\ cone singularities}; &
  \node (hdd) [draw, shape=rectangle, align=left] {Half-dilation with\\ dilation singulartites}; &
  \node (z) [draw, shape=rectangle] {Zebra}; \\
};
\draw[->, thick] (t) -- (dc);
\draw[->, thick] (dc) -- (dd);
\draw[->, thick] (ht) -- (ec);
\draw[->, thick] (t) -- (ht);
\draw[->, thick] (dc) -- (hdc);
\draw[->, thick] (dd) -- (hdd);
\draw[->, thick] (ht) -- (hdc);
\draw[->, thick] (hdc) -- (hdd);
\draw[->, thick] (hdd) -- (z);
\end{tikzpicture}
\end{center}

\subsubsection{Closed translation surfaces and cone surfaces}
\label{sect:translation surfaces}
A {\em translation surface} is an oriented surface with an atlas of charts to the plane whose transition functions are translations, where we allow cone points with cone angles that are integer multiples of $2 \pi$ (so, in our notation, $\alpha$ takes even values). We briefly explain why our main results are true in the case of a closed translation surface and in a related case.

A {\em (Euclidean) cone surface} is an oriented surface with an atlas of charts to the plane where transition functions are in the orientation-preserving isometry group, and where we allow cone singularities with any positive real cone angle. Thus a translation surface is a special case of a cone surface. \compat{Added this definition.}

If $S$ is a closed translation surface, its universal cover is a Hadamard space, i.e., a complete metric space that is nonpositively curved in the $\mathrm{CAT}(0)$ sense. More generally, we could consider a closed cone surface all of whose cone singularities have cone angles greater than $2 \pi$. The universal cover is again a Hadamard space. For details see \cite[\S 2.1]{BL18}\commf{Not happy with this reference.}\compat{Why? Do you want to suggest an alternative?}. This also works for closed half-translation surfaces, but if the surface has poles, then we have to replace the universal cover with the PRU cover described in this article.

From local considerations, a curve on such a surface is a geodesic if and only if it satisfies our definition of a trail. (Indeed, this is the motivation for our definition.) Hadamard spaces are well known to be geodesically convex, so this gives \Cref{thm:convex} in this context.

The strategy for deducing \Cref{thm:closed trails} in this context is to use facts about isometries of Hadamard spaces. Isometries of metric spaces can categorized based on their attained translation lengths. Here the {\em translation length} of a point $p \in X$ under an isometry $\varphi:X \to X$ is $\TL(x)=d\big(x, \varphi(x)\big)$.
The isometry $\varphi$ is {\em elliptic} if it has a fixed point, {\em hyperbolic} if $\TL$ attains a strictly positive minimum, and {\em parabolic} if the infimum of values of $\TL$ is not attained. An isometry of a locally $\mathrm{CAT}(0)$ space translates along some geodesic if and only if the isometry is hyperbolic \cite[II, Thm 6.8]{BH}.

Given a PR homotopy class $\conj{\gamma}$ on a closed surface $S$ as above, we get a deck transformation $\Delta_\gamma:\tilde S \to \tilde S$ where $\tilde S$ is the Hadamard cover described above, which by hypothesis is not elliptic (because $\conj{\gamma}$ is nontrivial and non-polar). A cocompact group of isometries acting on a Hadamard space cannot contain parabolic isometries \cite[II, Prop 6.10]{BH}. Therefore, $\Delta_\gamma$ is hyperbolic and translates along a geodesic. It is not hard to move from this point to the description in \Cref{thm:closed trails} using elementary facts about Euclidean cone surfaces, though in the cone surface case cylinders are immersed rather than embedded. Also the closed trails in a cylinder in this case are parallel (globally in the translation surface case and locally in the cone surface case).

\subsubsection{Noncompact translation surfaces and cone surfaces}

We will briefly explain how the argument from \Cref{sect:translation surfaces} proving special cases of our main results breaks when we consider noncompact translation surfaces and Euclidean cone surfaces whose cone angles are larger than $2 \pi$.
\compat{I removed (commented out) the questionable sentence ``We are not sure if these arguments can be upgraded to be as broadly applicable as our results in the setting of noncompact translation surfaces.''}

There are two difficulties.
First, there are noncompact translation surfaces, all of whose singularities are finite cone singularities, that admit leaf triangulations (i.e., triangulations by saddle connections joining singularities) but whose universal covers are not complete; see \Cref{fig:disk_branched_cover} for an example. Second, since the surface is not compact, it is unclear how to rule out parabolic isometries in the deck group.

\begin{figure}[htb]
\centering
\includegraphics[width=3in]{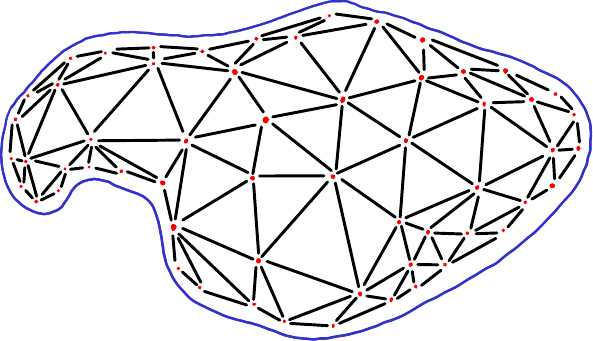}
\caption{Part of an infinite triangulation of a connected open subset of the plane is depicted. Let $S$ be the double cover of this disk which is branched over the vertices but has no other branching.
The surface $S$ is naturally a translation surface with a leaf triangulation, but is not complete as a metric space. \Cref{thm:convex} and \Cref{thm:closed trails} apply to $S$, with the latter giving a closed trail in every nontrivial free homotopy class.  \compatnew{Edited the first sentence slightly.}}
\label{fig:disk_branched_cover}
\end{figure}

It is interesting to note that the open unit disk in $\R^2$ has a complete metric for which geodesics are straight lines: The Klein disk model of the hyperbolic plane. We wonder:

\begin{question}
\label{q:cat0}
Which zebra surfaces have a complete $\mathrm{CAT}(0)$ metric whose geodesics are the trails?
\end{question}

Translation structures are special cases of zebra structures, so our results hold in this setting.
We have the following consequence: \compat{This corollary and proof was added in response to the following comment of Ferran. Perhaps I over reacted, or we can shorten the proof?} \commf{Suggestion remove this and explain how exactly Thm 1.3 is used to conclude that the deck group of the universal cover is `pure elliptic.'}

\begin{corollary}
\label{noncompact translation surface}
If $S$ is a noncompact translation surface whose universal cover $\tilde S$ is geodesically convex (or has a leaf triangulation), then every nontrivial deck transformation is a hyperbolic isometry of $\tilde S$.
\end{corollary}

We will explain that \Cref{fig:tractrix} illustrates a failure of our conclusions to hold in the context of noncompact cone surfaces with all singularities having cone angles larger than $2 \pi$. \commf{This last sentence was not clear enough.}\compat{Okay, I think the issue is that the sentence is explained by the paragraph, so I now preface with ``We will explain that...''. Is this sufficient?}
Note that this surface is not a zebra surface, because it has cone singularities with cone angle strictly between $2\pi$ and $3 \pi$. The figure illustrates a Euclidean cone structure on the annulus, but with no geodesic core curve, because the annulus continues to get thinner as we move towards one boundary. This surface is depicted with a decomposition into quadrilaterals, which when cut along their diagonals gives a leaf triangulation, in the sense that edges are saddle connections and vertices are singularities with cone angle larger than $2 \pi$. It follows that one of the following two implications must be incorrect in this context: A leaf triangulation implies convexity of the universal cover, or convexity of the cover implies the existence of a geodesic representative in every nontrivial free homotopy class of loops. However, we conjecture:

\begin{conjecture}
Fix $\epsilon>0$. Let $S$ be noncompact Euclidean cone surface such that all cone singularities have angle at least $2 \pi+\epsilon$. Suppose that $S$ admits a triangulation by saddle connections. Then the universal cover $\tilde S$ is convex and every nontrivial free homotopy class of closed curves in $S$ contains a geodesic representative.
\end{conjecture}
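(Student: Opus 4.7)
The strategy is to reduce both conclusions to showing that $\tilde{S}$ is a complete $\mathrm{CAT}(0)$ space, and then to extract a closed geodesic from each nontrivial deck transformation. Since every cone angle is at least $2\pi$, the surface $S$ is locally $\mathrm{CAT}(0)$, and this property lifts to the simply connected cover $\tilde{S}$. The saddle-connection triangulation pulls back to a locally finite Euclidean triangulation of $\tilde{S}$. If $\tilde{S}$ can be shown to be complete, then the Cartan--Hadamard (local-to-global) theorem for locally $\mathrm{CAT}(0)$ spaces immediately yields that $\tilde{S}$ is globally $\mathrm{CAT}(0)$, hence uniquely geodesic. As in the discussion of \Cref{sect:translation surfaces}, trails in this context correspond exactly to local geodesics, so global $\mathrm{CAT}(0)$ gives the convexity conclusion.

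The main obstacle is therefore completeness of $\tilde{S}$, and this is precisely where the uniform angle excess $\epsilon$ enters. The key quantitative input is Gauss--Bonnet: any geodesic polygon in $\tilde{S}$ that is an embedded disk with $n$ vertices and $k$ cone points in its interior has interior angle sum at most $(n-2)\pi - k\epsilon$. My plan is to use this to show that for every $p \in \tilde{S}$ and $R>0$, the closed ball $\bar{B}(p,R)$ meets only finitely many lifted triangles and is therefore compact. Without the $\epsilon$-bound (as with the tractrix example of \Cref{fig:tractrix}), a bounded region can accumulate arbitrarily many increasingly thin triangles; with the $\epsilon$-bound, each additional enclosed cone point forces a definite angle deficit, which under the constraint that vertex angles sum to the cone angle and each triangle has positive angle sum bounds the combinatorial complexity inside any ball. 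Turning this heuristic into a rigorous compactness argument is, in my view, the hardest step; I would attempt it by induction on the number of lifted triangles meeting a growing exhaustion of $\bar{B}(p,R)$, using the angle deficit to rule out escape to the metric completion.

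Once completeness is established, the second conclusion follows by the standard isometry classification in complete $\mathrm{CAT}(0)$ spaces. For a nontrivial free homotopy class $\conj{\gamma}$, the deck transformation $\Delta_\gamma$ acts freely on $\tilde{S}$, so it cannot be elliptic; it is therefore axial or parabolic. To exclude the parabolic case I would again invoke the $\epsilon$-hypothesis: a parabolic $\Delta_\gamma$ would produce a sequence $p_n$ with $d(p_n, \Delta_\gamma(p_n)) \to 0$, and hence essential loops in $S$ of length tending to zero. However, any sufficiently short loop in $S$ bounds an immersed disk whose Gauss--Bonnet computation, combined with the $k\epsilon$ angle deficit, forces $k=0$ and makes the loop contractible, contradicting nontriviality of $\conj{\gamma}$. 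Thus $\Delta_\gamma$ is axial, its unique axis in $\tilde{S}$ is a trail invariant under $\Delta_\gamma$, and its image in $S$ is a geodesic representative of $\conj{\gamma}$.
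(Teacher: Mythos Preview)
The statement you are attempting is labeled a \emph{conjecture} in the paper; no proof is offered, and it is presented as open. There is therefore no paper proof to compare against.

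Your strategy hinges on proving that $\tilde S$ is metrically complete, and this is where it fails: completeness is \emph{false} under the conjecture's hypotheses. The paper's own example in \Cref{fig:disk_branched_cover} is a translation surface with a saddle-connection triangulation whose cone angles are all $4\pi = 2\pi + 2\pi$, so it satisfies the hypotheses with $\epsilon = 2\pi$; yet the caption explicitly states that it ``is not complete as a metric space,'' and hence neither is its universal cover. In that example the triangles shrink fast enough that infinitely many fit inside a bounded ball, and the $k\epsilon$ angle deficit does not prevent this. So the Gauss--Bonnet heuristic you propose for compactness of balls cannot be made rigorous, and the Cartan--Hadamard local-to-global theorem is unavailable. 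Indeed, the paper raises exactly this difficulty in the surrounding discussion and in \Cref{q:cat0}.

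Your parabolic-exclusion step has a separate gap: the claim that ``any sufficiently short loop in $S$ bounds an immersed disk'' whose Gauss--Bonnet analysis forces contractibility presupposes a positive lower bound on the length of essential loops. On a non-compact cone surface this need not hold; nothing in the hypotheses rules out essential loops of arbitrarily small length (this is precisely the phenomenon the tractrix example in \Cref{fig:tractrix} illustrates). Without completeness the $\mathrm{CAT}(0)$ isometry trichotomy also loses its force, since a hyperbolic isometry is only guaranteed to have an axis in a complete space.
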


\begin{figure}[htb]
\centering
\includegraphics[width=3in]{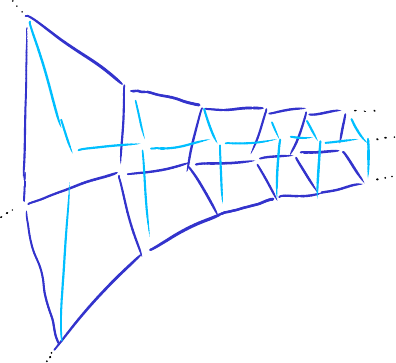}
\caption{A Euclidean cone structure on the annulus built using infinitely many trapezoids.}
\label{fig:tractrix}
\end{figure}

\subsubsection{Dilation surfaces with cone singularities}
\label{dilation with cone singularities}
A {\em half-dilation surface with cone singularities} is a surface with an atlas of charts to the plane and finite covers of $\C/\langle z \mapsto -z\rangle$ branched over the origin such that transition maps are in the group generated by translations, dilations, and rotations by $\pi$. A {\em dilation surface with cone singularities} is the same, only covers should be of the plane branched over the origin and the transition maps are in the group generated by translations and dilations.

The {\em (dilational) holonomy} around an oriented loop in such a surface is the ratio of lengths of a segment parallel translated around a loop and the original segment, measured in a fixed local coordinate chart and interpreted as an element of $\R_+$ \cite{W21}. Because our singularities are cone singularities, the holonomy around any contractible loop is trivial and the notion of holonomy around a loop gives rise to the  {\em holonomy homomorphism} $\pi_1(S) \to (\R_+, \times)$.

Earlier we mentioned the {\em Hopf tori}, given by $\C^\ast/\langle z \mapsto \lambda z\rangle$ where $\C^\ast=\C \setminus \{0\}$ and $\lambda$ is a positive real number, because these surfaces have non-convex universal covers. The fibers of the map $\arg: \C^\ast \to \R/2\pi\Z$ give a foliation of a Hopf torus. To see a Hopf torus has non-convex universal cover, observe that the torus has two closed leaves of every slope, and if $p$ and $q$ are points from distinct closed leaves of the same slope then there is no trail connecting $p$ with $q$. See \cite{DFG19} for background on Hopf tori and related constructions.

Consider an immersion of the cylinder $C=[-1,1] \times \bbS^1$ into a Hopf torus $T$ that sends vertical closed leaves of $C$ to the foliation of $T$ given by fibers of $\arg$. The pullback of the dilation structure to $C$ is called a {\em dilation} (or {\em affine}) {\em cylinder}. The universal cover of $C$ with this structure can be seen to be isomorphic to a sector in a branched cover of the plane, and we call the angle of this sector the {\em angle} of the dilation cylinder. It is not hard to see that a dilation surface (or half-dilation surface) containing an affine cylinder with angle $\pi$ or more cannot be convex, and no closed curve crossing such a cylinder can have a closed trail representing it. See \Cref{full cylinder}. We note that dilation surfaces can also contain {\em flat cylinders}, isomorphic to rotations of $[0,w] \times \R/c\Z$ for $c$ and $w$ positive.

We were unable to find a simple argument for proving Theorems \ref{thm:closed trails} and \ref{conj:zebra case} and in the context of dilation surfaces with cone singularities that bypasses the technique we use for zebra surfaces.

\subsubsection{Dilation surfaces with dilation singularities}
\label{sect:dilation singularities}
A {\em dilation singularity} is more general than a cone singularity: We allow a loop around a dilation singularity to have nontrivial dilational holonomy.

The dilation singularities must locally look like certain natural models. Let $\bbU$ denote the closed upper half-plane, $\{z \in \C:~\Im z \geq 0\}$.
One example of such a model is given by $M_\lambda = \bbU/\sim$ with $\lambda>0$, where $\sim$ is the finest equivalence relation on $\bbU$ where for every positive $x \in \R$, $-x \sim \lambda x$.
Here the singularity of $M_\lambda$ is at the origin, and the dilational holonomy of a counterclockwise loop around the origin is $\lambda$. In general, the model singularities are given by branched covers $M^n_\lambda$ of $M_\lambda$ of degree $n \geq 1$ branched over the origin $\0$. The dilational holonomy around the singularity in $M^n_\lambda$ is $\lambda^n$ and the {\em angle} at the singularity is $n \pi$. These models have natural local coordinate maps from $M^n_\lambda \setminus \{\0\}$ to $\C$ whose transition functions are in the group generated by translations, dilations, and rotations by $\pi$. If $n$ is even, we can specify local coordinate maps to $\C$ where the transition functions are in the group generated by translations and dilations.

A {\em half-dilation surface with dilation singularities} is a surface together with an atlas of charts to the plane and the spaces $M^n_\lambda$, where the transition maps are in the group generated by translations, dilations, and rotations by $\pi$ in local coordinates. A {\em dilation surface with dilation singularities} is the same, only the transition maps are in the group generated by translations and dilations, and the model singularities are of the form $M_\lambda^{2n}$. \compat{The three paragraphs above were rewritten. One point is that I want to allow cone singularities to be considered dilation singularities.}

Note that the universal cover of a dilation surface with dilation type singularities has no natural metric, because now there is dilational holonomy around loops in the cover. This makes metric methods to deduce convexity and existence of closed trails seem unlikely to work. It is conceivable there is a different metric worth considering. See \Cref{q:cat0}.

Interestingly, some papers in the field of dilation surfaces only allow cone singularities, while others allow dilation singularities. Veech was probably the first to consider dilation surfaces, though he worked in the more general context of (singular) complex affine structures on surfaces \cite{V93} \cite{V97}. Veech proved fundamental results on the moduli spaces of these structures. This understanding was recently improved in \cite{ABW} which specifically considers moduli spaces of dilation surfaces allowing dilation singularities, and proving (among other results) that the moduli space of dilation surfaces with singular data fixed is an orbifold covering of the usual moduli space of the corresponding punctured surface.

The directional foliations on the spaces $M_\lambda$ constructed above are isomorphic to the directional foliations of $\C^\ast/\langle z \mapsto - z\rangle$, and the foliations on $n$-fold branched covers $M^n_\lambda$ are isomorphic to those on the $n$-fold branched covers of
$\C^\ast/\langle z \mapsto - z\rangle$. Thus, dilation surfaces with dilation singularities still induce zebra structures on the surface. \commf{Weird phrase. We mean equivalent foliations?}\compat{I'm now saying isomorphic. This seems consistent with things later in the paper. It seems to be used in foliation theory (though I'm not 100\% certain this is the best  term).}

Veech proved that a dilation surface has a triangulation by saddle connections if and only if it contains no dilation cylinders with angle $\pi$ or more
\cite[Appendix]{DFG19}. Note however that dilation singularities with angle $2 \pi$ are singularities in the dilation surface sense but are nonsingular on the induced zebra structure. Therefore a triangulation by saddle connections may not be a leaf triangulation. Nonetheless, by Theorem~\ref{conj:zebra case} we obtain:
\begin{theorem}
\label{thm:dilation case}
Suppose $S$ is a closed surface with a dilation structure and at least one singularity, but without dilation singularities with angle $2 \pi$. Then, the following are equivalent:
\begin{itemize}
\item $S$ has a triangulation by saddle connections.
\item The universal cover $\tilde S$ is convex.
\item Every nontrivial free homotopy class $\conj{\gamma}$ of closed curves is realized by either a unique closed trail or a cylinder.
\item $S$ contains no dilation cylinder with angle $\pi$ or more.
\end{itemize}
\end{theorem}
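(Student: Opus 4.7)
My plan is to prove the four statements equivalent via the cycle $(1)\Rightarrow(2)\Rightarrow(3)\Rightarrow(4)\Rightarrow(1)$, assembling Veech's triangulation criterion with \Cref{thm:convex}, \Cref{cor:closed trails}, and \Cref{full cylinder}. The equivalence $(1)\Leftrightarrow(4)$ is already Veech's theorem quoted immediately before the statement, so that end of the cycle needs no work.

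For $(1)\Rightarrow(2)$, I would first observe that the standing hypothesis (no dilation singularities of angle $2\pi$, and no poles since $S$ is a dilation rather than half-dilation surface) forces every singularity of $S$ to have $\alpha\geq 2$ in the induced zebra structure. Consequently, every dilation saddle connection is a leaf saddle connection, and a triangulation by dilation saddle connections is a leaf triangulation of the associated zebra surface. Because $\alpha\geq 0$ everywhere, the universal cover $\tilde S$ equals the PRU cover and is a zebra plane; the triangulation lifts to a leaf triangulation of $\tilde S$, and \Cref{thm:convex} gives convexity.

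For $(2)\Rightarrow(3)$, fix a nontrivial free homotopy class $\conj{\gamma}$. Since there are no poles, $\conj{\gamma}$ is automatically non-polar and PR homotopy coincides with ordinary homotopy. If $\conj{\gamma}$ is not a power, \Cref{cor:closed trails} applies directly; the torus-foliation case (TF) cannot occur because $S$ has at least one singularity. If $\conj{\gamma}=\conj{\beta}^{k}$ with $\conj{\beta}$ primitive, apply the corollary to $\conj{\beta}$ and then pass up: any closed trail representing $\conj{\gamma}$ is of the form $\rho^{m}$ for some primitive closed trail $\rho$ representing some primitive class whose $m$-th power equals $\conj{\beta}^{k}$; primitivity of $\conj{\beta}$ forces $\rho$ to represent $\conj{\beta}$, so uniqueness in $\conj{\beta}$ yields uniqueness in $\conj{\gamma}$, and the cylinder realizing $\conj{\beta}$ realizes $\conj{\gamma}$ with each leaf traversed $k$ times. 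For $(3)\Rightarrow(4)$, I argue the contrapositive using \Cref{full cylinder}: if $S$ contains a dilation cylinder of angle $\geq \pi$, the induced zebra structure contains a full zebra cylinder, and a free homotopy class of a loop crossing this cylinder transversely has neither a closed-trail representative nor a cylinder realization, violating $(3)$.

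The main obstacle is the bookkeeping in Step $(2)\Rightarrow(3)$: verifying that the ``non-polar'' and ``not a power'' hypotheses of \Cref{cor:closed trails} cost nothing. The non-polar condition is automatic since the surface has no poles, and the power case reduces to the primitive case as sketched, so this obstacle is mild. Every other implication is a direct citation of a result already available in the paper together with the observation that the hypothesis ``no dilation singularities of angle $2\pi$'' is precisely what is needed to align the dilation and zebra notions of singularity and saddle connection.
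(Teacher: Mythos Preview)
Your proof is correct and follows exactly the approach the paper indicates (the paper presents \Cref{thm:dilation case} as an immediate consequence of Veech's triangulation criterion together with \Cref{thm:convex}, \Cref{cor:closed trails}, and \Cref{full cylinder}, without writing out a formal proof). Your cycle $(1)\Rightarrow(2)\Rightarrow(3)\Rightarrow(4)\Rightarrow(1)$ and your care in checking that the absence of angle-$2\pi$ dilation singularities makes every dilation saddle connection a zebra saddle connection, and that a dilation cylinder of angle $\geq\pi$ yields a full zebra cylinder, simply spell out what the paper leaves implicit.
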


Remark that \Cref{thm:dilation case} does not hold if one allows dilation singularities with angle $2 \pi$. When dilation singularities with angle $2 \pi$ are allowed in a dilation surface, there can be multiple dilation and flat cylinders with homotopic core curves that join together to form a full cylinder in the zebra sense (dilation singularities with angle $2 \pi$ are not considered to be singularities on the zebra surface). An example of a full zebra cylinder made from two dilation cylinders and one flat cylinder is shown in \Cref{fig:amalgamated_cylinder}.

\subsection{Outline of paper}
\label{sect:outline}

We will now explain what is done in this paper. Because the paper proves statements about zebra surfaces but some reader's interests will only include translation surfaces or dilation surfaces, we try to point out what can be skipped for such a reader. \commb{In this outline do you want to distinguish dilation singularities with only cone singularities versus those with dilation singularities?} \compat{I think not, but if you notice something useful, please point it out.}

In \Cref{sect:formal}, we carefully define what a zebra surface is and establish basic terminology. All results here are well known for translation and dilation surfaces. We prove that the Gauss-Bonnet theorem holds for zebra surfaces and subsurfaces with polygonal boundaries.

In \Cref{sect:basic}, we formally define the pole-resolved universal cover of a zebra surface. This is also natural for half-translation surfaces with poles, but we have not seen it in the literature. We consider basic geometric objects on zebra planes (such as PRU covers) such as polygons and trails. We prove basic results about these objects, which are all obvious when working with half-translation and half-dilation surfaces. For example, we construct rectangles, prove arcs of trails have maximal extensions as trails, and show trails on zebra planes are proper maps.

In \Cref{sect:boundary}, we define the notion of a zebra structure on a surface with boundary. Our definition allows for a polygonal boundary, generalizing the natural idea of a dilation surface with piecewise-linear boundary. This is important for laying a rigorous foundation for the next section. \compat{Edits to this paragraph and the next one related to the splitting the surgery section into a section on zebra surfaces with boundary and a second section on surgery. Oct 31.}

In \Cref{sect:surgery}, we consider surgical constructions on zebra surfaces, building new zebra surfaces from subsurfaces of others with polygonal boundary. Because of the flexibility of the zebra structure, we allow gluing subsurfaces together by homeomorphism of edges in the boundary. This is useful for simplifying several arguments appearing later in the paper. In \Cref{sect: not dilation}, we use surgery to show that there are zebra surfaces that do not arise from half-dilation structures.

\Cref{sect:foliations} focuses on producing foliations of polygons in zebra planes
by combining leaves from the foliations $\sF_m$ with $m$ varying. These foliations form the foundation of our later arguments, and some such foliations seem interesting even in the Euclidean plane (though proofs would be easier in this context where analytic methods are available).
In \Cref{sect:foliating triangles}, we show that a triangle in a zebra surface can be foliated by leaves emanating from a vertex, and that slopes of these leaves vary monotonically. In \Cref{sect:foliating polygons}, we prove that a polygon can be foliated by leaves passing through one edge, where the slopes of leaves passing through a given point on that edge are given by a monotone function (subject to obvious constraints). This second result has a slick proof using surgery on zebra surfaces. \compat{Rewrote this paragraph in response to some of Barak's comments. Dec 31, 2022.}

In \Cref{sect:connecting points with trails}, we investigate the behavior of trail rays emanating from a point in a zebra plane $Z$, and also arcs of trails joining two points. This work is fundamental for our convexity arguments later in the paper. Considering all the trail rays emanating from a point in $Z$ leads to a foliation of an open subset of $Z$ with a different singular structure. We use this structure to prove that polygonal regions in $Z$ all of whose exterior angles are at least $\pi$ are convex. This is clear from $\mathrm{CAT}(0)$ arguments when the polygon is in a translation surface, but seems unclear for polygons in dilation surfaces with dilation-type singularities in the interior of the polygon. This result allows us to prove a criterion for convexity of PRU covers of closed zebra surfaces where $\alpha$ is nonpositive; see \Cref{torus cover convex}. This statement applies to dilation tori, all of whose singularities are dilation-type with angle $2\pi$. In \Cref{sect:continuity of trail arcs}, we prove a continuity statement for the map sending a pair of points to the arc of a trail between the two points.

In \Cref{sect:convexity proof}, we prove \Cref{thm:convex}, which says that a zebra plane $Z$ with a leaf triangulation is convex. We choose an arbitrary point $p \in Z$ and consider all trail rays emanating from $p$. We inductively show that these rays cover every triangle in our triangulation. Given the results from the prior section, the proof is largely combinatorial. This argument is likely of interest to anyone interested in noncompact translation surfaces or in dilation structures.

In \Cref{sect:closed trails}, we consider the question of finding closed trails in a zebra surface. We begin with a discussion of PR free homotopy classes of curves and a pole-resolved version of the fundamental group in \Cref{sect:curves and deck transformations}. In \Cref{sect:existence of closed trails}, we prove a theorem that guarantees the existence of a closed trail. This result, \Cref{closed trails exist}, is of interest to those thinking about dilation surfaces. Later subsections are concerned with developing the remainder of the structure described in \Cref{thm:closed trails}. Arguments should be readable to experts interested in the contexts of translation or dilation surfaces. We prove \Cref{noncompact translation surface} in \Cref{sect:noncompact translation surface}. \compat{I made some minor changes to this paragraph on Oct 19th.}

\Cref{sect:loop} is dedicated to the proof of \Cref{conj:zebra case}. As explained in~\Cref{secc:proof-conj-zebra case} implications $(a)\Rightarrow (b) \Rightarrow (c) \Rightarrow (d)$ of the statements in \Cref{conj:zebra case} can be deduced from previous results in this article. The main result of this section is to show $(d)\Rightarrow(a)$, that is, every closed zebra surface $S$ with at least one singularity that is not a pole and that contains no full cylinders has a leaf triangulation. In~\Cref{ssec:triangulating-polygons-cylinders} we discuss first the triangulation of polygons and cylinders. \Cref{sect:minimal triangulations} introduces two new crucial ideas: \emph{preleaf} and \emph{minimal} triangulations. There we also
prove $(d)\Rightarrow(a)$ under extra the hypothesis that $S$ has no poles (\Cref{no full implies leaf}). In \Cref{ssec:producing-new-triangulations} we extend this proof for the case where there are poles. The proof of \Cref{conj:zebra case} is detailed in ~\Cref{secc:proof-conj-zebra case}.

\Cref{sect:questions} provides a list of open questions. We know very little about zebra surfaces.

\section{Formal definitions}
\label{sect:formal}

\commb{section 2.1 and 2.2 appear to be new. Can't we just refer to some standard textbook for the definition of a foliation, and a singular foliation? I found the definition using geometric structures hard to digest. I am more familiar with a definition using foliation charts. Is it true that you want this definition because the foliations are only assumed to be continuous, that is the surface is not equipped with a smooth structure and foliations are not required to be smooth?}
\compat{These sections were rewritten for consistency with the new section 4, which I rewrote because it was unreadable. The reason for switching to something more technical is that we need to do things carefully in \Cref{sect:surgery}. If you want to precisely define foliation (evening leaving aside the singular part), it is surprisingly technical. See for instance \url{https://en.wikipedia.org/wiki/Foliation}, where presumably they worked hard to make it readable. I don't think it is so hard to see that the definition I give is equivalent to a standard one. While not an ideal reference, \cite[Example 3.1.9]{ThurstonBook}, seems worth something. FLP doesn't formally define singular foliation and basically simultaneously defines measured foliation. Farb and Margalit does have a definition (maybe worth citing), but it is informal and doesn't consider surfaces with boundary. Also they state that transition functions should be smooth. We only assume $C^0$! Anyway, I'm open to looking for something to cite and agree we should cite the most reasonable thing. I'm skeptical however there will be a textbook that completely meets our needs, unfortunately.}

\subsection{Surfaces and structures}
\label{surfaces and structures}
For us a {\em surface} is a second countable Hausdorff space that is locally homeomorphic to $\R^2$. Throughout this paper, all surfaces are oriented.
A {\em closed surface} is a compact connected surface without boundary.
\commb{Isn't it true that non-orientable surfaces can't admit zebra structures? The zebra structure defines an orientation. It makes sense to go around the center of a stellar neighborhood in the counterclockwise direction}
\compat{I agree. Some things (e.g., foliations) make sense even for non-oriented surfaces, but I don't see the point of considering them.}
\compat{Earlier versions required a surface to be connected. I removed this, so it will be important to indicate this where required.} \commb{Connected is still being required.} \compat{Thanks. It is now really not being required!}

Let $X$ be a topological space and $S$ be a surface. An {\em atlas of charts} from $S$ to $X$ is a collection of {\em charts} of the form $\phi:U \to X$ whose domains are open and cover $S$ and such that each chart $\phi:U \to X$ has an open image $\phi(U)$, and is a homeomorphism from $U$ to its image. A {\em transition map} between two charts with intersecting domains $\phi_1:U_1 \to X$ and $\phi_2:U_2 \to X$ is the restriction of $\phi_2 \circ \phi_1^{-1}$ to $\phi_1(U_1 \cap U_2)$.

In general, geometric structures are specified by defining a pseudogroup of homeomorphisms between the open sets of $X$, and insisting that transition maps lie in the pseudogroup. We call $X$ the {\em model space}. We refer the uninitiated reader to Chapter 3 of \cite{ThurstonBook}. Foliations can be considered to be a particular case of a geometric structure, see \cite[Example 3.1.9]{ThurstonBook}.

\subsection{Foliated surfaces}
The {\em horizontal foliation} $\sH$ of $\R^2$ is the collection of all horizontal lines in the plane.

If $U \subset \R^2$ is open, we say that two points $(x_1,y_1)$ and $(x_2, y_2)$ are {\em horizontally equivalent} in $U$ if
$y_1=y_2$ and the horizontal line segment between the points is contained in $U$. The {\em horizontal foliation} of $U$ is the
collection $\sH|_U$ of horizontal equivalence classes. We call these equivalence classes {\em leaves}.

The {\em horizontal foliation pseudogroup} of $\R^2$ is the collection of homeomorphisms $h:U \to V$ between open subsets of $\R^2$ that induces a bijection from the leaves of $\sH|_U$ to the leaves of $\sH|_V$.

A {\em foliation atlas} on a surface $S$ is an atlas of charts to $\R^2$ whose transition functions lie in the horizontal foliation pseudogroup. A foliation atlas determines a {\em foliation equivalence relation} on $S$, namely the finest one such that given any chart $\phi:U \to \R^2$, preimages of points in the same leaf of $\sH|_{\phi(U)}$ are equivalent. A {\em foliation} of $S$ is the collection of foliation equivalence classes obtained from a foliation atlas. We call the equivalence classes {\em leaves}.

If $S$ is a surface with a foliation $\sF$, and $A \subset S$ is a subsurface (possibly with boundary), then the {\em restricted foliation} on $A$ is the collection $\sF|_A$ of connected components of intersections $A \cap \ell$, where $\ell$ varies over the leaves of $\sF$. If $A$ is an open set, then it is a surface and the restricted foliation is a foliation on $A$, because a foliation atlas for $A$ can be obtained by restricting each chart $\phi:U \to \R^2$ in the atlas for $\sF$ to the function $\phi|_{A \cap U}: A \cap U \to \R^2$.

\compat{I removed a remark indicating that $A$ will be a sector based on Ferran's concern that sector was not defined yet.}

\begin{definition}[Leaf topology]
\label{def:leaf topology}
Let $S$ be a topological surface, perhaps with boundary, and let $\ell \subset S$ be a subset. The {\em leaf topology} on $\ell$ is the coarsest topology such that for each open $U \subset S$, each connected component of $U \cap \ell$ is open. If $\ell \in \sF$ is a leaf of a foliated surface without boundary, then each point of $\ell$ has a neighborhood homeomorphic to an open interval. This gives
$\ell$ the structure of a connected $1$-manifold.
\end{definition}

A {\em local homeomorphism} $f:S_0 \to S_1$ is a map such that for every point $p \in S_0$, there is an open neighborhood $U$ of $p$ such that $f(U)$ is open in $S_1$ and $f|_U:U \to f(U)$ is a homeomorphism.
Suppose $S_1$ is a space with a foliation $\sF_1$. Let $S_0$ be another topological space and suppose $f:S_0 \to S_1$ is a local homeomorphism.
Then there is a natural pullback equivalence relation, namely the finest equivalence relation on $S_0$ such that
the points $p$ and $q$ of $S_0$ are equivalent when
there is an open set $U \subset S_0$ containing $p$ and $q$ such that $f(U)$ is open, $f|_U:U \to f(U)$ is a homeomorphism, and $f(p)$ and $f(q)$ lie on the same leaf of $\sF_1 |_{f(U)}$. The {\em pullback foliation} $f^{\ast}(\sF_1)$ is the collection of equivalence classes of the pullback equivalence relation. If $(S_0,\sF_0)$ and $(S_1, \sF_1)$ are two foliated spaces and $f:S_0 \to S_1$ is a homeomorphism such that $\sF_0 = f^{\ast}(\sF_1)$, then we say that $f$ is an {\em isomorphism}. That is, $f$ must
induce a bijection from $\sF_0$ to $\sF_1$.

Given a collection $\{(S_i, \sF_i)\}$ of foliated spaces as above, the disjoint union $\bigsqcup_i \sF_i$ is a foliation on $X = \bigsqcup_i S_i$.

In any of the foliated spaces $(X, \sF)$ constructed as above, the {\em foliation pseudogroup} consists of all isomorphisms between open subsets of $X$ endowed with restricted foliations.

\begin{remark}
One can define a foliated surface $(S,\sF)$ to be the geometric structure determined by a foliation atlas. But, treating a foliation as its collection of leaves seems more natural, and from the collection of leaves derived from such an atlas we can recover an atlas defining the structure. The charts can be taken to be the collection of all $\phi:U \to \R^2$ where $U \subset S$ is open and $\phi$ is an isomorphism from $(U, \sF|_U)$ to $\big(f(U),\sH|_{f(U)}\big).$
\end{remark}

\subsection{Standard singularities}
\label{sect:standard singularities}
Note that the action of multiplication by $-I$ on $\R^2$ preserves the horizontal foliation. Let $\Pi_{-1}$ denote $\R^2/{-I}$, which has a cone point with cone angle $\pi$ at the image of the origin. We call this cone point the {\em origin} $\0 \in \Pi_{-1}$ and write $\Pi_{-1}^\ast=\Pi_{-1} \setminus \{\0\}$. Note that a collection of inverses of restrictions of the covering map $\R^2 \setminus \{\0\} \to \Pi_{-1}^\ast$ gives a foliation atlas on $\Pi_{-1}^\ast$. We call the foliation associated to this atlas the {\em horizontal foliation} $\sH_{-1}$ of $\Pi_{-1}^\ast$.

For each integer $n \geq 0$, we define $\Pi_n$ to be the branched cover of $\Pi_{-1}$ of degree $n+2$ branched over the origin. In all these spaces, we use $\0$ to denote the unique preimage of $\0 \in \Pi_{-1}$, and call $\0$ the {\em origin}. Note that geometrically $\0 \in \Pi_n$ is a cone singularity with cone angle $(n+2) \pi$. We define $\Pi_n^\ast=\Pi_n \setminus \{\0\}$.
The pullback of the horizontal foliation on $\Pi_{-1}^\ast$ under the covering map $\Pi_n^\ast \to \Pi_{-1}^\ast$
is the {\em horizontal foliation} $\sH_n$ of $\Pi_{n}^\ast$.

Note that $\Pi_0$ is naturally homeomorphic to $\R^2$, and the foliation $\sH_0$ of $\Pi_0$ is carried by this homeomorphism to the horizontal foliation of $\R^2 \setminus \{\0\}$.

We further define $\Pi_{-2}$ to be the plane $\R^2$ equipped with the foliation of $\R^2 \setminus \{\0\}$ by circles with center $\0$. For convenience we call this foliation the {\em horizontal foliation} of $\Pi_{-2}^\ast =\Pi_{-2} \setminus \{\0\}$ and denote it by $\sH_{-2}$. This is locally homeomorphic to one of the straight-line foliations that arises near a double pole of a quadratic differential. Singularities of this form show up in some of our arguments, but are not allowed in zebra surfaces.

A {\em prong} of $\Pi_n$ is a leaf of the horizontal foliation with $\0$ as an endpoint. There are $n+2$ prongs in $\Pi_n$.

\subsection{Singular foliations}
\label{sect:singular foliations}
Consider the model space
\begin{equation}
\label{eq:model space}
X=\Pi_{-1} \sqcup \bigsqcup_{n \geq 1} \Pi_n, \quad \text{and let} \quad X^\ast=\Pi_{-1}^\ast \sqcup \bigsqcup_{n \geq 1} \Pi_n^\ast \subset X,
\end{equation}
which comes equipped with its horizontal foliation $\sH_X=\bigsqcup \sH_n$. \compat{Zero was removed from the indices above to accommodate the removal of removable singularities.}

Let $h:U \to V$ be an orientation-preserving homeomorphism between open subsets of $X$. We say $h$ is in the {\em horizontal pseudogroup} (of $X$) if :
\begin{enumerate}
\item We have $h(U \cap X^\ast) = h(U) \cap X^\ast$.
\item The restriction $h|_{U \cap X^\ast}$ is in the foliation pseudogroup of $(X^\ast, \sH_X)$.
\end{enumerate}

Observe that if $U_i \subset U$ is a connected component then $U_i \subset \Pi_m$ for some $m=m(i)$ and $h(U_i) \subset \Pi_n$ for some $n=n(i)$.
Note that statement (1) implies that $\0 \in U_i$ if and only if $\0 \in h(U_i)$ and that $h(\0)=\0$ if $\0 \in U_i$.

A {\em singular foliation atlas} on a surface $S$ is an atlas of charts to $X$ whose transition functions lie in the horizontal pseudogroup. We say a point $p \in S$ is a {\em singularity} if there is a chart $\phi:U \to \Pi_n$ such that $\phi(p)=\0$. Because elements of the pseudogroup send origins to origins, the notion of being a singularity is independent of the chart. The {\em singular set} $\Sigma \subset S$ is the collection of all singularities.
Every point $p$ has a neighborhood $U$ such that $U \setminus \{p\}$ contains no singularities, so $\Sigma$ is a closed discrete subset. \compat{Edits related to the point that $\Sigma$ is not just discrete but closed and discrete. Thanks Barak. Oct 31st.}

Observe that a singular foliation atlas determines a foliation on $S \setminus \Sigma$. We call this foliation a {\em singular foliation} on $S$. The {\em singular data} of the atlas consists of $\Sigma$ and the function $\alpha:S \to \Z_{\geq -1}$ whose support is $\Sigma$ and which sends $p \in \Sigma$ to the $n$ such that there is a chart from a neighborhood of $p$ to a neighborhood of $\0 \in \Pi_n$. Observe that $\alpha$ is well-defined, because a single chart tells you that there are $n+2$ prongs emanating from $p$. Here a {\em prong} emanating from a point $p \in S$ is the germ of an injective path $\gamma:(0,1) \to \ell$ into a leaf $\ell$ with $\lim_{t \to 0^+} \gamma(t)=p$, i.e., an equivalence class of such paths where two such paths $\gamma_1:(0,1) \to \ell$ and $\gamma_2:(0,1) \to \ell$ are equivalent if there
are constants $\epsilon_1, \epsilon_2 \in (0,1)$ such that the restrictions $\gamma_1|_{(0, \epsilon_1)}$ and $\gamma_2|_{(0, \epsilon_2)}$ are the same up to orientation-preserving reparameterization. (Our definition of prong is slightly non-standard in that typically prongs are only defined at singular points, but we define them at all points.)
If $\alpha(p)=-1$, the point $p$ is called a {\em pole}.

\begin{remark}[Removable singularities]
\compat{This is new.}
Since allowing removable singularities would make statements of our main results more technical and since we don't have much use for them in this paper, we have purposely not allowed removable singularities in our zebra surfaces.
However, if the model space $X$ is altered to include $\Pi_0$ with its horizontal foliation, then a point mapping to $\0 \in \Pi_0$ would be a {\em removable singularity} or {\em marked point}. A marked point in a singular foliation can be removed by replacing a chart to $(\Pi_0, \sH_0)$ with a chart to $(\R^2, \sH)$ or an isomorphic subset of $X$.
So, removable singularities are much the same as regular points. However, treating a regular point as a singularity creates some problems, because including a regular point in $\Sigma$ alters the space being foliated. Thus, this change alters the notion of what a leaf is. In particular, if we allow removable singularities, the statements of \Cref{thm:convex} and \Cref{thm:closed trails} or the definitions they depend on need to be suitable altered to make the results still true.
\compat{I truncated the remark, by editing and commenting out here. I did make some corrections before commenting out as suggested by Barak, if we want to expand this comment.}
\end{remark}

Given a prong contained in a leaf $\ell$ emanating from a singular point $p$, a parameterization of the leaf $(0,1) \to \ell$ can be extended to include $p$. In such a case we call the singularity an {\em endpoint} of the leaf. If a leaf has two endpoints, then we call the union of the leaf with its endpoints a {\em saddle connection}. If the leaf has one endpoint, then we call this union a {\em separatrix}. A leaf with no endpoints is called {\em bi-infinite}. A leaf that is homeomorphic to a circle is said to be {\em closed}. \compat{Based on a comment of Ferran, I removed the statement that we include singular endpoints in leaves of singular foliations. I mention this in case you notice a place where I say the endpoint is in a leaf. I am including the endpoints in saddle connections and separatrices.}


Sometimes we will allow double poles to appear in our singular foliations, though we do not allow these more general singular foliations in our zebra surfaces. A {\em generalized singular foliation atlas} on a surface is defined as above but including $\Pi_{-2}$ in the model space with its horizontal foliation $\sF_{-2}$. In this case $\alpha$ takes values in $\Z_{\geq -2}$ and we call a point $p$ where $\alpha(p)=-2$ a {\em double pole}. Note that a singular foliation is also a generalized singular foliation.

\begin{proposition}[Euler–Poincaré Formula]
\label{Euler-Poincare}
Let $\sF$ be a generalized singular foliation on a closed surface $S$ with singular set $\Sigma$ and singular data $\alpha$. Then,
$$\sum_{p \in \Sigma} \alpha(p)=-2 \chi(S),$$
where $\chi(S)$ denotes the Euler characteristic of $S$.
\end{proposition}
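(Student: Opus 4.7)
The plan is to apply the Poincar\'e--Hopf theorem for singular line fields on the closed oriented surface $S$. Because $\Sigma$ is closed and discrete inside the compact surface $S$, it is finite, so $\sF$ determines a singular line field on $S$ with only finitely many singularities, namely the points of $\Sigma$. The formula should then follow by summing local indices.

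The first step is a purely local index computation at each $p \in \Sigma$ using the model $\Pi_{\alpha(p)}$. Recall that a $k$-prong singularity of a line field on an oriented surface has index $1 - k/2$. For $\alpha(p) = n \geq -1$ the horizontal foliation $\sH_n$ on $\Pi_n$ has $n+2$ prongs at $\0$, so the index at $p$ is $1 - (n+2)/2 = -n/2$. For $\alpha(p) = -2$ the local model is the foliation of $\R^2 \setminus \{\0\}$ by concentric circles; the tangent line field is that of the rotational vector field $\partial_\theta$, with an isolated zero of index $+1$. In all cases the index of the tangent line field at $p$ equals $-\alpha(p)/2$.

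The second step is to invoke the index theorem for singular line fields on a closed oriented surface,
$$\sum_{p \in \Sigma} \mathrm{ind}(p) = \chi(S),$$
and combine it with the local computation to obtain $\chi(S) = -\tfrac{1}{2}\sum_{p \in \Sigma} \alpha(p)$, which is the stated identity.

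The main obstacle is justifying Poincar\'e--Hopf in our purely $C^0$ setting, since the standard statements assume some regularity. My plan to handle this is to promote $\sF$ to a smooth singular line field with the same singular set and the same topological type at each singularity (and hence the same indices). Concretely, I would fix once and for all a smooth line field on each model space $\Pi_n$ and on $\Pi_{-2}$ realizing the prescribed prong structure at $\0$, pull these back via the coordinate charts to small disjoint neighborhoods of the points of $\Sigma$, and then glue to an arbitrary smooth line field on the complement using a partition of unity. The result is a smooth line field on $S$ whose singular set is $\Sigma$ and whose local index at each $p$ matches the computation above, to which the classical Poincar\'e--Hopf theorem applies. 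Alternatively, one could avoid smoothing altogether and compute $\chi(S)$ from a finite CW decomposition adapted to $\sF$, with $\Sigma$ in the $0$-skeleton and $1$-cells taken to be leaf arcs and short transversals near each singularity, reading off the contribution of each singularity directly from its local model.
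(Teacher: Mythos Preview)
Your approach is essentially the same as the paper's: reduce to Poincar\'e--Hopf. The paper (citing FLP) phrases it slightly differently, passing to the orientation double cover of the line field so that the foliation becomes oriented and one obtains an honest vector field, to which the classical Poincar\'e--Hopf applies; this is of course equivalent to your direct invocation of the index theorem for line fields.

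One technical point: your smoothing step has a gap as written. Line fields are sections of an $\R\mathbb{P}^1$-bundle, not a vector bundle, so you cannot ``glue to an arbitrary smooth line field on the complement using a partition of unity''---there is no way to add or average line fields. This is exactly what the double-cover maneuver buys you: on the cover the line field lifts to a vector field, and then partition-of-unity arguments (or any standard smoothing) are available. Your alternative CW-decomposition argument would also work and avoids smoothing entirely.
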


This result follows from the Poincaré-Hopf theorem, which gives the Euler characteristic in terms of the sums of indices of a zeros of a vector field with isolated zeros. For a proof see Proposition 5.1 of \cite{FLP}. The main idea is to pass to a double cover, so the foliation becomes oriented and gives rise to a vector field.
Note that here we allow $\alpha$ to take the values $-2$ and $-1$ while \cite{FLP} does not (though the proof goes through in this case).

\subsection{The extended real numbers}
Let $\hat \R=\R \cup \{\infty\}$, which is homeomorphic to a circle. The usual increasing order on $\R$ extends to a cyclic order on $\hat \R$.

We use interval notation to denote subsets of $\hat \R$. If $m_0 \neq m_1$, we use interval notation such as $(m_0, m_1)$ to denote the set of slopes $m \in \hat \R$ for which the triple $(m_0, m, m_1)$ is in strictly increasing cyclic order. Then $[m_0, m_1]$ will denote the associated closed interval, $(m_0, m_1) \cup \{m_0, m_1\}$.

\subsection{The stellar functions}
\label{sect:stellar function}
The {\em stellar function} on $\R^2 \setminus \{\0\}$ is the function which sends a point $p$ to the slope of the line joining $p$ to $\0$:
\begin{equation}
\label{eq:standard stellar foliation}
\rho:\R^2 \setminus \{\0\} \to \hat \R; \quad (x,y) \mapsto \frac{y}{x}.
\end{equation}
Observe that the value of $\rho$ is preserved by the action of $-I$, so $\rho$ descends to a well-defined map $\rho_{-1}:\Pi_{-1}^\ast \to \hat \R$. Then for $n \geq 0$, we can obtain functions $\rho_{n}:\Pi_{n}^\ast \to \hat \R$ by composing the covering map $\Pi_{n}^\ast \to \Pi_{-1}^\ast$ with $\rho_{-1}$. We call $\rho_n$ the {\em stellar function} on $\Pi_{n}^\ast$.

A {\em ray} in $\Pi_n$ of slope $m$ is a connected component of $\rho_n^{-1}(m)$.

\subsection{Definition of zebra structure}
\label{sect:definition of zebra}
Let $\{\sF_m:~ m \in \hat \R\}$ be a collection of singular foliations on a connected surface $S$ with the same singular set $\Sigma$ and the same singularity data function $\alpha:S \to \Z_{\geq -1}$.

A {\em stellar neighborhood} of a point $p \in S$ is an open neighborhood $U$ of $p$ such that there is an integer $n \geq -1$ and a homeomorphism $h:U \to \Pi_n$ such that $h(p)=\0$ and the following statements hold for each slope $m \in \hat \R$. \compat{The statements below were corrected and made to align with what is in \S 4 before Sept 28, 2022.}
\begin{enumerate}
\item For each ray $r \subset \Pi_n^\ast$ of slope $m$, $h^{-1}(r)$ is contained in a leaf of $\sF_m$.
\item For each prong of $\sF_m$ emanating from $p$, there is a ray $r \subset \Pi_n^\ast$ of slope $m$ such that for any path $\gamma:(0,1) \to r$ with $\lim_{t \to 0^+} \gamma(t)=\0$, the preimage $h^{-1} \circ \gamma$ represents the prong.
\label{prongs as preimage of rays}
\end{enumerate}
The above two statements guarantee that for each $m$, $h$ induces a bijection from prongs of $\sF_m$ emanating from $p$ and rays of slope $m$ in $\Pi_n$. We call $h$ a {\em stellar homeomorphism}. It follows by counting prongs and rays that $n=\alpha(p)$.

We say that the collection of singular foliations $\{\sF_m\}_{m \in \hat \R}$ is a {\em stellar foliation structure} or a {\em zebra structure} on $S$ if the foliations have the same singular sets, the same singular data, and every $p \in S$ has a stellar neighborhood. We call a surface together with a zebra structure a {\em zebra surface}.

\subsection{The action by homeomorphisms of the circle}
\label{sect:homeo action}

Our foliations are parameterized by the topological circle $\hat \R$. There is a natural action of the group $\Homeo_+(\hat \R)$ of all orientation-preserving homeomorphisms of $\hat \R$ on zebra surfaces defined as follows. Suppose $\{\sF_m\}$ defines a zebra structure on $S$. If $\varphi \in \Homeo_+(\hat \R)$, then for each $m \in \hat \R$ we can define $\sF'_m = \sF_{\varphi^{-1}(m)}$, and $\{\sF'_m\}_{m \in \hat \R}$ will define another zebra structure on $S$. Note the original structure and the new structure have the same singular data.

Let $\Homeo_-(\hat \R)$ denote the collection of all orientation-reversing homeomorphisms of $\hat \R$. Then $\Homeo_+(\hat \R) \cup \Homeo_-(\hat \R)$ forms the full homeomorphism group of $\hat \R$. For $\varphi \in \Homeo_-(\hat \R)$, we define
$$\varphi\big(S, \{\sF_m\}\big) = \big(\bar S, \{\sF_{\varphi^{-1}(m)}\}\big)$$
where $\bar S$ denotes $S$ with its opposite orientation.

\subsection{Leaves on zebra surfaces}
As in the introduction, a leaf of a zebra surface is a leaf of one of the foliations $\sF_m$. The {\em slope} of a leaf on $S$ is the $m$ for which the leaf belongs to $\sF_m$. The word {\em horizontal} means slope zero, and {\em vertical} means slope $\infty$. Terms like {\em saddle connection}, {\em separatrix} and {\em bi-infinite leaf} all make sense on $S$.

Observe:
\begin{proposition}[Transversality]
\label{transversality}
If leaves $\ell_1$ and $\ell_2$ have distinct slopes and intersect at a non-singular point, then they cross transversely in the sense that there is an open disk containing the intersection point such that there is only one intersection between $\ell_1$ and $\ell_2$ in this disk and the disk is cut in two by $\ell_1$ with points in $\ell_2$ in both halves.
\end{proposition}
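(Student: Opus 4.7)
The plan is to reduce the statement to the elementary fact that two distinct lines through the origin in $\R^2$ cross transversely, using the stellar chart at the non-singular intersection point. Let $p$ denote the intersection point; since $p$ is non-singular, $\alpha(p) = 0$, and we may fix a stellar homeomorphism $h : U \to \Pi_0 = \R^2$ with $h(p) = \0$. For each $i \in \{1, 2\}$, let $L_i \subset \R^2$ denote the line of slope $m_i$ through $\0$. By stellar property (1), both rays comprising $L_i \setminus \{\0\}$ pull back under $h^{-1}$ into leaves of $\sF_{m_i}$, and by property (2) combined with the fact that $p$ has exactly two prongs per foliation (since $\alpha(p)+2 = 2$), both pullbacks must lie in the unique leaf $\ell_i$ of $\sF_{m_i}$ through $p$. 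Hence $h^{-1}(L_i \cap h(U)) \subset \ell_i$.

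Since $m_1 \neq m_2$, the lines $L_1$ and $L_2$ are distinct lines through the origin in $\R^2$, and so cross transversely at $\0$ in the elementary geometric sense. For any sufficiently small open disk $D$ centered at $\0$ with $D \subset h(U)$, one has $L_1 \cap L_2 \cap D = \{\0\}$, the arc $L_1 \cap D$ separates $D$ into two open half-disks $D^+$ and $D^-$, and $(L_2 \cap D) \setminus \{\0\}$ contains a point in each. Setting $V := h^{-1}(D)$ and $\alpha_i := h^{-1}(L_i \cap D) \subset \ell_i$, the homeomorphism $h$ transfers this picture back to $S$: $\alpha_1 \cap \alpha_2 = \{p\}$, the arc $\alpha_1$ separates $V$ into the two open components $V^\pm := h^{-1}(D^\pm)$, and $\alpha_2 \setminus \{p\}$ meets each $V^\pm$.

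The main obstacle is that a priori $\ell_i \cap V$ could strictly contain $\alpha_i$: the leaf $\ell_i$ might return to $V$ through plaques not passing through $p$, potentially creating extra intersection points in $\ell_1 \cap \ell_2 \cap V$ and disrupting the ``cut in two'' conclusion. I would handle this by shrinking $D$ further so that $V$ lies inside a product foliation chart for each of $\sF_{m_1}$ and $\sF_{m_2}$ at $p$, within which the plaque through $p$ can be isolated. This shrinking is available precisely because $p$ is non-singular for each $\sF_{m_i}$, so each foliation has the local product structure of a trivial foliation near $p$, and the local piece of $\ell_i$ through $p$ embeds as a single arc into a small neighborhood of $p$ in $S$. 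After this shrinking, $\ell_i \cap V = \alpha_i$ for $i = 1, 2$, so $\ell_1 \cap \ell_2 \cap V = \{p\}$, the disk $V$ is cut in two by $\ell_1$, and both halves contain points of $\ell_2$, as required.
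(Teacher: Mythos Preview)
Your proof is correct and follows the same approach as the paper, which simply says ``the intersection point has a stellar neighborhood that gives the desired properties.'' You have spelled out in detail what the paper leaves implicit, including the legitimate worry that a leaf might return to the stellar neighborhood along a plaque not through $p$; your resolution via shrinking into a product chart for each foliation is the right way to handle this, and the paper's one-line proof tacitly relies on the same maneuver.
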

\begin{proof}
Since we are on a zebra surface, the intersection point has a stellar neighborhood that gives the desired properties.
\end{proof}

\subsection{Angles and Gauss-Bonnet}
\label{sect:gauss-bonnet}

Zebra surfaces have a natural notion of {\em angle}.
Let $\overline{pq}$ and $\overline{qr}$ be two segments of leaves on a zebra surface, where we allow any of these three points to be singular.
Let $U$ be a stellar neighborhood of $q$ and $h:U \to \Pi_{\alpha(q)}$ be the corresponding stellar homeomorphism. Then $\measuredangle pqr$ indicates the counterclockwise angle measured at the origin of $\Pi_{\alpha(q)}$ from $h(\overline{pq} \cap U)$
to $h(\overline{qr} \cap U)$. We normalize this measurement so
\begin{equation}
\label{eq:angle normalization}
0 \leq \measuredangle pqr < \big(\alpha(q)+2\big)\pi.
\end{equation}

Suppose $S'$ is a subsurface of a zebra surface. Let $q \in \partial S'$, and suppose that in a neighborhood of $q$, $\partial S'=\overline{rq} \cup \overline{qp}$ where $\overline{rq}$ and $\overline{qp}$ are segments of leaves and $S'$ is on the left as we move from $r$ to $q$ to $p$ along this boundary curve. Then the {\em interior angle} of $S'$ at $q$ is $\measuredangle pqr$.

\begin{theorem}[The Gauss-Bonnet Theorem for zebra surfaces]
\label{Gauss-Bonnet} 
Let $K$ be a compact subsurface of a zebra surface with a boundary consisting of a union of disjoint simple closed curves that are piecewise given by segments of leaves (with finitely many pieces). Let $\alpha$ be the singular data function on the surface containing $K$.
For a boundary point $q$, let $\theta_q$ denote the interior angle at $q$. Then,
\begin{equation}
\label{eq:Gauss-Bonnet}
\sum_{q \in \partial K} \left(\pi - \theta_q\right) - \sum_{p \in \Sigma \cap K^\circ} \pi \alpha(p)= 2 \pi \chi(K).
\end{equation}
\end{theorem}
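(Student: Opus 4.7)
My plan is to reduce the theorem to a combinatorial double-count of interior angles over a triangulation of $K$ by leaf segments. The proof rests on two ingredients: (a) any triangle whose three sides are leaf segments and whose closure lies in a single stellar neighborhood has interior angle sum $\pi$, and (b) a suitable such triangulation of $K$ exists.

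For (a), if $T \subset U$ where $U$ is a stellar neighborhood and $h \colon U \to \Pi_{n}$ is the associated stellar homeomorphism, then by the defining property of the stellar structure $h$ carries each side of $T$ to a Euclidean line segment of the appropriate slope. Hence $h(T)$ is an honest Euclidean triangle sitting inside a sector of $\Pi_{n}$, and the angles $\measuredangle$ at its three vertices (which are defined precisely via such a stellar homeomorphism) sum to $\pi$ by Euclidean geometry.

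For the main argument, I would first construct a triangulation $\tau$ of $K$ whose every edge is a segment of some leaf, whose every face is contained in some stellar neighborhood, and whose vertex set contains $\Sigma \cap K$ together with every corner of $\partial K$. By compactness of $K$, the cover of $K$ by stellar neighborhoods has a positive Lebesgue number with respect to any metrization, which allows an arbitrary topological triangulation refining $\partial K$ to be refined until every triangle lies in a single stellar neighborhood. Within each such neighborhood the three topological edges of a face can be replaced by leaf segments with the same endpoints, chosen of generic slopes so that they remain embedded and so that choices on adjacent faces remain pairwise non-crossing; here one appeals to \Cref{transversality} and to the local straightness of leaves in $\Pi_n$. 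Carrying out this replacement coherently across the whole triangulation is the main technical obstacle.

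Once $\tau$ is in hand, let $V_\circ, V_\partial$ denote the interior and boundary vertex counts, $E_\circ, E_\partial$ the interior and boundary edge counts, and $F$ the face count. The identities $V-E+F = \chi(K)$, $3F = 2E_\circ + E_\partial$, and $E_\partial = V_\partial$ (since $\partial K$ is a disjoint union of cycles in $\tau$) combine to give
\[
2\pi\chi(K) \;=\; 2\pi\, V_\circ \;+\; \pi\, V_\partial \;-\; \pi F .
\]
On the other hand, by ingredient (a) the sum of all interior angles of all triangles equals $\pi F$, while summing the same quantity by vertex gives
\[
\pi F \;=\; 2\pi\, V_\circ \;+\; \pi \sum_{p \in \Sigma \cap K^\circ} \alpha(p) \;+\; \sum_{q \in V_\partial} \theta_q ,
\]
because the angles around an interior vertex sum to the cone angle $\bigl(\alpha(p)+2\bigr)\pi$ and the angles at a boundary vertex sum to the interior angle $\theta_q$. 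Equating the two expressions for $\pi F$ and rearranging, together with the observation that $\theta_q = \pi$ at the non-corner boundary vertices introduced by $\tau$ (so that only the original corners of $\partial K$ contribute to $\sum(\pi-\theta_q)$), yields the desired formula.
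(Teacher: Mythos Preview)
Your ingredient (a) rests on a misreading of what a stellar neighborhood does. The stellar homeomorphism $h:U\to\Pi_n$ attached to a point $p$ only straightens leaves \emph{through $p$}: condition (1) in \Cref{sect:definition of zebra} says that rays in $\Pi_n$ pull back to pieces of leaves, and condition (2) controls prongs at $p$. Nothing in the definition forces a leaf of $\sF_m$ that misses $p$ to map to a Euclidean segment under $h$. So if $T\subset U$ has vertices $a,b,c$ distinct from $p$, the sides of $T$ need not become straight in $\Pi_n$, and $h(T)$ need not be a Euclidean triangle. Moreover the angle $\measuredangle bac$ is defined via a stellar chart centered at $a$, not the one centered at $p$, and there is no compatibility relating these charts beyond the foliation pseudogroup. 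In fact the paper proves that leaf triangles have angle sum $\pi$ only as a \emph{consequence} of Gauss--Bonnet (\Cref{triangle1}), so this cannot serve as an input. Zebra structures are genuinely weaker than dilation structures (see \Cref{sect: not dilation}); there is in general no local chart straightening every $\sF_m$ simultaneously.

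Ingredient (b) inherits the same difficulty: once you cannot straighten leaves in a chart, the step ``replace each topological edge by a leaf segment with the same endpoints'' is not available. Given two points in a stellar neighborhood, neither of which is the center, you have no guarantee that any $\sF_m$ connects them. The convexity results that do establish such connecting arcs (\Cref{thm:polygonal convexity}, \Cref{triangle2}) all sit downstream of Gauss--Bonnet.

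The paper's proof avoids triangulating altogether: it doubles $K$ across $\partial K$, picks one generic slope $m$, and applies the Euler--Poincar\'e formula (\Cref{Euler-Poincare}) to the resulting singular foliation on the closed double. The only local input needed is a count of prongs of $\sF_m$ at each boundary vertex, and relating that prong count to the interior angle $\theta_q$ is handled by gluing the stellar sectors at the boundary vertices into a single $\Pi_k$.
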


Note that in \eqref{eq:Gauss-Bonnet}, there are only finitely many points in each sum for which the contribution to the sum is non-zero.

\begin{proof}
\compat{This proof was rephrased due to comments by Barak, and further improved after Ferran's comments.}
Consider $K$ as equipped with a foliation of slope $m$ which does not coincide with the slope of any of the finitely many leaves in the boundary of $K$.
Double $K$ across its boundary to obtain a closed surface $X$ to which we can apply \hyperref[Euler-Poincare]{the Euler–Poincaré Formula}. The doubled surface satisfies $\chi(X)=2\chi(K)$. The surface $X$ inherits a foliation from the two copies of $K$, where we allow our leaves to pass between the copies of $K$ through the boundary.
We will see that this foliation of slope $m$ of $K$ lifts to a generalized singular foliation of $X$. We allow our leaves to pass between copies of $K$ through the boundary, so this will be a foliation of the complement of the singular points in the interior of $K$ and endpoints of boundary edges. Let $\tilde \alpha$ denote the singularity data on $X$ for the lifted foliation. (Checking that $\tilde \alpha$ is well defined will prove that the lifted foliation to $X$ is a generalized singular foliation.) Each singular point $p \in K^\circ$ has two lifts $\tilde p_1, \tilde p_2 \in X$, and we have $\tilde \alpha(\tilde p_1)=\tilde \alpha(\tilde p_2)=\alpha(p)$. For $q \in \partial K$ an endpoint of a boundary edge, we have only one lift $\tilde q$ and by considering a stellar neighborhood of $q$ we see that $\tilde \alpha(\tilde q)=2 v_q-2$ where $v_q$ is the number of prongs in $K$ terminating at $q$. Then by \hyperref[Euler-Poincare]{the Euler–Poincaré Formula}, we have
$$\sum_{q \in \partial K} (2-2v_q) - \sum_{p \in \Sigma \cap K^\circ} 2 \alpha(p) = 2 \chi(X) = 4 \chi(K).$$
\commb{Is $\tilde{\alpha}(q)$ equal to zero for points $q$ in the boundary of $K$ which are in the interior of boundary edges? What is the stellar neighborhood for such a point $q$?}
\compat{Yes. But there is no stellar foliation. We are just considering the foliation of slope $m$ on $K$ doubled. This is a singular foliation, and those have singular data functions. The space $X$ has no zebra structure.}
Dividing by $2$ and multiplying by $\pi$ yields:
\begin{equation}
\label{eq:GB1}
\sum_{q \in \partial K} \pi (1-v_q) - \sum_{p \in \Sigma \cap K^\circ} \pi \alpha(p) = 2 \pi \chi(K).
\end{equation}
Now consider a single boundary component $\gamma$ of $K$ whose vertices are $q_i$ for $i \in \Z/n\Z$ written in increasing cyclic order as we travel around $\gamma$
with the region $K$ on the left. Given any $i$, choose a stellar homeomorphism $h_i:U_i \to \Pi_{\alpha(q_i)}$. By possibly shrinking $U_i$, we can assume that $h(K \cap U_i)$
is a sector $\sigma_i \subset \Pi_{\alpha(q_i)}$.
Then the angle of this sector coincides with the interior angle $\theta_{q_i}$, and the starting ray of the sector has the same slope as $\overline{q_i q_{i+1}}$ and the ending ray has the same slope as $\overline{q_{i-1} q_{i}}$.
Consider the union of these sectors $\sigma_i$ with the starting ray of $\sigma_i$ glued to the ending ray of $\sigma_{i+1}$. Note that the glued rays have the same slopes, so this gluing of sectors produces a copy of $\Pi_k$ where $\sum \theta_{q_i} = (k+2)\pi$. It follows that the total number of prongs $\sum v_{q_i}=k+2$ and so we have
\begin{equation}
\label{eq:GB2}
\sum_{i=0}^{n-1} \theta_{q_i} = \sum_{i=0}^{n-1} \pi v_{q_i}.
\end{equation}
Substituting \eqref{eq:GB2} into \eqref{eq:GB1} (for each boundary component) yields \eqref{eq:Gauss-Bonnet}.
\end{proof}

It is useful to observe the following result for future Gauss-Bonnet calculations:
\begin{proposition}
\label{loop contribution}
Let $\gamma$ be a closed curve in a zebra surface that is piecewise given by segments of leaves. Let $\{\theta_i: i=1, \ldots, n\}$ denote the measures of angles at the transitions between the segments of leaves, measured uniformly on one side of $\gamma$. Then the sum
$\sum_{i=1}^n \theta_i$ is an integer multiple of $\pi$.
\commb{Did you use that $\gamma$ is simple here? Wouldn't this work for any closed curve?} \compat{You are right. I modified it. Maybe I'll point out that there are some issues with exactly how to measure these angles if the path doubles back on itself, but the choice in this case has no effect on the conclusions, so I'm just ignoring the issue.}
\end{proposition}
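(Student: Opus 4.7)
The plan is to reduce the statement to a telescoping argument modulo $\pi$. Let $m_1,\ldots,m_n \in \hat\R$ be the slopes of the consecutive leaf-segments of $\gamma$, read in the order they appear as we traverse $\gamma$ once, and let $q_i \in S$ be the transition vertex between segment $i$ and segment $i+1$, with indices taken cyclically so that $m_{n+1}=m_1$. Introduce the homeomorphism $\phi:\hat\R \to \R/\pi\Z$ given by $m\mapsto \arctan m \pmod{\pi}$, extended by $\phi(\infty)=\pi/2+\pi\Z$. The proof hinges on a single local observation: the bending angle at any transition is determined modulo $\pi$ by the two adjacent slopes.

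\textbf{Step 1 (Local lemma).} Fix $q \in S$ with a stellar homeomorphism $h:U\to\Pi_{\alpha(q)}$ and consider two prongs at $q$ of slopes $m,m'$. The claim is that the counterclockwise angle from the first prong to the second, as measured in $\Pi_{\alpha(q)}$ via $h$, is congruent to $\phi(m')-\phi(m)\pmod{\pi}$. By definition, the rays of slope $m$ in $\Pi_n$ are the preimages, under the branched cover $\Pi_n\to\Pi_{-1}$, of the single ray of slope $m$ in $\Pi_{-1}^\ast$. There are therefore $n+2$ such rays at $\0\in\Pi_n$, and they are evenly spaced by angular increments of $\pi$ around the cone point, because the deck group of the cover acts as rotations by $\pi$. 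Consequently any two prongs of the same slope differ in angular direction by an integer multiple of $\pi$, and the counterclockwise angle from a prong of slope $m$ to a prong of slope $m'$ is well-defined modulo $\pi$ and equals $\phi(m')-\phi(m)$.

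\textbf{Step 2 (Telescoping).} Applying Step 1 at each transition $q_i$ yields
\[
\theta_i \equiv \phi(m_{i+1})-\phi(m_i)\pmod{\pi},
\]
independently of which side of $\gamma$ the angles are measured on, since switching sides changes each $\theta_i$ by a multiple of $\pi$ (the two sides at $q_i$ have complementary angles summing to the cone angle $(\alpha(q_i)+2)\pi$). Summing over $i=1,\ldots,n$ and using the cyclic convention gives a telescoping sum
\[
\sum_{i=1}^{n}\theta_i \;\equiv\; \sum_{i=1}^{n}\bigl(\phi(m_{i+1})-\phi(m_i)\bigr) \;\equiv\; 0 \pmod{\pi},
\]
which is the conclusion.

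The main obstacle is the local lemma of Step 1; once one knows angles depend on slopes only modulo $\pi$, the cyclic cancellation is automatic. This lemma in turn reduces to the structural fact that $\Pi_n$ is the $(n+2)$-fold branched cover of $\Pi_{-1}$ over $\0$ whose deck group acts by rotations of $\pi$ and permutes the rays of any fixed slope transitively, which is built into the construction of the standard singularities in \Cref{sect:standard singularities}.
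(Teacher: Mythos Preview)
Your proof is correct and follows essentially the same approach as the paper. The paper phrases the argument geometrically, tracking a tangent vector along $\gamma$ whose slope returns to its starting value after a full traversal, so that the total turning $\sum_i(\pi\pm\theta_i)$ is a multiple of $\pi$; your version makes this explicit by introducing $\phi=\arctan$ and writing each $\theta_i\equiv\phi(m_{i+1})-\phi(m_i)\pmod\pi$, so the sum telescopes. Both arguments rest on the same local fact that in $\Pi_n$ the rays of a fixed slope are spaced by multiples of $\pi$.
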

\begin{proof}
Let $q_i$ denote the endpoints of the segments of leaves making up $\gamma$, with the angle $\theta_i$ being measured at $q_i$. Orient $\gamma$, and consider a vector traveling around $\gamma$ in the direction of the orientation and pointed along the curve on the interiors of the arcs making up $\gamma$, and turning at each $q_i$ between the arcs. Then, the unit vector turns by a signed angle equivalent to $\pi \pm \theta_i$ modulo $\pi \Z$ at $q_i$,
where the sign only depends on the side of $\gamma$ where measurements were made. Since when the vector travels completely around $\gamma$, it ends pointing in a direction with the same slope as its start, we have that $\sum_i (\pi \pm \theta_i)$ is an integer multiple of $\pi$. Since the signs are uniform, the conclusion follows.
\end{proof}

\section{Basic observations, definitions, and results}
\label{sect:basic}

\subsection{The pole-resolved universal cover}
\label{sect:maximal cover}
Let $S$, $\Sigma$, and $\alpha$ be as in \Cref{sect:formal}, but add the hypothesis that $S$ is connected. Define $\Sigma_{-1} = \alpha^{-1}(-1)$ to be the set of poles. The points in $\Sigma_{-1}$ are our only source of ``positive curvature.'' The goal here is to define a variant of the universal cover but with no ``positive curvature.'' The cover described here was also considered in \cite[\S 3]{Frankel}. \compat{Added this reference Jan 3, 2023.}

\commb{Still having trouble, so when you defined it in the preceding page there was something wrong with your definition? There is the possibility that it is not well-defined.} \compat{It was a definition potentially like `Let $n$ be the largest integer...' I adjusted this section to make it more formal.}

Let $S^\plus=S \setminus \Sigma_{-1}$. Similar to language used in the introduction, call a loop in $S^\plus$ {\em polar} if it is freely homotopic in $S^\plus$ to a simple loop bounding a disk in $S$ containing exactly one point in $\Sigma_{-1}$. Choose a basepoint $p_0 \in S^\plus$ and define
\begin{equation}
\label{eq:N}
N = \langle \gamma^2:~\text{$\gamma$ is polar}\rangle \subset \pi_1(S^\plus,p_0).
\end{equation}
Then, $N$ is a normal subgroup of $\pi_1(S^\plus,p_0)$, because being polar is a conjugacy invariant.

\begin{proposition}
\label{maximal cover is a disk}
There is a largest branched cover $\tilde S$ of $S$ which is at most doubly branched over each point in $\Sigma_{-1}$ and is unbranched over other points.
\commb{I don't understand this. Isn't there just one way to form a branched cover where there is a double branch at all the poles and no other branch points? I am not very good with basic topology so maybe this is not clear.} \compat{No. For example, the square pillowcase has both the torus and the plane as a branched cover doubly branched over the singularities. ``Double branched'' refers to the fact that the map is two-to-one locally in a neighborhood of a preimage.}
The restriction of the covering $\pi:\tilde S \to S$ to $\pi^{-1}(S^+) \to S^+$ is the normal cover of $S^+$ associated to $N \subset \pi_1(S^\plus,p_0)$.
The surface $\tilde S$ is homeomorphic to a disk, and the covering $\pi$ is doubly branched over every pole, i.e., the covering map is locally $2-1$ near any preimage of a pole.
\end{proposition}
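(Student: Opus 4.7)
I will first build a regular covering of $S^+$ from the group $N$, then fill in one point over each pole, and finally verify the three claims. Construction: let $\pi^+\colon \tilde S^+ \to S^+$ denote the connected regular cover of $S^+$ corresponding to the normal subgroup $N\trianglelefteq\pi_1(S^+,p_0)$, with deck group $G=\pi_1(S^+)/N$. For each pole $p$, pick a small disk $D_p\subset S$ whose only singularity is $p$; the punctured disk $D_p\setminus\{p\}\subset S^+$ has $\pi_1$ generated (relative to a fixed path from $p_0$) by a polar loop $\gamma_p$. Since $\gamma_p^2\in N$, the image of $\pi_1(D_p\setminus\{p\})$ in $G$ is the cyclic group $\langle[\gamma_p]\rangle$ of order $1$ or $2$. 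Consequently, $(\pi^+)^{-1}(D_p\setminus\{p\})$ decomposes into components, each either a homeomorphic copy of $D_p\setminus\{p\}$ (order $1$) or a connected $2$-fold unramified cover modelled on $z\mapsto z^2$ (order $2$). Form $\tilde S$ by attaching one point $\tilde p$ to each such component, with the unique topology making each attachment a disk; the resulting projection $\pi\colon\tilde S\to S$ is a branched cover that, by construction, restricts to $\pi^+$ over $S^+$.

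\textbf{Double branching.} The substantive claim is that $\gamma_p\notin N$ for every pole $p$, forcing every component above to be doubly branched. The cleanest route is to identify $G$ with the orbifold fundamental group $\pi_1^{\mathrm{orb}}(S^{\mathrm{orb}})$, where $S^{\mathrm{orb}}$ is $S$ with an order-$2$ cone point inserted at each pole. By the classification of bad $2$-orbifolds, a connected $2$-orbifold is bad only if it is a sphere with one cone point or a sphere with two cone points of different orders; neither case arises for us, because all our cone points have the same order $2$ and $S=S^2$ with $|\Sigma_{-1}|=1$ is incompatible with the Euler--Poincar\'e formula $\sum\alpha(q)=-2\chi(S)=-4$ (the non-pole contributions, each at least $1$, cannot sum to $-3$). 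Hence $S^{\mathrm{orb}}$ is a good orbifold, and in a good orbifold the local isotropy $\langle[\gamma_p]\rangle$ embeds faithfully into $\pi_1^{\mathrm{orb}}$, giving $[\gamma_p]$ order exactly $2$.

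\textbf{Simple connectedness and the disk conclusion.} I compute $\pi_1(\tilde S)$ by Seifert--van Kampen, starting from $\pi_1(\tilde S^+)$ (which $\pi^+_*$ identifies with $N$) and attaching each point $\tilde p$, which kills a generator $\ell_{\tilde p}$ of the local punctured-disk loop. As $\tilde p$ ranges over all preimages of all poles and basing paths are varied, the pushforwards $\pi^+_*(\ell_{\tilde p})$ run through the full set $\{h\gamma_p^2 h^{-1}:h\in\pi_1(S^+),\,p\in\Sigma_{-1}\}\subset N$. Because $N$ is normal in $\pi_1(S^+)$, conjugating by $n\in N$ makes $nh$ range over all of $\pi_1(S^+)$; so the normal closure in $N$ of these loops coincides with the $\pi_1(S^+)$-normal closure of $\{\gamma_p^2\}$, which is $N$ itself by definition. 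Therefore $\pi_1(\tilde S)=N/N=1$, and $\tilde S$ is a simply connected orientable surface, i.e., $S^2$ or $\mathbb{R}^2$. To exclude $S^2$: if $S$ is non-compact then so is $\tilde S$; if $S$ is closed and $G$ is finite, the branched Riemann--Hurwitz formula applied to $\pi$ yields $\chi(\tilde S)=|G|\bigl(\chi(S)-|\Sigma_{-1}|/2\bigr)$, and a short case-check using Euler--Poincar\'e shows $\chi(S)-|\Sigma_{-1}|/2\leq 0$ for every closed zebra surface (on $S^2$ the formula forces $|\Sigma_{-1}|\geq 4$). Either way $\chi(\tilde S)\neq 2$, so $\tilde S\cong\mathbb{R}^2$.

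\textbf{Maximality and main obstacle.} Any connected branched cover $\tilde S'\to S$ at most doubly branched over $\Sigma_{-1}$ and unbranched elsewhere restricts to a covering of $S^+$ classified by a subgroup $H\leq\pi_1(S^+)$; the branching hypothesis forces $\gamma_p^2\in H$ for every $p$ (for regular covers this is immediate, for non-regular covers apply the argument to the normal core), hence $N\subseteq H$ and $\tilde S\to S$ factors through $\tilde S'\to S$. I expect the main obstacle to be the double-branching step: while $\gamma_p^2\in N$ is tautological, showing $\gamma_p\notin N$ is subtler because abelian invariants are insufficient (for instance, on the one-pole torus $\gamma_p=[a,b]$ is already trivial in $H_1(S^+)$). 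The orbifold good/bad dichotomy is the natural tool here; a self-contained combinatorial-group-theoretic proof would be desirable but seems to require more work.
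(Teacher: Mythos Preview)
Your proof is correct and takes a genuinely different route from the paper's. For the double-branching step you invoke the good/bad orbifold dichotomy; the paper instead exhibits, for each pole $p$, an explicit intermediate cover $\tilde T\to S$ doubly branched over $p$, built from a homomorphism $H_1(S^+;\Z/2\Z)\to\Z/2\Z$ sending $[\gamma_p]\mapsto 1$ (for positive genus this is immediate; for $S=S^2$ Gauss--Bonnet forces $|\Sigma_{-1}|\geq 4$ and one takes a torus double cover branched over four chosen poles including $p$). Since $\tilde S$ covers $\tilde T$ by maximality, $\tilde S$ inherits double branching at $p$. This is exactly the ``self-contained combinatorial-group-theoretic proof'' you were looking for: the homomorphism factors through $\pi_1(S^+)/N$ and sends $[\gamma_p]$ to $1$, directly witnessing $\gamma_p\notin N$. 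For the disk conclusion the paper likewise argues through these explicit intermediate covers (their universal covers are disks, and $\tilde S$ dominates them) rather than your Seifert--van~Kampen computation of $\pi_1(\tilde S)=N/N$; your argument is arguably more transparent here. One small wrinkle: the parenthetical about the normal core in your maximality paragraph is unnecessary and slightly misleading. For \emph{any} cover (regular or not), every polar $\gamma=\eta\cdot\ell\cdot\eta^{-1}$ has $\gamma^2\in H$, because the lift of $\ell^2$ closes up at \emph{whichever} preimage of the pole the lifted $\eta$ reaches, the local degree there being at most $2$; hence $N\subset H$ directly, just as in the paper.
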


We define $\tilde S$ to be the {\em pole-resolved universal cover (PRU cover)} of $S$. It follows from the result above that $\tilde S$ is a branched cover of the usual universal cover. If $\Sigma_{-1} = \emptyset$, then the PRU cover coincides with the universal cover.

\begin{proof}[Proof of \Cref{maximal cover is a disk}]
Let $\tilde S^\plus$ denote the cover of $S^\plus$ associated to the subgroup $N$, as in covering space theory.
We claim that $\tilde S^\plus$ is $\tilde S$ with the preimages of points in $\Sigma_{-1}$ removed.

Suppose $\tilde T$ is some other branched cover of $S$ which is only branched over points in $\Sigma_{-1}$ and at most doubly branched over these points. Puncturing $\tilde T$ at preimages of $\Sigma_{-1}$, we obtain a covering $\tilde T^\plus$ of $S^\plus$, which is associated to a subgroup $G \subset \pi_1(S^\plus,p_0)$. To show that $\tilde S^\plus$ covers $\tilde T^\plus$, it suffices to prove that $N \subset G$. To this end, let $\gamma:[0,1] \to S^\plus$ be a polar loop with $\gamma(0)=\gamma(1)=p_0$.  Then there is a homotopy $h_s:[0,1] \to S^\plus$ such that $h_0=\gamma$ and $h_1$ is a loop in $S$ bounding a disk enclosing exactly one point in $\Sigma_{-1}$. Let $\eta \in \pi_1(S^\plus,p_0)$ be the loop which follows $s \mapsto h_s(0)$ for $s \in [0,1]$, then follows $t \mapsto h_1(t)$ for $t \in [0,1]$ and returns to the basepoint following $s \mapsto h_s(0)$ parameterized backward from $s=1$ to $s=0$. It is not hard to show that $\eta$ is homotopic rel endpoints to $\gamma$ in $S^\plus$, thus determining the same element of $\pi_1(S^\plus,p_0)$. Since $\tilde T$ is at most doubly branched over points in $\Sigma_{-1}$, the square of the element of $\pi_1(S^\plus,p_0)$ associated to the common class of $\gamma$ and $\eta$ lies in $G$. Since $\gamma$ was an arbitrary polar curve $N \subset G$ as claimed, proving that $\tilde S^\plus$ is the largest such cover. This argument also shows that $\tilde S^\plus$ is locally at most a double cover in neighborhoods of $\Sigma_{-1}$, and we can fill in these points to form the branched cover $\tilde S$.

It remains to show that the cover $\tilde S$ is actually doubly branched over points in $\Sigma_{-1}$ and that $\tilde S$ is a topological disk. To see that $\tilde S \to S$ is doubly branched over some point $p \in \Sigma_{-1}$, it suffices to find a branched cover $\tilde T$ as above which is doubly branched over $p$. To see that $\tilde S$ is a disk, it suffices to find a $\tilde T$ whose universal cover is a disk. (If $\tilde T$ satisfies the double branching condition, the so does its universal cover.) If $S$ has positive genus, this is clear since its universal cover is a disk and given any $p \in \Sigma_{-1}$, we can find a linear map
$H_1(S^\plus; \Z/2\Z) \to \Z/2\Z$
sending a loop wrapping once around $p$ to $1$. Covering space theory associates this linear map to a double cover of $S^\plus$ which is doubly branched over $p$. If $S$ is a sphere, then by the \gaussbonnet there are at least four points in $\Sigma_{-1}$. Choosing four points including our favorite point $p$, we can puncture only at those four points and define a linear map as before such that the homology classes of the loops around each of these points are sent to one. The associated double cover is a torus which is doubly branched over these four points as desired. Since this torus has a disk as its universal cover, this also proves that $\tilde S$ is a disk in this case.
\end{proof}

Now suppose $\{\sF_m\}_{m \in \hat \R}$ is a family of foliations determining a zebra structure on $S$. Let $\tilde \sF_m$ be the singular foliation of $\tilde S$ whose leaves are lifts of leaves of $\sF_m$. We call $\{\tilde \sF_m\}_{m \in \hat \R}$ the {\em lifted family of foliations}. These foliations have common singularity data $\tilde \alpha$, where if $\tilde p \in \tilde S$ projects to $p \in S$, we have
$$\tilde \alpha(\tilde p)=\alpha(p) \text{ if $\alpha(p) \geq 0$} \quad \text{and} \quad
\tilde \alpha(\tilde p)=0 \text{ if $\alpha(p) = -1$.}
$$

\subsection{Zebra planes}

A {\em zebra plane} is a zebra structure on the open topological disk such that the singular data function $\alpha$ is non-negative. From the discussion above,
the PRU cover of a zebra surface is always a zebra plane.

\subsection{Polygons}
\label{sect:polygons}

A {\em polygon} $p_0 p_1 \ldots p_{n-1}$ in a zebra plane is a topological disk bounded by a simple closed curve of the form $\overline{p_0 p_1} \cup \overline{p_1 p_2} \cup \ldots \cup \overline{p_{n-2} p_{n-1}}\cup \overline{p_{n-1} p_0}$,
where each edge $\overline{p_i p_{i+1}}$ with $i \in \Z / n\Z$ is a segment of a leaf from a directional foliation. The Jordan Curve Theorem guarantees that this curve bounds a topological disk, and we'll use the counterclockwise ordering when describing polygons so that the polygon is on the left as we move from $p_i$ to $p_{i+1}$ along $\overline{p_i p_{i+1}}$. We call the $p_i$ {\em vertices}. The interior angle of $P$ at $p_i$ is $\measuredangle p_{i+1} p_i p_{i-1}$. The {\em external angle} is $\measuredangle p_{i-1} p_i p_{i+1}$. The sum of the interior and exterior angles at $p_i$ is $\alpha(p_i)\pi+2\pi$, the total angle at $p_i$.

We'll call a vertex {\em straight} if the interior angle equals $\pi$. As we are typically interested in the internal geometry of a polygon, we will typically ignore straight vertices. So, a {\em $k$-gon} is a polygon with $k$ vertices that are not straight. A {\em triangle} is a $3$-gon, and we'll use other similarly obvious terminology coming from plane figures to describe objects in $\tilde S$.

Suppose $S$ is a zebra surface and $\tilde S$ is its PRU cover. Let $\pi:\tilde S \to S$ denote the covering map.
If $\tilde P$ is a polygon in $\tilde S$ such that the restriction $\pi|_{\tilde P}:\tilde P \to S$ is injective on the interior of $\tilde P$,
then we'll call the restriction $\pi|_{\tilde P}$ a polygon $P$ in $S$. These are maps rather than subsets of $S$, because it gives the right notion of the interior of $P$ (the image of the interior) and boundary (the further restriction to the boundary of $\tilde P$). These notions are confusing even when considering the square in the center of the usual square torus (in that the closed square is the whole torus, but you still want it to have boundary for instance).

The next proposition shows that interior angles and slopes of edges of a zebra triangle behave as they do in plane geometry.

\begin{proposition}
\label{triangle1}
Triangles contain no singularities in their interiors and the sum of the interior angles of a triangle in a zebra plane is always $\pi$.
Let $(m_0, m_1, m_2) \in \hat \R^3$ be a triple of slopes of edges of a triangle, listed in counterclockwise order as we travel around the boundary of the triangle. Then the triple of slopes is distinct and appear in decreasing cyclic order on $\hat \R$.
\end{proposition}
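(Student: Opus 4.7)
The plan is to combine \Cref{Gauss-Bonnet} applied to the triangle $T$ with a local stellar analysis at each vertex. Since $T$ is a topological disk with three non-straight vertices, $\chi(T)=1$, and \Cref{Gauss-Bonnet} specializes to
$$(\theta_0+\theta_1+\theta_2) + \pi\sum_{p\in\Sigma\cap T^\circ}\alpha(p) \;=\; \pi,$$
where $\theta_i$ is the interior angle at $p_i$. On a zebra plane $\alpha\geq 0$, and each $\theta_i$ is strictly positive: a vanishing interior angle at $p_i$ would place the two edges meeting at $p_i$ on a common prong of some foliation in the stellar chart, forcing $\partial T$ to fail to be a simple closed curve near $p_i$. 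Hence both terms on the left-hand side are non-negative and sum to $\pi$, which forces the singular term to vanish. It follows that $\Sigma\cap T^\circ=\emptyset$, that $\theta_0+\theta_1+\theta_2=\pi$, and that each $\theta_i\in(0,\pi)$.

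If two edges meeting at some $p_i$ shared a common slope $m$, then under the stellar homeomorphism they would correspond to two distinct prongs of $\sF_m$ at $p_i$. In $\Pi_{\alpha(p_i)}$ the CCW stellar angle between any two prongs of a fixed foliation is a positive integer multiple of $\pi$. The non-straight hypothesis rules out $\theta_i=\pi$, and the bound $\theta_i<\pi$ from the previous paragraph rules out $\theta_i\geq 2\pi$, a contradiction. Since every two edges of a triangle share a vertex, the three slopes are pairwise distinct.

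For the cyclic-order statement I would track how the slope of the boundary edge changes at each vertex. Let $h\colon U\to\Pi_{\alpha(p_i)}$ be a stellar homeomorphism at $p_i$. In $\Pi_n$ the rays of slope $m$ occur precisely at stellar angles $\psi$ satisfying $\tan\psi=m$, and the projection $\psi\mapsto \psi\bmod\pi$ carries CCW rotation on stellar angles to the CCW cyclic order on $\hat\R$ (as read off from the $\R^2$ model). The outgoing edge at $p_i$ is carried to a ray of slope $m_i$ at some stellar angle $\psi$, and the incoming-reversed edge is a ray of slope $m_{i-1}$ at stellar angle $\psi+\theta_i$; reducing modulo $\pi$ yields $m_{i-1}\equiv m_i+\theta_i \pmod \pi$. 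In other words, as the CCW traversal of $\partial T$ passes through $p_i$, the slope rotates \emph{clockwise} on $\hat\R$ by $\theta_i\in(0,\pi)$.

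Summing the three clockwise rotations gives a total clockwise rotation of $\theta_0+\theta_1+\theta_2=\pi$, exactly one full traversal of the slope circle. Since each individual step rotates by a strictly positive amount strictly less than a full traversal, the three slopes partition $\hat\R$ into three non-degenerate arcs and are visited in clockwise cyclic order; because the cyclic order on $\hat\R$ extends the increasing order on $\R$, clockwise is decreasing, and so $(m_0,m_1,m_2)$ appears in decreasing cyclic order. The main bookkeeping point is the factor-of-two discrepancy between stellar angles on a cone of total angle $(\alpha(p_i)+2)\pi$ and the slope circle $\hat\R$ of circumference $\pi$, which is precisely what converts an angle sum of $\pi$ into one complete slope-circle revolution.
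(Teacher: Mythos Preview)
Your argument is essentially correct and follows the same route as the paper: Gauss--Bonnet to get the angle sum and rule out interior singularities, then a local slope/angle analysis for the cyclic order. One small logical slip: from ``both terms are non-negative and sum to $\pi$'' you cannot conclude the singular term vanishes without also using that $\sum_{p\in\Sigma\cap T^\circ}\alpha(p)$ is a non-negative \emph{integer} (equivalently, that $\alpha(p)\ge 1$ at every singularity of a zebra plane); with that observation the inference $\pi\sum\alpha(p)<\pi\Rightarrow\sum\alpha(p)=0$ is immediate, and this is exactly what the paper uses.

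For the cyclic-order statement, the paper is terser: having $\theta_0+\theta_1+\theta_2=\pi$, it simply observes that the edge slopes must match those of some Euclidean triangle (same interior angles force the same slope pattern via the stellar charts), and Euclidean triangles traversed counterclockwise have edge slopes in decreasing cyclic order. Your explicit computation---that at each vertex the slope rotates clockwise on $\hat\R$ by exactly $\theta_i$, totaling one full revolution---is precisely the content behind that sentence, just unpacked. So the approaches coincide; yours is more self-contained, the paper's more succinct.
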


\begin{proof}
Let $\theta_i$ for $i=0, \ldots, 2$ be the interior angles of a triangle $T$. Since the curve is simple, we have $\theta_i>0$ for all $i$. By the \gaussbonnet, we have
$$\theta_0+\theta_1+\theta_2=\pi- \sum_{p \in T^\plus} \pi \alpha(p) > 0,$$
where the sum is taken over all interior singularities of $T$. Since $\alpha(p) \geq 1$ at all singularities in a zebra plane, there must be no singularities in $T^\plus$ and thus $\theta_0+\theta_1+\theta_2=\pi$ as desired. It follows the slopes of the sides of a triangle must be the same as the slopes of the sides of a Euclidean triangle. Therefore, the slopes appear in decreasing cyclic order.
\end{proof}

\begin{proposition}
\label{quadrilateral}
The sum of the interior angles of a quadrilateral $Q$ in a zebra plane is either $\pi$ or $2 \pi$. If the sum is $2 \pi$, then $Q$ contains no singularities in its interior. If the sum is $\pi$, then $Q$ contains a singularity $q$ in its interior with $\alpha(q)=1$.
\end{proposition}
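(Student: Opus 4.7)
The plan is to adapt the proof of \Cref{triangle1} almost verbatim, using the \gaussbonnet together with the positivity of interior angles and the fact that in a zebra plane every singularity has $\alpha \geq 1$.

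First, let $\theta_0, \theta_1, \theta_2, \theta_3$ be the interior angles at the four non-straight vertices of $Q$. Applying the \gaussbonnet with $\chi(Q) = 1$ gives
\begin{equation*}
\sum_{i=0}^{3}(\pi - \theta_i) - \sum_{p \in \Sigma \cap Q^\circ} \pi\, \alpha(p) = 2\pi,
\end{equation*}
which rearranges to
\begin{equation*}
\sum_{i=0}^{3}\theta_i = 2\pi - \pi \sum_{p \in \Sigma \cap Q^\circ} \alpha(p).
\end{equation*}
Since the boundary of $Q$ is a simple closed curve bounding a disk, the same argument as in \Cref{triangle1} shows that each $\theta_i > 0$, so the left-hand side is strictly positive. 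Because $Q$ sits inside a zebra plane, $\alpha$ takes only non-negative values and $\alpha(p) \geq 1$ at every singularity $p$. Thus the sum $k := \sum_{p \in \Sigma \cap Q^\circ} \alpha(p)$ satisfies $k \geq 0$, and positivity of $\sum \theta_i$ forces $k \leq 1$.

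If $k = 0$, then there are no singularities in $Q^\circ$ and $\sum \theta_i = 2\pi$. If $k = 1$, then the inequality $\alpha(p) \geq 1$ at singularities forces there to be exactly one singularity $q \in Q^\circ$ with $\alpha(q) = 1$, and $\sum \theta_i = \pi$. These two alternatives are exactly those in the statement, and there is no real obstacle: the whole argument is a direct consequence of \Cref{Gauss-Bonnet} together with the integrality and sign constraints imposed by being in a zebra plane.
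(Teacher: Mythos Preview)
Your proof is correct and follows essentially the same approach as the paper: apply the \gaussbonnet to $Q$, use positivity of the interior angles together with $\alpha(p)\geq 1$ at singularities of a zebra plane to force $\sum_{p\in Q^\circ}\alpha(p)\in\{0,1\}$, and read off the two cases. The paper's proof is just a terser version of exactly this argument.
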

\begin{proof}
From the \gaussbonnet, the interior angles satisfy
$$\theta_0+\theta_1+\theta_2+\theta_3=2\pi- \sum_{p \in T^\plus} \pi \alpha(p) > 0,$$
immediately giving the result.
\end{proof}

We also have the following more general result:

\begin{proposition}
\label{ngons}
Any closed $n$-gon $P$ in a zebra plane must have at least three interior angles whose measure is less than $\pi$.
\end{proposition}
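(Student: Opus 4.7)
The plan is to apply the \gaussbonnet to $P$, interpreted as a compact subsurface with piecewise-leaf boundary. Since $P$ is a topological disk, $\chi(P)=1$, so \eqref{eq:Gauss-Bonnet} rearranges to
\begin{equation*}
\sum_{q \in \partial P} (\pi - \theta_q) \;=\; 2\pi \;+\; \sum_{p \in \Sigma \cap P^\circ} \pi\,\alpha(p).
\end{equation*}
The sum on the boundary runs over all vertices of the bounding curve, but straight vertices (where $\theta_q=\pi$) contribute $0$, so it is equivalent to sum over the $n$ non-straight vertices of $P$. Since $P$ sits in a zebra plane, the singular data function satisfies $\alpha \geq 0$ everywhere, and $\alpha(p) \geq 1$ at any singularity, so the right-hand side is at least $2\pi$.

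Now I would partition the non-straight vertices into those with $\theta_q<\pi$ (the \emph{convex} ones, contributing a positive number in $(0,\pi)$ since $0<\theta_q<\pi$) and those with $\theta_q>\pi$ (contributing a negative number). Let $k$ be the number of convex non-straight vertices. Each such vertex contributes strictly less than $\pi$ to the left-hand side, and the remaining non-straight vertices contribute non-positively. Hence
\begin{equation*}
\sum_{q} (\pi - \theta_q) \;<\; k\pi.
\end{equation*}
Combining with the Gauss--Bonnet identity, $k\pi > 2\pi$, which forces $k \geq 3$.

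There is no real obstacle here beyond careful bookkeeping: one needs only to observe that $\pi-\theta_q<\pi$ strictly for any non-straight vertex (because $\theta_q>0$, which follows from $P$ being a topological disk bounded by a simple closed curve) and that the interior-singularity contribution has the correct sign thanks to the zebra-plane hypothesis $\alpha\geq 0$. Thus the bound relies on exactly the two features of zebra planes that fail in more general singular contexts (non-negativity of $\alpha$ and disk topology of $P$).
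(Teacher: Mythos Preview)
Your proof is correct and follows essentially the same approach as the paper: apply the \gaussbonnet to obtain $\sum_q (\pi-\theta_q) \geq 2\pi$, then use that each term is strictly less than $\pi$ (and the non-convex terms are nonpositive) to force at least three positive terms. The paper's version is more terse but the argument is identical.
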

\begin{proof}
Again by the \gaussbonnet,
$$\sum_{q \in \partial P} \left(\pi - \theta_q\right) = 2 \pi + \sum_{p \in K^\circ} \pi \alpha(p) \geq 2 \pi.$$
Since for each $q$, we have $\pi-\theta_q < \pi$, there must be at least three $q \in \partial P$ for which $\pi-\theta_q > 0$.
\end{proof}

\subsection{Arcs of trails}
Let $Z$ be a zebra plane and let $\{\tilde \sF_m\}$ denote the associated foliations.

Let $p \in Z$ be a singularity and $\overline{pq}$ and $\overline{pr}$ be segments of leaves. We say these arcs satisfy the {\em angle condition} at $p$ if
$$\measuredangle qpr \geq \pi \quad \text{and} \quad \measuredangle rpq \geq \pi.$$
\commb{Here I would end the formula and add another line "that is, the angle made by $\gamma$ at $\gamma(t)$ is at least pi on both sides."} \compat{I added this to the second bullet below. (I wasn't clear where you suggested adding it.)}

A {\em parameterized arc of a trail} on $Z$ is a parameterized curve $\gamma:I \to Z$, where $I \subset \R$ is a nondegenerate interval,
such that if $t$ is any point in the interior of $I$ then the following statements are satisfied:
\begin{itemize}
\item If $\gamma(t)$ is not singular, then in a neighborhood of $t$, $\gamma(t)$ moves injectively along a leaf.
\item If $\gamma(t)$ is singular, then the two arcs made at $\gamma(t)$ formed by increasing and decreasing $t$ satisfy the angle condition.
That is, the angle made by $\gamma$ at $\gamma(t)$ is at least $\pi$ on both sides.
\end{itemize}
Suppose $\gamma:I \to Z$ and $\eta:J \to Z$ are parameterized arcs of trails. We say that $\gamma$ is a {\em subarc} of $\eta$ if there is an orientation-preserving continuous injective map $\phi:I \to J$ such that $\gamma = \eta \circ \phi$. We say $\gamma$ is a {\em proper subarc} of $\eta$ if the map $\phi$ is not surjective.

The condition that two arcs of trails are each subarcs of the other (i.e., reparameterizations of one another) is an equivalence relation, and we'll call an equivalence class an {\em arc of a trail}. The notion of subarc induces a well-defined partial ordering on arcs of trails.

An {\em arc of a trail} on $S$ is the image of an arc of a trail on the PRU cover $\tilde S$ under the covering $\tilde S \to S$. We likewise use the cover to define the other notions above.

\begin{proposition}[No monogons]
\label{no monogons}
If $\tau:I \to Z$ is a parameterized arc of a trail in a zebra plane, then $\tau$ is injective.
\end{proposition}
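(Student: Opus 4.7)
The plan is to argue by contradiction, extracting a simple closed ``monogon'' loop on the trail and contradicting the angle bound of \Cref{ngons}.

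Suppose $\tau$ is not injective. First I would establish local injectivity of $\tau$ at every interior point: between singularities the trail traces a leaf injectively by definition, and at a singularity $\tau(u)$ with $u$ interior to $I$ the angle condition combined with the absence of poles in a zebra plane forces the two prongs of the trail at $\tau(u)$ to be distinct (coincident prongs would give angular separation $0$ on one side, violating the two-sided bound $\geq \pi$), so in a stellar neighborhood of $\tau(u)$ the two prongs meet only at $\tau(u)$. Uniform local injectivity on compact subintervals then allows me to extract a minimal self-intersection: starting from any $(s_0, t_0)$ with $\tau(s_0) = \tau(t_0)$, the infimum $d$ of $t - s$ over pairs in $[s_0, t_0]^2$ with $\tau(s) = \tau(t)$ satisfies $d > 0$, and a limit point of a minimizing sequence yields $(s, t)$ with $s < t$, $\tau(s) = \tau(t)$, and $\tau|_{(s,t)}$ injective.

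The loop $\tau|_{[s,t]}$ is then a simple closed curve in the topological disk $Z$, so by the Jordan--Schoenflies theorem it bounds a closed topological disk $D \subset Z$. Since $\tau$ is an arc of a trail, $\partial D$ is piecewise a union of segments of leaves, making $D$ a closed polygon whose vertices are the self-intersection point $q := \tau(s) = \tau(t)$ together with the singular points of $\tau|_{(s,t)}$. At each intermediate singular vertex the two-sided angle condition forces the interior angle of $D$ to be $\geq \pi$, so only $q$ can have interior angle strictly less than $\pi$. But \Cref{ngons} requires at least three such vertices for any closed polygon in a zebra plane, a contradiction.

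The hard part is the minimal-loop extraction, where the no-bouncing property of trails on zebra planes and the two-sided angle condition are used nontrivially to prevent an accumulation of self-intersections; once a simple loop is obtained, the remainder is an immediate application of the angle-sum bound, equivalent to applying the \gaussbonnet to $D$ directly.
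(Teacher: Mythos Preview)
Your proof is correct and follows essentially the same strategy as the paper: extract a simple closed subloop from the trail, apply the Jordan curve theorem to get a disk, and contradict \Cref{ngons} since only the one self-intersection vertex can have interior angle less than $\pi$. The only difference is cosmetic: you minimize $t-s$ over self-intersection pairs, while the paper takes the infimum of the set of ``first return times'' $t$ for which $\tau(t)$ equals some earlier $\tau(t')$; both devices yield an injective subarc bounding the monogon.
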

\begin{proof}
Suppose $\tau$ is not injective. Then we can assume without loss of generality that $I=[a,b]$ and $\tau(a)=\tau(b)$. Observe that because $I$ is closed and bounded and $\tau$ is continuous and locally injective, the set $J$ of all $t \in [a,b]$ for which there is a $t' \in [a,t)$  such that $\tau(t)=\tau(t')$ is closed. Let $x = \inf J$. Then there is an $x' \in [a,x)$ such that $\tau(x)=\tau(x')$. Restricting $\tau$ to $[x',x]$ yields a simple closed curve, which by the Jordan Curve Theorem bounds a disk $D$. Observe that $D$ must be a polygon, and since $\tau$ is an arc of a trail, all its interior angles are larger than $\pi$ except possibly at $\tau(x')=\tau(x)$. This violates \Cref{ngons}.
\end{proof}

\begin{proposition}[No bigons]
\label{no bigons}
Suppose $\tau_1$ and $\tau_2$ are arcs of trails in a zebra plane $Z$. Then $\tau_1 \cap \tau_2$ is the empty set, is a single point, or
is a common subarc (possibly with different induced orientations).
\end{proposition}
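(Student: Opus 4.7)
The plan is to reduce this to a no-bigon assertion proved by the same mechanism as \Cref{no monogons}. Concretely, the core claim is: if $p, q \in \tau_1 \cap \tau_2$ are distinct, then the subarc of $\tau_1$ from $p$ to $q$ (well-defined since $\tau_1$ is simple by \Cref{no monogons}) coincides as a set with the corresponding subarc of $\tau_2$. Granting this, suppose $\tau_1 \cap \tau_2$ is neither empty nor a single point. Parameterizing $\tau_i$ by an interval $I_i$, the preimage of $\tau_1 \cap \tau_2$ in $I_1$ is convex (any subarc between two of its points lies in $\tau_2$ by the claim, hence in the intersection) and thus an interval; the same holds in $I_2$. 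Therefore $\tau_1 \cap \tau_2$ is a subarc of both, i.e.\ a common subarc, which may carry opposite induced orientations from the two parameterizations.

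To establish the claim, fix distinct $p, q \in \tau_1 \cap \tau_2$ and let $\sigma_i \subset \tau_i$ be the simple subarc between them. Assume for contradiction that $\sigma_1 \neq \sigma_2$ as subsets of $Z$. Parameterize $\sigma_1 : [0,1] \to Z$ with $\sigma_1(0) = p$ and $\sigma_1(1) = q$, and set $I = \sigma_1^{-1}(\sigma_2)$. Then $I$ is a closed subset of $[0,1]$ containing $0$ and $1$ but not equal to $[0,1]$, since $\sigma_1 \subseteq \sigma_2$ would force $\sigma_1 = \sigma_2$ (two simple arcs sharing endpoints with one contained in the other). Choose a maximal open interval $(t_0, t_1) \subset [0,1] \setminus I$; by injectivity of $\sigma_1$ the points $a = \sigma_1(t_0)$ and $b = \sigma_1(t_1)$ are distinct, and both lie on $\sigma_2$. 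Writing $\sigma_1' = \sigma_1|_{[t_0, t_1]}$ and $\sigma_2'$ for the unique subarc of $\sigma_2$ between $a$ and $b$, the choice of $(t_0,t_1)$ gives $\sigma_1' \cap \sigma_2 = \{a,b\}$, hence $\sigma_1' \cap \sigma_2' = \{a, b\}$. Thus $\sigma_1' \cup \sigma_2'$ is a simple closed curve bounding a disk $D \subset Z$ by the Jordan Curve Theorem.

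The region $D$ is a polygon whose boundary is piecewise made of segments of leaves. Non-singular boundary points have interior angle $\pi$ and are therefore not vertices. At a singular point in the interior of $\sigma_1'$ or $\sigma_2'$, the interior angle of $D$ equals one of the two trail angles at that singularity; both are at least $\pi$ by the angle condition defining arcs of trails, so that interior angle is $\geq \pi$. The only vertices of $D$ whose interior angle can be less than $\pi$ are therefore $a$ and $b$, giving at most two such angles. This contradicts \Cref{ngons}, which requires at least three interior angles strictly less than $\pi$, completing the proof of the claim.

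The main delicate step is the topological extraction: producing $a$ and $b$ so that the arcs $\sigma_1'$ and $\sigma_2'$ meet only at their endpoints and so genuinely bound a disk. Once this bigon is in hand, the angle count is structurally the same as in \Cref{no monogons}, and the rest of the argument is bookkeeping with the parameterizations.
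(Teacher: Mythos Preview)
Your proof is correct and follows essentially the same approach as the paper's: both extract a bigon bounded by subarcs of the two trails and then invoke \Cref{ngons} to reach a contradiction, since such a bigon could have at most two interior angles below $\pi$. The only difference is packaging---you first isolate the ``subarcs between any two intersection points coincide'' claim and then deduce convexity of the preimage, whereas the paper argues the contrapositive directly by assuming $\tau_1^{-1}(\tau_2)$ is disconnected and immediately building the bigon from a complementary interval; the geometric content is identical.
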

\begin{proof}
Assume $\tau_1$ and $\tau_2$ intersect. Consider each $\tau_i$ to be parameterized by functions with the same name, $\tau_i:I_i \to Z$. The statement can be observed to be true as long as $\tau_1^{-1}(\tau_2)$ is connected. If $\tau_1^{-1}(\tau_2)$ is disconnected, we can let $J_1 \subset I_1 \setminus \tau_1^{-1}(\tau_2)$ be a bounded open interval such that both boundary points lie in $\tau_1^{-1}(\tau_2)$. Since $\tau_2$ is injective, there is also a unique interval $J_2 \subset I_2$ such that $\tau_2(\partial J_2)=\tau_1(\partial J_1)$. By construction $\tau_1(J_1) \cup \tau_2(J_2)$ forms a simple closed curve, which again by the Jordan Curve Theorem bounds a disk $D$. This time the only possible interior angles less than $\pi$ are the two points in $\tau_1(\partial J_1)$, again violating \Cref{ngons}.
\end{proof}

\begin{proposition}
\label{no returning trails}
Let $P$ be a polygon in a zebra plane all of whose exterior angles are at least $\pi$. Then there is no parameterized arc of a trail $\tau:[0,1] \to Z$ such that $\tau(0),\tau(1) \in \partial P$ and $\tau\big((0,1)\big) \cap P = \emptyset$.
\end{proposition}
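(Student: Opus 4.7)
The plan is to derive a contradiction by constructing a polygon whose interior angles violate the constraint of \Cref{ngons}. Suppose such a trail arc $\tau\colon[0,1]\to Z$ exists. By \Cref{no monogons} the map $\tau$ is injective, so $\tau(0)\neq\tau(1)$, and these two points split the simple closed curve $\partial P$ into two sub-arcs $\beta_+$ and $\beta_-$. Because $\tau((0,1))\cap P=\emptyset$, the arc $\tau$ meets $\partial P$ only at its endpoints, so the three arcs $\tau,\beta_+,\beta_-$ share only the points $\tau(0),\tau(1)$ and together form a theta graph in the topological disk $Z$. A theta graph in $Z$ cuts it into three complementary regions: one is $P^\circ$ (bounded by $\beta_+\cup\beta_-$), and of the remaining two, exactly one is bounded (while the other is the unbounded complementary region of the theta graph in $Z$). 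I would take $D$ to be this unique bounded non-$P^\circ$ region and $\beta\in\{\beta_+,\beta_-\}$ to be the sub-arc of $\partial P$ on its boundary; then $\tau\cup\beta$ is a simple closed curve bounding $D$, and $D$ is disjoint from $P^\circ$.

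Next I would verify that $D$ qualifies as a polygon in the sense of the paper: its boundary is a concatenation of leaf segments coming from $\tau$ and from $\beta$, and its vertices are $\tau(0)$, $\tau(1)$, the bending vertices of $\tau$ lying in $(0,1)$, and the vertices of $P$ lying in the interior of $\beta$. I then compute interior angles of $D$. At a bending vertex $\tau(t)$ with $t\in(0,1)$, the interior angle of $D$ is one of the two angles formed by $\tau$ at that bend, and by the trail angle condition both are at least $\pi$. At a vertex $q$ of $P$ in the interior of $\beta$, the region $D$ lies on the side of $\partial P$ opposite to $P^\circ$, so the interior angle of $D$ at $q$ equals the exterior angle of $P$ at $q$, which is at least $\pi$ by hypothesis. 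Consequently every vertex of $D$, with the possible exception of $\tau(0)$ and $\tau(1)$, has interior angle at least $\pi$.

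This yields at most two vertices of $D$ with interior angle strictly less than $\pi$, contradicting \Cref{ngons}, which requires any polygon in a zebra plane to have at least three such vertices. The main delicate point is the purely topological setup of the first paragraph: confirming that the theta graph formed by $\tau,\beta_+,\beta_-$ has a unique bounded complementary region disjoint from $P^\circ$, so that $D$ is well defined as a topological disk bounded by $\tau$ together with a single sub-arc of $\partial P$. Once this is established, the angle bookkeeping is a routine consequence of the definition of an arc of a trail and the hypothesis that all exterior angles of $P$ are at least $\pi$.
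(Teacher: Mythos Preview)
Your proposal is correct and follows essentially the same approach as the paper's proof: form the polygon $D$ (the paper calls it $Q$) bounded by $\tau$ together with one arc of $\partial P$, observe that every interior angle of $D$ except possibly those at $\tau(0)$ and $\tau(1)$ is at least $\pi$, and contradict \Cref{ngons}. The only difference is presentational: the paper asserts in one sentence that ``the union of $\tau$ and an arc of $P$ bound a polygon $Q$ whose interior is in the complement of $P$,'' whereas you unpack this via the theta-graph argument and an explicit appeal to \Cref{no monogons} to rule out $\tau(0)=\tau(1)$.
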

\begin{proof}
If this were the case, the union of $\tau$ and an arc of $P$ bound a polygon $Q$ whose interior is in the complement of $P$. Points in $\partial Q \cap \tau$ have interior angles at least $\pi$ since $\tau$ is an arc of a trail, and points on $\partial Q \cap P$ that are not endpoints of $\tau$ have interior angles for $Q$ which are the same as the exterior angles for $P$. Thus, $Q$ can have at most two interior angles less than $\pi$ (namely, the endpoints of $\tau$). Again this violates \Cref{ngons}.
\end{proof}

\subsection{Trapezoids}

We speak of two segments of leaves in a zebra plane $Z$ as being {\em parallel} if they are segments of leaves coming from the same foliation $\tilde \sF_m$.
A {\em trapezoid} in $Z$ is a $4$-gon with a pair of opposite edges that are parallel. A parallelogram is a $4$-gon such that both opposite pairs of edges are parallel.

\begin{proposition}
\label{trapezoid observation}
Let $T$ be a trapezoid in $Z$. Then the angles of $T$ add to $2 \pi$ and there are no singularities in the interior of $T$.
\end{proposition}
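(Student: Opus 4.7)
By \Cref{quadrilateral}, the interior-angle sum of $T$ is either $\pi$ or $2\pi$, with the former occurring precisely when $T$ contains a unique interior singularity $q$ with $\alpha(q)=1$. The plan is to assume such a $q$ exists and derive a contradiction using the foliation $\tilde{\sF}_m$ whose slope $m$ matches that of the two parallel sides of $T$.

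Because $\alpha(q)=1$, the singularity $q$ has three prongs of $\tilde{\sF}_m$ in its stellar neighborhood, spaced at angles of exactly $\pi$ (as is the case for the horizontal foliation on $\Pi_1$). The next step is to extend each prong to a maximal separatrix in $\overline{T}$. No separatrix can meet the interior of a parallel side, because the parallel sides are themselves leaves of $\tilde{\sF}_m$ and leaves of the same foliation are disjoint; and no separatrix can return to $q$, because the disk it would bound would be a monogon whose only boundary angle $\theta_q$ at $q$ is positive, while the \gaussbonnet forces $\pi - \theta_q = 2\pi$, i.e., $\theta_q = -\pi$. Hence each separatrix terminates either transversely at an interior point of a non-parallel side or at a vertex of $T$ (necessarily then a singularity). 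The three separatrices divide $T$ into three topological disks $D_1, D_2, D_3$, and since consecutive prongs at $q$ are exactly $\pi$ apart, $q$ is a straight vertex (interior angle $\pi$) of each $D_i$.

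The remaining step is combinatorial: I will show that some $D_i$ must be a bigon, yielding a \gaussbonnet contradiction $\theta_{y_i} + \theta_{y_{i+1}} \leq 0$. The non-straight corners of $D_i$ are the separatrix endpoints $y_i, y_{i+1}$ together with any vertex of $T$ lying strictly inside the arc of $\partial T$ from $y_i$ to $y_{i+1}$. If zero, two, or three of the separatrix endpoints are vertices of $T$, then pigeonhole forces at least one of the three arcs to contain no interior vertex of $T$. The delicate situation is when exactly one separatrix endpoint is a vertex and the other two are edge interiors, in which case the three non-endpoint vertices of $T$ could in principle sit one per arc. But this would force the two non-vertex endpoints $y_2, y_3$ onto the two edges of $T$ immediately counterclockwise past the chosen vertex, and the alternation of parallel and non-parallel edges around the boundary of a trapezoid then guarantees that one of these two edges is parallel, violating the parallel-side restriction on separatrix endpoints. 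The main obstacle will be this case enumeration, where the parallel/non-parallel alternation of a trapezoid is what forces every arrangement to degenerate into a bigon.
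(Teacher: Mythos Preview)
Your approach does eventually work, but it is far more involved than the paper's proof, and the combinatorial endgame as written has a gap. The paper's argument is three lines: with $pqrs$ the trapezoid and $\overline{pq}\parallel\overline{rs}$ of slope $m$, the interior angles at the two ends of each non-parallel side add to a positive multiple of $\pi$ (the angle at $q$ is a counterclockwise rotation from a prong of the slope of $\overline{qr}$ to a prong of slope $m$, while the angle at $r$ is the complementary rotation from slope $m$ back to the slope of $\overline{qr}$). Hence the four angles sum to at least $2\pi$, and \Cref{quadrilateral} forces the sum to be exactly $2\pi$ with no interior singularity. No separatrices, no bigon hunt.

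The gap in your argument is the ``zero vertex endpoints'' case, which you dismiss with ``pigeonhole'': with four vertices of $T$ distributed among three boundary arcs, counting alone does not exclude a $(2,1,1)$ split. What actually works is that each arc either stays on a single non-parallel side (containing no vertices of $T$) or crosses an entire parallel side; since there are only two parallel sides, at most two arcs can contain vertices, so the third is your bigon. In fact your whole case split is unnecessary. Under the contradiction hypothesis the four interior angles sum to $\pi$, so each is strictly less than $\pi$; every vertex of $T$ is incident to a slope-$m$ edge, so the interior sector there is bounded on one side by a slope-$m$ ray and has opening $<\pi$, hence contains no other slope-$m$ ray. Thus no slope-$m$ separatrix from $q$ can land at a vertex of $T$, and only the ``zero'' case ever arises.
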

\begin{proof}
Suppose our trapezoid is $pqrs$, with vertices ordered counterclockwise and with $\overline{pq}$ parallel to $\overline{rs}$. Then
$\measuredangle rqp + \measuredangle srq=\pi$. The other pair of angles add to $\pi$ as well, so the sum of all the angles is $2\pi$.
Then \Cref{quadrilateral} tells us that $T$ has no singular points in its interior.
\end{proof}

We say a trail has {\em constant slope} if there is an $m$ such that every segment of a leaf contained in the trail has slope $m$.
\compat{This definition and the proposition below are new. They handle the issue of showing that the trapezoid construction produces an embedded rather than immersed polygon.}

\begin{proposition}
\label{building a trapezoid}
Let $\overline{pq}, \overline{rs} \subset Z$ be arcs of trails of the same constant slope $m$.
Suppose that $\overline{qr}$ and $\overline{sp}$ are disjoint segments of leaves whose slopes are not $m$.
Then the curve
$\overline{pq} \cup \overline{qr} \cup \overline{rs} \cup \overline{sp}$ is a $4$-gon (trapezoid) whose vertices are $p$, $q$, $r$ and $s$. In particular, all interior angles at singularities in the interior of segments $\overline{pq}$ and $\overline{rs}$ are $\pi$, and the interior angles at $p$, $q$, $r$ and $s$ are each less than $\pi$.
\end{proposition}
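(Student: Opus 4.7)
The plan is to (i) show that $\gamma := \overline{pq} \cup \overline{qr} \cup \overline{rs} \cup \overline{sp}$ is a simple closed curve, (ii) apply the Jordan Curve Theorem to obtain an enclosed disk $T$, and (iii) combine the \gaussbonnet with the parallel-sides geometry to read off the angle data.

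For (i), each adjacent pair of arcs has different slopes, so \Cref{no bigons} rules out a common subarc and leaves only their shared endpoint as possible intersection. The pair $\overline{qr}, \overline{sp}$ is disjoint by hypothesis. The main obstacle is showing $\overline{pq} \cap \overline{rs} = \emptyset$. By \Cref{no bigons} this intersection is empty, a single point, or a common subarc. If a common subarc $\alpha$ exists, the two arcs coincide along $\alpha$; by local uniqueness of the slope-$m$ leaf at non-singular points, the coincidence propagates beyond $\alpha$ until it terminates at an endpoint of $\overline{pq}$ or $\overline{rs}$ or at a singular branch point. The resulting configurations, combined with the disjointness of $\overline{qr}$ and $\overline{sp}$, produce a simple sub-polygon (typically a bigon or triangle) whose interior-angle data violates \Cref{ngons}. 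A single-point intersection at a non-singular point reduces to the common-subarc case by local leaf uniqueness. A single-point intersection at a singular point, where the two trails enter and exit through disjoint pairs of slope-$m$ prongs, splits $Z$ locally into sectors; gluing one of these sectors to portions of $\overline{qr}$ or $\overline{sp}$ again yields a simple sub-polygon forbidden by \Cref{ngons}.

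After establishing simplicity, the Jordan Curve Theorem produces a disk $T$ bounded by $\gamma$. Its boundary is piecewise composed of leaf segments, with vertices at $p, q, r, s$ and at any singularities on the interiors of $\overline{pq}$ or $\overline{rs}$ (the hypothesis that $\overline{qr}, \overline{sp}$ are single leaf segments rules out singular vertices on them). The \gaussbonnet yields
$$\sum_{v \in \partial T} (\pi - \theta_v) \;=\; 2\pi + \sum_{w \in \Sigma \cap T^\circ} \pi\alpha(w),$$
where $\theta_v$ is the interior angle at $v$. The trail-angle condition forces $\theta_v \geq \pi$ at each singular vertex on $\overline{pq}$ or $\overline{rs}$, so $\pi - \theta_v \leq 0$, while each interior singularity contributes at least $\pi$ to the right-hand side.

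Finally, I would adapt the parallelism computation used in the proof of \Cref{trapezoid observation} to obtain the co-interior identities $\theta_q + \theta_r = \pi$ and $\theta_p + \theta_s = \pi$, giving $\sum_{\text{corners}}(\pi - \theta_v) = 2\pi$. Combined with the preceding Gauss-Bonnet inequality, this forces equality throughout: $T^\circ$ contains no singularities, and every singular vertex on the interior of $\overline{pq}$ or $\overline{rs}$ has interior angle exactly $\pi$ and is therefore straight. Each corner angle is strictly positive, since adjacent edges have distinct slopes and hence cannot be tangent, and strictly less than $\pi$ by the co-interior pairing. Thus $\gamma$ bounds the asserted 4-gon with vertex set $\{p, q, r, s\}$ and parallel opposite sides $\overline{pq}, \overline{rs}$, which is the trapezoid in the statement.
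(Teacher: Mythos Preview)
Your overall plan matches the paper's: prove simplicity via \Cref{no bigons} and \Cref{ngons}, then extract the angle data from the \gaussbonnet together with the parallel-slope constraint. There are, however, two genuine gaps.

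\textbf{The co-interior identities are circular.} You invoke $\theta_q+\theta_r=\pi$ and $\theta_p+\theta_s=\pi$ by ``adapting'' \Cref{trapezoid observation}, but that proposition concerns a figure already known to be a $4$-gon (so its angle sum is at most $2\pi$ by \Cref{quadrilateral}). Here the corners $p,q,r,s$ may be singular, and a priori the parallelism only gives $\theta_q+\theta_r\in\pi\Z_{\geq 1}$ and $\theta_p+\theta_s\in\pi\Z_{\geq 1}$; likewise each interior singular vertex on $\overline{pq}$ or $\overline{rs}$ has angle $k\pi$ with $k\geq 1$ (constant slope $m$ on both sides). The paper runs your Gauss--Bonnet equation with these integer multiples as unknowns: if the polygon has $n$ sides, the angle sum $(n-2)\pi$ equals $(a+b+\sum k_i)\pi$, and since each summand is at least $1$ and there are exactly $n-2$ of them, all are forced to equal $1$. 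Your argument as written assumes $a=b=1$, which is precisely the conclusion.

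\textbf{The disjointness of $\overline{pq}$ and $\overline{rs}$ is only sketched.} Your case analysis (``propagation beyond $\alpha$'', ``single-point singular intersection splits into sectors'') gestures at the right contradiction but does not actually exhibit the simple sub-polygon. The paper's device is clean and worth knowing: if $\overline{pq}\cap\overline{rs}\neq\emptyset$, let $x$ be the point of that intersection closest to $q$ along $\overline{pq}$. Then $\overline{xq}\cup\overline{qr}\cup\overline{rx}$ is automatically simple, and since $\overline{xq},\overline{rx}$ both have slope $m$, the only possible interior angles below $\pi$ are at $q$ and $r$, contradicting \Cref{ngons}. This single choice of $x$ replaces all of your cases at once.
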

\begin{proof}
The curve $\overline{pq} \cup \overline{qr} \cup \overline{rs} \cup \overline{sp}$ is a closed curve. If we can show it is simple, then by the Jordan curve theorem it bounds a disk, which is our polygon. Suppose it has $n$ sides.
As in Euclidean geometry, the \gaussbonnet guarantees that the sum of the interior angles is $(n-2)\pi$. Because $\overline{pq}$ and $\overline{rs}$ are parallel, we have
$$\measuredangle q + \measuredangle r=a \pi
\quad \text{and} \quad
\measuredangle s + \measuredangle p=b \pi \quad
\text{for some integers $a, b \geq 1$.}$$
Suppose $t_1, \ldots, t_{n-4}$ are the singularities in the interiors of $\overline{pq}$ and $\overline{rs}$. Then $\measuredangle t_i=k_i \pi$ for some integer $k_i \geq 1$, because both these arcs of trains have constant slope. Thus the sum of the interior angles is $(a+b+\sum_{i=1}^{n-4} k_i)\pi$. The smallest this sum can be is $(n-2)\pi$ and so we must have $a=b=k_1=\ldots=k_{n-4}=1$.

Now we will argue that $\overline{pq} \cup \overline{qr} \cup \overline{rs} \cup \overline{sp}$ is simple.
We know that the arcs $\overline{pq}$, $\overline{qr}$, $\overline{rs}$, and $\overline{sp}$ are simple by \Cref{no monogons}. Using the slope conditions, we see that \Cref{no bigons} guarantees that the intersection of adjacent edges (e.g., $\overline{pq} \cap \overline{qr}$) consists only of the common vertex. Therefore, the only way that the curve can fail to be simple is if opposite edges intersect in their interiors. By hypothesis $\overline{rq} \cap \overline{ps}=\emptyset$. We claim that $\overline{pq} \cap \overline{rs} = \emptyset$. Suppose to the contrary that $\overline{pq} \cap \overline{rs} \neq \emptyset$. Then by \Cref{no bigons}, they intersect in either a single point or a common compact subarc. Let $x \in \overline{pq}$ be the point closest to $q$ in $\overline{pq} \cap \overline{rs}$
(where ``closest'' is measured in a parameterization of $\overline{pq}$). Then the path $\gamma=\overline{xq} \cup \overline{qr} \cup \overline{rx}$ is simple. Observe that $\gamma$ has at most two points at which there are angles whose measure is less than $\pi$ (on either side of the curve), namely the points $q$ and $r$. (It could be that $x$ coincides with either $q$ or $r$, but otherwise because the arcs of $\gamma$ on both sides of $x$ are parallel, the angle at $x$ must be at least $\pi$.) This contradicts \Cref{ngons}, proving our claim and completing the proof.
\end{proof}

\begin{lemma}[Trapezoid construction lemma]
\label{trapezoid construction}
Let $\overline{pq} \subset Z$ be an arc of a trail, where the angle measured on the left side as we move from $p$ to $q$ at any singularities in the interior of $\overline{pq}$ is $\pi$. Let $U \subset Z$ be an open set containing $\overline{pq}$. Let $\overline{ps}$ and $\overline{qr}$
be additional segments such that $0<\measuredangle qps < \pi$ and
$0<\measuredangle rqp < \pi$. Then there exist $s' \in \overline{ps} \setminus \{p\}$ and $r' \in \overline{qr} \setminus \{q\}$ and a segment $\overline{s'r'}$ parallel to $\overline{pq}$ forming a trapezoid $pqr's'$ contained in $U$. Furthermore, we can construct the trapezoid in such a way so that leaves parallel to $\overline{pq}$ passing through the interior of the trapezoid pass through the interiors of $\overline{ps'}$ and $\overline{qr'}$.
\end{lemma}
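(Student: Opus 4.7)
The plan is to first show that $\overline{pq}$ has constant slope, then to foliate a one-sided collar of $\overline{pq}$ by parallel leaf-segments of that slope, and finally to close off a thin trapezoid using \Cref{building a trapezoid}.

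First I would verify that $\overline{pq}$ lies in a single foliation $\tilde\sF_m$. In the stellar model $\Pi_{\alpha(x)}$ at any singular point, two prongs are separated by an angle that is an integer multiple of $\pi$ if and only if they have the same slope; indeed, consecutive same-slope prongs are exactly $\pi$ apart, while prongs of distinct slopes are separated by angles that are never integer multiples of $\pi$. Hence the hypothesis that the left angle at every interior singularity of $\overline{pq}$ equals $\pi$ forces the incoming and outgoing leaves there to share a common slope $m$. So $\overline{pq}$ is built from segments of $\tilde \sF_m$ meeting only at singularities.

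Next I would construct a one-sided foliated neighborhood of $\overline{pq}$ on the trapezoid side (the left side of $\overline{pq}$, since the hypotheses $0<\measuredangle qps<\pi$ and $0<\measuredangle rqp<\pi$ place $\overline{ps}$ and $\overline{qr}$ to the left). For each $x \in \overline{pq}$ I would build a small open set $W_x \subset U$ together with a foliation of a one-sided collar at $x$ by arcs of $\tilde\sF_m$ running parallel to $\overline{pq}$: at a non-singular interior point this is a standard flowbox; at an interior singularity $x$, the left angle of $\pi$ means the stellar sector to the left of $x$ has angular width $\pi$ and is bounded by two consecutive slope-$m$ prongs, so leaves of $\tilde \sF_m$ in this sector form a strip-like foliation skirting past $x$; at $p$, the condition $0<\measuredangle qps<\pi$ guarantees that slope-$m$ leaves populating the subsector between the $\overline{pq}$-prong and $\overline{ps}$ each emanate from a point of $\overline{ps}$ arbitrarily close to $p$, and symmetrically at $q$. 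Compactness of $\overline{pq}$ supplies a finite subcover, and uniqueness of the leaf of $\tilde \sF_m$ through each non-singular point lets the local foliations be glued into a single foliated neighborhood $N \subset U$ whose leaves run from $\overline{ps}$ to $\overline{qr}$.

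Finally I would choose $s' \in \overline{ps}\setminus\{p\}$ sufficiently close to $p$ that the leaf of $N$ through $s'$ terminates on $\overline{qr}$ at a point $r'$ close to $q$; by perturbing $s'$ further if needed, arrange that this leaf $\overline{s'r'}$ contains no singularity in its interior. Since the angle conditions at $p$ and $q$ preclude $\overline{ps}$ and $\overline{qr}$ from having slope $m$, and since $\overline{ps'}$ and $\overline{qr'}$ are disjoint when $s', r'$ are close enough to $p$ and $q$, \Cref{building a trapezoid} (with $\overline{s'r'}$ as the parallel side) produces the desired trapezoid $pqr's'$ inside $N \subset U$. By construction every leaf of $\tilde\sF_m$ meeting its interior lies in $N$ and so crosses the interiors of $\overline{ps'}$ and $\overline{qr'}$. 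The main obstacle will be the gluing step at interior singularities of $\overline{pq}$: one must check that the strip foliation in the stellar neighborhood of such an $x$ matches continuously with the flowboxes on either side and that the collar does not pinch at $x$. This is exactly where the exact-$\pi$ angle condition enters, because any larger left angle could allow an additional slope-$m$ prong to intrude into the left sector and destroy the strip picture.
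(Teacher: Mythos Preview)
Your proposal is correct and follows essentially the same approach as the paper: cover $\overline{pq}$ by slope-$m$ foliation charts (half-plane sectors at interior singularities, enabled by the $\pi$ left-angle), pass to a finite subcover by compactness, propagate a leaf from $\overline{ps}$ across the charts to $\overline{qr}$, and conclude with \Cref{building a trapezoid}. The paper makes your ``gluing'' step explicit via monotone transition maps $\psi_i$ between the $y$-coordinates of consecutive charts, and it treats your constant-slope observation as implicit in its ``assume $\overline{pq}$ is horizontal.''
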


See \Cref{fig:trapezoid construction} for an illustration of the trapezoid construction.

\begin{figure}[htb]
\centering
\includegraphics[width=5in]{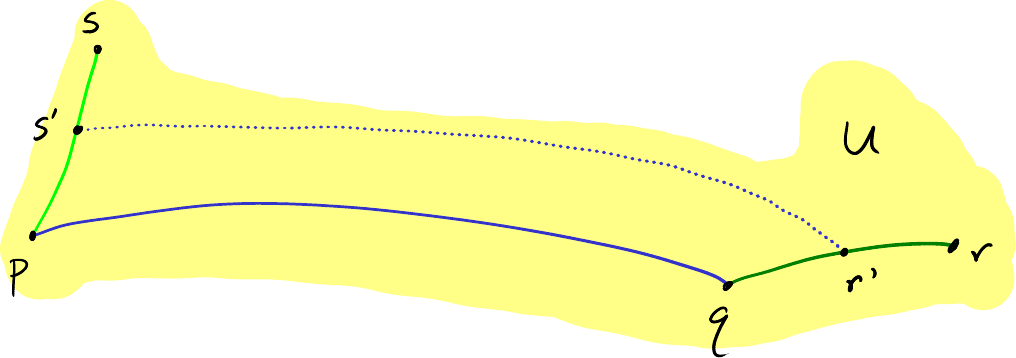}
\caption{Depiction of the trapezoid produced by \Cref{trapezoid construction}.}
\label{fig:trapezoid construction}
\end{figure}

We briefly discuss the idea of the proof before giving a detailed proof. By compactness, we can produce a finite covering of $\overline{pq}$ by foliation charts for the foliation parallel to $\overline{pq}$.
We can find stellar neighborhoods of $p$ and $q$ contained in the charts containing $p$ and $q$. The angle conditions at $p$ and $q$ and the fact that there are only finitely many charts can be used to guarantee that we can find a leaf parallel to $\overline{pq}$ that joins a point of $\overline{ps}$ in the stellar neighborhood of $p$ to a point of $\overline{qr}$ in the stellar neighborhood of $q$.
A technical point is that the charts might contain a singularity. \compat{Changed flow boxes to chart here following comments of Barak.}

\begin{proof}
Assume without loss of generality that $\overline{pq}$ is horizontal.
For each point $x \in \overline{pq}$ there is a foliation chart from a neighborhood of $x$ to $(-1,1) \times (-1,1)$ (or if $x$ is singular, a singular chart to $\Pi_n$ carrying $x$ to $\0$) for the horizontal foliation that intersects $\overline{pq}$ in an interval. By possibly shrinking the chart, we can assume that this neighborhood of $x$ is contained in $U$. For this proof, all our foliation charts lie in $U$. \compat{I added mention of $U$ in the previous two sentences. Thanks Barak.} We can normalize these charts so that the portion of $\overline{pq}$ in the domain maps to a subset of $(-1,1) \times \{0\}$ and so that the portion of the chart to the left of $\overline{pq}$ when moving from $p$ to $q$ is mapped into $(-1,1) \times (0,1)$. In case $x$ is singular, since the angle made on the left side of $\overline{pq}$ at $x$ is $\pi$, the portion of $U$ in the chart to the left of $\overline{pq}$ is mapped into a half-plane of $\Pi_n$. We can put coordinates on this closed half-plane of the form $\R \times [0,+\infty)$, and like in the nonsingular case we can restrict the chart and rescale it so that the portion of $\overline{pq}$ in the domain can be mapped to $(-1,1) \times \{0\}$ and the portion to the left of $\overline{pq}$ in the domain can be mapped to $(-1,1) \times (0,1)$ in these coordinates. Using compactness of $\overline{pq}$ we can produce a minimal finite subcovering of $\overline{pq}$. Restrict these charts to the points whose images have nonnegative $y$-coordinates. We can order this collection of restricted charts $\phi_i:B_i \to (-1,1) \times [0,1)$ for $i=0, \ldots, n$ such that $B_i \cap \overline{pq}$ gives a sequence of open subintervals of $\overline{pq}$ such that $p \in B_0$, $q \in B_n$ and $\overline{pq} \cap B_i \cap B_j \neq \emptyset$ if and only if $|i-j|\leq 1$. \compat{In this paragraph, I changed the box coordinates so that the $x$-coordinates are now $(-1,1)$, and spelled out the singular charts more carefully. (The previous version erroneously claimed that that the only singularities would be at $p$ or $q$.) Because of the singular charts, I now restrict the charts to half-planes.}

We claim that we can define intervals $J_i \subset [0,1)$ which are open as subsets of $[0,1)$
together with continuous strictly increasing functions $\psi_i:J_i \to [0,1)$ such that for all $y \in J_i$, the leaf $\phi_0^{-1}\big((-1,1) \times \{y\}\big)$ can be continued across $B_1$, $B_2$, \ldots, $B_i$ as
\begin{equation}
\bigcup_{k=0}^i \phi_k^{-1}\big((-1,1) \times \{\psi_k(y)\}\big).
\label{eq:continue leaves}
\end{equation}
We do this by induction.
Define $J_0=[0,1)$ and $\phi_0:J_0 \to [0,1)$ to be the identity map. Now assuming $J_i$ and $\phi_i$ are defined, we can let $U_i$ be the connected component of $B_i \cap B_{i+1}$ containing $\overline{pq} \cap B_i \cap B_{i+1}$ and there is a continuous strictly increasing function
$$h_i: \pi_y \circ \phi_i(U_i) \to \pi_y \circ \phi_{i+1}(U_i), \quad \text{where} \quad \pi_y(x,y)=y$$
coming from the transition between the charts such that
$\phi_i^{-1}\big((-1,1) \times \{y\}\big)$ continues across $U_{i}$ as $\phi_{i+1}^{-1}\big((-1,1) \times \{h_i(y)\}\big)$. Then by defining $J_{i+1}=J_i \cap \psi_i^{-1}(U_i)$ and $\psi_{i+1}=h_i \circ \psi_i$ we see that \eqref{eq:continue leaves} is satisfied, completing the induction and giving definitions for $J_n$ and $\psi_0, \ldots, \psi_n$ satisfying \eqref{eq:continue leaves}. This proves the claim. \compat{I made minimal changes to this paragraph. Adjusting $x$ coordinates and redefining the $J_i$ so that they are subsets of $[0,1)$ since we don't need negative $y$-coordinates.}

Let $N$ be a stellar neighborhood at $p$ and $h:N \to \Pi_{n'}$ be the stellar homeomorphism. Then the connected component of $N \cap \overline{pq}$ containing $p$ maps under $h$ to the closure of a horizontal ray $r_0 \subset \Pi_n$. Let $H \subset \Pi_n$ be the closed half-space consisting of $\0$ and all rays $r \subset \Pi_{n'}$ such that the counterclockwise angle from $r_0$ to $r$ lies in $[0, \pi]$. Observe that $p \in B_0$. By restricting to a smaller neighborhood $N$ and rescaling the stellar homeomorphism $h$, we can assume that $h^{-1}(H) \subset B_0$. Let $\ell$ be the connected component of $(\overline{ps} \setminus \{s\}) \cap N$ containing $p$. Then $h(\ell)$ is a ray contained in $H$. Since $h^{-1}(H) \subset B_0$, we see that $\ell \subset B_0$. Since $\ell$ is not horizontal and contains $p$, the horizontal leaves in $B_0$ meet $\ell$ transversely and so $\pi_y \circ \phi_0(\ell)=[0,b_0)$ for some $b_0>0$.
Similarly, there is a half-open arc $\ell' \subset \overline{qr} \cap B_n$ containing $q$ such that
interval $\pi_y \circ \phi_n(\ell')=[0,b_n)$ for some $b_n>0$. Since $J_n$ is an open subset of $[0,1)$ containing $0$ and $\psi_n$ is strictly increasing and preserves zero, we can find $y \in (0,b_0)$ so that $\psi_n(y) \in (0, b_n)$. Then we have a segment of a leaf that cuts across $B_0$, \ldots, $B_n$ as described in \eqref{eq:continue leaves}. Let $s'$ be the place this leaf crosses $\overline{ps}$ and $r'$ be the place this leaf crosses $\overline{qr}$. We conclude that $pqr's'$ is a trapezoid using \Cref{building a trapezoid}. Furthermore, if $0<y'<y$, then the leaf as constructed in \eqref{eq:continue leaves} (with $y'$ replacing $y$) cuts across this trapezoid passing through the interiors of edges $\overline{ps'}$ and $\overline{qr'}$ as desired. \compat{I expanded the first sentence. It is technical :(.}
\end{proof}

\begin{corollary}[Generalized rectangles]
\label{generalized rectangles}
Let $p \in Z$ and let $U \subset Z$ be an open set containing $p$. Then there is a $2\alpha(p)+4$-gon $P \subset U$ with alternating horizontal and vertical sides and all interior angles of $\frac{\pi}{2}$ such that $p \in P^\circ$. Furthermore, there is a bijection between the horizontal prongs at $p$ and the vertical edges of $P$ such that each horizontal prong has a realization as a horizontal path joining the corresponding edge to $p$. These $\alpha(p)+2$ paths cut $P$ into $\alpha(p)+2$ rectangles, each of which has $p$ in the interior of an edge formed by two of the prong realizations. Every horizontal leaf that enters the interior of $P$ either crosses through the interior of one of the rectangles joining opposite vertical sides of the rectangle, or follows one of the $\alpha(p)+2$ horizontal paths and terminates at $p$.
\end{corollary}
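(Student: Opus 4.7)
The plan is to build the polygon in the standard model $\Pi_n$ and pull it back. By the zebra structure hypothesis, $p$ has a stellar neighborhood $N_0$ with stellar homeomorphism $\phi:N_0 \to \Pi_n$, where $n=\alpha(p)$. Restricting to $V=N_0\cap U$ gives a foliation-preserving homeomorphism $\phi|_V$ onto an open neighborhood of $\0$ in $\Pi_n$. I will construct a polygon $P_\ast \subset \phi(V)$ directly in $\Pi_n$ and take $P=\phi^{-1}(P_\ast)\subset U$.

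In $\Pi_n$ the $n+2$ horizontal prongs $\tau_1,\ldots,\tau_{n+2}$ and $n+2$ vertical prongs alternate counterclockwise at $\0$, dividing a punctured neighborhood of $\0$ into $n+2$ half-plane regions $H_1,\ldots,H_{n+2}$, where $H_i$ is bounded by $\tau_i$ and $\tau_{i+1}$ (indices mod $n+2$) and is canonically isomorphic to a closed upper half-plane with $\0 \mapsto 0$, $\tau_i$ the positive real axis, and $\tau_{i+1}$ the negative real axis. Choose small positive parameters $\ell_1,\ldots,\ell_{n+2}$ and $c_1,\ldots,c_{n+2}$; let $P_i$ denote the point of $\tau_i$ at distance $\ell_i$ from $\0$, and define $R_i\subset H_i$ to be the closed rectangle $[-\ell_{i+1},\ell_i]\times[0,c_i]$ in $H_i$'s coordinates. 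The key consistency observation is that $R_i$ and $R_{i-1}$ both use length $\ell_i$ along the common horizontal prong $\tau_i$, so they agree on the segment $\overline{\0 P_i}$; meanwhile the right vertical side of $R_i$ and the left vertical side of $R_{i-1}$ are both perpendicular to $\tau_i$ at the non-singular point $P_i$ going into $H_i$ and $H_{i-1}$ respectively, so they form the two halves of a single vertical leaf crossing $\tau_i$ transversely at $P_i$. Chose the parameters small enough that every $R_i$ lies in $\phi(V)$, and set $P_\ast=\bigcup_i R_i$.

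From the construction I would read off all the required properties. The boundary of $P_\ast$ consists of $n+2$ horizontal top edges alternating with $n+2$ vertical edges (each passing through some $P_i$), so $P_\ast$ is a $(2\alpha(p)+4)$-gon; at each of the $2(n+2)$ rectangle corners the interior angle is $\pi/2$, while at each $P_i$ the two adjacent rectangle corners contribute $\pi/2+\pi/2=\pi$, making $P_i$ interior to a vertical edge rather than a vertex. The singularity $\0$ lies in the interior of $P_\ast$ because the $R_i$ together cover the full angular extent $(n+2)\pi$ at $\0$. The required bijection sends $\tau_i$ to the vertical edge through $P_i$, with horizontal realization the segment $\overline{\0 P_i}$; these $n+2$ segments are precisely the interior arcs separating $P_\ast$ into the rectangles $R_i$, and each $R_i$ has $\0$ in the interior of its bottom edge, which is the union of the two prong realizations $\overline{\0 P_i}$ and $\overline{\0 P_{i+1}}$ meeting at $\0$ with interior angle $\pi$ on the $R_i$ side. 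Finally, a horizontal leaf entering $P_\ast^\circ$ that is not one of these realizations lies entirely in some $H_i$ at positive height $y<c_i$ and appears inside $R_i$ as a horizontal segment joining its two vertical sides.

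The main obstacle is the combinatorial bookkeeping around $\0$: one must choose the lengths $\ell_i$ consistently across adjacent rectangles so they glue along shared segments of horizontal prongs, and identify the perpendicular emissions at the non-singular points $P_i$ with the two halves of a single vertical leaf. Once the local picture in $\Pi_n$ is set up correctly, pulling back by $\phi^{-1}$ yields the polygon $P$ with all the required properties.
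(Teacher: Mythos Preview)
Your approach has a genuine gap rooted in what a stellar homeomorphism actually guarantees. By definition (see \Cref{sect:definition of zebra}), the stellar homeomorphism $\phi:N_0\to\Pi_n$ only ensures that \emph{rays through $\0$} of slope $m$ pull back into leaves of $\sF_m$. It says nothing about arbitrary horizontal or vertical lines in $\Pi_n$ away from the origin. In your construction, the top edges of the Euclidean rectangles $R_i$ (at height $c_i>0$) and the vertical sides (the segments from $(\pm\ell,0)$ to $(\pm\ell,c_i)$) are not rays through $\0$, so their preimages under $\phi^{-1}$ are not in general segments of horizontal or vertical leaves in $Z$. Thus $\phi^{-1}(P_\ast)$ need not be a polygon with alternating horizontal and vertical sides in the zebra sense, and your final claim about horizontal leaves crossing the rectangles likewise does not transfer: horizontal leaves of $\sF_0$ in $Z$ need not correspond to Euclidean horizontals in $\Pi_n$.

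This is exactly why the paper works intrinsically in $Z$. It realizes the horizontal prongs as segments $\beta_i$, sets $e_i=\beta_i\cup\{p\}\cup\beta_{i+1}$, and then invokes the \hyperref[trapezoid construction]{Trapezoid Construction Lemma} to build each rectangle $R_i$ with $e_i$ as one side---this lemma produces rectangles whose sides are genuine leaf segments and whose parallel foliation behaves as required. The paper must then address a second issue you do not encounter (because you are working in the Euclidean model): the union of the $R_i$ is only an immersed polygon a priori, and a shrinking-and-compactness argument is needed to guarantee an embedded one. Your model-space idea would bypass that injectivity step cleanly, but only if $\phi$ were a foliation chart for every $\sF_m$ simultaneously, which the stellar condition does not provide.
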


We call the polygon $P$ a {\em generalized rectangle} because of the alternating horizontal and vertical sides and angles of $\frac{\pi}{2}$. An example is depicted in \Cref{fig:generalized rectangle}.

\begin{figure}[htb]
\centering
\includegraphics[height=1.5in]{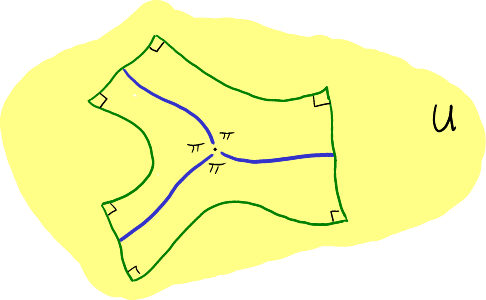}
\caption{A generalized rectangle surrounding a point $p$ where $\alpha(p)=1$.}
\label{fig:generalized rectangle}
\end{figure}

\begin{proof}
Construct horizontal segments of leaves with endpoints at $p$ realizing every prong. By possibly shortening them, we can assume they are pairwise disjoint. There are $n=\alpha(p)+2$ such arcs. Denote them $\{\beta_i~:~i \in \Z/n\Z\}$ and order them counterclockwise.
Then the counterclockwise angle from $\beta_i$ to $\beta_j$ at $p$ is $\pi$ if and only if $j=i+1$.
Let $e_i$ denote the path $\beta_i \cup \{p\} \cup \beta_{i+1}$. Using \Cref{trapezoid construction}, for each $i$ we can produce a rectangle $R_i$ one of whose edges is $e_i$ such that the counterclockwise angle from $\beta_i$ to $\beta_{i+1}$ at $p$ is interior to $R_i$.

Define $Q=(\bigsqcup R_i)/\sim$ where $\sim$ identifies the corresponding points in the common subarcs $\{p\} \cup \beta_i \subset R_{i-1} \cap R_i$. Observe that $Q$ is homeomorphic to a closed disk.
Let $\pi:Q \to Z$ be the natural map induced by the inclusions of $R_i$ into $Z$.
Observe that $\pi$ restricted to each $R_i$ is injective. By considering horizontal foliation charts at $p$ and at points of each $\beta_i$, we can see that $\pi$ is locally injective.
If we knew $\pi$ were globally injective, then $P=\pi(Q)$ will be a polygon satisfying the statements in the corollary, with the statement about the horizontal leaves following from \Cref{trapezoid construction}.

Now assume $\pi:Q \to Z$ is not injective. We will alter the construction to produce a smaller $Q' \subset Q$ with the same properties such that $\pi$ restricted to $Q'$ is injective. This will complete the proof.

We will actually construct a sequence of subsets $Q^n$ playing the role of $Q'$. For each $i$, construct a sequence of rectangles $R_i^n \subset \tilde R_i$ such that are nested ($\tilde R_i^{n+1} \subset \tilde R_i^n$ for all $n$) and satisfy $\bigcap_{n=0}^\infty R_i^n=e_i$. These $R^i_n$ can be produced from $R_i$ by cutting along a horizontal leaf through the interior.
We define $Q^n \subset Q$ to be the union over $i$ of the $R_i^n$.
If the restriction of $\pi$ to $Q^n$ is injective, then we can define $P=\pi(Q^n)$ to be our generalized rectangle, completing the proof.
Now suppose to the contrary that the restriction of $\pi$ to $Q^n$ is not injective for any $n$.
Then for each $n$, we can find distinct points $x_n,y_n \in Q^n$ such that $\pi(x_n)=\pi(y_n)$. By passing to a subsequence, we can assume that $\lim x_n=x$ and $\lim y_n=y$ both exist in $Q$. Then $\pi(x)=\pi(y)$ by continuity of $\pi$.
Observe that $\bigcap_n Q^n=\{p\} \cup \bigcup_i \beta_i$, and each $Q^n$ is closed, so we have $x,y \in \{p\} \cup \bigcup_i \beta_i$.
But $\pi$ restricted to $\{p\} \cup \bigcup_i \beta_i$ is injective because the $\beta_i$ were constructed to be pairwise disjoint realizations of prongs of $p$. Thus we must actually have $x=y$. But then the facts that $x_n \neq y_n$, $\pi(x_n)=\pi(y_n)$ and $\lim x_n=\lim y_n$ violates the local injectivity of $\pi$.
\end{proof}

\subsection{Properness of leaves}

Let $\ell$ be a separatrix. Then it has a parameterization of the form $\ell : [0,+\infty) \to Z$ where $\ell(0)$ is its singular endpoint. Bi-infinite leaves have a parameterizations of the form $\ell:\R \to Z$. Recall that a {\em proper map} between topological spaces is one for which preimages of compact subsets are compact.

\begin{proposition}
\label{leaves are proper maps}
A separatrix or bi-infinite leaf in a zebra plane that is parameterized as above is a proper map.
\end{proposition}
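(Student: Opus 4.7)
Suppose for contradiction that $\ell$ is not proper. Passing to a subsequence, and reversing the parametrization in the bi-infinite case if necessary, we obtain $t_n \to +\infty$ with $\ell(t_n) \to p \in Z$. Using the $\Homeo_+(\hat\R)$-action on zebra structures, we may assume $\ell$ has slope $0$. Then \Cref{generalized rectangles} supplies a generalized rectangle $P$ at $p$; passing to a further subsequence, all of the $\ell(t_n)$ lie in a single closed sub-rectangle $R$ of $P$. Use coordinates on $R$ in which $p = (0,0)$ sits on a horizontal boundary edge and the unique vertical prong of $p$ lying inside $R$ is $\{(0,y) : 0 < y \le b\}$.

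The key step is to show, using \Cref{no bigons}, that $\ell$ meets $R$ at height $0$ only. Suppose instead that $\ell$ had visits to $R$ at two distinct heights $y_1 < y_2$. Then the sub-arc $\alpha$ of $\ell$ joining $(0,y_1)$ to $(0,y_2)$, which must exit $R$ between the two visits, and the vertical segment $\nu = \{(0,y) : y_1 \le y \le y_2\}$, which stays in $R$, are two arcs of trails whose intersection consists of the two distinct transverse crossings $(0,y_1)$ and $(0,y_2)$ with no common subarc; this violates \Cref{no bigons}. Since $\ell(t_n) \to p$, the single visit height must equal the height of $p$, namely $0$, so $\ell$ intersects $R$ along the horizontal edge of $R$ through $p$.

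We distinguish two cases. If $p$ is non-singular, the horizontal edge through $p$ lies on a single leaf of $Z$, and this leaf is $\ell$; hence $p = \ell(t^\ast)$ for some finite $t^\ast$. The component $J$ of $\ell^{-1}(R)$ containing $t^\ast$ and the $t_n$ is unbounded above, and the $x$-coordinate composed with $\ell|_J$ is a continuous injection from the unbounded interval $J$ into a compact interval, hence strictly monotonic; its limit at $+\infty$ is a supremum or infimum not attained at any finite parameter, contradicting $x(\ell(t^\ast)) = 0 = \lim_n x(\ell(t_n))$. If $p$ is singular, the horizontal edge through $p$ consists of two prong realizations of $p$, so $\ell$ contains at least one prong emanating from $p$, and hence $p$ is an endpoint of $\ell$ in the sense of \Cref{sect:singular foliations}. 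This is incompatible with $\ell$ being bi-infinite; if $\ell$ is a separatrix with singular endpoint $p_0 \neq p$, then $\ell$ would have two singular endpoints and thus be a saddle connection, a contradiction. In the remaining case $p_0 = p$, the leaf $\ell$ contains both an initial prong of $p$ at parameter $0$ and a distinct terminal prong of $p$ approached as $t \to +\infty$, so the parametrization closes up into a simple loop at $p$ with a single corner whose bounding prongs subtend positive-integer multiples of $\pi$ on each side --- a parameterized closed arc of a trail, contradicting \Cref{no monogons}. All cases yield contradictions, so $\ell$ is proper.
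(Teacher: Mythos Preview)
Your argument is essentially correct and takes a genuinely different route from the paper's. The paper applies \Cref{no returning trails} directly to the full generalized rectangle $P$: the preimage $\ell^{-1}(P)$ must be a single interval, and the last clause of \Cref{generalized rectangles} says any horizontal leaf entering $P^\circ$ either crosses a sub-rectangle and exits (giving a bounded component) or follows a prong and terminates at $p$; the latter would give $\ell$ a singular endpoint reached as $t\to+\infty$, making it a saddle connection rather than a separatrix or bi-infinite leaf. So the component is bounded, contradicting $t_n\to+\infty$. Your approach instead uses \Cref{no bigons} against a vertical transversal to pin $\ell\cap R$ to a single height and then splits into cases on whether $p$ is singular. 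The paper's route is shorter because it avoids the case analysis; yours makes the obstruction more explicit.

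Two small repairs. First, your bigon step assumes $(0,y_1)\in\ell$; when $y_1=0$ and $p$ is singular this fails since $p\notin\ell$. The fix is to take any non-singular point $(x_0,0)\in\ell$ on the bottom edge and run the bigon argument with the vertical segment through $(x_0,0)$ instead (the horizontal at height $y_2$ also passes through $(x_0,y_2)$). Second, your final sub-case $p_0=p$ can be dispatched without \Cref{no monogons}: a leaf both of whose ends limit on a singularity is by definition a saddle connection, not a separatrix, so this case is already excluded by hypothesis. Your monogon argument is also valid, since the reparameterized loop $\tau:[0,1]\to Z$ with $\tau(0)=\tau(1)=p$ satisfies the arc-of-trail conditions at all interior points.
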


\begin{remark}
Singular foliations of the disk where $\alpha$ is allowed to take the value $-1$ can have separatrices and bi-infinite leaves that fail to be proper in the above sense even if there are only finitely many singularities \cite{Rosenberg83}. This phenomenon also occurs with periodically arranged singularities where $\alpha=-1$ \cite{Panov}. Avoiding this phenomenon is a reason for requiring $\alpha$ to be nonnegative in a zebra plane and for our definition of the PRU cover.
\commb{Have we ever discussed this reference? Is it related to $\pi$ cylinders? You may recall my request to put in a discussion about $\pi$ cylinders and things that can go wrong, if there is a connection we might mention it here.} \compat{My apologies: I think we haven't discussed this, though it would be good to. I added the reference to the Bibliography folder in dropbox. I don't think there is any connection to $\pi$ cylinders. I'm actually on the fence about whether we include it. I think it justifies the definition of zebra plane and the PRU cover- the universal cover might be hard to understand when there are poles. I was actually on the fence over whether this should be included or not.} \compat{Ferran said he supports including this.} \commf{Cite Panov also.} \compat{Added!}
\end{remark}

\begin{proof}[Proof of \Cref{leaves are proper maps}]
Let $\ell$ be as above, and assume without loss of generality that its slope is horizontal. If $\ell$ is not proper, there is a compact set $K \subset Z$ whose preimage is not compact. Since its preimage is necessarily closed, the preimage must not be bounded. So, we can find a sequence $t_n$ in the domain of $\ell$ such that each $\ell(t_n) \in K$ and $|t_n|\to +\infty$. Then by passing to a subsequence, we can assume that $\ell(t_n)$ converges to some point $p \in K$. Using \Cref{generalized rectangles}, we can produce a generalized rectangle $P$ containing $p$ in its interior. Therefore there are infinitely many $\ell(t_n) \in P$. Fix such an $n$. Since $\ell$ is horizontal and doesn't approach $p$ as $|t| \to +\infty$, \Cref{generalized rectangles} guarantees that the portion of the horizontal leaf through $\ell(t_n)$ must enter, cut across one of the rectangles making up $P$ and then exit. In particular the connected component of $\ell^{-1}(P)$ containing $t_n$ is a closed and bounded interval $I$. But then by hypothesis there is a $t_m$ in our sequence such that $\ell(t_m) \in P$ and $t_m \not \in I$. This contradicts \Cref{no returning trails}, which tells us that $\ell$ cannot later return to $P$. \compat{Changes here from using parallelogram covers to generalized rectangles. Nov 4.}
\end{proof}

\begin{corollary}
\label{trapezoid foliation}
Let $T=pqrs$ be a trapezoid with $\overline{pq}$ and $\overline{rs}$ parallel and of slope $m$. Then the restriction of $\sF_m$ to the interior of $T$ consists of segments of leaves joining $\overline{ps}$ to $\overline{qr}$. \compat{I added this on Dec 5th, and moved it to its current position on Dec 22, 2022.}
\end{corollary}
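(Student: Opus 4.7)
The plan is to show that for every leaf $\ell$ of $\sF_m$ meeting $T^\circ$, each connected component of $\ell\cap T^\circ$ is an open arc whose closure in $T$ has one endpoint on $\overline{ps}$ and one on $\overline{qr}$.

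\textbf{Step 1: Extract a compact arc.} First I would invoke \Cref{trapezoid observation} to see that $T^\circ$ contains no singularities. Since $\alpha\geq 0$ on the zebra plane $Z$, applying the \gaussbonnet to a disk bounded by a hypothetical closed leaf would force $\sum \pi\alpha(p)=-2\pi$, which is impossible; hence no leaf of $\sF_m$ is closed, and every such $\ell$ is bi-infinite or a separatrix and, by \Cref{leaves are proper maps}, a proper map. Therefore $\ell^{-1}(T)$ is compact, and for any $x\in T^\circ\cap\ell$ the connected component $J$ of $\ell^{-1}(T^\circ)$ through a preimage of $x$ is an open interval $(t_1,t_2)$ with $\ell(t_1),\ell(t_2)\in\partial T$.

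\textbf{Step 2: The endpoints lie on $\overline{ps}\cup\overline{qr}$.} The main obstacle will be to exclude $\ell(t_i)\in\mathrm{int}(\overline{pq})\cup\mathrm{int}(\overline{rs})$, or $\ell(t_i)$ being a non-singular vertex along which the local $\sF_m$-leaf runs into $\overline{pq}$ or $\overline{rs}$. If $\ell(t_1)\in\mathrm{int}(\overline{pq})$, then $\ell(t_1)$ is non-singular and lies on a unique leaf of $\sF_m$, so $\ell$ coincides with the leaf containing $\overline{pq}$ near $\ell(t_1)$; this gives $t_1$ an open neighborhood in $\ell^{-1}(\overline{pq})\subset\ell^{-1}(\partial T)$, contradicting that $t_1$ is a boundary point of the open set $J$. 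The same reasoning excludes $\mathrm{int}(\overline{rs})$ and any non-singular vertex whose local $\sF_m$-leaf enters $\partial T$ rather than $T^\circ$. At a singular vertex, $\ell$ must be a separatrix with that vertex as endpoint, and the vertex already belongs to $\overline{ps}\cup\overline{qr}$ (since $p,s\in\overline{ps}$ and $q,r\in\overline{qr}$). Thus $\ell(t_1),\ell(t_2)\in\overline{ps}\cup\overline{qr}$.

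\textbf{Step 3: The endpoints lie on different non-parallel edges.} Finally I would apply \Cref{no bigons}. If both $\ell(t_1)$ and $\ell(t_2)$ lay on $\overline{ps}$, then the leaf segment $\ell([t_1,t_2])$ and the subsegment of $\overline{ps}$ between $\ell(t_1)$ and $\ell(t_2)$ are two arcs of trails sharing both endpoints, so by \Cref{no bigons} they must share a common subarc; this is impossible because $\ell((t_1,t_2))\subset T^\circ$ while $\overline{ps}\subset\partial T$. The symmetric argument excludes both endpoints lying on $\overline{qr}$, so the endpoints lie one on each of $\overline{ps}$ and $\overline{qr}$, as required.
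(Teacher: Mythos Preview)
Your proof is correct and follows essentially the same approach as the paper's: use properness of leaves (together with the absence of interior singularities from \Cref{trapezoid observation}) to see that each maximal segment of a slope-$m$ leaf in $T^\circ$ limits onto $\partial T$, argue that these limit points cannot lie on the parallel edges $\overline{pq}$ or $\overline{rs}$, and then use \Cref{no bigons} to rule out both endpoints lying on the same non-parallel edge. One small imprecision: in Step 1 you assert that every leaf is bi-infinite or a separatrix, but a leaf could also have two singular endpoints (a saddle connection); this does not affect the argument since such a leaf is already compact.
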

\begin{proof}
Consider a parameterized leaf $\ell:(a_-, a_+) \to Z$ of slope $m$ that intersects the interior $T^\circ$. We claim that $\ell(t)$ must exit $T$ or approach a point in $\partial T$  as $t$ approaches either endpoint.
If $s$ is a sign and $\lim_{t \to a_s} \ell(t)$ is a singularity, this follows from the fact that $T^\circ$ has no singularities by \Cref{trapezoid observation}. On the other hand, if
$\lim_{t \to a_s} \ell(t)$ is not a singularity, then $\ell$ must be a bi-infinite leaf or must be a separatrix with the limit to the other endpoint $\lim_{t \to a_{-s}} \ell(t)$ a singularity. Then, \Cref{leaves are proper maps} guarantees that this parameterization can be made proper. Setting $t_s$ to be the closest element of $(a_-, a_+)$ to $a_s$ such that $\ell(t_s) \in T$, we see $\ell$ exits $T$ at $\ell(t_s)$ and never returns.
\compat{I made some improvements here in the evening of Dec 22, 2022.}

Now consider a maximal segment of a leaf of $\sF_m$ in $T^\circ$. From the previous paragraph, traversing such a maximal segment in either direction approaches a point in $\partial T$. The segment can't approach a point in $\overline{pq}$ or in $\overline{rs}$ because these are trails of slope $m$ with interior angles of $\pi$; see \Cref{building a trapezoid}. This means there are no prongs of slope $m$ approaching points in $\overline{pq}$ or $\overline{rs}$. Now observe that the two points approached in the two different directions can't lie on the same edge by \Cref{no bigons}. Thus each maximal segment must join $\overline{ps}$ to $\overline{qr}$ as claimed.
\end{proof}

\subsection{Trails}

A {\em trail} is an arc of a trail which is maximal with respect to the subarc partial order. The following result tells us that trails exist, and every arc of a trail can be extended to a trail:

\begin{theorem}
\label{trails exist}
If $\gamma$ is an arc of a trail in a zebra surface, then $\gamma$ is a subarc of a trail.
\end{theorem}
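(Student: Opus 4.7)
The plan is to apply Zorn's Lemma. First reduce to the case of a zebra plane: since an arc of a trail on $S$ is by definition the image of an arc of a trail on the PRU cover $\tilde S$, which is a zebra plane, it suffices to extend arcs of trails in a zebra plane $Z$. Let $\mathcal P$ be the poset of arcs of trails in $Z$ having $\gamma$ as a subarc, ordered by the subarc relation. Given a chain in $\mathcal P$, I would choose compatible representatives $\gamma_\alpha\colon I_\alpha\to Z$ with the $I_\alpha$ nested and $\gamma_\beta|_{I_\alpha}=\gamma_\alpha$ whenever $\gamma_\alpha$ is a subarc of $\gamma_\beta$, then define $\gamma_\infty\colon \bigcup_\alpha I_\alpha\to Z$ by $\gamma_\infty(t)=\gamma_\alpha(t)$ for any $\alpha$ with $t\in I_\alpha$. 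Because the $I_\alpha$ are nested intervals, any interior point $t$ of $\bigcup_\alpha I_\alpha$ lies in the interior of some $I_\alpha$, so the defining local conditions for being a parameterized arc of a trail transfer immediately. Thus Zorn's Lemma yields a maximal arc of trail $\eta\colon I\to Z$ with $\gamma$ as a subarc.

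Next I would show $I$ can be reparameterized as $\R$, so that $\eta$ qualifies as a trail. First, $I$ must be an open interval: if $b=\sup I$ lies in $I$, then I can extend $\eta$ past $b$. If $\eta(b)$ is non-singular, continue along the leaf through $\eta(b)$ and the angle condition is trivially preserved at the new interior point $b$. If $\eta(b)$ is a singularity, then $\tilde\alpha(\eta(b))\geq 1$ because $Z$ is a zebra plane; the total angle at $\eta(b)$ is at least $3\pi$, and continuing along the prong opposite the incoming one (i.e., along the same leaf through $\eta(b)$) produces angles $\pi$ and $(\tilde\alpha(\eta(b))+1)\pi$, both at least $\pi$, so the angle condition is satisfied. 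Either extension contradicts maximality, hence $\sup I\notin I$, and by symmetry $\inf I\notin I$.

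Finally, suppose $b=\sup I<\infty$. I would rule out the possibility that $\eta(t)$ remains in some compact $K\subset Z$ as $t\to b^-$. If it did, then because the singular set of $Z$ is closed and discrete, $K$ contains only finitely many singularities; by \Cref{no monogons} the arc $\eta$ is injective, so it visits each such singularity at most once, and there is $\delta>0$ with $\eta|_{(b-\delta,b)}$ lying on a single leaf $\ell$. By \Cref{leaves are proper maps} a natural parameterization of $\ell$ is proper, so the preimage of $K$ in that parameterization is a compact interval, forcing $\lim_{t\to b^-}\eta(t)$ to exist. I could then adjoin $b$ to the domain and extend as in the previous paragraph, contradicting maximality. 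Therefore $\eta(t)$ leaves every compact subset of $Z$ as $t\to b^-$, and I reparameterize so $\sup I=+\infty$; the symmetric argument at the left end gives $I=\R$, so $\eta$ is a trail containing $\gamma$. The delicate step is this last one, which chains together injectivity from \Cref{no monogons}, discreteness of the singular set, and properness of leaves from \Cref{leaves are proper maps} to rule out infinite bending of a maximal arc in finite parameter time.
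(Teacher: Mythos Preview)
Your Zorn's Lemma route works, but the proof is already complete at the end of your first paragraph, and the rest reflects a misreading of the definition. In this paper a \emph{trail} is simply an arc of a trail that is maximal under the subarc order; once Zorn produces a maximal $\eta$ containing $\gamma$, you are finished. Your phrase ``so that $\eta$ qualifies as a trail'' suggests you believe the domain must be $\R$; it need not be, and in any case once your second paragraph shows $I$ is open, any open interval reparameterizes to $\R$ trivially. What your third paragraph actually establishes is that $\eta$ is a \emph{proper} map (it leaves every compact set near each end), which is the separate result \Cref{proper}; the sentence ``I reparameterize so $\sup I=+\infty$'' is a non sequitur, since properness has nothing to do with reparameterizing the domain.

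The one step in your first paragraph that merits more care is the existence of a compatible system of representatives for an arbitrary (possibly uncountable) chain. The cleanest justification goes through \Cref{no monogons}: each arc is injective, hence determined by its oriented image; the images in a chain are nested, and their union $L$ is a connected $1$-manifold which cannot be a circle (a circle would be compact, hence equal to a single image, which is homeomorphic to an interval). Parameterizing $L$ by an interval gives the upper bound and the compatible representatives simultaneously.

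The paper takes a different, constructive route. It repeatedly invokes \Cref{lem:extension} to extend the arc so that each step \emph{finishes an additional leaf}, and then argues that the limiting curve is maximal because any would-be endpoint in $Z$ has a stellar neighborhood containing no complete leaves, whereas the tail of the construction contains infinitely many. Your Zorn argument is shorter once the chain-upper-bound check is in hand; the paper's argument is more explicit and uses only countable induction.
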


\compat{I made some minor improvements here on Dec 22, 2022.}
Let $I \subset \R$ be an interval, $\bar I \subset \R \cup \{\pm \infty\}$ be its closure, and $\gamma:I \to Z$ be a parameterized arc of a trail.
Let $\ell$ be a leaf which, since $Z$ is a zebra surface, is not closed and thus is homeomorphic to an open interval. If $\gamma(a)$ is in $\ell$ then $\ell \setminus {\gamma(a)}$ has two connected components.
We'll say that $\gamma$ {\em finishes a leaf $\ell$ in the positive direction} if there are $a \in I$ and $b \in \bar I$ with $a<b$ such that $\gamma(a) \in \ell$ and $\ell \setminus \gamma\big([a,b)\big)$ has one connected component. That is, there is a $c \in (a,b]$ such that $\gamma\big((a,c)\big)$ is one of the connected components of $\ell \setminus {\gamma(a)}$.
Then,
any further extension of $\gamma$ in the positive direction will require adding points not in $\ell$ (e.g., a singularity and a portion of a new leaf). \compat{Finishing $\ell$ does not mean that the parameterization stops at an endpoint of $\ell$. It could go beyond $\ell$...} We make a similar definition of finishing a leaf in the negative direction.
The following is the main ingredient in the proof of this theorem:

\begin{lemma}
\label{lem:extension}
Let $Z$ be a zebra plane, let $I \subset \R$ be an interval with endpoints $-1$ and $1$, and let $\gamma:I \to Z$ be a parameterized arc of a trail.
\begin{enumerate}
\item {\em (Right limit)}  If $\lim_{t \to 1^-} \gamma(t)$ exists, then there is a parameterized arc of a trail
$\eta:I \cup [1,2) \to Z$ extending $\gamma$ such that $\eta$ finishes a leaf in the positive direction that $\gamma$ does not.
\item {\em (Left limit)} If $\lim_{t \to -1^+} \gamma(t)$ exists, then $\gamma$ can be similarly extended to left as an arc of a trail. \compat{I made the left limit less formal in response to a suggestion of Barak. I want both statements so I can state the converse below easily.}
\end{enumerate}
Conversely if neither limit exists, then $\gamma$ is a trail.
\end{lemma}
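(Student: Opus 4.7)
The plan is to analyze $\gamma$ near its right endpoint using the stellar structure at $p = \lim_{t \to 1^-}\gamma(t)$, with the left-limit case handled symmetrically. First I would choose a stellar neighborhood $N$ of $p$ with $N \cap \Sigma = \{p\}$ (using discreteness of $\Sigma$) and pick $t^\ast < 1$ with $\gamma\big([t^\ast, 1)\big) \subset N$. The key preliminary claim is that $\gamma|_{[t^\ast, 1)}$ lies on a single leaf $\ell$: transitions occur only at singularities, the only singularity in $N$ is $p$, and a visit $\gamma(t_0) = p$ with $t_0 < 1$ is excluded because $\gamma$ would then emerge from $p$ along a fixed prong and, by injectivity (\Cref{no monogons}) combined with the stellar geometry, would have to leave $N$ before $t = 1$, contradicting $\gamma(t) \to p$.

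Case (a) is when $p$ is regular, so $p \in \ell$. I extend by setting $\eta(1) = p$ and taking $\eta|_{(1, 2)}$ to be a homeomorphism onto the open half-leaf $\ell^+$ lying past $p$ in $\gamma$'s forward direction. No angle condition applies at the regular point $p$, so $\eta$ is an arc of a trail. For any $a \in [t^\ast, 1)$, the image $\eta\big((a, 2)\big)$ equals the full forward component of $\ell \setminus \{\gamma(a)\}$, so $\eta$ finishes $\ell$; meanwhile $\gamma\big((a, 1)\big)$ omits $p$ and everything beyond it, so $\gamma$ does not finish $\ell$ on this sojourn. An injectivity argument rules out earlier sojourns: a complete half-traversal would have used up one or both singular endpoints of $\ell$ and forbade $\gamma$ from returning to $\ell$ near $t = 1$.

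Case (b) is when $p$ is singular, so $p$ is a singular endpoint of $\ell$. Since $\alpha(p) \geq 1$ in a zebra plane, the open angular window $\big(\pi, (\alpha(p)+1)\pi\big)$ at $p$ meeting the trail angle condition has length $\alpha(p)\pi \geq \pi$ and contains uncountably many prongs; I pick one whose underlying leaf $\ell'$ is therefore distinct from $\ell$. Setting $\eta(1) = p$ and letting $\eta|_{(1, 2)}$ traverse $\ell'$ outward from $p$, I obtain an extension satisfying the angle condition at $p$, and by parameterizing so that $\eta$ covers a full half-leaf of $\ell'$ from $p$, $\eta$ finishes $\ell'$. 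To see $\gamma$ did not finish $\ell'$: moving along $\ell'$ requires $\gamma$ to have transitioned in through a singular endpoint of $\ell'$, i.e.\ $p$ (never visited) or a possible second endpoint $p''$; had $\gamma$ entered at $p''$ it could not transition within the interior of $\ell'$ and would have to limit to $p$ along $\ell'$, contradicting $\gamma$'s actual approach along $\ell \neq \ell'$.

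For the converse, I assume neither one-sided limit exists and suppose for contradiction that $\gamma$ is a proper subarc of some arc of a trail $\eta : J \to Z$, via an order-preserving continuous injection $\phi : I \to J$. Then at least one of $\sup \phi(I) < \sup J$ or $\inf \phi(I) > \inf J$ holds; in the first case $s := \sup \phi(I) \in J$, and continuity of $\eta$ at $s$ gives $\eta(s) = \lim_{t \to 1^-}\gamma(t)$, contradicting non-existence of the right limit. The main obstacle I anticipate is the single-leaf claim at the start: it quietly combines discreteness of $\Sigma$, the stellar structure, and injectivity of trails via \Cref{no monogons}; once it is in place, the rest reduces to careful bookkeeping with the definition of ``finishes a leaf.''
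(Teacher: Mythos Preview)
Your proof is correct and follows the same strategy as the paper's: split on whether the limit point $p$ is regular or singular, extend along the current leaf in the first case and along a new prong satisfying the angle condition in the second, and handle the converse by contrapositive via the reparameterization map $\phi$. The paper's argument is considerably terser; it simply says ``extend $\gamma$ by following the leaf \ldots\ until it finishes the leaf'' without isolating your single-leaf preliminary claim or explicitly checking that $\gamma$ did not already finish the chosen leaf. Your added verifications are sound (the cleanest justification for both is that the extended curve $\eta$, being a parameterized arc of a trail, is injective by \Cref{no monogons}, which immediately forces $\gamma(I)$ to miss $\ell^+$ in case~(a) and to miss $\ell'$ in case~(b)), so the two proofs differ only in level of detail, not in substance.
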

\begin{proof}
We will prove statement (1). Statement (2) will follow by symmetry. Let $\gamma(1)$ denote the limit $\lim_{t \to 1^-} \gamma(t)$ (regardless of whether $1$ is formally in the domain of $\gamma$). If $\gamma(1)$ is not singular, let $m$ be the slope of $\gamma$ at $\gamma(1)$. Then we can extend $\gamma$ by following the leaf of $\sF_m$ through $\gamma(1)$ until it finishes the leaf. Now suppose $\gamma(1)$ is singular. Let $L$ be a leaf with an endpoint at $\gamma(1)$ which satisfies the angle condition at $\gamma(1)$. (One can see by inspection of the angle condition that such a leaf always exists.) Then $\gamma$ can be extended to finish $L$.

The final statement can be proved by showing the contrapositive. Suppose $\gamma:I \to Z$ is a proper subarc of an arc of a trail $\eta: J \to Z$. By a change of coordinates, we may assume that $I$ has endpoints $-1$ and $1$. Let $\phi:I \to J$ be the continuous orientation-preserving map satisfying $\gamma=\eta \circ \phi$. Since $\phi$ is not surjective, we may assume without loss of generality that the limit $\lim_{t \to 1^-} \phi(t)$ exists in $J$. \commb{Do you also want to say here that $1$ is not in $I$?} \compat{The argument I provide works even if $1 \in I$, though you are right that the case that $1 \in I$ is trivial. Anyway, I  think it is simpler not to say anything.} Then we have
$$\lim_{t \to 1^-} \gamma(t) = \lim_{t \to 1^-} \eta \circ \phi(t) = \eta \Big(\lim_{t \to 1^-} \phi(t)\Big)$$
by continuity of $\eta$, so the limit in statement (1) exists.
\end{proof}

\begin{proof}[Proof of Theorem \ref{trails exist}]
Observe that it suffices to prove the statement for zebra planes, because arcs of trails on zebra surfaces are defined to be images of arcs of trails on their PRU cover. So, throughout this proof, we will only consider trails in a zebra plane $Z$.

Let $\gamma_1:J_1 \to Z$ be an arc of a trail where $J_1 \subset \R$ is a bounded interval.
We produce $k \in \N \cup \{+\infty\}$ and a finite or infinite sequence of parameterized arcs of trails $\{\gamma_i:J_i \to Z\}_{1 \leq i < k}$, where $\{J_i\}_{1 \leq i < k}$
is a strictly increasing sequence of bounded open intervals.
We will produce this sequence inductively. Here, each $\gamma_{i+1}$ will be a parameterized arc of a trail extending $\gamma_i$, i.e., $\gamma_{i+1}|_{J_i} = \gamma_i$, whenever $\gamma_{i+1}$ is defined.
If the sequence is finite (i.e., $k < +\infty$) we will have that $\gamma_{k-1}$ is a trail and if $k=+\infty$ we will produce a trail using a limiting argument.

We produce the sequence inductively. Suppose $\gamma_i$ is defined for some $i \geq 1$. If $\gamma_i$ is a trail, we're done and we declare $k=i$. Otherwise, we can apply \Cref{lem:extension} to construct an arc of a trail $\gamma_{i+1}$ that has $\gamma_i$ as a proper subarc
and which finishes an additional leaf in any direction for which the limits (as described in (1) and (2) of \Cref{lem:extension}) exist.

\commb{What is confusing to me is what if $\gamma_i$ is the restriction of arctan to $(-i,i)$ or $[-i,i]$? Then the union is all of $\R$ but the map arctan doesn't give a trail.} \compat{The key idea is that whenever we extend, the arc $\gamma_{i+1}$ contains an additional leaf that was not in $\gamma_i$. So, in any direction where we extend infinitely often, the limiting ``trail'' passes through infinitely many leaves. I did correct typos and tried to clarify the argument a bit. But overall it is the same.}

If the sequence $\gamma_i$ is infinite (i.e., $k = +\infty$) then either the right or left limits for each $\gamma_i$ always exist. Assume the right limit always exists but at some point the left limits do not exist. Then we can reparameterize the curves $\gamma_i:J_i \to Z$ such that $\sup J_i = i$ for all $i$ (because each $\gamma_{i+1}$ is an extension of $\gamma_i$ including an additional leaf on the right) and still ensure that $\gamma_{i+1}|_{J_i} = \gamma_i$. Define $J_\infty = \bigcup_i J_i$ and define
$\gamma: J_\infty \to Z$ to be the limiting map, i.e., $\gamma(t)=\gamma_i(t)$ if $t \in J_i$. We claim that $\gamma$ is a trail. By \Cref{lem:extension}, it suffices to look at the limits of $\gamma(t)$ as $t$ approaches the endpoints. By hypothesis, the limit to the left endpoint of $J_\infty$ eventually stops existing. Consider the right endpoint, which by assumption is $+\infty$. Thus assuming to the contrary that $\gamma$ is not a trail, the limit $\lim_{t \to +\infty} \gamma(t)$ is some point $p \in Z$. Let $N$ be a stellar neighborhood of $p$. By possibly shrinking $N$, we can assume $N$ contains no complete leaves. But on the other hand because $\lim_{t \to +\infty} \gamma(t)=p$, there is an $m>0$ such that $\gamma\big((m,+\infty)\big)$ is contained in $N$. Observe that $\gamma\big((m,+\infty)\big)$ contains infinitely many complete leaves, giving us our desired contradiction to the statement that $\lim_{t \to +\infty} \gamma(t)=p$.

There are two other cases. The case where the left limit always exists and the forward limit eventually stops existing is symmetric. The case where both limits always exists is similar: we may simultaneously handle the right and left limits say by defining $J_i=(-i,i)$.
\end{proof}

We record the following basic consequence of the results above:

\begin{corollary}
Every trail on $Z$ has a parameterization $\gamma:\R \to Z$.
\end{corollary}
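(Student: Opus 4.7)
The plan is to show that the parameterization domain $I \subset \R$ of any trail $\tau: I \to Z$ must be an open interval (possibly with $\pm\infty$ as endpoints), and then to produce the desired parameterization by pulling back along an orientation-preserving homeomorphism $\R \to I$. By the definition of arcs of trails via equivalence under orientation-preserving injective continuous reparameterizations, this pullback represents the same trail, so this is enough.

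First I would argue that $I$ cannot contain either of its endpoints. Suppose to the contrary that $I$ contains its left endpoint $a \in \R$. Then $\tau(a)$ is defined, so by continuity $\lim_{t \to a^+} \tau(t) = \tau(a)$ exists. After an affine change of parameter sending $a$ to $-1$ and some fixed interior point to $1$, the hypotheses of \Cref{lem:extension}(2) are satisfied, producing a parameterized arc of a trail that strictly extends $\tau$ and thereby finishes a new leaf in the negative direction. This contradicts the maximality of $\tau$ as an arc of a trail. A symmetric argument using \Cref{lem:extension}(1) rules out inclusion of a finite right endpoint, so $I$ is open in $\R \cup \{\pm\infty\}$.

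Second, any non-degenerate open interval in $\R \cup \{\pm\infty\}$ is homeomorphic to $\R$ by an orientation-preserving homeomorphism $\phi: \R \to I$; for instance one can compose affine maps with $\arctan$ or its inverse as needed. Set $\eta = \tau \circ \phi: \R \to Z$. Since $\phi$ is an orientation-preserving homeomorphism, $\eta$ satisfies the defining conditions for a parameterized arc of a trail at each point of $\R$ (those conditions are purely local at interior parameter points and invariant under such reparameterizations), and $\eta$ represents the same equivalence class as $\tau$. Hence $\eta$ is a parameterization of the given trail with domain $\R$.

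There is essentially no hard step; the only care needed is in Step~1, where one must apply \Cref{lem:extension} after rescaling the domain so that its hypotheses apply literally, and in confirming that orientation-preserving reparameterizations preserve the ``arc of a trail'' property, which is immediate from the definition.
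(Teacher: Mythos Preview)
Your proposal is correct and follows essentially the same approach as the paper's proof: both argue that if an endpoint were in $I$ then the limit there would exist, so \Cref{lem:extension} would produce a proper extension contradicting maximality, and then reparameterize the resulting open interval by $\R$. Your write-up is simply more explicit about the reparameterization and about why being an arc of a trail is preserved under it; the paper compresses this into two sentences.
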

\begin{proof}
Let $\gamma:I \to Z$ be a parameterization of a trail. By \Cref{lem:extension}, $\lim_{t \to \inf I} \gamma(t)$ and $\lim_{t \to \sup I} \gamma(t)$ do not exist, so $I$ must be an open interval. By possibly reparameterizing, we may assume that $I=\R$.
\end{proof}

This allows us to strengthen \Cref{no returning trails}:

\begin{corollary}
\label{intersections with polygons}
Let $P \subset Z$ be a polygon all of whose exterior angles are at least $\pi$. Then given any trail $\tau:\R \to Z$, the preimage $\tau^{-1}(P)$ is either empty or a (possibly degenerate) closed and bounded interval.
\end{corollary}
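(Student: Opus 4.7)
The plan is to verify three properties of $\tau^{-1}(P)$ separately: closedness, connectedness, and boundedness. Closedness is immediate from continuity of $\tau$ and the fact that $P$ is closed.

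For connectedness, I will argue by contradiction using \Cref{no returning trails}. Assume $\tau^{-1}(P)$ is disconnected, so there exist $t_1 < t_2 < t_3$ with $\tau(t_1), \tau(t_3) \in P$ and $\tau(t_2) \notin P$. Set $t_1' = \sup\{t \leq t_2 : \tau(t) \in P\}$ and $t_3' = \inf\{t \geq t_2 : \tau(t) \in P\}$. Since $\tau^{-1}(P)$ is closed, these values are attained, and by definition $\tau(t) \notin P$ for $t \in (t_1', t_3')$; together with $P$ being closed this forces $\tau(t_1'), \tau(t_3') \in \partial P$. Then the restriction of $\tau$ to $[t_1', t_3']$ (reparameterized to $[0,1]$) is an arc of a trail whose endpoints lie on $\partial P$ and whose interior avoids $P$, contradicting \Cref{no returning trails}.

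The main obstacle is boundedness; here I will combine injectivity of trails, properness of leaves, and maximality. Suppose for contradiction that $\tau^{-1}(P)$ is unbounded above; since it is already a closed interval by the previous steps, it contains a half-line $[a,\infty)$. Trails transition between leaves only at singularities, and by \Cref{no monogons} the trail $\tau$ is injective, so each such transition point is visited at most once. Because $P$ is compact and $\Sigma$ is closed and discrete, $P \cap \Sigma$ is finite, so $\tau|_{[a,\infty)}$ undergoes only finitely many transitions between leaves. Hence there is some $t_0 \geq a$ such that $\tau|_{[t_0,\infty)}$ traces out a portion of a single leaf $\ell$.

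Writing $\ell$ with its canonical parameterization (by $\R$ if bi-infinite, by $[0,\infty)$ if a separatrix), this parameterization is proper by \Cref{leaves are proper maps}, and $\tau|_{[t_0,\infty)}$ factors as $t \mapsto \ell(\phi(t))$ for a strictly monotone continuous map $\phi$ into the domain of $\ell$. Either $\phi(t)$ is unbounded in the domain of $\ell$ as $t \to \infty$, in which case properness forces $\tau(t)$ to exit every compact subset of $Z$, contradicting $\tau([a,\infty)) \subset P$; or $\phi(t)$ has a finite limit, in which case $\lim_{t \to \infty} \tau(t)$ exists in $Z$, so by part (1) of \Cref{lem:extension} the trail admits a proper extension, contradicting its maximality. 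The case where $\tau^{-1}(P)$ is unbounded below is symmetric, completing the proof.
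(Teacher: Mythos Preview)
Your proof is correct and uses the same ingredients as the paper's: closedness from continuity, connectedness from \Cref{no returning trails}, and boundedness by combining injectivity of trails (\Cref{no monogons}), finiteness of $P\cap\Sigma$, properness of leaves (\Cref{leaves are proper maps}), and \Cref{lem:extension}. The only organizational difference is that the paper bounds $\tau^{-1}(P)$ by a direct accounting of the possible leaf segments (at most one bi-infinite leaf, at most two separatrices, and finitely many saddle connections meeting $P$), whereas you argue by contradiction that the trail would eventually follow a single leaf forever and then invoke properness or maximality; both arguments are equivalent in substance.
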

\begin{proof}
\compat{This proof was rewritten because of comments of Barak.}
Since $\tau$ is continuous and $P$ is a closed set, $\tau^{-1}(P)$ is closed. By \Cref{no returning trails}, the preimage $\tau^{-1}(P)$ is an interval. It remains to show that $\tau^{-1}(P)$ is bounded.

Recall that a trail follows leaves, transitioning between leaves at singularities. By \Cref{leaves are proper maps}, if $\tau|_{I}: I \to Z$ parameterizes a bi-infinite leaf or a separatrix including its singular endpoint\compat{As Ferran noted, it is important we include the endpoint}, the preimage $(\tau|_I)^{-1}(P)$ is compact. Because transitions only happen at singularities, restrictions of $\tau$ can parameterize at most one bi-infinite leaf and at most two separatrices.

Now consider the collection of all parameterized saddle connections of the form $\tau|_{J}$ that intersect $P$. Such an interval $J$ must be bounded by \Cref{lem:extension}. By \Cref{no returning trails}, $\tau$ can't leave $P$ and later return, so there are at most two parameterized saddle connections that intersect $P$ but are not contained in $P$. By \Cref{no monogons}, trails are simple curves and therefore each singularity in $P$ is the endpoint of at most two parameterized saddle connections
of the form $\tau|_{J}$. Since $P$ is compact and the singularities are isolated, there are at most finitely many singularities in $P$. It follows that there are at most finitely many parameterized saddle connections of the form
$\tau|_{J}$ that are contained in $P$.

Putting it all together, we have shown that $\tau^{-1}(P)$ is contained in the union of at most two compact subsets of intervals $I$ such that $\tau|_{I}$ parameterizes a bi-infinite leaf or a separatrix and finitely many bounded intervals $J$ parameterizing saddle connections intersecting $P$. Thus $\tau^{-1}(P)$ is bounded.
\end{proof}

\subsection{Properness of trails}

\begin{theorem}
\label{proper}
If $\tau:\R \to Z$ is parameterized trail, then $\tau$ is a proper map.
\end{theorem}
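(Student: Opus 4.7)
The plan is to mimic the argument given for \Cref{leaves are proper maps}, using the polygonal-trap statement \Cref{intersections with polygons} in place of the direct inspection of a generalized rectangle that was carried out there. Suppose toward a contradiction that $\tau$ fails to be proper. Then some compact $K \subset Z$ has non-compact preimage $\tau^{-1}(K)$, which is nevertheless closed in $\R$ by continuity of $\tau$; hence $\tau^{-1}(K)$ is unbounded. Choose a sequence $t_n$ with $|t_n|\to\infty$ and $\tau(t_n)\in K$, and, passing to a subsequence using compactness of $K$, arrange that $\tau(t_n)\to p$ for some $p\in K$.

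Next I would build an obstruction around $p$ that a trail cannot enter infinitely often. I would apply \Cref{generalized rectangles} at $p$ to obtain a generalized rectangle $P$ containing $p$ in its interior, whose non-straight vertices all have interior angle $\tfrac{\pi}{2}$. The exterior angle at any such vertex $v$ is $\alpha(v)\pi + 2\pi - \tfrac{\pi}{2} \geq \tfrac{3\pi}{2} > \pi$, so $P$ satisfies the hypothesis of \Cref{intersections with polygons}. That corollary then yields that $\tau^{-1}(P)$ is either empty or a closed bounded interval.

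Since $\tau(t_n)\to p\in P^\circ$, all but finitely many $\tau(t_n)$ lie in $P$, so $\tau^{-1}(P)$ contains infinitely many $t_n$ with $|t_n|\to\infty$. This contradicts boundedness of $\tau^{-1}(P)$ and therefore forces $\tau$ to be proper.

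The only real hurdle is having \Cref{intersections with polygons} in hand; given it, the argument is short and essentially a standard ``extract a convergent subsequence and trap it in a nice neighborhood'' maneuver. Verifying the exterior-angle hypothesis on $P$ is immediate from the explicit description of generalized rectangles in \Cref{generalized rectangles}, and no new constructions are required.
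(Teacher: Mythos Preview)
Your proof is correct and follows essentially the same approach as the paper's own proof: assume non-properness, extract a convergent subsequence inside a compact set, build a generalized rectangle around the limit point via \Cref{generalized rectangles}, and obtain a contradiction with \Cref{intersections with polygons}. Your explicit verification of the exterior-angle hypothesis is a detail the paper leaves implicit, but otherwise the arguments are the same.
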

\begin{proof}
Let $\tau: \R \to Z$ be a parameterized trail. Suppose to the contrary that $\tau$ is not proper. Then there is a compact subset $K \subset Z$ such that $\tau^{-1}(K)$ is not compact. Since $\tau$ is continuous, $\tau^{-1}(K)$ is closed. Therefore, $\tau^{-1}(K)$ must be unbounded. Without loss of generality, we may assume that there is a sequence $t_n$ with $\lim t_n \to +\infty$ and $\tau(t_n) \in K$ for all $n$. Since $K$ is compact, by passing to a subsequence we may assume that $\lim \tau(t_n)=p \in K$. Use \Cref{generalized rectangles} to
construct a generalized rectangle $P$ containing $p$ in its interior. Then there are infinitely many $n$ such that $\tau(t_n) \in P$, so $\tau^{-1}(P)$ is unbounded
violating \Cref{intersections with polygons}. \compat{Changes here from using parallelogram covers to generalized rectangles. Nov 4.}
\end{proof}

\begin{corollary}
\label{cut by a trail}
If $\tau$ is a trail in $Z$, then $Z \smallsetminus \tau$ has two connected components, both homeomorphic to an open disk.
\end{corollary}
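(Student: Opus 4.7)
The approach is a compactified Jordan--Schoenflies argument. First, identify $\tau$ with the subset $\tau(\R) \subset Z$ coming from a parameterization $\tau:\R \to Z$, and record the three properties we will exploit: $\tau$ is continuous; it is injective (this is \Cref{no monogons}, applied to the whole parameterization, which rules out self-intersections including at singularities where distinct parameters could otherwise map to the same singular point); and it is proper (by \Cref{proper}). Since $Z$ is a zebra plane, it is homeomorphic to an open disk, hence to $\R^2$. Fix a homeomorphism $\phi:Z \to \R^2$. The composition $\phi \circ \tau:\R \to \R^2$ remains a proper, continuous injection, so it suffices to prove the corollary in $\R^2$ (and transport the conclusion back by $\phi^{-1}$).

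Next, pass to one-point compactifications: $\R^2 \cup \{\infty\} \cong \bbS^2$ and $\R \cup \{\ast\} \cong \bbS^1$. Properness of $\phi \circ \tau$ is exactly the statement that the extension $\overline{\phi \circ \tau}:\bbS^1 \to \bbS^2$ sending $\ast \mapsto \infty$ is continuous at $\ast$. This extended map is also injective: $\phi \circ \tau$ is injective on $\R$ and its image misses $\infty$, so the only preimage of $\infty$ is $\ast$. A continuous injection from a compact Hausdorff space into a Hausdorff space is a topological embedding, so we have a topological embedding $\bbS^1 \hookrightarrow \bbS^2$. By the topological Jordan--Schoenflies theorem, its image separates $\bbS^2$ into two open components $U_1, U_2$, each homeomorphic to an open disk. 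Since $\infty$ lies on the embedded circle, removing it gives
\[
\R^2 \setminus (\phi \circ \tau)(\R) \;=\; U_1 \sqcup U_2,
\]
which pulls back to the desired decomposition $Z \setminus \tau = \phi^{-1}(U_1) \sqcup \phi^{-1}(U_2)$ into two open-disk components.

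The only real content beyond citation of \Cref{no monogons} and \Cref{proper} is the step of upgrading our proper injection $\R \hookrightarrow \R^2$ to an embedding $\bbS^1 \hookrightarrow \bbS^2$, and I expect this to be the main (though still routine) point to argue carefully: one must verify that the extension at $\ast$ really is continuous (this is the $\epsilon$--$K$ form of properness) and that no finite parameter maps to the added point $\infty$ (this is where we use that $\tau(\R) \subset \R^2$). Once the embedded $\bbS^1$ is in hand, the classical topological Jordan--Schoenflies theorem finishes the argument without further work.
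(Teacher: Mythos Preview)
Your proof is correct and follows essentially the same approach as the paper: one-point compactify both the domain $\R$ and the zebra plane $Z$ (an open disk), use properness and injectivity to extend $\tau$ to an embedding $\bbS^1 \hookrightarrow \bbS^2$, and invoke Jordan--Schoenflies. The only cosmetic difference is that the paper one-point compactifies $Z$ directly rather than first identifying it with $\R^2$, and refers to the ``Jordan Curve Theorem'' where (as you note) the disk conclusion really needs Schoenflies.
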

\begin{proof}
Let $X$ be the one-point compactification of $Z$ with $x_\infty$ denoting the point added. Since $Z$ is an open topological disk, $X$ is homeomorphic to the $2$-sphere. Our trail can be parameterized by an injective proper map from $\R$, so it extends continuously to a simple closed curve $\bar \tau:\hat \R \to \tau \cup \{x_\infty\}$ with $\bar \tau(\infty)=x_\infty$. By the Jordan Curve Theorem, $X \smallsetminus \bar \tau$ consists of two components, each homeomorphic to a disk. We have $Z \smallsetminus \tau=X \smallsetminus \bar \tau$.
\end{proof}

\begin{corollary}
\label{trail through interior}
Let $\tau:\R \to Z$ be a parameterized trail and let $P \subset Z$ be a polygon all of whose interior angles are less than or equal to $\pi$. If $\tau$ passes through the interior of $P$, then $I=\tau^{-1}(P)$ is a nondegenerate closed and bounded interval and $\tau^{-1}(\partial P)=\partial I$.
\compat{This was added on Sept 29, 2022}
\end{corollary}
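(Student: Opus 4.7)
My plan is to combine \Cref{intersections with polygons} with a case analysis driven by the interior-angle hypothesis on $P$. First I would apply \Cref{intersections with polygons} to conclude that $I = \tau^{-1}(P)$ is either empty or a closed and bounded interval (possibly degenerate). The hypothesis furnishes a time $t_0$ with $\tau(t_0) \in P^\circ$, and continuity of $\tau$ together with openness of $P^\circ$ forces an open neighborhood of $t_0$ to map into $P^\circ \subset P$; this neighborhood lies in $I$, so $I$ is non-degenerate and I can write $I = [a,b]$ with $a < b$. Since $P^\circ$ is open, neither $\tau(a)$ nor $\tau(b)$ can lie in $P^\circ$ (a two-sided neighborhood of the endpoint would otherwise map into $P$, contradicting $a = \inf I$ or $b = \sup I$), so $\partial I \subset \tau^{-1}(\partial P)$. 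The reverse containment $\tau^{-1}(\partial P) \subset I$ is automatic because $\partial P \subset P$.

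It remains to show that $\tau^{-1}(\partial P) \cap (a,b) = \emptyset$. Because $\tau(I) \subset P$, the open interval $(a,b)$ is partitioned into $U := \tau^{-1}(P^\circ) \cap (a,b)$ and $K := \tau^{-1}(\partial P) \cap (a,b)$. Continuity of $\tau$ makes $U$ open, and the hypothesis makes $U$ nonempty. My strategy is then to show that $K$ is also open; connectedness of $(a,b)$ will then force one of the two sets to be empty, and it will have to be $K$.

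The main obstacle is verifying openness of $K$: given $t_1 \in K$, I need $\tau$ to remain on $\partial P$ in a whole neighborhood of $t_1$. I would carry out a case analysis on the type of the point $\tau(t_1)$, using that every interior angle of $P$ is at most $\pi$. If $\tau(t_1)$ lies in the interior of an edge $e$, it is nonsingular (edges carry no interior singularities), so $\tau$ locally follows a single leaf; by \Cref{transversality}, either that leaf crosses $e$ transversely---forcing $\tau$ to exit $P$ on one side, incompatible with $t_1 \in (a,b)$---or it has the same slope as $e$, in which case $\tau$ coincides with the edge's leaf and stays on $\partial P$ near $t_1$. If $\tau(t_1) = v$ is a nonsingular vertex of $P$, then $v$ is non-straight so the interior sector at $v$ has size $\theta_P^v < \pi$; since a leaf through a nonsingular point consists of two opposite directions and such a sector cannot contain both, at least one side of $v$ along $\tau$ lies outside $P$, again contradicting $t_1 \in (a,b)$.

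The delicate case is when $\tau(t_1) = v$ is a singular vertex. Here $\tau$ genuinely transitions at $v$. I would set up stellar coordinates so that the interior sector of $P$ at $v$ is $[0, \theta_P^v] \subset [0,\pi]$, and let $\theta_A,\theta_B \in [0,\theta_P^v]$ denote the directions from $v$ toward the previous and next points on $\tau$; these lie in the closed interior sector because $\tau$ stays in $P$ on both sides of $t_1$. The trail angle condition requires $\measuredangle pvr \geq \pi$ and $\measuredangle rvp \geq \pi$, and these two angles sum to the total angle $(\alpha(v)+2)\pi \geq 3\pi$ at $v$. A short computation splitting into the subcases $\theta_A \leq \theta_B$ and $\theta_A > \theta_B$ pins this down to $\theta_P^v = \pi$ and $\{\theta_A,\theta_B\} = \{0,\pi\}$; equivalently, $v$ must be a straight vertex and $\tau$ follows the two edges meeting at $v$ on the two sides of $t_1$, so $\tau$ remains on $\partial P$ near $t_1$. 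This completes the case analysis, shows that $K$ is open, and concludes the proof by connectedness.
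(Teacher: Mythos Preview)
Your proof is correct, modulo one small oversight: when $\tau(t_1) = v$ is a nonsingular vertex you assert that $v$ is non-straight and hence $\theta_P^v < \pi$, but nothing in the hypotheses rules out a nonsingular vertex with interior angle exactly $\pi$. In that case the two edges at $v$ have the same slope (they are opposite rays in the stellar picture), so locally $\partial P$ is a single leaf segment and the situation reduces to your edge-interior case; this is an easy patch.

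Your route differs from the paper's. Both proofs agree on the first half (that $I = [a,b]$ is a non-degenerate closed bounded interval, via \Cref{intersections with polygons} or its ingredients). For the second half, rather than proving that $K = \tau^{-1}(\partial P)\cap(a,b)$ is open by a case analysis on boundary points, the paper takes a connected component $U_0$ of $U = \tau^{-1}(P^\circ)$ and argues directly at an endpoint $p$ of $U_0$: the prong of $\tau(U_0)$ at $p$ lies in the \emph{open} interior sector of $P$ at $p$ (because it comes from $P^\circ$, it cannot coincide with either boundary edge), while any continuation of $\tau$ into $P$ lies in the closed sector of size at most $\pi$; hence the angle between them on that side is strictly less than $\pi$, violating the trail angle condition, so $\tau$ must exit $P$ at $p$. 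This single observation forces the endpoints of $U_0$ to coincide with $a$ and $b$ with no case split on whether $p$ is an edge point, a nonsingular vertex, or a singular vertex. Your connectedness argument is a valid alternative, but the paper's version is shorter precisely because it exploits the asymmetry between a prong arriving from $P^\circ$ (strictly interior to the sector) and a hypothetical prong remaining in $P$ (only in the closed sector).
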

\begin{proof}
Since $P$ is closed and $\tau$ is continuous, $I=\tau^{-1}(P)$ is closed. Since $\tau$ is proper, $I$ is bounded. If $I$ were not an interval, there would be a bounded open interval $J$ that is a component of $\R \setminus I$, and the restriction of $\tau$ to the closure $\bar J$ would lead to a contradiction to \Cref{no returning trails}.
Observe that $I$ is nondegenerate because $U=\tau^{-1}(P^\circ)$ is open, nonempty and contained in $I$.

Observe that $\tau(t) \in \partial P$ if and only if $t \in I \setminus U$. Thus, it remains to show that $U$ is the interior of $I$. Let $U_0 \subset U$ be a connected component of $U$. Observe that $\tau(U_0)$ is an arc of $\tau$ in the interior of $P$ whose endpoints $\tau(\partial U_0)$ lie in $\partial P$. Let $p \in \tau(\partial U_0)$ be one of those endpoints. The interior angle at $p$ is at most $\pi$, so any path $\ell:[0,1) \to P$ with $\ell(0)=p$ such that $\ell\big((0,1)\big)$ is contained in a leaf must make an angle that is strictly less than $\pi$ with $\tau(U_0)$ at $p$. But by the angle condition of trails, the continuation of $\tau$ outside of $U_0$ cannot make such an angle with $\tau(U_0)$ at $p$, so after $\tau$ passes through $p$ it immediately leaves $P$. Therefore $U$ only has one connected component, whose endpoints are also endpoints of $I$. In particular $U$ is the interior of $I$.
\end{proof}

\section{Zebra surfaces with boundary}
\label{sect:boundary}

\commb{section 4 is very long and much of it is what some readers would think of as obvious. I am not advocating changing anything but I suggest dividing it into two sections. The first one, from the beginning to the end of subsection 4.7, could be called "zebra surfaces with boundary", and the rest could be called "surgeries of zebra surfaces". Also in the opening paragraph to the first of these, we should say some motivating and apologetic remarks, e.g. "surgeries are useful but require some preparations", "there are no surprises but there is no literature on topological foliations," etc. Of course this also requires a tiny change to the part of the introduction describing the organization of the paper} \compat{Seems reasonable. I'll go ahead and do it.}

Surgeries are useful but require some preparations, namely a clear definition of zebra structure on a surface with boundary. There are no surprises but we were unable to find work on topological singular foliations on surfaces with boundary that was sufficiently detailed for our needs.

\subsection{Surfaces with boundary}

Let $\bbU=\{(x,y) \in \R^2:~y \geq 0\}$ be the closed upper half-plane, whose boundary is $\partial \bbU=\{0\} \times \R \subset \bbU$.
A {\em surface with boundary} $S$ is a second countable Hausdorff space that is locally homeomorphic to $\bbU$. That is, for each point $p \in S$, there is an open neighborhood $N$
and an open subset $U \subset \bbU$ together with a homeomorphism $h:N \to U$. A point $p$ is said to be {\em in the boundary} of $S$ if $h(p) \in \partial \bbU$, and the set of points in the boundary is denoted $\partial S$. It is a standard observation that this definition is well-defined, that $\partial S$ is a $1$-manifold, and that $\partial S$ is a closed subset of $S$. The points in $S^\circ=S \setminus \partial S$ are said to be {\em interior points} of $S$.

\subsection{Sectors}
Recall the objects constructed in \Cref{sect:standard singularities}: The spaces $\Pi_n$ which are $n$-fold covers of $\Pi_{-1}=\R^2/-I$ branched over the origin, the stellar functions $\rho_n: \Pi_{n} \setminus \{\0\} \to \hat \R$, and the stellar foliations of $\Pi_n \setminus \{\0\}$. As in \Cref{sect:stellar function}, a ray in $\Pi_n$ is a connected component of $\rho_n^{-1}(m) \subset \Pi_{n} \setminus \{\0\}$, where $m \in \hat \R$ is some slope.

If $r_1$ and $r_2$ are distinct rays in some $\Pi_n$, then $r_1 \cup \{\0\} \cup r_2$ is a simple curve that divides $\Pi_n$ into two connected components. A {\em sector} $\sigma \subset \Pi_n$ is the union of $r_1 \cup \{\0\} \cup r_2$ and one of the connected components. This is an example of a surface with boundary, with $\partial \sigma = r_1 \cup \{\0\} \cup r_2$. As we move outward along one of the boundary rays, $\sigma$ is on the left. We call this ray the {\em initial ray}. As we move out along the other ray, the sector $\sigma$ is on the right. We call this second ray the {\em terminal ray}. A sector also has an interior angle defined in \Cref{sect:gauss-bonnet}, which measures the angle from the initial ray to the terminal ray. A {\em half-plane sector} is a sector whose interior angle is $\pi$. We write $\sigma^\ast$ for $\sigma \setminus \{\0\}$.

\subsection{Horizontal foliations of sectors}
Recall that $\sH_n$ denotes the horizontal foliation of $\Pi_n$. If $\sigma \subset \Pi_n$ is a sector, its horizontal foliation is $\sH_{\sigma}=\sH_n|_{\sigma^{\ast}}$, which is really a foliation of $\sigma^\ast$.
Note that the boundary rays of a sector are either horizontal leaves or everywhere transverse to the horizontal foliation. See \Cref{fig:boundary} for some examples.

\begin{figure}[htb]
\centering
\includegraphics[width=5in]{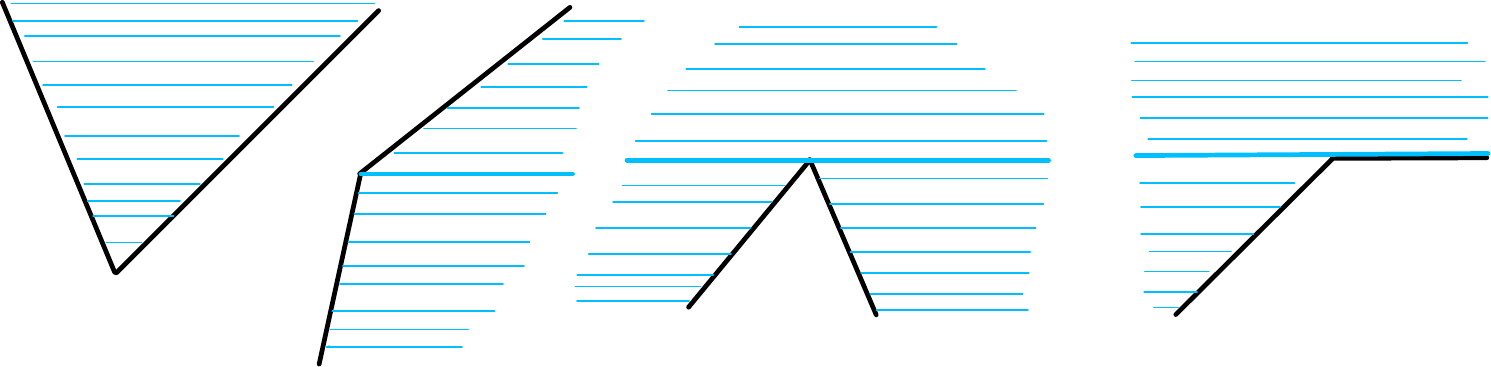}
\caption{Some sectors in $\Pi_0$ with their horizontal foliations.}
\label{fig:boundary}
\end{figure}

We say two sectors $\sigma_1 \subset \Pi_m$ and $\sigma_2 \subset \Pi_n$ have {\em isomorphic horizontal foliations} if there is an orientation-preserving homeomorphism $h:\sigma_1 \to \sigma_2$ such that $h(\0)=\0$ and $h|_{\sigma_1^\ast}:\sigma_1^\ast \to \sigma_2^\ast$ is an isomorphism between the horizontal foliations of these sectors.
Observe:

\begin{proposition}
\label{prop:foliation-equivalent}
Two sectors $\sigma_1$ and $\sigma_2$ have isomorphic horizontal foliations if and only if the two statements ``the initial ray of $\sigma_i$ is horizontal'' and ``the terminal ray of $\sigma_i$ is horizontal'' each have a truth value independent of the choice $i \in \{1,2\}$ and the number of horizontal rays in the two sectors is the same.
\end{proposition}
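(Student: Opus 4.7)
The plan is to prove the two implications separately, with the forward direction being essentially bookkeeping and the backward direction requiring an explicit construction followed by a gluing argument.

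For the forward direction, suppose $h:\sigma_1 \to \sigma_2$ is an isomorphism of sectors with isomorphic horizontal foliations. Since $h$ is an orientation-preserving homeomorphism of surfaces with boundary sending $\0$ to $\0$, it must send the initial ray of $\sigma_1$ to the initial ray of $\sigma_2$ and terminal to terminal. A boundary ray $r_i\subset \partial\sigma_i$ is horizontal if and only if $r_i \setminus \{\0\}$ is a leaf of $\sH_{\sigma_i}$: one direction is immediate, and the other holds because non-horizontal boundary rays are transverse to the foliation and hence are not unions of leaves. Being a leaf is an isomorphism invariant, so the horizontal-status of each of the two boundary rays is preserved by $h$. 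Finally, the horizontal rays in $\sigma_i$ (interior and boundary combined) are characterized topologically as those leaves of $\sH_{\sigma_i}$ whose closure in $\sigma_i$ contains $\0$; non-horizontal leaves in $\Pi_n$ do not pass through $\0$ and have closures disjoint from $\0$. Since $h$ sends $\0$ to $\0$ and leaves to leaves, it carries this collection bijectively, so the two counts agree.

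For the backward direction, assume the boundary horizontal-status and total count of horizontal rays match. Cut each $\sigma_i$ along its interior horizontal rays into a finite ordered sequence of sub-sectors $\sigma_i^{(1)},\dots,\sigma_i^{(k)}$, where $k-1$ is the number of interior horizontal rays. By hypothesis, $k$ is the same for $i=1,2$, and the pair of boundary horizontal-statuses of $\sigma_1^{(j)}$ matches that of $\sigma_2^{(j)}$ for each $j$: the interior sides introduced by cutting are all horizontal, and the two outer-most sides inherit the horizontal-status of the boundaries of $\sigma_i$. Each such sub-sector has interior angle in $(0,\pi]$, and angle exactly $\pi$ occurs precisely in the sub-case that both of its boundary rays are horizontal.

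Next, I would establish the following key lemma: any two sub-sectors of matching type are isomorphic as foliated spaces. Each sub-sector of angle $\leq \pi$ lifts through the covering $\Pi_n^\ast \to \Pi_{-1}^\ast$ to a sub-sector of $\Pi_0=\R^2$ of the same angle and type, so it suffices to handle the four canonical models in $\R^2$: (A) a half-plane with both sides horizontal, (B) a quadrant with only the initial side horizontal, (C) the mirror version, and (D) a wedge of angle $<\pi$ in the open upper half-plane. In each model, leaves are parameterized by a height coordinate and each leaf is a linearly ordered continuum, so an explicit foliation isomorphism between any two sub-sectors of the same type can be built by first choosing a matching of the leaf spaces and then an affinely interpolated matching within each leaf. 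These isomorphisms extend continuously to $\0$.

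Finally, I would glue the sub-sector isomorphisms along their shared interior horizontal rays to produce the desired $h$. The main subtlety is compatibility: for each interior horizontal ray $r \subset \sigma_1$ (shared by adjacent sub-sectors $\sigma_1^{(j)}$ and $\sigma_1^{(j+1)}$) and its counterpart $r' \subset \sigma_2$, the two sub-sector isomorphisms on either side must agree along $r \to r'$. I would handle this by first fixing an arbitrary homeomorphism $r \to r'$ fixing $\0$, and then using the flexibility in the construction above (the freedom to reparameterize each leaf) to build each sub-sector isomorphism with the prescribed behavior on its horizontal boundary rays. The resulting map $h:\sigma_1 \to \sigma_2$ is a continuous bijection with $h(\0)=\0$ carrying leaves to leaves. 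The principal technical point to verify carefully is the compatibility of the gluing, but the flexibility of the models in matching prescribed boundary data makes this a routine, if somewhat tedious, exercise.
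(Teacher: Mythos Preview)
Your proof is correct and follows essentially the same strategy as the paper's: decompose along horizontal rays and match pieces. The differences are in execution rather than substance.

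For the forward direction, you give a careful topological characterization of horizontal rays (as the leaves whose closure contains $\0$) and of horizontal boundary rays (as boundary components that are leaves rather than transversals). The paper simply states that the conditions are ``clearly necessary.'' Your version is more complete.

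For the backward direction, both arguments cut by horizontal rays, but the paper organizes things differently. Rather than classifying sub-sectors into four types and building homeomorphisms from scratch, the paper first handles the no-horizontal-ray case in one line using the $3$-transitivity of the M\"obius action of $\PSL(2,\R)$ on $\hat\R$: an element fixing the slope $0$ is upper-triangular, hence preserves the horizontal foliation, and can send any non-horizontal ordered pair of boundary slopes to any other. For sectors containing horizontal rays, the paper lifts both sectors into a single large $\Pi_n$, rotates to align, and then applies affine maps half-plane by half-plane to make the sectors literally coincide; the resulting isomorphism is piecewise affine. Your approach trades this explicit linear-algebraic construction for a more hands-on ``reparameterize the leaf space, then reparameterize within each leaf'' argument, followed by a gluing along the interior horizontal rays. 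Both work; the paper's version avoids the gluing compatibility check because the affine maps automatically agree on the shared horizontal rays, while your version has the advantage of treating all four sub-sector types uniformly without invoking $\PSL(2,\R)$.
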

\begin{proof}
The listed conditions are clearly necessary for an isomorphism to exist. To see the converse, we need to break into cases. Recall that the Möbius action of $\PSL(2,\R)$ on $\hat \R$ (obtained from the action of lines in $\R^2$ through the origin) acts transitively on counterclockwise ordered triples in $\hat \R$. So, if $\sigma_1, \sigma_2 \subset \Pi_{-1}$ are two sectors that do not contain the horizontal ray in $\Pi_{-1}$, there is an affine map of $\Pi_{-1}$ which carries $\sigma_1$ to $\sigma_2$ and preserves the horizontal foliation. Every sector $\sigma \subset \Pi_n$ containing no horizontal rays is isomorphic to such a sector in $\Pi_{-1}$ under an isomorphism obtained by restricting the covering map $\Pi_n \to \Pi_{-1}$ to $\sigma$, so this handles the case when there are no horizontal rays in the sectors.

Now suppose that the same number of horizontal rays of $\sigma_1 \subset \Pi_m$ and $\sigma_2 \subset \Pi_n$ exist. By lifting to a common cover, we can assume that $m=n$ and the complementary angles of the sectors are at least $2 \pi$. By rotation of $\Pi_n$, we can assume that if the initial rays are both horizontal, then they are the same, and if they are not horizontal then they lie in the same half-plane $H_i$ with horizontal boundary. We can act affinely on $H_i$ as before while preserving the horizontal foliation to make the portions of the sectors in $H_i$ coincide. Then because the sectors contain the same number of horizontal rays, the terminal rays coincide if they are horizontal or lie in the same half-plane $H_r$ with horizontal boundary if not. Again, we can act affinely to make the sectors coincide. In this case, there is a piecewise affine map between the sectors that gives an isomorphism of the foliations.
\end{proof}

\subsection{The model space}
\label{sect:model space}
Let $\sS$ be a set of sectors, with one from each horizontal foliation isomorphism class.
Consider the model space
$$X_\partial = \Pi_{-1} \sqcup \bigsqcup_{n=1}^\infty \Pi_n \sqcup \bigsqcup_{\sigma \in \sS} \sigma
\quad \text{with its {\em horizontal foliation}} \quad
\sH_\partial = \sH_{-1} \sqcup \bigsqcup_{n=1}^\infty \sH_n \sqcup \bigsqcup_{\sigma \in \sS} \sH_\sigma.$$
We let $X_\partial^\ast$ denote the space $X_\partial$ with the origin removed from each $\Pi_n$ and each sector in $\sS$. Then, $\sH_\partial$ is a foliation of $X_\partial^\ast$.

Imitating the definition in \Cref{sect:singular foliations}, we define the pseudogroup $G_\partial$ to consist of all orientation-preserving homeomorphisms $h:U \to V$ between open subsets $U,V \subset X_\partial$ such that the following two statements hold:
\begin{enumerate}
\item $h(U \cap X_\partial^\ast) = h(U) \cap X_\partial^\ast$.
\item The restriction $h|_{U \cap X_\partial^\ast}$ is in the foliation pseudogroup of $(X^\ast_\partial, \sH_\partial)$.
\end{enumerate}
Note that elements of $G_\partial$ send boundary points of sectors to boundary points of sectors. As in \Cref{sect:singular foliations},
statement (1) guarantees that origins are sent to origins. In particular, if the origin in a sector is in the domain of an element of $G_\partial$, then it must be sent to the origin of another sector. \commb{There is something confusing about the terminology. "is in pseudogroup $G_{\partial}$" is a term with several quantifiers. You never define $G_{\partial}$ as an independent object. So I guess you are now referring to the foliation pseudogroup? At first reading I misunderstood and thought that $G_{\partial}$ could be a general pseudogroup, and then it was unclear what "this" referred to.} \compat{I added a clarifying remarks related to the way elements of $G_\partial$ must act on origins of sectors. This seemed to be a source of confusion in \Cref{sect:Singular foliations on surfaces with boundary}.}

It will be convenient to notice that $(X_\partial, \sH_\partial)$ looks fairly uniform, with points having standard neighborhood. We will use this to simplify our gluing arguments. Recall that $\bbU$ denotes the closed upper half-plane. Let $\bbV=\{(x,y) \in \R^2:~x \geq 0\}$ be the closed right half-plane. Both half-planes are surfaces with boundary that come with horizontal foliations obtained by restricting the horizontal foliation $\sH$ on $\R^2$.

\begin{proposition}
\label{prop:surjective}
Let $U \subset X_\partial$ be open and $p \in U$. Then:
\begin{enumerate}
\item If $p$ lies in the interior of $X_\partial$ and is not the origin of a $\Pi_n$, then there
is an open neighborhood $V \subset U \cap \Pi_n^\ast$ of $p$ such that $(V, \sH_\partial|_V)$ is isomorphic to $(\R^2, \sH)$
under a homeomorphism carrying $p$ to $\0$.
\item If $p \in \partial \sigma \setminus \{\0\}$ for some sector $\sigma$, then there is an open interval $I \subset U \cap \partial \sigma \setminus \{\0\}$ containing $p$ such that for any subinterval $J \subset I$ containing $p$ there is an open neighborhood of $p$, $V \subset U$, such that $V \cap \partial \sigma=J$ and $(V, \sH_\partial|_V)$ is isomorphic to $(\bbU, \sH|_\bbU)$ or $(\bbV, \sH|_\bbV)$ and such that $p \mapsto \0$ under this map. (See \Cref{fig:subspaces} for an example.)
\label{item:boundary ray}
\item If $p=\0 \in \sigma$ for some sector $\sigma$, then there is an open neighborhood $V \subset U$ of $p$ such that $(V, \sH_\partial|_V)$ is isomorphic to $(\sigma, \sH_\sigma)$. Furthermore, if $\sigma$ contains at least one horizontal ray, then there is an open interval $I \subset U \cap \partial \sigma$ containing $\0$ such that for any open subinterval $J \subset I$ containing $\0$ there is an open neighborhood $V \subset U$ of $p$ such that $V \cap \partial \sigma=J$ and $(V, \sH_\partial|_V)$ is isomorphic to $(\sigma, \sH_\sigma)$.
\label{item:surjective sector origin}
\commb{better "horizontal boundary ray" or better yet "if at least one of the initial and terminal ray is horizontal". At first I thought you mean "contains a horizontal ray", and the sector on the left side of the diagram does contain a horizontal ray. This threw me off}
\compat{I really mean $\sigma$ contains at least one ray that is horizontal. It doesn't need to be a boundary ray. In an attempt to emphasize this I changed "has a horizontal ray" to "contains a horizontal ray".}
\commb{It wasn't clear to me why you needed to treat these two cases separately. Is the stronger statement false for sectors which don't have a horizontal boundary ray?}
\compat{The stronger statement is only when the sector contains at least one horizontal ray (in the boundary or not). The issue is that if there are no horizontal rays in a sector $\sigma$, then the horizontal leaves join pairs of points in $\partial \sigma$. Any isomorphism must respect this pairing. Therefore in order for the stronger statement to hold in the case when $J$ has no horizontal rays, the interval $J$ would have to be preserved under the involution swapping points in these pairs.}
\item If $p=\0 \in \Pi_n$, then there is an open neighborhood $V \subset U$ of $p$ such that $(V, \sH_\partial|_V)$ is isomorphic to $(\Pi_n, \sH_n)$.
\end{enumerate}
\end{proposition}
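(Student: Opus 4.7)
I would handle the four cases by case analysis, using three basic tools: (a) the radial dilation action of $\R_+$ on each $\Pi_n$, which fixes $\0$ and preserves $\sH_n$ because dilations preserve slopes; (b) standard foliation charts for $\sH_n$ at non-singular points; and (c) the classification of sectors up to foliation isomorphism given in \Cref{prop:foliation-equivalent}. Producing the abstract model for $V$ in each case is quick; the real work is arranging the neighborhoods to lie in $U$ and, for the boundary cases, to intersect $\partial \sigma$ in a prescribed interval.

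For (4), I would fix a relatively compact neighborhood $W \subset U$ of $\0 \in \Pi_n$ and use a sufficiently small radial dilation to produce a homeomorphic copy $V$ of $\Pi_n$ inside $W$; since radial dilations preserve $\sH_n$, the dilation is a foliation isomorphism from $\Pi_n$ onto $V$. Part (1) is essentially a restatement of the definition of foliation: at any non-singular interior point $p$, a shrunk foliation chart for $\sH_n$ yields the required $V$. For (2), let $r$ be the boundary ray through $p$. If $r$ is horizontal, then near $p$ the boundary coincides with a leaf, and a foliation chart sending this leaf to $\partial \bbU$ works. If $r$ is not horizontal, then horizontal leaves cross $r$ transversely at $p$, and parameterizing a neighborhood of $p$ by an arc $I \subset r$ together with short horizontal flow into $\sigma$ yields a chart to $(\bbV, \sH|_{\bbV})$. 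In either case, for any prescribed $J \subset I$ containing $p$, one can select $V$ as the union of horizontal leaf segments emanating from points of $J$, shrinking the flow lengths so that $V \subset U$.

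For (3), the first statement is the sector analogue of (4): the radial dilation on $\Pi_n$ restricts to a map $\sigma \to \sigma$ that fixes $\0$ and preserves $\sH_\sigma$, so a sufficiently small dilate lies inside $U$. The second statement, which requires that the boundary interval $J$ be realized exactly, is where the hypothesis that $\sigma$ contains a horizontal ray becomes essential. I would subdivide $\sigma$ along all horizontal prongs emanating from $\0$ into subsectors, each of which has at least one horizontal boundary ray. In each subsector, the horizontal foliation provides a transverse coordinate, and I would use this to deform the non-horizontal boundary arcs via leaf-preserving reparameterizations while keeping horizontal boundary rays pointwise fixed; these deformations then glue across the shared horizontal prongs. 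The main obstacle will be ensuring the subsector deformations glue consistently; the hypothesis that $\sigma$ contains a horizontal ray supplies the needed flexibility, since without any horizontal ray the horizontal foliation pairs the two boundary rays of $\sigma$, and an asymmetric choice of $J$ would violate this pairing.
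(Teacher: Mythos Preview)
Your treatment of (1) and (2) is fine and matches the paper's approach in spirit; the paper just makes the chart explicit by taking an axis-aligned rectangle (or half-rectangle) centered at $p$ and sending it onto $\R^2$, $\bbU$, or $\bbV$ by the product map $(x,y)\mapsto(\tan x,\tan y)$.

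There is, however, a genuine gap in your arguments for (4) and for the first sentence of (3). You propose to take a relatively compact $W\subset U$ and use ``a sufficiently small radial dilation to produce a homeomorphic copy $V$ of $\Pi_n$ inside $W$.'' But a linear radial dilation $p\mapsto\lambda p$ is a homeomorphism of $\Pi_n$ onto all of $\Pi_n$; its image is never contained in a bounded $W$, no matter how small $\lambda$ is. What you would need is a \emph{nonlinear} radial rescaling $p\mapsto f(|p|)\cdot p/|p|$ with $f:[0,\infty)\to[0,r)$, and such maps do \emph{not} preserve $\sH_n$: the horizontal line $y=c$ is sent to a curve whose height varies with $x$. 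Radial maps preserve the stellar function $\rho_n$, but that is the wrong invariant here; the horizontal foliation is only preserved by maps of the form $(x,y)\mapsto\bigl(g(x,y),h(y)\bigr)$. So tool (a) in your plan cannot do what you ask of it.

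The paper's remedy is anisotropic rather than radial. For (3) with a horizontal ray, it reduces via \Cref{prop:foliation-equivalent} to a sector with horizontal and vertical boundary rays, hence of angle $n\pi/2$, and builds $V$ as a union of $n$ small axis-aligned rectangles meeting edge-to-edge at $\0$; each rectangle is mapped onto its right-angle subsector by a product map like $(x,y)\mapsto(\tan x,\tan y)$, and these agree on the shared horizontal/vertical edges. The same rectangle decomposition handles (4) and, with suitable coordinates $(m,y)$, the no-horizontal-ray case of (3). Your plan for the second statement of (3)---subdivide along horizontal prongs and deform each piece---is actually much closer to this correct picture than your plan for (4); you should use that idea throughout.
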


Observe that in particular, this means that we can think of $(\R^2, \sH)$, $(\bbU, \sH|_\bbU)$ and $(\bbV, \sH|_\bbV)$ as part of our model space $X_\sigma$: These spaces are all realizable up to isomorphism by subsets of $X_\partial$.

\begin{figure}[htb]
\centering
\includegraphics[height=1.5in]{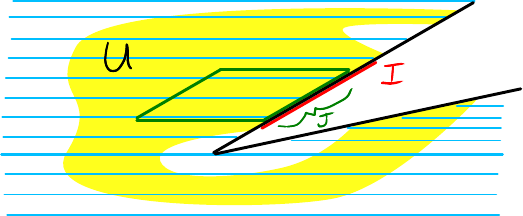}
\hspace{0.5in}
\includegraphics[height=1.5in]{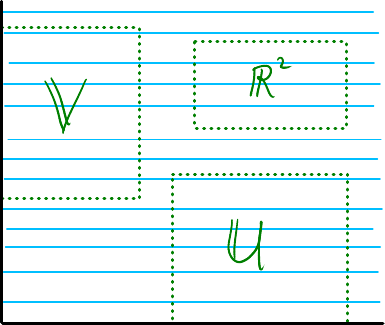}
\caption{{\em Left:} The situation of \hyperref[item:boundary ray]{statement (\ref{item:boundary ray})} of \Cref{prop:surjective}. The parallelogram has a foliation isomorphic to $(\bbV, \sH|_{\bbV}$).
{\em Right:} A sector containing subspaces isomorphic to $(\R^2, \sH)$, $(\bbU, \sH|_\bbU)$ and $(\bbV, \sH|_\bbV)$.}
\label{fig:subspaces}
\end{figure}

\begin{proof}
To see (1), we choose $V \subset U$ to be an open rectangle in $X_\partial^\ast$ with center $p$. Identify $V$ with $(-\frac{\pi}{2},\frac{\pi}{2})^2$ affinely and in these coordinates define the map to $\R^2$ by
\begin{equation}
\label{eq:tan}
(x,y) \mapsto (\tan x, \tan y).
\end{equation}
Observe that this map is an isomorphism as desired.

To see (2), observe that using \Cref{prop:foliation-equivalent}, we can assume that the boundary ray of $\sigma$ containing $p$ is either vertical or horizontal. Then we can find a rectangle $R \subset U$ containing $p$ that intersects $\partial \sigma$ in an interval $I \subset \partial \sigma \setminus \{\0\}$. Then for any $J \subset I$ containing $p$, we can define $V \subset R$ to be a smaller rectangle with boundary $J$. Place coordinates on $V$ of the form $(-\frac{\pi}{2},\frac{\pi}{2}) \times [0, \frac{\pi}2)$ or $[0,\frac{\pi}2) \times (-\frac{\pi}{2},\frac{\pi}{2})$ where $p$ is given coordinates of $(0,0)$. Then \eqref{eq:tan} defines a map to $\bbU$ or $\bbV$, respectively that is an isomorphism as claimed.

Now consider the first statement of (3) in the case when $\sigma$ has no horizontal rays. In this case $\sigma$ has isomorphic horizontal foliations to the sector depicted on the left side of \Cref{fig:boundary}. So, assume that all $\sigma \subset \Pi_0$ which we identify with $\R^2$. Rays in $\sigma$ are uniquely determined by their slope $m$, and the horizontal leaves are uniquely determined by their $y$-coordinate in $\R$. We can assume by possibly applying a $180^\circ$ rotation that $y \geq 0$ on $\sigma$. Then $(m,y)$ is a coordinate system on $\sigma$. Given $U$ containing $p=\0 \in \sigma$, there is a $y_0>0$ such that the triangle $V=\{(m,y) \in \sigma:~y <y_0\} \subset U$. Then the homeomorphism $V \to \sigma$ given by $(m,y) \mapsto (m, \tan \frac{\pi y}{2 y_0})$ is an isomorphism from $(V,\sH_\sigma|_V)$ to $(\sigma, \sH_\sigma)$ as desired.

In case $\sigma$ has a horizontal ray, the second statement of (3) implies the first. To see the second statement holds, suppose $p=\0 \in \sigma$ where $\sigma$ has a horizontal ray. Then by \Cref{prop:foliation-equivalent}, we can assume that $\sigma$ has horizontal and vertical boundary rays. The interior angle of $\sigma$ is $\frac{n \pi}{2}$ for some $n \geq 1$.
Then we can find a neighborhood $N \subset U$ of $\0$ that is a union of $n$ rectangles meeting edge-to-edge, with each rectangle in a subsector whose interior angle is $\frac{\pi}{2}$ with horizontal and vertical sides. See \Cref{fig:triple sector}. We let $I=N \cap \partial \sigma$ which is an interval. For any open subinterval $J \subset I$ containing $\0$, we can construct a neighborhood $V \subset N$ of $\0$ from a union of $n$ rectangles meeting edge-to-edge such hat $V \cap \partial \sigma=J$. Then each rectangle $R$ of $V$ can be mapped homeomorphically onto the subsector of $\sigma$ containing the rectangle via a map such as \eqref{eq:tan}. Observe that the maps agree along the common edges of rectangles, and so together they define a homeomorphism $V \to \sigma$ that gives the desired isomorphism.

\begin{figure}[htb]
\centering
\includegraphics[width=3.5in]{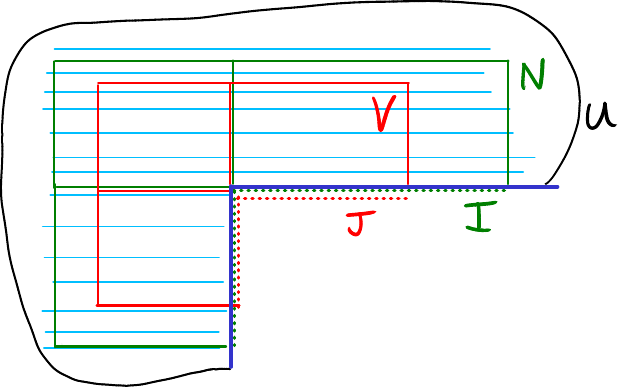}
\caption{A neighborhoods $V$ and $N$ of $\0$ built from rectangles in an open subset $U$ of a sector with horizontal and vertical sides and interior angle $\frac{3\pi}{2}$.}
\label{fig:triple sector}
\end{figure}

A similar argument proves (4), because a neighborhood $V \subset U$ of $\0 \in \Pi_n$ can be constructed from a union of $2n$ rectangles with $\0$ as a vertex meeting edge-to-edge.
\end{proof}

Let $Y$ be a surface possibly with boundary and $\sF$ be a foliation of a subset $Y^\ast \subset Y$. (We are only interested in the cases when $Y=\Pi_n$, $Y$ is a sector, or $Y \in \{\bbU,\bbV\}$.) Note that we can recover $Y^\ast$ from $\sF$ as it is the union of all leaves in $\sF$. We'll say that an {\em automorphism} of $(Y,\sF)$ is a homeomorphism $h:Y \to Y$ such that $h(Y^\ast)=Y^\ast$ and $h|_{Y^\ast}$ is an isomorphism from $(Y^\ast, \sF)$ to itself. The collection of orientation-preserving automorphisms forms the group $\Aut_+(Y,\sF)$.
\compat{I changed $X$ in this paragraph to $Y$ based on concerns that Barak brought up ($X$ is already defined in equation \eqref{eq:model space}). Similar changes below.}

\begin{proposition}
\label{automorphisms of models}
\begin{enumerate}
\item If $(Y,\sF) \in \{(\bbU, \sH|_{\bbU}), (\bbV, \sH|_{\bbV})\}$ then $Y^\ast=Y$ and any orientation-preserving homeomorphism $h:\partial Y \to \partial Y$ can be continuously extended to an element of $\Aut_+(Y,\sF)$. \commb{I don't understand this. We start with $X^*$ being some subset of X and suddenly it has to equal X? Why can't it be a proper subset? Or do you mean that X and $X^*$ are as defined in formula (2)?} \compat{$Y^\ast$ is always the union of leaves in the foliation. I added a sentence pointing this out to this effect to the paragraph above.}
\label{item:automorphism of half-space}
\item If $\sigma$ is a sector with at least one horizontal ray, then any orientation-preserving homeomorphism $h:\partial \sigma \to \partial \sigma$ such that $h(\0)=\0$ can be continuously extended to an element of $\Aut_+(\sigma,\sH_\sigma)$.
\label{item:automorphism of sector 1}
\item If $\sigma$ is a sector that contains no horizontal rays, then there is a unique involution $\iota:\partial \sigma \setminus \{\0\} \to \partial \sigma \setminus \{\0\}$, swapping the two components, such that for all $p \in \partial \sigma \setminus \{\0\}$, the leaf of $\sH_\sigma$ through $p$ also passes through $\iota(p)$. If $h:\partial \sigma \to \partial \sigma$ is any orientation-preserving homeomorphism such that $h(\0)=\0$ and such that $h|_{\partial \sigma \setminus \{\0\}}$ commutes with $\iota$, then $h$ can be continuously extended to an element of $\Aut_+(\sigma,\sH_\sigma)$. \compat{Added that $\iota$ is unique. To prevent back and forth, note that \href{https://livexp.com/blog/a-unique-or-an-unique-which-article-is-correct-in-this-case/}{a unique is correct}.}
\label{item:automorphism of sector 2}
\end{enumerate}
\end{proposition}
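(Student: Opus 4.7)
My plan is to construct the required automorphisms explicitly, starting from the simplest case.

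For \hyperref[item:automorphism of half-space]{(1)}, every point of $\bbU$ lies on some horizontal leaf (the boundary being the leaf $\{y = 0\}$), and every point of $\bbV$ lies on a horizontal ray with endpoint on $\partial \bbV$, so $Y^\ast = Y$ in both cases. An orientation-preserving $h: \partial \bbU \to \partial \bbU$ has the form $(x, 0) \mapsto (f(x), 0)$ for an increasing homeomorphism $f: \R \to \R$, which I would extend by $H(x, y) = (f(x), y)$; this preserves each leaf $\{y = c\}$ setwise. For $\bbV$, writing $h$ as $(0, y) \mapsto (0, g(y))$, I extend by $H(x, y) = (x, g(y))$, sending the leaf $\{y = c\}$ to $\{y = g(c)\}$. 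Both $H$ are orientation-preserving homeomorphisms by inspection.

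For \hyperref[item:automorphism of sector 2]{(3)}, the absence of horizontal rays in $\sigma$ forces its interior angle to be strictly less than $\pi$, since consecutive horizontal rays in $\Pi_n$ are exactly $\pi$ apart; consequently each horizontal line that meets the open sector does so in a compact segment with one endpoint on each of the two boundary rays. Defining $\iota(p)$ to be the other endpoint of the leaf through $p$ yields the unique involution of $\partial \sigma \setminus \{\0\}$ with the stated property, and it swaps the two components. Under the commutativity hypothesis, the leaf $\ell_p$ through $p$ is sent to the leaf $\ell_{h(p)}$ through $h(p)$, so I would extend $h$ by parameterizing each leaf linearly via the ambient $x$-coordinate in $\Pi_n$ and transporting the parameter from $\ell_p$ to $\ell_{h(p)}$. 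Continuity in the interior follows from continuity of the endpoints, and continuity at $\0$ follows because leaves shrink to $\0$ as both endpoints approach $\0$.

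For \hyperref[item:automorphism of sector 1]{(2)}, let $r_1, \ldots, r_k$ be the horizontal rays of $\sigma$ in counterclockwise order. They decompose $\sigma$ into sub-sectors of two types: \emph{interior} sub-sectors between consecutive $r_i$ and $r_{i+1}$, each a half-plane with interior angle $\pi$ bounded by two horizontal rays; and at most two \emph{corner} sub-sectors at the ends of $\sigma$ with interior angle in $(0, \pi)$ and one horizontal and one non-horizontal boundary ray (absent when the corresponding ray of $\partial \sigma$ is horizontal). For each $r_i$ I fix an increasing homeomorphism $h_i: [0, \infty) \to [0, \infty)$ with $h_i(0) = 0$, requiring $h_i = h|_{r_i}$ when $r_i \subset \partial \sigma$ and choosing $h_i$ arbitrarily otherwise. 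On an interior sub-sector, identified with $\bbU$ so that its two horizontal boundary rays become the positive and negative $x$-axes with boundary data $h_i$ and $h_{i+1}$, I extend using the recipe from part~(1). On a corner sub-sector with horizontal boundary $r_h$ and interior angle $\beta$, the linear change of coordinates $(u, y) = (x - y \cot \beta, y)$ realizes it as $[0, \infty)^2$ with leaves $\{y = c\}$, and I extend by $(u, y) \mapsto (\tilde h_h(u), \tilde h_n(y))$ using the prescribed boundary maps $\tilde h_h$ and $\tilde h_n$. Because extensions on adjacent sub-sectors both restrict to $h_i$ on $r_i$, the pasting lemma applied to the finite closed cover gives a continuous $H: \sigma \to \sigma$ extending $h$; the same construction applied to $h^{-1}$ (with the $h_i^{-1}$) produces a continuous inverse, so $H \in \Aut_+(\sigma, \sH_\sigma)$.

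The main technical obstacle I foresee is verifying continuity of the piecewise extension in (2) at $\0$, the common vertex of all sub-sectors. I plan to handle this by noting that each sub-sector extension is, in its local coordinates, a product of homeomorphisms fixing $0$, so the image of $p$ tends to $\0$ as $p \to \0$ within any one of the finitely many sub-sectors, giving continuity at $\0$ globally.
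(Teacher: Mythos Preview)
Your proof is correct and follows the same overall strategy as the paper: reduce to product-type extensions on pieces and paste. The execution differs in minor but noteworthy ways. In part~(2) the paper first normalizes (via \Cref{prop:foliation-equivalent}) so that the boundary rays are horizontal or vertical and then cuts $\sigma$ into uniform sub-sectors of angle $\tfrac{\pi}{2}$, each handled as a product $\bar r_{i-1}\times\bar r_i$; you instead cut only along horizontal rays, producing interior half-plane pieces of angle $\pi$ (handled by part~(1)) together with at most two corner pieces of angle $<\pi$ (handled by a shear to $[0,\infty)^2$). Both decompositions work; the paper's is more uniform, yours avoids introducing auxiliary vertical rays. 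In part~(3) the paper uses the product coordinates $(\pi_y,\rho_0)$ on $\sigma^\ast$ and extends via $h_y\times f$ for an arbitrary $f$ on the slope interval, whereas you extend leaf-by-leaf by affine reparametrization in the $x$-coordinate; these are different formulas for the same kind of map. Your discussion of continuity at $\0$ is adequate and the paper does not say more.
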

\begin{proof}
In statement (1), $Y$ is a subset of $\R^2$ and is naturally a product space $Y=\partial Y \times [0, +\infty)$ where leaves are fibers of one of the coordinate projections. Observe that if $f:[0,+\infty) \to [0,+\infty)$ is any homeomorphism then $h \times f \in \Aut_+(Y,\sF)$.

Consider the case of (2) when $\sigma$ has one horizontal boundary ray and one vertical boundary ray and an interior angle of $\frac{\pi}2$. Then we can naturally identify $\sigma$ with $\bar r_1 \times \bar r_2$ where $r_1$ and $r_2$ are the boundary rays whose closures $\bar r_i$ include $\0$. The foliation $\sH_\partial$ consists of the fibers of the perpendicular projection from $\sigma^\ast$ to the vertical boundary ray. Given $h$ as in statement (2), the map $h$ restricts to two homeomorphisms $h_1:\bar r_1 \to \bar r_1$ and $h_2: \bar r_2 \to \bar r_2$. The product $h_1 \times h_2$ lies in $\Aut_+(\sigma, \sH_\sigma)$.

Now consider the general case of a sector $\sigma$ with at least one horizontal ray. By \Cref{prop:foliation-equivalent}, we may assume that the boundary rays of $\sigma$ are either horizontal or vertical. Then the interior angle is $\frac{n\pi}{2}$ for some integer $n \geq 1$, and there are a total of $n+1$ horizontal and vertical rays, say $r_0, \ldots, r_n$ in counterclockwise order. As in the previous part, an $h$ as in (2) determines homeomorphisms $h_0:\bar r_0 \to \bar r_0$ and $h_n:\bar r_n \to \bar r_n$. Choose arbitrary homeomorphisms $h_i:\bar r_i \to \bar r_i$ for the other rays. Observe that the horizontal and vertical rays partition $\sigma$ into $n$-subsectors $\sigma_1, \ldots, \sigma_n$ each bounded by $\bar r_{i-1}$ and $\bar r_i$, and as in the previous part we can think of each $\sigma_i$ as the product $\bar r_{i-1} \times \bar r_i$. Then the map
which restricts to the product $h_{i-1} \times h_i$ on each $\sigma_i$ lies in $\Aut_+(\sigma, \sH_\sigma)$. This completes the proof of (2). \commb{any map}\compat{I think it should be `the map' because the $h_i$ are already defined and the products determine the map.}

Now let $\sigma$ be a sector that has no horizontal rays and consider (3). Then up to isomorphism, we may think of $\sigma$ as lying in the upper half-plane of $\Pi_0$ which we can identify with $\R^2$. The leaves in $\sH_\sigma$ are then fibers of the restriction to $\sigma^\ast$ of the projection $\pi_y:\R^2 \to \R$. Here $\pi_y(\sigma^\ast)=(0,+\infty)$ while $\pi_y(\0)=0$. Note that each $y \in (0,+\infty)$ has two preimages in $\partial \sigma \setminus \{\0\}$ and $\iota$ must swap them. This shows that $\iota$ exists and is unique as claimed. If $h$ is as in statement (3), then $h$ induces a homeomorphism $h_y:[0,+\infty) \to [0,+\infty)$ from its action on leaves. Now consider the stellar function $\rho_0:\Pi_0^\ast \to \hat \R$. Observe that $\rho_0(\sigma^\ast)$ is a closed interval $I$ with $0 \not \in I$, and
\begin{equation}
(\pi_y \times \rho_0)|_{\sigma^\ast}:\sigma^\ast \to (0,+\infty) \times I
\label{eq:dumb product}
\end{equation}
is a homeomorphism. If $f:I \to I$ is any orientation-preserving homeomorphism, then the homeomorphism of $\sigma$ whose restriction to $\sigma^\ast$ is conjugate under \eqref{eq:dumb product} to $h_y \times f$ is in $\Aut_+(\sigma, \sH_\sigma)$.
\end{proof}

\subsection{Definition of singular foliation on surfaces with boundary}
\label{sect:Singular foliations on surfaces with boundary}

Let $S$ be a surface with boundary. A {\em singular foliation atlas} on $S$ is an atlas of charts to $X_\partial$ such that transition functions lie in $G_\partial$. Observe that such an atlas induces a singular foliation on the interior $S^\circ$ in the sense of \Cref{sect:singular foliations}, because the restriction of a chart $\phi:U \to \sigma$ induces a chart from $U \cap S^\circ \to \Pi_n$ obtained by $i \circ \phi|_{U \cap S^\circ}$ where $i:\sigma^\circ \to \Pi_n$ denotes the inclusion of the interior of a sector into the $\Pi_n$ containing it. Thus we get a singular set $\Sigma \subset S^\circ$ that is discrete and closed and a singular data function $\alpha:S^\circ \to \Z_{\geq -1}$ supported on $\Sigma$ as before. Observe that $\Sigma$ is closed as a subset of $S$, because each point $p \in S$ has a neighborhood $N$ such that $N \setminus \{p\}$ contains no singular points. \compat{Made a minor improvement here, pointing out that $\Sigma$ is closed in $S$. Dec 23, 2022}

Given $S$ with a singular foliation atlas, a point $v \in \partial S$ is called a {\em vertex} if there is a chart $\phi:U \to X_\partial$ such that $v \in U$ and $\phi(v)$ is the origin in a sector.
\commb{shouldn't you also require that sigma is not a half plane? If sigma could be a half plane then any point on an edge would satisfy this definition of vertex.} \compat{No, the singular atlas is fixed. So, vertices are determined by the atlas. Furthermore, as part of the definition of $G_\partial$, transition functions must send origins to origins, so the notion of vertex is independent of the chart containing the point. A further subtlety  is that we want to allow ``removable vertices'' (even as we don't allow removable vertices). This will give us more flexibility with our gluing constructions.}
Again, each point $p \in S$ has a neighborhood $N$ such that $N \setminus \{p\}$ contains no vertices, so the collection $\sV \subset \partial S$ of all vertices is both closed and discrete. An {\em edge} of $S$ is a connected component of $\partial S \setminus \sV$. Recalling that $\partial S$ is a $1$-manifold, we see that each edge is also a $1$-manifold. It is important to note that edges can be homeomorphic to an open interval or to a circle (because a component of $\partial S$ might be homeomorphic to a circle and not contain any vertices).

An edge $e$ will be said to be {\em incident} to a vertex $v$ if $v \in \bar e$. Each $v$ has two incident edges (counting with multiplicity, since there could be an edge $e$ and a vertex $v$ such that $e \cup \{v\}$ is homeomorphic to a circle). We will name these two edges to match the terminology used for sectors: As we travel around $\partial S$ with $S^\circ$ on the left, we move from the {\em terminal incident edge} of $v$, to $v$, and then to the {\em initial incident edge to $e$}. Because these two edges may be counted with multiplicity, it is useful to distinguish them with an orientation: We orient them away from $v$. \compat{I made some improvements to the above two paragraphs: The chart $\phi$ wasn't formally correct before (it assumed the codomain was a sector), and I tried to address some of the technical issues with edges. Dec 23, 2022.}

Given a singular foliation atlas on a surface $S$ with boundary, we get a {\em foliation equivalence relation} on $S \setminus (\Sigma \cup \sV)$ as before: the coarsest one for which two points are equivalent if they can be connected by an arc whose image under a chart is contained in a leaf of the foliation $\sH_\partial$ of $X_\partial$. {\em Leaves} are again equivalence classes, and the {\em foliation} associated to the atlas is the collection of equivalence classes. Leaves get a topology by restricting the charts to connected components of intersections with charts as before. Observe that a leaf of $(\sigma, \sH_\sigma)$ that is transverse to a boundary ray has boundary, so leaves $\ell$ of our singular foliation are $1$-manifolds some of which have nonempty boundary $\partial \ell$.

Let $\sF$ be a singular foliation. We'll say $\sF$ is {\em transverse} to an edge $e$ of $\sF$ if for any $p \in e$, the point $p$ is in the boundary of the leaf containing $p$.

\begin{proposition}
\label{edges of singular foliations}
For each edge $e$, either $e$ is a leaf or the singular foliation is transverse to $e$. If a leaf $\ell$ is not an edge, then its interior $\ell^\circ$ is contained in $S^\circ$ and every point in $\partial \ell$ lies in an edge that is transverse to the singular foliation.
\end{proposition}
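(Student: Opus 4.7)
The plan is to establish a local dichotomy at each edge point via the normal forms of \Cref{prop:surjective}(\ref{item:boundary ray}), globalize via connectedness of $e$, and then deduce the second statement by exclusion.

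First I would fix $p \in e$ and apply \Cref{prop:surjective}(\ref{item:boundary ray}) to produce an open neighborhood $V$ of $p$ such that $(V, \sF|_V)$ is isomorphic to either $(\bbU, \sH|_{\bbU})$ or $(\bbV, \sH|_{\bbV})$ via a map sending $p \mapsto \0$ and carrying $V \cap \partial S$ onto the model boundary. In the $\bbU$ model, $\partial \bbU$ is itself a leaf of $\sH|_{\bbU}$, so $V \cap e$ is an open subarc of the leaf through $p$. In the $\bbV$ model, each leaf is a horizontal ray whose unique boundary point lies on $\partial \bbV$, so $p$ is a boundary point of its leaf and the foliation is transverse along $V \cap e$.

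To globalize, let $A, B \subset e$ be the sets of points admitting a $\bbU$, respectively $\bbV$, local model. Both are open (the model persists on a neighborhood), and they are disjoint, since the two conditions—being an interior leaf point with the leaf locally lying in $\partial S$ versus being a boundary leaf point whose leaf interior enters $S^\circ$—are mutually exclusive. Since $e = A \sqcup B$ is connected, one of them equals $e$. The case $B = e$ is the transverse case. In the case $A = e$, the local leaves through points of $e$ piece together, by connectedness of $e$, into a single leaf $\ell$ containing $e$. The key step is to conclude $\ell = e$: the subsets $\ell^\circ \cap \partial S$ and $\ell^\circ \cap S^\circ$ are both open in the connected $1$-manifold $\ell^\circ$ (openness read off the $\bbU$ and $(\R^2, \sH)$ models, respectively), so since $e \subset \ell^\circ \cap \partial S$ is nonempty we get $\ell^\circ \subset \partial S \setminus \sV$; connectedness of $\ell^\circ$ together with $e$ being a component of $\partial S \setminus \sV$ then gives $\ell^\circ = e$. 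A boundary point of $\ell$ would live in a $\bbV$ chart in which the interior of the leaf immediately enters $S^\circ$, contradicting $\ell^\circ = e \subset \partial S$; hence $\partial \ell = \emptyset$ and $\ell = e$.

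For the second statement, suppose $\ell$ is a leaf that is not an edge and take $p \in \ell \cap \partial S$. Since $p$ is in a leaf, $p \notin \sV$, so $p$ lies in some edge $e$. The dichotomy for $e$ cannot yield $A = e$, since then $e$ would itself be a leaf and $p \in e \cap \ell$ would force $e = \ell$ (leaves partition), contradicting the hypothesis. Hence $e$ is transverse and $p \in \partial \ell$. This proves $\ell \cap \partial S \subset \partial \ell$, equivalently $\ell^\circ \subset S^\circ$. Conversely, points of $\partial \ell$ can only arise in $\bbV$ charts, hence lie in $\partial S$ on some edge, which by the above must be transverse. The main obstacle I anticipate is the $A = e$ step; once the $1$-manifold structure of $\ell$ is used together with the fact that leaves miss $\Sigma \cup \sV$, the rest of the argument is essentially bookkeeping.
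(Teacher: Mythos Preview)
Your proof is correct and follows essentially the same approach as the paper: use \Cref{prop:surjective}(\ref{item:boundary ray}) to get a local $\bbU$/$\bbV$ dichotomy at each point of $e$, then globalize by connectedness. The one difference is that where the paper simply asserts ``if each $N_p$ is isomorphic to $(\bbU,\sH|_{\bbU})$, then $e$ is a leaf edge,'' you supply the extra argument (via the open--open decomposition $\ell^\circ = (\ell^\circ\cap\partial S)\sqcup(\ell^\circ\cap S^\circ)$ and connectedness of $\ell^\circ$) showing that the leaf containing $e$ is exactly $e$ rather than possibly larger; this is a genuine detail the paper glosses over, and your handling of it is clean.
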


As a consequence we see that each edge $e$ is either a {\em leaf edge} meaning it is also a leaf, or a {\em transverse edge} meaning that the singular foliation is transverse to $e$.
\begin{proof}
Consider the first statement. Let $p \in e$ and $\phi:U \to X_\partial$ be a chart with $p \in U$. By
\hyperref[item:boundary ray]{(2)} of \Cref{prop:surjective}, there is a neighborhood $N_p \subset U$ containing $p$ such that
$(N_p, \sF|_{N_p})$ is isomorphic to either $(\bbU, \sH|_\bbU)$ or $(\bbV, \sH|_\bbV)$. Doing this for every point in $e$, gives local isomorphisms to these spaces covering $e$. But observe that whenever two neighborhoods $N_p$ and $N_q$ contain a common point of $e$, they must have the same type (either isomorphic to $\bbU$ or $\bbV$) because in $(\bbU, \sH|_\bbU)$ the boundary is a leaf and in $(\bbV, \sH|_\bbV)$ leaves hit the boundary transversely. Since $e$ is connected, it follows that all points have neighborhoods of the same type. If each $N_p$ is isomorphic to $(\bbU, \sH|_\bbU)$, then $e$ is a leaf edge and otherwise it is a transverse edge.

Now consider the last statement. Let $\ell \in \sF$ be any leaf and suppose there is a point $p \in \ell \cap \partial S$. Then $p$ must lie in some edge $e$. If $e$ is a leaf edge, then we must have $\ell=e$. Otherwise $e$ is a transverse edge, and so $p \in \partial \ell$.
Thus if $\ell$ is not an edge, every point of $\partial \ell$ is contained in a transverse edge and $\ell^\circ \subset S^\circ$.
\end{proof}

Now we give a more detailed description of how a singular foliation on a surface with boundary gives rise to a singular foliation of its interior, explaining exactly how the leaves of the two foliations are related:

\begin{corollary}
\label{restriction of singular foliation}
If $\sF$ is a singular foliation of a surface $S$ with boundary, then
$$\sF^\circ=\{\ell^\circ:~\text{$\ell \in \sF$ and $\ell \not \subset \partial S$}\}$$
is a singular foliation of $S^\circ$.
\end{corollary}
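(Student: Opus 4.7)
The plan is to show that restricting the given singular foliation atlas of $\sF$ to $S^\circ$ gives a singular foliation atlas in the sense of \Cref{sect:singular foliations} whose leaves are exactly the sets in $\sF^\circ$. The paragraph defining a singular foliation atlas on a surface with boundary already notes that if $\phi: U \to X_\partial$ is a chart of $\sF$, then $\phi|_{U \cap S^\circ}$, post-composed with the inclusion of the interior of a sector into the ambient $\Pi_n$ when needed, is a chart into $X$, and that transition functions between such restricted charts lie in the horizontal pseudogroup of $X$ because they are restrictions of elements of $G_\partial$. Let $\sG$ denote the singular foliation on $S^\circ$ induced by this restricted atlas; by construction its singular set is $\Sigma$ and its singular data function is $\alpha$. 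What remains to establish is the equality $\sG = \sF^\circ$ as collections of subsets of $S^\circ$.

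For the inclusion $\sF^\circ \subseteq \sG$, I would fix $\ell \in \sF$ with $\ell \not\subset \partial S$ and argue that $\ell^\circ$ lies in a single leaf of $\sG$. By \Cref{edges of singular foliations}, $\ell^\circ \subset S^\circ$. Since $\ell$ is a connected $1$-manifold (possibly with boundary), and by the classification of connected $1$-manifolds with boundary removing the at most two boundary points leaves a connected open $1$-manifold, $\ell^\circ$ is connected. Now for any chart $\phi: U \to X_\partial$ meeting $\ell$, the intersection $U \cap \ell$ lies in a single leaf of $\sH_\partial|_{\phi(U)}$, so after restriction to $U \cap S^\circ$ the intersection $U \cap \ell^\circ$ lies in a single leaf of $\sH_X$ in the restricted chart; thus nearby points of $\ell^\circ$ are $\sG$-equivalent. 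Connectedness of $\ell^\circ$ then forces the whole set into one $\sG$-leaf.

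For the reverse inclusion $\sG \subseteq \sF^\circ$, I would fix a leaf $m$ of $\sG$ and observe that any path in $S^\circ$ locally following leaves in restricted charts is automatically a path in $S$ locally following leaves of $\sH_\partial$ in the original charts, because the restricted charts were obtained by literal restriction of charts of $\sF$ and leaves of $\sH_X$ sit inside leaves of $\sH_\partial$ under the inclusion of $\Pi_n^\ast$ into $X_\partial$. Hence $m$ lies in a single $\sF$-leaf $\ell$. Since $m \subset S^\circ$ is nonempty we must have $\ell \not\subset \partial S$, and so $m \subset \ell \cap S^\circ = \ell^\circ$. Together with the previous paragraph this forces $m = \ell^\circ$, establishing $\sG = \sF^\circ$. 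The only substantive point is the connectedness of $\ell^\circ$, which is the classification of connected $1$-manifolds with boundary; once that is in hand the rest is an unpacking of definitions.
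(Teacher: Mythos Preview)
Your proof is correct and follows essentially the same approach as the paper: restrict the atlas of $\sF$ to $S^\circ$ (post-composing with inclusions $\sigma^\circ \hookrightarrow \Pi_n$ where needed) to obtain a singular foliation atlas, then verify that its leaves coincide with the sets $\ell^\circ$. You are more explicit than the paper in separating the two inclusions $\sF^\circ \subseteq \sG$ and $\sG \subseteq \sF^\circ$, and in invoking the connectedness of $\ell^\circ$ via the classification of $1$-manifolds; the paper handles the first inclusion briefly and leaves the second implicit (both being partitions of $S^\circ\setminus\Sigma$, one containment suffices). One minor imprecision: ``$U\cap\ell$ lies in a single leaf of $\sH_\partial|_{\phi(U)}$'' should read ``each connected component of $U\cap\ell$ lies in a single leaf,'' but this does not affect your local-to-global connectedness argument.
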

\begin{proof}
By \Cref{edges of singular foliations}, each point $p \in S^\circ \setminus \Sigma$ lies in the interior of a leaf of $\sF$. Thus $\sF^\circ$ is a partition of $S^\circ \setminus \Sigma$. To see $\sF^\circ$ is a singular foliation, we need to see that it arises from a singular foliation atlas on $S^\circ$. We'll obtain this atlas by altering the charts of the atlas determining $\sF$. If $\phi:U \to X_\partial$ is a chart for the structure on $S$ and $U_i \subset U$ is a connected component, then $\phi|_{U_i \cap S^\circ}$ is either a map to $\Pi_n$ or to the interior of a sector $\sigma^\circ \subset \Pi_n$ for some $n$. In the latter case, we obtain a new chart by post-composing $\phi|_{U_i \cap S^\circ}$ with the inclusion $\sigma^\circ \to \Pi_n$. To see the corresponding singular foliation $\sF^\circ$ of $S^\circ$ is as claimed, suppose that $\ell \in \sF$ is a leaf with $\ell \not \subset \partial S$. Then \Cref{edges of singular foliations} tells us that that $\ell^\circ \subset S^\circ$. Now observe that any interval $I \subset \ell^\circ$ that is contained in the domain of a chart $\phi:U \to X_\partial$ will also be contained in the domain of altered charts defined as above. Thus, $\ell^\circ \in \sF^\circ$ as desired.
\end{proof}

\subsection{Constructing singular foliations}
\label{sect:constructing singular foliations}

Formally a singular foliation $\sF$ is a partition of a subset of a surface that is determined from a singular foliation atlas. We offer a recipe for proving that a partition $\sF$ is a singular foliation:

\begin{lemma}
\label{construction}
Let $S$ be a surface with boundary. Let $\Sigma \subset S^\circ$ and $\sV \subset \partial S$ be sets such that $\Sigma \cup \sV$ is closed and discrete\compat{added closed Oct 31}. Suppose that $\sF$ is a partition of $S \setminus (\Sigma \cup \sV)$ into path connected subsets. Suppose further that there is an atlas $\sA$ of charts $\phi:U \to X_\partial$ from $S$ to $X_\partial$ such that each chart satisfies the two conditions:
\begin{enumerate}
\item We have $\phi(U^\ast)=\phi(U) \cap X_\partial^\ast$, where $U^\ast = U \setminus (\Sigma \cup \sV)$.
\item The partition of $U^\ast$ given by connected components of intersections of elements $\ell \in \sF$ with $U^\ast$ is the same as the pullback of $\sH_\partial|_{\phi(U^\ast)}$ under $\phi$.
\end{enumerate}
Then, $\sA$ is a singular foliation atlas on $S$ and $\sF$ is the singular foliation determined by $\sA$.
\end{lemma}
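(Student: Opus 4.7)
The plan is to verify two things: first, that the charts in $\sA$ have transition maps lying in $G_\partial$ (so $\sA$ really is a singular foliation atlas), and second, that the partition $\sF$ coincides with the partition into equivalence classes of the foliation equivalence relation associated to $\sA$.

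For the first step, let $\phi_i : U_i \to X_\partial$ be two charts in $\sA$ with $U_1 \cap U_2 \neq \emptyset$, and set $h = \phi_2 \circ \phi_1^{-1}$ on $W = \phi_1(U_1 \cap U_2)$. Applying hypothesis (1) to each $\phi_i$ gives that $\phi_i$ carries $(U_1 \cap U_2) \setminus (\Sigma \cup \sV)$ onto $\phi_i(U_1 \cap U_2) \cap X_\partial^\ast$; combining these for $i = 1, 2$ shows $h(W \cap X_\partial^\ast) = h(W) \cap X_\partial^\ast$, which is the first condition for membership in $G_\partial$. For the second condition, hypothesis (2) says that $\phi_i^{-1}$ pulls back the restricted horizontal foliation $\sH_\partial|_{\phi_i(U_i^\ast)}$ to the partition of $U_i^\ast$ whose pieces are connected components of intersections $\ell \cap U_i^\ast$ for $\ell \in \sF$. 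Restricting to the overlap $(U_1 \cap U_2) \setminus (\Sigma \cup \sV)$, both pullback foliations are determined by the same $\sF$-intersections, so the two restrictions agree. Hence $h$ carries leaves of $\sH_\partial|_{W \cap X_\partial^\ast}$ bijectively to leaves of $\sH_\partial|_{h(W) \cap X_\partial^\ast}$, placing $h$ in the foliation pseudogroup of $(X_\partial^\ast, \sH_\partial)$. Orientation preservation is automatic from the orientation of $S$, so $h \in G_\partial$.

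For the second step, recall that the foliation associated to $\sA$ is the partition of $S \setminus (\Sigma \cup \sV)$ into equivalence classes of the coarsest equivalence relation $\sim_\sA$ for which $p \sim_\sA q$ whenever some chart $\phi : U \to X_\partial$ of $\sA$ has $p, q \in U$ with $\phi(p)$ and $\phi(q)$ in the same leaf of $\sH_\partial|_{\phi(U^\ast)}$. By hypothesis (2), this generating condition is equivalent to saying $p$ and $q$ lie in a common connected component of $\ell \cap U^\ast$ for some $\ell \in \sF$. In particular, since $\sF$ is a partition, any two points related by the generating condition lie in the same element of $\sF$; taking the transitive closure, $p \sim_\sA q$ implies $p, q$ lie in the same element of $\sF$. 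For the reverse inclusion, let $\ell \in \sF$ and $p, q \in \ell$. Path-connectedness of $\ell$ produces a continuous path $\gamma : [0,1] \to \ell$ from $p$ to $q$. Its image is compact, so it is covered by finitely many chart domains from $\sA$; choose $0 = t_0 < t_1 < \cdots < t_n = 1$ so that $\gamma([t_{k-1}, t_k])$ lies in a single chart domain $U_k$. Then $\gamma(t_{k-1})$ and $\gamma(t_k)$ lie in the same connected component of $\ell \cap U_k^\ast$, so are $\sim_\sA$-related; chaining gives $p \sim_\sA q$.

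The main delicate point is the interaction between hypothesis (2) as stated on a single chart and the behavior of pullback foliations under restriction to open subsets and overlaps of domains — one must verify that the local equivalence between the $\sF$-partition and the pullback foliation behaves well under such restrictions. This is essentially immediate from the local-neighborhood formulation of the pullback relation used in the paper, but it is what makes the transition-map argument in the first step go through. The path-connectedness assumption on the pieces of $\sF$ is also essential, since without it a disconnected member of $\sF$ would generally split into multiple $\sim_\sA$-classes.
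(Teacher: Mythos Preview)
Your proof is correct and follows essentially the same two-step strategy as the paper: first verify that transition maps lie in $G_\partial$ using hypotheses (1) and (2), then identify the partition $\sF$ with the foliation equivalence classes. One small terminological slip: the foliation equivalence relation is the \emph{finest} (not coarsest) one in which locally-same-leaf points are equivalent---that is, the relation generated by those pairs---and your subsequent use of ``taking the transitive closure'' shows that this is what you intend. The paper's second step is phrased slightly differently (it shows each $\ell \in \sF$ is contained in a leaf $\ell'$ of the atlas foliation and then argues $\ell = \ell'$ by contradiction), but your direct argument that the two equivalence relations coincide is equivalent and arguably cleaner.
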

\begin{proof}
We claim that $\sA$ is a singular foliation atlas. To this end, let $\phi_1:U_1 \to X_\partial$ and $\phi_2:U_1 \to X_\partial$ be charts. Let $V \subset U_1 \cap U_2$ be a connected component of the intersection. We must show that the associated transition function $\phi_2 \circ \phi_1^{-1}$ lies in $G_\partial$. But, since $G_\partial$ is a pseudogroup, it suffices to show that $h=\phi_2 \circ \phi_1^{-1} |_{\phi_1(V)}$ is in $G_\partial$.

Since $V$ is connected, the images $\phi_1(V)$ and $\phi_2(V)$ must lie either in some $\Pi_n$ or in some sector $\sigma \in \sS$. Then statement (1) of the lemma guarantees that there can be at most one point of $\Sigma \cup \sV$ in $V$, since there is only one point in each $\Pi_n \setminus X_\partial^\ast$ and each $\sigma \setminus X_\partial^\ast$. Furthermore, if such a point exists, it must be sent to the origin in the containing $\Pi_n$ or $\sigma$. Thus $h$ satisfies part (1) of the definition of $G_\partial$.

To see that part (2) of the definition of $G_\delta$ is satisfied by $h$, set $V^\ast=V \setminus (\Sigma \cup \sV)$. Observe that $h$ must be an isomorphism from
$\big(\phi_1(V),\sH_\partial|_{\phi_1(V^\ast)}\big)$ to
$\big(\phi_2(V),\sH_\partial|_{\phi_2(V^\ast)}\big)$ because both foliations pull back to the same partition of $V^\ast$ by statement (2) of the Lemma. Thus $h|_{\phi_1(V^\ast)}$ is in $G_\partial$ as desired.

Let $\sF'$ be the singular foliation determined by the atlas $\sA$. We need to show that $\sF=\sF'$. Let $\ell \in \sF$. Then given any path $\gamma:[0,1] \to \ell$, $\gamma$ stays within the same leaf of $\sF'$ locally because given any $p \in \ell$ there is an open set $U$ and a chart $\phi:U \to X_\partial$ such that connected components of $\ell \cap U$ are obtained by pullback just as they are in the definition of $\sF'$. By hypothesis, elements of $\sF$ are path connected, so this local agreement guarantees that $\ell \subset \ell'$ for some $\ell' \in \sF'$.
Now suppose that $\ell$ was a proper subset of $\ell'$. Then, we can choose a point $p \in \ell$ and a point $q \in \ell' \setminus \ell$. Since $\ell'$ is a connected $1$-manifold, we can join $p$ to $q$ by a path $\eta$. Then we can cover $\eta$ by connected components of $U_i \cap \eta$ where each $U_i$ comes from a chart $\phi_i:U_i \to X_\partial$ in $\sA$. By compactness, we can pass to a finite subcover by these connected components $C_i \subset U_i \cap \eta$ where we now index by $i \in \{1, \ldots, n\}$. Observe that if $p$ and $q$ are in different elements of $\sF$, then some component $C_i$ contains points both in $\ell$ and in $\ell' \setminus \ell$. But this would violate statement (2) of the lemma for the corresponding chart. Thus in fact $\ell=\ell'$.

We've shown that each element of $\sF$ is an element of $\sF'$. But, since both $\sF$ and $\sF'$ are partitions of $S \setminus (\Sigma \cup \sV)$, they must coincide.
\end{proof}

We have the following consequence, which indicates that every singular foliation has a nice atlas:

\begin{proposition}
\label{nice atlas}
Let $\sF$ be a singular foliation on a surface $S$ with boundary. Then there is a singular foliation atlas for $\sF$ such that every chart is a homeomorphism from an open set $U$ to one of the spaces in
$$\{\R^2, \bbU, \bbV\} \cup \{\Pi_n:~n=-1 \text{ or } n \geq 1\} \cup \{\sigma:~ \sigma \in \sS\}$$
such that restriction to $U^\ast=U \setminus (\Sigma \cup \sV)$ is an isomorphism to the respective foliated space in
$$\{(\R^2, \sH), (\bbU, \sH|_{\bbU}), (\bbV, \sH|_{\bbV})\} \cup \{(\Pi_n^\ast,\sH_n):~n=-1 \text{ or } n \geq 1\} \cup \{(\sigma^\ast,\sH_\sigma):~ \sigma \in \sS\}.$$
Furthermore, if $D \subset S$ is any closed discrete subset, then such an atlas can be produced where each point of $D$ appears in the domain of exactly one chart, and such that each $p \in (D \cap S^\circ) \setminus \Sigma$ is sent to the origin $\R^2$, and each $p \in (D \cap \partial S) \setminus \sV$ is sent to the origin in $\bbU$ or $\bbV$. \compat{Added the condition that $D$ be closed Oct 31.}
\end{proposition}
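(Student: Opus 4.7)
The plan is to use Proposition \ref{prop:surjective} as a local normalization result: at every point of $S$ that proposition produces a neighborhood foliation-isomorphic to one of the listed model spaces with the distinguished point sent to the origin. I would assemble such neighborhoods into a global atlas, taking care to respect the additional constraint that each point of $D$ lies in exactly one chart domain, and then invoke Lemma \ref{construction} to conclude that the resulting atlas is a singular foliation atlas for $\sF$.

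First I would set $E = D \cup \Sigma \cup \sV$, which is closed and discrete as a finite union of closed discrete subsets. For each $p \in E$ I would apply the case of Proposition \ref{prop:surjective} determined by the type of $p$, using a chart from a fixed singular foliation atlas for $\sF$ to transfer between $S$ and $X_\partial$: case (4) when $p \in \Sigma$, yielding an image $\Pi_{\alpha(p)}$; case (3) when $p \in \sV$, yielding an image sector $\sigma \in \sS$; case (1) when $p \in D \cap (S^\circ \setminus \Sigma)$, yielding image $\R^2$; and case (2) when $p \in D \cap (\partial S \setminus \sV)$, yielding image $\bbU$ or $\bbV$. In each case the resulting chart $\phi_p:V_p \to X_\partial$ is a homeomorphism onto the claimed model space that sends $p$ to the origin and restricts to a foliation isomorphism on $V_p \setminus (\Sigma \cup \sV)$. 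Parts (2)--(4) of Proposition \ref{prop:surjective} permit shrinking of the boundary intersection to an arbitrarily small open subinterval, and part (1) permits shrinking of $V_p$ outright; since $E$ is closed and discrete I can therefore arrange $V_p \cap E = \{p\}$ for every $p \in E$.

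For each $q \in S \setminus \bigcup_{p \in E} V_p$ I would then apply case (1) of Proposition \ref{prop:surjective} if $q \in S^\circ$ or case (2) if $q \in \partial S$ to obtain a chart $\psi_q : W_q \to X_\partial$ whose image is $\R^2$, $\bbU$, or $\bbV$, shrunk so that $W_q \cap E = \emptyset$; this is possible since $q \notin E$ and $E$ is closed. The collection $\sA = \{\phi_p : p \in E\} \cup \{\psi_q\}$ is then an open cover of $S$ by charts of the required form. Each chart satisfies condition (1) of Lemma \ref{construction} since singularities and vertices map exactly to origins of the model spaces, and each satisfies condition (2) since the restriction to the complement of $\Sigma \cup \sV$ is a foliation isomorphism onto the corresponding piece of $X_\partial^\ast$. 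Lemma \ref{construction} then guarantees that $\sA$ is a singular foliation atlas whose foliation is $\sF$. By construction every $p \in D$ lies in the domain of $\phi_p$ alone: any other $\phi_{p'}$ misses $p$ because $V_{p'} \cap E = \{p'\}$, and every $\psi_q$ misses $p$ because $W_q \cap E = \emptyset$. The main obstacle is really just the bookkeeping for uniqueness on $D$, which is handled by combining the discreteness of $E$ with the shrinking clauses of Proposition \ref{prop:surjective}.
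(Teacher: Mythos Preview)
Your proof is correct and follows essentially the same approach as the paper: produce normalized charts at each point via Proposition~\ref{prop:surjective}, shrink them to isolate the points of the designated discrete set, and invoke Lemma~\ref{construction}. One minor imprecision: the ``shrinking of the boundary intersection'' clause in case~(3) of Proposition~\ref{prop:surjective} only applies when the sector contains a horizontal ray, and case~(4) concerns $\Pi_n$ which has no boundary, but this does not matter since in every case the neighborhood $V$ can be taken inside any prescribed open $U$, and choosing $U$ with $U \cap E = \{p\}$ already suffices.
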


We remark that above we are thinking of $\R^2$, $\bbU$ and $\bbV$ as subspaces of $X_\partial$, see \Cref{fig:subspaces}.

\begin{proof}
By definition $\sF$ is determined by an atlas $\sA$. We will define a new atlas $\sA'$ with maps as described determining the same foliation.

Suppose a closed discrete set $D$ has been provided. Fix a $p \in D$ and a chart $\phi:U \to X_\partial$ in $\sA$ with $p \in U$.
By \Cref{prop:surjective}, for each chart $\phi:U \to X_\partial$ in $\sA$ and each $p \in U$, we can define a new chart $\psi_{\phi,p}$ from a neighborhood $V_{\phi,p} \subset U \setminus (D \setminus \{p\})$ of $p$ to one of the spaces described and whose restriction to $V_{\phi,p}^\ast=V_{\phi,p} \setminus (\Sigma \cup \sV)$ is an isomorphism to the corresponding foliated space. Observe that these isomorphisms will satisfy the last statement. Each of these will be included in the atlas $\sA'$.

For every point $p \in X_\partial \setminus D$, define a chart $\psi_{\phi,p}$ from a neighborhood of $p$, $V_{\phi,p} \subset U \setminus D$, in the same way. Add these charts to $\sA'$. Now the domains of charts in $\sA'$ cover $S$, and there is only one chart containing each point in $D$.

Observe that statements (1) and (2) of \Cref{construction} are satisfied by $\sA'$ since the atlas came from restrictions of charts determining $\sF$. Thus, \Cref{construction} guarantees that $\sF$ is the foliation determined by the atlas $\sA'$ as desired.
\end{proof}

\subsection{Removable vertices}
\label{sect:removable vertices}
By \Cref{prop:foliation-equivalent}, there are only two half-plane sectors up to
isomorphism. These isomorphism classes are represented by the upper and right half-planes in $\Pi_0$. Given a singular foliation $\sF$ on a surface $S$ with boundary, a vertex $v \in \sV$ is {\em removable} if there is a chart $\phi:U \to X_\partial$ such that $v \in U$ and the connected component $C \subset U$ containing $v$ is mapped into a sector that is isomorphic to a half-plane sector. The edges on either side of a removable vertex $v$ are either both leaf edges or both transverse edges. In removing $v$ from the list of vertices, we either join these two leaf edges or add an endpoint to the single leaf with $v$ as an endpoint in the transverse case.

\Cref{construction} can be used to prove the following result:

\begin{proposition}
\label{removing marked points 1}
Let $\sF$ be a singular foliation on a surface $S$ with boundary. Let $\Sigma$ and $\sV$ be the singularity and vertex sets, respectively.
If $\sV' \subset \sV$ is any collection of removable vertices, then there is a unique singular foliation $\sF'$ on $S$ whose vertex set is $\sV \setminus \sV'$ such that every leaf of $\sF$ is contained in a leaf of $\sF'$.
\end{proposition}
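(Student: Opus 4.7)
The plan is to use \Cref{construction} and \Cref{nice atlas} to build the new foliation atlas by simply replacing the offending charts. First, apply \Cref{nice atlas} to $\sF$ with $D = \sV'$, producing a singular foliation atlas $\sA$ in which each $v \in \sV'$ appears in the domain of exactly one chart $\phi_v: U_v \to \sigma_v$ where $\sigma_v$ is a half-plane sector (this uses that $v$ is removable and the local-form classification in \Cref{prop:foliation-equivalent}). By \Cref{prop:surjective} and \Cref{prop:foliation-equivalent}, each $(\sigma_v, \sH_{\sigma_v})$ is isomorphic to either $(\bbU, \sH|_{\bbU})$ or $(\bbV, \sH|_{\bbV})$ via an isomorphism sending $\0$ to $\0$; compose $\phi_v$ with such an isomorphism to obtain a new chart $\phi_v': U_v \to \bbU$ or $\bbV$. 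Let $\sA'$ be the atlas obtained from $\sA$ by replacing each $\phi_v$ with $\phi_v'$. Since $\bbU$ and $\bbV$ contain no sector origins, no point of $\sV'$ is a vertex with respect to $\sA'$, while vertices in $\sV \setminus \sV'$ and singularities in $\Sigma$ are unchanged.

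Next, I define the candidate partition $\sF'$ of $S \setminus \bigl(\Sigma \cup (\sV \setminus \sV')\bigr)$. Using \Cref{edges of singular foliations}, each $v \in \sV'$ is either between two leaf edges or between two transverse edges of $\sF$ (the half-plane condition forces both incident edges to be of the same type). In the leaf-edge case, define $\sF'$ by merging the two leaf edges incident to $v$ together with $\{v\}$ into a single class; in the transverse case, add $v$ to the unique leaf of $\sF$ having $v$ as a boundary point. All other leaves of $\sF$ remain unchanged. Each resulting class is path connected, because a path through $v$ in $\bbU$ or $\bbV$ stays in a single leaf.

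Now apply \Cref{construction} to $S$ with singularity set $\Sigma$, vertex set $\sV \setminus \sV'$, partition $\sF'$, and atlas $\sA'$. Condition (1) holds for the unchanged charts since it held in $\sA$, and it holds for each $\phi_v'$ because $\bbU^\ast = \bbU$ and $\bbV^\ast = \bbV$ together with the fact that $U_v$ was chosen to contain only the removable vertex $v$ and no points of $\Sigma \cup (\sV \setminus \sV')$. For condition (2), the unchanged charts again inherit it from $\sA$ (the partition agrees with $\sF$ away from $\sV'$), and for $\phi_v'$ one observes that the merging rule defining $\sF'$ is exactly what is needed so that the connected components of $\ell \cap U_v^\ast$ for $\ell \in \sF'$ correspond to leaves of $\sH|_{\bbU}$ or $\sH|_{\bbV}$ pulled back by $\phi_v'$. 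This shows $\sF'$ is a singular foliation with the desired vertex set, and by construction every leaf of $\sF$ is contained in a leaf of $\sF'$.

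For uniqueness, suppose $\sF''$ is another such singular foliation. Every leaf $\ell'' \in \sF''$ is a path-connected subset of $S \setminus \bigl(\Sigma \cup (\sV \setminus \sV')\bigr)$ that is a union of leaves of $\sF$ together with some subset of $\sV'$ (since leaves of $\sF$ are maximal path-connected unions of foliated arcs in the common complement, except at the removed vertices). Conversely, at each $v \in \sV'$, the local model for $\sF''$ must be $(\bbU, \sH|_\bbU)$ or $(\bbV, \sH|_\bbV)$, which forces $v$ to belong to the leaf of $\sF''$ combining the two $\sF$-leaves incident to $v$ in exactly the way prescribed by our merging rule. Hence $\sF'' = \sF'$. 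The main technical point, and the one requiring the most care, is verifying condition (2) of \Cref{construction} at the replaced charts, i.e., that the partition defined by merging precisely matches the pullback partition under $\phi_v'$; but this reduces to the concrete topological description of leaves in $(\bbU, \sH|_{\bbU})$ and $(\bbV, \sH|_{\bbV})$ given by \hyperref[item:boundary ray]{(\ref{item:boundary ray})} of \Cref{prop:surjective}.
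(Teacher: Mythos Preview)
Your proof is correct and follows essentially the same approach as the paper: apply \Cref{nice atlas}, replace each chart to a half-plane sector at a point of $\sV'$ by the corresponding chart to $\bbU$ or $\bbV$, and argue uniqueness from the local picture at each removed vertex. The only difference is that you explicitly define the merged partition $\sF'$ first and verify it via \Cref{construction}, whereas the paper simply asserts that the modified atlas $\sA'$ is a singular foliation atlas and reads off $\sF'$ from it; these are two ways of packaging the same observation.
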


\commf{Propositions \ref{removing marked points 1} and \ref{adding marked points 1} are `evident'. If we want at some point to reduce the length of the paper, we can leave the proofs to the reader.} \compat{I agree.}

\begin{proof}
By \Cref{nice atlas}, there is an atlas $\sA$ for $\sF$ such that every point of $\Sigma \cup \sV$ appears in exactly one chart, and such that charts are as described in that proposition. We will alter $\sA$ to define a new atlas $\sA'$. The only charts we alter are those whose domain contains a point in $\sV'$.
Each $v \in \sV'$ appears in the domain of a chart to a half-plane sector,
which is isomorphic to either the upper or right half-plane of $\Pi_0$. We replace this chart with one to $\bbU$ or $\bbV$ obtained by isomorphism to the half-plane in $\Pi_0$ followed by the natural map to $\R^2$. Observe that $\sA'$ is a singular foliation atlas that determines a foliation $\sF'$ as described.

Uniqueness follows from properties of the singular foliations. If $v \in \sV'$, then because $v$ has a neighborhood isomorphic to a half-plane sector, the two edges meeting at $v$ must be either both leaf edges or both transverse edges. If two leaf edges meet at $v$, these two leaf edges must be part of the same leaf of $\sF'$. If the two edges are transverse edges, then there is exactly one leaf whose closure contains $v$ and the union of $\{v\}$ and this leaf is contained in a leaf of $\sF'$. These conditions determine $\sF'$.
\end{proof}

We also have the reverse construction:

\begin{proposition}
\label{adding marked points 1}
Let $\sF$ be a singular foliation on a surface $S$ with boundary. Let $\Sigma$ and $\sV$ be the singularity and vertex sets, respectively.
If $\sV' \subset \partial S \setminus \sV$ is any collections of points such that $\sV \cup \sV'$ is a closed discrete subset of $\partial S$, then there is a unique singular foliation $\sF'$ of $S$ whose vertex set is $\sV \cup \sV'$ such that every leaf of $\sF'$ is contained in a leaf of $\sF$. \compat{Added closed to discreteness condition, Oct 31.}
\end{proposition}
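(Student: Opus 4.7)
The plan is to mirror the proof of \Cref{removing marked points 1}, working in the opposite direction: instead of replacing half-plane-sector charts with half-plane charts, we replace half-plane charts with half-plane-sector charts. First I would apply \Cref{nice atlas} with the closed discrete set $D = \Sigma \cup \sV \cup \sV'$ to obtain a singular foliation atlas $\sA$ for $\sF$ in which every point of $\sV'$ appears in exactly one chart, and by the last clause of that proposition, each $v \in \sV'$ is sent either to the origin of $\bbU$ or to the origin of $\bbV$ (determined by whether the component of $\partial S$ through $v$ is a leaf edge or transverse edge, via \Cref{edges of singular foliations}).

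Next I would modify $\sA$ to $\sA'$ by altering only the charts containing points of $\sV'$. For each such $v$, identify $\bbU$ (respectively $\bbV$) with the upper half-plane (respectively the right half-plane) of $\Pi_0 \cong \R^2$, which is itself a half-plane sector in the model space $X_\partial$, and use the inclusion to obtain a new chart from the same domain to a half-plane sector, sending $v$ to the origin of that sector. All other charts of $\sA$ are kept unchanged. To check $\sA'$ is a singular foliation atlas determining a foliation $\sF'$ with the stated properties, I would verify conditions (1) and (2) of \Cref{construction} applied to the partition $\sF'$ of $S \setminus (\Sigma \cup \sV \cup \sV')$ obtained by refining $\sF$: each leaf of $\sF$ which does not meet $\sV'$ is unchanged; a leaf edge of $\sF$ containing points of $\sV'$ is cut into pieces at those points, giving several new leaf edges of $\sF'$; and a non-edge leaf of $\sF$ whose boundary contains a point of $\sV'$ is replaced by the leaf with that boundary point deleted. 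Both conditions then follow directly from the corresponding properties of $\sA$ together with the local behavior of $(\bbU, \sH|_{\bbU})$ and $(\bbV,\sH|_{\bbV})$ when the origin is treated as the origin of a half-plane sector.

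For uniqueness, suppose $\sF''$ is any singular foliation on $S$ with vertex set $\sV \cup \sV'$ such that every leaf of $\sF''$ is contained in a leaf of $\sF$. By \Cref{edges of singular foliations}, each newly added vertex $v \in \sV'$ has, in either structure, two incident edges which are both of the same type (leaf or transverse); since the edge type for $\sF''$ is forced to match that of $\sF$ (as leaves of $\sF''$ sit inside leaves of $\sF$), the local combinatorics at $v$ are determined. Locally near $v$, the chart for $\sF''$ maps a neighborhood to a half-plane sector; by \Cref{removing marked points 1} applied to the single removable vertex $v$, removing $v$ from the vertex set of $\sF''$ returns a singular foliation that contains $\sF''$ leafwise and agrees with $\sF$ away from $v$, hence equals $\sF$. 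Iterating (or arguing simultaneously) over $\sV'$, we recover $\sF$ from $\sF''$ by removing all points of $\sV'$, which forces $\sF'' = \sF'$.

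The main obstacle I foresee is purely bookkeeping: keeping track of the two possibilities (leaf edge versus transverse edge) at each added vertex and confirming that the local model chosen is consistent with the surrounding atlas. All of the real content is already encapsulated in \Cref{nice atlas}, \Cref{construction}, \Cref{edges of singular foliations}, and \Cref{removing marked points 1}; the argument is essentially the inverse of the previous proposition and presents no genuinely new difficulty.
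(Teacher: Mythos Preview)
Your proposal is correct and takes essentially the same approach as the paper: apply \Cref{nice atlas} with $D=\Sigma\cup\sV\cup\sV'$, then post-compose each chart at a point of $\sV'$ with an isomorphism from $\bbU$ or $\bbV$ to the corresponding half-plane sector in $\sS$. The only difference is that the paper dispatches uniqueness in one line---leaves of $\sF'$ must be the connected components of $\ell\setminus\sV'$ over all $\ell\in\sF$---whereas your argument via \Cref{removing marked points 1} is a somewhat more roundabout way to reach the same conclusion.
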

\begin{proof}
Again by \Cref{nice atlas}, we can produce an atlas $\sA$ for $\sF$ as described there such that each point of $D=\Sigma \cup \sV \cup \sV'$ appears in exactly one chart. Then a point $p \in \sV'$ must be mapped to the origin in either $\bbU$ or $\bbV$. Recall that there are sectors $\sigma_U, \sigma_V \in \sS$ isomorphic to $(\bbU,\sH|_{\bbU \setminus \{\0\}})$
and $(\bbV,\sH|_{\bbV \setminus \{\0\}})$, respectively. We form a new atlas $\sA'$ by post-composing the chart associated to each $p \in \sV'$ with the isomorphism from the codomain ($\bbU$ or $\bbV$) to $\sigma_U$ or $\sigma_V$. This new atlas determines $\sF'$ as described. Uniqueness is clear: Leaves of $\sF'$ must be the connected components of $\ell \setminus \sV'$ taken over all $\ell \in \sF$.
\end{proof}

\section{Surgery on zebra surfaces}
\label{sect:surgery}

\subsection{Gluing singular foliations}
\label{sect:gluing singular foliations}
Let $S$ be an oriented topological surface with boundary and $\sF$ be a singular foliation on $S$. We do not require $S$ to be connected, and we could therefore obtain $S$ as the disjoint union of surfaces with singular foliations. We will explain how to glue edges of $S$ to obtain a new surface (perhaps with a smaller boundary) equipped with a singular foliation.
\commb{Do you mean ``edges of $\partial S$''.}\compat{This seems to just be semantics, but in the definition of edge, we say ``edge of $S$''. I think this is right because you say the edge of a polygon, not edge of the boundary of a polygon.}

Recall that $\partial S$ is a $1$-manifold and edges are connected components of $\partial S \setminus \sV$ where $\sV \subset \partial S$ is a closed discrete subset. \compat{Added closed to discreteness condition, Oct 31.} Let $\sE$ denote the collection of edges of $S$
. \commb{maybe "edges of $\partial S$" instead of "edges of $S$"? I am not sure which terminology is more consistent with what came before}\compat{I think it is fine as is. One says the "edges of a polygon" not the "edges of the boundary of a polygon". I went back and made a minor change for consistency earlier.} Then $\partial S=\sV \cup \bigcup_{e \in \sE} e$. The boundary of an edge $\partial e$ consists of any vertices in the closure $\bar e$ of $e$ viewed as a subset of $\partial S$. Thus $\partial e$ can consist of zero, one, or two vertices.

We will construct the quotient by gluing together closures of edges. Edges have a natural orientation: If we move in the direction of the orientation, the interior $S^\circ$ should be on the left. An {\em edge gluing} is an orientation-reversing homeomorphism between the closures of two edges $\bar e_1 \to \bar e_2$.

An {\em edge identification scheme} for $(S, \sF)$ consists of a collection of edges $E \subset \sE$, a fixed-point free involution $\varepsilon: E \to E$, and a choice of an edge gluing for each $e \in E$ of the form:
$$g_e:\bar e \to \overline{\varepsilon(e)} \quad \text{such that $g_{e} \circ g_{\varepsilon(e)}$ is the identity on $\bar e$ for each $e \in E$.}$$
An edge $e \in \sE$ will be called {\em glued} if $e \in E$ and {\em unglued} if $e \not \in E$.

An edge identification scheme determines an equivalence relation $\sim$ on $\bigcup_{e \in E} \bar e$: The finest equivalence relation such that each $p \in \bar e$ is equivalent to $g_e(p) \in \overline{\varepsilon(e)}$. We extend $\sim$ to an equivalence relation on all of $S$ by defining $\sim$-equivalence classes of a point $p \in S \setminus \bigcup_{e \in E} \bar e$ to be $[p]=\{p\}$. Let $\pi:S \to S/\sim$ be the quotient projection.

Observe that edge gluings send vertices to vertices, because these are endpoints of edges, so the $\sim$-equivalence class of a vertex $[v]$ consists only of vertices. Let $\sV/\sim$ denote the equivalence classes in $S/\sim$ containing vertices. We introduce several notions related to identified vertices. We say $[v]$ is {\em finite} if it is finite as an equivalence class of vertices of $\sV$. We say $[v]$ is {\em completely glued} if for each $e \in \sE$ containing a point of $[v]$ in its closure is glued. Finally, we let $\Prongs([v]) \in \Z_{\geq 0} \cup \{\infty\}$ denote the total number of prongs of $\sF$ taken over all $v \in V$, with each pair of glued leaf edges counting only once.

We will explain how an edge identification scheme satisfying certain requirements leads to a
singular foliation on the quotient surface $S'$. These requirements are:
\begin{enumerate}[label=(\alph*)]
\item Each $[v] \in \sV/\sim$ is finite.
\label{finiteness condition}
\item If $[v] \in \sV/\sim$ is completely glued, then $\Prongs([v])\geq 1$.
\label{prong condition}
\item
For each $e \in E$, the edge $e$ is a leaf edge if and only if $\varepsilon(e)$ is a leaf edge.
\label{type condition}
\end{enumerate}
Note that condition (c) says that glued edges must have the same type, because there are only two edge types: leaf edges and transverse edges.

\begin{theorem}[Surgery on singular foliations]
\label{surgery on singular foliations}
Let $S$ and $\sF$ be as above and suppose $(E, \varepsilon, \{g_e:e \in E\})$ is an edge identification scheme satisfying conditions (a)-(c). Then $S'=S/\sim$ is a surface with boundary $\partial S'=\sV' \cup \bigcup_{e' \in \sE'} e'$, where
$$\sV'=\{[v]\in \sV/\sim~:~\text{$[v]$ is not completely glued}\}
\quad \text{and} \quad
\sE'=\{\pi(e)~:~e \in \sE \setminus E\}.
$$
Also define
$$\Sigma' = \pi(\Sigma) \cup \{[v] \in \sV/\sim~:~\text{$[v]$ is completely glued and $\Prongs([v]) \neq 2$}\}.$$
Let $\sF'$ be the finest partition of $S' \setminus (\Sigma' \cup \sV')$ such that the following two statements hold:
\begin{enumerate}
\item Images of leaves of $\sF$ are contained in leaves of $\sF'$, i.e., for each $\ell \in \sF$, we have $\pi(\ell) \subset \ell'$ for some $\ell' \in \sF'$.
\label{images of leaves}
\item If $v \in \sV$ is a vertex such that $[v]$ is completely glued and $\Prongs([v]) = 2$, and the leaf $\ell \in \sF$ contains a prong emanating from $v$, then we have that $[v]$ is contained in the same leaf of $\sF'$ as $\pi(\ell)$. \compat{Ferran had some concerns about this statement. I tried to address them. Dec 7.}
\label{images of vertices with two prongs}
\end{enumerate}
Then $\sF'$ is a singular foliation on $S'$, with singular set $\Sigma'$, vertex set $\sV'$,
edge set $\sE'$ and singular data function $\alpha':(S')^\circ \to \Z_{\geq -1}$ whose only nonzero values are given by
$\alpha'([p])=\alpha(p)$ if $[p] \in \pi(\Sigma)$ and
$\alpha'([v])=\Prongs([v])-2$ if $[v] \in \Sigma' \setminus \pi(\Sigma)$.
\end{theorem}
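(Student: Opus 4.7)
The plan is to verify the theorem in three stages: first, that $S'$ is a surface with boundary with the stated boundary structure; second, that there is a singular foliation atlas on $S'$ whose associated foliation equals $\sF'$; and third, that the singular data of this atlas is as claimed. Throughout we will use \Cref{nice atlas} to start with a convenient atlas on $S$ in which every point of $\Sigma \cup \sV$ lies in the domain of a unique chart, and each chart maps onto one of the standard models in $\{\R^2,\bbU,\bbV\} \cup \{\Pi_n\} \cup \sS$.

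For the topological part, we show that each equivalence class $[p]$ has a neighborhood in $S'$ that is homeomorphic to $\R^2$ or $\bbU$. The cases are: (i) If $p$ lies in the interior of $S$ or in an unglued edge, then $\pi$ is a homeomorphism near $p$ and the chart on $S$ transports over. (ii) If $p$ lies in the interior of a glued edge $e$, then $[p]=\{p, g_e(p)\}$ and we glue the two half-plane charts (of types $\bbU$ or $\bbV$) along the boundary via $g_e$; condition \ref{type condition} ensures the two half-plane types are compatible for the foliation gluing below, and homeomorphically any two half-planes glued along their boundary yield $\R^2$. (iii) If $[p] \in \sV/\sim$ is not completely glued, the finiteness condition \ref{finiteness condition} lets us glue finitely many sectors along consecutive boundary rays according to the edge gluings, and since at least one edge incident to $[v]$ is unglued the result is again a half-plane neighborhood. (iv) If $[p]$ is completely glued, the finite cyclic chain of sectors glued via $g_e$'s closes up into a neighborhood homeomorphic to $\Pi_n$ for an appropriate $n$; condition \ref{prong condition} is what keeps this neighborhood from collapsing degenerately.

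Next we construct the foliation atlas on $S'$. For cases (i), (ii), (iii) above, the gluings of the chart domains on $S$ descend to orientation-preserving homeomorphisms to $\R^2$, $\bbU$, $\bbV$, or a sector in $\sS$, and condition \ref{type condition} guarantees that along each glued edge the two foliation models (namely $(\bbU,\sH|_\bbU)$ glued to $(\bbU,\sH|_\bbU)$ for leaf edges, and $(\bbV,\sH|_\bbV)$ glued to $(\bbV,\sH|_\bbV)$ for transverse edges, possibly after applying part \hyperref[item:automorphism of half-space]{(1)} of \Cref{automorphisms of models}) piece together to produce a chart whose restriction is an isomorphism with $(\R^2,\sH)$, $(\bbU,\sH|_\bbU)$, or $(\bbV,\sH|_\bbV)$. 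The hard part is case (iv): at a completely glued $[v]=\{v_1,\ldots,v_k\}$ with sectors $\sigma_1,\ldots,\sigma_k$ at the respective $v_i$, we have to reparameterize the sector charts so that the terminal ray of $\sigma_i$ is identified with the initial ray of $\sigma_{i+1}$ in a way that extends across the glued rays as a foliation isomorphism. This is where \Cref{automorphisms of models}, specifically parts \hyperref[item:automorphism of sector 1]{(2)} and \hyperref[item:automorphism of sector 2]{(3)}, is used: after matching the types of the terminal and initial rays (using \ref{type condition} and \Cref{prop:foliation-equivalent}), we precompose each sector chart by an automorphism to make adjacent rays agree as foliated rays, and the cyclically glued sectors form a neighborhood of $[v]$ whose foliation is isomorphic to $\sH_n$ on some $\Pi_n$ with $n+2 = \Prongs([v])$ (pairs of glued leaf edges count once).

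Finally, \Cref{construction} applies to this atlas provided we verify its two conditions for the partition $\sF'$: condition (1) follows because we built the charts so that images of leaves of $\sF$ remain connected pieces of the pulled-back foliation, and condition (2) follows from definition of $\sF'$ as the finest partition satisfying \ref{images of leaves} and \ref{images of vertices with two prongs}. This proves $\sF'$ is a singular foliation and identifies the singular data: at $\pi(p)$ with $p \in \Sigma$, the original chart at $p$ passes through unchanged, giving $\alpha'([p]) = \alpha(p)$; at a completely glued $[v]$ the chart constructed in case (iv) maps to $\Pi_n$ with $n+2 = \Prongs([v])$, giving $\alpha'([v]) = \Prongs([v]) - 2$ (and when $\Prongs([v])=2$ this point is removed from $\Sigma'$ because the local model $\Pi_0$ is foliation-isomorphic to $\R^2$ under the identification used to define $\sF'$ via condition \ref{images of vertices with two prongs}); all other points receive value $0$. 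The main obstacle is the careful bookkeeping in case (iv) to check that the cyclic gluing of sectors really does yield a model $(\Pi_n,\sH_n)$ and that the bijection between pairs of glued edge-prongs and prongs of $\Pi_n$ accounts correctly for the $\Prongs([v])=2$ boundary case.
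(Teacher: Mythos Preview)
Your approach is essentially the same as the paper's: build local charts on $S'$ case-by-case (interior points, glued-edge points, incompletely glued vertices, completely glued vertices) and then invoke \Cref{construction}. Two points deserve more care, however.

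First, \Cref{construction} requires that the partition $\sF'$ consist of \emph{path-connected} subsets; this is not automatic from the ``finest partition'' description and the paper devotes a paragraph to it (introducing the sets $\ell^\bullet$ and chaining them). Your sentence ``condition (2) follows from definition of $\sF'$ as the finest partition'' does not address this hypothesis.

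Second, in case (iv) you inductively adjust each sector chart by an automorphism from \Cref{automorphisms of models} so that consecutive rays agree. The delicate step is closing the cycle: the final sector must match \emph{both} the preceding and the first sector simultaneously. Part \hyperref[item:automorphism of sector 2]{(3)} of \Cref{automorphisms of models} only lets you prescribe the boundary action subject to the involution constraint, so it is not strong enough in general; the paper handles this by cyclically reindexing so that the last sector contains a horizontal ray, which allows use of the stronger part \hyperref[item:automorphism of sector 1]{(2)}. You should make explicit how the cycle closes.
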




\begin{proof}
Let $\sA$ be a singular foliation atlas for $(S,\sF)$. We will use $\sA$ to produce an atlas of charts $\sA'$ from $S'$ to $X_\partial$ satisfying \Cref{construction}. Existence of this atlas will prove that $S'$ is a surface with boundary and \Cref{construction} will guarantee that the atlas is a singular foliation atlas whose singular foliation is the partition $\sF'$ as described above. Since the connected components of the codomain of each chart consists of spaces homeomorphic to $\R^2$ or $\bbU$, it will follow that $S$ is a surface with boundary. Attention to the definition of the charts will verify that $\partial S'$ is as claimed.

In order for this to work, we need to check that the conditions of \Cref{construction} are satisfied. One condition that needs checking is that the elements of $\sF'$ are path connected. We verify this now. For $\ell \in \sF$, let $\ell^\bullet$ denote the union of $\ell$ and any $v \in \sV$ with $[v]$ completely glued and $\Prongs([v])=2$ such that there is a prong emanating from $v$ and contained in $\ell$. Observe that $\ell^\bullet$ is homeomorphic to an interval in $\R$, since the added points would have to be endpoints of an open end of $\ell$. Observe that $\sF'$ (as defined in the statement of the Theorem) can be seen to be the finest partition of $S' \setminus (\Sigma' \cup \sV')$ such that for each $\ell \in \sF$, the image $\pi(\ell^\bullet)$ is contained in a partition element of $\sF'$. Then by definition, for any two points $[p],[q] \in S'$
that lie in the same partition element of $\sF'$, there is a finite sequence
of leaves $\ell_1, \ldots, \ell_n \in \sF$ such that $[p] \in \pi (\ell_1^\bullet)$,
$[q] \in \pi (\ell_n^\bullet)$, and $\pi(\ell_i^\bullet) \cap \pi(\ell_{i+1}^\bullet) \neq \emptyset$ for $i =1, \ldots, {n-1}$. We can then explicitly describe a path from $[p]$ to $[q]$ in $S'$ by moving along $\pi (\ell_1^\bullet)$ from $[p]$ to a point of intersection with $\pi (\ell_2^\bullet)$ then along $\pi (\ell_2^\bullet)$ to a point of intersection with $\pi (\ell_3^\bullet)$, et cetera.

We will also need to check that the atlas $\sA'$ we produce satisfies statements (1) and (2) of \Cref{construction}. But these conditions pertain only to individual charts. So these conditions will be checked as we define our charts for $S'$. We will define charts covering any point not taking part in the gluing, then charts handling each point on a glued edge, and finally charts including any vertex taking part in the gluing.

The set of points that take part in the gluing construction is $G=\bigcup_{e \in E} \bar e$. We claim that $G$ is closed. Since $G \subset \partial S$ and $\partial S$ is closed, we know that $\bar G \subset \partial S$. Now recall
that every point of $\partial S$ is either in an edge or is a vertex. But edges are open in $\partial S$, and a vertex $v \in \sV$ has a neighborhood in $\partial S$ consisting of at most two edges $e$ with $v \in \partial e$. Thus, $\bar G \setminus G$ contains neither edges nor vertices, and therefore $\bar G=G$ as desired.

Because no gluing is taking place in the complement of $G$, the map $\pi$ induces an isomorphism between $(S \setminus G, \sF|_{S \setminus G})$ and $\big(\pi(S \setminus G), \sF'|_{\pi(S \setminus G)}\big)$. (By definition $\sF'|_{\pi(S \setminus G)}$ consists of the connected components of $\ell' \setminus \pi(G)$ taken over $\ell' \in \sF'$.) Given our atlas $\sA$, the collection of restricted charts
$\phi|_{U \setminus G}:U \setminus G \to X_\partial$
taken over all $\phi:U \to X_\partial$ in $\sA$ gives a singular foliation atlas for $\sF|_{S \setminus G}$. Thus pre-composing these charts with $\pi^{-1}$ gives a singular foliation atlas for
$\sF'|_{\pi(S \setminus G)}$, namely the collection of charts
$$\phi \circ \pi|_{S \setminus G}^{-1}:\pi(U \setminus G) \to X_\partial$$
taken over all $\phi:U \to X_\partial$ in $\sA$. We include each such chart in $\sA'$.
Since $\sA$ was a singular foliation atlas for $(S, \sF)$, these charts cover $\pi(S \setminus G)$ and form a singular foliation atlas for $\sF'|_{\pi(S \setminus G)}$. Since they are a singular foliation atlas, each individual chart satisfies statements (1) and (2) of \Cref{construction}.

\commf{(Written next to the paragraph below.) We could invoke statement (2) of Proposition 4.2 and directly work with the $V_i$s. Though formally correct, I find this can be shortened.} \compat{Okay I altered the paragraph to more quickly invoke statement (2) of Proposition 4.2.}

We will now define a chart of $\sA'$ for each pair $(e_1, p_1)$ consisting of an edge $e_1 \in E$ and a point $p_1 \in e_1$. Then $[p_1]=\{p_1,p_2\}$ where $p_2=g_{e_1}(p_1)$. Define $e_2=\varepsilon(e_1)$.
\hyperref[prop:surjective]{Statement (2)} of \Cref{prop:surjective} guarantees we can restrict charts of $\sA$ and post-compose these restricted charts with an isomorphism to obtain new charts for the original structure of the form
$$\psi_i:V_i \to \bbH \quad \text{for $i \in \{1,2\}$,}$$
where $V_1$ and $V_2$ are disjoint neighborhoods of $p_1$ and $p_2$ respectively, $\bbH \in \{\bbU, \bbV\}$ is independent of $i$, and $g_{e_1} \big(e_1 \cap V_1\big)=e_2 \cap V_2.$ Then using \hyperref[item:automorphism of half-space]{statement (\ref{item:automorphism of half-space})} of \Cref{automorphisms of models} we can alter $\psi_2$ by post-composition with an automorphism of $(\bbH,\sH|_{\bbH})$ to ensure
\begin{equation}
\label{eq:edge gluing}
\psi_2 \circ g_e(q)=-\psi_1(q) \quad \text{for any $q \in e_1 \cap V_1$}.
\end{equation}
Then we can define the glued neighborhood and corresponding chart
$$W=\pi(V_1 \cup V_2)
\quad \text{and} \quad
\chi:W \to \R^2; \quad \chi \circ \pi(q) = \begin{cases}
\psi_1(q) & \text{if $q \in V_1$,} \\
-\psi_2(q) & \text{if $q \in V_2$.}
\end{cases}$$
This map $\chi$ is a well-defined homeomorphism since $V_1$ and $V_2$ are disjoint, and the only gluing taking place is according to $g_e$ and our charts were built to satisfy \eqref{eq:edge gluing}. We include $\chi$ in our atlas $\sA'$ by identifying $\R^2$ with a subset of $X_\partial$ as in \Cref{prop:surjective}. The domain $W$ contains no singularities so statement (1) of \Cref{construction} is automatically satisfied. Also observe that statement (2) of \Cref{construction} is satisfied, i.e., $\chi$ induces a bijection between the collection of connected components of $\ell' \cap (W \setminus \{[v]\})$ taken over all $\ell' \in \sF'$ and the elements of the horizontal foliation $\sH$ of $\R^2$, because leaves of $\sF'$ pass through the glued edges according to the edge gluing.

Now suppose $[v]$ is an equivalence class of vertices that is not completely glued.
For each such $[v]$ we will define a single chart on a domain containing $[v]$.
\hyperref[finiteness condition]{Condition \ref{finiteness condition}} tells us that $[v]$ is finite. Because our edge gluings are required to be orientation-reversing, whenever two vertices $v$ and $v'$ are identified by a gluing map $g_e$, the initial incident edge of one of the vertices must be glued to the terminal incident edge of the other vertex.
Say $v' \in [v]$ is the {\em successor} of $v$ if the terminal incident edge of $v$ is glued to the initial incident edge of $v'$. From the preceding remarks, we can enumerate $[v]=\{v_1, \ldots, v_n\}$ such that $v_{i+1}$ is the successor of $v_i$ for $i=1, \ldots, n-1$.
Let $e_i^-$ denote the initial incident edge to $v_i$ and let $e_i^+$ denote the terminal incident edge.
Let $g_i:\overline{e_i^+} \to \overline{e_{i+1}^-}$ denote the gluing maps. For each $i$, fix a chart $\phi_i:U_i \to X_\partial$ such that $v_i \in U_i$. By restricting these charts to smaller open subsets we may assume the $U_i$ are pairwise disjoint. By \hyperref[item:surjective sector origin]{statement (\ref{item:surjective sector origin})} of \Cref{prop:surjective}, for each $i$, we can choose open intervals $I_i^- \subset e_i^-$ and $I_i^+ \subset e_i^+$ with $v_i \in \partial I_i^\pm$ such that:
\begin{enumerate}
\item We have $g_i(I_{i}^+)=I_{i+1}^-$ for $i=1, \ldots, n-1$,
\item For each $i \in \{1, \ldots, n-1\}$, there is an open subset $V_i \subset U_i$ such that $V_i \cap \partial S=I_i^- \cup \{v_i\} \cup I_i^+$ and such that there exists a sector $\sigma_i$ and a homeomorphisms $\psi_i:V_i \to \sigma_i$ that is an isomorphism from $(V_i, \sF|_{V_i})$ to $(\sigma_i, \sH_{\sigma_i})$.
\end{enumerate}
Observe that \hyperref[type condition]{condition \ref{type condition}} guarantees that for $i \in \{1, \ldots, n-1\}$, the terminal ray of $\sigma_i$ is horizontal if and only if the initial ray of $\sigma_{i+1}$ is horizontal. Let $\sigma'$ be the a sector such that the initial ray of $\sigma'$ is horizontal if and only if the initial ray of $\sigma_1$ is horizontal, such that the terminal ray of $\sigma'$ is horizontal if and only if the terminal ray of $\sigma_n$ is horizontal, and such that $\sigma'$ has the same number of horizontal rays as the total number of horizontal rays in $\{\sigma_1, \ldots, \sigma_n\}$ with glued rays counting only once. Using \Cref{prop:foliation-equivalent}, observe that $\sigma'$ can be be decomposed by cutting along rays into sectors $\sigma'_1, \ldots, \sigma'_n$ arranged in counterclockwise order such that each $\sigma_i$ has a horizontal foliation isomorphic to that of $\sigma'_i$ via a map $h_i:\sigma_i \to \sigma_i'$. We will define a chart
$$\chi:W \to \sigma' \quad \text{where} \quad W=\pi\left(\bigcup_{i=1}^n V_i\right)$$
by defining each restriction $\chi|_{\pi(V_i)}:V_i \to \sigma'_i$. We define
\begin{equation}
\label{eq:chi1}
\chi|_{\pi(V_1)} = h_1 \circ \psi_1 \circ \pi|_{V_1}^{-1}.
\end{equation}
Then proceeding by induction, assuming $i \in \{1, \ldots, n-1\}$ and $\chi|_{\pi(V_i)}$ has been defined, we define
\begin{equation}
\label{eq:chi2}
\chi|_{\pi(V_{i+1})} = \alpha_{i+1} \circ h_{i+1} \circ \psi_1 \circ \pi|_{V_1}^{-1},
\end{equation}
where $\alpha_{i+1}:\sigma_{i+1}' \to \sigma_{i+1}'$ is an automorphism chosen so that for all $q \in e_{i+1}^-$, we have
\begin{equation}
\label{eq:well-defined 2}
\chi|_{\pi(V_{i+1})}(q) = \chi|_{\pi(V_{i})} \circ g_{e_{i+1}^-}(q).
\end{equation}
Such an automorphism $\alpha_{i+1}$ exists by statement
\hyperref[item:automorphism of sector 1]{(\ref{item:automorphism of sector 1})}
or
\hyperref[item:automorphism of sector 2]{(\ref{item:automorphism of sector 2})}
of \Cref{automorphisms of models}. Equation \eqref{eq:well-defined 2} guarantees that $\chi$ is well defined. We include $\chi$ in our atlas $\sA'$. The only singular point in the domain is $[v]$ which is sent to $\0 \in \sigma'$, so $\chi$ satisfies statement (1) of \Cref{construction}. The identifications made by $\pi$ on $V_1 \cup \dots \cup V_n$ glue $[v]$ to one point and each $q \in e_{i+1}^-$ to $g_{e_{i+1}^-}(q) \in e_i^+$, just as the map $\chi$ does. By construction, the restriction $\chi|_{V_i}$ sends leaves of $\sF|_{V_i}$ to leaves of $\sH_{\sigma'_i}$. Since the only gluings happening on $\pi^{-1}(W)$ are between the edges already discussed, $\chi$ satisfies statement (2) of \Cref{construction}.

Now suppose $[v]$ is an equivalence class of vertices that is completely glued.
Again we will define a single chart for $\sA'$ on a domain containing $[v]$.
Much of the structure is the same, so we maintain as much of the notation as possible. Here because $[v]$ is completely glued, every $v \in [v]$ has a successor. Thus it is natural to enumerate $[v]$ as
$$[v]=\{v_i:~i \in \Z/n\Z\} \quad \text{where $n$ is the cardinality of $[v]$}$$
so that for all $i$, $v_{i+1}$ is the successor of $v_i$. We define the charts $\phi_i:U_i \to X_\partial$,
intervals $I_i^- \subset e_i^-$ and $I_i^+ \subset e_i^+$, gluing maps $g_i$, open subsets $V_i \subset U_i$,
sectors $\sigma_i$, and homeomorphisms $\psi_i:V_i \to \sigma_i$ as before except that now we require any statement relating an object with index $i$ to an object with index $i+1$ to be true for all $i \in \Z/n\Z$. Let $m=\Prongs([v])-2$. Recall that \hyperref[prong condition]{Condition \ref{prong condition}} guarantees that $m \geq -1$. Then there is a partition of $\Pi_m$ into sectors $\sigma_1', \ldots, \sigma_n'$ cyclically ordered such that $\sigma_i$ and $\sigma_i'$ always have an isomorphic horizontal foliation. Cyclically shift the indexing so that $v_n$ has a horizontal prong. Then $\sigma_n$ and $\sigma_n'$ have horizontal rays. Now we will define a chart
$$\chi:W \to \Pi_m \quad \text{where} \quad W=\pi\left(\bigcup_{i \in \Z/n\Z} V_i\right)$$
inductively. We first define $\chi|_{\pi(V_1)}$ to $\sigma_1'$ as in \eqref{eq:chi1}. Then we inductively define
$\chi|_{\pi(V_{i+1})}$ to $\sigma_{i+1}'$ for $i \in \{1, \ldots, n-2\}$ as in \eqref{eq:chi2} with each $\alpha_{i+1}$ defined
so that \eqref{eq:well-defined 2} holds as before. It remains to define $\chi|_{\pi(V_n)}:\pi(V_n) \to \sigma_n'.$
Recalling that $\sigma_n'$ has a horizontal ray, we can define $\chi|_{\pi(V_n)}$ as in \eqref{eq:chi2}
where this time $\alpha_n: \sigma_n' \to \sigma_n'$ is chosen to satisfy the stronger condition that for each $q_- \in e_{n}^-$ and each $q_+ \in e_n^+$ we have
\begin{equation}
\label{eq:well-defined 3}
\chi|_{\pi(V_{n})}(q_-) = \chi|_{\pi(V_{n-1})} \circ g_{e_{n}^-}(q_-) \quad \text{and} \quad
\chi|_{\pi(V_{n})}(q_+) = \chi|_{\pi(V_{1})} \circ g_{e_{n}^+}(q_+).
\end{equation}
To find such an $\alpha_n$, we need that $\sigma_n'$ has a horizontal ray so that we can use the stronger \hyperref[item:automorphism of sector 2]{statement (\ref{item:automorphism of sector 1})}
of \Cref{automorphisms of models}. Again $\chi$ is a well-defined homeomorphism because it respects all gluing maps. Assuming $m \neq 0$, we add $\chi$ to the atlas $\sA'$. In this case, $[v]$ is a singular point $\alpha'([v])=m$ as in the Theorem. Observe that statements (1) and (2) of \Cref{construction} are satisfied. If $m=0$, then we alter the chart slightly by defining $\chi'= \iota \circ \chi$, where $\iota:\Pi_0 \to \R^2$ is a homeomorphism that restricts to an isomorphism from $(\Pi_0 \setminus \{\0\},\sH_0)$ to $(\R^2 \setminus \{\0\}, \sH|_{\R^2 \setminus \{\0\}})$. We include $\chi':W \to \R^2$ into $\sA'$ by identifying $\R^2$ with a subset of $X_\partial$ as in the edge gluing case. In this case statement (1) of \Cref{construction} is satisfied because $W$ contains no singularities. Statement (2) is satisfied because $\chi'$ respects the edge identifications and because $\chi'$ joins the two leaves containing prongs of points in $[v]$.

Since we have covered $S'=S/\sim$ by charts, this is a surface with boundary consisting only of vertices that were not completely glued and the union of edges not taking part in the gluing. Furthermore, since statements statements (1) and (2) of \Cref{construction} are satisfied for all charts, $\sA'$ is a singular foliation atlas whose singular foliation is $\sF'$.
\end{proof}

\subsection{Stellar functions}
Let $\sigma \subset \Pi_n$ be a sector. Recall the definition of the stellar function on $\Pi_n$ in \Cref{sect:stellar function}. The {\em stellar function} on $\sigma$ is the restriction
$$\rho_\sigma=\rho_n|_\sigma:\sigma^\ast \to \hat \R.$$
This function maps each point in a ray of $\sigma$ to the slope of the ray.

Two sectors $\sigma_1$ and $\sigma_2$ are {\em stellar equivalent} if there is an orientation-preserving homeomorphism $h:\sigma_1 \to \sigma_2$ such that $h(\0)=\0$ and $\rho_{\sigma_1}=\rho_{\sigma_2} \circ h$ on $\sigma_1^\ast$. Observe:
\begin{proposition}
\label{prop:stellar-equivalent}
Two sectors are stellar equivalent if and only if their initial rays have the same slope and their interior angles are the equal.
\end{proposition}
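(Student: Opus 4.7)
The forward direction should be essentially automatic. If $h:\sigma_1\to\sigma_2$ is a stellar equivalence, then $h$ is an orientation-preserving homeomorphism fixing $\0$, so it sends $\partial\sigma_1$ to $\partial\sigma_2$ and, since the initial ray is distinguished among the two boundary rays by having the sector on its left as we move outward, $h$ must carry the initial ray of $\sigma_1$ to the initial ray of $\sigma_2$. Because $h$ intertwines $\rho_{\sigma_1}$ and $\rho_{\sigma_2}$, the two initial rays have the same slope.

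For the equality of interior angles I would pass to polar-coordinate parametrizations. Each sector $\sigma\subset\Pi_n$ has a universal cover parametrization by $(r,\theta)\in(0,\infty)\times[\theta_0,\theta_0+\alpha]$, where $\theta_0$ is a lift of the initial slope, $\alpha$ is the interior angle, and the stellar function in these coordinates is $\rho_\sigma(r,\theta)=\tan\theta$ (reading $\hat\R$ as the image of $\tan$). The restriction $h|_{\sigma_1^\ast}$ lifts to a homeomorphism of these parametrizations that sends the initial boundary ray to the initial boundary ray. Tracking the induced map $\theta\mapsto h_\ast(\theta)$ on angular coordinates, we get a continuous, strictly increasing (by orientation) bijection of intervals satisfying $\tan(h_\ast(\theta))=\tan\theta$. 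Any such function is of the form $\theta\mapsto\theta+k\pi$ for a fixed integer $k$, so the $\theta$-intervals have equal length, i.e.\ $\alpha_1=\alpha_2$, and the initial rays have the same slope (modulo $\pi$, which is the correct notion in $\hat\R$).

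For the converse, assume the initial slopes and interior angles agree. I would construct the stellar equivalence directly using the angular parametrizations above. Write $\sigma_i^\ast$ via polar coordinates on $(0,\infty)\times[\theta_0^{(i)},\theta_0^{(i)}+\alpha]$ where $\theta_0^{(1)}\equiv\theta_0^{(2)}\pmod\pi$. The map $(r,\theta)\mapsto (r,\theta+\theta_0^{(2)}-\theta_0^{(1)})$ is a homeomorphism $\sigma_1^\ast\to\sigma_2^\ast$ which extends continuously by sending $\0\mapsto\0$; it is orientation-preserving (being the identity in $r$ and a translation in $\theta$) and preserves $\tan\theta$ since the shift is a multiple of $\pi$, so it is a stellar equivalence.

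The one step that deserves a little care is verifying that the lift of $h$ to the angular-coordinate parametrizations is well-defined and that $h_\ast$ really is of the form $\theta\mapsto\theta+k\pi$ (rather than some more exotic $\tan$-preserving homeomorphism that jumps across the asymptote); this is handled by noting that $h_\ast$ is continuous on an interval with image an interval and $\tan$ is a homeomorphism on each open interval between asymptotes, so $\tan\circ h_\ast=\tan$ forces $h_\ast-\text{id}$ to be locally constant and integer-multiple-of-$\pi$ valued, hence globally so.
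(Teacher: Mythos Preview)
Your argument is correct and essentially parallels the paper's. For the converse you build the equivalence explicitly in polar coordinates $(r,\theta)\mapsto(r,\theta+k\pi)$, while the paper phrases the same map as a lift of the branched covering $\pi_m:\Pi_m\to\Pi_{-1}$ through $\Pi_n$; these are the same construction in different language. For the forward direction the paper is slightly terser, observing only that the interior angle is recovered from the number of rays of each slope (your $\tan$-preserving analysis of $h_\ast$ is the continuous version of this count), but your more explicit treatment is fine and arguably more self-contained.
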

\begin{proof}
If such an equivalence $h$ exists, then it must send the initial ray to the initial ray and so the slopes of the initial rays are the same. Also the internal angle of the sector can be determined by the number of rays of each slope. To see the converse assume $\sigma_1 \subset \Pi_m$ and $\sigma_2 \subset \Pi_n$ are two sectors with initial rays of the same slope and the same internal angles. Let $\pi_m:\Pi_m \to \Pi_{-1}$ and $\pi_n:\Pi_n \to \Pi_{-1}$ denote the covering maps. There is an $h:\sigma_1 \to \Pi_n$ satisfying $\pi_m=\pi_n \circ h$ obtained by lifting the restriction of $\pi_m$ to $\sigma_1$. Namely, both initial rays have the same image in $\Pi_{-1}$, so we can define $h$ on the initial ray of $\sigma_1$ so it sends this ray to the initial ray of $\sigma_2$. Then because $\sigma_1$ is simply connected, we can extend to all of $\sigma_1$ in a unique way. The image $h(\sigma_1)$ must be a sector with the same initial ray as $\sigma_2$ and have the same angle, so $\sigma_2=h(\sigma_1)$ and $h$ can be seen to be a stellar equivalence.
\end{proof}

\begin{corollary}
\label{stellar and isomorphic}
Stellar equivalence is a finer equivalence relation than the relation of being horizontal foliation isomorphic.
\end{corollary}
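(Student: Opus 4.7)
The plan is to show that stellar equivalence implies horizontal foliation isomorphism by combining the two characterizations of these equivalence relations given by \Cref{prop:stellar-equivalent} and \Cref{prop:foliation-equivalent}. Suppose $\sigma_1$ and $\sigma_2$ are stellar equivalent. Then \Cref{prop:stellar-equivalent} tells us that their initial rays share a common slope $m_0 \in \hat \R$ and their interior angles are equal to a common value $\theta$.

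I would now verify the criteria listed in \Cref{prop:foliation-equivalent}. First, the statement ``the initial ray of $\sigma_i$ is horizontal'' is equivalent to $m_0 = 0$, which is independent of $i \in \{1,2\}$. Second, traversing either sector counterclockwise from the initial ray through interior angle $\theta$ brings us to a terminal ray whose slope depends only on $m_0$ and $\theta$; hence the terminal rays of $\sigma_1$ and $\sigma_2$ also share a common slope, and in particular the truth value of ``the terminal ray of $\sigma_i$ is horizontal'' is independent of $i$. Third, the horizontal rays of $\sigma_i$ correspond precisely to the angles $\varphi \in [0,\theta]$ for which the ray at counterclockwise angle $\varphi$ from the initial ray of $\sigma_i$ has slope zero; this set of $\varphi$ depends only on $m_0$ and $\theta$, so $\sigma_1$ and $\sigma_2$ contain the same number of horizontal rays.

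Since all three conditions of \Cref{prop:foliation-equivalent} are satisfied, $\sigma_1$ and $\sigma_2$ have isomorphic horizontal foliations. No step here looks like it will cause trouble; the result is essentially a bookkeeping consequence of the two preceding propositions, the main point being that once $m_0$ and $\theta$ are fixed, both the slope of the terminal ray and the count of interior horizontal rays are determined.
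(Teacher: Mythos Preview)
Your proof is correct and follows essentially the same approach as the paper: both use \Cref{prop:stellar-equivalent} to deduce that the initial ray slopes and interior angles agree, and then invoke \Cref{prop:foliation-equivalent} to conclude. Your version is in fact slightly more thorough, since you explicitly verify all three conditions of \Cref{prop:foliation-equivalent} (in particular the count of horizontal rays), whereas the paper's proof leaves this last point implicit.
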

\begin{proof}
Suppose $\sigma_1$ and $\sigma_2$ are stellar equivalent. Then by \Cref{prop:stellar-equivalent}, the initial rays have the same slope and they have the same interior angles. It also therefore follows that the terminal rays have the same slope. Thus by statement (1) of \Cref{prop:foliation-equivalent}, the sectors $\sigma_1$ and $\sigma_2$ have isomorphic horizontal foliations.
\end{proof}

Later, we will need to glue sectors together in a manner respecting their stellar functions. Let $\sigma$ be a sector. We'll say that a {\em stellar epimorphism} $\psi:U \to \sigma$ is a homeomorphism whose domain $U$
is an open subset of $\sigma$ containing $\0$ such that $\psi(\0)=\0$ and such that for any ray $r \subset \sigma^\ast$, $\psi(r \cap U)=r$. It then follows that $\rho_\sigma \circ \psi(p)=\rho_\sigma(p)$ for $p \in \sigma^\ast$. The following three propositions ensure we can construct stellar epimorphisms that are useful for our gluing constructions.
None of these proofs are difficult, and we'll leave all but the last to the reader.
\compat{I think we're all supportive of omitting most of these. Ferran suggested keeping the last proof. The proofs of the others are commented out.}

\begin{proposition}
\label{dilating sectors}
If $\sigma$ is a sector and $C \subset \sigma$ is a closed subset that does not contain $\0$, then there is a stellar epimorphism $\psi:U \to \sigma$ with $U \cap C=\emptyset$.
\end{proposition}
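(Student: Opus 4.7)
The plan is to use polar-type coordinates on $\sigma$ coming from the stellar function, then construct $U$ as the region lying below a suitably small positive distance from $\0$ along each ray, and finally define $\psi$ ray-by-ray via a simple reparametrization.

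First, I would identify $\sigma$ with a standard model. Parametrize the rays of $\sigma$ by an angular coordinate $\theta$ ranging over a closed interval $[\theta_0, \theta_1]$ (of length equal to the interior angle of $\sigma$), and parametrize each ray by arc length $t \in (0, \infty)$ using the Euclidean metric lifted from $\Pi_{-1}$ via the branched covering $\Pi_n \to \Pi_{-1}$. This gives a homeomorphism from the quotient $\bigl([\theta_0,\theta_1]\times[0,\infty)\bigr)/\bigl([\theta_0,\theta_1]\times\{0\}\bigr)$ onto $\sigma$, sending the collapsed edge to $\0$ and sending $\{\theta\}\times(0,\infty)$ onto the ray of slope $\rho_\sigma$-value determined by $\theta$. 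Under this identification, the task becomes: given a closed $C$ not containing $\0$, produce a neighborhood $U$ of $\0$ disjoint from $C$ and a homeomorphism $U \to \sigma$ that is the identity on the $\theta$-coordinate.

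Next, define $d:[\theta_0,\theta_1] \to (0,\infty]$ by $d(\theta) = \inf\{t : (\theta,t) \in C\}$, with the convention $\inf\emptyset=\infty$. Since $C$ is closed in $\sigma$ and $\0 \notin C$, one has $d(\theta) > 0$ for every $\theta$, and the infimum is attained whenever finite. A short argument shows $d$ is lower semicontinuous: if $\theta_n \to \theta$ with $d(\theta_n) \to L < \infty$, then the points $(\theta_n, d(\theta_n)) \in C$ converge to $(\theta, L) \in C$, giving $d(\theta) \le L$. Because $[\theta_0,\theta_1]$ is compact and $d$ is lower semicontinuous and strictly positive, $d$ attains a positive minimum $m := \min d > 0$.

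Now set $g := m/2$, and define
\[
U \;=\; \{\0\}\cup\{(\theta,t)\in \sigma : 0 < t < g\}.
\]
Then $U$ is open in $\sigma$, contains $\0$, and is disjoint from $C$ because every point of $C$ lies at distance at least $d(\theta) \ge m > g$ along its ray. Finally, define $\psi:U \to \sigma$ by $\psi(\0)=\0$ and
\[
\psi(\theta,t) \;=\; \Bigl(\theta,\; \tfrac{t}{g-t}\Bigr) \quad\text{for } 0<t<g.
\]
For each fixed $\theta$, the map $t \mapsto t/(g-t)$ is a homeomorphism of $[0,g)$ onto $[0,\infty)$, so $\psi$ is bijective and preserves rays, meaning $\psi(r\cap U)=r$ for every ray $r \subset \sigma^\ast$. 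Continuity on $U\setminus\{\0\}$ is immediate, continuity at $\0$ follows from $t/(g-t) \to 0$ as $t \to 0$ uniformly in $\theta$ (since $g$ is a positive constant), and the inverse $(\theta,s)\mapsto(\theta, sg/(1+s))$ is continuous for the same reason. Thus $\psi$ is the required stellar epimorphism.

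The only step requiring genuine care is the positivity of $\min d$, and this reduces to lower semicontinuity of $d$ together with compactness of the angular parameter interval; both are routine given the standard polar description of a sector.
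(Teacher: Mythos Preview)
Your proof is correct and follows the same approach as the paper's (commented-out) argument: take a small radial neighborhood of $\0$ disjoint from $C$ and stretch each ray via a homeomorphism $[0,g)\to[0,\infty)$. The only difference is that your lower-semicontinuity argument for $d(\theta)$ is more work than needed; since $C$ is closed in $\sigma$ and $\0\notin C$, the Euclidean distance $\epsilon=d(\0,C)$ is automatically positive, and one can take $U$ to be the open $\epsilon$-ball about $\0$ in $\sigma$ directly.
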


\begin{proposition}
\label{interval fix}
If $\sigma$ is a sector and $I \subset \partial \sigma$ is an open interval containing $\0$, then there is a
stellar epimorphism $\psi:U \to \sigma$ with $U \cap \partial \sigma=I$.
\end{proposition}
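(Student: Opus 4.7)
My plan is to set up polar-type coordinates on $\sigma$ and define $\psi$ by a radial stretch. First, let $A$ denote the interior angle of $\sigma$ and parametrize $\sigma \setminus \{\0\}$ by $[0,A] \times (0,+\infty)$ via $p \mapsto (\theta(p), d(p))$, where $\theta(p)$ is the counterclockwise angle from the initial ray to the ray of $\sigma$ through $p$, and $d(p)$ is the Euclidean distance from $\0$ inherited from $\Pi_n$. In these coordinates each ray $r \subset \sigma^\ast$ appears as a vertical slice $\{\theta_r\} \times (0,+\infty)$, and the two boundary rays correspond to $\theta=0$ and $\theta=A$.

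Next, describe $I$ in these coordinates: since $I \subset \partial \sigma$ is an open interval containing $\0$, there exist $a,b>0$ such that $I$ is the union of $\{\0\}$, the points of the initial ray at distance less than $a$ from $\0$, and the points of the terminal ray at distance less than $b$. Choose any continuous function $f:[0,A]\to (0,+\infty)$ with $f(0)=a$ and $f(A)=b$, and set
$$U = \{\0\} \cup \{p \in \sigma \setminus \{\0\} : d(p) < f(\theta(p))\}.$$
Then $U$ is open in $\sigma$, contains $\0$, and $U \cap \partial \sigma = I$ by the choice of $f(0)$ and $f(A)$.

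Finally, define $\psi:U \to \sigma$ by $\psi(\0)=\0$ and, in polar coordinates, $\psi(\theta,t)=\bigl(\theta,\,\tan(\pi t/(2f(\theta)))\bigr)$ for $0 \leq t < f(\theta)$. For each fixed $\theta$ the radial factor is an increasing homeomorphism from $[0,f(\theta))$ onto $[0,+\infty)$ fixing $0$; joint continuity in $(\theta,t)$ on $U \setminus \{\0\}$ is immediate, and $\psi$ preserves the $\theta$-coordinate, so it sends each $r \cap U$ onto the full ray $r$. The main obstacle is continuity of $\psi$ and $\psi^{-1}$ at $\0$, but this follows from the fact that $f$ attains a positive minimum and a finite maximum on the compact interval $[0,A]$, giving the uniform bounds $\tan(\pi t/(2f(\theta))) \leq \tan(\pi t/(2 \min f))$ and $(2f(\theta)/\pi)\arctan(s) \leq (2 \max f/\pi)\arctan(s)$ that force both maps to send points close to $\0$ to points close to $\0$. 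Thus $\psi$ is a stellar epimorphism with $U \cap \partial\sigma = I$, as required.
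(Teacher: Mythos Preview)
Your argument is essentially the same polar-coordinate radial-stretch construction as the paper's (omitted) proof, and it is correct as written. One small point: you implicitly assume $a,b<\infty$, but the proposition allows $I$ to contain an entire boundary ray (so $a$ or $b$ could be $+\infty$); the paper's version handles this by letting the cutoff function take values in $\R_{>0}\cup\{+\infty\}$ and using the identity map on rays where the cutoff is infinite, a trivial adjustment to your setup.
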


\begin{proposition}
\label{stretching sectors}
If $\sigma$ is a sector and $\psi_0:\partial \sigma \to \partial \sigma$ is an orientation-preserving homeomorphism such that $\psi_0(\0)=\0$, then there is a homeomorphism $\psi:\sigma \to \sigma$ extending $\psi_0$ such that $\psi$ is also a stellar epimorphism.
\end{proposition}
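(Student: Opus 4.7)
The plan is to parameterize $\sigma$ by angular/radial coordinates that make the rays of $\sigma$ into the fibers of the angular projection, reducing the problem to interpolating between the two boundary restrictions of $\psi_0$.

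First, I would set up coordinates. Since $\sigma \subset \Pi_n$ is a sector, we can identify $\sigma$ with a set of the form $\{(\phi,r) : \phi \in [\phi_0,\phi_1],\; r \in [0,\infty)\}/\!\sim$ where the origin $\0$ corresponds to $r=0$ (with all $\phi$-values collapsed), $\phi$ is an angular coordinate lifted to the branched cover, and $r$ is the Euclidean distance from $\0$ inherited from $\Pi_n$. Under this identification, the rays of $\sigma^\ast$ are exactly the fibers $\{(\phi, r) : r \in (0,\infty)\}$ of the first coordinate, the initial boundary ray $r_0$ corresponds to $\phi=\phi_0$, and the terminal boundary ray $r_1$ corresponds to $\phi=\phi_1$.

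Next, I would analyze $\psi_0$. Because $\partial\sigma = r_0 \cup \{\0\} \cup r_1$ is homeomorphic to $\R$ with $\0$ separating the two rays, and $\psi_0$ is orientation-preserving and fixes $\0$, the homeomorphism $\psi_0$ must send each boundary ray to itself. Thus $\psi_0$ restricts to increasing homeomorphisms $f_0, f_1 : [0,\infty) \to [0,\infty)$ (on $r_0$ and $r_1$ respectively), each fixing $0$. The key step is to construct the interpolation: for $\phi \in [\phi_0,\phi_1]$, set
$$f_\phi(r) = \lambda(\phi)\, f_0(r) + \bigl(1-\lambda(\phi)\bigr) f_1(r), \quad \text{where } \lambda(\phi) = \frac{\phi_1-\phi}{\phi_1-\phi_0},$$
and then define $\psi(\phi,r) = (\phi, f_\phi(r))$, together with $\psi(\0)=\0$.

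Finally, I would verify the required properties. For each fixed $\phi$, the function $f_\phi$ is a convex combination of two increasing homeomorphisms of $[0,\infty)$ fixing $0$, hence itself an increasing homeomorphism of $[0,\infty)$ fixing $0$; so $\psi$ preserves each ray and is bijective on $\sigma^\ast$ (with inverse $(\phi,r)\mapsto(\phi, f_\phi^{-1}(r))$). Joint continuity of $(\phi,r)\mapsto f_\phi(r)$ and of its inverse on $\sigma^\ast$ is immediate from the formula, and continuity at $\0$ follows from $f_\phi(0) = 0$ together with the local boundedness of $f_\phi(r)$ in $\phi$ for fixed small $r$ (since $f_0, f_1$ are continuous at $0$). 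Restricted to $\phi = \phi_i$ we recover $\psi_0|_{r_i}$, so $\psi$ extends $\psi_0$. The only mildly subtle point — and the one I would handle most carefully — is continuity of $\psi$ at $\0$ given that $\sigma$ may be unbounded; but this follows because any neighborhood of $\0$ in $\sigma$ contains $\{(\phi,r) : r < \delta\}$ for some $\delta>0$, and $f_\phi$ sends $[0,\delta)$ into a set of the form $[0,\max(f_0(\delta), f_1(\delta)))$ uniformly in $\phi$.
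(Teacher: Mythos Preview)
Your proposal is correct and follows essentially the same approach as the paper: parameterize the sector by a ray index and a radial coordinate, observe that $\psi_0$ must preserve each boundary ray and hence determines two increasing homeomorphisms of $[0,\infty)$, and linearly interpolate between them along the angular direction to define $\psi$. The paper's proof is slightly terser (it leaves the verification that the resulting map is a homeomorphism as implicit), while you spell out continuity at $\0$ more carefully, but the construction is the same.
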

\begin{proof}
Let $\{r_t:~t \in [0,1]\}$ denote the collection of rays of $\sigma$ ordered counterclockwise, so $r_0$ is the initial ray and $r_1$ is the terminal ray. Since $\psi_0$ is orientation-preserving and $\psi_0(\0)=\0$, we know $\psi_0(r_0)=r_0$ and $\psi_0(r_1)=r_1$. For $t \in \{0,1\}$, define $h_t:[0,+\infty) \to [0,+\infty)$ such that $h_t(0)=0$ and we have
$$d\big(\0,\psi_0(p)\big)=h_t\big(d(\0,p)\big) \quad \text{if $p \in r_t$.}$$
Then since $\psi_0$ is a homeomorphism, both $h_0$ and $h_1$ are homeomorphisms. We extend our definition of $h_t$ to include $t \in (0,1)$ by defining
$$h_t(x)=(1-t) h_0(x) + t h_1(x) \quad \text{for all $x \in [0,+\infty)$.}$$
Then each $h_t$ is a homeomorphism and the map $(t,x) \mapsto h_t(x)$ is continuous. Observe that the function $\psi: \sigma \to \sigma$ defined by $\psi(\0)=\0$ and $\psi(p)=q$
if $p \in r_t$ where $q \in r_t$ is the point such that
$d\big(\0,q\big)=h_t\big(d(\0,p)\big)$
is a stellar epimorphism such that the restriction to $\partial \sigma$ is $\psi_0$.
\end{proof}

\subsection{Zebra structures on surfaces with boundary}

Let $S$ be a surface with boundary and let $\{\sF_m\}_{m \in \hat \R}$ be a collection of singular foliations on $S$ with the same singular set $\Sigma$, the same singular data $\alpha:S \to \Z_{\geq -1}$, and the same vertex set $\sV$.

A stellar neighborhood of a point $p$ in the interior $S^\circ$ is defined exactly as in \Cref{sect:definition of zebra}. If $p \in \partial S$, a {\em stellar neighborhood} of $p$ is an open neighborhood $U$ of $p$ such that there is a sector $\sigma$ and a homeomorphism  $h:U \to \sigma$ such that $h(p)=\0$ and the following statements hold for each slope $m \in \hat \R$:
\begin{enumerate}
\item For each ray $r \subset \sigma$ of slope $m$, $h^{-1}(r)$ is contained in a leaf of $\sF_m$.
\item For each prong of $\sF_m$ emanating from $p$, there is a ray $r \subset \sigma^\ast$ of slope $m$ such that for any path $\gamma:(0,1) \to r$ with $\lim_{t \to 0^+} \gamma(t)=\0$, the preimage $h^{-1} \circ \gamma$ represents the prong.
\end{enumerate}
This homeomorphism $h$ is called a {\em stellar homeomorphism}. We remark that statement (1) guarantees that if $U$ is a stellar neighborhood $U$ of a point $p$, then there can be at most one singularity or vertex in $U$, namely $p$ itself.
\compat{There was a typo in the proof above (the last proof from the previous subsection) where I referred to stellar homeomorphism but meant epimorphism. I corrected this. This seemed to lead to complaints from both of you. Anyway, check if you still have objections related to this. (Stellar epimorphisms and stellar homeomorphisms are fairly different unfortunately- maybe less than ideal choice of terminology?)} \compat{Added the last sentence and split the paragraph here after a suggestion by Ferran related to the proof of \Cref{surgery}.}

We say $\{\sF_m\}$ is a {\em stellar foliation structure} or a {\em zebra structure} on a surface $S$ with boundary if the singular foliations all have the same singular set, same singular data, and same vertex set, and each point $p \in S$ has a stellar neighborhood. The pair $(S, \{\sF_m\})$ will be called a zebra surface with boundary.

Again a {\em leaf} in a zebra surface is a leaf of any of the foliations $\sF_m$. Because the foliations all have the same singular set and the same vertices, they also have the same edges. Each edge is a leaf:

\begin{proposition}
\label{edges of zebras with boundary}
Let $\{\sF_m\}$ be a zebra structure on a surface $S$ with boundary. Then:
\begin{enumerate}
\item For each edge $e$, there is a unique $m$ such that $e$ is a leaf of $\sF_m$.
\label{item:slope of an edge}
\item If $p$ is a point of an edge $e$, then the stellar homeomorphism associated to $p$ is a homeomorphism to a half-plane sector.
\end{enumerate}
\end{proposition}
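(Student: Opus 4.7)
The plan is to analyze the stellar homeomorphism at a point $p$ in the interior of the edge $e$, using it first to identify the slope of the foliation of which $e$ is a leaf, and then to force the interior angle of the sector to be exactly $\pi$.

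Given a stellar homeomorphism $h : U \to \sigma$ at $p \in e$, after possibly shrinking $U$ I may assume $U \cap \partial S \subset e \cup \{p\}$. Let $r_1, r_2$ be the initial and terminal rays of $\sigma$, with respective slopes $s_1, s_2 \in \hat \R$. The homeomorphism $h$ carries $U \cap \partial S$ onto $\partial \sigma$ and sends $p$ to $\0$, so the arcs $h^{-1}(r_1)$ and $h^{-1}(r_2)$ are exactly the two components of $(e \cap U) \setminus \{p\}$. By condition (1) of the definition of stellar homeomorphism, each arc $h^{-1}(r_i)$ is contained in a leaf of $\sF_{s_i}$. Since these arcs lie in $e$, \Cref{edges of singular foliations} implies that $e$ is a leaf of both $\sF_{s_1}$ and $\sF_{s_2}$.

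I would next deduce $s_1 = s_2$ and the uniqueness in (1). Just as in the interior case discussed in \Cref{sect:definition of zebra}, conditions (1) and (2) together establish a bijection between the prongs of $\sF_m$ at $p$ and rays of slope $m$ in $\sigma$, for each $m \in \hat \R$. Because $\Sigma \subset S^\circ$, the point $p$ is non-singular, so the unique leaf of $\sF_{s_1}$ through $p$ is $e$, contributing exactly two prongs at $p$---one along each arc $h^{-1}(r_i)$. Applied at $m = s_1$, the bijection carries these two prongs to the rays $r_1, r_2$, forcing both to have slope $s_1$. Hence $s_1 = s_2 =: m_0$ and $e$ is a leaf of $\sF_{m_0}$. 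Applying the same argument to any slope $m$ for which $e$ is a leaf of $\sF_m$ gives $m = m_0$, so statement (1) holds.

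For (2), the equality $s_1 = s_2$ forces the interior angle of $\sigma$ to be a positive integer multiple of $\pi$, which I denote $k\pi$. A direct count shows that such a sector contains exactly $k+1$ rays of slope $m_0$: the two boundary rays $r_1, r_2$, together with $k-1$ interior rays at angles $\pi, 2\pi, \ldots, (k-1)\pi$ measured from $r_1$. By the bijection above, this ray count must equal the number of prongs of $\sF_{m_0}$ at $p$, which we have just seen to be $2$. Hence $k + 1 = 2$, so $k = 1$ and $\sigma$ is a half-plane sector. The main thing to watch is simply that the bijection between prongs and rays, explicitly stated in the interior case, carries over to the boundary case; this follows immediately because condition (1) injects rays into prongs (via paths approaching $\0$) and condition (2) provides the reverse surjection.
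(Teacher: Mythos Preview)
Your proof is correct and follows essentially the same approach as the paper's. Both arguments hinge on the prong--ray bijection at a point $p$ of the edge, using it to identify the slope of $e$ with the common slope of the boundary rays of $\sigma$ and then to pin down the interior angle via a prong count. The organization differs slightly (you establish existence first via the slopes $s_1, s_2$ of the boundary rays and then deduce uniqueness, while the paper does uniqueness first), but the content is the same.
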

The $m$ appearing in \hyperref[item:slope of an edge]{statement (\ref{item:slope of an edge})} is the {\em slope} of $e$.
\begin{proof}
Let $e$ be an edge. First we will establish that if $e$ is both a leaf of $\sF_{m_1}$ and of $\sF_{m_2}$ then $m_1=m_2$.
Choose any $p \in e$. Then the two ways of approaching $p$ within $e$ determine two prongs at $p$ of each of the two singular foliations $\sF_{m_1}$ and $\sF_{m_2}$. Let $h:U \to \sigma$ be a stellar homeomorphism associated to $p$. Each prong must arise from a ray, so we find there are two rays of slope $m_1$ and two rays of slope $m_2$ that give rise to the prongs. But observe that these rays must be contained in $\partial \sigma$ because $h$ is a homeomorphism and their preimages under $h$ are contained in $e \subset \partial S$.
We conclude that both pairs of rays are the boundary rays of $\sigma$. Thus, $m_1=m_2$ and boundary rays of $\partial \sigma$ have the same slope. This proves the uniqueness part of (1) and that the interior angle of $\sigma$ is an integer multiple of $\pi$ (since the boundary rays have the same slope).

We still need to show that $e$ is a leaf of some $\sF_m$. Again let $h:U \to \sigma$ be a stellar homeomorphism associated to $p \in e$.
Then $h(p)=\0$. Let $m$ be the slope of the initial ray $r$ of $\sigma$. From statement (1) of the stellar neighborhood definition, $h^{-1}(r)$ is a subset of a leaf $\ell \in \sF_m$. Because $h$ is a homeomorphism from an open subset of $S$ to $\sigma$, we have $h^{-1}(r) \subset \partial S$. Then \Cref{edges of singular foliations} guarantees that $\ell$ is a leaf edge. Observe that both $e$ and $\ell$ are edges containing $h^{-1}(r)$. Edges are pairwise disjoint, so we conclude that $e=\ell$. Thus $e$ is a leaf of $\sF_m$ proving (1).

It remains to prove that if $p \in e$ and $h:U \to \sigma$ is a stellar homeomorphism, then $\sigma$ is a half-plane. That is, we need to show that the interior angle of $\sigma$ is $\pi$. Let $m$ be such that $e$ is a leaf of $\sF_m$. From the first paragraph, the interior angle of $\sigma$ is $(n-1)\pi$ where $n$ is the number of prongs of $\sF_{m}$ at $p$. But any point in a leaf $\ell$ in a singular foliation has a single prong if the point lies in $\partial \ell$ and has two prongs if the point lies in $\ell^\circ$. Since edges are homeomorphic to open intervals, we see that $n=2$ and so the interior angle of $\sigma$ is $\pi$, proving (2).
\end{proof}

Now let $(S, \{\sF_m\})$ be a zebra surface with boundary and let $v \in \sV$ be a vertex. Since $v \in \partial S$, the stellar homeomorphism associated to $v$ must have the form $h:U \to \sigma$ with $h(v)=\0$. The {\em interior angle} of $S$ at $v$ is the same as the interior angle of $\sigma$ at $\0$. This notion is well-defined:

\begin{proposition}
For each $v \in \sV$, if $h_1:U_1 \to \sigma_1$ and $h_2:U_2 \to \sigma_2$ are two stellar homeomorphisms with $h_i(v)=\0$,
then $\sigma_1$ and $\sigma_2$ are stellar equivalent. In particular, the interior angle of $S$ at $v$ is well-defined.
\end{proposition}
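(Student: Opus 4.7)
The plan is to apply \Cref{prop:stellar-equivalent}, which says two sectors are stellar equivalent if and only if their initial rays have the same slope and their interior angles agree. So it suffices to establish these two properties for $\sigma_1$ and $\sigma_2$; the well-definedness of the interior angle at $v$ then follows directly from the second property, since stellar equivalent sectors have equal interior angles by the same proposition.

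First I would show the initial slopes agree. The initial ray of $\sigma_i$ lies in $\partial \sigma_i$, so its preimage under $h_i$ lies in $U_i \cap \partial S$. By the orientation convention for sectors versus surfaces with boundary, the initial ray of $\sigma_i$ is carried by $h_i^{-1}$ into the terminal incident edge $e^+$ of $v$ (the edge whose initial direction is $v$, traversed with $S^\circ$ on the left as we move away from $v$). By \Cref{edges of zebras with boundary}, the edge $e^+$ is a leaf of $\sF_{m}$ for a uniquely determined slope $m$. Condition~(1) of the stellar neighborhood definition then forces the initial ray of $\sigma_i$ to have slope $m$ for both $i=1,2$, since the image of each boundary leaf edge under a ray in $\sigma_i$ inherits the slope of the leaf containing it.

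For the interior angles, I would consider the transition map $g = h_2 \circ h_1^{-1}$ restricted to a connected open neighborhood $W$ of $\0$ in $\sigma_1$ with $W \subset h_1(U_1 \cap U_2)$. This is an orientation-preserving homeomorphism from $W$ to $g(W)$ fixing $\0$. The key claim is that $g$ preserves the stellar function: for every $q \in W \setminus \{\0\}$ with $m = \rho_{\sigma_1}(q)$, we have $\rho_{\sigma_2}(g(q)) = m$. To see this, note $q$ lies on a unique ray $r_1 \subset \sigma_1$ of slope $m$; by condition~(2) of the stellar neighborhood definition applied to $h_1$, the restriction $h_1^{-1}|_{r_1}$ parameterizes a specific prong $P$ of $\sF_m$ emanating from $v$. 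Applying condition~(2) to $h_2$, the same prong $P$ corresponds to a unique ray $r_2 \subset \sigma_2$ of slope $m$, and matching reparameterizations of $P$ shows that on a neighborhood of $\0$ in $r_1$, the map $g$ sends points of $r_1$ into $r_2$, so $\rho_{\sigma_2}(g(q)) = m$ for $q$ close to $\0$ on $r_1$; since every point of $W \setminus \{\0\}$ lies on some ray this suffices.

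Once slope-preservation of $g$ is known, the geometric conclusion is routine: because $g$ is orientation-preserving and fixes $\0$, it induces a slope- and cyclic-order-preserving bijection between the rays of $\sigma_1$ meeting $W$ and the rays of $\sigma_2$ meeting $g(W)$. Together with the shared initial ray slope, this forces $\sigma_1$ and $\sigma_2$ to exhibit the same cyclic sequence of ray slopes from initial to terminal ray, which determines the interior angle uniquely (for instance, the terminal rays land in a common edge of $v$ and so also share a slope, and the count of rays of any generic slope $m_0$ equals the number of prongs of $\sF_{m_0}$ at $v$, pinning down the interior angle). The main technical obstacle is thus the verification that the composition $g$ is stellar-function-preserving near $\0$; this hinges on the \emph{parametrization} content of condition~(2), not merely a set-theoretic bijection between prongs and rays, and is the point requiring care.
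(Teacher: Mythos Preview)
Your proof is correct, and its endgame is exactly the paper's argument: invoke \Cref{prop:stellar-equivalent} by checking that the initial-ray slope is forced by the incident boundary edge, and that the interior angle is determined by the number of rays of a fixed slope, which equals the number of prongs of the corresponding $\sF_m$ at $v$ and is therefore chart-independent.

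The difference is that you route the angle step through the transition map $g=h_2\circ h_1^{-1}$ and a claim that $g$ preserves the stellar function near $\0$. This detour is unnecessary: the stellar-neighborhood axioms already give, for each $m$, a bijection between prongs of $\sF_m$ at $v$ and rays of slope $m$ in $\sigma_i$, so the ray count matches for $i=1,2$ directly, with no transition map needed. Your argument that $g$ takes the germ of a ray $r_1$ of slope $m$ into a ray $r_2$ of the same slope is correct (both $h_1^{-1}(r_1)$ and $h_2^{-1}(r_2)$ are connected subsets of the same leaf accumulating on $v$, hence overlap near $v$), but the sentence ``since every point of $W\setminus\{\0\}$ lies on some ray this suffices'' overstates what was shown: you only controlled $g$ on germs, not on a fixed neighborhood $W$. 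Fortunately the germ-level bijection of rays is all you actually use.

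One small bookkeeping slip: with the paper's conventions the initial ray of $\sigma_i$ corresponds to the \emph{initial} incident edge of $v$, not the terminal one (both $\partial S$ and $\partial\sigma$ are traversed with the interior on the left, and $h_i$ is orientation-preserving). This does not affect the argument, since either edge furnishes a slope common to both charts.
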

\begin{proof}
Fix $v \in \sV$. Recall from \Cref{sect:Singular foliations on surfaces with boundary} that $v$ is isolated within $\sV$. Thus, as we travel around $\partial S$ through $v$ with $S^\circ$ on the left, we pass through an edge $e_1$ then through $v$ and finally through an edge $e_2$. By \Cref{edges of zebras with boundary}, each of these edges are leaves of some slope. Let $m_1$ and $m_2$ denote the slopes of $e_1$ and $e_2$, respectively. Now let $h_i:U_i \to \sigma_i$ be two stellar homeomorphisms with $h_i(v)=\0$ as in the statement. Then for each $i \in \{1,2\}$, the preimage of the terminal ray of $\sigma_i$ is contained in $e_1$ and the preimage of the initial ray is contained in $e_2$. The boundary slopes of $\sigma_i$ are therefore determined. Also, the interior angles of each $\sigma_i$ can be determined from knowing the number of rays of slope $m_1$ in $\sigma_i$. But by definition of stellar neighborhood this number of rays is the same of the number of prongs of $\sF_{m_1}$ at $v$. Since the boundary slopes are the same and the interior angles are the same, \Cref{prop:stellar-equivalent} tells us that $\sigma_1$ and $\sigma_2$ are stellar equivalent.
\end{proof}

\begin{proposition}
\label{zebra interior}
If $(S, \{\sF_m\})$ is a zebra surface with boundary, then $\{\sF_m^\circ\}$ is a zebra structure on $S^\circ$.
\end{proposition}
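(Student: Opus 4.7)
The plan is to reduce the statement to \Cref{restriction of singular foliation} (applied to each foliation $\sF_m$) and to the elementary observation that a stellar neighborhood of an interior point is automatically contained in $S^\circ$.

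First I would apply \Cref{restriction of singular foliation} separately to each $\sF_m$, obtaining that each $\sF_m^\circ$ is a singular foliation on $S^\circ$. Tracing the atlas constructed in the proof of that corollary (restrict each chart of $\sF_m$ to $U \cap S^\circ$, and post-compose any sector-valued chart $\phi : U \to \sigma \subset \Pi_n$ with the inclusion $\sigma^\circ \hookrightarrow \Pi_n$), one reads off that the singular set of $\sF_m^\circ$ equals $\Sigma$ and the singular data function of $\sF_m^\circ$ equals the restriction of $\alpha$ to $S^\circ$. Since the zebra-with-boundary structure places $\Sigma$ in $S^\circ$ and requires the $\sF_m$ to share a common $\Sigma$ and $\alpha$, the foliations $\sF_m^\circ$ share the same singular set $\Sigma$ and the same singular data function $\alpha|_{S^\circ}$.

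Next I would produce, for each $p \in S^\circ$, a stellar neighborhood in $(S^\circ, \{\sF_m^\circ\})$. By the definition of stellar neighborhood at interior points (which the paper sets up to coincide with the zebra-without-boundary definition), the hypothesis supplies a homeomorphism $h : U \to \Pi_{\alpha(p)}$ satisfying conditions (1) and (2) of \Cref{sect:definition of zebra} relative to the foliations $\sF_m$. Because $\Pi_{\alpha(p)}$ has empty boundary and $h$ is a homeomorphism, $U$ cannot meet $\partial S$, so $U \subset S^\circ$. I would then verify that the two stellar conditions transfer verbatim: for (1), the preimage $h^{-1}(r)$ of a slope-$m$ ray lies in a leaf $\ell \in \sF_m$; since $\ell$ meets $S^\circ$ it is not contained in $\partial S$, so $\ell^\circ \in \sF_m^\circ$ and $h^{-1}(r) \subset \ell^\circ$. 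For (2), prongs at $p$ are germs of paths into leaves, and on $U \subset S^\circ$ the leaves of $\sF_m$ and $\sF_m^\circ$ coincide as subsets, so the bijection between prongs at $p$ and rays of slope $m$ in $\Pi_{\alpha(p)}$ established by $h$ is the same for both foliations.

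There is no real obstacle here: the argument is essentially bookkeeping. The one subtlety worth highlighting is that for an interior point the stellar model is forced to be $\Pi_n$ rather than a proper sector, so the neighborhood automatically avoids $\partial S$; this is exactly what makes the restriction-to-interior operation interact compatibly with the stellar condition.
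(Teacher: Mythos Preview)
Your proof is correct and follows essentially the same approach as the paper: apply \Cref{restriction of singular foliation} to each $\sF_m$, then note that for $p \in S^\circ$ the stellar homeomorphism $h:U \to \Pi_{\alpha(p)}$ forces $U \subset S^\circ$ (since $\Pi_{\alpha(p)}$ has no boundary), so the same $h$ serves as a stellar homeomorphism for the restricted structure. The paper in fact leaves the details to the reader, and your write-up fills them in carefully.
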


This is a direct consequence of \Cref{restriction of singular foliation} and the definitions.
Details are left to the reader.
\compat{Ferran suggested leaving this to the reader. I commented out the proof.}

\subsection{Removable vertices of zebra structures}
Let $S$ be a zebra surface with boundary and let $v \in \sV$ be a vertex. We say $v$ is {\em removable} if its internal angle is $\pi$. This is equivalent to the stellar homeomorphism for $v$ being a map to a half-plane sector. It then follows from \Cref{stellar and isomorphic} that a removable vertex for the zebra structure is also a removable vertex for each singular foliation $\sF_m$ in the structure.

\begin{proposition}[Removing marked points]
\label{removing marked points 2}
Let $(S, \{\sF_m\})$ be a zebra surface with boundary. Let $\Sigma$ and $\sV$ be the singularity and vertex sets, respectively. Suppose $\sV' \subset \sV$ is any collection of removable vertices. For each $m \in \hat \R$, let $\sF_m'$ be the singular foliation obtained by removing the removable vertices as in \Cref{removing marked points 1}. Then $(S, \{\sF_m'\})$ is a zebra surface with boundary.
\end{proposition}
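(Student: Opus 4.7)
The plan is to verify each defining property of a zebra structure with boundary for $\{\sF_m'\}$: that all $\sF_m'$ share the same singular set, singular data, and vertex set, and that every point of $S$ admits a stellar neighborhood. The first three conditions will follow from \Cref{removing marked points 1} applied to each $\sF_m$ once we check that $\sV'$ consists of removable vertices of each $\sF_m$. The substantive work is then producing stellar neighborhoods for $\{\sF_m'\}$.

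To verify that each $v \in \sV'$ is a removable vertex of every $\sF_m$, note that the zebra stellar homeomorphism $h:U \to \sigma$ at $v$ sends $U$ to a half-plane sector $\sigma \subset \Pi_n$. Post-composing $h$ with a rotation of $\Pi_n$ carrying the slope-$m$ direction to horizontal produces a chart whose image is again a half-plane sector and whose pulled-back horizontal foliation is $\sF_m|_U$; this witnesses $v$ as a removable vertex of $\sF_m$. Thus \Cref{removing marked points 1} yields foliations $\sF_m'$ with common singular set $\Sigma$, data $\alpha$, and vertex set $\sV \setminus \sV'$. For a point $p \in S \setminus \sV'$, the original stellar neighborhood $h:U \to Y$ still works, since by the single-vertex-or-singularity remark $U \cap \sV' = \emptyset$, so $\sF_m|_U$ and $\sF_m'|_U$ induce the same partition and the stellar conditions transfer directly.

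The main, though not deep, obstacle is showing that the original $h:U \to \sigma$ remains a stellar homeomorphism at $p \in \sV'$ with respect to $\{\sF_m'\}$. Here $\sigma$ is a half-plane sector, which is the correct target type for a non-vertex boundary point by \Cref{edges of zebras with boundary}. Condition (1) of a stellar neighborhood is immediate: the preimage of any slope-$m$ ray in $\sigma$ lies in a leaf of $\sF_m$ and hence of $\sF_m'$ by \Cref{removing marked points 1}. For condition (2), I would split on whether $m$ equals the common slope of the two boundary rays of $\sigma$. If so, the two boundary rays are the only slope-$m$ rays of $\sigma$ and pull back under $h^{-1}$ to the two leaf edges of $\sF_m$ incident to $p$; these join in $\sF_m'$ to a single leaf through $p$, producing the two prongs that correspond to the two rays. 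If not, $\sigma$ has a unique interior ray of slope $m$, whose preimage is the unique leaf of $\sF_m$ ending at $p$, extended in $\sF_m'$ to include $p$; this yields the single prong matching the single ray. In both cases prongs correspond bijectively to rays, completing the verification.
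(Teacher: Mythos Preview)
Your proof is correct and follows the same approach as the paper: the paper's proof is the single sentence ``The stellar neighborhoods and homeomorphisms for $(S, \{\sF_m\})$ still work for $(S, \{\sF_m'\})$,'' and your argument is precisely a careful unpacking of why this is so (the removability of each $v\in\sV'$ for every $\sF_m$ is already noted in the paper via \Cref{stellar and isomorphic}, just before the proposition). Your case analysis at points of $\sV'$ is exactly the verification the paper leaves implicit.
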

\begin{proof}
The stellar neighborhoods and homeomorphisms for $(S, \{\sF_m\})$ still work for $(S, \{\sF_m'\})$.
\end{proof}

\begin{proposition}[Adding marked points]
\label{adding marked points 2}
Let $(S, \{\sF_m\})$ be a zebra surface with boundary. Let $\Sigma$ and $\sV$ be the singularity and vertex sets, respectively. Suppose $\sV' \subset \partial S \setminus \sV$ is a collections of points such that $\Sigma \cup \sV \cup \sV'$ is a closed discrete subset of $S$. \compat{Added closed to discreteness condition, Oct 31.}
For each $m \in \hat \R$, let $\sF_m'$ be the singular foliation obtained by adding each point in $\sV'$ to the collection of vertices as in \Cref{adding marked points 1}. Then $(S, \{\sF_m'\})$ is a zebra surface with boundary with singular set $\Sigma$ and vertex set $\sV \cup \sV'$. Furthermore, each $v \in \sV'$ is a removable vertex in $(S, \{\sF_m'\})$.
\end{proposition}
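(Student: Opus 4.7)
The strategy is to verify directly that $\{\sF_m'\}$ satisfies the definition of a zebra structure by exhibiting a stellar neighborhood at every point. By \Cref{adding marked points 1}, for each $m$ the foliation $\sF_m'$ has singular set $\Sigma$, singular data $\alpha$, and vertex set $\sV \cup \sV'$, so the family $\{\sF_m'\}$ has a common singular set, common singular data, and common vertex set. Moreover, away from $\sV'$ the leaves of $\sF_m'$ agree locally with those of $\sF_m$ (the only effect of the construction is to split each leaf of $\sF_m$ that has a point of $\sV'$ in its interior at that point). We handle the stellar neighborhood condition by splitting into three cases according to where $p \in S$ lies.

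If $p \in S^\circ$, then any original stellar homeomorphism $h : U \to \Pi_n$ has $U \subset S^\circ$, so $U \cap \sV' = \emptyset$ since $\sV' \subset \partial S$, and $h$ remains a stellar homeomorphism for the new structure. If $p \in \partial S \setminus \sV'$, let $h : U \to \sigma$ be an original stellar homeomorphism. The set $C = h(U \cap \sV')$ is closed in $\sigma$ (since $\Sigma \cup \sV \cup \sV'$ is closed and discrete in $S$) and does not contain $\0$. By \Cref{dilating sectors} there is a stellar epimorphism $\psi : V \to \sigma$ with $V \cap C = \emptyset$; then $h' = \psi \circ h|_{h^{-1}(V)} : h^{-1}(V) \to \sigma$ is a homeomorphism whose domain avoids $\sV'$. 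Because $\psi$ preserves the stellar function $\rho_\sigma$, it sends each ray of $\sigma$ onto itself, and so the correspondence between rays of $\sigma$ and prongs of $\{\sF_m\}$ at $p$ that was provided by $h$ is inherited by $h'$. Since the leaves of $\sF_m'$ and $\sF_m$ coincide in $h^{-1}(V)$, the map $h'$ is a stellar homeomorphism for the new structure.

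The main point, and the only case requiring genuine verification, is $v \in \sV'$. Since $v$ was a non-vertex boundary point of the original zebra surface, it had an original stellar homeomorphism $h : U \to \sigma$, and by \Cref{edges of zebras with boundary} the sector $\sigma$ is a half-plane sector whose boundary rays have the common slope $m$ of the edge of $\partial S$ containing $v$. I claim $h$ is still a stellar homeomorphism for $(S, \{\sF_m'\})$. For each slope $m' \neq m$, the foliations $\sF_{m'}$ and $\sF_{m'}'$ have the same leaves in $U$, and $v$ has exactly one prong of slope $m'$ in both structures, matching the single ray of slope $m'$ in the half-plane sector $\sigma$. For the slope $m$, passing from $\sF_m$ to $\sF_m'$ splits the edge through $v$ into two leaves meeting in the boundary sense at $v$, but the prong count of slope $m$ at $v$ remains $2$ in either structure, matching the two boundary rays of $\sigma$. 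Conditions (1) and (2) of the stellar neighborhood definition are inherited from the original structure because $h$ already carried these leaves to the appropriate rays. Since the stellar homeomorphism at $v$ is to a half-plane sector, the interior angle at $v$ is $\pi$, so $v$ is a removable vertex of $(S, \{\sF_m'\})$, completing the proof.
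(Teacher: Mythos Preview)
Your approach is essentially the same as the paper's: verify the stellar-neighborhood condition pointwise by shrinking an original stellar neighborhood so that it avoids the newly added vertices, and observe that ray-preserving post-composition keeps the two stellar-neighborhood axioms. The paper does this uniformly for all $p$ (shrinking $h(U)$ to a Euclidean ball about $\0$ of small radius and composing with a ray-preserving homeomorphism from the ball onto the model), whereas you split into three cases.

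There is, however, a small gap in your third case. When $p=v\in\sV'$, you claim the original stellar homeomorphism $h:U\to\sigma$ works as is for the new structure. But $U$ may contain other points of $\sV'$ lying on the same original edge, and for such a point $v'$ the boundary ray of $\sigma$ through $h(v')$ would have $h^{-1}$ of that ray straddling two distinct leaves of $\sF_m'$ (the edge having been split at $v'$), violating condition~(1). This is exactly the obstruction the paper's remark after the definition warns about: a stellar neighborhood can contain at most one singularity or vertex. The fix is the same one you already used in your second case: shrink $U$ via a stellar epimorphism (or the paper's ball trick) so that it misses $\sV'\setminus\{v\}$. Once you do that, the rest of your argument for the $v\in\sV'$ case goes through and the conclusion about removable vertices follows as you say.
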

\begin{proof}
We must find a stellar neighborhood of each point $p \in S$ for the new structure. Let $h:U \to Y$ be a stellar homeomorphism for $p$ for the original structure, where $Y$ is either some $\Pi_n$ or a sector. Since $\sV'$ is closed and discrete, there is an $r>0$ such that there are no points of $h\big(U \cap (\sV' \setminus \{p\})\big)$ in the open ball $B$ consisting of all points whose Euclidean distance from $\0$ in $Y$ is less than $r$. Let $f:B \to Y$ be a homeomorphism such that $f(\0)=\0$ and for each ray $r$ of $Y$, $f(r \cap B)=r$. Then $h^{-1}(B)$ is a stellar neighborhood of $p$ and $f \circ h|_{B}$ is a stellar homeomorphism.
Observe that if $p \in \sV'$ then $p$ is a removable vertex because $p$ lies in an edge of the original structure and so $Y$ is a half-plane sector by \Cref{edges of zebras with boundary}.
\end{proof}

\subsection{Surgery on zebra surfaces}

Let $\{\sF_m\}$ be a zebra structure on a surface $S$ with boundary, possibly with multiple connected components.
Let $\Sigma$ denote the singular set and $\sV$ denote the vertex set. We will explain how to glue edges of $S$ to make a new zebra surface.

As in \Cref{sect:gluing singular foliations}, an edge gluing is an orientation-reversing homeomorphism between closures of distinct edges. As before an {\em edge identification scheme} is a collection $(E, \varepsilon, \{g_e\}_{e \in E})$, where $E$ is a collection of edges, $\varepsilon$ is a fixed-point free involution that describes which edges are to be glued, and $\{g_e\}$ is a choice orientation-reversing gluing homeomorphisms $g_e:\bar e \to \overline{\varepsilon(e)}$ such that $g_e^{-1}=g_{\varepsilon(e)}$ for all $e \in E$.

Again there is an equivalence relation $\sim$ on $S$ determined by the edge identification scheme. Let $[p] \in S/\sim$ denote the $\sim$-equivalence class of $p \in S$. As in \Cref{sect:gluing singular foliations}, vertices can only be identified with other vertices.
The notions of $[v] \in \sV/\sim$ being finite and being complete glued are defined as before. The {\em total angle}, $\Angle([v])$, of $[v]$ is the sum of the interior angles at $v$ over all $v \in [v]$. In order for our construction to produce a zebra surface, we require that:
\begin{enumerate}[label=(\alph*), start=4]
\item
Each $[v] \in \sV/ \sim$ is finite.
\label{finiteness condition 2}
\item For each edge $e \in E$, the slope of $e$ is the same as the slope of $\varepsilon(e)$.
\label{slope condition}
\end{enumerate}
As part of the proof of the result below, we will show that statements (b) and (c) from
\Cref{sect:gluing singular foliations}, which were required to glue singular foliations, follow from statements (d) and (e) above. \compat{Added this sentence in response to a comment of Ferran on Dec 7, 2022.}

\begin{theorem}[Surgery on zebra surfaces]
\label{surgery}
Let $(S,\{\sF_m\})$ be a zebra surface with boundary as above and let $(E, \varepsilon, \{g_e\})$ be an edge identification scheme satisfying statements \ref{finiteness condition 2} and \ref{slope condition} above.  Define
$$\Sigma'=\pi(\Sigma) \cup \big\{[v] \in \sV/\sim~:~\text{$[v]$ is completely glued and $\Angle([v]) \neq 2 \pi$}\big\} \quad \text{and}$$
$$\sV'=\{[v]\in \sV/\sim~:~\text{$[v]$ is not completely glued}\}.$$
Then $S'=S/\sim$ is a surface with boundary $\partial S'=\sV' \cup \bigcup_{e \in \sE \setminus E} \pi(e)$, and for each $m \in \hat \R$, the edge identification scheme satisfies the hypotheses of \Cref{surgery on singular foliations} for $(S, \sF_m)$ and $\sF_m'=\sF_m/\sim$ is a singular foliation $\sF_m'$ on $S'=S/\sim$ with singular set $\Sigma'$, vertex set $\sV'$, and singular data function $\alpha':(S')^\circ \to \Z_{\geq -1}$ whose only nonzero values are given by
$$\alpha'([p])=\alpha(p) \text{\quad if } [p] \in \pi(\Sigma) \quad \text{and} \quad
\alpha'([v])=\frac{1}{\pi}\Angle([v])-2\text{\quad if }[v] \in \Sigma' \cap \sV/\sim.$$
Furthermore $\{\sF_m':~m \in \hat \R\}$ is a zebra structure on $S'$, and if $[v] \in \sV'$,
then the interior angle at $[v]$ is $\Angle([v])$.
\end{theorem}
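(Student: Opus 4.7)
The plan is to reduce to \Cref{surgery on singular foliations} by applying it to each $\sF_m$ separately, and then verify that the resulting singular foliations $\sF_m'$ assemble into a zebra structure on $S'$. I first verify that the edge identification scheme satisfies hypotheses \ref{finiteness condition}--\ref{type condition} of \Cref{surgery on singular foliations} for every $\sF_m$. Hypothesis \ref{finiteness condition} is identical to \ref{finiteness condition 2}. For \ref{type condition}, \Cref{edges of zebras with boundary} identifies an edge as a leaf edge of $\sF_m$ precisely when its slope equals $m$, so \ref{slope condition} ensures that $e$ is a leaf edge of $\sF_m$ iff $\varepsilon(e)$ is. For \ref{prong condition}, the proof of \Cref{edges of zebras with boundary} shows that the interior angle at each vertex $v$ is an integer multiple $k_v\pi$ of $\pi$ with $k_v\ge 1$; in the corresponding sector, every slope is represented by at least $k_v$ rays, so each $v \in [v]$ contributes at least one prong of $\sF_m$, giving $\Prongs([v]) \ge |[v]| \ge 1$.

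With these hypotheses verified, \Cref{surgery on singular foliations} produces each $\sF_m'$ on $S'$; the sets $\Sigma'$, $\sV'$, and $\sE'$ depend only on the edge identification scheme and not on $m$, and agree with the ones claimed here. To compute $\alpha'([v])$ for a completely glued $[v]$, observe that \Cref{edges of zebras with boundary} gives a common slope $m_v$ to the two edges incident to each $v$, and \ref{slope condition} propagates this slope across every gluing within the class. Hence all vertices in $[v]$ share a common slope $m_0$. Setting $K = \Angle([v])/\pi = \sum_{v \in [v]} k_v$, counting rays yields $\Prongs([v]) = K$ for every $m$: when $m \neq m_0$ each $v$ contributes $k_v$ prongs and none are glued; when $m = m_0$ each $v$ contributes $k_v+1$ prongs but the $|[v]|$ edge-gluings identify one prong from each pair, reducing the total from $K+|[v]|$ back to $K$. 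Thus $\alpha'([v]) = K - 2 = \Angle([v])/\pi - 2$, and $[v] \in \Sigma'$ exactly when $\Angle([v]) \neq 2\pi$, as claimed.

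The substantive step is showing that $\{\sF_m'\}$ is a zebra structure, i.e., every $[p] \in S'$ has a stellar neighborhood. For $[p]$ outside the image of any glued edge, a stellar neighborhood of any preimage of $[p]$ descends. For $[p]$ in the interior of a glued edge, I pick stellar neighborhoods of its two preimages and glue them along the edge into a chart to $\R^2$; since \ref{slope condition} matches slopes, after adjusting one chart by a boundary-preserving automorphism from \hyperref[item:automorphism of half-space]{\Cref{automorphisms of models}(\ref{item:automorphism of half-space})} the resulting chart simultaneously sends every $\sF_m'$-leaf to a horizontal line of slope $m$. For a completely glued $[v]$, I follow the inductive sector-gluing from the proof of \Cref{surgery on singular foliations}, building charts $\psi_i: V_i \to \sigma_i$ and reconciling successive gluings. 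The main obstacle is that here the reconciling homeomorphisms must preserve the entire stellar function, not merely the horizontal foliation used in that earlier proof; this stronger compatibility is supplied by \Cref{stretching sectors}, which constructs stellar-preserving sector self-homeomorphisms with prescribed boundary behavior. Matching of slopes under \ref{slope condition} guarantees that the rays identified at each gluing carry the same slope, so only a scaling along each ray needs adjustment; when the induction closes up around the full $\Pi_n$ with $n = \Angle([v])/\pi - 2$, the stellar structures assemble consistently. Finally, when $[v] \in \sV'$ is not completely glued, the same inductive construction terminates in a single sector whose interior angle is the sum of the interior angles of the component sectors, namely $\Angle([v])$, which establishes the final claim.
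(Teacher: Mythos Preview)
Your overall strategy matches the paper's: verify conditions \ref{finiteness condition}--\ref{type condition} of \Cref{surgery on singular foliations} for each $\sF_m$, then build stellar neighborhoods on $S'$ using stellar epimorphisms in place of the foliation-only automorphisms used there. However, your verification of \ref{prong condition} and your computation of $\alpha'([v])$ both rest on false claims about vertices.

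You assert that the proof of \Cref{edges of zebras with boundary} shows ``the interior angle at each vertex $v$ is an integer multiple $k_v\pi$.'' It does not: that proposition treats points \emph{on an edge}, and its conclusion is that such points have half-plane stellar neighborhoods. Vertices may have arbitrary positive interior angle---any triangle in a zebra plane (\Cref{triangle1}) has vertex angles summing to $\pi$, none a multiple of $\pi$. Likewise your claim that \Cref{edges of zebras with boundary} ``gives a common slope $m_v$ to the two edges incident to each $v$'' is false for the same reason; adjacent edges of a polygon typically have different slopes. Consequently there is no single ``common slope $m_0$'' for the class $[v]$, and individual $v_i$ need not contribute a slope-independent number of prongs, so your case split on $m=m_0$ versus $m\neq m_0$ collapses.

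The paper's argument sidesteps this by working globally around the completely glued class. Using condition \ref{slope condition}, the terminal ray of each sector $\sigma_i$ has the same slope as the initial ray of $\sigma_{i+1}$, so the sectors glue up and the induced slope map from the union of rays to $\hat\R$ is a covering. This forces $\Angle([v])\in\pi\Z_{>0}$ and gives $\Prongs([v])=\Angle([v])/\pi$ for \emph{every} slope at once, yielding both \ref{prong condition} and the formula for $\alpha'$. A smaller point: in the edge case you invoke \Cref{automorphisms of models}\ref{item:automorphism of half-space}, but that only preserves the horizontal foliation; to get a stellar homeomorphism you need maps preserving all slopes, which is why the paper uses the stellar epimorphisms of \Cref{dilating sectors}, \Cref{interval fix}, and \Cref{stretching sectors} throughout the gluing.
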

\begin{proof}
We begin by checking the hypotheses of \Cref{surgery on singular foliations} for $(S, \sF_m)$. \hyperref[finiteness condition]{Condition \ref{finiteness condition}} of \Cref{surgery on singular foliations} is the same as \hyperref[finiteness condition 2]{condition \ref{finiteness condition 2}} here. To check \hyperref[prong condition]{condition \ref{prong condition}} of \Cref{surgery on singular foliations}, suppose that $[v]$ is completely glued. For each $v_i \in [v]$, there is a stellar neighborhood $U_i$ and a stellar homeomorphism $h_i:U_i \to \sigma_i$ for some sector $\sigma_i$. Assume that $[v]$ is indexed $\{v_i:~i \in \Z/n\Z\}$ where the terminal incident edge of $v_i$ is glued to the initial incident edge to $v_{i+1}$ for all $i$. Our \hyperref[slope condition]{condition \ref{slope condition}} tells us that these pairs of edges have the same slopes. Then the terminal ray of $\sigma_i$ has the same slope as the initial ray of $\sigma_{i+1}$ for all $i$. Using stellar neighborhoods at each $v_i$, we see that each $v_i$ has prongs of varying slopes parameterized by a closed interval: these prongs are identified with rays in the corresponding sector. The collection of all prongs of all slopes at all $v_i$ with boundary prongs identified according to the edge gluings has a natural cyclic ordering coming from counterclockwise rotation of rays in the corresponding sectors, and the map from this collection to slopes is a covering map of $\hat \R$. It follows that $\Angle([v])$ is a positive integer multiple of $\pi$. Then by definition of stellar neighborhood, regardless of slope $\Prongs([v])=\frac{1}{\pi} \Angle([v]) \geq 1$, verifying \hyperref[prong condition]{Condition \ref{prong condition}}.
\hyperref[type condition]{Condition \ref{type condition}} of \Cref{surgery on singular foliations} follows trivially from \hyperref[slope condition]{condition \ref{slope condition}} here: An edge $e$ is a leaf edge for $\sF_m$ if and only if $e$ has slope $m$, and we only allow gluing edges of the same slope.

We have shown that $\sF'_m$ is well-defined for all $m$. It remains to show that $(S', \{\sF'_m\})$ is a zebra surface. This is a property that must be checked for every point on the surface.

Observe that the collection of points taking part in the gluing is $G=\bigcup_{e \in E} \bar e \subset \partial S$. For points $p \in S \setminus G$ a stellar neighborhood $U$ of $p$ for the zebra surface $(S, \{\sF_m\})$ can be produced that is disjoint from $G$. Then $\pi(U)$ is a stellar neighborhood for $\pi(p)$ on $(S', \{\sF'_m\})$. \compat{In response to a suggestion of Ferran, I omit some details here. The more detailed original argument is commented out below. Dec 7, 2022.}

Now let $p_1$ be a point in an edge $e_1 \in E$. Let $e_2=\varepsilon(e_1)$ and let $g:e_1 \to e_2$ denote the gluing map. Then $[p_1]=\{p_1, p_2\}$ where $p_2=g(p_1) \in e_2$. Choose stellar neighborhoods $U_i$ of $p_i$ for each $i \in \{1,2\}$. From (e) we know that $e_1$ and $e_2$ have the same slope. Then by \Cref{edges of zebras with boundary} we know that the stellar homeomorphisms $h_i:U_i \to \sigma_i$
are both maps to half-plane sectors with the same boundary slopes. We can assume that $U_1 \cap U_2 =\emptyset$. (Otherwise, we can find a closed set $C \subset S \setminus \{p_1, p_2\}$ such that $p_1$ and $p_2$ lie in different components of $S \setminus C$, and find new stellar neighborhoods in $U_i \setminus C$ with new stellar homeomorphisms defined by restricting $h_i$ to the smaller neighborhoods and post-composing with a stellar epimorphism obtained from \Cref{dilating sectors}. As the new neighborhoods are connected, they are necessarily disjoint.) Since the $\sigma_i$ are half-planes with the same slope, we can assume that $\sigma_1$ and $\sigma_2$ are complementary half-planes in $\Pi_0$. Thus, $\Pi_0=\sigma_1 \cup \sigma_2$. Now choose an open interval $I_1 \subset e_1$ containing $p_1$ such that
$I_1 \subset U_1$ and $I_2=g_{e_1}(I_1) \subset U_2$. Using \Cref{interval fix}, we can find an open $V_i \subset U_i$ such that $V_i \cap \partial S=I_i$ and a stellar epimorphism $\psi_i:h_i(V_i) \to \sigma_i$.
Observe that the two maps $\psi_i \circ h_i$ send $I_i$ to the common boundary $\partial \sigma_i$ by homeomorphism, but they do not yet respect the gluing map. By \Cref{stretching sectors}, there is a stellar epimorphism $\chi_2:\sigma_2 \to \sigma_2$ such that
\begin{equation}
\label{eq:chi3}
\chi_2 \circ \psi_2 \circ h_2 \circ g(q_1) = \psi_1 \circ h_1(q_1) \quad \text{for each $q_1 \in I_1$}.
\end{equation}
Define $W=\pi(V_1 \cup V_2)$. Then $W$ is an open neighborhood of $[p_1]$ in $S'$ since $\sim$ identifies $V_1$ and $V_2$ according to $g|_{I_1}:I_1 \to I_2$. We define
$$h:W \to \Pi_0; \quad \pi(q) \mapsto \begin{cases}
\psi_1 \circ h_1(q) & \text{if $q \in V_1$,} \\
\chi_2 \circ \psi_2 \circ h_2(q) & \text{if $q \in V_2$.} \\
\end{cases}$$
Then \eqref{eq:chi3} guarantees that $h$ is well-defined on $W$ and is a homeomorphism. We need to check that $h$ is a stellar homeomorphism. Observe that
$$h \circ \pi|_{V_1} = \psi_1 \circ h_1|_{V_1} \quad \text{and} \quad
h \circ \pi|_{V_2} = \chi_2 \circ \psi_2 \circ h_2|_{V_2}.$$
Since the original maps $h_i$ were stellar, for each $m \in \hat \R$, it induces a bijection between prongs of $\sF_m$ at $p_i$ and rays of slope $m$ in $\sigma_i$. Since $h \circ \pi|_{V_i}$ is obtained from $h_i$ by post-composition with a stellar epimorphisms, these bijections persist for $h$ between prongs of $\sF_m'$ at $[p_1]$ approaching $[p_1]$ within $\pi(V_i)$. The only identifications between the $V_i$ made by $\pi$ occur in the identification of $I_1$ with $I_2$, so these bijections induce a bijection between prongs of $\sF_m'$ at $[p_1]$ and rays of $\Pi_0=\sigma_1 \cup \sigma_2$. Thus $h$ is stellar as desired.

Now let $v \in \sV$ and assume that $[v]$ is not completely glued. We can order $[v]=\{v_1, \ldots, v_n\}$
such that the terminal incident edge $e_i^+$ of $v_i$ is glued to the initial incident edge $e_{i+1}^-$ of $v_{i+1}$ for $i \in \{1, \ldots, n-1\}$ under the gluing map $g_i:e_i^+ \to e_{i+1}^-$. As above, we can choose pairwise disjoint stellar neighborhoods $U_i$ of $v_i$ and a stellar homeomorphism $h_i:U_i \to \sigma_i$. For each $i$, choose open intervals $I_i^- \subset e_i^-$ and $I_i^+ \subset e_i^+$ with $v_i$ as an endpoint such that $g_i(I_i^+) = I_{i+1}^-$ for all $i \in \{1, \ldots, n-1\}$. As above, for each $i$ we can find an open subset $V_i \subset U_i$ such that $V_i \cap \partial S = I_i^+ \cup \{\0\} \cup I_i^+$ and a stellar epimorphism $\psi_i:h_i(V_i) \to \sigma_i$. Let $\sigma'$ be a sector whose initial ray has the same slope as the initial ray of $\sigma_1$, whose terminal ray has the same slope as the terminal ray of $\sigma_n$, and whose interior angle is the sum of the interior angles of the $\sigma_i$. Then by cutting $\sigma'$ along rays, we can partition $\sigma'$ into subsectors with the same internal angles as the $\sigma_i$ in counterclockwise order. Then each $\sigma_i$ is stellar equivalent to the corresponding subsector of $\sigma'$. Therefore, we redefine $\sigma_i$ so that it is this subsector of $\sigma'$. We have that
$$\psi_i \circ h_i(I_i^+)=\psi_{i+1} \circ h_{i+1}(I_{i+1}^-)$$
is the common boundary ray of $\sigma_i$ and $\sigma_{i+1}$, but the maps to do not yet respect the gluing maps $g_i|_{I_i^+}:I_i^+ \to I_{i+1}^-$.
Define $W=\pi(\bigcup_i V_i)$. Then $W$ is an open neighborhood of $[v]$. We will define $h:W \to \sigma'$ by induction. For $q \in V_1$, we define $h \circ \pi(q)=\psi_1 \circ h_1(q)$. Also define $\chi_1:\sigma_1 \to \sigma_1$ to be the identity map. Now assume that $h$ has been defined on $\pi(V_i)$. Using \Cref{stretching sectors}, we see there is a stellar epimorphism $\chi_{i+1}:\sigma_{i+1} \to \sigma_{i+1}$ such that
\begin{equation}
\label{eq:chi4}
\chi_{i+1} \circ \psi_{i+1} \circ h_{i+1} \circ g_i(q_i) = \chi_i \circ \psi_i \circ h_i(q_i) \quad \text{for each $q_i \in I_i^+$}.
\end{equation}
For $q \in V_{i+1}$ we define $h \circ \pi(q)=\chi_{i+1} \circ \psi_{i+1} \circ h_{i+1}(q)$. Equation \eqref{eq:chi4} guarantees that $h$ is a well-defined homeomorphism. Again $h$ was defined from the $h_i$ by post-composing with stellar epimorphisms, so $h$ is a stellar homeomorphism for $[v]$.

The case when $[v]$ is completely glued is similar. We will highlight the differences with the previous case.
This time we write $[v]=\{v_i:~i \in \Z/n\Z\}$ and write the elements of $\Z/n\Z$ as $1,\ldots, n$ to keep with the previous paragraph. We get a gluing map $g_i:e_i^+ \to e_{i+1}^-$ for all $i$ and we require that $g_i(I_i^+) = I_{i+1}^-$ hold for all $i$. As
noted in the first paragraph of the proof, in the completely glued case the sum of the interior angles of the sectors is of the form $(m+2)\pi$ for some $m \in \Z_{\geq -1}$. Thus, we can partition $\Pi_m$ into sectors that are stellar equivalent to the $\sigma_i$. We think of $\sigma_i \subset \Pi_m$ and order these sectors cyclically counterclockwise. The maps $h_i$, subsets $V_i \subset U_i$, and stellar epimorphisms $\psi_i$ can be defined as before. The set $W$ is defined as above, but we will define $h:W \to \Pi_m$. We again proceed inductively. We define the base case of $h$ on $\pi(V_1)$, and the inductive step of $h$ on $\pi(V_{i+1})$ when $i \in \{1, \ldots, n-2\}$ as before, but things change in a minor way when defining $h$ in the last step. This time using \Cref{stretching sectors} slightly differently, we define the stellar epimorphism $\chi_n:\sigma_n \to \sigma_n$ such that
\begin{align*}
\chi_{n} \circ \psi_{n} \circ h_{n} \circ g_{n-1}(q_{n-1}) &= \chi_{n-1} \circ \psi_{n-1} \circ h_{n-1}(q_{n-1}) \quad \text{for each $q_{n-1} \in I_{n-1}^+$}, \quad \text{and} \\
\chi_{n} \circ \psi_{n} \circ h_{n} \circ g_{n}^{-1} (q_{1}) &= \chi_{1} \circ \psi_{1} \circ h_{1}(q_{1}) \quad \text{for each $q_{1} \in I_{1}^-$}.
\end{align*}
Then we define $h$ on $\pi(V_n)$ to be $\chi_n \circ \psi_{n} \circ h_{n}$. The maps again respect the gluings so
$h:W \to \Pi_m$ is a homeomorphism and it is a stellar homeomorphism for the same reason as before.
\end{proof}

\subsection{Zebras not arising from dilation structures}
\label{sect: not dilation}
On a closed half-dilation surface, in any direction, there are at most finitely many isolated closed leaves. Indeed, this can be deduced from \cite{DFG19} as follows. It is enough to consider the case where the surface has a nonempty singular set or at least marked points. Each isolated closed leaf of the singular foliation of slope $m$ must lie in a dilation cylinder. Each dilation cylinder contains only finitely many closed leaves of slope $m$. Furthermore, the dilation cylinders that contain closed leaves of slope $m$ must have disjoint interiors, and must be bounded by saddle connections. The total of the interior angles of a dilation cylinder must be at least $2 \pi$, so this tells us that there can be only finitely many dilation cylinders that contain closed leaves of slope $m$. The finiteness result follows.
\compat{Added a citation and more details on the argument. DFG doesn't actually come out and say that any closed leaf gives rise to a cylinder. I wasn't able to find this explicitly in the literature...}

The same holds for surfaces in the $\Homeo_+(\hat \R)$-orbit of a zebra structure obtained by weakening a dilation surface structure. However, there are zebra surfaces whose directional foliations have countably many isolated closed leaves. Therefore:
\begin{proposition}
Let $g \geq 0$ and let $S$ be a closed and oriented topological surface of genus $g$. Suppose $\alpha:S \to \Z_{\geq -1}$ has finite support and satisfies $\sum_{p \in S} \alpha(p)=4g-4$. Then there is a zebra structure $\{\sF_m\}$ on $S$ with singular data $\alpha$, such that for every $\varphi \in \Homeo_+(\hat \R)$, the zebra structure $\varphi(S, \{\sF_m\})$ is not isotopic to a zebra structure obtained from a half-dilation surface.
\end{proposition}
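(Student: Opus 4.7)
The proposition follows by exhibiting a specific zebra structure. First, I would observe that the property ``some foliation $\sF_m$ contains countably many isolated closed leaves'' is preserved both by the $\Homeo_+(\hat\R)$-action on zebra structures (which merely reindexes the foliations) and by isotopy of zebra structures (which carries closed leaves of each $\sF_m$ to closed leaves of the isotopy image). In light of the paragraph preceding the proposition, every zebra structure in the $\Homeo_+(\hat\R)$-orbit of a half-dilation structure admits only finitely many isolated closed leaves in each direction. It thus suffices to construct a single zebra structure $\{\sF_m\}$ on $S$ with singular data $\alpha$ such that some $\sF_m$ admits countably many isolated closed leaves.

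To produce such a structure, I would begin from a baseline. By the Euler--Poincar\'e constraint $\sum_p \alpha(p) = 4g - 4$, there is a half-translation structure on $S$ realizing $\alpha$, which induces a zebra structure $\{\sF_m^0\}$ on $S$ with the prescribed singular data. After a suitable choice of this structure, I would assume it contains an embedded translation cylinder $C \subset S$ bounded by saddle connections, say with all horizontal leaves closed.

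I would then modify the structure inside $C$ via \Cref{surgery}. The key idea, drawn from \Cref{sect:dilation singularities}, is that several dilation cylinders whose isolated closed leaves share a common slope can be amalgamated along transverse closed boundary curves to form a zebra cylinder containing multiple isolated closed leaves of that slope; the ``dilation singularities of angle $2\pi$'' appearing at the gluing circles are not singularities in the zebra sense. Concretely, I would take a countable sequence $\{D_n\}_{n\geq 1}$ of dilation cylinders, each possessing an isolated closed leaf transverse to its boundary, with moduli decreasing geometrically so that the total modulus is finite. Glued consecutively along common closed boundary leaves of matching slope, they produce a compact zebra cylinder $C^\ast$ whose outer boundary matches that of $C$ and whose interior contains countably many isolated closed leaves of a common slope. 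Substituting $C^\ast$ for $C$ inside $S$ via \Cref{surgery} then yields a zebra structure $\{\sF_m\}$ on $S$ with singular data still equal to $\alpha$ and in which some $\sF_m$ admits countably many isolated closed leaves, as required.

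The principal technical obstacle is that \Cref{surgery} is stated for finite edge identification schemes, while the above construction involves gluing infinitely many dilation cylinders. I expect this to be resolved by choosing $\{D_n\}$ to form a self-similar family (scaled copies of a single model), so that the zebra structure extends continuously through the accumulation region of the gluing circles; alternatively, one may produce finite approximations with an arbitrary finite number of isolated closed leaves by iterated application of \Cref{surgery} and pass to a direct limit. The subtle step in either case is verifying the stellar condition at points in the accumulation region, and this is precisely where the topological (rather than metric) flexibility of zebra structures is essential, since no analogous construction can succeed in the half-dilation category.
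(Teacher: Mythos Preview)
Your overall strategy is the same as the paper's: construct a zebra structure with countably many isolated closed leaves in some direction, then invoke the fact (stated just before the proposition) that half-dilation surfaces admit only finitely many. You are also right that this property is preserved under the $\Homeo_+(\hat\R)$-action and under isotopy.

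Where you diverge is in the construction, and here you make things much harder than necessary. You propose to build $C^\ast$ by amalgamating infinitely many dilation cylinders and then confront the accumulation-circle problem, which you correctly flag as unresolved: the \surgery does not directly cover this situation, and verifying the stellar condition at the accumulation locus is genuinely delicate (and your sketched workarounds via self-similarity or direct limits are not obviously sufficient). There is also a secondary issue: it is not clear that the isolated closed leaves in your different $D_n$ can be arranged to all share a single slope, since in a dilation cylinder the slopes of closed leaves vary.

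The paper avoids all of this with a one-step finite surgery. Start from a square-tiled half-translation surface $X$ in the given stratum, so the horizontal foliation is periodic. Take a vertical interval $I$ inside a horizontal cylinder, slit it open into two edges $I_+$ and $I_-$, and reglue by a homeomorphism $g:I_+\to I_-$ that is monotone in the $y$-coordinate and whose induced self-map of $I$ has countably many isolated fixed points. A horizontal leaf through a point of $I$ closes up in the new surface if and only if that point is fixed by the regluing map, so the new horizontal foliation has countably many isolated closed leaves. This is a single application of the \surgery with two edges and no limiting argument; the singular data is unchanged because the regluing introduces no new singularities.
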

\begin{proof}
There are square-tiled half-translation surfaces in each stratum determined by $\alpha$. (In fact they are dense; see \cite[\S 1.5.2]{HS06}.) Let $X$ be such a surface, and observe the horizontal foliation is periodic. Let $I \subset X$ be a vertical interval contained in the interior of a horizontal cylinder $C \subset X$. Slice $I$ open, obtaining two halves $I_+$ and $I_-$. Weakening the structure to a zebra structure, we see that the resulting object is a zebra surface $X'$ with two boundary edges, $I_+$ and $I_-$.
Let $g:I_+ \to I_-$ be a gluing homeomorphism that is monotone increasing in the $y$-coordinate such that the induced map $I \to I$ has countably many isolated fixed points. Then the zebra surface $X''$ obtained by gluing $\partial X'$ according to $g$ has the same singular data $\alpha$ and has countably many isolated closed leaves in the horizontal foliation $\sF_0$. Every surface in the $\Homeo_+(\hat \R)$-orbit also has a directional foliation with countably many isolated closed leaves, and so cannot be isotopic to a zebra structure arising from a half-dilation surface.
\end{proof}

\section{Constructing new foliations}
\label{sect:foliations}

\subsection{The Burp Lemma}

Later we will be constructing foliations consisting of leaves from directional foliations with varying slope. It is generally easy to prove that the proposed leaves do not cross, but it is more difficult to prove that there are no gaps (bubbles) between the proposed leaves. We call the following result the Burp Lemma because it is useful to rule out (burp) these bubbles:

\begin{lemma}[The Burp Lemma]
\label{burp}
Let $\overline{pq}$ be an arc of a trail such that all bending angles on the left side, as we move from $p$ to $q$, are $\pi$. For every segment $\overline{qr}$ such that $\measuredangle rqp < \pi$, there exists a point $x \in \overline{qr} \setminus \{q\}$ and a segment of a leaf $\overline{px}$. \compat{I rephrased the first sentence of the Burp lemma because I found the old version imprecise, and now with the notion of bending angle it is possible to be more precise. Dec 14, 2022.}
\end{lemma}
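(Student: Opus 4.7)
After applying an element of $\Homeo_+(\hat\R)$ I may assume $\overline{pq}$ is horizontal. Because each bending angle on the left of $\overline{pq}$ equals $\pi$, the two prongs meeting at any singularity interior to $\overline{pq}$ lie at stellar angle $\pi$ apart and therefore share the same slope (using the description of $\Pi_n$ in \Cref{sect:standard singularities}), so $\overline{pq}$ is a concatenation of horizontal leaf segments. Let $m_r\in\hat\R$ denote the slope of $\overline{qr}$; the hypothesis $0<\measuredangle rqp<\pi$ forces $m_r\neq 0$ and puts $\overline{qr}$ locally in the half-plane counterclockwise from the direction $qp$.

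The first step is to build a parallelogram bordered by pieces of $\overline{pq}$ and $\overline{qr}$. For the auxiliary segment required by \Cref{trapezoid construction}, I take the initial segment $\overline{ps}$ of the ray of slope $m_r$ at $p$ lying in the half-plane counterclockwise from the direction toward $q$ (such a ray exists because each half-plane in the stellar neighborhood of $p$ contains exactly one ray of each slope). Then $\measuredangle qps\in(0,\pi)$ and $\measuredangle rqp\in(0,\pi)$, so applying \Cref{trapezoid construction} yields a parallelogram $P=pqr's'$ with $s'\in\overline{ps}\setminus\{p\}$, $r'\in\overline{qr}\setminus\{q\}$ and $\overline{s'r'}$ horizontal. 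By \Cref{trapezoid observation}, $P^\circ$ contains no singularities.

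The second step is to push a leaf of some intermediate slope from $p$ into $P^\circ$. Choose a slope $m$ strictly between $0$ and $m_r$ along the counterclockwise arc of $\hat\R$ of length $\measuredangle qps$ corresponding to the angular sector at $p$ bounded by the rays of $\overline{pq}$ and $\overline{ps'}$. The prong of slope $m$ at $p$ lies inside this sector and so enters $P^\circ$; extending it gives a maximal leaf segment $\lambda\subset P^\circ$ emanating from $p$. By \Cref{no bigons} applied to $\lambda$ and each of the distinct-slope edges $\overline{pq}$ and $\overline{ps'}$, the segment $\lambda$ meets these edges only at $p$; by \Cref{leaves are proper maps}, $\lambda$ cannot remain in the compact set $P$ forever. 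Hence $\lambda$ first exits $P$ through a point of $\overline{qr'}\cup\overline{s'r'}$.

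The final step, and main technical difficulty, is to arrange for this exit point to lie on $\overline{qr'}\setminus\{q\}$. The exit point depends continuously on $m$, and as $m\to 0$ the prong at $p$ approaches the horizontal direction of $\overline{pq}$, so the exit point of $\lambda$ tends to $q$ along $\overline{qr'}$. Hence for $m$ sufficiently close to $0$, the exit point $x$ lies in $\overline{qr'}\setminus\{q\}\subset\overline{qr}\setminus\{q\}$, and $\overline{px}:=\lambda$ is the desired leaf segment. Making the continuous dependence of the exit point precise is the main obstacle: it can be handled either by invoking \Cref{trapezoid construction} again to shrink $P$ so that the horizontal edge $\overline{s'r'}$ becomes arbitrarily close to $\overline{pq}$ while $\overline{pq}$ retains comparable horizontal extent, or by using the fact that $P^\circ$ is singularity-free and simply connected to identify it with a Euclidean parallelogram on which the transverse foliations $\sF_0$ and $\sF_{m_r}$ give well-behaved coordinates.
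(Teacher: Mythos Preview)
Your setup (normalizing, building the parallelogram $P=pqr's'$ via \Cref{trapezoid construction}, and sending the slope-$m$ prong from $p$ into $P^\circ$) is fine, and you have correctly identified the main obstacle: showing that for $m$ close to $0$ the leaf $\lambda_m$ exits through $\overline{qr'}$ rather than $\overline{s'r'}$. Unfortunately, neither of your two suggested fixes closes this gap.

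The first suggestion (shrinking $P$ so that $\overline{s'r'}$ is ``close'' to $\overline{pq}$) does not work because there is no metric on a zebra plane, so ``close'' has no meaning that controls $\lambda_m$. Even in a very thin parallelogram, for each fixed $m>0$ the leaf $\lambda_m$ could still rise all the way to $\overline{s'r'}$ before reaching $\overline{qr'}$. The second suggestion (using that $P^\circ$ is singularity-free and simply connected to identify it with a Euclidean parallelogram via the transverse foliations $\sF_0$ and $\sF_{m_r}$) also fails: yes, those two foliations give you coordinates on $P^\circ$, but nothing in the axioms of a zebra structure forces the \emph{other} foliations $\sF_m$ to look like straight lines in those coordinates. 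The stellar condition is purely local at each point and places no global constraint on how a single $\sF_m$-leaf threads through a region. So you cannot conclude that the exit point varies continuously in $m$, let alone that it tends to $q$ as $m\to 0$.

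In fact, the statement ``the exit point of $\lambda_m$ tends to $q$ as $m\to 0$'' is essentially a restatement of the Burp Lemma itself (and it is exactly what the lemma is later used to prove in \Cref{triangle2}, where one shows that leaves from a vertex foliate a triangle). The paper's proof avoids this circularity by an entirely different argument: it parameterizes $\overline{pq}$, defines a point $t$ to be ``bad'' if no leaf from $p$ crosses a short vertical segment at $\gamma(t)$, takes $t_0$ to be the infimum of bad points, and then derives contradictions from both ``$t_0$ is bad'' and ``$t_0$ is not bad'' using Gauss--Bonnet computations on auxiliary triangles and quadrilaterals formed by diagonal leaves. This infimum argument is what actually rules out a ``bubble'' between $\lambda_m$ and $\overline{pq}$; your continuity heuristic assumes away that bubble rather than excluding it.
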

\begin{proof}
We may assume by acting by an element of $\Homeo_+(\hat \R)$ that $\overline{pq}$ is horizontal and $\overline{qr}$ is vertical. Using \Cref{trapezoid construction}, we can construct a rectangle $R$ whose vertices are $p$, $q$, a point $r' \in \overline{qr} \setminus \{q\}$ and some additional point $s$. The rectangle has no singularities in its interior by \Cref{trapezoid observation}. Our task is to construct a segment of a leaf $\overline{px}$ where $x \in \overline{qr'} \setminus \{q\}$.

Let $\gamma:[0,1] \to \overline{pq}$ be a parameterization of $\overline{pq}$ with $\gamma(0)=p$ and $\gamma(1)=q$. Say that $t \in [0,1]$ is {\em bad}, if there is a vertical segment $\overline{\gamma(t) \beta(t)}$ with $\beta(t)$ in the interior of $R$ such that there is no segment of a leaf joining $p$ to a point on $\overline{\gamma(t) \beta(t)} \setminus \{\gamma(t)\}$.

If the Burp Lemma is false, then there is a choice of points for which $1$ is bad. Assume this is the case, and we will derive a contradiction. Since there is a stellar neighborhood at $p$, every $t$ for which $\gamma(t)$ is in this stellar neighborhood is not bad, because the neighborhood is foliated by leaves emanating from $p$. Thus, setting $t_0$ equal to the infimum of the bad values of $t$ gives $t_0>0$.

First we claim that $t_0$ is not bad. Suppose to the contrary that $t_0$ is bad. Consider $\beta(t_0)$, which is a point on the vertical segment $\overline{\gamma(t_0) \beta(t_0)}$ below which no leaf from $p$ can cross. Let $\ell$ denote the leaf through $\beta(t_0)$ of slope $1$. Moving leftward along $\ell$, we must eventually leave $R$ by \Cref{leaves are proper maps}. We will consider where $\ell$ exits the rectangle. This situation is depicted on the left side of \Cref{fig:burp}. The leaf $\ell$ can't exit through the $\overline{r's} \setminus \{r'\}$ because then the polygon formed whose vertices are $\gamma(t_0)$, $q$, $r'$, $\ell \cap \overline{r's}$, and $\beta(t_0)$ would have interior angles adding to more than $3 \pi$, violating our \gaussbonnet which guarantees that the interior angles of a pentagon with no interior singular points add up to $3 \pi$. Similarly, we can see that $\ell$ cannot exit through $\overline{qr'} \setminus \{q\}$ (by applying \gaussbonnet to the quadrilateral with vertices $\gamma(t_0)$, $q$, $\overline{qr'} \cap \ell$ and $\beta(t_0)$)
and cannot exit through $\overline{\gamma(t_0) q} \setminus \{\gamma(t_0)\}$ (by applying 
\gaussbonnet to the triangle with vertices $\gamma(t_0)$, $\overline{\gamma(t_0) q} \cap \ell$ and $\beta(t_0)$). If $\ell$ exits through $\overline{sp}$, then it can't exit at $p$, or else it would violate the definition of $\beta(t_0)$. If $\ell$ exits at a point of $\overline{sp} \setminus \{p\}$, consider the segment $\tau$ of slope $1$ leaving $p$ and entering $R$. The segment $\tau$ enters and so must eventually exit the quadrilateral with vertices $p$, $\gamma(t_0)$, $\beta(t_0)$ and $\ell \cap \overline{sp}$. It can't exit through $\overline{pq}$ or $\overline{ps}$ because such an exit would create a bigon contradicting \Cref{no bigons} and cannot exit through $\ell$ because it is a leaf of the same foliation. So $\tau$ would have to exit through $\overline{\gamma(t_0) \beta(t_0)} \setminus \{\gamma(t_0)\}$, which would violate the definition of $\beta(t_0)$. We've ruled out the possibility that $\ell$ exits $R$ through $\overline{ps}$, and the only remaining segment of $R$ that remains for $\ell$ to exit is $\overline{p \gamma(t_0)} \setminus \{p\}$.
If $\ell$ exited at $\gamma(t_0)$ it would create a bigon with the vertical segment $\overline{\gamma(t_0) \beta(t_0)}$. So, it must exit through some point $u \in \overline{p \gamma(t_0)} \setminus\{p, \gamma(t_0)\}$, forming a new triangle $T=\triangle u \gamma(t_0) \beta(t_0)$. Pick a $t$ such that $\gamma(t)$ is in the interior of $\overline{u \gamma(t_0)}$. Choose $\beta(t)$ on the vertical leaf through $\gamma(t)$ such that $\beta(t)$ is in $T$. As $t<t_0$, the parameter $t$ cannot be bad. Therefore, there is a point $y \in \overline{\gamma(t) \beta(t)}$ and a segment of a leaf $\overline{py}$. Since $\overline{py}$ enters the interior of $T$, it must enter through an edge. By definition of $\beta(t_0)$, it cannot be through $\overline{\gamma(t_0) \beta(t_0)}$. It cannot be through $\overline {u \gamma(t_0)}$, because this is part of the boundary of $R$, and no trail can exit $R$ and later reenter (\Cref{intersections with polygons}). Therefore, it enters through $\ell=\overline{u\beta(t_0)}$. But then it must exit $T$ through a different edge, or else it would create a bigon. But we've already showed that the leaf continuing $\overline{py}$ cannot pass through any of the other edges. This contradicts our hypothesis that $t_0$ is bad, proving that $t_0$ is not bad. \compat{Edits were made based on Ferran's comments. I think it should be good now. Dec 10, 2022.}

\begin{figure}[htb]
\centering
\includegraphics[width=5in]{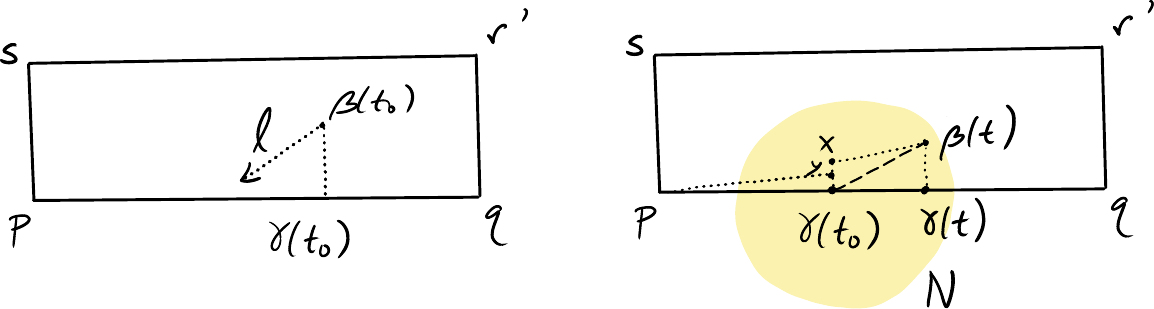}
\caption{Configurations discussed in the proof of \Cref{burp}.}
\label{fig:burp}
\end{figure}

We have shown that $t_0$ is not bad. We will also derive a contradiction from this. Construct a stellar neighborhood $N$ of $\gamma(t_0)$. Since $t_0$ is the infimum of the bad values of $t$, there is a $t>t_0$ such that $t$ is bad and $\gamma(t) \in N$. Let $\beta(t)$ be as in the definition of bad. We can assume, by possibly moving $\beta(t)$ closer to $\gamma(t)$ along $\overline{\gamma(t) \beta(t)}$ that $\beta(t) \in N$. 
Using the fact that $N$ is stellar, we can construct $\overline{\gamma(t_0) \beta(t)}$ forming a triangle $\triangle \gamma(t_0) \gamma(t) \beta(t)$. The slope of $\overline{\gamma(t_0) \beta(t)}$ is some $m>0$ by \Cref{triangle1}. Let $m'$ be a slope with $0<m'<m$, and construct the ray of slope $m'$ from $\beta(t)$ moving leftward through $R$. This leaf cannot cross over $\overline{\beta(t) \gamma(t_0)}$ so it must cross the vertical leaf through $\gamma(t_0)$ at some point, call it $x$. (The vertical leaf through $\gamma(t_0)$ must exit through $\overline{sr'}$, and the ray cannot exit through $\overline{\gamma(t) q} \cup \overline{q r'} \cup \overline{r's}$ by repeating analysis done in the previous paragraph.) Since $t_0$ is not bad, there must be a segment $\overline{py}$ where $y \in \overline{\gamma(t_0) x} \setminus \{\gamma(t_0)\}$. This leaf must have positive slope, and any leaf of smaller positive slope through $p$ must also intersect $\overline{\gamma(t_0) x} \setminus \{\gamma(t_0)\}$, so we can assume without loss of generality that the slope $m''$ of $\overline{py}$ satisfies $0<m''<m'$. Then continuing along $\overline{py}$ we enter the quadrilateral $\gamma(t_0) \gamma(t) \beta(t) x$ through the edge $\overline{x \gamma(t_0)}$. The continuation cannot exit through $\overline{\gamma(t_0) \gamma(t)}$ or else it would create a bigon, and cannot exit through $\overline{x \beta(t)}$ because it would create a triangle whose counterclockwise slope triple is $(\infty, m'', m')$ which is increasing and violates \Cref{triangle1}. Therefore, it must exit through $\overline{\gamma(t) \beta(t)}$, but this violates that $t$ was bad.

We have shown that the infimum of bad points cannot exist. It follows that all elements of $[0,1]$ are not bad. In particular $1$ is not bad, so there is a leaf joining $p$ to $\overline{qr'} \setminus \{q\}$. 
\end{proof}

\subsection{Foliating triangles}
\label{sect:foliating triangles}
Let $\triangle pqr$ be a triangle in a zebra plane $Z$. For $\ast \in \{p,q,r\}$, let $m_\ast$ denote the slope of the edge opposite $\ast$.

Triangles inherit restricted foliations $\sF_m$ of every slope $m$ from inclusion in $Z$. For each vertex $\ast$, each edge with $\ast$ as a vertex forms a {\em section} for the foliation of the triangle by leaves of slope $m_\ast$ (i.e., every maximal segment of a leaf contained the triangle passes transversely through the edge exactly once), because leaves of slope $m_\ast$ passing through the interior of the triangle cannot exit through the opposite edge.

\begin{lemma}
\label{triangle2}
Let $\triangle pqr$ be a triangle in $Z$ with vertices ordered counterclockwise.
Then the collection of leaves with slopes in $[m_r, m_q]$ through $p$ cover the triangle and are pairwise disjoint except for sharing the common vertex $p$.
Consider the function
\begin{equation}
\label{eq:triangle foliation}
h:\triangle pqr \setminus \{p\} \to [m_r, m_q] \times (\overline{pq} \setminus \{p\})
\end{equation}
sending $x$ to the pair consisting of the slope of $\overline{px}$ and the point on $\overline{pq}$ where the leaf of slope $m_p$ through $x$ exits the triangle.
This function is a homeomorphism whose restriction to $\overline{qr}$ maps to $[m_r,m_q] \times \{q\}$.
\end{lemma}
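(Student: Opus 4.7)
The plan is to first construct a monotone bijection $f : [m_r, m_q] \to \overline{qr}$ parameterizing leaves from $p$ by slope and their exit point on the opposite edge, show $f$ is a homeomorphism, and then assemble $h$ using $f$ together with the transverse foliation $\sF_{m_p}$. For each $m \in [m_r, m_q]$, the stellar structure at $p$ yields a prong of $\sF_m$ entering $\triangle pqr$, which extends to a leaf that must, by \Cref{leaves are proper maps}, exit the compact triangle. By \Cref{no bigons} together with \Cref{transversality}, it cannot exit through $\overline{pq}$ (of slope $m_r$) for $m \neq m_r$ or through $\overline{pr}$ (of slope $m_q$) for $m \neq m_q$, since two leaves of different slopes meeting twice would form a bigon. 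Thus the leaf exits through $\overline{qr}$ at a point $f(m)$, with $f(m_r) = q$ and $f(m_q) = r$ corresponding to the degenerate cases $\ell_{m_r} = \overline{pq}$ and $\ell_{m_q} = \overline{pr}$.

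Monotonicity of $f$ would follow from a separation argument: if $m_1, m_2 \in [m_r, m_q]$ with $m_1$ cyclically before $m_2$, then $\ell_{m_1}$ divides $\triangle pqr$ into two sub-polygons, and the prong of $\ell_{m_2}$ at $p$ lies in the one containing $\overline{pr}$; hence $\ell_{m_2}$ remains there until reaching $\overline{qr}$, forcing $f(m_2)$ onto the $r$-side of $f(m_1)$. For surjectivity I would invoke the \Cref{burp}: the arc $\ell_m$ has no interior singularities by \Cref{triangle1}, so the bending-angle condition is vacuously satisfied. Paired with a short segment of $\overline{qr}$ from $f(m)$ toward $r$, Burp produces a point $x$ in that segment joined to $p$ by a leaf; the required angle $\measuredangle\, r\, f(m)\, p$ is exactly the interior angle at $f(m)$ of the sub-triangle $p\, f(m)\, r$ and hence less than $\pi$. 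Since the segment can be made arbitrarily short, $f$ is right-continuous at every $m < m_q$; an analogous application of Burp, combined with the continuous variation of the prongs of $\sF_m$ at $p$ with $m$, handles potential left jumps. The upshot is that $f : [m_r, m_q] \to \overline{qr}$ is a monotone continuous bijection, hence a homeomorphism. I expect this step---carefully establishing surjectivity, particularly when the boundary value of a potential gap is not attained by the monotone $f$---to be the main obstacle.

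With $f$ in hand, the map $h$ is well-defined: its first coordinate is the unique slope $m$ with $x \in \ell_m$, and its second coordinate is the unique point of $\overline{pq}$ on the $\sF_{m_p}$-leaf through $x$, well-defined because $\overline{pq}$ is a section of $\sF_{m_p}|_{\triangle pqr}$. Its inverse $(m,y) \mapsto \ell_m \cap \lambda_y$, where $\lambda_y$ denotes the $\sF_{m_p}$-leaf through $y$, gives a single point by \Cref{transversality} and \Cref{no bigons}. Continuity of $h$ and $h^{-1}$ then follows from continuous variation of leaves in slope and in base point, via the stellar structure at $p$ and the trapezoid and rectangle constructions from the preceding section, together with \Cref{leaves are proper maps}. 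Finally, for $x \in \overline{qr}$, the $\sF_{m_p}$-leaf through $x$ coincides with $\overline{qr}$, which meets $\overline{pq}$ only at $q$; hence $h(\overline{qr}) \subseteq [m_r, m_q] \times \{q\}$, with equality following from bijectivity of $h$.
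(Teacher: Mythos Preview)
Your proposal shares the paper's key tool (the Burp Lemma) and its overall shape (leaves $\ell_m$ from $p$ for the first coordinate, the transverse foliation $\sF_{m_p}$ for the second). The difference in the coverage step is where Burp is applied: you apply it along $\overline{qr}$ at each $f(m)$ to obtain continuity of the exit-point map $f$, whereas the paper argues by contradiction, passes the $\sF_{m_p}$-leaf $\beta$ through a hypothetical uncovered point $x_0$, sets $y=\ell_{m_c}\cap\beta$ for the critical slope $m_c$, and applies Burp along $\overline{py}$ with the segment $\overline{y\,x_0}\subset\beta$.

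This difference leaves a genuine gap in your argument. Surjectivity of $f$ onto $\overline{qr}$ does not by itself give coverage of \emph{interior} points of the triangle, yet you need that to assert ``the first coordinate of $h$ is the unique $m$ with $x\in\ell_m$.'' The paper's placement of Burp---along the transverse leaf through an arbitrary uncovered point---handles interior points directly. Your gap is easy to fill: the $\sF_{m_p}$-leaf $\lambda_x$ through an interior point $x$ cuts off a subtriangle $p\,y\,z$ with $\overline{yz}=\lambda_x$ playing the role of the opposite edge and with the same three edge-slopes, so your continuity argument for $f$ applies verbatim there and puts $x$ on some $\ell_m$. But as written, the passage from ``$f$ surjective onto $\overline{qr}$'' to ``the $\ell_m$ cover $\triangle pqr$'' is missing. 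A smaller point: your ``analogous application of Burp'' for left jumps glosses over an orientation issue---points toward $q$ lie on the wrong side of $\ell_m$ for the Burp Lemma as stated. The paper resolves the same asymmetry with an orientation-reversing WLOG, and you can do likewise. The paper also spells out the continuity of $h$ and $h^{-1}$ more explicitly, describing preimages of basic open rectangles as intersections of a subtriangle (bounded by two $\ell_m$'s) with a trapezoid built via \Cref{trapezoid construction}.
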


Equation \ref{eq:triangle foliation} places a coordinate system on the triangle somewhat analogous to polar coordinates.

\begin{proof}
For $m \in [m_r, m_q]$, let $\ell_m$ denote the arc of the leaf of slope $m$ starting at $p$ and entering the triangle until it exits the triangle. (Exiting is guaranteed by \Cref{leaves are proper maps}.) Then $\ell_{m_r}=\overline{pq}$
and $\ell_{m_q}=\overline{pr}$. In a stellar neighborhood of $p$, it is easy to observe that the segments $\ell_m$ are disjoint except at $p$, and by \Cref{no bigons}, these segments are disjoint except at $p$ as subsets of $\triangle pqr$.
Thus if $m \in (m_r, m_q)$, the arc of the leaf $\ell_m$ exits the triangle through a point on $\overline{qr}$. Thus for such $m$, $\triangle pqr \setminus \ell_m$ consists of two components, one containing $q$ and the other containing $r$. We'll say that $\ell_m$ {\em runs through} $x$ if $x \in \ell_m$, {\em runs above} $x$ if $x$ lies in the component of $\triangle pqr \setminus \ell_m$ containing $q$, and {\em runs below} $x$ if $x$ lies in the component containing $r$. These notions make sense for $m \in [m_r, m_q]$. We will first prove that for every $x \in \triangle pqr$ there is a $m \in [m_r, m_q]$ such that $\ell_m$ runs through $x$.

Observe that for $m<m'$ slopes in $[m_r, m_q]$, the segments $\ell_m$ and $\ell_{m'}$ together with a segment of $\overline{qr}$ form a triangle. Thus by \Cref{triangle1}, the counterclockwise order for the edges of this triangle is $\ell_m$, the segment of $\overline{qr}$, and finally $\ell_{m'}$. In particular, if $A_m$ denotes the set of points that $\ell_m$ runs above and $B_m$ denote the set of points that $\ell_m$ runs below we have
\begin{equation}
\label{eq:above or below}
A_{m} \subset A_{m'} \quad \text{and} \quad B_{m'} \subset B_{m}.
\end{equation}

Now assume that there is a point $x_0 \in \triangle pqr$ such that there is no $m \in [m_r, m_q]$ for which $\ell_m$ runs through $x_0$. Define
$$m_c=\sup\,\{m \in [m_r, m_q]:~\text{$\ell_m$ runs below $x_0$}\}$$
(where the supremum is taken using the increasing cyclic ordering on $[m_r, m_q]$). We may assume without loss of generality (by possibly applying an orientation-reversing homeomorphism of the circle as described in \Cref{sect:homeo action}) that $\ell_{m_c}$ runs below $x_0$.
Now define
$$X =  B_{m_c} \cap \bigcup_{m \in (m_c,m_q]} A_m.$$
Observe that $x_0 \in X$.
By \eqref{eq:above or below}, for $x \in X$ and $m \in [m_r, m_c]$, $m$ runs below $x$. 

Consider the leaf $\beta$ through $x_0$ which is parallel to $\overline{qr}$. (If $x \in \overline{qr}$, we take $\beta=\overline{qr}$.) Since $\beta$ is a leaf of the same foliation as $\overline{qr}$, it cannot cross $\overline{qr}$. Therefore, the leaf $\beta$ must intersect both $\overline{pq}$ and $\overline{qr}$ and so must cross each $\ell_m$ exactly once (or else it creates a bigon).
Let $y=\ell_{m_c} \cap \beta$ and consider the segment $\overline{x_0 y} \subset \beta$. We claim that $\overline{x_0 y} \setminus \{y\} \subset X$.
We have $\overline{x_0 y} \setminus \{y\} \subset B_{m_c}$ because $\ell_{m_c} \cap \beta=\{y\}$ and so $\overline{x_0 y} \setminus \{y\}$ lies in the same component of $\triangle pqr \setminus \ell_{m_c}$. Similarly if $m \in (m_c, m_q]$, because $x_0 \in A_m$ the leaf $\ell_m$ must intersect $\beta$ in the segment of $\beta$ connecting $\overline{pr}$ to $x_0$. Thus $\overline{x_0 y} \setminus \{y\} \subset A_{m}$ because $\overline{x_0 y}$ lies in the same component of $\triangle pqr \setminus \ell_{m}$ as $x_0$.

Observe that $\measuredangle x_0 y p < \pi$ but there is no leaf from $p$ that intersects $\overline{x_0 y} \setminus \{y\}$. This is a violation of \Cref{burp}, proving that the $\ell_m$ cover $\triangle pqr$.

Now we will show that the map $h$ is a homeomorphism. To see it is continuous, fix an $x$ in the triangle. The topology on the codomain has a basis consisting of rectangles. Suppose $h(x)$ is in the rectangle given by the product of an interval of slopes with endpoints $m_1$ and $m_2$, and points $y_1, y_2 \in \overline{pq}$. The preimage of this set consists of the intersection of the triangle bounded by $\ell_{m_1}$, $\ell_{m_2}$ and a segment of $\overline{qr}$ and a trapezoid consisting of the region between the leaves of slope $m_p$ through the points $y_1$ and $y_2$. Thus $h$ is continuous. It is onto since there must be an intersection between any $\ell_m$ and any leaf of slope $m_p$ passing through the triangle (since the boundary of $\ell_m$ separates $\overline{pq} \setminus \{p\}$ from $\overline{pr} \setminus \{r\}$ in the boundary of the triangle). It is one-to-one because of \Cref{no bigons} and the fact that the two leaves being intersected have distinct slopes. Thus $h^{-1}$ is well-defined. To show $h^{-1}$ is continuous, let $U$ be an open subset of the triangle. Let $x \in U$ and suppose $h(x)=(m,y)$. Let $\gamma$ be the leaf of slope $m_p$ through $x$ and $y$. Since $x$ is in the interior of $U$, we can find $x_1 \in U \cap (\overline{xy} \setminus \{x\})$ and $x_2 \in U \cap (\gamma \setminus \overline{xy})$, so that $\overline{x_1 x_2} \subset U$. Let $m_1$ and $m_2$ be the slopes of $\overline{px_1}$ and $\overline{px_2}$ respectively. Then \Cref{trapezoid construction} allows us to construct two trapezoids in $U$ one on each side of $\overline{x_1 x_2}$ with adjacent edges consisting of segments of $\ell_{m_1}$ and $\ell_{m_2}$. The union of these trapezoids is a larger trapezoid with $x$ in its interior. As we've already shown that the leaves $\ell_m$ vary monotonically, the image of the interior of this trapezoid under $h$ is a rectangle $(m_1,m_2) \times (y_1,y_2)$ with $y_1$ and $y_2$ being the places that the edges of the trapezoid parallel to $m_q$ intersect $\overline{pq}$.

The fact that $\overline{qr}$ maps to $[m_r,m_q] \times \{q\}$ is clear since $\overline{qr}$ is one of the leaves of slope $m_p$.
\end{proof}

\begin{corollary}[Vertex foliations]
\label{vertex foliations}
Let $P=p_0 p_1 \ldots p_{n-1}$ be a $n$-gon in a zebra plane with no interior singularities all of whose interior angles are less than $\pi$. Let $m_0$ be the slope of $\overline{p_0 p_1}$ and $m_1$ be the slope of $\overline{p_0 p_{n-1}}$. For $m \in [m_0, m_1]$, let $\ell_m$ be the leaf of slope $m$ entering $P$ from $p_0$. Then the leaves $\{\ell_m:~m \in [m_0, m_1]\}$ cover $P$, foliate the interior of $P$, and for any $m \in (m_0, m_1)$ the intersection $\ell_m \cap \partial P$ consists of two points.
\end{corollary}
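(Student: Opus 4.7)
The plan is to mimic the proof of \Cref{triangle2} and reduce the general polygonal case to the triangle case handled there via \Cref{burp}. For each $m \in (m_0, m_1)$, the leaf $\ell_m$ emanates from $p_0$ into $P^\circ$ and, by \Cref{leaves are proper maps}, exits $P$ at a unique point $y_m \in \partial P$. Combining \Cref{no bigons} with the slope conditions gives $\ell_m \cap \partial P = \{p_0, y_m\}$, with $y_m$ lying on one of the ``opposite'' edges $\overline{p_i p_{i+1}}$ for $1 \leq i \leq n-2$, which establishes the boundary-intersection claim. Pairwise disjointness of the family $\{\ell_m\}$ (except at $p_0$) is immediate from \Cref{no bigons}, since $P^\circ$ contains no singularities.

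The main content is the covering claim, which I would prove by contradiction. Suppose $x_0 \in P$ lies on no $\ell_m$. For $m \in (m_0, m_1)$, the arc $\ell_m$ separates $P^\circ$ into two open components; label by $A_m$ the one adjacent to the arc of $\partial P$ from $y_m$ to $p_0$ passing through $p_{n-1}$, and by $B_m$ the other, with the natural limiting conventions at $m = m_0$ and $m = m_1$. As in \Cref{triangle2}, \Cref{no bigons} forces the monotonicity $A_{m'} \subset A_m$ and $B_m \subset B_{m'}$ whenever $m < m'$. Set $m_c = \sup \{m \in [m_0, m_1] : x_0 \in A_m\}$; using the orientation-reversing action from \Cref{sect:homeo action} to swap the two sides if needed, we may assume $x_0 \in A_{m_c}$.

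The crucial step is to produce a triangle inside $A_{m_c}$ that contains $x_0$, on which \Cref{triangle2} can be invoked directly. Write $y = y_{m_c}$, and let $s$ be the first vertex of $P$ encountered along $\partial P$ starting at $y$ and proceeding into the region adjacent to $A_{m_c}$, so $\overline{ys}$ lies inside a single edge of $P$. I claim the angle $\theta := \measuredangle s y p_0$ at $y$ measured on the $A_{m_c}$ side is strictly less than $\pi$: when $y$ is interior to an edge, $y$ is non-singular (edges contain no interior singularities), so the local half-plane structure of $P$ at $y$ is bisected by $\ell_{m_c}$ into two sub-angles summing to $\pi$, giving $\theta < \pi$; when $y = p_k$ is a vertex, $\theta$ is a sub-angle of the full interior angle $\measuredangle p_{k+1} p_k p_{k-1} < \pi$ of $P$ at $p_k$, again giving $\theta < \pi$. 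Since $\ell_{m_c}$ is a single leaf (no bending angles), \Cref{burp} applies and produces $x' \in \overline{ys} \setminus \{y\}$ together with a leaf segment $\overline{p_0 x'}$; its slope $m^{**}$ satisfies $m^{**} \in (m_c, m_1]$ and $\overline{p_0 x'}$ coincides with $\ell_{m^{**}}$.

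The closed region $\overline{A_{m_c}} \cap \overline{B_{m^{**}}}$ is bounded by $\ell_{m_c}$, the segment $\overline{y x'}$ (which lies inside one edge of $P$, hence is a single leaf segment), and $\ell_{m^{**}}$, so it is a triangle $T$ in the zebra plane whose interior angles are positive and sum to $\pi$ by \Cref{triangle1}. Since $m^{**} > m_c = \sup\{m : x_0 \in A_m\}$ we have $x_0 \in B_{m^{**}}$, and combined with $x_0 \in A_{m_c}$ this gives $x_0 \in T$. Applying \Cref{triangle2} to $T$ at the vertex $p_0$ (adjacent slopes $m_c$ and $m^{**}$) shows $x_0$ lies on a leaf from $p_0$ of some slope $m \in [m_c, m^{**}] \subset [m_0, m_1]$, namely $\ell_m$; this contradicts the assumption, establishing coverage. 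The foliation statement on $P^\circ$ then follows from coverage, pairwise disjointness, and the local product structure near each $\ell_m$ provided by \Cref{trapezoid construction}. The main delicate point will be the angle bookkeeping at the exit vertex $y$, resolved uniformly by the sub-angle observation above.
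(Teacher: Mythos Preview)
Your argument is correct and takes a genuinely different route from the paper. The paper's proof is a surgery trick: it realizes a Euclidean convex $n$-gon $Q \subset \R^2$ with the same edge slopes as $P$, finds a single Euclidean triangle $T' \supset Q$ with apex at $q_0$, replaces $Q$ by $P$ via the \surgery to form a new zebra plane $Z' = (\R^2 \setminus Q) \cup P$, and then applies \Cref{triangle2} once to $T'$ in $Z'$. Your approach instead replays the proof of \Cref{triangle2} inside $P$ itself: you set up the above/below dichotomy, take the critical slope $m_c$, and then use \Cref{burp} along $\ell_{m_c}$ to produce a second leaf $\ell_{m^{**}}$ so that $x_0$ sits in the genuine triangle between them, to which \Cref{triangle2} applies. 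The paper's route is shorter and illustrates the power of surgery; your route is more elementary in that it avoids the surgery machinery of \Cref{sect:surgery} entirely, relying only on \Cref{burp}, \Cref{triangle2}, and \Cref{no returning trails}/\Cref{intersections with polygons}.

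One point you should make explicit: the Burp Lemma hands you a leaf segment $\overline{p_0 x'}$ in $Z$, not a priori in $P$, and you need it to coincide with the $\ell_{m^{**}}$ defined relative to $P$. Since all interior angles of $P$ are below $\pi$, all exterior angles exceed $\pi$, so \Cref{intersections with polygons} applies: extending $\overline{p_0 x'}$ to a trail, its intersection with $P$ is a single interval containing both $p_0$ and $x'$, hence contains $\overline{p_0 x'}$. This forces the prong at $p_0$ to point into $P$, so $m^{**} \in (m_c, m_1]$ and $\overline{p_0 x'}$ really is (the $P$-portion of) $\ell_{m^{**}}$. With that clause inserted, your proof goes through.
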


\begin{figure}[htb]
\centering
\includegraphics[width=5in]{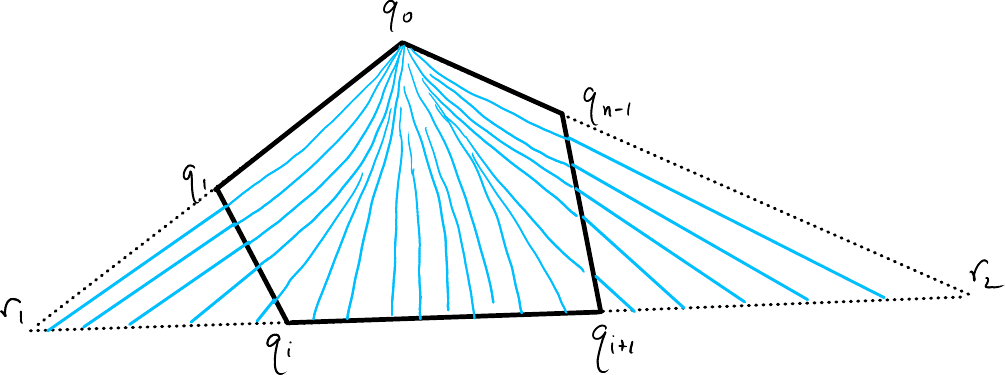}
\caption{\Cref{vertex foliations} and its proof.} \label{fig:vertex foliation}
\end{figure}

\begin{proof}
Let $Z$ denote the zebra plane containing $P$. Since $P$ has no singularities in its interior and all of its interior angles are less than $\pi$, there is a convex Euclidean $n$-gon $Q=q_0 \ldots q_{n-1} \subset \R^2$ with the same boundary slopes. Since $Q$ is convex,
there is an $i \in \{1, \ldots, n-2\}$ such that the line $\overleftrightarrow{q_i q_{i+1}}$ extending the side $\overline{q_i q_{i+1}}$ intersects the two rays $\overrightarrow{q_0 q_1}$ and $\overrightarrow{q_0 q_{n-1}}$.
Define $r_1=\overrightarrow{q_0 q_1} \cap \overleftrightarrow{q_i q_{i+1}}$, and $r_2=\overrightarrow{q_0 q_{n-1}} \cap \overleftrightarrow{q_i q_{i+1}}$.
Then the triangle $\triangle q_0 r_1 r_2$ contains $Q$. Using surgery (\Cref{surgery}), define
$Z' = (\R^2 \setminus Q) \cup P$.
Then applying \Cref{triangle2}, we see that the leaves through $p_0$ cover all of the triangle $\triangle q_0 r_1 r_2$ (which we are now viewing as in $Z'$),
and in particular these leaves cover $P$ and foliate its interior. Finally, if $m \in (m_0, m_1)$, we see by considering a stellar neighborhood at $p_0$ that $\ell_m$ immediately enters the interior of $P$ after leaving $p_0$. Thus, $\ell_m \cap \partial P$ contains only two points by \Cref{trail through interior}.
\end{proof}

As a consequence, we see some uniformity to our stellar neighborhoods:

\begin{corollary}
Suppose $P$ is polygon containing no singularities in its interior all of whose interior angles are less than $\pi$. If $x$ is in the interior of $P$,
then the interior of $P$ is a stellar neighborhood of $x$. The same holds if $P$ is a polygon satisfying the same properties, but with $x$ being the only singularity in the interior of $P$.
\end{corollary}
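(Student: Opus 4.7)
The plan is to decompose $P$ into $2(\alpha(x)+2)$ sub-polygons by extending prongs from $x$ in two generic directions, apply \Cref{vertex foliations} to each sub-polygon, and assemble the resulting leaf-preserving homeomorphisms into a stellar homeomorphism $P^\circ \to \Pi_{\alpha(x)}$.

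Set $n = \alpha(x) + 2$. Choose two distinct slopes $m_1, m_2 \in \hat{\R}$ avoiding the finitely many slopes of edges of $P$ as well as the finitely many slopes of leaves from $x$ through vertices of $P$. The $2n$ prongs at $x$ of slopes $m_1$ or $m_2$ interlace around $x$ and partition a stellar neighborhood of $x$ into $2n$ sectors whose angles alternate between two positive values summing to $\pi$; in particular each such angle is strictly less than $\pi$. By \Cref{leaves are proper maps} each prong extends as a leaf to a unique point on $\partial P$, and by the generic choice of slopes this endpoint lies in the relative interior of an edge of $P$ with transverse incidence. By \Cref{no bigons} the $2n$ extensions meet pairwise only at $x$, so they partition $P$ into $2n$ closed sub-polygons $P_1, \ldots, P_{2n}$. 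Each $P_i$ has $x$ as a vertex with interior angle less than $\pi$; every other interior angle of $P_i$ is either inherited from $P$ (and so less than $\pi$ by hypothesis), or arises at a transverse leaf endpoint on an edge of $P$, where the straight angle $\pi$ is split into two strictly smaller angles. Since $P_i$ contains no interior singularities, \Cref{vertex foliations} applies with $x$ as the distinguished vertex, yielding that the leaves from $x$ of slopes in the corresponding closed slope interval cover $P_i$ and foliate $P_i^\circ$. Summing over $i$, leaves from $x$ of every slope $m \in \hat{\R}$ cover $P$.

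The proof of \Cref{vertex foliations} does more than assert coverage. Internally it constructs a convex Euclidean polygon $Q_i$ with the same boundary slopes and interior angles as $P_i$ and applies \Cref{triangle2} inside the surgery $(\R^2 \setminus Q_i) \cup P_i$, working in an enveloping Euclidean triangle. The explicit (slope-from-$x$, position-on-section) parametrization of \Cref{triangle2} identifies the interior of $P_i$ with the interior of $Q_i$, producing a homeomorphism $h_i\colon P_i \to Q_i$ with $h_i(x) = \0$ that sends each leaf from $x$ of slope $m$ to the Euclidean segment from $\0$ of slope $m$ inside $Q_i$. Realize $\Pi_{\alpha(x)}$ as the cyclic gluing of $2n$ closed Euclidean sectors having the same alternating interior angles, and inscribe each $Q_i$ in the $i$th sector with apex at $\0$. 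Adjacent $h_i$ and $h_{i+1}$ send their shared extended prong to the common bounding ray of the two sectors, so they glue to a homeomorphism $h\colon P \to \bigcup_i Q_i \subset \Pi_{\alpha(x)}$. Composing sector-by-sector with a stellar epimorphism from \Cref{stretching sectors} that expands $\bigcup_i Q_i$ onto all of $\Pi_{\alpha(x)}$ while preserving rays from $\0$, and restricting to $P^\circ$, yields the required stellar homeomorphism.

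The main technical obstacle is the extraction of the explicit $h_i$ from \Cref{vertex foliations}, whose statement only records coverage; one must trace through its proof to confirm that the (slope, position) coordinates on $P_i$ match those on $Q_i$, and that adjacent sub-polygons yield identical parametrizations along their shared prong extension so that the $h_i$ genuinely assemble. Once this is done, the two stellar axioms for $h$ are automatic: the $n$ prongs of $\sF_m$ at $x$ lie in distinct sectors and map bijectively to the $n$ rays of slope $m$ in $\Pi_{\alpha(x)}$.
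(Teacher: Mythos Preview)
Your overall strategy---extend prongs from $x$ to $\partial P$, foliate each piece by leaves through $x$, and assemble into a stellar homeomorphism---matches the paper's. The paper, however, organizes the decomposition differently: after using horizontal and vertical prongs together with \Cref{vertex foliations} to see that every point of $P$ (in particular every vertex) lies on a leaf through $x$, it cuts $P$ into \emph{triangles} $\triangle x v w$ with $v,w$ consecutive vertices of $P$. It then applies \Cref{triangle2} directly to each triangle to obtain product coordinates $[m_v,m_w]\times(0,1]$, stitches these rectangles together with bump functions into a map $P\setminus\{x\}\to \hat\R^k\times(0,1]$ (where $k=\alpha(x)+2$), and finishes with a standard complex-analytic identification of this half-open cylinder with $\Pi_{\alpha(x)}\setminus\{\0\}$. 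Working with triangles lets the paper invoke the explicit homeomorphism of \Cref{triangle2} rather than having to mine one out of the surgery proof of \Cref{vertex foliations}.

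Your proposal has a genuine gap precisely where you flag the ``main technical obstacle.'' The assertion that adjacent $h_i$ and $h_{i+1}$ agree on their shared prong extension is not true as stated: both send that edge to the common bounding ray of slope $m_j$, but the \emph{parametrizations} along that ray come from independent choices (the second coordinate in \Cref{triangle2} depends on which transversal section you use, and the surgery gluings in \Cref{vertex foliations} are made by arbitrary edge homeomorphisms). So the $h_i$ do not glue automatically; one must inductively post-compose each $h_{i+1}$ with a ray-preserving automorphism of its sector (via \Cref{stretching sectors}) to force agreement on the shared boundary, and then handle the closing condition at the last step. This is exactly the role of the paper's ``bump functions.'' Once you add that adjustment your argument goes through, but as written the gluing step is incomplete.
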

\begin{proof}
Consider the horizontal and vertical leaves emanating from $x$. These divide $P$ into subpolygons all of whose interior angles are less than $\pi$ with no singularities in its interior. Then \Cref{vertex foliations} allows us to foliate all these subpolygons using leaves emanating from $x$. We can use this to construct leaves from $x$ to the vertices of $P$, cutting $P$ into triangles. If $v$ and $w$ are consecutive vertices of $P$, then \Cref{triangle2} gives a homeomorphism from each triangle with $x$ removed to rectangles $[m_v,m_w] \times (0,1]$, carrying the foliation of the triangle by leaves through $x$ to the vertical foliation of the rectangle. These rectangles can be stitched together (using bump functions to continuously adjust the homeomorphisms so that the vertical sides can be identified by isometry), giving  homeomorphism from $P \setminus \{x\}$ to $\hat \R^k \times (0,1]$ where $\hat \R^k$ denotes the $k$-fold cover of $\hat \R$ and $k=\alpha(x)+2$. Then composing with a standard map from complex analysis gives the stellar homeomorphism.
\end{proof}

\subsection{Foliating polygons}
\label{sect:foliating polygons}
In this subsection, we explain two methods of foliating a polygon by leaves passing through an edge.

\begin{lemma}
\label{quadrilateral foliation}
Let $P$ be an $n$-gon with vertices $p_0$, \ldots, $p_{n-1}$ listed in counterclockwise order. Assume that there
are no singularities in the interior of $P$ and interior angles at each $p_i$ are less than $\pi$. For each $i \in \Z/n\Z$, let $m_i$ denote the slope of $\overline{p_i p_{i+1}}$. Let $\gamma:[0,1] \to \overline{p_0 p_{n-1}}$ be a parameterization such that $\gamma(0)=p_0$ and $\gamma(1)=p_{n-1}$. Suppose $\mu:[0,1] \to \hat \R \setminus \{m_{n-1}\}$ is continuous and, using the ordering on $\R \setminus \{m_{n-1}\}$, we have that $\mu$ is strictly increasing and satisfies
$$\mu(0) \leq m_0 \quad \text{and} \quad m_{n-2} \leq \mu(1).$$
Let $\ell_t$ denote the leaf of slope $\mu(t)$ through $\gamma(t)$. Then the collection of leaves $\{\ell_t:~t\in [0,1]\}$ foliates $P$. This foliation is transverse to $\overline{p_0 p_1}$ if $\mu(0)<m_0$, is transverse to $\overline{p_{n-2} p_{n-1}}$ if $m_{n-2}<\mu(1)$, and is always transverse to the other edges.
\end{lemma}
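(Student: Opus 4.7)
The plan is to establish pairwise disjointness of the leaves $\ell_t$ in the interior of $P$ using \Cref{triangle1}, and then to establish coverage by a surgical construction that reduces everything to \Cref{vertex foliations}.

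For disjointness, suppose $\ell_{t_1}$ and $\ell_{t_2}$ with $t_1 < t_2$ intersect at some interior point $x \in P^\circ$. Together with the subarc of $\overline{p_0 p_{n-1}}$ from $\gamma(t_1)$ to $\gamma(t_2)$, they bound a triangle whose counterclockwise traversal $\gamma(t_1) \to x \to \gamma(t_2) \to \gamma(t_1)$ has edges of slopes $(\mu(t_1), \mu(t_2), m_{n-1})$. By \Cref{triangle1} this triple is in strictly decreasing cyclic order on $\hat\R$, equivalently (via cyclic shift) $(m_{n-1}, \mu(t_1), \mu(t_2))$ is strictly decreasing cyclic. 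However, the strict monotonicity of $\mu$ in the ordering on $\hat\R \setminus \{m_{n-1}\}$ says precisely that $(m_{n-1}, \mu(t_1), \mu(t_2))$ is strictly increasing cyclic, yielding the contradiction. This gives pairwise disjointness in $P^\circ$.

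For coverage, I would construct in an auxiliary zebra plane a triangle $T = \triangle vab$ (vertices counterclockwise) whose edges $\overline{va}, \overline{ab}, \overline{bv}$ have slopes $\mu(0), m_{n-1}, \mu(1)$; this triple is in strictly decreasing cyclic order by the monotonicity and endpoint hypotheses on $\mu$, so such a triangle exists. By \Cref{triangle2}, the vertex foliation from $v$ in $T$ provides a homeomorphism sending each slope in $[\mu(0), \mu(1)]$ to its exit point on $\overline{ab}$. Choose the orientation-reversing gluing homeomorphism $g : \overline{ab} \to \overline{p_0 p_{n-1}}$ with $g(a) = p_0$ and $g(b) = p_{n-1}$, arranged so that the leaf from $v$ of slope $\mu(t)$ exits $T$ at the point $g^{-1}(\gamma(t))$. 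Invoking \Cref{surgery} yields a zebra surface containing the combined polygon $P \cup T$, and by construction each leaf from $v$ of slope $\mu(t)$ continues across the glued edge into $P$ as the leaf $\ell_t$ emanating from $\gamma(t)$.

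The main technical hurdle will be verifying that the combined polygon $P \cup T$ has interior angles at most $\pi$, so that \Cref{vertex foliations} applies (after removing any straight vertices via \Cref{removing marked points 2}). The angle at $v$ is the interior angle of the triangle $T$ and is less than $\pi$; the angles at $p_1, \ldots, p_{n-2}$ are inherited from $P$. At $p_0 = a$ the combined interior angle equals $\theta_0 + \phi_a$, where $\theta_0$ is the interior angle of $P$ at $p_0$ and $\phi_a$ the interior angle of $T$ at $a$. The hypothesis $\mu(0) \leq m_0$ is equivalent to the geometric statement that, sweeping counterclockwise at $p_0$ from the direction of $\overline{p_0 p_1}$ through $P$ and then through $T$ to the direction of $\overline{p_0 v}$, the total angular sweep is at most $\pi$, with equality iff $\mu(0) = m_0$; the parallel analysis at $p_{n-1} = b$ uses $m_{n-2} \leq \mu(1)$. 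Once the angle conditions are verified, \Cref{vertex foliations} implies that the leaves from $v$ foliate the combined polygon, and restricting to $P$ yields the desired foliation by $\{\ell_t\}$. The transversality claims follow immediately by comparing the slopes of leaves and edges: strict inequality $\mu(0) < m_0$ (resp.\ $m_{n-2} < \mu(1)$) rules out tangency to $\overline{p_0 p_1}$ (resp.\ $\overline{p_{n-2} p_{n-1}}$). The delicate matching of the hypothesis on $\mu$ with the Euclidean angle analysis in the surgery is the step I expect to require the most care.
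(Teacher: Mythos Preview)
Your approach is essentially the paper's: both perform a surgery so that $P$ sits inside a region foliated by leaves from a single apex, with the gluing along $\overline{p_0 p_{n-1}}$ calibrated so that the leaf of slope $\mu(t)$ meets that edge at $\gamma(t)$. The paper's execution is slicker, however. Instead of gluing an external triangle to $P$ and then invoking \Cref{vertex foliations} on the combined $(n{+}1)$-gon, the paper finds a convex Euclidean polygon $P'\subset\R^2$ with the same edge slopes as $P$, draws the lines of slopes $\mu(0)$ and $\mu(1)$ through $p_0'$ and $p_{n-1}'$ to locate an apex $x$, extends these to form a large Euclidean triangle $T'$ \emph{containing} $P'$, and performs the surgery $X=(\R^2\setminus P')\cup P$. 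In $X$ the set $T'$ is still a triangle, now containing $P$, and \Cref{triangle2} applies directly. This bypasses both of the steps you flag as delicate: the combined-angle check at $p_0$ and $p_{n-1}$ is unnecessary because those points lie on the \emph{sides} of $T'$ rather than at its vertices, and no further embedding is needed because $X$ is already a zebra plane (whereas your $P\cup T$ is only a zebra surface with boundary, so an additional surgery into $\R^2$ would be required before \Cref{vertex foliations} applies). Your separate disjointness argument via \Cref{triangle1} then also becomes redundant.
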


\begin{figure}[htb]
\centering
\includegraphics[width=3in]{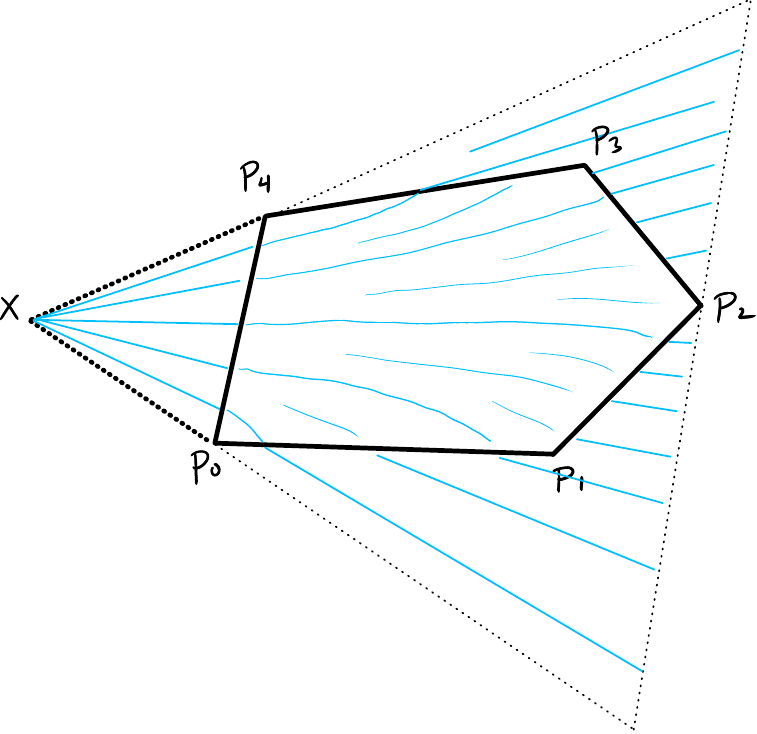}
\caption{A foliation of a polygon as described by \Cref{quadrilateral foliation}, depicted as constructed in the proof.} \label{fig:quadrilateral lemma}
\end{figure}

\commb{Just curious. You use strictly increasing in the proof, but is it necessary for the Lemma to hold?} \compat{It is probably not necessary, but, as you point out, our proof needs it. If $\mu$ was just increasing, then there would be intervals where slope is constant, but those constant intervals obviously correspond to foliated intervals. To extend the slick argument, maybe there is an argument that collapses intervals where $\mu$ is constant. (This is just a vague idea, and probably would be hard to make rigorous.)}

\begin{proof}
By the Gauss Bonnet Theorem, the sum of the interior angles of $P$ is $(n-2) \pi$. Thus we can find a strictly convex polygon $P'=p_0'\ldots p_{n-1}'$ in the plane $\R^2$ with the same edge slopes and the same interior angles as $P$. Let $x \in \R^2$ be the point at which the line of slope $\mu(0)$ through $p_0'$ intersects the line of slope $\mu(1)$ through $p_{n-1}'$. \commf{Maybe I am not understanding something...but why these intersect? why can't these lines be parallel?} \compat{I think we discussed. We have $\mu(0) \neq \mu(1)$ because the function $\mu:[0,1] \to \hat \R \setminus \{m_{n-1}\}$ is strictly increasing.} We can extend the rays $\overrightarrow{x p_0'}$ and $\overrightarrow{x p_{n-1}'}$ and join them by a segment forming a triangle $T'$ containing $P'$ as depicted in \Cref{fig:quadrilateral lemma}.

Consider the foliation of $T'$ by rays emanating from $x$. Let $\eta:[0,1] \to \overline{p_0' p_{n-1}'}$ be a parameterization such that $\eta(0)=p_0'$ and $\eta(1)=p_{n-1}'$. For $t \in [0,1]$, let $\nu(t)$ denote the slope of the segment $\overline{x \eta(t)}$. By a standard Euclidean geometry exercise, $\nu$ is a function from $[0,1]$ to $[\mu(0), \mu(1)]$ which is strictly increasing and satisfies $\nu(0)=\mu(0)$ and $\nu(1)=\mu(1)$. 

We use surgery. Let $X=(\R^2 \setminus P') \cup P$, where we reglue $P$ in place of $P'$ being sure to glue $\overline{p_0 p_{n-1}}$ to segment $\overline{p_0' p_{n-1}'}$ by the map
$$\gamma(t) \mapsto \eta\big(\nu^{-1} \circ \mu(t)\big),$$
and gluing the other edges more freely by arbitrary homeomorphisms.
The triangle $T'$ remains a triangle in $X$, and so by \Cref{triangle2} there is a foliation $\sG$ of $T$ by leaves through $x$. By construction the leaf of slope $\mu(t)$ enters $P$ through the point $\gamma(t)$ and so the restriction of $\sG$ to $P$ realizes the desired foliation of $P$.

To see the last statement, recall that on a zebra surface, any two foliations $\sF_m$ and $\sF_{m'}$ with $m \neq m'$ are transverse. Since interiors of edges are segments of leaves, to show an edge is transverse to $\sG$, it suffices to prove that the interior of the edge is not contained in a leaf of $\sG$. Observe that $\ell_m \cap P$ is connected by \Cref{no returning trails}. First consider the leaf $\ell_{\mu(0)}$ which passes through $p_0$. By considering a stellar neighborhood at $p_0$ we see that $\ell_{\mu(0)} \cap P=\{p_0\}$ if $\mu(0)<m_0$. Also if $\mu(0)=m_0$ then $\ell_{\mu(0)} \cap P=\overline{p_0 p_1}$ because the interior angles at $p_0$ and $p_1$ are both less than $\pi$. Thus the only edge that $\ell_{\mu(0)}$ can contain is $\overline{p_0 p_1}$ and only if $\mu(0)=m_0$. Similarly, the only edge $\ell_{\mu(1)}$ can contain is $\overline{p_{n-2} p_{n-1}}$ and only if $m_{n-2}=\mu(1)$. Now consider a leaf $\ell_{\mu(t)}$ with $t \in (0,1)$. By construction $\ell_{\mu(t)}$ intersects $\gamma(t)$ and because $\mu(t)$ is distinct from $m_{n-1}$, it crosses $\overline{p_{n-1} p_0}$ transversely and enters the interior of $P$. Then \Cref{trail through interior} guarantees that $\ell_{\mu(t)} \cap \partial P$ contains only two points and so $\ell_{\mu(t)}$ cannot contain any edges. We've shown that $\overline{p_0 p_1}$ and $\overline{p_{n-2} p_{n-1}}$ are the only edges that can fail to be transverse to $\sG$, and only in the circumstances allowed in the statement of the lemma.
\end{proof}

\section{Connecting points with trails}
\label{sect:connecting points with trails}

\subsection{Trail rays}
\label{sect:trail rays}

Let $Z$ be a zebra plane. A {\em trail ray} (or simply {\em ray}) with initial point $p \in Z$ is an arc of a trail with a parameterization of the form $\gamma:[0,+\infty) \to Z$ with $\gamma(0)=p$ which is maximal with respect to the subarc partial order among all such arcs of trails with initial point $p$. \Cref{trails exist} guarantees that any arc of a trail starting at $p$ can be extended to a ray (simply by extending to a trail and discarding the portion before $p$).

Let $\Sigma$ denote the set of singularities of $Z$, let $\gamma:[0, +\infty) \to Z$ be a trail ray, and let $p=\gamma(0)$. Then $J=\gamma^{-1}\big(Z \setminus (\Sigma \cup \{p\})\big)$ is an open subset of $\R$. For $I$ a connected component of $J$, we call $\ell=\gamma(I)$ a {\em leaf} of $\gamma$. It is either a leaf contained in the image of $\gamma$ or a connected component of a leaf with $p$ removed. Trail rays and their leaves are oriented away from $p$.

\compat{Barak rightly pointed out that there are issues with leaves and endpoints. {\bf Leaves should not contain singular endpoints}, but in earlier versions I had the opposite convention. If you notice anything amiss, please point it out.}

Fix a point $p$ and consider the collection $R_p \subset Z$ of all trail rays leaving $p$. We will show:
\begin{theorem}
\label{union of rays}
The union of all rays in $R_p$ is an open subset $U_p \subset Z$ containing $p$.
Let $U_p^\ast=U_p \setminus (\Sigma \cup \{p\})$. Then the collection $\sR_p$ of all leaves of rays leaving $p$ is an oriented foliation of $U_p^\ast$. \compat{Added oriented here. Dec 5, 2022. Thanks Barak. Orientation is mentioned in the previous paragraph.}
\end{theorem}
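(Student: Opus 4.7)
The plan is to verify openness of $U_p$ locally at each point and simultaneously construct foliation charts for $\sR_p$. I would first dispose of the case $q = p$: given a stellar homeomorphism from a neighborhood of $p$ onto $\Pi_{\alpha(p)}$, every ray through the origin in $\Pi_{\alpha(p)}$ pulls back to the beginning of a leaf through $p$, which is an arc of a trail and hence extends to a trail ray by \Cref{trails exist}. These rays sweep out the full stellar neighborhood, placing it inside $U_p$, and the stellar chart itself serves as a foliation chart for $\sR_p$ with the radial orientation pointing away from $p$.

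For a general $q \in U_p$ with $q \neq p$, fix a trail ray $\gamma : [0,\infty) \to Z$ through $q$, say $\gamma(T) = q$, and consider the compact arc $\gamma|_{[0,T]}$. Since $\Sigma$ is closed and discrete, only finitely many singularities lie on this arc; let $s$ be the singularity of largest parameter value strictly less than $T$, with the convention $s = p$ if no such singularity exists. The subarc from $s$ to $q$ parametrizes a single leaf $L$ of some slope $m_0$. To build a neighborhood of $q$ in $U_p$ together with a foliation chart, I would apply \Cref{trapezoid construction} on both sides of $\overline{sq}$ (after extending slightly past $q$), obtaining trapezoids $T_-$ and $T_+$ each having $s$ as a vertex. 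By \Cref{trapezoid observation} and \Cref{building a trapezoid}, these trapezoids contain no interior singularities and have all interior angles strictly less than $\pi$, so \Cref{vertex foliations} foliates each of them by leaves emanating from $s$ whose slopes vary monotonically over a closed interval adjacent to $m_0$.

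Whenever a slope in one of these intervals satisfies the angle condition at $s$ relative to the incoming direction of $\gamma$ (automatic for every slope in the interval when $s = p$, and on an open one-sided neighborhood of $m_0$ whenever the corresponding bending angle of $\gamma$ at $s$ strictly exceeds $\pi$), the concatenation of $\gamma|_{[0,s]}$ with the corresponding leaf from $s$ is an arc of a trail and extends to a trail ray from $p$ by \Cref{trails exist}; such leaves thereby lie inside $U_p$. In the generic situation where both bending angles of $\gamma$ at $s$ strictly exceed $\pi$, both $T_-$ and $T_+$ are covered, their union together with $\overline{sq}$ forms an open neighborhood of $q$ in $U_p$, and the natural parametrization by (slope, progression along the leaf from $s$) supplies an oriented foliation chart for $\sR_p$ in which horizontal lines correspond to the leaves and orientation points away from $p$.

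The main obstacle is the edge case in which a bending angle of $\gamma$ at $s$ equals exactly $\pi$, so slope perturbations at $s$ only sweep out one of the two trapezoids. To cover the constrained trapezoid I would invoke the Burp Lemma (\Cref{burp}), applied to the appropriate subarc of $\gamma$ ending at $q$ whose interior bending angles on the relevant side are all $\pi$; if necessary one backtracks through earlier singularities of $\gamma|_{[0,T]}$ until reaching a singularity (or $p$ itself) where the relevant bending angle is strictly greater than $\pi$, which terminates since only finitely many singularities lie on the arc. The Burp Lemma then produces direct single-leaf connections from the chosen base point to points near $q$ on the constrained side, which concatenate with the earlier portion of $\gamma$ and extend to rays from $p$ via \Cref{trails exist}. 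Finally, uniqueness of the leaf of $\sR_p$ through each non-singular $q \in U_p^\ast$ follows from \Cref{no bigons}: any two rays from $p$ passing through $q$ must intersect in a common subarc containing both $p$ and $q$, hence coincide along the leaf through $q$, so they determine the same element of $\sR_p$.
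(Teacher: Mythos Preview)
Your approach is essentially the same as the paper's. Both identify the last singularity $s$ on the ray before $q$, handle the generic case (both bending angles at $s$ strictly larger than $\pi$) by foliating a neighborhood of $q$ with leaves emanating from $s$, and invoke the Burp Lemma after backtracking to a suitable base point in the edge case where a bending angle equals $\pi$. The paper organizes this via addresses $(\theta_0; t_1,\ldots,t_k)$, with $t_i\in\{0,1\}$ encoding exactly the $\pi$-angle situation; your organization in terms of the last singularity and its bending angles is equivalent.

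Two gaps remain. First, your edge case treatment is incomplete: a single application of \Cref{burp} yields one segment $\overline{p'x}$ to a single point $x$ on a transversal through $q$, not a family filling a half-neighborhood. The paper uses Burp once to obtain a triangle $\triangle p'qx$, then applies \Cref{triangle2} to foliate this triangle by leaves from $p'$ (after extending past $\overline{qx}$ via \Cref{trapezoid construction} so that $q$ lies in the interior). You need this triangle-foliation step; otherwise you have not shown that the constrained side is covered. Second, you do not treat the case where $q$ itself is singular. The paper's Argument~3 handles this by taking a stellar neighborhood of $q$: the two wedges of angle $\pi$ adjacent to the incoming leaf are covered by the non-singular arguments already established, and the remaining wedge (of angle $\alpha(q)\pi$) is covered by the leaves emanating from $q$ that are legal trail-continuations of the incoming ray.
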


We remark that if $Z$ is convex, then $U_p$ will equal $Z$ because any point can be connected to $p$ by a trail.

The oriented foliation $\sR_p$ of $U_p^\ast$ should be considered to be an oriented singular foliation, but with a singular structure which is different from that of the singular foliations defined in \Cref{sect:singular foliations}. The initial point $p$ is special: the leaves agree locally with the stellar neighborhood foliation with leaves oriented outward. At the singularities $s \in (\Sigma \cap U_p) \setminus \{p\}$, the foliation has exactly one leaf $\ell_s$ oriented towards $s$. (If there were two such leaves, we'd get a contradiction to \Cref{no bigons}.) Nearby leaves pass by the singularity, and leaves emanating from $s$ making angle with $\ell_s$ of at least $\pi$ on each side are oriented away from $s$. See the left side of \Cref{fig:ray foliation}. \compat{This paragraph was rewritten on Dec 5, 2022.}

\begin{figure}[htb]
\centering
\includegraphics[height=1.6in]{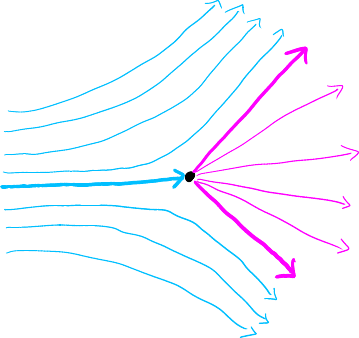}
\hspace{0.5in}
\includegraphics[height=1.6in]{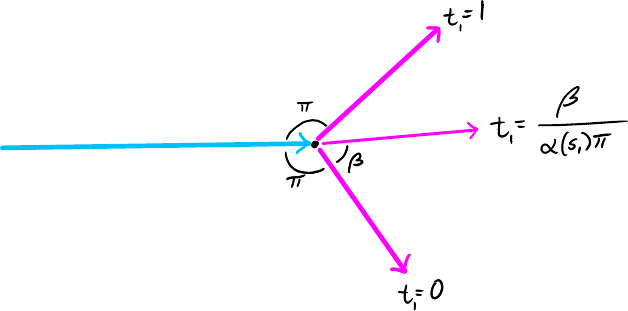}
\caption{{\em Left:} The foliation near a singularity by rays as in \Cref{union of rays}. Here the light blue curves represent rays passing near or hitting the singularity (bold), and the pink curves are emanating from the singularity and can be viewed as trail continuations of the ray hitting the singularity. {\em Right:} The correspondence between values of $t_1 \in [0,1]$ and angles for continuations through a singularity $s_1$. \compat{The right side of the figure was added on Dec 11, 2022 in response to a request of Ferran.}}
\label{fig:ray foliation}
\end{figure}

We'll say a simple path $\alpha:[0,1] \to U_p \setminus \{p\}$ is a {\em leaf transversal to $\sR_p$} if $\alpha\big((0,1)\big)$ is contained in a leaf of some directional foliation $\sF_m$ of $Z$ but not contained in a leaf of $\sR_p$. \commb{Doesn't simple follow from the leaf transversal definition?} \compat{I'm worried about backtracking. Just because the image is a path, doesn't meant hat the parameterization is simple.}

There is a natural slope map $\mu:U_p \setminus \{p\} \to \hat \R$
that assigns to each point $q \in U_p^\ast$ the slope of the leaf of $\sR_p$ through $q$. If $s$ is a singularity in $U_p \setminus \{p\}$, then we define $\mu(s)$ to be the slope of the unique leaf in $\sR_p$ that has $s$ as an endpoint and is oriented towards $s$. We prove the foliation associated to $R_p$ is monotonic in the following sense.

\begin{theorem}
\label{union of rays2}
If $\alpha:[0,1] \to U_p$ is a leaf transversal and is parameterized such that rays cross from the left side of $\alpha$ to the right, then the function
$$[0,1] \to \hat \R; \quad t \mapsto \mu \circ \alpha(t)$$
is continuous and (cyclically) strictly increasing.
\end{theorem}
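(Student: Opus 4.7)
The plan is to prove strict cyclic monotonicity first and then continuity; both rely on the no-bigons principle (\Cref{no bigons}) and the \gaussbonnet (\Cref{Gauss-Bonnet}). Fix $s<t$ in $[0,1]$ and set $q_s=\alpha(s)$, $q_t=\alpha(t)$; let $\gamma_s$ and $\gamma_t$ be the arcs of the corresponding trail rays from $p$ to $q_s$ and $q_t$. By \Cref{no bigons}, the intersection $\gamma_s\cap\gamma_t$ is a common initial subarc from $p$ to some point $p'$ (possibly $p'=p$), after which the two arcs are disjoint. Together with $\alpha|_{[s,t]}$, the portions of $\gamma_s$ and $\gamma_t$ from $p'$ onward bound a simply connected region $P\subset Z$: a polygon whose edges are segments of leaves. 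The hypothesis that rays cross $\alpha$ from left to right identifies the side of $\alpha$ on which $P$ lies and fixes the orientation of $\partial P$.

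Apply \Cref{Gauss-Bonnet} to $P$. Each interior singularity of $Z$ lying in $P^\circ$ contributes a non-positive term to the right-hand side. Each singularity of $Z$ lying in the interior of $\gamma_s$ or $\gamma_t$ has interior angle in $P$ at least $\pi$ (by the trail angle condition), so contributes a non-positive term $\pi-\theta_q$. Hence the only vertices of $P$ potentially contributing positively are the three corners $p'$, $q_s$, $q_t$, and the sum of their contributions must be at least $2\pi$. I would then read off that the interior angles at $q_s$ and $q_t$ measure the angular gaps between the slope $m$ of the leaf containing $\alpha$ and the slopes $\mu(q_s)$, $\mu(q_t)$, on the appropriate sides of $\alpha$; positivity of these gaps together with the left-to-right crossing convention forces $\mu(q_s)$ and $\mu(q_t)$ to sit in the required cyclic order on $\hat\R$, giving cyclic strict monotonicity of $t\mapsto\mu(\alpha(t))$.

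For continuity at $t_0\in[0,1]$, I would split into cases according to whether $\alpha(t_0)$ is a regular or singular point of $Z$. In the regular case, the unique leaf $\ell\in\sR_p$ through $\alpha(t_0)$ has slope $m_0=\mu(\alpha(t_0))$, and \Cref{trapezoid construction} applied along the arc of the corresponding trail ray from $p$ to $\alpha(t_0)$ produces a neighborhood of this ray foliated by slope-$m_0$ leaves; these parallel leaves continue rays through nearby points $\alpha(t)$ and force $\mu(\alpha(t))$ to be close to $m_0$. In the singular case, the local picture of $\sR_p$ at a singularity described after \Cref{union of rays} (a unique incoming ray $\ell_{\alpha(t_0)}$ flanked on each side by outgoing rays at bending angles $\geq\pi$) together with the monotonicity already established shows that as $t$ passes $t_0$ the slope $\mu(\alpha(t))$ sweeps exactly the allowed angular interval at $\alpha(t_0)$ continuously, since monotonicity rules out any ``skipping'' within this interval.

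The hard part will be the careful Gauss-Bonnet bookkeeping for the polygon $P$: one must treat separately the cases $p'=p$ and $p'\neq p$, since if $p$ is itself singular it contributes singular curvature even as a corner, and one must handle the possibility that $q_s$ or $q_t$ is a singularity of $Z$, in which case the relevant interior angle is determined by the prong structure rather than by leaf slopes alone. In addition, relating the interior angles at $q_s$ and $q_t$ to the cyclic position of $\mu(q_s)$ and $\mu(q_t)$ relative to the slope $m$ of $\alpha$ requires unwinding the stellar model and the left-to-right convention; once these cases are sorted, the angle inequalities directly yield the claimed cyclic order.
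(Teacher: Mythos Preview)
Your monotonicity argument via Gauss--Bonnet is a genuinely different route from the paper's, and with enough care it does go through: once the region $P$ is shown to be an honest polygon, the inequality $\theta_{q_s}+\theta_{q_t}+\theta_{p'}\le\pi$ together with $\theta_{p'}>0$ gives $\theta_{q_s}+\theta_{q_t}<\pi$, and unwinding the stellar model at $q_s,q_t$ (as you anticipate) converts this into the cyclic inequality for $\mu(q_s),\mu(q_t)$ relative to $m$. You should be aware, though, that verifying $P$ really is a simple closed polygon is not free: one must rule out that $\alpha|_{[s,t]}$ overlaps $\gamma_s$ or $\gamma_t$ on a subarc, which happens exactly when some $\mu(\alpha(u))=m$, and this in turn needs the left-to-right crossing hypothesis to be interpreted as excluding tangency. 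The paper avoids all of this by working locally: for each point of $\alpha$ it uses the Burp Lemma (\Cref{burp}) to manufacture a triangle with apex at $p$ (or at the last singularity on the trail before the point, depending on the address of the ray), and then applies \Cref{triangle2}, whose homeomorphism statement delivers continuity and strict monotonicity of the slope simultaneously on the opposite edge. So your global argument trades one application of the Burp Lemma and \Cref{triangle2} per point for a single Gauss--Bonnet computation plus bookkeeping.

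Your continuity argument, however, has a real gap. The trapezoid produced by \Cref{trapezoid construction} along the trail arc $\overline{p\,\alpha(t_0)}$ is foliated by leaves of the \emph{fixed} slope $m_0$; these leaves are not leaves of $\sR_p$, and there is no reason the trail ray from $p$ through a nearby $\alpha(t)$ should coincide with, or even stay close to, one of them. Saying ``these parallel leaves continue rays through nearby points $\alpha(t)$'' conflates the constant-slope foliation $\sF_{m_0}$ with the ray foliation $\sR_p$; to know that the ray through $\alpha(t)$ has slope near $m_0$ you would already need continuity of $\mu$, which is what you are proving. Nor does your monotonicity argument rescue this: strict cyclic monotonicity alone does not prevent jumps. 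What the paper does instead is exactly the missing ingredient: the triangle from \Cref{triangle2} is foliated by rays emanating from the apex, so nearby points of $\alpha$ genuinely lie on nearby rays, and the homeomorphism in \Cref{triangle2} makes the slope a continuous (and monotone) function of position on the base. If you want to salvage your approach, you need some construction that foliates a neighborhood of $\alpha(t_0)$ by actual rays from $p$ rather than by parallel leaves, and that is precisely the content of the paper's Arguments~1--3.
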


To prove these theorems we will give an {\em address} to every ray in $R_p$.
The directions of straight line paths leaving $p$ can be parameterized by an angle in $\R/\big(\alpha(p)+2\big)\pi \Z$. We fix an identification between leaves emanating from $p$ and this circle. Then, every ray must initially travel in one of these directions, call it $\theta_0$. If the ray never hits a singularity, then $\theta_0$ is the complete address. Otherwise, it hits a singularity $s_1$ with cone angle $\big(\alpha(s_1)+2\big) \pi$. (Because $s_1$ is a singularity in a zebra plane, $\alpha(s_1) \geq 1$.)
There is a nondegenerate closed interval of directions in which the ray can continue, of total angle $\alpha(s_1) \pi$, because of the angle condition. Apply an affine change of coordinates so this interval becomes $[0,1]$. That is, $t_1 \in [0,1]$ represents continuing through $s_1$ in a direction so that the counterclockwise angle from the initial leaf leaving $p$ and the continuation through $s_1$ is $\pi + t_1 \alpha(s_1) \pi$; see the right side of \Cref{fig:ray foliation}. Continuing along our ray, we may encounter more singularities. We therefore have provided our ray with one of the following three kinds of addresses:
\begin{equation}
\label{eq:ray addresses}
(\theta_0;), \quad (\theta_0;t_1, t_2, \ldots, t_k), \quad \text{or} \quad (\theta_0;t_1, t_2, \ldots).
\end{equation}
In the first case, the ray never hits a singularity. In the second case, the ray hits a finite number of singularities (namely, $k$) and then follows a separatrix. In the third case, the ray follows an infinite sequence of saddle connections.

A {\em truncated address} is given by the choice of $\theta_0$ and a possibly empty finite sequence $t_1, t_2, \ldots, t_j$ which can be extended to a full address of a ray. The truncated addresses correspond to leaves of the directional foliations making up a ray. Namely, we reach the leaf by following a ray under the instructions given by the truncated address. So $(\theta_0; t_1, t_2, \ldots, t_j)$ corresponds to the leaf at obtained by first moving in direction $\theta_0$ away from $p$, we then hit a sequence of $j$ singularities $s_1, \ldots, s_j$ and in each case move in the direction as described by $t_j$. The leaf followed after crossing $s_j$ is the leaf
$L(\theta_0; t_1, t_2, \ldots, t_j)$ referred to by the truncated address.

\begin{proof}[Proof of \Cref{union of rays} and \Cref{union of rays2}]
Let $U_p$ denote the union of rays in $R_p$. Since $p$ has a stellar neighborhood, $p$ lies in the interior of $U_p$.

Now consider a nonsingular point $q \in U_p \smallsetminus \{p\}$. Then $q$ lies on one of the leaves $L(\theta_0; t_1, t_2, \ldots, t_j)$, where the truncated address can be extended to be the address of a ray. We consider several cases. Some arguments are used more than once, so we highlight some ideas for later use in bold. See \Cref{fig:trail rays open} for illustrations of these named arguments.

{\bf Argument 1.}
First, suppose the leaf through $q$ has the form $L(\theta_0;)$. We claim that the leaves of the form $L(\theta;)$ foliate a neighborhood of $q$. To see this, fix a slope $m$ distinct from the slope of $\overline{pq}$, and let $\alpha$ be the leaf of slope $m$ through $q$. Then by two applications of \Cref{burp} we can find points $r$ and $s$ on $\alpha$ such that $q \in \overline{rs}$ and such that $\overline{pr}$ and $\overline{ps}$ are segments of leaves. Then $\triangle prs$ is a triangle. Since $\alpha$ was a leaf, there are no singularities on $\overline{rs}$ and we can construct a quadrilateral $rr' s' s$ such that $\overline{rr'}$ extends $\overline{pr}$ and $\overline{ss'}$ extends $\overline{ps}$ using \Cref{trapezoid construction}, forming a larger triangle $\triangle pr's'$ that contains $q$ in its interior. Then \Cref{triangle2} guarantees that the leaves through $p$ foliate this triangle, covering a neighborhood of $q$ as desired. It also follows by applying \Cref{triangle2} to $\triangle prs$ that the function $\mu$ is continuous and monotonic on $\alpha$ as described in \Cref{union of rays2} on $\overline{rs}$. \commb{This last sentence also holds in the other cases involving regular points, but you don't mention this.}
\compat{I edited this last sentence to make it more concrete, and added a sentence to the end of Argument 2 below. I think the other cases were fine.}

\begin{figure}[htb]
\centering
\includegraphics[width=5in]{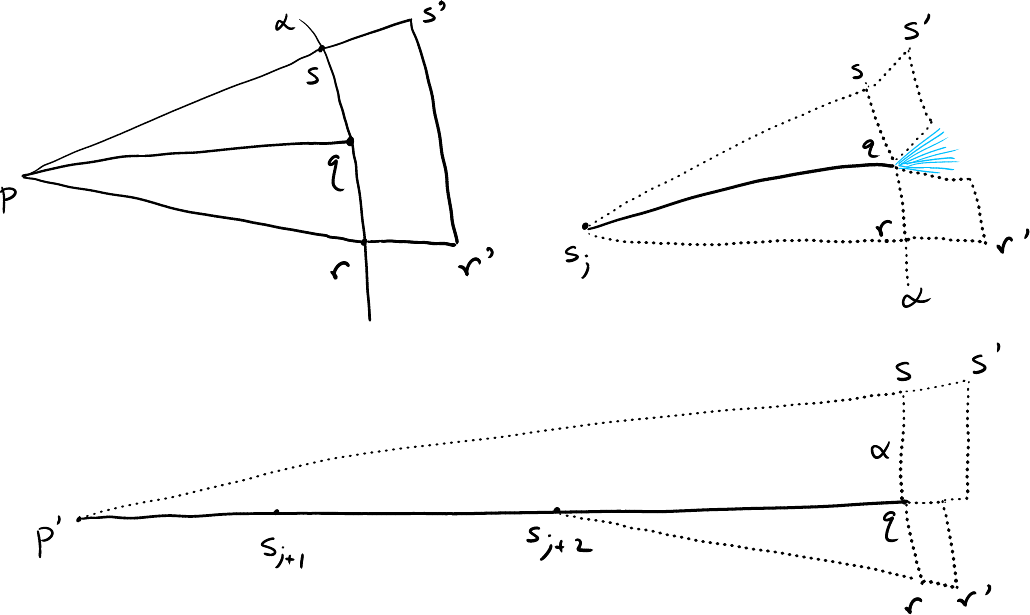}
\caption{Figures of the arguments for the proof of \Cref{union of rays}. Top left: Argument 1; Top right: Argument 3; Bottom: Argument 2 with $k=2$.}
\label{fig:trail rays open}
\end{figure}

\commb{I agree that
if English
was logical,
“First”
should be
continued
with
“Second”,
but this
doesn't
sound right. “Secondly”
is better, then ``Thirdly'',
etc.}
\compat{I added comments to First, Second and Third. It seems to be grammatically correct to use either First, Second, Third or Firstly, Secondly or Thirdly. I prefer not to use the extraneous ``ly'' for reasons explained here: \url{https://www.grammarly.com/blog/firstly/}. In particular, it would be wrong to add ``ly'' to only some of these words and not others.}

Second, assume that the address of the leaf through $q$ is $L=L(\theta_0; t_1, t_2, \ldots, t_j)$ where $t_j \not \in \{0, 1\}$. The leaf with this address starts at some singularity, call it $s_{j-1}$, and the leaf $L$ continues $\overline{s_{j-1} q}$. Repeating Argument 1 with $s_{j-1}$ playing the role of $p$ gives a foliation of a triangle containing $q$ in its interior by leaves emanating from $s_{j-1}$. The slopes of these leaves vary continuously and monotonically by \Cref{triangle2}, and so there is a neighborhood of $q$ in which points all lie on leaves with addresses $L(\theta_0; t_1, t_2, \ldots, t_j')$ with $t_j'$ near $t_j$.

{\bf Argument 2.}
Third, assume that the address of the leaf through $q$ is $L=L(\theta_0; t_1, t_2, \ldots, t_j, 1, 1, \ldots, 1)$
where there are exactly $k$ ones at the end of the address. (It can be that $j=0$, in which case the portion of the address after $\theta_0$ is all ones.) Let $p'=p$ if $j=0$ and let $p'=s_j$ otherwise. Then the trail visits $p'$ and then a sequence of $k$ singularities, making an angle of $\pi$ on the left as the trail moves through each of these singularities. Let $\alpha$ be a leaf through $q$ with a different slope than $L$. Repeating half of Argument 1 for the portion of $\alpha$ reachable from $\overline{s_{j+k} q}$ by making a right turn at $q$, gives a $\triangle s_{j+k} rq$. We can extend this triangle by a trapezoid through edge $\overline{qr}$. This triangle is foliated by leaves through $s_{j+k}$, and thus points in the triangle have addresses of the form
$L=L(\theta_0; t_1, t_2, \ldots, t_j, 1, 1, \ldots, t_{j+k}')$ for $t_{j+k}'$ in some interval of the form $[a,1]$. This foliates a half-neighborhood of $q$. On the other side of $\alpha$, because the angles at $s_{j+c}$ are $\pi$ for all $c \in \{1, \ldots, k\}$ we can apply \Cref{burp} to construct $\triangle p' q s$, where $\overline{qs} \subset \alpha$. We can extend the triangle through $\overline{qs}$, and foliate the triangle again. Points sufficiently close to $q$ in this triangle lie on leaves whose addresses have the form $L(\theta_0; t_1, t_2, \ldots, t_{j-1}, t_j')$ where $t_j' > t_j$. This gives a foliation of the second half of the neighborhood of $q$. \Cref{triangle2} can again be applied to
$\triangle p'qs$ and $\triangle p' r q$ to deduce continuity and monotonicity of $\mu$ on $\alpha$. \compat{Added this last sentence in response to Barak's comment. Jan 1, 2023.}

Fourth, if the address of the leaf through $q$ is $L=L(\theta_0; t_1, t_2, \ldots, t_j, 0, 0, \ldots, 0)$,
then we can repeat a symmetric version of Argument 2. (In fact, an orientation reversing map of the underlying surface has the effect of changing each $t_i$ to $1-t_i$.)

{\bf Argument 3.}
The above four paragraphs handle all possible addresses of regular points. A singular point $q \in U_p$ appears as the endpoint of some leaf $L=L(\theta_0; t_1, t_2, \ldots, t_j)$, i.e., $L=\overline{s_j q}$. The point $q$ has a stellar neighborhood $N$ with total angle being $n \pi$ where $n=\alpha(q)+2$. Say that the angle coordinate of a point $x \in N$ is the angle made with $L$: $\measuredangle x q s_j$.
Arguing as in the previous paragraphs, we can foliate neighborhoods of $q$ with total angle $\pi$, namely points sufficiently close to $q$ with angle coordinates in $[0,\pi]$ or $[(n-1)\pi,n\pi]$. Points in the stellar neighborhood whose angle coordinates lie in $[\pi, (n-1)\pi]$ lie on leaves with addresses of the form $L=L(\theta_0; t_1, t_2, \ldots, t_j, t_{j+1}')$ for some $t_{j+1}' \in [0,1]$. Since this is a stellar neighborhood, we have foliated a neighborhood of $q$ as desired.
\end{proof}

\compat{Here I commented out a section on the cyclic ordering on trail rays. The collection of trail rays with the natural cyclic ordering is a topological circle, but I don't think we need this anymore.}

\subsection{Polygonal convexity}
The goal of this section is to prove:

\begin{theorem}
\label{thm:polygonal convexity}
A polygon $R$ in a zebra plane is convex if and only if all of the exterior angles of $R$ are at least $\pi$.
\end{theorem}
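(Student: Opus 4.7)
My plan for \Cref{thm:polygonal convexity} has two parts.

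\emph{Only-if direction.} I argue by contrapositive. Suppose the exterior angle at some vertex $p_i$ of $R$ is strictly less than $\pi$. Then the interior angle there equals $(\alpha(p_i)+2)\pi$ minus a quantity smaller than $\pi$, and so exceeds $(\alpha(p_i)+1)\pi \geq \pi$. In a stellar neighborhood of $p_i$ the complement of $R$ is an angular wedge bounded by the two edges at $p_i$ of total angle strictly less than $\pi$. I pick a slope $m$ transverse to both edges at $p_i$ and a short segment of a leaf $\ell$ of slope $m$ with endpoints $x, y$ on these two edges whose open interior crosses the exterior wedge; such a segment exists precisely because the wedge has angle less than $\pi$. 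Then $x, y \in R$ and $\overline{xy} \subset \ell$ is an arc of a trail in $Z$ joining them, but lies outside $R$. By \Cref{no bigons} this is the only arc of a trail in $Z$ joining $x$ to $y$, so no arc of a trail in $R$ joins them, and $R$ is not convex.

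\emph{If direction.} Assume every exterior angle of $R$ is at least $\pi$. By \Cref{intersections with polygons}, every trail of $Z$ meets $R$ in a (possibly empty or degenerate) closed interval, so any arc of a trail in $Z$ with both endpoints in $R$ is automatically contained in $R$. It therefore suffices to show that any two points $p, q \in R$ are joined by some arc of a trail in $Z$. Fix $p \in R^\circ$ and apply \Cref{union of rays} to obtain the open set $U_p \subset Z$ foliated by trail rays from $p$, together with the continuous monotone slope map of \Cref{union of rays2}. Set $A = R \cap U_p$; this is open in $R$ and contains $p$. The goal reduces to showing $A$ is also closed in $R$, so that connectedness of $R$ forces $A = R$.

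\emph{The main obstacle} is proving closedness of $A$ in $R$. Let $q_n \in A$ converge to $q \in R$, with each $q_n = r_n(t_n)$ for a trail ray $r_n$ from $p$ and $t_n \geq 0$. By properness of trails (\Cref{proper}) and by \Cref{intersections with polygons} applied to $R$, the times $t_n$ are bounded and the portion of $r_n$ up to $q_n$ remains in $R$. Using the combinatorial address of trail rays developed in \Cref{sect:trail rays} and the natural cyclic order on the space of rays from $p$, I extract a subsequential limit ray $r$ from $p$, and by invoking the monotonicity of the slope map from \Cref{union of rays2} along a leaf transversal through $q$, I verify that $r$ actually passes through $q$, placing $q$ in $U_p$. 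The delicate point here — and where the exterior angle hypothesis enters through \Cref{intersections with polygons} — is ruling out the scenario in which the $r_n$ bend at singularities just before reaching $q$ in such a way that the limit veers off before hitting $q$; the confinement of the $r_n$ to $R$ near their endpoints is what forces the addresses to stabilize and the limit to hit $q$.
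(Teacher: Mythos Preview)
Your overall architecture matches the paper's exactly: the only-if direction via a short exterior chord, and the if direction via showing that $R\cap U_p$ is both open and closed in $R$. The paper isolates the closedness step as a separate lemma (\Cref{lem:closed}) valid for an arbitrary compact disk $K$ containing $p$, and then uses \Cref{intersections with polygons} only to identify the closed set $G=\bigcup_r g_r$ with $R\cap U_p$; openness then comes for free from \Cref{union of rays}.

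Two points in your sketch deserve care. First, the restriction $p\in R^\circ$ leaves the boundary-to-boundary case unhandled; the paper's argument works verbatim for any $p\in R$, so just drop the restriction. Second, and more substantively, your closedness argument as written has a circularity: invoking the monotone slope map of \Cref{union of rays2} along a leaf transversal through $q$ presupposes that transversal lies in $U_p$, which is exactly what you are trying to establish at $q$. Similarly, ``extract a subsequential limit ray using the cyclic order on rays'' needs justification, since no compactness of the ray space has been established. The paper's \Cref{lem:closed} avoids both issues: compactness of $R$ gives finitely many singularities, so after a subsequence the last leaf of each approximating arc emanates from a fixed singularity $s$; the corresponding address coordinates $x_n$ are monotone and converge to some $x$; one then \emph{constructs} the candidate ray $r_\infty$ with address $(\ldots,x,0,0,\ldots)$, follows it by properness to a point $y\notin R$, uses the \hyperref[burp]{Burp Lemma} to build a triangle $\triangle sz'y$, and applies \Cref{triangle2} to foliate it by leaves from $s$, forcing $q\in\overline{sy}\subset r_\infty$. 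That explicit construction is what replaces your appeal to \Cref{union of rays2}.
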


We will prove the ``if'' part of this result first. The following complements \Cref{union of rays}, giving a subspace in which the union of rays is closed.

\begin{lemma}
\label{lem:closed}
Let $K$ be a compact topological disk in a zebra plane and suppose that $p \in K$. For a ray $r \in R_p$, let $g_r$ denote the connected component of $r \cap K$ containing $p$, which is an arc of a trail from $p$ to a point on $\partial K$ or possibly just $\{p\}$ if $p \in \partial K$. Then the union $G=\bigcup_{r \in R_p} g_r$ is closed.
\end{lemma}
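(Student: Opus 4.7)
The plan is to take a sequence $q_n \in G$ with $q_n \to q \in K$ and construct an arc of a trail from $p$ to $q$ lying entirely in $K$; this arc extends to a trail ray $r$ by \Cref{trails exist}, giving $q \in g_r \subseteq G$. If $q = p$ there is nothing to prove, so assume $q \neq p$, pick $q_n \to q$ with $q_n \in g_{r_n}$, and let $\tau_n \subset K$ be the arc of a trail from $p$ to $q_n$; this arc is a subarc of $g_{r_n}$, and by \Cref{no bigons} it is uniquely determined.

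The key combinatorial input is that $K \cap \Sigma$ is finite, since $\Sigma$ is closed and discrete and $K$ is compact. By \Cref{no monogons} each $\tau_n$ is injective, so it can bend at most $|K \cap \Sigma|$ times. Passing to a subsequence, I can assume that every $\tau_n$ bends at the same ordered sequence of singularities $s_1, \ldots, s_k$ (with $k = 0$ permitted). For any $j \leq k$ the arcs $\tau_n$ and $\tau_m$ both contain $p$ and $s_j$, so by \Cref{no bigons} their intersection is a common subarc containing $p$ and $s_j$; consequently the subarcs from $p$ to $s_k$ coincide, and I denote this common initial arc by $\sigma^*$. The final leg of $\tau_n$ is then a segment of a leaf of $\sF_{m_n}$ joining $s_k$ (or $p$, if $k=0$) to $q_n$.

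Now pass to a further subsequence so that $m_n \to m_\infty \in \hat{\R}$ and so that the prong of $\sF_{m_n}$ at $s_k$ used by the final leg is combinatorially the same for all $n$ (only finitely many prongs exist at $s_k$); the corresponding prongs of $\sF_{m_\infty}$ at $s_k$ are determined by continuity of the stellar model in the slope parameter, and the angle condition at $s_k$, being a closed condition in the slope, persists in the limit. Let $\ell_\infty$ denote the leaf of $\sF_{m_\infty}$ through $s_k$ along this limit prong. I claim that $q \in \ell_\infty$ and that the segment $\overline{s_k q}$ of $\ell_\infty$ is contained in $K$. For this, work in a stellar neighborhood of $s_k$ to identify short initial pieces of the final legs as converging (uniformly) to an initial piece of $\ell_\infty$; then propagate this convergence across $\overline{s_k q}$ by covering it with finitely many generalized rectangles (\Cref{generalized rectangles}), inside each of which the leaves of nearby slope vary continuously as sections of the restricted foliation. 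Since $q_n \to q$ and each $\overline{s_k q_n} \subset K$, the limit segment $\overline{s_k q}$ lies in $K$.

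The concatenation $\sigma^* \cup \overline{s_k q}$ is therefore an arc of a trail from $p$ to $q$ lying entirely in $K$, and the proof concludes as described above. The main obstacle is the claim about $\ell_\infty$: making rigorous the continuous dependence of a leaf segment on its initial slope, and showing that the endpoints $q_n$ of the approximating segments converge to a point on the limiting leaf rather than escape along it. This is handled by combining properness of leaves (\Cref{leaves are proper maps}) with the local product structure near any interior point provided by \Cref{generalized rectangles}, using compactness of $K$ to reduce to finitely many local patches.
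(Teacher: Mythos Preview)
Your overall strategy---reduce to a common singularity prefix $p,s_1,\ldots,s_k$, then take a limit of the final legs $\overline{s_k q_n}$---is the same as the paper's, and your use of \Cref{no bigons} to pin down the common initial arc $\sigma^*$ is correct. There is, however, a genuine gap in the last step.

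The claim ``$q \in \ell_\infty$ and the segment $\overline{s_k q}$ of $\ell_\infty$ is contained in $K$'' can fail as stated. The issue is that the leaf $\ell_\infty$ of slope $m_\infty$ emanating from $s_k$ may terminate at a singularity $s' \in K$ before it reaches $q$. Nothing in your subsequence argument rules this out: the final legs $\overline{s_k q_n}$ all avoid $s'$ (since each $\tau_n$ bends only at $s_1,\ldots,s_k$), but in the limit they can still accumulate on a path that passes through $s'$. In this situation $q$ is simply not on the leaf $\ell_\infty$, and your propagation argument via generalized rectangles presupposes the existence of the very segment $\overline{s_k q}$ you are trying to construct.

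The paper resolves this by working not with the limiting \emph{leaf} but with the limiting \emph{trail ray} $r_\infty$, specified via the address system as $(\theta;t_1,\ldots,t_k,x,0,0,\ldots)$ where $x=\lim x_i$: the trailing zeros force each subsequent bend to have angle exactly $\pi$ on the side where the approximating segments $\overline{s_k q_n}$ lie. One then picks a point $y$ on $r_\infty$ just outside $K$, applies the Burp Lemma (\Cref{burp}) to build a triangle $\triangle s_k z' y$, and uses the triangle foliation of \Cref{triangle2} to see that the $\overline{s_k q_n}$ sit inside this triangle and accumulate on the edge $\overline{s_k y}\subset r_\infty$. This forces $q \in r_\infty$ with $\overline{s_k q}\subset K$, even when $r_\infty$ bends at additional singularities between $s_k$ and $q$. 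Your argument can be repaired along these lines, but as written it does not handle this case.
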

\begin{proof}
Let $q_i \in G$ be a sequence approaching some $q \in K$. We will prove that $q \in G$.
Since $p \in G$, we can assume $q \neq p$. Because singularities are isolated, we can assume (by passing to a subsequence) that each $q_i$ is nonsingular.

Let $L_i$ denote the leaf containing $q_i$ in a trail through $p$ and $q_i$. By compactness, there are only finitely many singularities in $K$, so up to passing to a subsequence, we can assume that each $L_i$ either passes through $p$ or starts at the same singularity $s \in K$. In the first case, the truncated address of each $L_i$ has the form $(\theta_i;)$. In the latter case, these addresses have the form $(\theta, t_1, t_2, \ldots, t_k, x_i)$ for some $x_i \in [0,1]$, where $\theta$ and $t_1, \ldots, t_k$ are fixed.
We'll continue the argument in the case of $(\theta, t_1, t_2, \ldots, t_k, x_i)$, but the same argument applies in the other case as well (taking $s=p$).

We may further assume by passing to a subsequence, that the sequence $(x_i)$ is monotonic. We may assume without loss of generality that it is increasing.
Let $x = \lim_{i \to \infty} x_i \in [0,1]$.

Suppose first that $x=x_i$ for some $i$. Then the sequence $(x_i)$ would be eventually constant and therefore for $i$ large enough we'd have $q_i$ and $q$ on the same leaf. In this case, we can fix $g_r$ that contains the
$q_i$ that lie on the same leaf as $q$, and since $g_r$ is closed we'd have $q \in g_r \subset G$ as well.

If $(x_i)$ is not eventually constant, we can pass to a subsequence such that the sequence $(x_i)$ is strictly increasing and converging to $x$. Thus in particular $x>0$. Since there are only finitely many singularities in $K$, we can find an $a \in [0,x)$ such that for any $y \in (a,x)$, the leaf $L(\theta; t_1, t_2, \ldots, t_k, y)$ does not terminate in a singularity in $K$. We will assume by passing to a further subsequence that each $x_i \in (a,x)$.

Consider the trail ray $r_\infty$ with address $(\theta; t_1, t_2, \ldots, t_k, x, 0, 0, 0, \ldots)$, where we add as many zeros as possible. We claim that following $r_{\infty}$ we hit $q$ before leaving $K$. To see this observe that all the arcs of trails from $p$ to $q_i$ follow the same path up to $s$ as $r_\infty$, because of the common start to their coding. The idea is that the segments $\overline{s q_i}$ accumulate on a segment ending at $q$ from the ray $r_\infty$, and since $K$ is compact it follows that this segment is in $K$ and therefore the connected component of $r_\infty \cap K$ that contains $p$ also contains $q$. See \Cref{fig:approach}.

\begin{figure}[htb]
\labellist
\small\hair 2pt
 \pinlabel {$s$} [ ] at 2 49
 \pinlabel {$q$} [ ] at 271 73
 \pinlabel {$q_i$} [ ] at 263 13
\endlabellist
\centering
\includegraphics[scale=1.0]{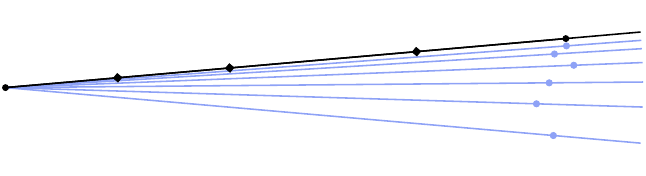}
\caption{The end of the argument in \Cref{lem:closed}. The squares denote locations of possible singularities along $r_\infty$.}
\label{fig:approach}
\end{figure}

To formalize the argument, observe that by \Cref{proper}, the trail ray $r_\infty$ must eventually exit the compact set $K$. Since $K$ is compact and the singularity set is discrete and closed, we can choose a point $y$ in the first open interval of $r_\infty \smallsetminus (K \cup \Sigma)$ after $g_{r_\infty}$. Since $K$ is compact, we can construct a stellar neighborhood $N$ of $y$ that is disjoint from $K$. Let $z$ be a point in $N$ such that the angle at $y$ from $\overline{p y} \subset r_\infty$ to $\overline{yz}$ is $\frac{\pi}{2}$. Then using \Cref{burp} we can construct a triangle $T=\triangle s z'y$ where $z' \in \overline{yz} \setminus \{y\}$. Using \Cref{triangle2} we can foliate $T$ by leaves emanating from $s$. The leaf space is homeomorphic to the interval $\overline{z'y}$, and the leaf
$\overline{st}$ with $t \in \overline{z' y}$ has slope that is continuous and strictly increasing as $t$ moves from $z'$ to $y$. For $i$ sufficiently large, $\overline{sq_i}$ extends to a leaf of this foliation of $T$, and because the $x_i$ from the addresses vary affine linearly with slope, these leaves must approach the edge $\overline{s y}$ of $T$
as $i \to \infty$. Thus we must have that $\overline{sq} \subset K$. (Every point on $\overline{sq}$ is approached by a sequence of points from $\overline{sq_i}$ with $i \to \infty$.) It follows that $\overline{sq} \subset g_{r_\infty}$ and therefore $q \in G$ as desired. \compat{This paragraph was simplified a bit by ideas of Barak. Jan 1, 2023.}
\end{proof}

\begin{proof}[Proof of \Cref{thm:polygonal convexity}]
Let $P$ be a polygon with all exterior angles at least $\pi$.
We will show that there is a trail in $P$ between any two points $p,q \in P$ by showing that $G=P$, where as in \Cref{lem:closed}, $G$ is the union over all rays $r \in R_p$ of the connected component of $r \cap P$ containing $p$.
By this lemma, $G$ is closed. By \Cref{intersections with polygons}, we have $g_r=r \cap P$. Thus $G$ coincides with the intersection of $P$ with the union of rays, which is open as a subset of $P$ by \Cref{union of rays}. Since $P$ is connected and $G$ is nonempty and both open and closed as a subset of $P$, we have that $G=P$. This proves the ``if'' part of the statement. \compat{Rephrased the first few sentences to make the logic more clear as suggested by Barak. Jan 1, 2023.}

To prove the converse, suppose $P$ is a polygon, and $a$, $b$, and $c$ are consecutive vertices in counterclockwise order such that the measure of the external angle $\measuredangle abc$ is less that $\pi$. Then we can find a slope $m$ and an open segment of a leaf or trail through $b$ of constant slope $m$ that is contained in the interior of $P$ except for touching the boundary at $b$. Considering a chart near $b$ of the singular foliation $\sF_m$, we can find segments of leaves of $\sF_m$ that join $\overline{ab}$ to $\overline{bc}$ that enter the complement of $P$; see \Cref{fig:convex_only_if}.
Let $a' \in \overline{ab}$ and $c' \in \overline{bc}$ be the intersections of such a leaf with these two edges of $P$. We see that $P$ cannot be convex, because there can be at most one arc of a trail from $a'$ to $c'$ by \Cref{no bigons}, but by construction $\overline{a' c'}$ exits $P$.
\end{proof}

\begin{figure}[htb]
\centering
\includegraphics[width=2.5in]{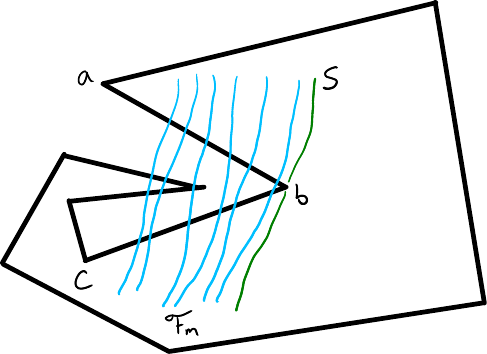}
\caption{The ``only if'' part of the proof of \Cref{thm:polygonal convexity}.}
\label{fig:convex_only_if}
\end{figure}

This theorem has a nice consequence for zebra tori without singularities:

\begin{corollary}
\label{torus cover convex}
Suppose $S$ is a closed surface with a zebra structure such that $\alpha$ is nonpositive. Let $S^\plus$ denote $S$ with its singularities removed. If $S$ has two closed leaves which are not homotopic in $S^\plus$, then
the PRU cover $\tilde S$ is convex.
\end{corollary}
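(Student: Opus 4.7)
My plan is to reduce the problem to \Cref{thm:polygonal convexity} by enclosing any two points of $\tilde{S}$ in a parallelogram bounded by four leaves. First, since $\alpha \leq 0$ on $S$, the lifted singular data $\tilde\alpha$ vanishes identically on $\tilde{S}$ (following the construction in \Cref{sect:maximal cover}), so $\tilde{S}$ contains no singularities. Consequently every trail in $\tilde{S}$ is simply a bi-infinite leaf, and convexity of $\tilde{S}$ amounts to showing that any two points lie on a common leaf. If I can construct, for any given $p, q \in \tilde{S}$, a topological parallelogram in $\tilde{S}$ whose four sides are arcs of leaves — two of slope $m_1$ and two of slope $m_2$ with $m_1 \neq m_2$ — that contains both $p$ and $q$, polygonal convexity will finish the job: by \Cref{trapezoid observation} the four interior angles sum to $2\pi$, opposite angles are equal (parallel sides), so each interior angle is strictly less than $\pi$ and each exterior angle exceeds $\pi$.

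Let $\gamma_1, \gamma_2$ be the two given closed leaves, of slopes $m_1$ and $m_2$. First I would note $m_1 \neq m_2$: otherwise $\gamma_1, \gamma_2$ would be disjoint closed leaves of the same foliation $\sF_{m_1}$, cobounding an annulus in $S$, hence homotopic in $S^+$. The \hyperref[Euler-Poincare]{Euler--Poincaré formula} combined with $\alpha \leq 0$ forces $S$ to be either a torus with no singularities or a sphere with exactly four poles, and in both cases $\tilde{S}$ is a topological disk. I would lift each $\gamma_i$ to a bi-infinite leaf $\tilde\gamma_i \subset \tilde{S}$ of slope $m_i$ and let $\delta_i$ be the deck transformation representing the free homotopy class of $\gamma_i$; then $\delta_i$ preserves $\tilde\gamma_i$ setwise and translates along it. Since $\gamma_1, \gamma_2$ are non-homotopic essential simple closed curves of distinct slopes, they have positive geometric intersection number in $S^+$ (directly on the torus, since non-homotopic essential simple closed curves represent linearly independent classes in $H_1(T^2;\Z)$; and on the four-punctured sphere after passing to the torus obtained by branched double cover over the poles). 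After pre-composing with a suitable deck transformation I may therefore assume $\tilde\gamma_1 \cap \tilde\gamma_2 = \{p_0\}$, a single transverse intersection by \Cref{no bigons} and \Cref{transversality}.

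Next I would build a grid out of the parallel translates $\{\delta_2^k(\tilde\gamma_1)\}_{k \in \Z}$ (leaves of slope $m_1$) and $\{\delta_1^j(\tilde\gamma_2)\}_{j \in \Z}$ (leaves of slope $m_2$). Because $\delta_i$ preserves $\tilde\gamma_i$, the intersection $\delta_2^k(\tilde\gamma_1) \cap \tilde\gamma_2 = \delta_2^k(\tilde\gamma_1 \cap \tilde\gamma_2) = \{\delta_2^k(p_0)\}$ is a single point, and likewise $\tilde\gamma_1 \cap \delta_1^j(\tilde\gamma_2) = \{\delta_1^j(p_0)\}$. In the torus case $\delta_1$ and $\delta_2$ commute (the deck group is $\Z^2$), so $\delta_2^k(\tilde\gamma_1) \cap \delta_1^j(\tilde\gamma_2) = \{\delta_1^j\delta_2^k(p_0)\}$; the sphere-with-four-poles case reduces to this after passing to the index-two $\Z^2$ subgroup of the orbifold deck group. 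For integers $k_1 < k_2$ and $j_1 < j_2$ the four leaves $\delta_2^{k_1}\tilde\gamma_1, \delta_2^{k_2}\tilde\gamma_1, \delta_1^{j_1}\tilde\gamma_2, \delta_1^{j_2}\tilde\gamma_2$ meet pairwise in the combinatorially expected pattern, so the arcs between adjacent crossings form a simple closed curve bounding, by the Jordan curve theorem, a topological parallelogram $P_{k_1,k_2,j_1,j_2} \subset \tilde{S}$.

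The main obstacle is to show that the nested union $\bigcup_{N} P_{-N,N,-N,N}$ exhausts $\tilde{S}$, so that any two prescribed points $p, q$ lie in $P_{-N,N,-N,N}$ for $N$ sufficiently large; polygonal convexity applied to this single parallelogram will then produce the desired trail. I would argue this via cocompactness: $\langle\delta_1, \delta_2\rangle \cong \Z^2$ has finite index in the full deck group (immediate in the torus case; in the sphere case the orbifold deck group is an extension of $\Z^2$ by $\Z/2\Z$), so it acts cocompactly on $\tilde{S}$, and a fundamental domain can be taken inside the basic parallelogram $P_{0,1,0,1}$. The larger parallelogram $P_{-N,N,-N,N}$ is precisely the union of the $(2N)^2$ translates $\delta_1^j\delta_2^k(P_{0,1,0,1})$ with $-N \leq j,k < N$. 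Since any compact subset of $\tilde{S}$ meets only finitely many $\langle\delta_1, \delta_2\rangle$-translates of a fundamental domain, the compact pair $\{p,q\}$ is contained in $P_{-N,N,-N,N}$ for some $N$, and a final application of \Cref{thm:polygonal convexity} produces the trail joining $p$ and $q$.
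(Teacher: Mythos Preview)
Your proof is correct and follows essentially the same approach as the paper: reduce to the torus (or reduce the sphere-with-four-poles to the torus via the branched double cover), build a fundamental parallelogram from lifts of the two closed leaves and the deck transformations they determine, then exhaust $\tilde S$ by an increasing union of translates of this parallelogram and apply \Cref{thm:polygonal convexity}. The paper invokes \Cref{building a trapezoid} to verify the four arcs bound a genuine parallelogram with interior angles below $\pi$, whereas you deduce this from the angle-sum in \Cref{trapezoid observation}; otherwise the arguments are the same.
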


\begin{remark}
The condition about the existence of nonhomotopic closed leaves is necessary.
Consider a Hopf torus, $\C^\ast/\langle z \mapsto \lambda z\rangle$, as described in \Cref{dilation with cone singularities}.
The foliation by lines through the origin descends to a foliation of a Hopf torus by homotopic closed leaves.
The universal cover of such a torus is also the universal cover of $\C^\ast$, which is not convex, so these are the only closed leaves. \compat{Edited on Oct 25, because it duplicated some discussion from the intro.}
\end{remark}

\begin{proof}[Proof of \Cref{torus cover convex}]
By the \gaussbonnet, $S$ is either a torus with $\alpha$ identically zero, or a sphere with four poles.

Consider the case of the sphere. Each of our closed leaves is simple and so bounds a pair of disks, and by the \gaussbonnet, each disk must contain two poles. If the two curves were disjoint, then they'd have to bound an annulus and would therefore be homotopic. Thus, they must intersect. The sphere has the torus as a double cover branched over the fours poles, and we can lift the zebra structure to the torus and our closed leaves to intersecting closed curves on the torus. The foliations on the torus are orientable, so the geometric intersection number coincides with the algebraic intersection number. The lifted closed leaves on the torus cannot be homotopic, and
thus it suffices to discuss the case of the torus.

\compat{The part of the proof from this paragraph to the end was modified to address an error pointed out by Ferran. Dec 12, 2022.}
Now let $\beta$ and $\gamma$ be the two nonhomotopic closed leaves for a zebra structure on the torus without singularities. Since these two curves are nonhomotopic simple closed curves, they must intersect. It follows that their slopes are distinct, and the intersection points are isolated.
Let $p_0 \in \beta \cap \gamma$. Parameterize $\beta:[0,1] \to S$ and $\gamma:[0,1] \to S$ such that both curves start and end at $p_0$.
Choose a lift $\tilde p_0 \in \tilde S$, and select lifts $\tilde \beta$ and $\tilde \gamma$ starting at $\tilde p_0$. Let $\Delta_\beta$ and $\Delta_\gamma$ denote the deck transformations that carry $\tilde p_0$ to the endpoint of $\tilde \beta$ and $\tilde \gamma$, respectively. Consider the union of the four segments of leaves
\begin{equation}
\label{eq:parallelogram}
\tilde \beta \cup \Delta_\beta(\tilde \gamma) \cup \Delta_\gamma(\tilde \beta) \cup \tilde \gamma.
\end{equation}
We claim that these four segments of leaves bound a parallelogram in $\tilde S$, and the segments are edges listed in cyclic order. Indeed consecutive segments above are not parallel and therefore, by \Cref{no bigons}, intersect at exactly one point.
Namely,
$\tilde \gamma \cap \tilde \beta = \{p_0\}$,
$\tilde \beta \cap \Delta_\beta(\tilde \gamma) = \{\Delta_\beta(p_0)\}$,
$$
\Delta_\beta(\tilde \gamma) \cap \Delta_\gamma(\tilde \beta) = \{\Delta_\beta \circ \Delta_\gamma(\tilde p_0)=\Delta_\gamma \circ \Delta_\beta(\tilde p_0)\}, \quad
\text{and}
\quad
\Delta_\gamma(\tilde \beta) \cap \tilde \gamma = \{\Delta_\gamma(p_0)\}.$$
Opposite edges must be disjoint: If they weren't disjoint then because they are parallel, they'd have to lie in the same leaf, but then one of the other edges would join a leaf to itself violating \Cref{no bigons}. Then \Cref{building a trapezoid} guarantees that the four curves in \eqref{eq:parallelogram} bound a parallelogram $P_1 \subset \tilde S$ as claimed above.

Observe that $\langle \Delta_\beta, \Delta_\gamma \rangle$ is a finite index subgroup of the fundamental group of the torus, that $\tilde S/\langle \Delta_\beta, \Delta_\gamma \rangle$ is a torus covering $S$, and that the parallelogram $P_1$ constructed above is a fundamental domain for the action of $\langle \Delta_\beta, \Delta_\gamma \rangle$ on $\tilde S$. For each $n \geq 1$, define
$$P_{2n+1}=\bigcup_{-n \leq i \leq n} \bigcup_{-n \leq j \leq n} \Delta_\beta^i \circ \Delta_\gamma^j(P_1).$$
Each $P_{2n+1}$ is a parallelogram formed from $(2n+1)^2$ copies of $P_1$ and bounded by four lifts of $2n+1$-fold covers of $\beta$ and $\gamma$. Since $P_1$ is a fundamental domain for the action of $\langle \Delta_\beta, \Delta_\gamma \rangle$ on $\tilde S$, it follows that $\bigcup_n P_{2n+1}=\tilde S$. Each $P_{2n+1}$ is convex by \Cref{thm:polygonal convexity}, and so $\tilde S$ is convex: If $x,y \in \tilde S$, then there is an $n$ such that $x,y \in P_n$ and so $\overline{xy}$ exists.
\end{proof}

\subsection{Continuity of arcs of trails}
\label{sect:continuity of trail arcs}
If $X$ is a topological space, then we use $\Cl(X)$ to denote the collection of all closed subsets of $X$ with its
{\em Fell topology}, which has a subbase given by the sets,
$$V^\cap = \{A \in \Cl(X):~A \cap V \neq \emptyset\} \quad \text{and} \quad K^{\not \cap}=\{A \in \Cl(X):~A \cap K= \emptyset\}$$
taken over all nonempty open sets $V \subset X$ and all proper compact subsets $K \subset X$. If $X$ is locally compact second countable, then $\Cl(X)$ is compact and metrizable \cite[Theorem 5.1.5]{Beer}. In particular, this holds when $X$ is a surface. \compat{In a prior version $\Cl(X)$ did not contain $\emptyset$. Including $\emptyset$ makes $\Cl(X)$ compact. Thanks Barak! I don't think this change affects anything, but I'm pointing this out in case you notice something.}

For a zebra plane $Z$, consider the set
$$\TS = \big\{\{p\}:p \in Z\big\} \cup \big\{\{x,y\}:~\text{$x,y \in Z$ are distinct points that can be joined by a trail arc}\}.$$
If $x$ and $y$ are distinct and $\{x,y\} \in \TS$, we use $\overline{xy}$ to denote the arc of a trail starting at $x$ and ending at $y$. A singleton $\{p\}$ can also be written $\{p,p\}$ and we define $\overline{pp}=\{p\}$. In this way we have associated each element of $\TS$ with a closed subset of $Z$.
\compat{There was some confusion (contributed to by minor errors in the exposition), so I tried to be more explicit about the definition of $\TS$. I also made some minor corrections to the exposition.}

\begin{theorem}
\label{segment continuity}
The set $\TS$ is an open subset of $Z^2$ modulo permutation. The function
$$\seg:\TS \to \Cl(Z); \quad \{x,y\} \mapsto \overline{xy}$$
is a homeomorphism onto its image.
\end{theorem}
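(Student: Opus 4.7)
My plan is to establish the theorem in three stages: openness of $\TS$ together with continuity of $\seg$ via a ``tube'' construction along a given trail arc; injectivity of $\seg$ from \Cref{no bigons}; and continuity of $\seg^{-1}$ by combining continuity of $\seg$ with a standard connectedness argument forcing sequences of trail arcs Fell-converging to a compact limit to stay in a compact set.

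\textbf{Openness of $\TS$ and continuity of $\seg$.} Fix $\{x_0, y_0\} \in \TS$ with trail arc $\tau_0 = \overline{x_0 y_0}$, which is compact by \Cref{proper}. Given an open Fell neighborhood of $\tau_0$, I would construct neighborhoods $V \ni x_0$ and $W \ni y_0$ so that for all $(x,y) \in V \times W$ we have $\{x,y\} \in \TS$ and $\overline{xy}$ lies in the prescribed Fell neighborhood. The construction covers $\tau_0$ by a finite alternating chain built from generalized rectangles (\Cref{generalized rectangles}) at each singularity on $\tau_0$ and foliation-chart trapezoids (\Cref{trapezoid construction}) along each leaf segment between consecutive singularities, where the generalized rectangle at a singularity is chosen so that two of its horizontal prongs realize the incoming and outgoing segments of $\tau_0$ at that singularity. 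Shrinking this chain gives an open tube $T$ with $\tau_0 \subset T$ whose closure lies in any prescribed Fell neighborhood. For $(x,y)$ near $(x_0, y_0)$, I would build a parameterized arc of a trail from $x$ to $y$ threading sequentially through these pieces and making the same turns at each singularity as $\tau_0$; uniqueness of trail arcs between fixed endpoints (\Cref{no bigons}) identifies the result with $\overline{xy}$. The trickiest case is a singularity along $\tau_0$ whose bending angle equals $\pi$ on one side: there the perturbed arc may either continue to turn at the same singularity with a nearby bending angle, or slip past on an adjacent parallel leaf provided by the trapezoid construction, and both possibilities must be accommodated within the tube.

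\textbf{Injectivity and continuity of $\seg^{-1}$.} Injectivity is immediate from \Cref{no bigons}: a trail arc between two fixed endpoints is unique, so its image as a subset determines the unordered endpoint pair. For continuity of $\seg^{-1}$, suppose $\tau_n = \seg(\{x_n, y_n\}) \to \tau_\infty = \seg(\{x_\infty, y_\infty\})$ in $\Cl(Z)$. Choose an open neighborhood $U$ of $\tau_\infty$ with compact closure, e.g.\ an open tube as in the previous step. The boundary $\partial U$ is compact and disjoint from $\tau_\infty$, so Fell convergence gives $\partial U \cap \tau_n = \emptyset$ eventually. Since each $\tau_n$ is connected and meets $U$ (as $\tau_n$ meets every neighborhood of any point of $\tau_\infty$), it follows that $\tau_n \subset U$ eventually, hence $x_n, y_n \in \bar U$. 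By compactness, pass to a subsequence with $x_n \to x'$ and $y_n \to y'$ in $Z^2$ modulo permutation. Continuity of $\seg$ (established in the first stage) gives $\tau_n \to \overline{x'y'}$, so $\overline{x'y'} = \tau_\infty$, and injectivity yields $\{x',y'\} = \{x_\infty, y_\infty\}$. Since every convergent subsequence of $\{x_n, y_n\}$ has the same limit and the sequence is precompact, the whole sequence converges to $\{x_\infty, y_\infty\}$.

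\textbf{Main obstacle.} The chief technical difficulty is the tube construction, and in particular the case analysis at singularities along $\tau_0$ where the bending angle equals $\pi$: small perturbations of the endpoints can either push the perturbed trail arc through the same singularity with a nearby bending angle, or cause it to miss the singularity entirely by slipping past along a nearby parallel leaf. Ensuring that both possibilities produce a trail arc inside the tube, and that the construction is uniform enough to yield Fell-continuity of $\seg$ at $\{x_0,y_0\}$, is where the real work lies; everything else reduces to organized applications of the tools developed in \Cref{sect:basic}.
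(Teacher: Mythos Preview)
Your plan is plausible but diverges substantially from the paper's argument, and the divergence makes your life harder in exactly the spot you flag as the main obstacle.

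The paper does not thread perturbed trail arcs through a tube by hand. Instead, the tube construction you describe is packaged separately as \Cref{approximating trail arcs}: for any trail arc $\overline{xy}$ there is a decreasing sequence of \emph{convex} polygons $P_n$ with $\bigcap P_n = \overline{xy}$. Convexity (via \Cref{thm:polygonal convexity}) then does all the work: openness of $\TS$ is immediate since $P_1^\circ \times P_1^\circ \subset \TS$, and for continuity of $\seg$ the paper chooses, for each open $V_i$ in a Fell subbasic set, a short transverse segment $\gamma_i \subset V_i$ crossing $\overline{x_0 y_0}$, then picks $P_n$ disjoint from the given compact $K$ and from the endpoints of the $\gamma_i$. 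Any $\overline{x'y'}$ with $x', y'$ on opposite sides of each $\gamma_i$ inside $P_n^\circ$ must cross every $\gamma_i$ and avoid $K$. This completely sidesteps the bending-angle-equals-$\pi$ case analysis: convexity of $P_n$ gives existence of $\overline{x'y'}$ without ever asking which singularities it passes through or how it bends there. Your threading approach would require you to prove, at each singularity along $\tau_0$, that the perturbed arc either hits the same singularity with a legal bending angle or slips past on the correct side and still satisfies the angle condition downstream --- doable, but it duplicates the internal work of \Cref{approximating trail arcs} without gaining the payoff of convexity.

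For continuity of $\seg^{-1}$, your sequential argument is different from the paper's direct construction of Fell-open neighborhoods (using small convex polygons $Q_x, Q_y$ around each endpoint with all but one boundary edge forming a compact set the arc must avoid). Your route works but has a small gap: after extracting $x_n \to x'$, $y_n \to y'$, you invoke continuity of $\seg$ at $\{x', y'\}$, which requires $\{x', y'\} \in \TS$. You have not established this. The fix is short: Fell convergence $\tau_n \to \tau_\infty$ together with $x_n \in \tau_n$, $x_n \to x'$ forces $x' \in \tau_\infty$ (if $x' \notin \tau_\infty$, a compact neighborhood of $x'$ disjoint from $\tau_\infty$ would eventually miss $\tau_n$), and similarly $y' \in \tau_\infty$; then $\overline{x'y'}$ is the sub-arc of $\tau_\infty$ between them, so $\{x', y'\} \in \TS$ and your argument concludes. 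Also, your justification of injectivity via \Cref{no bigons} is slightly off: uniqueness of the arc given endpoints shows $\seg$ is well-defined, not injective. What you need is that an arc, as a subset, determines its endpoints --- a purely topological fact the paper spells out (endpoints are the points with arbitrarily small connected punctured neighborhoods).
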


\begin{lemma}
\label{approximating trail arcs}
If $\overline{xy}$ is an arc of a trail in $Z$ (possibly with $x=y$) then there is a decreasing sequence of convex polygons $P_n$ such that $\bigcap P_n=\overline{xy}$. In $\Cl(Z)$, we have $\lim P_n = \overline{xy}$.
\end{lemma}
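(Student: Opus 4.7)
The plan is to build $P_n$ as a thin convex polygonal neighborhood of $\overline{xy}$ by assembling trapezoids from \Cref{trapezoid construction} along the trail on both sides, and to verify convexity via \Cref{thm:polygonal convexity}. The key linking the trail's angle hypothesis to the convexity conclusion is that the bending angle on each side at every interior singularity, which is at least $\pi$ by assumption, will appear as an exterior angle (or a lower bound thereon) at the corresponding boundary vertex of $P_n$.

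The degenerate case $x=y$ is immediate from \Cref{generalized rectangles}: inside a nested shrinking sequence of open neighborhoods $U_n$ of $x$ with $\bigcap U_n = \{x\}$, take $P_n$ to be a generalized rectangle contained in $U_n$. All interior angles are $\pi/2$, so exterior angles are at least $3\pi/2$, and \Cref{thm:polygonal convexity} gives convexity; nesting and the intersection $\{x\}=\overline{xy}$ follow from the nesting of the $U_n$.

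For $x \neq y$, decompose $\overline{xy}$ into leaf segments $\ell_1, \ldots, \ell_k$ meeting at interior singularities $s_1, \ldots, s_{k-1}$ with bending angles $\theta_i^L, \theta_i^R \geq \pi$. Fix nested open $U_n \supset \overline{xy}$ with $\bigcap U_n = \overline{xy}$. Partition $\overline{xy}$ into maximal sub-arcs of constant slope on which all interior left bending angles equal $\pi$ (noting that two consecutive prongs of a given slope in a stellar neighborhood are spaced by $\pi$, so $\theta_i^L=\pi$ forces $\ell_{i-1}$ and $\ell_i$ to have the same slope) and apply \Cref{trapezoid construction} to each such left sub-arc inside $U_n$ to build a thin left trapezoid whose far edge is a single leaf parallel to that sub-arc; do the same on the right. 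At each $s_i$ with $\theta_i^L > \pi$, the two adjacent left trapezoids' far edges---being, generically, leaves of distinct slopes---extend into the left sector of $s_i$ and meet at a unique regular point $w_i^L$; replace the portions of the trapezoid boundaries past $s_i$ by these extensions joining at $w_i^L$. Do the analogous thing on the right to define $w_i^R$, and close the curve at $x$ and $y$ using the short slanted segments from \Cref{trapezoid construction}.

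Let $P_n$ be the polygon bounded by the resulting closed curve, whose vertices are $x$, $y$, and the $w_i^L$ and $w_i^R$. A direct local Euclidean calculation in the stellar neighborhood of $s_i$ shows that the interior angle of $P_n$ at the regular vertex $w_i^L$, measured on the side containing $s_i$, equals $2\pi - \theta_i^L$, so the exterior angle there equals $\theta_i^L \geq \pi$; similarly at each $w_i^R$. The interior angles at $x$ and $y$ can be made arbitrarily small by choosing small slant angles in \Cref{trapezoid construction}, giving exterior angles close to the total angle and exceeding $\pi$. Thus \Cref{thm:polygonal convexity} gives convexity of $P_n$. Shrinking the $U_n$ and the trapezoid widths in tandem ensures $P_{n+1}\subset P_n\subset U_n$, so $\bigcap P_n=\overline{xy}$ and $P_n\to\overline{xy}$ in $\Cl(Z)$. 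The chief technical obstacle will be verifying that the constructed closed curve is genuinely simple and bounds a topological disk---handling the gluing of trapezoids at each $w_i^L$ and $w_i^R$ and ruling out extraneous self-intersections of shifted leaves, both controlled using \Cref{no bigons} and distinct-slope transversality---together with handling the non-generic subcase where $\theta_i^L$ is a proper integer multiple of $\pi$ (so the slopes of $\ell_{i-1}$ and $\ell_i$ coincide and the two far edges are distinct parallel leaves in different branches of the left sector); in this subcase an auxiliary connector leaf of a different slope must be inserted through the left sector, and its angle contributions must be checked to remain at most $\pi$ by an analogous local computation.
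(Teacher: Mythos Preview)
Your construction has a genuine gap at interior singularities where a bending angle exceeds $2\pi$. You claim that at each $s_i$ with $\theta_i^L>\pi$ the extended far edges of the two adjacent left trapezoids ``extend into the left sector of $s_i$ and meet at a unique regular point $w_i^L$.'' This fails once $\theta_i^L\geq 2\pi$, which already occurs at singularities with $\alpha(s_i)\geq 1$ (cone angle $\geq 3\pi$). Developing the left sector, the two far edges are leaves of slopes equal to those of $\ell_{i-1}$ and $\ell_i$, but they lie in branches of the sector separated by an angle $\theta_i^L$; once that separation is at least $2\pi$ the two leaves are in non-overlapping sheets of the cone and simply do not intersect in the left sector. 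Your ``non-generic subcase'' fix (a single auxiliary connector leaf when $\theta_i^L$ is a proper multiple of $\pi$) does not address this: the obstruction is the size of $\theta_i^L$, not the coincidence of slopes, and one connector is not enough when $\theta_i^L$ is large. Relatedly, your angle formula $2\pi-\theta_i^L$ for the interior angle at $w_i^L$ only makes sense for $\theta_i^L\in(\pi,2\pi)$; for $\theta_i^L>2\pi$ it is negative.

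The paper's proof is organized precisely to avoid this obstacle. Instead of seeking a single intersection point on each side of each singularity, it tiles a neighborhood of $\overline{xy}$ by edge rectangles, end rectangles, and \emph{vertex triangles}: at each interior singularity the complementary angle $\theta_i^L-\pi$ (after the two right angles from adjacent edge rectangles) is subdivided by auxiliary leaves into wedges each of angle $<\pi$, with one isoceles triangle per wedge. Each tile is foliated (rectangles by parallel leaves, triangles by leaves parallel to the side opposite the singularity), the foliations are stitched using the freedom in the end-rectangle foliation to force closure, and the resulting closed leaves have all interior angles $<\pi$ by construction (right angles from rectangles, base angles $<\pi/2$ from the isoceles triangles), so \Cref{thm:polygonal convexity} applies. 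In effect the paper inserts about $\lceil(\theta_i^L-\pi)/\pi\rceil$ vertices on each side of each singularity, which is exactly what your single-vertex construction is missing.
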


\begin{proof}
In a locally compact and second countable space, a decreasing nested sequence of closed subsets approaches its intersection in the Fell topology. Thus the second sentence follows from the first. \compat{Inserted this here in response to Barak's comment that this is generally true. Previously, there was a paragraph at the end of the proof that proved this. Dec 23, 2022.}

First consider the case of $x=y$. Since $Z$ is a topological disk, $x$ has a countable neighborhood base which we can take to be nested, $U_1 \supset U_2 \supset \ldots$.
We produce our sequence of polygons by induction using \Cref{generalized rectangles} to produce generalized rectangles. First define $P_1$ to be a generalized rectangle contained in $U_1$ and containing $x$ in its interior. Then assuming that $P_k \subset U_k$ is generalized rectangle containing $x$ in its interior, define $P_{k+1}$ to be a generalized rectangle contained in $U_{k+1} \cap P_k^\circ$ and containing $x$ in its interior. To see why the resulting sequence $\{P_n\}$ approaches $\overline{xy}=\{x\}$, first observe that $\{x\} \in V^\cap$ implies $x \in V$ and so $P_n \in V^\cap$ for all $n$. Second observe that $\{x\} \in K^{\not \cap}$ implies there is a $U_N$ such that $U_N \cap K=\emptyset$ and therefore $P_n \in K^{\not \cap}$ for $n \geq N$. \compat{This paragraph was rewritten to make use of generalized rectangles on Nov 4.}

Now suppose that $x \neq y$. We will cover a neighborhood of $\overline{xy}$ by polygons of three types. Observe that that $\overline{xy}$ is the union of segments of leaves (or leaves) whose endpoints are in the union of $\{x,y\}$ and the collection of singularities in the interior of $\overline{xy}$. We'll call these segments {\em edges} in this proof. For each edge $e$ of $\overline{xy}$, use \Cref{trapezoid construction} to construct two rectangles with an edge of $e$, one on each side of $\overline{xy}$. These will be called {\em edge rectangles}. Now fix a singularity $z$ in the interior of $\overline{xy}$. Since $Z$ is a zebra plane, the cone angle at $z$ is at least $3 \pi$, and since $\overline{xy}$ is a trail, the angles made at $z$ are at least $\pi$. So, the constructed right angles of edge rectangles at $z$ cannot overlap, but also can't cover all of a neighborhood of $z$. The complement of these right angles consists of one or two positive angles (one occurs if one side of $\overline{xy}$ makes an angle of $\pi$ at $z$), which we will call {\em complementary angles}. Choose finitely many segments of leaves emanating from $z$ that cut these complementary angles into smaller angles all of whose measures are less than $\pi$. Fix two adjacent leaves emanating from $z$ bounding such an angle, and construct a triangle
with vertex $z$ such that two edges are arcs of these leaves, and the opposite angles are equal. We'll call these equal angles the {\em base angles} and these triangles {\em vertex triangles}. At endpoint $x$, the two edge rectangles with vertex $x$ cover an angle at $x$ of measure $\pi$. Using \Cref{trapezoid construction}, we can construct more rectangles to add to these two edge rectangles that cover a neighborhood of $x$ and have disjoint interiors. We call these rectangles {\em end rectangles} and also add them around $y$. A picture of this situation is depicted in \Cref{fig:approximating_tail_segments}.

\begin{figure}[htb]
\centering
\includegraphics[height=1in]{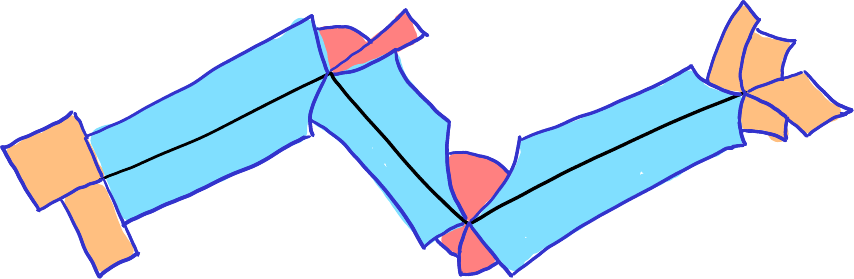}
\hspace{3em}
\includegraphics[height=1in]{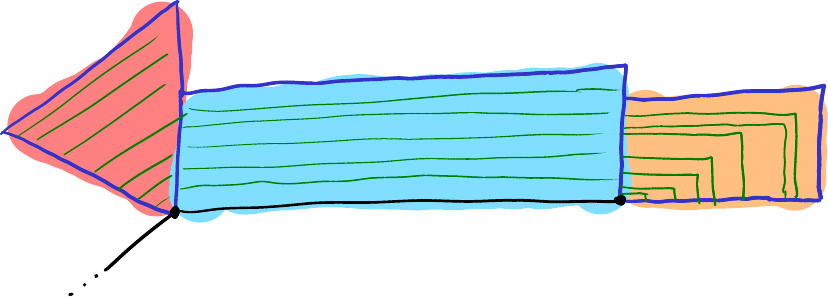}
\caption{Left: A neighborhood of a trail tiled by polygons as in the proof of \Cref{approximating trail arcs}. Edge rectangles are colored blue, vertex triangles are red, and end rectangles are orange. Right: Example foliations of the polygons, with the trail situated at the bottom of the polygons.}
\label{fig:approximating_tail_segments}
\end{figure}

We will foliate each of the polygons we have constructed, with the goal to enclose $\overline{xy}$ in a $1$-parameter family of boundaries of convex polygons that nest down to $\overline{xy}$. Each edge rectangle has one edge on $\overline{xy}$, and by \Cref{trapezoid foliation} we can foliate it by parallel leaves which run between the edges emanating from points on $\overline{xy}$. Each vertex triangle has $T$ one vertex $v$ that is a singularity on $\overline{xy}$. Let $m$ be the slope of the side opposite $v$. Restricting $\sF_m$ to $T$, we see from \Cref{no bigons} that maximal segments of leaves through the interior of $T$ must join the edges of the triangle emanating from $v$. We foliate the end rectangles in a slightly more technical way. These rectangles intersect $\overline{xy}$ in an endpoint (either $x$ or $y$), and suppose we are given a homeomorphism between the edges sharing this endpoint vertex, $h:e_1 \to e_2$ that preserves that endpoint (i.e., $h(x)=x$ or $h(y)=y$). A point $p \in e_1$, then picks out two segments of leaves of the directional foliations: the leaf through $p$ parallel to $e_2$ and the leaf through $h(p)$ parallel to $e_1$. By \Cref{trapezoid foliation}, these segments of leaves join distinct pairs of opposite sides and so intersect. Therefore, we can join $p$ to $h(p)$ by a path that follows these leaves, making the transition at the intersection point. (See the orange rectangle on the right side of \Cref{fig:approximating_tail_segments} for examples of such paths.)
\compat{This paragraph was edited on Dec 5, 2022 to reference the new \Cref{trapezoid foliation} and to provide more details on how we construct these polygons, as requested by Barak.}

In order to achieve our goal, we need the leaves of the foliations around $\overline{xy}$ to close up. (If we attempted to close the polygons using segments of say the boundaries of end rectangles, then the resulting curves might not bound convex polygons.) To ensure closing, we need to take advantage of the flexibility built into the foliations of the end rectangles. Pick one end rectangle with vertex $x$ and call it $R^\star$, and choose foliations as above, with arbitrary choices made for all end rectangles other than $R^\star$. Let $e^\star_1$ and $e^\star_2$ denote the edges of $R^\star$ with endpoint $x$.
Observe that there are closed intervals $I_j \subset e^\star_j$ with endpoint $x$ for $j \in \{1,2\}$ and a homeomorphism $h_\star:I_1 \to I_2$ such that $h_\star(x)=x$ and for $p \in I_1 \setminus \{x\}$ there is a concatenation of leaves joining $p$ to $h_\star(p)$. To foliate $R^\star$, extend $h_\star$ to a homeomorphism $e^\star_1 \to e^\star_2$, and use this homeomorphism to define the foliation as above. Observe that concatenations of leaves through our polygons passing through $I_1 \setminus \{x\}$ all close up.

We claim that the closed leaves constructed above bound convex polygons. To see this we can check that the interior angles all have measure less than $\pi$, by \Cref{thm:polygonal convexity} and the fact that all points in zebra planes have angle at least $2 \pi$. These interior angles only occur in the interiors of end rectangles, where all interior angles are right angles, and in the transition between two vertex triangles or between a vertex triangle and an edge rectangle. Observe that because our vertex triangles were constructed so that the base angles are equal, these base angles always measure less than $\frac{\pi}{2}$, while the contribution of edge rectangles to an interior angle is always $\frac{\pi}{2}$. So, all interior angles are less than $\pi$ as claimed and so each bounded polygon is convex.
\end{proof}

\begin{proof}[Proof of \Cref{segment continuity}]
To see that $\TS$ is open, let $\{x,y\} \in \TS$. Then $\overline{xy}$ exists, and \Cref{approximating trail arcs} guarantees that there is a convex polygon $P_1$ containing $\overline{xy}$ in its interior, $P_1^\circ$. Observe that convexity guarantees that $P_1^\circ \times P_1^\circ \subset \TS$, so $\TS$ is open.

To see $\seg$ is injective, let $A \in \seg(\TS)$. Observe that if $A$ consists of only one point, say $p$, then $\seg^{-1}(A)$ contains only $\{p\}$. Otherwise, $A$ is an arc and $\seg^{-1}(A)$ consists of the endpoints of $A$. (The endpoints of $A$ can be distinguished from the other points of $A$. If $x$ is an endpoint of $A$, then for any neighborhood $N$ of $x$, there is a smaller neighborhood $N'$ such that $N' \setminus A$ is connected. On the other hand, if $z \in A$ is not an endpoint, then there is a neighborhood $N$ of $z$ such that $N \setminus A$ is homeomorphic to the disjoint union of two open disks with $z$ in the common boundary. Therefore, any smaller neighborhood $N'$ must intersect both these disks and so $N' \setminus A$ is also disconnected.\compat{This comment has been expanded to say something correct! Thanks Ferran! Dec 12, 2022.})

Now consider the continuity of $\seg$. Fix $\overline{xy} \in \seg(\TS)$. Let $P_n$ be the sequence of convex polygons approaching $\overline{xy}$ guaranteed to exist by \Cref{approximating trail arcs}.
A basis for $\Cl(Z)$ is given by sets of the form
$$U = V_1^\cap \cap V_2^\cap \cap \ldots \cap V_n^\cap \cap K^{\not \cap}$$
(since finite unions of compact sets are compact). Suppose $U$ contains $\overline{xy}$.
We'll produce an open set containing $\{x,y\} \in \TS$ whose image is contained in $U$.

The case when $x=y$ is simple: Here we have that $x \in V_i$ for all $i$ and since $P_n \to \{x\}$ in $\Cl(Z)$, for $n$ large enough,
$P_n$ that is disjoint from $K$. The open set consisting of pairs from $P_n^\circ \cap \bigcap_{i=1}^n V_i$ suffices.

Now suppose that $x \neq y$. Then for any index $i \in \{1, \ldots, n\}$, we can find a segment of a leaf $\gamma_i \subset V_i$ such that $\gamma_i \cap \overline{xy}$ consists of a single point $p_i$ that is distinct from $x$ and $y$ and that the crossing at $p_i$ is transverse, in the sense that $\gamma_i$ crosses from one side of $\overline{xy}$ to the other at $p_i$.
This situation is depicted in \Cref{fig:segment_continuity}.
Let $E$ be the collection of endpoints of the collection of paths $\{\gamma_i:~i=1,\ldots, n\}$.
Then there is  a polygon $P_n$ that is disjoint from both $K$ and $E$.
Observe that by construction, $x$ and $y$ lie in distinct components of $P_n^\circ \setminus \gamma_i$.
Denote these components by $X_i$ and $Y_i$ respectively. Let
$X=\bigcap_{i=1}^n X_i$ and $Y=\bigcap_{i=1}^n Y_i$. For $x' \in X$ and $y' \in Y$, both points lie in $P_n^\circ$, so $\overline{x'y'}$ exists. Since $P_n$ is disjoint from $K$, so is $\overline{x'y'}$.
Furthermore, the path $\overline{x'y'}$ must intersect each of the $\gamma_i$ and so must intersect each $V_i$.
Thus $\overline{x' y'}$ is in our basis element $U$ as desired.

Finally, we need to show that $\seg^{-1}:\seg(TS) \to \TS$ is continuous. Let $U \subset \TS$ be open and pick any $(x_0, y_0) \in U$. We'll find an open neighborhood of $\overline{x_0 y_0}$ in $\Cl(Z)$ such that whenever this open set contains $\overline{xy}$, we have $\{x,y\} \in U$.

We consider two cases separately. First suppose $x_0=y_0$. Then there is an open subset $U' \subset Z$ containing $x_0$ such that $U' \times U' \subset U$. Using \Cref{approximating trail arcs}, we can produce a compact polygon $P$
containing $x_0$ in its interior which does not intersect $\partial U'$ and therefore is contained in $U'$. Observe that the set $(P^\circ)^\cap \cap (\partial P)^{\not \cap}$ is a neighborhood satisfying our condition.

Now assume that $x_0 \neq y_0$. In this case we can find disjoint open neighborhoods $U_x$ of $x_0$ and $U_y$ of $y_0$ such that the collection $U'$ of pairs $\{x,y\}$ with $x \in U_x$ and $y \in U_y$ satisfies $U' \subset U$.
Applying  \Cref{trapezoid construction}, we can produce produce a convex polygon $Q_x \subset U_x$ containing $x_0$ such that $\overline{x_0 y_0}$ passes through the interior of one of the edges of $Q_x$.
Define $K_x$ to be the union of the edges that do not intersect $\overline{x_0 y_0}$. Similarly define $Q_y \subset U_y$ and $K_y$. Consider the open subset of $\Cl(Z)$ defined by
$$(Q_x^\circ)^\cap \cap K_x^{\not \cap} \cap (Q_y^\circ)^\cap \cap K_y^{\not \cap}.$$
Fix an $\overline{xy}$ from this set. Observe that since $\overline{xy} \in (Q_x^\circ)^\cap$ it must contain points in $Q_x$. Because $K_x$ contains all but one edge of $Q_x$ and $\overline{xy} \in K_x^{\not \cap}$, if there was no endpoint in $Q_x$, then $\overline{xy}$ must enter and exit $Q_x$ through the same edge. This is impossible by \Cref{no bigons}, so $Q_x$ must contain either $x$ or $y$. By the same analysis, $Q_y$ must contain either $x$ or $y$. Thus $\{x,y\} \in U$ as desired.
\end{proof}

\begin{figure}[htb]
\centering
\includegraphics[width=3in]{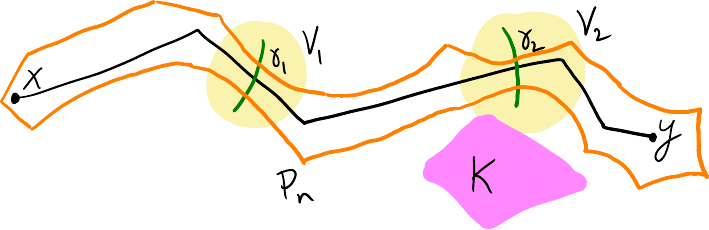}
\caption{Illustration of a configuration described in the proof of \Cref{segment continuity}.}
\label{fig:segment_continuity}
\end{figure}

\begin{corollary}
\label{compact segments}
If $K \subset \TS$ is compact, then so is the subset of $Z$ given by $\bigcup_{\{x,y\} \in K} \overline{xy}$.
\end{corollary}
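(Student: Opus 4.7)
The plan is to combine \Cref{segment continuity}, \Cref{approximating trail arcs}, and \Cref{thm:polygonal convexity} to cover the union of arcs by a finite union of compact convex polygons, and then to observe that the union is closed (so that it is a closed subset of a compact set, hence compact).

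First I would fix an arbitrary $\{x_0, y_0\} \in K$ and invoke \Cref{approximating trail arcs} to obtain a convex polygon $P$ with $\overline{x_0 y_0} \subset P^\circ$. By \Cref{thm:polygonal convexity}, any two points of $P^\circ$ are joined by an arc of a trail lying in $P$. Therefore $U = \big\{\{x,y\} : x,y \in P^\circ\big\}$ is an open subset of $Z^2$ modulo permutation contained in $\TS$ (which is itself open by \Cref{segment continuity}), is an open neighborhood of $\{x_0, y_0\}$ in $\TS$, and satisfies
$$\bigcup_{\{x,y\} \in U} \overline{xy} \subset P.$$
Using compactness of $K$, I extract a finite subcover by sets $U_1, \ldots, U_n$ of the above form, with associated compact polygons $P_1, \ldots, P_n$, so that
$$\bigcup_{\{x,y\} \in K} \overline{xy} \subset \bigcup_{i=1}^n P_i.$$
The right-hand side is a compact subset of $Z$.

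To finish, I would show that the union is closed in $Z$. Given a convergent sequence $p_m \to p$ in the union, write $p_m \in \overline{x_m y_m}$ with $\{x_m, y_m\} \in K$. By compactness of $K$ we may pass to a subsequence with $\{x_m, y_m\} \to \{x,y\} \in K$, and then \Cref{segment continuity} yields $\overline{x_m y_m} \to \overline{xy}$ in the Fell topology on $\Cl(Z)$. Since $Z$ is locally compact Hausdorff and $\overline{xy}$ is closed, if $p \notin \overline{xy}$ we could pick an open neighborhood $V$ of $p$ with compact closure $\overline{V}$ disjoint from $\overline{xy}$; then $\overline{V}^{\not\cap}$ is a Fell-open set containing $\overline{xy}$, which would force $\overline{x_m y_m} \cap \overline{V} = \emptyset$ for all large $m$, contradicting $p_m \to p$. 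Hence $p \in \overline{xy}$, so $p$ lies in the union.

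The main potential obstacle is verifying closedness, but it reduces to this standard property of the Fell topology on a locally compact Hausdorff space; the substantive content of the argument is the convexity-based bounding step, which leverages the structural results already developed in \Cref{sect:connecting points with trails}.
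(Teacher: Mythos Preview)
Your proof is correct and uses the same key ingredients as the paper's proof—\Cref{approximating trail arcs} to produce convex polygons containing trail arcs, and \Cref{segment continuity} for Fell convergence of arcs. The only difference is organizational: the paper argues sequential compactness directly (given a sequence $q_n \in \overline{x_n y_n}$, pass to a subsequence with $\{x_n,y_n\}\to\{x_\infty,y_\infty\}$, trap the tail of arcs in a single convex polygon around $\overline{x_\infty y_\infty}$, and extract a convergent subsequence of $q_n$), whereas you first bound the whole union by a finite collection of polygons via a compactness-of-$K$ cover argument and then prove closedness separately; both routes are equally valid and rest on the same lemmas.
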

\begin{proof}
Let $Q=\bigcup_{\{x,y\} \in K} \overline{xy} \subset Z$. Since $Z$ is metrizable, it suffices to prove that $Q$ is sequentially compact, so let $q_n \in Q$ be a sequence. Then for each $n$, we can find $\{x_n,y_n\} \in K$ such that $q_n \in \overline{x_n y_n}$. Since $K$ is compact, there is a subsequence $\{x_{n_k}, y_{n_k}\}$ that converges to some $\{x_\infty, y_\infty\} \in K$. By \Cref{approximating trail arcs}, we can find a convex polygon $P$ that contains $\overline{x_\infty y_\infty}$ in its interior. Then the interior $P^\circ$ intersects $\overline{x_\infty y_\infty}$ and the boundary $\partial P$ is disjoint from $\overline{x_\infty y_\infty}$. By \Cref{segment continuity}, we know that $\overline{x_{n_k} y_{n_k}}$ converges to $\overline{x_\infty y_\infty}$ in $\Cl(Z)$, and so there is an index $k'$ such that $k>k'$ implies that
$\overline{x_{n_k} y_{n_k}} \cap P^\circ\neq \emptyset$ and $\overline{x_{n_k} y_{n_k}} \cap \partial P=\emptyset$. Since $\overline{x_{n_k} y_{n_k}}$ is connected, we conclude that $\overline{x_{n_k} y_{n_k}} \subset P$ for $k>k'$.
Then since $P$ is compact, we know that $q_{n_k} \in \overline{x_{n_k} y_{n_k}}$ has a convergent subsequence.
\end{proof}


\begin{theorem}
\label{slope continuity}
The function $\mu$ that sends a pair of distinct points $(x,y) \in Z^2$ with $\{x,y\} \in \TS$ and $y$ nonsingular to the slope of $\overline{xy}$ measured at $y$ is continuous.
\end{theorem}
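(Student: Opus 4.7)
The plan is to build a small rectangular trap around $y_0$ in which the trail's slope can be read off from the geometry, and combine this with the Fell continuity of $\seg$ established in \Cref{segment continuity}. Fix $(x_0, y_0)$ in the domain and set $m_0 = \mu(x_0, y_0)$. Acting by an element of $\Homeo_+(\hat\R)$ as in \Cref{sect:homeo action}, I may assume $m_0 = 0$. Since $y_0$ is non-singular, a stellar chart provides Euclidean coordinates with $y_0 = \0$ in which leaves of $\sF_0$ are horizontal. I apply \Cref{generalized rectangles} inside this stellar chart to obtain a rectangle $R$ with horizontal and vertical leaf edges and with $y_0 \in R^\circ$, then shrink $R$ until $x_0 \notin R$ and $\overline{x_0 y_0} \cap R$ is precisely a horizontal segment from some point on the left edge of $R$ to $y_0$. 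This is achievable because the terminal edge of $\overline{x_0 y_0}$ has slope $0$, the compact arc $\overline{x_0 y_0}$ carries only finitely many singularities, and $\overline{x_0 y_0} \cap R$ is connected by \Cref{intersections with polygons}.

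The key observation about this trap is the following. For $(x,y)$ close enough to $(x_0, y_0)$, $y \in R^\circ$ and $x \notin R$, so \Cref{intersections with polygons} forces $\overline{xy}$ to enter $R$ at a single point $p_{x,y} \in \partial R$ and end at $y$. Because $R$ contains no singularities, the portion of $\overline{xy}$ inside $R$ is a single leaf segment from $p_{x,y}$ to $y$, and its slope is exactly $\mu(x,y)$.

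I then argue by contradiction. Suppose $(x_n, y_n) \to (x_0, y_0)$ but $\mu_n := \mu(x_n, y_n)$ does not converge to $0$. Passing to a subsequence, using compactness of $\hat\R$ and of $\partial R$, assume $\mu_n \to m_\infty \in \hat\R \setminus \{0\}$ and $p_{x_n, y_n} \to p_\infty \in \partial R$; note $p_\infty \neq y_0$ since $\partial R$ is closed and $y_0 \in R^\circ$. In the Euclidean coordinates the straight segments $\overline{p_{x_n, y_n} y_n}$ converge in Hausdorff distance to the straight segment from $p_\infty$ to $y_0 = \0$ of slope $m_\infty$. Pick any $w \in \overline{x_0 y_0} \cap R$ with $w \neq y_0$: it has height $0$ and nonzero $x$-coordinate, while the line of slope $m_\infty$ through $\0$ has nonzero height at the $x$-coordinate of $w$. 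Hence a sufficiently small open ball $W \subset R^\circ$ around $w$ is disjoint from the limit segment, so Hausdorff convergence gives $W \cap \overline{p_{x_n, y_n} y_n} = \emptyset$, and thus $W \cap \overline{x_n y_n} = \emptyset$ (since $W \subset R$) for all large $n$. But $W$ is open and meets $\overline{x_0 y_0}$, so \Cref{segment continuity} together with the definition of the Fell topology forces $W \cap \overline{x_n y_n} \neq \emptyset$ for all large $n$, a contradiction. The only delicate point is arranging the rectangle $R$ with the stated properties; once that is done, the rest is a mix of elementary Euclidean geometry and Fell convergence.
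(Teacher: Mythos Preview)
Your argument has a genuine gap at the point where you invoke ``Euclidean coordinates'' and treat the segments $\overline{p_{x_n,y_n}\,y_n}$ as straight lines. In a zebra plane, a stellar chart at $y_0$ straightens only the leaves \emph{through $y_0$} (rays of each slope through the center), while a foliation chart for $\sF_0$ straightens only the leaves of that one foliation. Neither chart makes the leaf of $\sF_{\mu_n}$ through $y_n$ into a Euclidean line, so the assertion that these segments ``converge in Hausdorff distance to the straight segment from $p_\infty$ to $y_0$ of slope $m_\infty$'' and the subsequent height comparison at $w$ are not justified. In fact, the claim that the limit segment $\overline{p_\infty y_0}$ has slope $m_\infty$ is exactly the continuity you are trying to prove: \Cref{segment continuity} does give $\overline{p_{x_n,y_n}\,y_n}\to\overline{p_\infty y_0}$ in the Fell topology, but passing from this to ``the limiting slope is $m_\infty$'' is circular.

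The paper's proof avoids this by working at an interior point $z$ of the terminal leaf of $\overline{x_0 y_0}$ and erecting two test segments $\overline{ac}$ and $\overline{bd}$ through $z$ of the prescribed boundary slopes $m_0,m_1$. Any nearby trail $\overline{xy}$ is forced to cross both of these inside a small singularity-free quadrilateral $Q$, and then \Cref{triangle1} (the cyclic ordering of edge slopes of a triangle) pins the slope of $\overline{xy}$ in $Q$ into the interval $(m_0,m_1)$; since there are no singularities between $Q$ and $y$, this is the slope at $y$. The key difference is that the paper never needs leaves to be straight in any chart---the slope control comes from the combinatorial triangle lemma, not from coordinate geometry. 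If you want to rescue your approach, you would need a zebra-intrinsic substitute for the Hausdorff step, and the natural one is precisely this triangle-crossing argument.
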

\begin{proof}
Let $I=(m_0, m_1) \subset \hat \R$ be an open interval. Suppose $\mu(x_0,y_0) \in I$. We need to find an open neighborhood of $(x_0,y_0)$ whose intersection with the domain maps into $I$.

Let $\alpha \subset \overline{x_0 y_0}$ be an arc on the same leaf as the leaf of $\overline{x_0 y_0}$ containing $y_0$, but such that $\alpha$ does not contain $y_0$. Using \Cref{trapezoid construction} twice, we can construct two rectangles with $\alpha$ as one edge, one on each side of $\overline{x_0 y_0}$. The union of these two rectangles is a larger rectangle $R$ such that $\alpha$ passes through the interior of $R$ and joins opposite sides. Recall that $R$ is convex and contains no singularities in its interior. Choose a point $z$ from the interior of $\alpha$, and through $z$ construct segments $\overline{ac}$ and $\overline{bd}$ contained in $R$ of slope $m_0$ and $m_1$, respectively. Let $Q$ denote the quadrilateral $abcd$. This construction is illustrated in \Cref{fig:slope_continuity}. Because $\mu(x_0,y_0) \in I$, direction considerations at $z$ tell us that $\overline{x_0 y_0}$ passes through opposite sides of $Q$, and up to swapping $a$ and $c$, we can assume these sides are $\overline{ab}$ and $\overline{cd}$. But $\overline{x_0 y_0}$ does not pass through $\overline{bc}$ or $\overline{cd}$. So, \Cref{approximating trail arcs} guarantees we can find a convex polygon $P$ containing $\overline{x_0 y_0}$ in its interior that does not intersect $\overline{bc} \cup \overline{ad}$. This lemma further allows us to ensure that $P$ contains no singularities not found on $\overline{x_0 y_0}$, since the singularities in any compact polygon form a compact set.
Observe that $x_0$ and $y_0$ lie in distinct components of $P^\circ \setminus Q$, call these components $X$ and $Y$. We claim the intersection of $X \times Y$ with the domain of $\mu$ maps into $I$. Fix $(x,y) \in X \times Y$. Then $\overline{xy}$ is forced to cross through edge $\overline{ab}$ into $Q$, then over both both $\overline{ac}$ and $\overline{bd}$, and exit through $\overline{cd}$. Assuming $\overline{xy}$ does not pass through $z$, $\overline{xy}$ forms a triangle with $\overline{ac}$ and $\overline{bd}$, and \Cref{triangle1} guarantees that the slope of $\overline{xy}$ measured in $Q$ is in $I$. (The same holds if it does pass through $z$ because parallel trajectories form such a triangle.) The slope is the same as the slope measured at $y$, because $Y$ cannot contain any singularities, because by construction all singularities lie in $X$. Thus $\mu(x,y) \in I$ as desired.
\end{proof}

\begin{figure}[htb]
\centering
\includegraphics[width=3.5in]{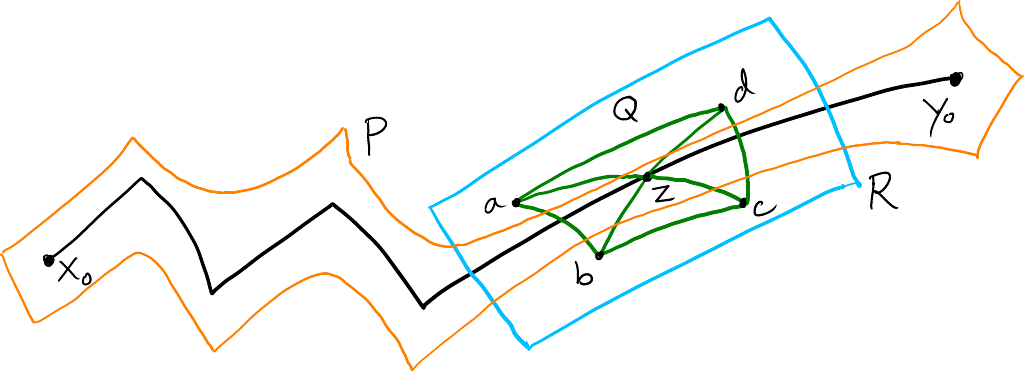}
\caption{Illustration for the proof of \Cref{slope continuity}.}
\label{fig:slope_continuity}
\end{figure}

\begin{theorem}
\label{compact extension}
\compat{This theorem and proof are new as of Nov 5.}
Let $K$ be a compact subset of the zebra plane $Z$. Suppose $\{x_n,y_n\} \in \TS$ is a sequence of pairs such that the limits
$x=\lim x_n$ and $y=\lim y_n$ exist. If $\overline{x_n y_n} \subset K$ for all $n$, then $\{x, y\} \in \TS$.
\compat{The first version on the arxiv said zebra surface here rather than zebra plane. This would be an error (at least the proof doesn't work. We only cite this once, and apply it to a zebra plane, so that seems okay.}
\end{theorem}
\begin{proof}
We offer a direct proof. Using \Cref{generalized rectangles}, for each point $p \in K$, there is a generalized rectangle $P_p$ such that $p \in P_p^\circ$ and $\partial P_p$ contains no singularities. Then $\{P_p^\circ:~p \in K\}$ is an open cover of $K$, so there is a finite subcover. Let $\sP$ denote the finite collection of polygons used in the finite subcover.

Let $Q_0 \in \sP$ denote a polygon containing $x$ in its interior. If $y \in Q_0$ then $\{x, y\} \in \TS$ by \Cref{thm:polygonal convexity}. Now suppose $y \not \in Q_0$. Then by discarding the first finitely many elements of our sequences, we may assume that $x_n \in Q_0^\circ$ and $y_n \not \in Q_0$ for all $n$. Therefore \Cref{intersections with polygons} guarantees that for all $n$ we have $Q_0 \cap \overline{x_n y_n} = \overline{x_n z^1_n}$ for some point $z^1_n \in \partial Q_0$. Since $\partial Q_0$ is compact, by passing to a subsequence, we can assume that $z^1 = \lim z^1_n \in \partial Q_0$ exists. Observe that
$\overline{x z^1}$ exists as an arc of a trail by \Cref{thm:polygonal convexity}.

Since $z_1^n \in \overline{x_n y_n}$ for all $n$, we see that $z^1 \in K$.
Let $Q_1 \in \sP$ be such that $z^1 \in Q_1^\circ$. If $y \in Q_1$ then we can construct the arc of a trail $\overline{z^1 y}$ by \Cref{thm:polygonal convexity}. In this case we claim that the concatenation $\overline{x z^1} \bullet \overline{z^1 y}$ is a trail.
Since both paths in the concatenation are arcs of trails, we only need to check the path at $z^1 \in \partial Q_0$. Since $\partial Q_0$ contains no singularities, we know $z^1$ is not singular. Similarly, the points $z^1_n$ are not singular. Since $z_n^1 \in \overline{x_n y_n}$, we have
$\overline{x_n y_n} = \overline{x_n z_n^1} \bullet \overline{z_n^1 y_n}.$
Let $m_n^1$ denote the slope of $\overline{x_n y_n}$ measured at $z_n^1$.
Applying \Cref{slope continuity} to the sequence $\{x_n, z_n^1\} \to \{x, z^1\}$ tells us that $m_n^1$ converges to the slope $m^1$ of $\overline{x z^1}$ measured at $z^1$.
Then applying the same result to $\{y_n,z_n^1\} \to \{y, z^1\}$ tells us that
$m^1$ is also the slope of $\overline{z^1 y}$ measured at $z^1$.
Therefore, $\overline{x z^1} \bullet \overline{z^1 y}$ is a trail, proving our claim.

Now assume that $y \not \in Q_1$. By discarding the first finitely many elements of our sequences we may assume that $z^1_n \in Q_1^\circ$ and $y_n \not \in Q_1$ for all $n$. Then \Cref{intersections with polygons} guarantees that for all $n$ we have $Q_1 \cap \overline{z^1_n y_n} = \overline{z^1_n z^2_n}$ for some point $z^2_n \in \partial Q_1$.
By passing to a subsequence we can assume that $z^2_n$ converges to some $z^2 \in \partial Q_1$. Then \Cref{thm:polygonal convexity} guarantees that we can construct $\overline{z^1 z^2}$. The argument from the previous paragraph guarantees that
$\overline{x z^1} \bullet \overline{z^1 z^2}$ is an arc of a trail $\overline{x z^2}$.

The previous two paragraphs can be repeated inductively, producing a sequence of polygons $Q_0, \ldots, Q_k$ in $\sP$. We also produce sequences $z_n^j \in \overline{x_n y_n} \cap \partial Q_{j-1}$ for $j=1, \ldots, k$ that converge to $z^j \in \partial Q_{j-1}$ such that there are arcs of trails $\overline{x z^j}$. At each such stage, either $y \in Q_k$ in which case we can construct a trail $\overline{xy}=\overline{x z^k} \bullet \overline{z^k y}$, or we can extend the sequence one more step (passing to a subsequence as we do). Since $\sP$ is finite, we must either produce the trail $\overline{xy}$ at some point, or some polygon must appear twice in our sequence. To prove the theorem, it suffices to show that no polygon can appear twice. Suppose to the contrary that $Q_j=Q_k$ where $j<k$. Temporarily fix $n$, which because of our passing to subsequences determines the points $z_n^1, \ldots, z_n^k$. In our construction, we define $z_n^{j+1}$ to be the point such that $Q_j \cap \overline{z_n^j y_n}=\overline{z_n^j z_n^{j+1}}$.  By construction we have the sequence of proper subsets $\overline{z_n^j y_n} \supset \overline{z_n^{j+1} y_n} \supset \ldots \supset \overline{z_n^k y_n}$. Thus $z_n^k \in \overline{z_n^j y_n} \setminus \overline{z_n^j z_n^{j+1}}.$
Now recall that by definition of $Q_k$ we have $z^k \in Q_k^\circ$. Therefore for $n$ large enough we have $z_n^k \in Q_k^\circ$.
But this is a contradiction: Since $Q_j=Q_k$, for such an $n$ we are supposed to have
$$z_n^k \in Q_k^\circ, \quad
Q_k \cap \overline{z_n^j y_n}=\overline{z_n^j z_n^{j+1}},\quad \text{and} \quad
z_n^k \in \overline{z_n^j y_n} \setminus \overline{z_n^j z_n^{j+1}}.$$
\nopagebreak
\end{proof}
\commb{Why is there a newline above before the QED symbol?} \compat{I tried reading this: \url{https://tex.stackexchange.com/questions/66152/pushing-qed-to-the-right-within-a-displayed-formula} and played a bit with it, and was unsuccessful addressing this issue. This is a preprint, so lets let it go. (When the equation splits with the QED symbol over a page, that is pretty bad... but again it is a preprint!) I tried inserting \\nopagebreak.}

\section{Convexity of triangulated zebra planes}
\label{sect:convexity proof}
In this section we prove \Cref{thm:convex}, which says zebra planes with a leaf triangulation are convex. From now on we assume our zebra plane $Z$ has a leaf triangulation and we fix such a triangulation $\sT$. Since $\sT$ is a leaf triangulation, its vertices are singularities $s \in \Sigma$ where $\alpha(s)>0$. (This guarantees that the angles at the singular vertices of our triangle are all $3 \pi$ or more. This point will be crucial for \Cref{entering a triangle} below.)
Thus edges $e$ of $\sT$ are saddle connections, including their singular endpoints. As sets, a triangle $T \in \sT$ is the union of its three edges and the interior, and so is homeomorphic to a closed disk.

\begin{proposition}
\label{vertices are finite degree}
In a leaf triangulation, only finitely many triangles meet at each vertex. \compat{I think this is unnecessary. I have a proof that this is more generally true for a triangulation whose vertex set is discrete. It would be nice to find a reference!}
\end{proposition}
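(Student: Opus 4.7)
I would argue by contradiction. Suppose infinitely many triangles of $\sT$ meet $v$. Since consecutive triangles in the cyclic order at $v$ share an edge, this would force infinitely many saddle connections of $\sT$ to emanate from $v$, with pairwise distinct initial directions in the cone at $v$. The cone at $v$ is a compact circle of total angular length $(\alpha(v)+2)\pi$, so these directions would accumulate at some $\phi^*$; after passing to a monotone subsequence I would fix distinct edge directions $\phi_1,\phi_2,\ldots$ converging to $\phi^*$ from one side (say from below), with $e_k$ the associated edges of $\sT$ and $\ell^*$ the leaf emanating from $v$ in direction $\phi^*$.

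The key step is to produce a triangle $T^*\in\sT$ having $v$ as a vertex whose angular sector at $v$ contains the direction $\phi^*$. I would do this by choosing a non-singular point $p=\ell^*(t_0)$ with $t_0>0$ small and letting $T^*$ be the triangle of $\sT$ containing $p$. For $t_0$ sufficiently small, $T^*$ must contain $v$: otherwise the closed set $T^*$ would be disjoint from an open neighborhood of $v$, contradicting that $\ell^*(t)\in T^*$ as $t\to 0^+$. Combining $v\in T^*$ with \Cref{triangle1} (no singularity in the interior of a triangle) and the edge-to-edge condition (which prevents $v$ from sitting in the interior of a triangle edge, since every singularity of $\sT$ must be an endpoint of any saddle connection containing it), $v$ is forced to be a vertex of $T^*$; and since $p\in T^*$ lies along direction $\phi^*$ from $v$, the angular sector of $T^*$ at $v$ contains $\phi^*$.

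Finally I would take $T^{**}\in\sT$ to be the triangle with vertex $v$ whose angular sector at $v$ contains an interval of the form $(\phi^*-\delta,\phi^*)$ with $\delta>0$: either $T^{**}=T^*$ (when $\phi^*$ lies strictly interior to $T^*$'s sector) or the triangle adjacent to $T^*$ on the side from which the $\phi_k$ accumulate (when $\phi^*$ is itself an edge direction of $T^*$). For $k$ large enough, $\phi_k$ lies in $(\phi^*-\delta,\phi^*)$ and hence strictly interior to $T^{**}$'s angular sector, so in a sufficiently small stellar neighborhood of $v$ the initial segment of $e_k$ lies in the interior of $T^{**}$. This contradicts the fact that $e_k$ is an edge of the triangulation $\sT$, which must sit on the common boundary of two triangles and in the interior of none, completing the proof. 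The step I expect to be the main obstacle is the one forcing $v\in T^*$: it implicitly uses that only finitely many triangles of $\sT$ can approach $v$ without containing it, which is an essentially topological reflection of $\sT$ being a triangulation of a $2$-manifold and may require either a local finiteness hypothesis on $\sT$ or a separate topological lemma.
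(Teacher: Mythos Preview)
Your argument and the paper's share the same core idea---exploiting compactness of the circle of directions at $v$---but the paper packages it more directly. Rather than arguing by contradiction via an accumulation direction, the paper observes that each triangle with vertex $v$ contributes a non-degenerate closed arc of this circle (its interior angle at $v$); by the edge-to-edge hypothesis these arcs have pairwise disjoint interiors and meet endpoint-to-endpoint; and they cover the circle. A short compactness argument then shows any such collection of arcs is finite.

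The step you flag as the obstacle---showing that the triangle $T^*$ containing a point close to $v$ in direction $\phi^*$ must have $v$ as a vertex---is precisely the ``cover the circle'' claim in the paper's version, viewed through your contradiction lens. The paper does not justify that claim either; it is taken as implicit in what a triangulation is (that the star of a vertex is a neighborhood of that vertex). So the gap you identify is real but not specific to your route. Once one grants that the triangles with vertex $v$ fill a neighborhood of $v$, both arguments go through, and the paper's direct formulation is the cleaner one: it avoids your accumulation setup and the $T^*$-versus-$T^{**}$ case split entirely.
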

\begin{proof}
Let $v$ be a vertex of a leaf triangulation. The zebra structure on the surface gives a bijection between the 
prongs emanating from $v$ and the circle $\R/\alpha(v) \pi \Z$. Each triangle with vertex $v$ corresponds to an interval in $\R/\alpha(v) \pi \Z$
representing the prongs with representations that are entirely contained in the triangle.Since the triangles are meeting edge-to-edge, these intervals of prongs meet endpoint-to-endpoint. Using compactness of the circle, it is not hard to show that any collection of nondegenerate closed intervals in the circle that 
have disjoint interiors, meet endpoint-to-endpoint, and cover the circle is necessarily a finite collection.
\end{proof}

To prove \Cref{thm:convex}, we pick a point $p \in Z$. As above, let $R_p$ denote the collection of all trail rays emanating from $p$, and let $U_p$ denote the union of all these trail rays. By \Cref{union of rays}, $U_p$ is open. Our goal is to prove that $U_p=Z$, which indicates that any point $q \in Z$ lies on a trail ray from $p$. Since $p$ was arbitrary, all pairs of points can be connected by an arc of a trail.

Our proof is an inductive argument showing that $R_p$ covers larger and larger collections of triangles from $\sT$ whose union is $Z$.  At the end of the day, the argument is largely combinatorial, so we begin by considering how the rays in $R_p$ can pass through a triangle $T \in \sT$.

Recall from \Cref{sect:trail rays} that $\sR_p$ is a singular foliation of an open subset $U_p \subset Z$ by the leaves of rays in $R_p$ with singularities at $p$ and at $\Sigma$. Let $e$ be an edge of a triangle $T$ not containing $p$. We'll say $e$ is {\em transverse} to $\sR_p$ if $e \subset U_p$ and as a leaf $e$ is transverse to $\sR_p$. If $e$ is an edge and $T$ is a triangle with edge $e$, then rays {\em enter $T$ through $e$} if some rays intersect $e$ before entering $T$ or {\em exit $T$ through $e$} if rays passing through $e$ have already passed through $T$. (There can be no transition between exiting and entering through $e$ because $e$ is either everywhere transverse or coincides with a leaf.)

We say that an edge $e$ of $\sT$ is a {\em leaf} of $\sR_p$ (or a {\em leaf edge}) if there is a trail ray containing $e$. In this case we also have $e \subset U_p$. Leaf edges get an orientation from inclusion in the ray containing them.

The following observations are the key to our induction.

\begin{lemma}
\label{entering a triangle}
Fix $p$ as above. Let $T$ be a triangle that does not contain $p$ and has an edge $e_0$ that is transverse to $\sR_p$. Assume that rays enter $T$ through $e_0$. Let $e_1$ and $e_2$ be the other two edges, labeled so that the list of edges $e_0, e_1, e_2$ is in counterclockwise order around $\partial T$. Then $T \subset U_p$ and one of the following three statements holds:
\begin{enumerate}
\item The edges $e_1$ and $e_2$ are transverse to $\sR_p$ and rays exit $T$ through them. There is a ray in $R_p$ that passes through the interior of $e_0$ and the opposite vertex of $T$.
\item The edge $e_1$ is transverse to $\sR_p$ and rays exit $T$ through $e_1$, but $e_2$ is a leaf of $\sR_p$ whose orientation agrees with the clockwise orientation on $\partial T$. \compat{Now I more carefully explain the orientation condition. Jan 1, 2023.}
\item The edge $e_2$ is transverse to $\sR_p$ and rays exit $T$ through $e_2$, but $e_1$ is a leaf of $\sR_p$ whose orientation agrees with the counterclockwise orientation on $\partial T$.
\end{enumerate}
\end{lemma}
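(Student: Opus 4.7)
My plan is to analyze how rays from $R_p$ enter and cross $T$ along the edge $e_0$, extract the asymptotic behavior of the slope function at the two endpoints of $e_0$, and then classify the exit pattern on the remaining edges using \Cref{quadrilateral foliation} together with \Cref{triangle2}.

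First I set up notation. Label the vertices so that $v_2$ is the vertex opposite $e_0$, $v_1$ is the vertex shared by $e_0$ and $e_1$, and $v_0$ is the vertex shared by $e_2$ and $e_0$; going counterclockwise we traverse $v_0 \xrightarrow{e_0} v_1 \xrightarrow{e_1} v_2 \xrightarrow{e_2} v_0$. Let $m_{e_i}$ denote the slope of $e_i$; by \Cref{triangle1} the triple $(m_{e_0}, m_{e_1}, m_{e_2})$ is distinct and in decreasing cyclic order, so that going CCW from $m_{e_0}$ on $\hat\R$ one first encounters $m_{e_2}$ and then $m_{e_1}$. Parameterize $\gamma:[0,1]\to\bar e_0$ with $\gamma(0)=v_1,\gamma(1)=v_0$. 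With this parameterization $T$ lies on the right of $\gamma$, so rays (which enter $T$) cross from the left side of $\gamma$ to the right, and \Cref{union of rays2} implies that $\mu\circ\gamma$ is continuous and strictly CCW-increasing on $(0,1)$; by monotonicity it has endpoint limits $L_1 = \lim_{t\to 0^+}\mu(\gamma(t))$ and $L_0=\lim_{t\to 1^-}\mu(\gamma(t))$ in $\hat\R$.

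The central step is to show that $L_1$ lies weakly before $m_{e_1}$ and $L_0$ weakly after $m_{e_2}$ in the CCW linear order on $\hat\R\setminus\{m_{e_0}\}$ starting at $m_{e_0}^+$, so that the continuous extension of $\mu$ to $[0,1]$ (with $\mu(0)=L_1,\mu(1)=L_0$) satisfies the hypotheses of \Cref{quadrilateral foliation} applied to $T$ (with the identification $p_0=v_1$, $p_1=v_2$, $p_2=v_0$). At each endpoint I analyze the limiting configuration of the rays $r_{\gamma(t)}$ using \Cref{segment continuity}: the limit is either a trail passing through the vertex $v_1$ (respectively $v_0$) and continuing into $T$, or a trail terminating at the vertex, and in both cases the angle condition defining trails at the singular vertex, combined with uniqueness of leaves of $\sR_p$ (\Cref{no bigons}), constrains the limiting slope to lie in the prescribed arc. \Cref{quadrilateral foliation} then produces a foliation of $T$ by leaves of directional foliations coinciding with $\sR_p|_T$; since the resulting leaves are all contained in rays of $R_p$, this gives $T \subset U_p$. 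The same lemma tells us the foliation is transverse to $e_1$ iff $L_1 < m_{e_1}$ strictly, and transverse to $e_2$ iff $L_0 > m_{e_2}$ strictly.

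Case analysis then concludes the proof. If $L_1 < m_{e_1}$ and $L_0 > m_{e_2}$ strictly, both $e_1$ and $e_2$ are transverse to $\sR_p$ with rays exiting; for the ray through $v_2$, consider the slope function $s(\gamma(t))$ of the segment $\overline{\gamma(t)v_2}$, which by \Cref{triangle2} applied to $T$ runs continuously and CCW-monotonically from $m_{e_1}$ at $t=0$ to $m_{e_2}$ at $t=1$ (in the opposite sense to $\mu$), so the intermediate-value theorem gives a unique $t^{*}$ with $\mu(\gamma(t^{*}))=s(\gamma(t^{*}))$, yielding a unique ray through the interior of $e_0$ and $v_2$, which is Case~(1). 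If $L_1=m_{e_1}$, then $e_1$ itself is realized as the leaf of the foliation produced at $t=0$, hence is a leaf of $\sR_p$; passing to the limit in the orientation of $r_{\gamma(t)}$ (outward from $p$) shows that the leaf is oriented $v_1\to v_2$, which is CCW around $\partial T$, giving Case~(3). The case $L_0=m_{e_2}$ is symmetric and gives Case~(2) with $e_2$ oriented $v_0\to v_2$, i.e., CW around $\partial T$. Mutual exclusivity follows because having both $L_1=m_{e_1}$ and $L_0=m_{e_2}$ simultaneously would produce two distinct trail arcs from $p$ to $v_2$ (one via $e_1$, one via $e_2$), violating \Cref{no bigons}. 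The main obstacle will be the central step: controlling the limits $L_1$ and $L_0$ requires a careful analysis of the local structure of $\sR_p$ at the singular vertices $v_0$ and $v_1$, which is where the angle condition at trail bends and the uniqueness of leaves of $\sR_p$ interact most delicately.
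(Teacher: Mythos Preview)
Your central claim—that the endpoint slopes $L_1=\mu(v_1)$ and $L_0=\mu(v_0)$ always satisfy $L_1\le m_{e_1}$ and $L_0\ge m_{e_2}$—is false, and this is where the argument breaks down. The slope $\mu(v_0)$ is the slope of the unique leaf of $\sR_p$ oriented \emph{towards} the singularity $v_0$; this leaf approaches $v_0$ from the side of $e_0$ opposite $T$, and nothing forces its slope to lie on any particular side of $m_{e_2}$. The vague appeal to \Cref{segment continuity} and the angle condition does not pin it down: the angle condition only says that any trail \emph{continuation} through $v_0$ makes angle $\ge\pi$ on both sides, which is a constraint on outgoing directions, not on the incoming slope. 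Concretely (normalizing $e_0$ to be vertical so $m_{e_2}<m_{e_1}$ are real), the case $\mu(v_0)<m_{e_2}$ genuinely occurs, and in it the hypotheses of \Cref{quadrilateral foliation} fail for $T$, so you cannot foliate $T$ in one shot.

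The paper's proof handles exactly this missing case. When $\mu(v_0)<m_{e_2}$, one continues the incoming ray through $v_0$ into $T$ making angle exactly $\pi$ on the $T$-side; this continuation has the same slope $\mu(v_0)$, cannot exit through $e_2$ (by the slope ordering of \Cref{triangle1}), and so exits through $e_1$ at some point $q$, splitting $T$ into two subtriangles. \Cref{quadrilateral foliation} foliates $\triangle v_1 v_0 q$ by the rays crossing $e_0$, and \Cref{triangle2} foliates the remaining triangle by leaves emanating from $v_0$, all of which make angle in $[\pi,2\pi)$ with the incoming ray and hence are valid trail continuations (using that the cone angle at $v_0$ is at least $3\pi$). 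In particular $e_2$ is one of these emanating leaves, so it becomes a leaf edge of $\sR_p$ oriented clockwise. The symmetric analysis at $v_1$ handles $\mu(v_1)>m_{e_1}$. Your trichotomy with equality cases $L_1=m_{e_1}$ and $L_0=m_{e_2}$ is thus only the degenerate boundary of the two genuinely distinct subcases you need to treat.
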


Note that as a consequence of this lemma, no triangle has two edges through which rays enter $T$.

\begin{proof}
Normalize by a rotation so that edge $e_0$ is vertical. Parameterize $e_0$ clockwise around $T$ by $\gamma:[0,1] \to e_0$. Then by \Cref{union of rays2}, there is a strictly increasing continuous function $\mu:[0,1] \to \R$ such that the ray crossing $\gamma(t)$ has slope $\mu(t)$ at $\gamma(t)$. Let $m_1$ and $m_2$ be the slopes of $e_1$ and $e_2$ respectively. Then by \Cref{triangle1}, both are real and $m_2 < m_1$. Since $\mu(0)<\mu(1)$, we have three mutually exclusive possibilities:
\begin{enumerate}
\item[(i)] $\mu(0)<m_1$ and $m_2<\mu(1)$.
\item[(ii)] $\mu(0)<m_1$ and $\mu(1) \leq m_2$.
\item[(iii)] $m_1 \leq \mu(0)$ and $m_2 < \mu(1)$.
\end{enumerate}
These possibilities are depicted in \Cref{fig:entering a triangle}.

\begin{figure}[htb]
\centering
\includegraphics[width=5in]{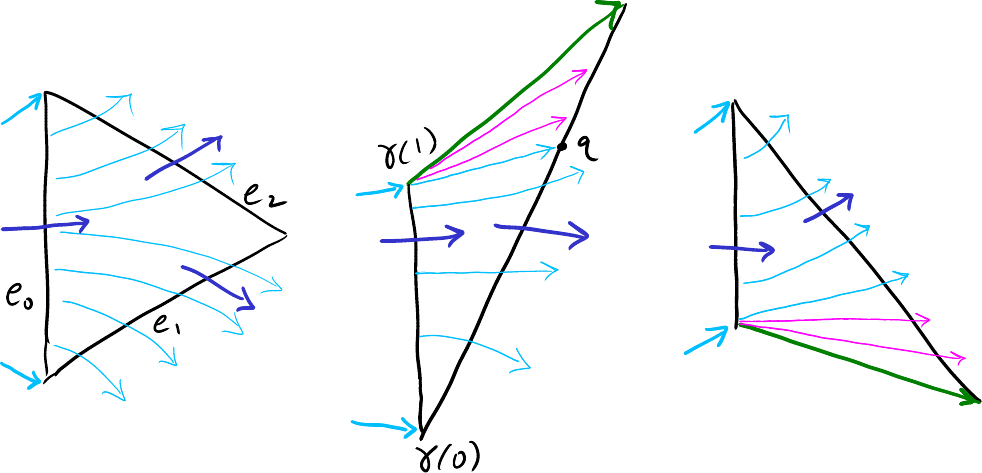}
\caption{Cases (1)-(3) of \Cref{entering a triangle} from left to right: fully transverse, clockwise leaf, and counterclockwise leaf triangles. Edge $e_0$ is always depicted at left, rays entering through $e_0$ are depicted in light blue with bold light blue arrows indicating $\mu(0)$ and $\mu(1)$. Pink arrows indicate portions of rays emanating from a vertex.}
\label{fig:entering a triangle}
\end{figure}

In case (i), we can apply \Cref{quadrilateral foliation} to foliate $T$ by the continuation of rays through $e_0$. These rays enter through $e_0$ and therefore must exit transversely through $e_1$ and $e_2$.

In case (ii), there are two subcases. First, it could be that $\mu(1)=m_2$. In this case, there is a leaf of $\sR_p$ of slope $\mu(1)$ hitting $e_2$, and the corresponding ray can be continued as a trail along $e_2$, making an angle of $\pi$ counterclockwise from $e_2$ to the ray hitting $\gamma(1)$. Thus $e_2$ is a leaf and is oriented clockwise. Applying \Cref{quadrilateral foliation} again shows that rays hitting $e_0$ exit transversely through $e_1$. The second subcase is when $\mu(1)<m_2$. In this case, we can continue the ray hitting $\gamma(1)$ into the interior of the triangle $T_1$, making an angle of $\pi$ at $\gamma(1)$. The ray cannot exit through $e_2$ or else it would violate the decreasing cyclic order of slopes promised by \Cref{triangle1}. Therefore, it exits through $e_1$ at some point, call it $q$. This forms triangle $T_1=\triangle \gamma(0) \gamma(1) q$, to which we can apply \Cref{quadrilateral foliation} again to foliate $T_1$ by leaves passing through $e_0$ and exiting through $\overline{\gamma(0) q} \subset e_1$.
Now consider the triangle $T_2=T \setminus T_1.$ This triangle has $\gamma(1)$ as one vertex and by \Cref{triangle2} we can foliate $T_2$ by leaves emanating from $\gamma(1)$. The counterclockwise angle from the ray hitting $\gamma(1)$ to each leaf emanating from $\gamma(1)$ into $T_2$ is in the interval $[\pi, 2 \pi)$ and since $\gamma(1)$ is a singularity with cone angle at least $3 \pi$, these are continuations of the trail hitting $\gamma(1)$. The edge $e_2$ is one of these leaves emanating from $\gamma(1)$, so $e_2$ is a leaf oriented clockwise. Also, all of edge $e_1$ is covered by trail rays exiting $T$, so $e_1$ is transverse to $\sR_p$ and rays exit through $e_1$.

Case (iii) is symmetric to case (ii) under an orientation reversing symmetry.
\end{proof}

We introduce some terminology associated to the triangles described in \Cref{entering a triangle}.
We'll call a triangle $T$ with an edge that is transverse to $\sR_p$ such that rays enter $T$ through the edge a {\em transverse triangle}. This includes any triangle covered by the lemma. We also introduce more specialized terminology for the three cases. We'll call $T$ satisfying (1) a {\em fully transverse triangle}. A $T$ satisfying (2) is a {\em clockwise leaf triangle}, and a $T$ satisfying (3) is a {\em counterclockwise leaf triangle}. Here ``clockwise'' and ``counterclockwise'' refer to the orientation the (already oriented by $\sR_p$) leaf edges induce on $\partial T$. \compat{added this last sentence}

There is a fourth type of triangle we need to understand. A {\em double leaf triangle} is one for which two edges are leaves of $\sR_p$, and both are oriented away from their common vertex, and the third edge is a transverse edge through which rays exit the triangle. These will show up as a consequence of the lemma below. Combinatorial pictures of each triangle type are shown in \Cref{fig:triangle types}.

\begin{figure}[htb]
\centering
\includegraphics[width=3in]{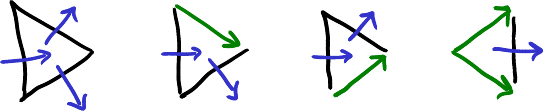}
\caption{Combinatorial representations of the four triangle types (from left to right): fully transverse, clockwise leaf, counterclockwise leaf, and double leaf. Green arrows indicate leaf edges and their orientations. Blue arrows indicate transverse edges and the direction rays cross.}
\label{fig:triangle types}
\end{figure}

We will now consider the local structure of the edges of $\sT$ sharing a vertex $v$, as they relate to $\sR_p$.
A {\em flag} with vertex $v$ is a pair $(e,v)$ where $e$ is an edge of $\sT$ with endpoint $v$.
Let $\sF(v)$ denote the collection of all flags with vertex $v$.
Assuming $e \subset U_p$, there are several possible relationships between the flag $(v,e)$ and the oriented foliation $\sR_p$.
First, it could be that $e$ is a leaf of $\sR_p$, in which case we call $e$ a {\em leaf edge}
and $(v,e)$ a {\em leaf flag}.
Recalling that rays are oriented away from $p$, we see that leaf edges inherit orientations.
Thus, a leaf flag can either be oriented towards the vertex or away from the vertex.
If $e$ is not a leaf, then viewing $v$ as the center of a clock, we see that leaves of $\sR_p$ either cross $e$ in the clockwise or counterclockwise direction. In these cases we call $(v,e)$ a {\em clockwise or counterclockwise transverse flag}, respectively.
See \Cref{fig:flags}. \compat{Rewrote this paragraph, improved the definitions, and added \Cref{fig:flags}.}

\begin{figure}[htb]
\centering
\includegraphics[width=4in]{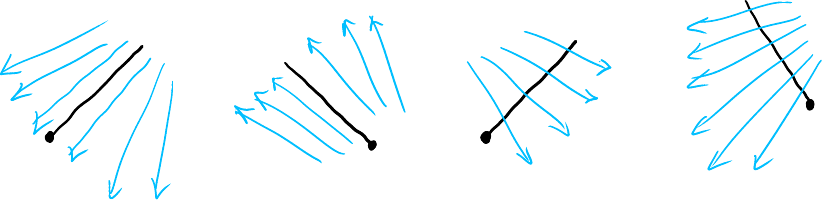}
\caption{Four examples of flags $(v,e)$ drawn as a black point and segment. Portions of the oriented foliation $\sR_p$ are drawn in blue.
From left to right: a leaf flag oriented towards the vertex, a leaf flag oriented away from the vertex, a clockwise transverse flag, and a counterclockwise transverse flag.}
\label{fig:flags}
\end{figure}

Since singularities have cone angle at least $3 \pi$, there are at least four edges meeting at any vertex, and by 
\Cref{vertices are finite degree} there can be only finitely many edges meeting at a vertex.
Thus, the flags in $\sF(v)$ come with a natural cyclic ordering. Given a flag $f=(v,e) \in \sF(v)$,
we'll use $f^{cc}$ to denote the next flag counterclockwise from $f$,
and use $f^c$ to denote the next flag clockwise from $f$.
We'll say a collection of flags $\sF \subset \sF(v)$ are {\em consecutive} if with at most one exception, $f \in \sF$ implies $f^{cc} \in \sF$. We have:

\begin{lemma}
\label{covering a vertex}
Let $v \in U_p \setminus \{p\}$ be a singularity. Suppose either:
\begin{enumerate}
\item[(a)] There is a fully transverse triangle in $U_p$ such that $v$ is vertex opposite the edge through which rays enter the triangle.
\item[(b)] There is an edge $e$ of the triangulation that is also a leaf of $\sR_p$ oriented towards $v$, and the two triangles sharing the edge $e$ are contained in $U_p$.
\end{enumerate}
Then each triangle with vertex $v$ is contained in $U_p$ and we have the following configuration of flags in $\sF(v)$:
\begin{enumerate}
\item In case (a) there are no leaf flags oriented towards $v$, and in case (b) there is exactly one leaf flag oriented towards $v$.
\item If $f \in \sF(v)$ is a leaf edge oriented towards $v$, then $f^{cc}$ is a counterclockwise transverse flag
and $f^c$ is a clockwise transverse flag.
\item The collections of clockwise transverse flags, counterclockwise transverse flags, and leaf flags oriented away from $v$ are all consecutive nonempty subsets of $\sF(v)$. 
\end{enumerate}
\compat{Rewritten in terms of flags.}
\end{lemma}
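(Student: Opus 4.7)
The plan is to set up angular coordinates at $v$ using the local structure of $\sR_p$ described after \Cref{union of rays}, classify each flag at $v$ by its angular position, and then establish the covering statement by induction around $v$. Since $v$ is a singularity in $U_p\setminus\{p\}$, there is a unique leaf $\ell_v\in\sR_p$ oriented toward $v$; I would parametrize the $(\alpha(v)+2)\pi$-cone at $v$ so that $\ell_v$ arrives at angle $0$. Then the continuation sector of $\sR_p$ at $v$ is $[\pi,(\alpha(v)+1)\pi]$ and the pass-by sectors are $(0,\pi)$ (CCW) and $((\alpha(v)+1)\pi,(\alpha(v)+2)\pi)$ (CW). Pass-by leaves sufficiently close to $\ell_v$ are parallel to it in the stellar chart $\Pi_{\alpha(v)}$, and the orientation of rays away from $p$ makes them traverse the CCW (resp.\ CW) pass-by sector counterclockwise (resp.\ clockwise) around $v$. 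In case (a), $\ell_v$ enters $v$ through the interior of $T_0$, so no edge of $\sT$ is a leaf flag toward $v$; in case (b), $\ell_v=e$ gives exactly one such flag. This proves (1).

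For the classification, a flag at angular direction $\theta$ from $v$ is: a leaf flag toward $v$ if $\theta=0$ (occurring only in case (b), for $e$); a leaf flag away from $v$ if $\theta\in[\pi,(\alpha(v)+1)\pi]$, since the flag is then a saddle connection along a continuation prong coinciding with a leaf of $\sR_p$ emanating from $v$; and a CCW (resp.\ CW) transverse flag if $\theta\in(0,\pi)$ (resp.\ $\theta\in((\alpha(v)+1)\pi,(\alpha(v)+2)\pi)$), since no leaf of $\sR_p$ has such an angle (the incoming leaf is unique and continuations lie in the continuation sector), and the direction of crossing follows from the previous paragraph. By \Cref{triangle1} the interior angle at $v$ of any triangle is strictly less than $\pi$, so consecutive flags in $\sF(v)$ differ by less than $\pi$; in particular, when $f$ is at angle $0$, $f^{cc}$ lies in $(0,\pi)$ and $f^c$ in $((\alpha(v)+1)\pi,(\alpha(v)+2)\pi)$, which gives (2). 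Statement (3) follows: the three collections occupy three disjoint arcs in cyclic order and so are consecutive, the transverse collections are nonempty via (2) in case (b) and via the two flags of $T_0$ at $v$ in case (a), and the leaf-away collection is nonempty because the continuation sector has length $\alpha(v)\pi\ge\pi$ and cannot fit inside a single interior angle of measure less than $\pi$.

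It remains to show every triangle at $v$ lies in $U_p$. For a triangle with both flags at $v$ in the continuation sector I would apply \Cref{triangle2} with $v$ as the special vertex: the foliating leaves from $v$ are continuation rays of $\sR_p$, so the triangle lies in $U_p$. The other triangles I plan to handle by rotating CCW (and symmetrically CW) around $v$, starting from an initial triangle in $U_p$ supplied by the hypothesis ($T_0$ in case (a); the two triangles on either side of $e$ in case (b)). At each inductive step the shared flag at $v$ is a transverse edge of $\sR_p$ across which rays cross into the next triangle (by the pass-by analysis), so \Cref{entering a triangle} places the next triangle in $U_p$. The main technical point, which I expect to be the principal obstacle, is the straddling triangle whose other flag at $v$ is in the continuation sector: this flag is a leaf edge and cannot serve as an entry, but \Cref{entering a triangle} still applies with the pass-by flag as the entry, identifying the triangle as a CW (resp.\ CCW) leaf triangle with exit through the edge opposite $v$. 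Iterating exhausts the pass-by and straddling triangles, and combined with the continuation triangles, every triangle at $v$ lies in $U_p$.
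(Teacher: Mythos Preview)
Your proof is correct and follows essentially the same strategy as the paper: both arguments identify the unique incoming leaf $\ell_v$ at $v$, perform a counterclockwise (and symmetrically clockwise) inductive walk around $v$ using \Cref{entering a triangle} to absorb the pass-by triangles into $U_p$, and handle the remaining triangles (those whose flags at $v$ lie in the continuation sector) via \Cref{triangle2}.

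The one noteworthy difference is your treatment of statement~(2). The paper proves (2) by a continuity argument: taking a short transversal $\ell(t)$ into the adjacent triangle and invoking \Cref{segment continuity} (the Fell topology) to conclude that $\overline{p\,\ell(t)}$ misses the far edge for small $t$. Your angular framework bypasses this entirely: once you observe that consecutive flags differ by less than $\pi$ (interior angles of triangles), the flags $f^{cc}$ and $f^c$ fall automatically into the pass-by sectors $(0,\pi)$ and $((\alpha(v)+1)\pi,(\alpha(v)+2)\pi)$, and the hypothesis~(b) ensures these edges lie in $U_p$. This is cleaner and more in keeping with the local picture after \Cref{union of rays}. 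One small point of presentation: your classification in the second paragraph presupposes that the relevant edges lie in $U_p$, which you only establish in the third paragraph; the logic is not circular (the induction at each step uses only that the \emph{previous} triangle is in $U_p$), but it would read more smoothly if you either flagged the classification as conditional or moved the covering argument before the derivation of (2) and (3).
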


Observe that the lemma forces clockwise and counterclockwise transverse flags to move outward from the edges provided in the hypothesis (either a fully transverse triangle or leaf edge oriented towards $v$). At some point, in each direction these leaves have to transition to (one or more) leaf edges oriented away from $v$. See \Cref{fig:vertices} for examples. \compat{Minor changes here.}

\begin{figure}[htb]
\centering
\includegraphics[width=3in]{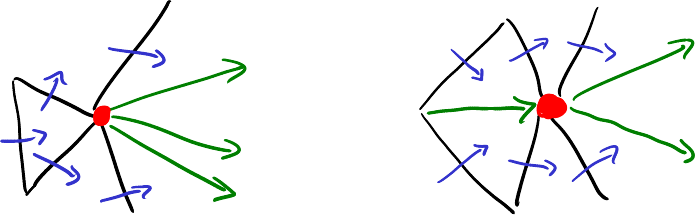}
\caption{Two examples of configurations by \Cref{covering a vertex} are shown. An example satisfying hypothesis (a) is shown on the left and an example of (b) is on the right. Blue arrows denote the direction $\sR_p$ crosses a transverse edge, and green arrows are leaf edges indicating their orientation.}
\label{fig:vertices}
\end{figure}

\begin{proof}[Proof of \Cref{covering a vertex}]
\compat{Changes to the proof to make use of flags.}
Note that if we have a leaf edge oriented towards $v$, we get an arc of a trail $\overline{pv}$.
Similarly, in case (a), from \Cref{entering a triangle}(1) we get an arc of a trail $\overline{pv}$. Statement (1) holds because there can be at most one trail from $p$ to $v$.

To see statement (2), suppose the leaf edge oriented towards $v$ is $e=\overline{wv} \subset \overline{pv}$. Let $T$ be the triangle to the right of $e$ as we move from $w$ to $v$. Let $x$ be the third vertex of $T$, making $(v,\overline{vx})=(v,e)^{cc}$. Let $\ell:[0,1]$ be a parameterized segment of a leaf perpendicular to $e$ such that $\ell(0)$ is in the interior of $e$ and $\ell\big((0,1]) \subset T^\circ$. By \Cref{segment continuity}, observe that for $t$ sufficiently small, we have $\overline{p \ell(t)} \cap \overline{vx}=\emptyset$ by definition of the Fell topology. (Since $\overline{p \ell(0)}$ misses $\overline{vx}$, so must $\overline{p \ell(t)}$ for $t$ small.)
Thus the ray extending such a $\overline{p \ell(t)}$ must exit through $\overline{vx}$, making $(v,\overline{vx})$ a counterclockwise transverse flag. A symmetric argument handles the edge clockwise from $\overline{wv}$. \compat{This paragraph is new. Jan 1, 2023.}

Either case (a) or (b) gives a counterclockwise transverse flag $f=(v,e)$ in the initial triangle(s). Let $T$ be the triangle such that both the edges from $f$ and $f^{cc}$ are edges of $T$. Then rays enter $T$ through $e$, so we can apply \Cref{entering a triangle} to see that rays cover $T$ and see that the $f^{cc}$ is either another counterclockwise transverse flag or a leaf flag edge oriented away from $v$. We can then repeat this inductively, forming a consecutive collection of counterclockwise transverse flags. We observe that this process must terminate in a leaf edge oriented away from $v$, because the trail $\overline{pv}$ can be continued as a trail by continuing the path through $v$ such that the counterclockwise angle from $\overline{pv}$ to the continuation is $\pi$. The next edge counterclockwise after this continuation (or perhaps the edge that coincides with this continuation) is a leaf edge oriented away from $v$, and prior edges are all counterclockwise transverse flags.

A similar analysis in the clockwise direction gives a consecutive collection of clockwise transverse flags followed by a leaf edge oriented away from $v$. All triangles considered so far are contained in $U_p$ by \Cref{entering a triangle}. If there are edges we haven't seen yet, then they make an angle of more than $\pi$ with $\overline{pv}$ on each side, so they are leaf edges oriented away from $v$ arranged in a consecutive collection as desired. Considering the remarks above, we've proved (3).

It remains to show that triangles with two leaf edges oriented away from $v$ are in $U_p$. Let $T$ be any such triangle. Observe that the angles made between the leaf edges and $\overline{pv}$ are all larger than $\pi$. Therefore, any leaf in $T$ emanating from $v$ is a trail continuation of $\overline{pv}$. We conclude using \Cref{triangle2} that $T \subset U_p$.
\end{proof}

\Cref{covering a vertex} provides us with an inductive step to prove \Cref{convexity}. Assuming a vertex satisfies the hypotheses of the lemma, it allows us to extend the known collection of triangles that are foliated by rays to include all triangles with the given vertex. We apply this inductively to prove all of $Z$ is covered.

To organize the induction used in the proof of \Cref{convexity}, we borrow the idea of a {\em queue} from computer programming \cite{Knuth}. A queue $Q$ is a data structure whose state is always a finite ordered list of elements of a set $V$. 
We'll write $Q_i$ to denote the state of the queue after $i \geq 0$ operations have been performed. Thus each $Q_i$ is an element of
$$V^\ast=\bigcup_{n=0}^\infty V^n.$$
For simplicity our queue will always start by representing the empty list, which we denote by $\varepsilon$. Queues support two operations. The {\em enqueue} operation ${\mathbf E}(w):V^\ast \to V^\ast$ takes as input an element $w \in V$ and appends it to the list. Assuming this is the $j$-th operation performed, 
$$Q_{j-1}=(w_0, w_1, \ldots, w_{n-1})
\quad \text{implies} \quad
Q_{j}=(w_0, w_1, \ldots, w_{n-1}, w).$$
The {\em dequeue} operation ${\mathbf D}:V^\ast \setminus \{\varepsilon\} \to V^\ast$ removes an element from the start of the list. Thus if the dequeue operation is applied to $Q_{j-1}=(w_0, w_1, \ldots, w_{n-1})$, we would remove $w_0$ from the list and define $Q_j=(w_1, \ldots, w_{n-1})$. 
A {\em dequeue error} is the attempt to apply the dequeue operation to $\varepsilon$. This is not defined and must be avoided.
Thus the set of operations we can perform is 
$${\mathcal O}=\{\mathbf D\} \cup \{{\mathbf E}(w):~w \in V\}.$$

Suppose that we choose an infinite sequence of operations,
$\{\op_j \in {\mathcal O}\}_{j=1}^\infty.$
Then we can inductively update the state of a queue $Q$ by defining $Q_0=\varepsilon$ and
defining $Q_{j}$ to be operation $\op_j$ applied to $Q_{j-1}$ for each $j \geq 1$.
The {\em queuing sequence} of $Q$ is the sequence $\{v_i \in V\}$ in which elements are enqueued. That is, if $\{j_i\}_{i=0}^\infty$ is the collection of $j \geq 1$ such that $\op_{j}$ is an enqueue operation and $\{j_i\}$ is enumerated in increasing order, we define 
$$\{v_i \in V\}_{i=0}^\infty \quad \text{where $v_i$ is such that} \quad \op_{j_i}={\mathbf E}(v_i).$$
Assuming no dequeue error occurs, the queuing sequence will be a well-defined infinite sequence.

We will use the following scheme to enumerate the vertices of an infinite triangulation of a connected surface:

\begin{lemma}
\label{queue lemma}
Let $G$ be an infinite connected graph all of whose vertices have finite degree. Let $V$ denote the vertex set of $G$. Suppose $Q$ is a queue storing finite lists of elements of $V$ with $Q_0=\varepsilon$. Let $\{\op_j\}_{j=1}^\infty$ be an infinite sequence of operations defined by induction according to the rules that:
\begin{enumerate}
\item $\op_1={\mathbf E}(v_0)$ for some $v_0 \in V$.
\item $\op_2={\mathbf D}$.
\item Whenever $j \geq 2$ and $\op_j={\mathbf D}$ is an operation that leads to the dequeuing of a vertex $w$, there is an enumeration of the set ${\mathcal E}(w)$ of all vertices that are adjacent to $w$ and have not already been enqueued in operations with index less than $j$, ${\mathcal E}(w)=\{w_1, \ldots, w_k\}$,
such that the next operations on the queue are given by
$\op_{j+i}={\mathbf E}(w_i)$ for $i=1, \ldots, k$ and $\op_{j+k+1}={\mathbf D}$.
\end{enumerate}
Then, there are no dequeue errors, the queuing sequence $\{v_i\}$ of $Q$ is an enumeration of $V$, and every vertex is eventually dequeued.
\compat{{\bf An earlier version contained the following statement which was used in the proof of \Cref{convexity} before it was rewritten (the proof of the statement was also commented out):}\\
Finally, if $i \geq 1$ and $V_{i}$ denotes the set of vertices that are enqueued before $v_i$ is dequeued, then 
\begin{enumerate}
\item[($\ast$)]
$\displaystyle V_{i}=\bigcup_{n=0}^{i-1} \sB_1(v_n)$ where $\sB_1(v_n)$ denotes the union of $\{v_n\}$ and the set of vertices adjacent to $v_n$.
\end{enumerate}
}
\end{lemma}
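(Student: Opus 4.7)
The plan is to run a standard breadth-first search analysis. I would introduce, for each $j \geq 0$, the set $E_j \subseteq V$ of vertices enqueued during $\op_1, \ldots, \op_j$ and the set $D_j \subseteq E_j$ of those dequeued, so that the queue state $Q_j$ consists of $E_j \setminus D_j$ in order of enqueue. The key invariant supplied by rule (3) is that if $w$ is dequeued at step $j$, then by step $j + |\mathcal{E}(w)|$ every neighbor of $w$ has been enqueued; in particular, any vertex that is ever dequeued has all of its neighbors in $\bigcup_j E_j$.

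The main step is to rule out dequeue errors. I would argue by contradiction: suppose the first error occurs at step $j+1$. Then $\op_j = \mathbf{D}$ was a valid dequeue of some $w$ with $|\mathcal{E}(w)| = 0$, and $\op_j$ emptied the queue, so $E_j = D_j$. The invariant above forces every vertex in $D_j$ to have all of its neighbors in $E_j = D_j$, making $D_j$ closed under adjacency in $G$. Since $v_0 \in D_j$ and $G$ is connected, $D_j = V$; but $D_j$ is finite while $V$ is infinite, a contradiction. Hence the process runs forever with no errors.

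Once that is in hand, the remaining conclusions follow routinely. Rule (3) groups the operations into blocks, each consisting of a dequeue followed by at most $\deg(w) < \infty$ enqueues, so infinitely many dequeues occur. The FIFO structure then ensures every enqueued vertex is eventually dequeued: a vertex inserted into position $n$ of the queue reaches the front after at most $n-1$ further dequeues. Setting $U = \bigcup_j E_j$, the invariant together with connectedness of $G$ gives $U = V$ by the same closure argument as above, and since rule (3) never re-enqueues a vertex, the queuing sequence is a bijection onto $V$. I expect the dequeue-error argument to be the main obstacle; it is precisely the place where all three hypotheses on $G$ (infinite, connected, locally finite) come into play, while the rest is standard BFS bookkeeping.
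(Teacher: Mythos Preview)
Your proposal is correct and complete. The organization differs from the paper's proof in a way worth noting. The paper argues directly that every vertex is eventually dequeued by induction along a path from $v_0$: if $w_i$ has been dequeued then its neighbor $w_{i+1}$ is enqueued shortly after, and FIFO together with the observation that the queue stays nonempty while $w_{i+1}$ is waiting guarantees $w_{i+1}$ is eventually dequeued with no error in the interim. The absence of dequeue errors and the surjectivity of the enumeration fall out simultaneously from this path induction.

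You instead separate the two concerns. You first rule out dequeue errors by a global closure argument: at the moment of a hypothetical first error, the dequeued set would be finite, contain $v_0$, and be closed under adjacency, contradicting the infinitude of the connected graph. With errors excluded, you then argue separately that infinitely many dequeues occur, that FIFO drains every enqueued vertex, and that the enqueued set is closed under adjacency and hence all of $V$. Both arguments use exactly the same ingredients (connectedness, local finiteness, FIFO); yours is a bit more modular and makes the role of each hypothesis slightly more visible, while the paper's path induction is shorter and handles everything in one pass.
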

Note that we have some freedom in our choice of operations that satisfy (1)-(3): We are free to choose the order in which the vertices $\{w_1, \ldots, w_k\}$ are enqueued after each dequeue operation. 

From a programming perspective, the reason this lemma holds is that we are constructing a spanning tree formed inductively by including in the tree the edges from each dequeued vertex $w$ to each of its adjacent and not previously enqueued vertices (i.e., $w_1, \ldots, w_k$), while simultaneously traversing the tree according in the manner of a breadth-first search. We give an elementary formal proof:

\begin{proof}
The queuing sequence can be interpreted as a finite or infinite sequence $\{v_i\}_{i=0}^m$ for some $m \in \Z_{\geq 0} \cup \{+\infty\}$, being finite if there is a queuing error. We only enqueue vertices that have not already been enqueued, so the map $i \mapsto v_i$ is injective. We will show that every vertex will eventually be dequeued (before a dequeue error occurs). Since there are infinitely many vertices, this guarantees that no no dequeue error can occur and that $\{v_i\}$ is an infinite sequence enumerating the vertices.

Let $w$ be any vertex. Since $G$ is connected, there is a path of edges joining $v_0$ to $w$, with vertices:
$$w_0=v_0, w_1, \ldots, w_k=w.$$
We will prove by induction that each $w_i$ is dequeued at some point. Since $w_0=v_0$, we can see by inspection that it is dequeued when $\op_2$ is performed. Now assume that $w_i$ was dequeued at some point. Then because $w_{i+1}$ is adjacent to $w_i$, statement (3) ensures that $w_{i+1}$ is enqueued shortly afterward if it hasn't already been enqueued. Now observe that because queues remove from the front of the list and append to the back, the vertex $w_{i+1}$ will be removed after a finite number of dequeue operations (determined by its initial position in the list). Since only finitely many enqueue operations occur between each dequeue operation, $w_{i+1}$ is dequeued in finite time. Furthermore, no dequeue errors can occur before $w_{i+1}$ is dequeued, because a dequeue error requires an empty list and $w_{i+1}$ is in the list up to the point at which it is removed. By induction, we see that $w_k=w$ is successfully dequeued.
%
\end{proof}

\compat{Added a proof outline before the formal proof.} We will briefly outline the idea of the proof of \Cref{convexity}. Let $\sT$ be a leaf triangulation of a zebra plane $Z$ and let $p \in Z$. By a reduction, we'll be able to assume that $p$ is in the vertex set $V$ of $\sT$. We'll let $Q$ be a queue keeping track of finite lists of elements of $V$. The first operation we'll perform on the queue are $\op_1={\mathbf E}(p)$ and $\op_2={\mathbf D}$ which dequeues $p$. We'll follow the restrictions on queue operations given in \Cref{queue lemma}. Thus, we will be defining a queue sequence $\{v_i\}_{i=0}^\infty$ enumerating $V$. But, recall that \Cref{queue lemma} leaves us some freedom: the choice of the initial vertex, and the order in which vertices adjacent to the most recently dequeued vertex are enqueued. Therefore, the sequence $\{v_i\}$ is defined inductively according to our choices and initially we only know $v_0=p$. According to requirements of \Cref{queue lemma}, the operations immediately after $\op_2$ must enqueue the vertices adjacent to $p$, and we do so in arbitrary order. Then we are required to dequeue one of the adjacent vertices: vertex $v_1$ in our sequence. Proceeding inductively, once vertex $v_i$ is determined, so are:
\begin{equation}
\label{eq:A and E}
{\mathcal A}(v_i)=\{w \in V:~\text{$w$ is adjacent to $v_i$}\} \quad \text{and} \quad {\mathcal E}(v_i)=\{w \in {\mathcal A}(v_i):~\text{we have $\op_k \neq {\mathbf E}(w)$ for all $k \leq i$}\}.
\end{equation}
After dequeuing a $v_i \neq p$, we are required to enqueue the vertices in ${\mathcal E}(v_i)$. 
As part of our induction, we produce a proper consecutive subset ${\mathcal A}'(v_i) \subset {\mathcal A}(v_i)$ for $i \geq 1$ such that the vertices in the complement ${\mathcal A}(v_i) \setminus {\mathcal A}'(v_i)$ were already enqueued in previous steps of the induction, guaranteeing that ${\mathcal E}(v_i) \subset {\mathcal A}'(v_i)$. 
Recall that $\sF(v_i)$ denotes the flags with vertex $v_i$. There is a natural bijection
$$f_i:{\mathcal A}(v_i) \to \sF(v_i); \quad w \mapsto (v_i, \overline{v_i w}).$$
Using \Cref{covering a vertex}, we show that $f_i\big({\mathcal A}'(v_i)\big)$ splits into three consecutive groups of flags: 
the counterclockwise leaf flags, the leaf flags oriented away from $v_i$, and the clockwise leaf flags.
This allows us to enumerate the collection ${\mathcal A}'(v_i)$ as a list: 
\begin{itemize}
\item[($a$)] First we list vertices $w \in {\mathcal A}'(v_i)$ such that $f_i(w)$ is a counterclockwise transverse flag, in counterclockwise order.
\item[($b$)] Second we list those $w \in {\mathcal A}'(v_i)$ such that $f_i(w)$  is a clockwise transverse flag, in clockwise order.
\item[($c$)] Last we list the $w \in {\mathcal A}'(v_i)$ such that $f_i(w)$ is a leaf flag oriented away from $v_i$, in counterclockwise order.
\end{itemize}
Two examples of such enumerations are depicted by numbering vertices in \Cref{fig:induction}.
We enqueue the subcollection ${\mathcal E}(v_i) \subset {\mathcal A}'(v_i)$ in the order these vertices appear in the above list. Our inductive argument will show that if we enqueue in this order, then we can continue to enqueue in this order. 
Whenever we apply \Cref{covering a vertex} after dequeuing a vertex $v_i$, we also can conclude that all the triangles with vertex $v_i$ are all contained in $U_p$. Since our queue sequence enumerates $V$, we conclude that every triangle is contained in $U_p$ and therefore $Z \subset U_p$ as desired.

\begin{figure}[htb]
\centering
\includegraphics[width=\textwidth]{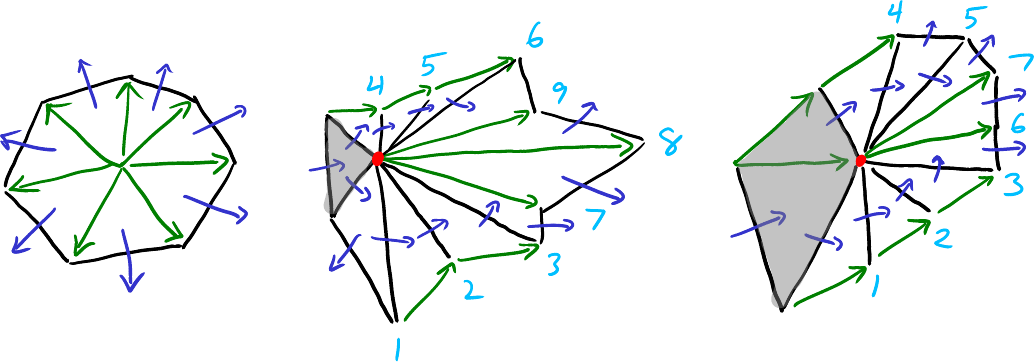}
\caption{Images related to the proof of \Cref{convexity}. Left: A possible region $X_0$. Middle and right: Collections of all triangles with vertex $v_i$ (red dot), in cases when statements $C^a_i$ (middle) or $C^b_i$ (right) are true. Dark gray regions illustrate $Y_i \subset X_{i-1}$. Numbers are written next to the vertices in
${\mathcal A}'(v_i)={\mathcal A}(v_i) \setminus Y_i$ and indicate the order in which we enumerate these vertices.}
\label{fig:induction}
\end{figure}

\begin{proof}[Proof of \Cref{convexity}]
\compat{Large parts of this proof were rewritten.}
Let $Z$ be a zebra plane with a leaf triangulation $\sT$. Let $p \in Z$. We'll show that $Z \subset U_p$, where $U_p$ as above denotes the union of all rays through $p$.

With no loss of generality, we may assume that $p$ is a singularity and a vertex of a triangulation.
To see why there is no loss of generality, suppose $p$ is not a singularity. We define a different triangulation $\hat \sT$ of $Z$. Observe that either $p$ is in the interior of a triangle $T$, or $p$ is in the interior of an edge $e$ where two triangles $T_1$ and $T_2$ meet. 
In the first case, partition $T$ into three triangles along leaf segments from $p$ to the vertices of $T$ to form $\hat \sT$. 
In the second case, cut $T_1$ and $T_2$ into two triangles each along leaf segments from $p$ to the vertices opposite $e$ in $T_1$ and $T_2$. Both collections of leaf segments exist by \Cref{triangle2}. The new triangulation $\hat \sT$ is not a leaf triangulation, because
$p$ is not singular. But, if $Z'$ is the double cover of $Z$ branched only at $p$, then the preimages of edges of $\hat \sT$ form a leaf triangulation $\sT'$ for which the preimage $p'$ of $p$ is a singularity and a vertex. Our argument will then show that $Z' \subset U_{p'}$. Because the image of a ray emanating from $p'$ under the covering map $Z' \to Z$ is a ray emanating from $p$, we also would have $Z \subset U_p$, explaining this reduction. \compat{The WLOG approach was suggested by Barak and made the argument much better! Added this paragraph Jan 1, 2023.}

Let $V$ denote the collection of vertices of $\sT$, and now by assumption $p \in V$.
Let $Q$ be a queue keeping track of finite lists of elements of $V$. The set $V$ together with the edges of triangles forms an infinite connected graph. We will perform operations as described in \Cref{queue lemma} to ensure that the queuing sequence is an enumeration $\{v_i\}_{i=0}^\infty$ of $V$. 
For each $i \geq 0$, let $X_i$ denote the finite union of triangles:
\begin{equation}
\label{eq:D}
X_i = \bigcup \big\{\text{triangles $T$}:~\text{$T$ has at least one vertex in $\{v_0, \ldots, v_i\}$}\big\}.
\end{equation}
Note that we are defining the vertices $\{v_i\}$ inductively, and we will be considering $X_i$ to be defined as soon as the vertices
$v_0, \ldots, v_i$ are defined.

As in the outline, we have $Q_0=\varepsilon$, $\op_1={\mathbf E}(p)$ and $\op_2={\mathbf D}$. Thus $v_0=p$.

It remains to specify how we enqueue vertices in a manner consistent with \Cref{queue lemma} after each dequeue.
The case of $i=0$ is special. Vertex $v_0=p$ is dequeued in operation $\op_2$. We list the collection of adjacent vertices
${\mathcal A}(v_0) = \{v_1, \ldots, v_{k_0}\}$ in arbitrary order. Then the next $k_0+1$ operations will be
\begin{equation}
\label{eq:next vertices}
\op_3={\mathbf E}(v_1), \quad \op_4={\mathbf E}(v_2), \quad \ldots, \quad \op_{k_0+2}={\mathbf E}(v_{k_0}), \quad \text{and} \quad \op_{k_0+3}={\mathbf D}.
\end{equation}
We named these vertices so that their names correspond to their place in the queuing sequence. This also determines unions of triangles
$X_1, \ldots, X_{k_0}$.

Consider the statements defined below for some $i \geq 1$:
\begin{enumerate}
\item[$A_i$\,:] The vertices $\{v_0, \ldots, v_i\}$ have been determined and vertex $v_i$ is dequeued in some operation $\op_{j}={\mathbf D}$.
\item[$B_i$\,:] We have $X_{i-1} \subset U_p$.
\item[$C_i$\,:] Either statement $C^a_i$ or statement $C^b_i$ is true, where:
\begin{enumerate}
\item[$C^a_i$\,:] There is a fully transverse triangle in $X_{i-1}$ such that $v_{i}$ is the vertex opposite the edge through which rays enter the triangle.
\item[$C^b_i$\,:] There is an edge $e$ of the triangulation that is also a leaf of $\sR_p$ oriented towards $v_{i}$, and the two triangles sharing this edge $e$ are contained in $X_{i-1}$.
\end{enumerate}
\end{enumerate}
We will inductively prove that for every $i \geq 1$, the statements $A_i$, $B_i$, and $C_i$ are true.

We will first prove some base cases. First observe that $A_1$ is true, because $v_1$ will be dequeued in operation $\op_{k_0+3}$ by
\eqref{eq:next vertices}. Second we claim that $B_1$ is true. To see this observe that $X_0$ is the union of triangles with vertex $v_0=p$, and \Cref{triangle2} tells us that these triangles are foliated by leaves emanating from $p$, proving $B_1$. It also follows that statements $C^b_1, \ldots, C^b_{k_0}$ are all true, because for $i=1, \ldots, k_0$, the edge
$\overline{p v_i}$ is a leaf edge oriented towards $v_i$, and the triangles sharing this edge are in $X_0 \subset X_{i-1}$. Clearly $C^b_i$ implies $C_i$, so we have shown that
\begin{equation}
\label{eq:base case}
A_1 \wedge B_1 \wedge C_1 \wedge C_2 \wedge \ldots \wedge C_{k_0} \quad \text{is true}.
\end{equation}

We will now prove the following implication for each $i \geq 1$:
\begin{equation}
\label{eq:implication 1}
(A_i \wedge B_i \wedge C_i) \implies (A_{i+1} \wedge B_{i+1}).
\end{equation}
Suppose that $A_i$, $B_i$ and $C_i$ are true. Since $A_i$ is true, $\{v_0, \ldots, v_i\}$ have been determined and vertex $v_i$ has been dequeued. Verifying $A_{i+1}$ involves ensuring a $v_{i+1}$ has been defined and dequeued, but before we can dequeue $v_{i+1}$ we must enqueue the vertices in ${\mathcal E}(v_i)$,
and so we also need to show we can enqueue these vertices according to statements (a)-(c) of the proof outline. 
When combined with the truth of statement $B_i$, statement $C^a_i$ implies statement (a) of \Cref{covering a vertex}
is true with $v=v_i$, and $C^b_i$ implies statement (b) of \Cref{covering a vertex}
is true. Using the conclusions of \Cref{covering a vertex}, we see that $B_i$ and $C_i$ together imply that all triangles with vertex $v_i$ are contained in $U_p$. That is, $B_{i+1}$ is true. To see $A_{i+1}$ is true, we need to show we can enqueue the vertices
in ${\mathcal E}(v_i)$ and so we need to more carefully consider the vertices in ${\mathcal A}(v_i)$. 
There are two cases. 
If $C^a_i$ is true, then there is a fully transverse triangle $T_a$ satisfying $C^a_i$. Otherwise $C^b_i$ must be true and the pair of triangles $\{T_b, T_b'\}$ sharing the edge $e$ from the statement satisfy the statement.
Define $Y_i=T_a$ if $C^a_i$ is true, and $Y_i=T_b \cup T_b'$ if $C^b_i$ is true. Then as a consequence of $C^a_i$ or $C^b_i$ (whichever is true) we see that $Y_i \subset X_{i-1}$. We define ${\mathcal A}'(v_i) = {\mathcal A}(v_i) \setminus Y_i$, and so  ${\mathcal E}(v_i) \subset {\mathcal A}'(v_i)$ as in the outline. By (1)-(3) of \Cref{covering a vertex}, with the possible exception of one leaf flag oriented towards $v_i$ which is necessarily contained in $Y_i$, the flags $\sF(v_i)$ fall into three consecutive groups: counterclockwise transverse flags, leaf flags oriented away from $v_i$, and clockwise transverse flags. Furthermore the order in which these edge types were just listed matches the counterclockwise cyclic order on the intervals. Therefore, we can enumerate the vertices in ${\mathcal A}'(v_i)$ as described in statements (a)-(c) of the proof outline. (Note that $f_i\big({\mathcal A}'(v_i)\big)$ excludes the leaf edge
oriented towards $v_i$ if it exists, as well as the first counterclockwise transverse flag in the counterclockwise order, 
and excludes the first clockwise transverse flag in the clockwise order.)
Because ${\mathcal E}(v_i) \subset {\mathcal A}'(v_i)$, the total ordering corresponding to this enumeration on ${\mathcal A}'(v_i)$ restricts to a total ordering on ${\mathcal E}(v_i)$ and we enqueue these vertices in this order.
Immediately after enqueuing all these vertices, we must carry out a dequeue operation.
Since we have been following the restrictions in \Cref{queue lemma}, this cannot cause a dequeue error so we must dequeue vertex $v_{i+1}$. This proves that $A_{i+1}$ is true, and completes the proof
of \eqref{eq:implication 1}.

We now show the following implication holds for each $i > k_0$:
\begin{equation}
\label{eq:implication 2}
\big(A_i \wedge (B_1 \wedge B_2 \wedge \ldots \wedge B_i) \wedge (C_1 \wedge C_2 \wedge \ldots \wedge C_{i-1})\big) \implies C_i.
\end{equation}
Assume the hypotheses. Since $A_i$ is true, we know that $\{v_0, \ldots, v_i\}$ have been defined. Therefore $v_i$ was enqueued at some point, and we must have $v_i \in {\mathcal E}(v_k)$ for some $k < i$. Since $i>k_0$, we must have $k \geq 1$.
Since $1 \leq k < i$, we know that $C_k$ is true. We can then define the collection of triangles $Y_k$ as in the previous paragraph.
Since $Y_k \cap {\mathcal E}(v_k) = \emptyset$, we know $v_i \not \in Y_k$. We wish to prove $C_i$, and to do so we break into cases depending on the type of the flag $f_k(v_i)=(v_k, \overline{v_k v_i})$, which was relevant to the order in which the vertices ${\mathcal E}(v_k)$ were enqueued. Recall that $f_k\big({\mathcal E}(v_k)\big)$ consisted only of counterclockwise transverse flags, clockwise transverse flags, and leaf flags oriented away from $v_k$.
First suppose $f_k(v_i)$ is a counterclockwise transverse flag.
Let $u \in {\mathcal A}(v_k)$ be such that the flag $f_k(u)=f_k(v_i)^{c}$ is next clockwise.
Because $v_i \not \in Y_k$, it follows from statements (1)-(3) of \Cref{covering a vertex} (which apply because $B_k$ and $C_k$ are true as noted in the previous paragraph) that $f_k(u)$ is also a counterclockwise transverse flag. (It might be helpful to look at \Cref{fig:induction}.)
Thus, rays enter the triangle $\triangle v_{k} u v_i$ through edge $\overline{v_k u}$.
We can then apply \Cref{entering a triangle} and see that either $\overline{u v_i}$ is a transverse edge through which rays exit the triangle
or $\overline{u v_i}$ is a leaf edge oriented towards $v_i$. 
In the first case, $\triangle v_{k} u v_i$ is a fully transverse triangle in $X_k \subset X_{i-1}$ so $C^a_i$ is true.
In the second case, observe that because of our choice of ordering, we have that $u=v_j$ for some $j<i$.
(If $u \not \in {\mathcal E}(v_k)$ then it was enqueued earlier than elements of ${\mathcal E}(v_k)$, 
and if $u \in {\mathcal E}(v_k)$ it comes before $v_i$ in our enumeration of ${\mathcal E}(v_k)$.)
Therefore in this second case, the two triangles sharing edge $\overline{u v_i}$ are in $X_j \subset X_{i-1}$ and so statement $C^b_i$ is true. The case when $f_k(v_i)$ is a clockwise transverse flag is handled by a mirror-symmetric argument.
The final possibility is that $\overline{v_k v_i}$ is a leaf edge oriented towards $v_i$. In this case, the two triangles sharing this edge are contained in $X_k \subset X_{i-1}$, so statement $C^b_i$ is true here. We have shown that $C_i$ is true in all possible cases.

Now observe that \eqref{eq:base case}, \eqref{eq:implication 1}, and \eqref{eq:implication 2} together imply that $A_i$, $B_i$ and $C_i$ are true for all $i \geq 1$. Statement $B_i$ guarantees that $X_{i-1} \subset U_p$ for all $i$. Since $\{v_i\}$ is an enumeration, we have $Z=\bigcup X_{i-1}$ and so $Z \subset U_p$ as desired.
\end{proof}

\section{Closed Trails}
\label{sect:closed trails}

\subsection{Curves and deck transformations}
\label{sect:curves and deck transformations}
Let $S$ be a zebra surface. Recall that in \Cref{sect:maximal cover} we defined the PRU cover $\tilde S$ as the largest cover which is at most doubly branched over the poles. As in that section, let $\Sigma_{-1}=\{p \in S:~\alpha(p)=-1\}$ be the set of poles. In \Cref{maximal cover is a disk} we proved that $\tilde S$ is a disk and a normal cover of $S$.

The homotopy lifting property does not work when the range of the homotopy includes points in $\Sigma_{-1}$ because of the branching. Therefore, for this section, we will only consider paths and loops in $\gamma:[0,1] \to S$ where either:
\begin{enumerate}
\item no poles lie in the interior of $\gamma$, i.e., $\gamma\big((0,1)\big) \cap \Sigma_{-1}=\emptyset$.
\item $\gamma$ is an arc of a trail on $S$.
\end{enumerate}
We allow $\gamma$ to be a trail, because the lifting property works here in the sense that given a lift of the beginning of $\gamma$ (e.g., $\gamma|_{(0,\epsilon)}$ for some $\epsilon>0$), there is a unique way to continue the lift through the preimage of a point in $\Sigma_{-1}$ such that the lift is still a trail. (The preimage of a point in $\Sigma_{-1}$ is nonsingular, and the lift must go straight through nonsingular points.)

Two paths in $S$ are {\em pole-resolved (PR) homotopy equivalent rel endpoints} if they have a lift to $\tilde S$ with the same start and end points. Because the cover is normal, if $\gamma_1$ is PR homotopy equivalent rel endpoints to $\gamma_2$, for any lift $\tilde \gamma_1$, there is a lift $\tilde \gamma_2$ with the same start and end points as $\tilde \gamma_1$.

Recall $S^\plus = S \setminus \Sigma_{-1}$. Choose a basepoint $p_0 \in S^\plus$ and a preimage of this point $\tilde p_0 \in \tilde S$. The pole-resolved fundamental group is $\prpi(S,p_0)$ is the collection of PR homotopy classes rel endpoints of loops starting and ending at $p_0$, with the operation of concatenation. This group is the same as $\pi_1(S^\plus,p_0)/N$ where $N$ is as in \eqref{eq:N}. In particular, if $\Sigma_{-1} = \emptyset$, then $\prpi(S,p_0)=\pi_1(S,p_0)$.

If $\alpha$ and $\beta$ are two parameterized curves such that the endpoint of $\alpha$ is the same as the starting point of $\beta$, let $\alpha \bullet \beta$ denote their concatenation which follows $\alpha$ and then $\beta$.
Two loops $\gamma_1, \gamma_2:[0,1] \to S^\plus$ will be said to be {\em pole-resolved (PR) free homotopy equivalent} if there are paths $\eta_1,\eta_2:[0,1] \to S^\plus$ with $\eta_i(0)=p_0$ and $\eta_i(1)=\gamma_i(0)$ for $i \in \{1,2\}$ such that the concatenations $\eta_1 \bullet \gamma_1 \bullet \eta_1^{-1}$ and $\eta_2 \bullet \gamma_2 \bullet \eta_2^{-1}$ are PR homotopy equivalent rel endpoints. Existence of such $\eta_1$ and $\eta_2$ is equivalent to the condition that for any curves $\beta_i$ with $\beta_i(0)=p_0$ and $\beta_i(1)=\gamma_i(0)$, the elements in $\prpi(S,p_0)$ given by
$[\beta_1 \bullet \gamma_1 \bullet \beta_1^{-1}]$ and $[\beta_2 \bullet \gamma_2 \bullet \beta_2^{-1}]$
are conjugate in $\prpi(S,p_0)$. Thus, PR free homotopy classes of closed curves in $S^\plus$ are in natural bijective correspondence with conjugacy classes in $\prpi(S,p_0)$. From the remarks above about trails, a closed trail also determines such a conjugacy class. The collection of all closed curves determining a conjugacy class in $\prpi(S,p_0)$ is a {\em PR free homotopy class} of closed curves. The notion of PR free homotopy equivalence coincides with the usual notion of free homotopy equivalence if $S$ contains no poles.

From standard covering space theory, there is an isomorphism $\Delta$ from the group $\prpi(S,p_0)$ to the deck group of the covering $\tilde S \to S$. To understand the associated deck group action, fix an element $[\gamma] \in \prpi(S,p_0)$ with representative $\gamma$ and a point $\tilde q \in \tilde S$. Choose a path $\tilde \eta$ starting at $\tilde p_0$ and ending at $\tilde q$ such that the image $\eta$ in $S$ is a path in the sense above.  Let $\widetilde{\gamma \bullet \eta}$ denote the lift of the concatenation $\gamma \bullet \eta$ starting at $\tilde p_0$. Then the image of $\tilde q$ under the deck transformation associated to $\gamma$, is the end point of $\widetilde{\gamma \bullet \eta}$ denoted $\Delta_\gamma(\tilde q)$.

Recall from \Cref{sect:maximal cover} the notion of a polar loop in $S^\plus$. We'll call an element $[\gamma] \in \prpi(S,p_0)$ {\em polar} if curves in $[\gamma]$ are PR free homotopy equivalent to a polar loop.

\begin{proposition}
\label{deck group}
Let $[\gamma] \in \prpi(S,p_0)$. The following statements are equivalent:
\begin{itemize}
\item $[\gamma]$ is polar.
\item $\Delta_\gamma$ is nontrivial and fixes a point in $\tilde S$.
\item $[\gamma]$ is nontrivial and $[\gamma]^2$ is the identity.
\item $\Delta_\gamma$ fixes a unique point in $\tilde S$.
\end{itemize}
If these statements are false and $[\gamma]$ is nontrivial, then $[\gamma]$ is infinite order and all orbits of $\Delta_\gamma$ are infinite.
\end{proposition}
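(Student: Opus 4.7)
The plan is to establish the cycle $(a) \Rightarrow (b) \Rightarrow (c) \Rightarrow (b) \Rightarrow (a)$ together with $(b) \Leftrightarrow (d)$, and then deduce the final statement from the failure of (b). For $(a) \Rightarrow (b)$: if $[\gamma]$ is polar, then up to conjugation in $\prpi(S,p_0)$ we may write $\gamma = \eta \bullet \beta \bullet \eta^{-1}$, where $\beta$ is a small simple loop around some pole $p \in \Sigma_{-1}$ and $\eta$ is a path from $p_0$ to a point near $p$ that avoids poles. Lifting to $\tilde S$ and using the fact that the PRU cover is exactly $2$-to-$1$ near $\pi^{-1}(p)$ (\Cref{maximal cover is a disk}), the lift of $\beta$ winds around $\pi^{-1}(p)$ once, so its endpoint is swapped with its starting point by the local involution at the preimage. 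Thus $\Delta_\gamma$ is conjugate in the deck group to this local involution, is non-trivial, and fixes the image of $\pi^{-1}(p)$ under the conjugating deck transformation.

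For $(b) \Rightarrow (c)$: by \Cref{maximal cover is a disk}, the restriction of $\tilde S \to S$ to $\pi^{-1}(S^\plus)$ is an ordinary normal cover, so the deck group acts freely on $\pi^{-1}(S^\plus)$. Any fixed point of a non-trivial $\Delta_\gamma$ must therefore be a preimage $\tilde q$ of a pole. The stabilizer of $\tilde q$ in the deck group is cyclic of order $2$ because the branching is double, so $\Delta_\gamma^2$ acts as the identity on a neighborhood of $\tilde q$; since $\tilde S$ is connected, $\Delta_\gamma^2 = \mathrm{id}$. For $(c) \Rightarrow (b)$: $\Delta_\gamma$ is then a non-trivial orientation-preserving involution of the disk $\tilde S$, and by the Brouwer/Smith fixed-point theorem — or more concretely the Ker\'ekj\'art\'o theorem that every orientation-preserving involution of $\mathbb{R}^2$ is topologically conjugate to $z \mapsto -z$ — it has a fixed point. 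For $(b) \Rightarrow (a)$: the fixed point $\tilde q$ projects to a pole $p$, and $\Delta_\gamma$ is the unique non-trivial element of the stabilizer, which is precisely the deck transformation associated to a small loop around $p$ conjugated by any path $\tilde \eta$ from $\tilde p_0$ ending near $\tilde q$; this identifies $\gamma$ with a polar loop up to conjugation.

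For $(b) \Leftrightarrow (d)$: $(d) \Rightarrow (b)$ is immediate. For $(b) \Rightarrow (d)$, the same Ker\'ekj\'art\'o conjugation shows that a non-trivial orientation-preserving involution of $\tilde S \cong \mathbb{R}^2$ has exactly one fixed point. Finally, for the concluding statement: suppose $[\gamma]$ is non-trivial and none of (a)–(d) holds, so $\Delta_\gamma$ is fixed-point free. If $\Delta_\gamma$ had some finite order $n \geq 2$, then $\langle \Delta_\gamma \rangle \cong \mathbb{Z}/n\mathbb{Z}$ would be a finite group of orientation-preserving homeomorphisms of the disk $\tilde S$, and by Smith theory (or directly, since a finite cyclic group action on $\mathbb{R}^2$ must have a global fixed point) some non-trivial power of $\Delta_\gamma$ would have a fixed point, contradicting the failure of (b) applied to that power together with the equivalences. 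Hence $\Delta_\gamma$ has infinite order. If some orbit of $\Delta_\gamma$ were finite of size $k$, then $\Delta_\gamma^k$ would fix a point, forcing $\Delta_\gamma^{2k} = \mathrm{id}$ by the equivalences applied to $[\gamma]^k$, again contradicting infinite order; so every orbit is infinite.

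The main obstacle is the topological input that every non-trivial finite-order orientation-preserving self-homeomorphism of $\mathbb{R}^2$ has a fixed point, and that for involutions this fixed point is unique. These are classical (Brouwer, Ker\'ekj\'art\'o, Smith theory) but should be cited explicitly; everything else is a direct manipulation of the covering-space correspondence furnished by \Cref{maximal cover is a disk}.
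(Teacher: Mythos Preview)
Your proof is correct, but it takes a different route from the paper's at several key steps, and the difference is worth noting.

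For $(c)\Rightarrow(b)$ and $(b)\Rightarrow(d)$ you invoke Ker\'ekj\'art\'o's theorem (every finite-order orientation-preserving homeomorphism of $\mathbb{R}^2$ is conjugate to a rotation), which immediately gives both existence and uniqueness of the fixed point. The paper instead argues elementarily: for existence, if $\Delta_\gamma$ were a fixed-point-free involution then $\tilde S/\langle\Delta_\gamma\rangle$ would be a surface with fundamental group $\Z/2\Z$, which does not exist; for uniqueness, given two fixed points $\tilde q,\tilde q'$ the paper builds a simple path between their images in the quotient, lifts it to two arcs forming a Jordan curve in $\tilde S$, and observes that $\Delta_\gamma$ swaps the two complementary disks, so their union would be a $2$-sphere inside the disk $\tilde S$---a contradiction. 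The same ``no surface has $\pi_1\cong\Z/n\Z$'' trick is reused in the final statement in place of your Smith-theory appeal. Your approach is shorter and conceptually clean once Ker\'ekj\'art\'o is granted; the paper's is more self-contained, using only the Jordan curve theorem and the classification of surfaces, which fits its goal of keeping the prerequisites minimal.

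One small wording issue in your last paragraph: you write that a finite cyclic action on $\mathbb{R}^2$ forces ``some non-trivial power of $\Delta_\gamma$'' to have a fixed point, contradicting ``the failure of (b) applied to that power.'' But (b) was only assumed to fail for $[\gamma]$ itself, not its powers. The fix is immediate: Ker\'ekj\'art\'o applied to $\Delta_\gamma$ directly (which is periodic) gives a fixed point of $\Delta_\gamma$ itself, contradicting (b) for $[\gamma]$. Your orbit argument that follows is fine as stated.
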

\begin{proof}
First suppose $[\gamma]$ is polar and let $\gamma \in [\gamma]$.
We claim that $\Delta_\gamma$ is nontrivial and fixes a point in $\tilde S$.
By hypothesis, $\gamma$ is PR homotopic rel endpoints to a concatenation $\eta \bullet \ell \bullet \eta^{-1}$, where $\eta$ is a path in $S^\plus$ starting at $p_0$ and ending in an open disk $U \subset S$ with $U \cap \Sigma_1$ containing a single point $q$, and $\ell \subset U$ is a polar loop
enclosing $q$. Let $\beta \subset U$ be a path joining the endpoint of $\eta$ to $q$.
Let $\widetilde{\eta \bullet \beta}$ denote the lift of $\eta \bullet \beta$ starting at $\tilde p_0$, and let $\tilde q$ denote the endpoint of $\widetilde{\eta \bullet \beta}$. We claim that $\tilde q$ is fixed by $\Delta_\gamma$. From our description of the deck group action, $\Delta_{\gamma}(\tilde q)$ is given by the endpoint of the lift of
$$(\eta \bullet \ell \bullet \eta^{-1}) \bullet (\eta \bullet \beta)
\quad \text{which is homotopic rel endpoints to} \quad
\eta \bullet \ell \bullet \beta.$$
Thus $\Delta_{\gamma}(\tilde q)=\tilde q$ if and only if the lifts $\widetilde{\eta \bullet \beta}$ and $\widetilde{\eta \bullet \ell \bullet \beta}$ terminate at the same point. We will explain that this follows from the fact that both curves begin with $\eta$ and are concatenated with paths that stay within $U$ terminating at the singularity $q$. Let $\tilde U$ be the connected component of the preimage of $U$ in $\tilde S$ that contains $\tilde q$. By \Cref{maximal cover is a disk}, the restriction of the covering map is a map $\tilde U \to U$ which is double branched over $q$. Both $\widetilde{\eta \bullet \beta}$ and $\widetilde{\eta \bullet \ell \bullet \beta}$ begin by following the same lift $\tilde \eta$.
By definition $\widetilde{\eta \bullet \beta}$ ends at $\tilde q$, so the endpoint of $\tilde \eta$ is contained in $\tilde U$. Since $\ell \bullet \beta$ is contained entirely in $U$, the endpoint of the concatenation $\widetilde{\eta \bullet \ell \bullet \beta}$ must be in $\tilde U$. But, this endpoint must also be a lift of $q$, and the only lift of $q$ that is contained in $\tilde U$ is $\tilde q$, so $\widetilde{\eta \bullet \ell \bullet \beta}$ ends at $\tilde q$ as desired. This proves that $\Delta_\gamma(\tilde q)=\tilde q$.
To see $\Delta_\gamma$ is nontrivial, let $\tilde r$ be the endpoint of $\tilde \eta$. Then, $\Delta_\gamma(\tilde r)$ is the endpoint of $\widetilde{\eta \bullet \ell}$, which is distinct since the lift $\tilde \ell$ of $\ell$ starting at $\tilde r$ does not lift as a closed loop to $\tilde S$ because of the double branching. This completes the proof of our claim.

Now suppose $\Delta_\gamma$ is nontrivial but has a fixed point $\tilde q$.
Since the restriction of the PRU covering map to the preimage of $S^+$ is a covering map, no point in the preimage of $S^+$ can be fixed by a nontrivial deck transformation. So the image $q$ of $\tilde q$ must be a pole. Let $U \subset S$ be a disk such that $U \cap \Sigma_{-1}=\{q\}$ as in the previous paragraph and let $\tilde U$ be the connected component of the preimage containing $\tilde q$. Then the deck group of the restricted covering map cover $\tilde U \to U$ must be order two and so $\Delta_\gamma^2$ is trivial. Thus $[\gamma]$ is nontrivial but $[\gamma]^2$ is the identity in $\prpi(S,p_0)$.

Now suppose $[\gamma]$ is nontrivial but $[\gamma]^2$ is the identity. The quotient $\tilde S/\langle \Delta_\gamma \rangle$ must be an orientable surface (intermediate between $\tilde S$ and $S$). If $\Delta_\gamma$ has no fixed points then covering space theory guarantees that $\tilde S/\langle \Delta_\gamma \rangle$ is a surface with fundamental group isomorphic to $\Z/2\Z$, but all surfaces with finite fundamental group are simply connected so such a quotient cannot exist.
\compat{Ferran had concerns here, and later in the proof where this argument is repeated. I rephrased it slightly. Here is a discussion of this fact: \url{https://math.stackexchange.com/questions/4010415/noncompact-surface-with-finite-fundamental-group}}
We conclude that $\Delta_\gamma$ must have at least one fixed point. Suppose it has two, $\tilde q$ and $\tilde q'$. In $\tilde S/\langle \Delta_\gamma \rangle$ construct a simple path $\alpha$ starting at the image of $\tilde q$ and ending at the image $\tilde q'$ whose interior does not contain lifts of points in $\Sigma_{-1}$. Then $\alpha$ has two lifts $\tilde \alpha_1$ and $\tilde \alpha_2$, which are both paths from $\tilde q$ to $\tilde q'$. By construction, the union $\tilde \alpha_1 \cup \tilde \alpha_2$ is a simple closed curve, which by the Jordan Curve Theorem bounds a disk $D \subset \tilde S$. Observe that $\Delta_\gamma$ swaps $\tilde \alpha_1$ and $\tilde \alpha_2$, and therefore takes $D$ to the exterior of the curve $\tilde \alpha_1 \cup \tilde \alpha_2$. Because $\Delta_\gamma$ swaps the two curves, the union $D \cup \Delta_\gamma(D)$ is a $2$-sphere. But, this is impossible because this set is contained in $\tilde S$ which is a disk by \Cref{maximal cover is a disk}. We conclude that $\Delta_\gamma$ cannot have distinct fixed points.

To complete the equivalence of the four statements, suppose $\Delta_\gamma$ has a unique fixed point $\tilde q \in \tilde S$. Let $\tilde \zeta$ be a simple loop through the basepoint in $\tilde S$ and contained in the preimage of $S^\plus$ such that intersection of the enclosed disk with $\Sigma_{-1}$ is $\{\tilde q\}$. Let $\zeta \subset S^\plus$ be the image of $\zeta$.
Then from the first paragraph, $\Delta_\zeta$ fixes $\tilde q$ but is nontrivial. Since $\Delta_\gamma$ and $\Delta_\zeta$ fix $\tilde q$ and are nontrivial, they must agree in a neighborhood of $\tilde q$ (as the covering is double branched at $\tilde q$). Thus $\Delta_\gamma=\Delta_\zeta$. From covering space theory, $[\gamma]$ and $[\zeta]$ are equal in $\prpi(S,p_0)$, and so $[\gamma]$ is polar.

Finally suppose the four statements are false for a nontrivial $[\gamma] \in \prpi(S,p_0)$. Clearly if all orbits of $\Delta_\gamma$ are infinite, then $[\gamma]$ is infinite order. So, it suffices to show that $\Delta_\gamma$ has no periodic points. Suppose to the contrary that $\Delta_\gamma$ does have a periodic point. Then we can find a periodic point $\tilde q \in \tilde S$ of minimal period $n \geq 2$. First, it could be that $\Delta_\gamma^n$ is a trivial deck transformation. In this case there is a well-defined covering $\tilde S \to \tilde S/\langle \Delta_\gamma \rangle$, and covering space theory tells us that $\tilde S/\langle \Delta_\gamma \rangle$ has fundamental group $\Z/n\Z$. But, then $\tilde S/\langle \Delta_\gamma \rangle$ is a surface with a nontrivial finite fundamental group, which is impossible. If $\Delta_\gamma^n$ is nontrivial, then the four statements guarantee that $\Delta_\gamma^n$ fixes a unique point. Let $\tilde q$ be this fixed point. But then each point in the $\Delta_\gamma$ orbit of $\tilde q$ (namely $\{\tilde q, \Delta_\gamma(\tilde q), \ldots, \Delta_\gamma^{n-1}(\tilde q)\}$) is fixed by $\Delta_\gamma^n$, contradicting the uniqueness of $\tilde q$ (or that $n \geq 2$ is the minimal period).
\end{proof}

\begin{corollary}
\label{annulus}
If $[\gamma] \in \prpi(S,p_0)$ is nontrivial and non-polar, then $\tilde S/\langle \Delta_\gamma \rangle$ is a normal quotient of $\tilde S$ that is homeomorphic to an annulus.
\end{corollary}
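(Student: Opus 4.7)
The plan is to verify three things in sequence: (1) that the cyclic subgroup $\langle \Delta_\gamma \rangle$ acts freely and properly discontinuously on $\tilde S$; (2) that the quotient by such an action is automatically a normal (regular) covering; and (3) that the quotient surface is homeomorphic to an open annulus.

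First I would recall that the full deck group of the covering $\tilde S \to S$ acts properly discontinuously on $\tilde S$, so the subgroup $\langle \Delta_\gamma \rangle$ inherits this property. By \Cref{deck group}, the hypothesis that $[\gamma]$ is non-trivial and non-polar implies that $\Delta_\gamma$ has infinite order and that every orbit of $\Delta_\gamma$ is infinite. In particular $\Delta_\gamma^n$ has no fixed points for any $n \neq 0$, so the action of $\langle \Delta_\gamma \rangle$ on $\tilde S$ is free. A free properly discontinuous action of a group $G$ on a Hausdorff manifold yields a normal covering with deck group $G$; applying this to $\langle \Delta_\gamma \rangle \cong \Z$ gives a normal covering $\tilde S \to \tilde S/\langle \Delta_\gamma \rangle$.

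It remains to identify the quotient topologically. Since $\tilde S$ is a topological disk by \Cref{maximal cover is a disk}, it is a simply connected oriented surface without boundary. Because $\Delta_\gamma$ is a deck transformation of a covering of oriented surfaces, it is orientation-preserving, so the quotient inherits an orientation and has no boundary. Since $\tilde S$ is simply connected, covering-space theory identifies the fundamental group of $\tilde S/\langle \Delta_\gamma \rangle$ with $\langle \Delta_\gamma \rangle \cong \Z$. By the classification of (noncompact) surfaces, the unique connected orientable surface without boundary having fundamental group $\Z$ is the open annulus $\bbS^1 \times \R$, completing the argument.

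The main (and quite minor) obstacle is pinning down the classification statement used at the end: one needs that a connected orientable surface without boundary with infinite cyclic fundamental group is homeomorphic to $\bbS^1 \times \R$. This follows from the classification of noncompact surfaces (Ker\'ekj\'art\'o--Richards); alternatively one can argue directly that the quotient is homotopy equivalent to $\bbS^1$ via a deformation retraction obtained from a fundamental domain for the $\Delta_\gamma$-action, which together with orientability and lack of boundary suffices to identify it as an open annulus.
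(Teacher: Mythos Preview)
Your proposal is correct and follows essentially the same route as the paper's proof: both invoke \Cref{deck group} to see that $\langle \Delta_\gamma\rangle \cong \Z$ acts freely, use covering-space theory to compute $\pi_1$ of the quotient, and appeal to the classification of surfaces to identify the quotient as an annulus. The paper's version is simply terser, while you spell out the freeness and orientability steps explicitly.
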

\begin{proof}
Clearly $\tilde S/\langle \Delta_\gamma \rangle$ is an oriented surface because it is intermediate between $\tilde S$ and $S$. From \Cref{deck group} and covering space theory, $\tilde S/\langle \Delta_\gamma \rangle$ has fundamental group isomorphic to $\Z$. From the classification of surfaces, $\tilde S/\langle \Delta_\gamma \rangle$ must be homeomorphic to an annulus.
\end{proof}

\subsection{Topological considerations for closed trails}
\label{sect:topological}
Let $(S, \{\sF_m\})$ be a zebra surface with singular data function $\alpha:S \to \Z_{\geq -1}$. A {\em zebra automorphism} of $(S, \{\sF_m\})$ is a homeomorphism $\delta:S \to S$ such that $\alpha \circ \delta=\alpha$ and such that for each $m \in \hat \R$, the pullback of $\sF_m$ under $\delta$ is identical to $\sF_m$.

\compat{At some later point I use the Bigon Criterion of Farb and Margalit \cite{FM}, which states: Two transverse simple closed curves
in a surface S are in minimal position if and only if they do not bound a bigon. I think this is very useful, but currently do not use except towards the end of this section. If you notice another place to use it to shorten arguments, let me know.}

Let $Z$ be a zebra plane and let $\delta:Z \to Z$ be a zebra automorphism with no fixed points. (Such automorphisms naturally arise as deck transformations of PRU covers.) Let $A=Z/\langle \delta \rangle$ which is topologically an open annulus with a zebra structure.

The fundamental group of an annulus is isomorphic to $\Z$, and is isomorphic to $H_1(A; \Z)$. We say that a loop in the annulus is a {\em core curve} if its homology class generates $H_1(A; \Z)$. We'll say curve is {\em essential} if it is not homotopic to a point. A basic topological fact about curves in the annulus is:

\begin{proposition}
\label{core curve}
Any essential simple closed curve in an annulus is a core curve.
\end{proposition}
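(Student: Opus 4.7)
The plan is to pass to the universal cover and argue topologically. The universal cover $\tilde A$ of the open annulus $A$ is equivariantly homeomorphic to the strip $\mathbb{R} \times (0,1)$, with the deck group $\mathbb{Z}$ generated by the translation $T(x,y) = (x+1,y)$. Under the identification $H_1(A;\mathbb{Z}) \cong \mathbb{Z}$, the core curves correspond to the class $\pm 1$. Let $\gamma \subset A$ be an essential simple closed curve and write $[\gamma] = n \in H_1(A;\mathbb{Z})$; since $\gamma$ is essential, $n \neq 0$, and by reversing orientation I may assume $n \geq 1$. The goal is to prove $n = 1$.

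First I would lift $\gamma$ to a curve $\tilde\gamma \subset \tilde A$. Because $\gamma$ is essential, no lift is a closed loop, so $\tilde\gamma$ is a properly embedded copy of $\mathbb{R}$. Parameterizing $\tilde\gamma:\mathbb{R} \to \tilde A$ as a lift of $\gamma:S^1 \to A$, the relation $T^n\tilde\gamma(t) = \tilde\gamma(t+1)$ forces $\pi_x(\tilde\gamma(t+1)) = \pi_x(\tilde\gamma(t)) + n$, so $\pi_x \circ \tilde\gamma(t) \to \pm\infty$ as $t \to \pm\infty$. Hence $\tilde\gamma$ is a proper arc joining the two ends of the strip, and by the Jordan curve theorem it separates $\tilde A$ into two open regions, which I name $U_+$ (``above'') and $U_-$ (``below'').

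Because $\gamma$ is simple, any two distinct lifts of $\gamma$ are disjoint. In particular, if $n \geq 2$ then $T\tilde\gamma$ is a distinct lift and is disjoint from $\tilde\gamma$, so $T\tilde\gamma$ is entirely contained in one of $U_\pm$; after relabeling, $T\tilde\gamma \subset U_+$. The image $T(U_+)$ is the ``above'' side of $T\tilde\gamma$, so it coincides with $U_+$ minus the (nonempty) region strictly between $\tilde\gamma$ and $T\tilde\gamma$. Thus $T(U_+) \subsetneq U_+$, and iterating gives the strict chain $T^n(U_+) \subsetneq T^{n-1}(U_+) \subsetneq \cdots \subsetneq U_+$.

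On the other hand, $T^n$ preserves $\tilde\gamma$ setwise, is orientation-preserving on $\tilde A$, and restricts to the translation $t \mapsto t+1$ on $\tilde\gamma$, which is orientation-preserving on $\tilde\gamma \cong \mathbb{R}$. Hence $T^n$ cannot swap the two sides of $\tilde\gamma$, so $T^n(U_+) = U_+$, contradicting the strict nesting above. Therefore $n = 1$, and $\gamma$ is a core curve. The main technical point to verify carefully will be the strict inclusion $T(U_+) \subsetneq U_+$, which requires the Jordan-type separation in the strip together with the fact that the translation $T$ preserves the ambient orientation; everything else is routine bookkeeping on the parameterized lift.
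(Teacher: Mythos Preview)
Your argument is essentially correct and takes a genuinely different route from the paper. The paper identifies the annulus with $\mathbb{C}^*$, uses the Jordan curve theorem on the sphere to trap $\gamma$ between two small circles $C_r$, and reads off that $\gamma$ and $C_r$ cobound an annulus and are therefore homologous up to sign. Your approach instead lifts to the universal cover and argues via nesting of separating arcs under the deck translation---a standard and perfectly valid strategy, closer in spirit to the proof that simple closed curves on the torus represent primitive classes.

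There is, however, a genuine gap in your justification of $T(U_+)\subsetneq U_+$. Orientation-preservation of $T$ tells you only that $T$ sends the \emph{left} side of $\tilde\gamma$ to the left side of $T\tilde\gamma$; it does not by itself tell you that the left side of $T\tilde\gamma$ is the one contained in $U_+$. What you actually need is the additional fact that $T$, being a \emph{horizontal} translation, preserves the ``top/bottom'' structure of the strip: for any proper separating arc $\alpha$ with $\pi_x\circ\alpha\to\pm\infty$, one component of the complement contains all points $(x,y)$ with $y$ sufficiently close to $1$ (call it $\mathrm{above}(\alpha)$), and $T(\mathrm{above}(\alpha))=\mathrm{above}(T\alpha)$. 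Once you observe that $T\tilde\gamma\subset\mathrm{above}(\tilde\gamma)$ forces $\mathrm{above}(T\tilde\gamma)\subset\mathrm{above}(\tilde\gamma)$ (because $\mathrm{below}(\tilde\gamma)$ is connected, disjoint from $T\tilde\gamma$, and contains points near $y=0$, hence lies in $\mathrm{below}(T\tilde\gamma)$), the strict nesting follows and your contradiction with $T^n(U_+)=U_+$ goes through. So the fix is to replace ``$T$ preserves the ambient orientation'' with ``$T$ is a horizontal translation, hence preserves the top side of any such separating arc.''
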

\begin{proof}
First, identify $A$ with $\C^\ast=\C \setminus \{0\}$. It is a simple check that a simple parameterization of the circle $C_r$ of radius $r$ centered at $0$ is a core curve of $\C^\ast$.
The general statement follows from the Jordan curve theorem. A simple curve $\gamma$ in $\C^\ast$ separates the sphere into two components. For the curve to be essential, each must contain one of the two points removed. For $r>0$ sufficiently small, $C_r$ and $\gamma$ must be disjoint and so $\C^\ast \setminus (C_r \cup \gamma)$ must have three components. One component must be compact so the homology class of $\gamma$ is the same as that of $C_r$ up to sign. So, $\gamma$ is also a core curve. \compat{Fixed the issue Barak pointed out: I didn't actually prove that $\gamma$ generated homology. There is probably a better proof using exact sequences in simplicial homology, but I didn't think about it. I think this is beneath us and suggest removing it.}
\end{proof}

\begin{proposition}
A closed trail in $A$ is homologically nontrivial.
\end{proposition}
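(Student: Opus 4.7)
The plan is to argue by contradiction: if a closed trail $\tau$ in $A$ were homologically trivial, then it would lift to a closed trail in the zebra plane $Z$, contradicting the non-existence of closed arcs of trails there.

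First I would translate homological triviality into null-homotopy. Since $A$ is an annulus, $\pi_1(A)\cong H_1(A;\Z)\cong \Z$, so the Hurewicz homomorphism is an isomorphism. Hence a closed curve in $A$ is homologically trivial if and only if it is null-homotopic. So assume $\tau:\R\to A$ is a closed trail whose image loop $\bar\tau:\R/T\Z\to A$ (for some period $T>0$) is null-homotopic.

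Next I would apply covering space theory to the projection $\pi:Z\to A=Z/\langle\delta\rangle$. Because $\bar\tau$ is null-homotopic and $Z$ is simply connected (it is a topological disk by the definition of zebra plane), $\bar\tau$ lifts to a map $\R/T\Z\to Z$; equivalently, the lift $\tilde\tau:\R\to Z$ of $\tau$ starting at any chosen preimage satisfies $\tilde\tau(t+T)=\tilde\tau(t)$ for all $t$. Thus $\tilde\tau$ is not injective. Since the trail condition at each point is local (transit along leaves with bending angles at least $\pi$ on each side at singular transitions, and bouncing off poles), and since $\pi$ is a local homeomorphism respecting the zebra structure away from poles and a branched double cover at poles that preserves the trail through such points, the lift $\tilde\tau$ is itself a parameterized arc of a trail in the zebra plane $Z$.

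Finally, I would invoke \Cref{no monogons}, which asserts that every parameterized arc of a trail in a zebra plane is injective. This directly contradicts $\tilde\tau(t+T)=\tilde\tau(t)$, completing the proof. The only step requiring any care is the verification that the lift of a closed trail through the (possibly branched) covering $Z\to A$ is again an arc of a trail; this is the main technical point, but it is routine given that poles of $S$ (if any) are regular points of the PRU-cover and that trails are characterized by a purely local condition compatible with the zebra structure.
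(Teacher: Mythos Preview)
Your proof is correct and follows exactly the paper's approach: a homologically trivial closed trail would lift to a closed trail in the zebra plane $Z$, contradicting \Cref{no monogons}. Your added detail about poles and branching is unnecessary here, since $Z\to A=Z/\langle\delta\rangle$ is an unbranched covering ($\delta$ is fixed-point free on the zebra plane $Z$, which has no poles), but this does no harm.
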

\begin{proof}
If there were a homologically trivial closed trail, then it would lift to a closed trail in $Z$ in violation of \Cref{no monogons}.
\end{proof}

A {\em cover} of a closed curve is just a parameterization that wraps around the curve multiple times. A cover of a closed trail is still a closed trail. Up to covers, closed trails in annuli are simple and distinct closed trails do not intersect:

\begin{proposition}
\label{simple and disjoint}
Any closed trail in $A$ covers a simple closed trail in $A$. If two closed trails in $A$ intersect, they cover the same simple closed trail in $A$ (up to reparameterization).
\end{proposition}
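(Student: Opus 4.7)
Let $\tau:\R\to A$ be a closed trail with minimal period $T_0$, and set $\tau_0:=\tau|_{\R/T_0\Z}$. My plan is to show that $\tau_0$ is injective, giving a simple closed trail, so that $\tau$ automatically covers $\tau_0$ with degree $T/T_0$ for whatever period $T$ one uses for $\tau$. Lift to $\tilde\tau_0:\R\to Z$, necessarily injective by \Cref{no monogons} and proper by \Cref{proper}, with $\tilde\tau_0(t+T_0)=\delta^n\tilde\tau_0(t)$ for a unique integer $n$ that is nonzero by the preceding proposition. After possibly replacing $\delta$ by $\delta^{-1}$, assume $n\geq 1$. An elementary check shows that the symmetry group $G=\{(T',k)\in\R\times\Z:\tilde\tau_0(t+T')=\delta^k\tilde\tau_0(t)\ \forall t\}$ is discrete (by the free action of $\delta$ together with continuity of $\tilde\tau_0$), projects injectively to $\R$, and is generated by $(T_0,n)$ by minimality of $T_0$. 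Consequently, whenever $\tilde\tau_0(s)=\delta^{jn}\tilde\tau_0(t)$ for some $j\in\Z$, one compares with the global relation $\tilde\tau_0(t+jT_0)=\delta^{jn}\tilde\tau_0(t)$ and invokes injectivity of $\tilde\tau_0$ to conclude $s\equiv t\pmod{T_0}$.

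Next, I would pass to the intermediate annulus $\hat A:=Z/\langle\delta^n\rangle$, a degree-$n$ cover of $A$ with deck group $\langle\bar\delta\rangle\cong\Z/n\Z$. The curve $\tilde\tau_0$ descends to $\hat\tau_0:\R/T_0\Z\to\hat A$, which is simple by the previous paragraph and essential since it winds once around $\hat A$. Set $\hat\tau_i:=\bar\delta^i\hat\tau_0$; the $n$ curves $\hat\tau_0,\ldots,\hat\tau_{n-1}$ are pairwise distinct, because $\bar\delta^i\hat\tau_0=\hat\tau_0$ for some $0<i<n$ would force $\delta^i$ to preserve $\tilde\tau_0(\R)$ setwise and hence produce some $(c,i)\in G$, contradicting $G=\Z(T_0,n)$. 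The heart of the argument, which I expect to be the main obstacle, is to show the $\hat\tau_i$ are pairwise \emph{disjoint}. They are pairwise homologous core curves of $\hat A$, so their algebraic intersection numbers vanish and their geometric intersections are even. If any pair intersected, the Farb--Margalit bigon criterion would produce a bigon in $\hat A$; lifting this bigon to the disk $Z$ yields a bigon bounded by arcs of $\tilde\tau_0$ and some translate $\delta^k\tilde\tau_0$, contradicting \Cref{no bigons}. Pairwise disjointness then forces $\pi|_{\hat\tau_0}$ to be injective (any two identified points would lie in some $\hat\tau_0\cap\hat\tau_i$), so $\tau_0=\pi\circ\hat\tau_0$ is an injective simple closed trail covered by $\tau$.

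For the second statement, if two closed trails $\tau^{(1)},\tau^{(2)}$ in $A$ intersect, then by the first claim they cover simple closed trails whose images $C_1,C_2\subset A$ are core curves by \Cref{core curve}. If $C_1\neq C_2$, these are two distinct homologous simple closed curves in $A$ meeting with nonzero, hence even and therefore at least two, geometric intersections; by the bigon criterion they cobound a bigon in $A$, which lifts to a bigon in $Z$ between $\tilde C_1$ and some $\delta^k\tilde C_2$, again contradicting \Cref{no bigons}. Hence $C_1=C_2$, so the associated simple closed trails agree up to choice of basepoint and orientation and $\tau^{(1)},\tau^{(2)}$ cover a common simple closed trail.
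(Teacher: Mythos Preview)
Your overall strategy is reasonable and your symmetry-group analysis of $G$ is correct, but there is a genuine gap in both places where you invoke the Farb--Margalit bigon criterion: that criterion is stated for \emph{transverse} simple closed curves, and nothing guarantees transversality here. Two distinct closed trails can share arcs (along common saddle connections), and at a singularity two trails can meet in a single point without crossing (depending on the cyclic arrangement of their prongs). In either situation the phrase ``geometric intersections are even'' is not well-defined, and the bigon criterion does not directly produce a bigon. You would need a separate argument to handle the non-transverse case, and this is not supplied.

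The paper sidesteps this entirely with a much shorter argument that never invokes Farb--Margalit. The key observation is: if $\gamma_1,\gamma_2$ are \emph{homologous} closed trails in $A$ sharing a basepoint $p$, then $\gamma_1\bullet\gamma_2^{-1}$ is null-homologous, hence (since $\pi_1(A)=H_1(A)=\Z$) null-homotopic, so it lifts to a closed curve $\tilde\gamma_1\bullet\tilde\gamma_2^{-1}$ in $Z$; now $\tilde\gamma_1$ and $\tilde\gamma_2$ are arcs of trails in $Z$ with the \emph{same} endpoints, and \Cref{no bigons} forces them to coincide. This single lemma does all the work. For the first assertion, a non-simple closed trail $\gamma$ has $\gamma(t_1)=\gamma(t_2)$ with $t_1\ne t_2$; rebasing at these two points gives homologous closed trails sharing a point, hence a nontrivial finite-order reparameterization $\psi$ with $\gamma=\gamma\circ\psi$, exhibiting $\gamma$ as a cover. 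For the second, two intersecting closed trails cover simple core curves which, after orienting consistently, are homologous and share a point, hence coincide. Your computation that $G=\Z\cdot(T_0,n)$ is essentially equivalent to the paper's extraction of this finite-order $\psi$, but the detour through the intermediate annulus $\hat A$ is unnecessary, and it is precisely at the bigon-criterion step that your argument becomes incomplete.
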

\begin{proof}
Suppose $\gamma_1$ and $\gamma_2$ are homologous closed trails that intersect. Then we may parameterize the curves by $\gamma_i:\R/\Z \to A$ for $i=1,2$ such that $\gamma_1(0)=\gamma_2(0)$. Denote this common point by $p=\gamma_1(0)$. Consider the curve $\eta=\gamma_1 \bullet \gamma_2^{-1}$, which starts at $p$, wraps around $\gamma_1$ once, then wraps around $\gamma_2$ backward once. Then $\eta$ is homologically trivial and so lifts to a closed curve $\tilde \eta = \tilde \gamma_1 \bullet \tilde \gamma_2^{-1}$, where
$\tilde \gamma_1$ and $\tilde \gamma_2$ are trails in $Z$ that descend to $\gamma_1$ and $\gamma_2$ on $A$, respectively. Observe that \Cref{no bigons} guarantees that $\tilde \gamma_1=\tilde \gamma_2$ up to reparameterization fixing the endpoints,
so $\gamma_1=\gamma_2$ up to reparameterization fixing $0$.

We can use the fact we just proved to establish the proposition. First suppose $\gamma:\R/\Z \to A$ is a closed trail that is not simple. For the first assertion, it suffices to prove that $\gamma$ is a nontrivial cover. (Since this will decrease the absolute value of the homology class of $\gamma$, repeating the operation finitely many times must lead to a simple curve.) Since $\gamma$ is not simple there are distinct points $t_1$ and $t_2$ in the domain such that $\gamma(t_1)=\gamma(t_2)$. Let $\gamma_1=\gamma \circ \phi_1$ and $\gamma_2=\gamma \circ \phi_2$ be orientation-preserving reparameterizations of $\gamma$ sending $t_1$ and $t_2$ to zero, respectively. The curves $\gamma_1$ and $\gamma_2$ are homologous so the above paragraph applies and we get that $\gamma_1$ is a reparameterization of $\gamma_2$, $\gamma_1=\gamma_2 \circ \psi$, where $\psi$ fixes zero. It then follows that $\gamma=\gamma \circ \phi_2 \circ \psi \circ \phi_1^{-1}$. Since $\gamma$ is locally one-to-one,this implies that $\phi_2 \circ \psi \circ \phi_1^{-1}$ is a finite-order homeomorphism of $\R/\Z$.
\compat{Added the fact that this map is finite-order in response to Barak's concerns. Dec 30, 2022.}
The combined reparameterization described by $\phi_2 \circ \psi \circ \phi_1^{-1}$ sends $t_1$ to $t_2$ and so is nontrivial, and $\gamma$ covers the quotient curve
$$(\R/\Z)/\langle \phi_2 \circ \psi \circ \phi_1^{-1} \rangle \to A,$$
which sends the $\langle \phi_2 \circ \psi \circ \phi_1^{-1} \rangle$-orbit of $x \in \R/\Z$ to $\gamma(x)$.

Now suppose $\eta_1$ and $\eta_2$ are closed trails that intersect. From the previous paragraph, we know they cover simple closed trails $\gamma_1$ and $\gamma_2$ respectively. Since $\eta_1$ and $\eta_2$ intersect, we know that $\gamma_1$ and $\gamma_2$ also must. By \Cref{core curve}, up to reversing the orientation of one of the curves, we may assume that $\gamma_1$ and $\gamma_2$ are homologous. Then
the first paragraph again gives that $\gamma_1=\gamma_2$ up to reparameterization.
\end{proof}

The above focuses our attention on core curves when looking for closed trails. We will need the following topological fact for later arguments:

\begin{proposition}
\label{topological annulus}
Let $A$ be an open annulus and let $q,r \in A$ be distinct points. Let $\gamma$ be a simple closed curve that is a core curve of $A$ and passes through both $q$ and $r$. Let $A'$ be the union of $\gamma$ and one of the components of $A \setminus \gamma$. Then any simple curve in $A'$ joining $q$ to $r$ is homotopic rel endpoints to one of the arcs of $\gamma$ joining $q$ to $r$. Furthermore if $\beta$ is a simple closed curve in $A'$ that passes through $q$ and $r$ and is a core curve of $A$, then the two arcs of $\beta$ from $q$ to $r$ are homotopic rel endpoints to the two arcs of $\gamma$ from $q$ to $r$.
\end{proposition}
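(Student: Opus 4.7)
The plan is to pass to the universal cover $\tilde{A}'$. Since $A'$ is homotopy equivalent to $\gamma \cong S^1$, its universal cover is homeomorphic to the closed upper half-plane $H = \{(x, y) : y \geq 0\}$, with deck group generated by $T(x, y) = (x+1, y)$ and boundary $\R \times \{0\}$ projecting to $\gamma$. I would parameterize $\gamma$ by $\R/\Z$ so that $q$ corresponds to $0$ and $r$ to some $t_0 \in (0, 1)$; the lifts of $q$ and $r$ are then $\tilde q_n = (n, 0)$ and $\tilde r_n = (n + t_0, 0)$, and the two arcs $\gamma_1, \gamma_2$ from $q$ to $r$ lift (starting at $\tilde q_0$) to segments of $\R \times \{0\}$ ending at $\tilde r_0$ and $\tilde r_{-1}$ respectively. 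Thus the first claim reduces to showing that if $\tilde \alpha$ is the lift of $\alpha$ starting at $\tilde q_0$, then $\tilde \alpha$ ends at $\tilde r_0$ or $\tilde r_{-1}$.

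First I would reduce to the case that $\alpha \cap \gamma = \{q, r\}$. Using a collar neighborhood of $\gamma$ in $A'$, after a PL approximation placing $\alpha$ in general position with $\gamma$, the finitely many interior intersections can each be pushed off $\gamma$ into the interior of $A'$ by a small local isotopy rel endpoints; this preserves simplicity and the homotopy class rel endpoints. Under this reduction, the lift $\tilde \alpha$ meets $\R \times \{0\}$ only at its endpoints $(0, 0)$ and $(k + t_0, 0)$ for some integer $k$.

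Suppose for contradiction that $k \geq 1$ (the case $k \leq -2$ is symmetric). Since $\alpha$ is simple, its distinct lifts are pairwise disjoint; in particular, $\tilde \alpha$ and $\tilde \alpha_1 = T(\tilde \alpha)$, which runs from $(1, 0)$ to $(k+1+t_0, 0)$, are disjoint. By the Jordan curve theorem the simple closed curve $\tilde \alpha \cup ([0, k+t_0] \times \{0\})$ bounds a compact disk $B \subset H$. The starting point $(1, 0)$ of $\tilde \alpha_1$ lies in the interior of the boundary segment (since $0 < 1 < k + t_0$), and because the interior of $\tilde \alpha_1$ lies in $\{y > 0\}$, the arc $\tilde \alpha_1$ enters the open interior $B^\circ$ immediately. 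Its endpoint $(k+1+t_0, 0)$ is outside $B$, so $\tilde \alpha_1$ must cross $\partial B$; but the only portion of $\partial B$ in $\{y > 0\}$ is the arc $\tilde \alpha$ (minus its endpoints), contradicting disjointness.

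For the second statement, $q$ and $r$ divide $\beta$ into two simple arcs $\beta_1, \beta_2$ from $q$ to $r$ in $A'$, and the first part shows each $\beta_i$ is homotopic rel endpoints to $\gamma_1$ or $\gamma_2$. If both were homotopic to the same $\gamma_j$, then the loop $\beta \simeq \beta_1 \bullet \beta_2^{-1}$ would be null-homotopic in $A'$; but since the inclusion $A' \hookrightarrow A$ is a homotopy equivalence and $\beta$ is a core curve of $A$, the loop $\beta$ is nontrivial in $\pi_1(A')$, a contradiction. Hence $\beta_1$ and $\beta_2$ are homotopic rel endpoints to the two different arcs of $\gamma$. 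The most delicate step in this plan is the isotopy reduction in the second paragraph; if one wishes to avoid it, one can instead work with $\tilde \alpha$ allowed to meet $\R \times \{0\}$ at interior points, at the cost of a more involved analysis of the components of $\bar{D} \setminus \tilde \alpha$ in a compactification $\bar{D}$ of $H$ to a closed disk.
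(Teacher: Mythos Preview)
Your proof is correct and proceeds by a genuinely different route from the paper's. The paper compactifies $A$ to a $2$-sphere $S^2 = A \cup \{x,y\}$, so that $A' \cup \{x\}$ becomes a closed disk; after the same reduction to $\alpha \cap \gamma = \{q,r\}$ (achieved there by an ambient isotopy of $A$ pushing $\gamma \setminus \{q,r\}$ into the interior of $A'$), it applies the Jordan curve theorem directly to the two simple closed curves $\alpha \cup \gamma_i$ in $A' \cup \{x\}$: the disk not containing $x$ furnishes the homotopy from $\alpha$ to one $\gamma_i$. Your approach instead lifts to the universal cover $\tilde A' \cong H$ and uses the Jordan curve theorem there, together with disjointness of deck translates of a simple arc, to pin down the terminal lift of $r$. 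The paper's argument is slightly more direct in that it produces the nullhomotopy disk explicitly; your covering-space argument is perhaps more systematic and makes transparent why exactly two homotopy classes arise (namely the two adjacent lifts $\tilde r_0, \tilde r_{-1}$). Your treatment of the second statement coincides with the paper's. One minor remark: your reduction via PL approximation and local pushes is fine, but the paper's alternative of isotoping $\gamma$ rather than $\alpha$ avoids having to discuss general position altogether.
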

\begin{proof}
We can assume that $A$ is the $2$-sphere $S^2$ with two points $x,y \in S^2$ removed.
Let $\gamma$ be a simple core curve of $A$, and let $q$ and $r$ be distinct points on $\gamma$. Let $A'$ be the union of $\gamma$ and the component of $A \setminus \gamma$ containing $x$ in its boundary. Now suppose that $\alpha:[0,1] \to A'$ is a simple curve with $\alpha(0)=q$ and $\alpha(1)=r$. There is an ambient isotopy of $A$ that fixes $q$ and $r$ but moves the rest of $\gamma$ into the interior of $A'$. By moving $\alpha$ under this isotopy, we see that we can assume that $\alpha \cap \gamma=\{q,r\}$. Thus, the union of $\alpha$ and either of the arcs of $\gamma$ from $q$ to $r$ forms a simple closed curve. By the Jordan curve theorem, each choice bounds a closed disk contained in $A' \cup \{x\}$, and the union of the two disks is $A' \cup \{x\}$. These two disks intersect in the curve $\alpha$, so exactly one of the disks contains $x$. The disk that does not contain $x$ can be used to define a homotopy rel endpoints from $\alpha$ to the other boundary component of the disk, which is one of the two arcs of $\gamma$ joining $q$ to $r$.

Now consider the last case where $\beta$ is a simple core curve contained in $A'$ and passing through $q$ and $r$. In light of the previous paragraph, the arcs of $\beta$ from $q$ to $r$ are each homotopic rel endpoints to one of the arcs from $\gamma$. If the arcs of $\beta$ were homotopic to the same arc of $\gamma$, then $\beta$ would be contractible in $A'$. But this is impossible because $\beta$ is a core curve, so each homotopy class of arcs rel endpoints of $\gamma$ must be attained by an arc of $\beta$.
\end{proof}

\subsection{Existence of closed trails}
\label{sect:existence of closed trails}
As in the previous section, let $A=Z/\langle \delta \rangle$ be an annular quotient of a zebra plane.

Given a point $\tilde p \in Z$, if there is an arc of a trail joining $\tilde p$ to $\delta(\tilde p)$, then we'll denote it by $\tilde \gamma_{\tilde p}$. We use $\gamma_p$ to denote the image curve in $A$, which is a closed loop based at $p$ that is also a core curve of the annulus. Note that $\gamma_p$ is independent of the choice of lift $\tilde p$. The loop $\gamma_p$ satisfies the angle condition for trails, except possibly at the point $p$. So, in some sense it is close to being a closed trail. Our main result is that under mild hypotheses it passes through a closed trail:

\begin{theorem}
\label{closed trails exist}
Let $p \in A$ be arbitrary and suppose that $\gamma_p$ exists. If for all points $q \in \gamma_p$ the curve $\gamma_q$ exists, then for some $q \in \gamma_p$, the curve $\gamma_q$ is a simple closed trail.
\end{theorem}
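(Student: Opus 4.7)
My plan is to first observe that simplicity comes for free once we find $q \in \gamma_p$ with $\gamma_q$ a closed trail: since $\gamma_q$ is freely homotopic in $A$ to $\gamma_p$, it is a core curve by \Cref{core curve}; but by \Cref{simple and disjoint} every closed trail covers a simple closed trail, and a core curve cover is necessarily a single cover, hence $\gamma_q$ is automatically simple. So the entire theorem reduces to finding $q$ for which the trail angle condition holds at $q$.

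To locate such a $q$, I would parametrize $\gamma_p$ by $q:[0,1]\to A$ with $q(0)=q(1)=p$, lift to $\tilde q:[0,1]\to Z$ with $\tilde q(0)=\tilde p$ and $\tilde q(1)=\delta(\tilde p)$, and study the family of trail arcs $\tilde\gamma_{\tilde q(t)}$ from $\tilde q(t)$ to $\delta(\tilde q(t))$ that exist by hypothesis. By \Cref{segment continuity}, this family varies continuously in $\Cl(Z)$, and by \Cref{compact segments} all arcs in the family sit inside a single compact set $K\subset Z$ containing only finitely many singularities. Let $\Gamma=\bigcup_n\tilde\gamma_{\delta^n(\tilde p)}$ be the $\delta$-invariant bi-infinite concatenation coming from $\tilde\gamma_{\tilde p}$; if the angle condition holds at some (hence every) $\delta^n(\tilde p)$ in $\Gamma$, then $\gamma_p$ itself is a closed trail and we take $q=p$. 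Otherwise, by $\delta$-equivariance, at each $\delta^n(\tilde p)$ the angle between the incoming and outgoing arcs in $\Gamma$ is strictly less than $\pi$ on a fixed side, say the left.

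The heart of the argument is then an iterative shortcutting procedure based on the \hyperref[burp]{Burp Lemma}. Truncating $\tilde\gamma_{\delta^{-1}(\tilde p)}$ and $\tilde\gamma_{\tilde p}$ near $\tilde p$ so that all bending angles on the left become $\pi$, the Burp Lemma produces a leaf segment joining a point $\tilde r\in\tilde\gamma_{\delta^{-1}(\tilde p)}$ to a point $\tilde q\in\tilde\gamma_{\tilde p}$ that cuts off the corner at $\tilde p$. By \Cref{no bigons} this leaf segment extends uniquely to the trail arc $\tilde\gamma_{\delta^{-1}(\tilde q)}$ from $\delta^{-1}(\tilde q)$ to $\tilde q$, so the new loop $\gamma_q$ (with $q\in\gamma_p$ the image of $\tilde q$) is obtained from $\gamma_p$ by replacing a subarc containing the singularity $p$ with a strictly shorter trail subarc, and in particular $\gamma_q$ meets strictly fewer singularities per fundamental domain of $\delta$ than $\gamma_p$ does. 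If $\gamma_q$ is a closed trail we are done; otherwise we iterate, producing a sequence $q_0=p,q_1,q_2,\ldots$ of points on $\gamma_p$ with $\gamma_{q_k}$ traversing strictly fewer singularities per period at each step.

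The main obstacle, and the delicate part of the proof, is to justify termination of this iteration. For this I would argue that all the arcs $\tilde\gamma_{\tilde q_k}$ remain inside the compact set $K$ supplied by \Cref{compact segments}, which contains only finitely many singularities; thus the strict decrease in singularities per period cannot continue forever, and after finitely many steps we must reach a $q=q_N$ for which no further shortcut is possible, i.e., for which the angle condition at $\tilde q_N$ holds on both sides and $\gamma_q$ is a closed trail. A subtle point that requires care is that the cut at one step might appear to introduce a new bad angle at $\delta(\tilde q_k)$; one must show (using $\delta$-equivariance and a second application of the Burp Lemma if necessary) that such an obstruction is itself absorbed by one more shortcutting step, so that the combinatorial complexity still strictly decreases.
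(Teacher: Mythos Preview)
Your proposal has genuine gaps. The crucial step is your assertion that ``by \Cref{no bigons} this leaf segment extends uniquely to the trail arc $\tilde\gamma_{\delta^{-1}(\tilde q)}$'': \Cref{no bigons} only says that two trail arcs with the same endpoints coincide, so to use it you would need to know that the concatenation of a piece of $\tilde\gamma_{\delta^{-1}(\tilde p)}$ with the Burp segment $\overline{\tilde r\,\tilde q}$ is itself a trail arc---but there is no reason the angle condition holds at $\tilde r$ or at $\tilde q$. Without this, you have no control over what $\gamma_q$ actually looks like, and in particular no justification that it meets fewer singularities than $\gamma_p$; the ``strictly fewer singularities per period'' monotone quantity is therefore unsupported. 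A second structural problem is that the hypothesis only guarantees $\gamma_q$ exists for $q\in\gamma_p$: your first step yields $q_1\in\gamma_p$, but the next produces $q_2\in\gamma_{q_1}$, and in general $\gamma_{q_1}\not\subset\gamma_p$ (cf.\ \Cref{containment}, which only gives $\gamma_{q_1}\subset A_p$), so after one step the hypothesis no longer applies and you cannot iterate.

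The paper's proof is entirely different and does not shortcut or iterate. After reducing to $\gamma_p$ simple via the \hyperref[lollipop]{Lollipop Lemma}, it considers for each $q\in\gamma_p$ the intersection $I_q=\gamma_p\cap\gamma_q$, proves these are connected arcs (\Cref{connected intersection}) and nested in the sense that $r\in I_q$ implies $I_r\subset I_q$ (\Cref{nesting}), and then applies Zorn's Lemma to the family $\{I_q\}$ to extract a minimal $I_{\min}$. A case split on whether $I_{\min}$ equals $\gamma_p$, is a nondegenerate arc, or is a single point finishes the argument; in the first two cases \Cref{equality implies trail} gives the closed trail directly, and the single-point case is ruled out by a Gauss--Bonnet computation on the region between $\gamma_p$ and $\gamma_q$.
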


Observe the following consequence:

\begin{corollary}
\label{cor: closed trails exist}
Suppose $S$ is a zebra surface and $[\gamma] \in \prpi(S,p_0)$ is nontrivial and non-polar.
Let $\tilde S$ be the PRU cover of $S$, and let $A$ be the annulus $\tilde S / \langle \Delta_\gamma\rangle$. Suppose there is a point $p \in A$ such that $\gamma_p$ exists. Suppose further that there is a convex subset of $\tilde S$ that contains two consecutive periods of the preimage of $\gamma_p$. Then, $S$ contains a closed trail that is PR free homotopic to the curves in $[\gamma]$. \compat{Thanks Barak for catching this. Fixed again Dec 30, 2022.}
\end{corollary}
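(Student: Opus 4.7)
The plan is to verify the hypothesis of \Cref{closed trails exist} using the given convexity, and then transfer the resulting simple closed trail in $A$ down to $S$ via the intermediate covering $\pi: A \to S$ obtained from the tower $\tilde S \to A \to S$.

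First I would fix a lift $\tilde p \in \tilde S$ of $p$ so that the arc $\tilde\gamma_{\tilde p}$ runs from $\tilde p$ to $\Delta_\gamma(\tilde p)$, and take $C \subset \tilde S$ to be a convex subset containing the two consecutive periods $\tilde\gamma_{\tilde p} \cup \Delta_\gamma(\tilde\gamma_{\tilde p})$ guaranteed by hypothesis. For any $q \in \gamma_p$, choose a lift $\tilde q \in \tilde\gamma_{\tilde p}$; then $\Delta_\gamma(\tilde q) \in \Delta_\gamma(\tilde\gamma_{\tilde p}) \subset C$, so both endpoints lie in $C$. Convexity of $C$ (together with the uniqueness from \Cref{no bigons}) produces the arc of a trail $\tilde\gamma_{\tilde q}$ joining $\tilde q$ to $\Delta_\gamma(\tilde q)$; its image in $A$ is the loop $\gamma_q$. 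Thus $\gamma_q$ exists for every $q \in \gamma_p$, which is exactly the hypothesis required to invoke \Cref{closed trails exist}.

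Applying that theorem yields a point $q \in \gamma_p$ for which $\gamma_q$ is a simple closed trail in $A$. Since $\pi: A \to S$ is a local isomorphism of zebra structures, the image $\pi \circ \gamma_q$ is a closed trail in $S$. By construction, a lift of $\pi \circ \gamma_q$ to $\tilde S$ starting at $\tilde q$ terminates at $\Delta_\gamma(\tilde q)$, so the associated deck transformation of $\tilde S \to S$ is $\Delta_\gamma$, which (by \Cref{sect:curves and deck transformations}) represents the conjugacy class $\conj{\gamma}$; hence $\pi \circ \gamma_q$ lies in the PR free homotopy class $\conj{\gamma}$. There is no genuine obstacle -- the convexity hypothesis is tailored precisely so that any $\tilde q$ on one period and its image $\Delta_\gamma(\tilde q)$ on the next both sit in a common convex region, which is all that is needed to feed \Cref{closed trails exist}.
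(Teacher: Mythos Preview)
Your proof is correct and is exactly the argument the paper has in mind: the corollary is stated immediately after \Cref{closed trails exist} with the phrase ``Observe the following consequence,'' and no further proof is given. Your verification that every $q\in\gamma_p$ has $\gamma_q$ defined (by lifting $q$ into the first period so that $\Delta_\gamma(\tilde q)$ lands in the second, then invoking convexity of $C$) and your subsequent projection to $S$ are precisely the details the reader is expected to supply.
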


We now turn our attention to proving the theorem. It will make our lives easier to assume that $\gamma_p$ is a simple curve. The following result allows us to do this:

\begin{lemma}[Lollipop lemma]
\label{lollipop}
Every $\gamma_p$ contains a simple $\gamma_q$ as a subarc. Moreover, if $\gamma_p$ is not simple, then $\gamma_p$ is (up to reparameterization) the concatenation of paths $\gamma_p=\alpha \bullet \gamma_q \bullet \alpha^{-1}$ for some $q \in \gamma_p$ such that $\gamma_q$ is a simple closed curve. Furthermore, $\alpha$ is a simple curve that is disjoint from $\gamma_q$ except for the common endpoint $q$.
\end{lemma}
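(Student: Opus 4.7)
The plan is to reduce to two cases: the simple case is trivial, and in the non-simple case I will construct the lollipop structure by extracting a minimal-length self-intersection, verify the resulting sub-loop is a simple essential core curve, rule out the wrong orientation via a no-bigons argument, identify the sub-loop as $\gamma_q$, and finally show the two outer arcs of $\gamma_p$ are mutually reverse.

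If $\gamma_p$ is simple, take $q := p$ and $\alpha$ the constant path; the decomposition is trivial. Assume henceforth that $\gamma_p$ is not simple. Parameterize $\gamma_p:[0,1]\to A$ with $\gamma_p(0)=\gamma_p(1)=p$ and let $\tilde\gamma_{\tilde p}:[0,1]\to Z$ be its lift from $\tilde p$ to $\delta(\tilde p)$. By \Cref{no monogons} the lift is simple, and a compactness argument on local foliation charts yields $\epsilon>0$ such that $\gamma_p$ is injective on every parameter interval of length at most $\epsilon$. The set
\[
S=\{(s,t):0\le s<t\le 1,\ \gamma_p(s)=\gamma_p(t),\ (s,t)\neq (0,1)\}
\]
is closed in its ambient space, nonempty by hypothesis, and bounded below by $\epsilon$ in the $t-s$ coordinate, so by compactness a minimizer $(s_0,t_0)$ of $t-s$ is attained. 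Set $q:=\gamma_p(s_0)$, $\tilde q_1:=\tilde\gamma_{\tilde p}(s_0)$, $\tilde q_2:=\tilde\gamma_{\tilde p}(t_0)=\delta^k(\tilde q_1)$ with $k\neq 0$ (since the lift is simple), and $\eta:=\gamma_p|_{[s_0,t_0]}$. Minimality forces $\eta$ to be simple; null-homotopy of $\eta$ in $A$ would lift to a closed trail arc in $Z$, contradicting \Cref{no monogons}, so $\eta$ is essential, hence a core curve by \Cref{core curve}, forcing $k=\pm 1$.

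The main obstacle is ruling out $k=-1$. If $k=-1$, then $\tilde\gamma_{\tilde p}|_{[0,t_0]}$ and $\delta^{-1}(\tilde\gamma_{\tilde p}|_{[s_0,1]})$ are two trail arcs sharing endpoints $\tilde p$ and $\tilde q_2$, so by \Cref{no bigons} they coincide as subsets of $Z$. This produces a continuous orientation-reversing bijection $\phi:[0,t_0]\to[s_0,1]$ with $\phi(0)=1$, $\phi(t_0)=s_0$, and $\gamma_p(t)=\gamma_p(\phi(t))$ for every $t$. The intermediate value theorem applied to $\phi(t)-t$ gives a fixed point of $\phi$ in $(s_0,t_0)$, and continuity then produces $t$ arbitrarily near it with $0<|\phi(t)-t|<\epsilon$; after reordering $\{t,\phi(t)\}$ one obtains an element of $S$ of length less than $\epsilon$, contradicting the uniform lower bound. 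Hence $k=1$, and $\tilde\gamma_{\tilde p}|_{[s_0,t_0]}$ is the unique trail arc from $\tilde q_1$ to $\delta(\tilde q_1)$, so $\eta=\gamma_q$.

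Finally, set $\alpha:=\gamma_p|_{[0,s_0]}$. The trail arcs $\tilde\gamma_{\tilde p}|_{[0,s_0]}$ and $\delta^{-1}(\tilde\gamma_{\tilde p}|_{[t_0,1]})$ both run from $\tilde p$ to $\tilde q_1$, so \Cref{no bigons} forces $\gamma_p|_{[t_0,1]}=\alpha^{-1}$ up to reparameterization. To secure simplicity of $\alpha$ and disjointness from $\gamma_q$ away from $q$, I refine the choice by also requiring $s_0$ minimal among minimizers of $t-s$ on $S$: any self-intersection of $\alpha$, or additional meeting of $\alpha$ with $\eta$ off $q$, would produce either a strictly shorter element of $S$ or an equally short minimizer with smaller initial parameter, violating the refined choice.
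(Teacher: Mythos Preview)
Your overall strategy parallels the paper's, and your argument ruling out $k=-1$ via the fixed point of the orientation-reversing reparametrization $\phi$ is a nice alternative to the paper's concatenation argument. In fact it can be shortened: at the fixed point $t^*$ your identity $\tilde\gamma_{\tilde p}(t)=\delta^{-1}(\tilde\gamma_{\tilde p}(\phi(t)))$ gives $\tilde\gamma_{\tilde p}(t^*)=\delta^{-1}(\tilde\gamma_{\tilde p}(t^*))$, so $\delta$ fixes a point of $Z$, which is already the contradiction; the $\epsilon$-argument is not needed.

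However, your final step has a genuine gap. Suppose $\alpha$ has a self-intersection $\gamma_p(u)=\gamma_p(v)$ with $0\le u<v\le s_0$. Minimality of $(s_0,t_0)$ gives only $v-u\ge t_0-s_0$. Your refined choice disposes of the equality case, but the case $v-u>t_0-s_0$ yields a pair $(u,v)$ that is neither shorter nor an equal-length minimizer with smaller first coordinate, so no contradiction arises from your stated reasoning; the same issue occurs for extra meetings of $\alpha$ with $\eta$. The paper avoids this by working with the \emph{inclusion} partial order on $Y$ (rather than minimizing $t-s$), proving via a separate no-bigons argument that the inclusion-minimal element is \emph{unique}, and then observing that a self-intersection of $\alpha$ would produce a second inclusion-minimal element. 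Your length-minimizer is automatically inclusion-minimal, so you could import that uniqueness argument; alternatively, you can close the gap within your framework as follows. First rule out $k=-1$ for the sub-loop $\gamma_p|_{[u,v]}$ by the same fixed-point trick. Then for $k=+1$, \Cref{no bigons} identifies $\tilde\gamma_{\tilde p}|_{[u,s_0]}$ with $\delta^{-1}(\tilde\gamma_{\tilde p}|_{[v,t_0]})$, giving an increasing $\chi:[u,s_0]\to[v,t_0]$ with $\gamma_p(t)=\gamma_p(\chi(t))$ and $\chi(s_0)=t_0$. Combining this with the reparametrization $\psi:[0,s_0]\to[t_0,1]$ you already built (from $\alpha^{-1}=\gamma_p|_{[t_0,1]}$), the pairs $(\chi(t),\psi(t))$ lie in $S$ for $t<s_0$ and have length $\psi(t)-\chi(t)\to 0$ as $t\to s_0^-$, contradicting minimality.
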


We call this the Lollipop Lemma because it tells us that a non-simple $\gamma_p$ traces the pattern of a {\em lollipop}: $\gamma_p$ begins by traveling up the stick (following a simple curve $\alpha$), then travels around a circular candy ($\gamma_q$) and returns to $p$ by traveling back down the stick ($\alpha^{-1}$), and in addition both $\gamma_q$ and $\alpha$ are simple and disjoint except at their common endpoint. \commb{Def of a lollipop after statement of lemma 9.9: alpha is not allowed to be a point, that is simple closed curves do not qualify as lollipops. This is used later so should be made more explicit. lternatively, in first line of proof of Cor. 9.10, you could say “first assume gamma is not simple”.}\compat{Now I  explicitly say that $\alpha$ is a simple curve in the definition. A simple curve cannot be just a point.}

\begin{proof}
Assume $\gamma_p$ is not simple. Let $X$ denote the set of pairs $(a,b) \in [0,1] \times [0,1]$ with $a<b$.
Define
$$Y = \{(a,b) \in X:~\gamma_p(a)=\gamma_p(b)\}.$$
Then by continuity of $\gamma_p$, $Y$ is a closed subset of $X$. Since $\gamma_p$ is not simple, $Y$ is nonempty.

There is a natural partial ordering on $Y$ given by $(a,b) \leq (c,d)$ if $[a,b] \subset [c,d]$. We will apply Zorn's lemma to find a minimal element. (We remark that Zorn's lemma is more than we need here, because our $\gamma_p$ is combinatorially fairly simple. However, Zorn's lemma provides a framework enabling us to avoid thinking about the combinatorial details.) To see that Zorn's lemma applies, let $\{(a_i,b_i) \in Y:~i \in \Lambda\}$ be a totally ordered subset. We must find a lower bound.
Let $a=\sup \{a_i\}$ and $b=\inf \{b_i\}$. Then $[a,b]=\bigcap_{i \in \Lambda} [a_i,b_i]$. We claim that $a \neq b$. If this were not the case, we can choose a neighborhood of the common point $\gamma_p(a)=\gamma_p(b)$ that lifts to the zebra plane $Z$, and by continuity and the fact that the intervals nest down to this common point, we can find a pair $(a_i, b_i)$ such that $\gamma_p([a_i, b_i])$ is contained in this neighborhood. The lift of this segment to $Z$ violates the injectivity of trails on zebra planes. Thus, $a \neq b$ as claimed and because $Y$ is closed, we have $(a,b) \in Y$ giving us our needed lower bound.

Zorn's lemma guarantees the existence of a minimal $(a,b) \in Y$. The restriction $\gamma_p|_{[a,b]}$ is a simple curve in $A$.
The curve $\gamma_p|_{[a,b]}$ must be an essential simple closed curve, because otherwise its lift to $Z$ violates \Cref{no monogons}.
Then \Cref{core curve} tells us that $\gamma_p|_{[a,b]}$ is a core curve, and lifts of this restriction to $Z$ have endpoints differing by $\delta$. It follows that the restriction $\gamma_p|_{[a,b]}$ must coincide with $\gamma_q$ or $\gamma_q^{-1}$, where $q$ is the common point, $q=\gamma_p(a)=\gamma_p(b)$.
We have shown that $\gamma_p=\alpha \bullet \gamma_q^{\pm 1} \bullet \beta$ where $\alpha$ is a path from $p$ to $q$ and $\beta$ is a path from $q$ back to $p$.

\begin{figure}[htb]
\centering
\includegraphics[width=3in]{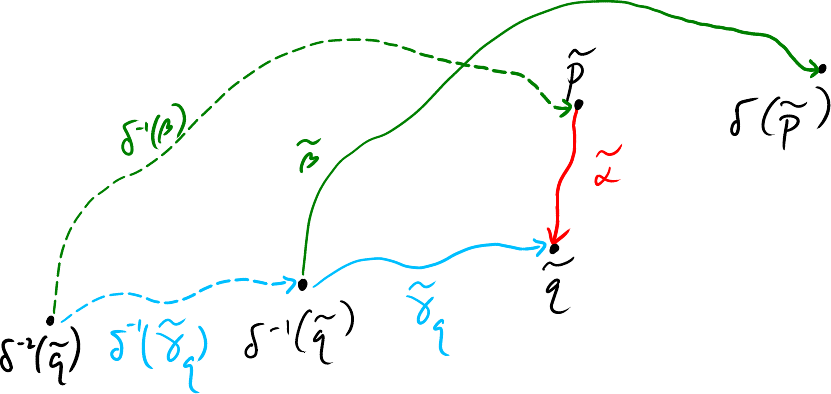}
\caption{Illustration relevant to the case $\gamma_p=\alpha \bullet \gamma_q^{-1} \bullet \beta$ in the proof of \Cref{lollipop}.}
\label{fig:badly_oriented}
\end{figure}

We will first show that the power of $\gamma_q$ must be positive. Suppose to the contrary that
$\gamma_p=\alpha \bullet \gamma_q^{-1} \bullet \beta$. We will obtain a contradiction by considering lifts of these curves to $Z$; see \Cref{fig:badly_oriented}.
Choose a lift $\tilde p \in Z$ of $p$, and let $\tilde \alpha$ denote the lift of $\alpha$ starting at $\tilde p$. Then $\tilde \alpha$ ends at some lift $\tilde q$ of $q$. Let $\tilde \gamma_q$ be the lift ending at $\tilde q$, so that following $\tilde \gamma_q^{-1}$ ends at $\delta^{-1}(\tilde q)$. Then define $\tilde \beta$ to be the lift of $\beta$ starting at $\delta^{-1}(\tilde q)$. This means that the concatenation
$\tilde \alpha \bullet \tilde \gamma_q^{-1} \bullet \tilde \beta$ is a lift of $\gamma_p$, and $\tilde \beta$ must end at $\delta(\tilde p)$. Now observe that both $\tilde \alpha \bullet \tilde \gamma_q^{-1}$ and $\delta^{-1}(\tilde \beta^{-1} \bullet \tilde \gamma_q)$ are arcs of trails running from $\tilde p$ to $\delta^{-1}(\tilde q)$. Therefore, by \Cref{no bigons}, the two curves coincide. Now observe that $\delta^{-2}(\tilde q)$ is in $\delta^{-1}(\tilde \beta^{-1} \bullet \tilde \gamma_q)$ (as the endpoint of $\delta^{-1}(\tilde \beta^{-1})$) and therefore it must also be in $\tilde \alpha \bullet \tilde \gamma_q^{-1}$. Since $\tilde \gamma_q^{-1}$ is the lift of a simple loop and joins $\tilde q$ to $\delta^{-1}(\tilde q)$, it must be that $\delta^{-2}(\tilde q) \in \tilde \alpha$.
Thus by uniqueness of trails, we must have $\tilde \alpha=\delta^{-1}(\tilde \beta^{-1}) \bullet \epsilon$ for some path $\epsilon$ from $\delta^{-2}(\tilde q)$ to $\tilde q$. Similarly, we see that $\tilde q$ must be in
$\delta^{-1}(\tilde \beta^{-1} \bullet \gamma_q)$ but can't be in $\delta^{-1}(\tilde \gamma_q)$ and so must be in
$\delta^{-1}(\tilde \beta^{-1})$. We conclude by uniqueness of trails that $\delta^{-1}(\tilde \beta^{-1})=\tilde \alpha \bullet \eta$ for some path $\eta$ joining $\tilde q$ to $\delta^{-2}(\tilde q)$. But we have shown
\begin{equation}
\label{eq:absurd1}
\tilde \alpha=\delta^{-1}(\tilde \beta^{-1}) \bullet \epsilon
\quad \text{and} \quad
\delta^{-1}(\tilde \beta^{-1})=\tilde \alpha \bullet \eta
\end{equation}
from which it follows that $\tilde \alpha=\tilde \alpha \bullet \eta \bullet \epsilon$, which is absurd: a compact path cannot be the concatenation of itself with a nontrivial curve.

We have shown that $\gamma_p=\alpha \bullet \gamma_q \bullet \beta$. Again let $\tilde p$ be a lift of $p$ and $\tilde \alpha$ be a lift of $\alpha$ starting at $\tilde p$. Define $\tilde q$ to be the endpoint of $\tilde \alpha$, which is a lift of $q$. Let $\tilde \gamma_q$ be the lift of $\gamma_q$ starting at $\tilde q$. Then $\tilde \gamma_q$ ends at $\delta(\tilde q)$. Let $\tilde \beta$ be the lift of $\beta$ starting at $\delta(\tilde q)$. By definition of $\gamma_p$, we have that $\tilde \beta$ ends at $\delta(p)$. Now observe that both $\tilde \alpha$ and $\delta^{-1}(\tilde \beta^{-1})$ run from $\tilde p$ to $\tilde q$, so they coincide because they are trails. We conclude that $\alpha=\beta^{-1}$ as desired.

Now we know that $\gamma_p=\alpha \bullet \gamma_q \bullet \alpha^{-1}$. With $(a,b) \in Y$ minimal as above, we have $\gamma_p(a)=\gamma_p(b)=q$. We claim the minimal pair $(a,b) \in Y$ is unique. Suppose to the contrary that $(c,d) \in Y$ is a distinct minimal pair. Let $q = \gamma_p(a)=\gamma_p(b)$ as above, and let $r = \gamma_p(c)=\gamma_p(d)$. Let $\alpha=\gamma_p|_{[0,a]}$ as above and $\beta=\gamma_p|_{[0,c]}$. We see that
$$\gamma_p=\alpha \bullet \gamma_q \bullet \alpha^{-1} = \beta \bullet \gamma_r \bullet \beta^{-1}.$$
By definition of the partial order, we cannot have $[a,b] \subset [c,d]$ or $[c,d] \subset [a,b]$. Up to swapping the minimal pairs, we have two possible configurations of the points $a,b,c,d \in [0,1]$, nonoverlapping or overlapping:
$$a<b<c<d \quad \text{or} \quad a<c \leq b<d.$$
First consider the nonoverlapping possibility, $a<b<c<d$. In this case,
$$\alpha^{-1} = \gamma_p|_{[b,c]} \bullet \gamma_r \bullet \beta^{-1} \quad \text{and} \quad \beta=\alpha \bullet \gamma_p \bullet \gamma_p|_{[b,c]}.$$
Combining these rules creates an absurdity as in \eqref{eq:absurd1}. In the overlapping case of $a<c \leq b<d$, we have
$$\beta=\alpha \bullet \gamma_p|_{[a,c]} \quad \text{and} \quad \alpha^{-1}=\gamma_p|_{[b,d]} \bullet \beta^{-1},$$
which again leads to an absurdity, ruling out the possibility of multiple minimal pairs.

Now we claim that the curve $\alpha=\gamma_p|_{[0,a]}$ is simple. If not, then there is a pair $(c,d) \in Y \cap [0,a]^2$. Then $Y \cap [0,a]^2$ is nonempty and we can apply Zorn's lemma to produce a minimal $(c',d') \in Y \cap [0,a]^2$. Such a minimal element would also be minimal in $Y$, in contradiction to the previous paragraph.

Finally suppose that $\alpha$ and $\gamma_q$ share a point in common other than $q$. Then there is a $c \in [0,a)$ and a $d \in (a,b)$ for which $\gamma_p(c)=\gamma_p(d)$. Thus $Y \cap [c,d]^2$ is nonempty and Zorn's lemma guarantees that there is a minimal element $(c',d') \in Y \cap [c,d]^2$. This minimal element cannot be $(a,b)$ because $(a,b) \not \subset [c,d]$, so this is also a contradiction to the uniqueness of the minimal element of $Y$.
\end{proof}

\begin{corollary}
\label{equality implies trail}
If $p$ and $q$ are distinct points in the annulus $A$ and $\gamma_p=\gamma_q$ as sets, then $\gamma_p$ is a simple closed trail.
\end{corollary}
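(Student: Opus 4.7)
The plan is to use the Lollipop Lemma to show that if $\gamma_p$ fails to be simple, the point $p$ is singled out topologically as the unique ``endpoint'' of the set-theoretic image, and then to exploit the equality $\gamma_p = \gamma_q$ as sets to derive a contradiction from $p \neq q$.

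First I would analyze the local topology of the image set of $\gamma_p$ in $A$ under the assumption that $\gamma_p$ is not simple. By \Cref{lollipop}, we may write (up to reparameterization) $\gamma_p = \alpha \bullet \gamma_{q'} \bullet \alpha^{-1}$, where $\alpha$ is a simple arc from $p$ to some $q' \in \gamma_p$, $\gamma_{q'}$ is a simple closed curve, and $\alpha \cap \gamma_{q'} = \{q'\}$. Writing $I_p$ for the image of $\gamma_p$, I would observe that interior points of $\alpha$ and of $\gamma_{q'} \setminus \{q'\}$ each have neighborhoods in $I_p$ homeomorphic to an open interval, the point $q'$ has a neighborhood homeomorphic to a tripod, while $p$ is the unique point of $I_p$ admitting a neighborhood homeomorphic to a half-line $[0,1)$. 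Thus $p$ is the only free endpoint of $I_p$ when $\gamma_p$ is not simple.

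Now I would run a case analysis on $\gamma_q$ using $\gamma_p = \gamma_q$ as sets. If $\gamma_q$ is simple, its image is homeomorphic to a circle and has no free endpoint, so it cannot equal $I_p$. If $\gamma_q$ is not simple, then by the same reasoning $q$ is the unique free endpoint of the image of $\gamma_q$; combined with $\gamma_p = \gamma_q$ as sets this forces $p = q$, contradicting our hypothesis. Hence $\gamma_p$ must be simple, and by the same topological characterization applied to $\gamma_q$, so is $\gamma_q$.

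Finally I would promote simplicity to ``closed trail.'' Because $\gamma_p$ satisfies the angle condition for a trail at every point except possibly at $p$, and $\gamma_q$ satisfies it except possibly at $q$, the fact that $p$ is an interior (non-basepoint) point of $\gamma_q$ gives the angle condition at $p$; since $\gamma_p$ and $\gamma_q$ coincide as sets and both are simple, the two loops traverse the same simple closed curve and so have the same local picture at $p$. Therefore $\gamma_p$ satisfies the angle condition at $p$ as well and is a simple closed trail. The main obstacle in turning this sketch into a proof is the verification of the local topology of the lollipop image; in particular, one must be careful to use the disjointness clause $\alpha \cap \gamma_{q'} = \{q'\}$ from \Cref{lollipop} to ensure that no additional valence-$1$ points are created in $I_p$ besides $p$ itself.
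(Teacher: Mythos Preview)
Your proposal is correct and follows essentially the same route as the paper: use the Lollipop Lemma to identify $p$ as the unique degree-one vertex of the image when $\gamma_p$ is not simple (forcing $p=q$), and then in the simple case combine the angle conditions away from $p$ and away from $q$ to get a closed trail. Your version is slightly more explicit in handling the mixed case (one simple, one not) via the topological-type mismatch, but this is a minor elaboration of the same argument.
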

\begin{proof}
First assume $\gamma_p=\gamma_q$ is not simple. Then this common curve must be a lollipop.
\compat{Minor changes in the first two sentences of the proof. Dec 31 2022.}
Observe that a lollipop is homeomorphic to a graph with two vertices: one of degree one and another of degree three.
If the lollipop is $\gamma_p$ then $p$ is the vertex of degree one. Thus $\gamma_p=\gamma_q$ implies that $p=q$ in the case of a lollipop.

Now consider the case when $p \neq q$ and $\gamma_p=\gamma_q$ is a simple closed curve.
Observe that $\gamma_p$ satisfies the angle condition to be a trail at every point other than $p$, and $\gamma_q$ satisfies the angle condition at every point other than $q$. So, $p \neq q$ and $\gamma_p=\gamma_q$ implies that this simple closed curve satisfies the angle condition at all points. \compat{Proof rewritten Dec 14, 2022 to address concerns raised by Ferran.}
\end{proof}

\begin{lemma}
\label{connected intersection}
Suppose $\gamma_p$ is a simple closed curve but is not a closed trail.
If $q \in \gamma_p$ and $\gamma_q$ is defined, then $\gamma_p \cap \gamma_q$ is connected.
\end{lemma}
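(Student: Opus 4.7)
The plan is to lift to the PRU cover $Z$ and reduce the claim to \Cref{no bigons} applied to pairs of trail arcs, with a \gaussbonnet argument ruling out further intersections. First I fix a lift $\tilde p$ of $p$ and let $\tilde \gamma_{\tilde p}$ be the trail arc from $\tilde p$ to $\delta(\tilde p)$. Since $\gamma_p$ is simple there is a unique lift $\tilde q \in \tilde \gamma_{\tilde p}$ of $q$, and $\tilde \Gamma_p := \pi^{-1}(\gamma_p) = \bigcup_{n \in \Z} \delta^n(\tilde \gamma_{\tilde p})$ is a proper simple arc in $Z$, where $\pi : Z \to A$ denotes the quotient. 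Since $\tilde \Gamma_p$ is $\delta$-invariant we have
\[
\gamma_p \cap \gamma_q = \pi(\tilde \gamma_{\tilde q} \cap \tilde \Gamma_p) = \bigcup_{n \in \Z} \pi\bigl(\tilde \gamma_{\tilde q} \cap \delta^n(\tilde \gamma_{\tilde p})\bigr).
\]
By \Cref{no bigons} applied to the pair of trail arcs $\tilde \gamma_{\tilde q}$ and $\delta^n(\tilde \gamma_{\tilde p})$, each set $\tilde \gamma_{\tilde q} \cap \delta^n(\tilde \gamma_{\tilde p})$ is empty, a single point, or a common sub-arc, and is in all cases connected. The sets for $n = 0$ and $n = 1$ contain $\tilde q$ and $\delta(\tilde q)$ respectively, both of which project under $\pi$ to $q$. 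Hence $\pi(\tilde \gamma_{\tilde q} \cap \tilde \gamma_{\tilde p}) \cup \pi(\tilde \gamma_{\tilde q} \cap \delta(\tilde \gamma_{\tilde p}))$ is a connected subset of $\gamma_p \cap \gamma_q$ containing $q$, and it suffices to show that $\tilde \gamma_{\tilde q} \cap \delta^n(\tilde \gamma_{\tilde p}) = \emptyset$ for every $n \notin \{0, 1\}$.

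I would argue this by contradiction. Suppose for some $n \geq 2$ (the $n \leq -1$ case is symmetric) the intersection is non-empty. Since $\tilde \Gamma_p$ is a proper simple arc and thus separates $Z$ into two open half-disks, and $\tilde \gamma_{\tilde q}$ is a compact trail arc with endpoints on $\tilde \Gamma_p$, this forces $\tilde \gamma_{\tilde q}$ to leave $\tilde \Gamma_p$ and re-enter it, producing a sub-arc $\tilde \alpha \subset \tilde \gamma_{\tilde q}$ with endpoints $\tilde x, \tilde y$ on $\tilde \Gamma_p$ and interior in one half-disk, so that together with the sub-arc $\tilde \beta$ of $\tilde \Gamma_p$ from $\tilde x$ to $\tilde y$ it bounds a polygonal disk $D$. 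If $\tilde \beta$ contains no kink $\delta^k(\tilde p)$ then $\tilde \beta$ is a sub-arc of a single $\delta^k(\tilde \gamma_{\tilde p})$ and $D$ is a bigon between two trail arcs, contradicting \Cref{no bigons}. So $\tilde \beta$ contains at least one kink. Because $\gamma_p$ fails the trail condition at $p$, the $\tilde \Gamma_p$ angle on one side at each kink is strictly less than $\pi$, while the trail condition on $\tilde \alpha$ forces each of its interior bending angles to be at least $\pi$. Applying the \gaussbonnet to $D$, I would seek a contradiction with
\[
\sum_{v \in \partial D}(\pi - \theta_v) = 2\pi + \pi \sum_{v \in D^\circ \cap \Sigma} \alpha(v) \geq 2\pi
\]
by balancing the sign of the kink contributions (positive if $D$ sits on the acute side of $\tilde \Gamma_p$, negative otherwise) against the non-positive contributions from interior bending points of $\tilde \alpha$ and the corner contributions at $\tilde x, \tilde y$. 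The case where $D$ sits on the obtuse side is quick: every interior angle at a kink, an interior non-kink singularity on $\tilde \beta$, or an interior singularity on $\tilde \alpha$ is $\geq \pi$, so the sum is bounded above by $(\pi - \theta_{\tilde x}) + (\pi - \theta_{\tilde y}) < 2\pi$, contradicting the identity.

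When $\gamma_q$ is not simple, the Lollipop Lemma (\Cref{lollipop}) expresses $\gamma_q$ as a concatenation $\alpha \bullet \gamma_{q'} \bullet \alpha^{-1}$ with $\alpha$ a simple stick and $\gamma_{q'}$ a simple closed trail meeting $\alpha$ only at $q'$, so one checks connectedness of $\gamma_p \cap \alpha$ and $\gamma_p \cap \gamma_{q'}$ separately (the latter by the simple case above), and uses $q \in \gamma_p \cap \alpha$ and $q' \in \alpha \cap \gamma_{q'}$ to glue. The main obstacle will be the angular bookkeeping at the corners $\tilde x, \tilde y$ of $D$ in the acute-side case of the second paragraph: these corners can be either transverse crossings (with interior angle in $(0, \pi)$) or tangential branchings where $\tilde \alpha$ separates from a common sub-arc of $\tilde \Gamma_p$ (with the interior angle of $D$ determined by the trail angle of $\tilde \gamma_{\tilde q}$ via the angle condition), and one must combine these constraints with the angle deficits at kinks to rule out the bigon.
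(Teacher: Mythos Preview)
Your acute-side gap is real and is not closable by angle bookkeeping alone. Your obtuse-side argument is precisely \Cref{containment}: it shows $\gamma_q\subset A_p$, so every excursion sub-arc $\tilde\alpha$ of $\tilde\gamma_{\tilde q}$ with endpoints on $\tilde\Gamma_p$ and interior off it lies on the \emph{acute} side. There the kinks $\delta^k(\tilde p)$ have interior angle in $D$ strictly less than $\pi$ and contribute \emph{positively} to $\sum(\pi-\theta_v)$; together with the two positive corner contributions at $\tilde x,\tilde y$ (each less than $\pi$), the left side of \eqref{eq:Gauss-Bonnet} can certainly reach $2\pi$ once even a single kink lies on $\tilde\beta$, and no contradiction emerges. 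The difficulty is not the corner analysis you flag but the absence of any bound on how many kinks $\tilde\beta$ may cross. Your lollipop reduction also breaks: applying the simple case to $\gamma_{q'}$ requires $q'\in\gamma_p$, which is not given, and even granting connectedness of $\gamma_p\cap\alpha$ and $\gamma_p\cap\gamma_{q'}$ separately, their union need not be connected unless they share a point of $\gamma_p$, which again forces $q'\in\gamma_p$.

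The paper avoids any acute-side computation. After \Cref{containment}, it fixes $r\in\gamma_p\cap\gamma_q\setminus\{q\}$, takes the arc $\alpha\subset\gamma_p$ from $q$ to $r$ whose interior misses $p$ (so $\alpha$ is a genuine trail arc), and invokes the purely topological \Cref{topological annulus}: since $\gamma_q$ is a simple core curve contained in the half-annulus $A_p$ and passes through $q$ and $r$, one of its two arcs $\beta$ from $q$ to $r$ is homotopic rel endpoints to $\alpha$. Lifting this homotopy to $Z$ and applying \Cref{no bigons} gives $\alpha=\beta$, hence $\alpha\subset\gamma_p\cap\gamma_q$ joins $q$ to $r$. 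The lollipop case is handled not by decomposing $\gamma_q$ but by a surgery on $A$ that replaces the stick $\overline{qq'}$ by a triangle, converting $\gamma_q$ into a simple closed curve; \Cref{topological annulus} is applied in the surgered annulus and the homotopy is transported back through a collapsing map.
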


To prove this, observe that since $\gamma_p$ is a simple closed core curve of the annulus $A$, it splits $A$ into two sub-annuli. Since $\gamma_p$ is not a closed trail, one of the angles made by $\gamma_p$ at $p$ must have measure smaller than $\pi$. Let $A_p$ denote the union of $\gamma_p$ and the component of $A \setminus \gamma_p$ containing the interior angle at $p$ with measure smaller than $\pi$. We have:

\begin{proposition}
\label{containment}
Suppose $\gamma_p$ exists and is a simple closed curve but is not a closed trail. Then for any $q \in A_p$ for which $\gamma_q$ exists, we have $\gamma_q \subset A_p$.
\end{proposition}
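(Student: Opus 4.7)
I would argue by contradiction, lifting to the PRU cover $Z$ and deriving a contradiction from \Cref{ngons}. Let $\tilde p$ be a lift of $p$, $\tilde \gamma_p$ the lifted arc of a trail from $\tilde p$ to $\delta(\tilde p)$, and set $\tilde \Gamma_p = \bigcup_{n \in \Z} \delta^n(\tilde \gamma_p)$. Because $\gamma_p$ is a simple closed core curve of $A$ by hypothesis, $\tilde \Gamma_p$ is a properly embedded simple curve in $Z$, and by the Jordan Curve Theorem it separates $Z$ into two components. Let $\tilde A_p$ denote the closed component projecting to $A_p$; by the definition of $A_p$, the interior angle of $\tilde A_p$ at each $\delta^n(\tilde p)$ is strictly less than $\pi$, while the interior angle of the complementary closed region $\tilde B$ at each $\delta^n(\tilde p)$ is strictly greater than $\pi$.

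Suppose for contradiction that there is $q \in A_p$ with $\gamma_q$ defined but $\gamma_q \not\subset A_p$. Choose a lift $\tilde q \in \tilde A_p$ and let $\tilde \gamma_q:[0,1] \to Z$ be the lifted trail arc from $\tilde q$ to $\delta(\tilde q)$. Since $\tilde \gamma_q \not\subset \tilde A_p$ and $\tilde A_p$ is closed, $\{t \in [0,1] : \tilde \gamma_q(t) \notin \tilde A_p\}$ is a non-empty open subset of $[0,1]$. Select a maximal open component $(t_1, t_2)$ and set $\tau = \tilde \gamma_q|_{[t_1, t_2]}$, $x = \tilde \gamma_q(t_1)$, $y = \tilde \gamma_q(t_2)$. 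By construction, $x, y \in \tilde \Gamma_p$, the interior of $\tau$ lies in $\tilde B \setminus \tilde \Gamma_p$, and $x \neq y$ by \Cref{no monogons}. Let $s$ be the unique compact sub-arc of the properly embedded line $\tilde \Gamma_p$ joining $x$ to $y$. Then $\tau \cup s$ is a simple closed curve, which bounds a disk $D \subset \tilde B$.

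The heart of the argument is that every interior angle of the polygon $D$ at a vertex other than $x$ and $y$ has measure at least $\pi$. Along the $s$-portion of $\partial D$: at any $\delta^n(\tilde p)$ lying in $s$ the interior angle of $D$ equals the interior angle of $\tilde B$ at $\delta^n(\tilde p)$, which exceeds $\pi$; at any other singularity of $Z$ on $s$ the angle condition for the trail $\tilde \gamma_p$ guarantees both flanking angles are at least $\pi$, so the interior angle of $D$ is at least $\pi$. Along the $\tau$-portion: at any singularity interior to $\tau$, the angle condition for the trail $\tilde \gamma_q$ forces the interior angle of $D$ to be at least $\pi$ as well. Consequently $D$ has at most two vertices (namely $x$ and $y$) with interior angle strictly less than $\pi$, contradicting \Cref{ngons}.

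The main technical hurdle will be bookkeeping at the endpoints $x$ and $y$ and verifying that $D$ really sits on the $\tilde B$ side rather than straddling $\tilde \Gamma_p$. The latter will follow by a local argument at any interior point of $s$ away from vertices: since $\tau$'s interior avoids $\tilde \Gamma_p$, the disk $D$ must locally occupy the $\tilde B$ half of a neighborhood of $s$, and this propagates globally by connectedness. Degeneracies such as $x = y$, or $x$ and $y$ coinciding with some $\delta^n(\tilde p)$ or with a singularity of $\tilde \gamma_p$, do not affect the count: they can only make $x$ or $y$ cease to be a non-straight vertex, which only decreases the number of interior angles less than $\pi$ and reinforces the contradiction with \Cref{ngons}.
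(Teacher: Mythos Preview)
Your proof is correct and follows essentially the same route as the paper's: lift to the zebra plane, take an arc $\tau$ of $\tilde\gamma_q$ escaping $\tilde A_p$, close it up with a sub-arc $s$ of $\tilde\Gamma_p$, and reach a contradiction with \Cref{ngons} because the resulting polygon has at most two interior angles below $\pi$. Your angle bookkeeping (especially the observation that the interior angle of $\tilde B$ at each $\delta^n(\tilde p)$ exceeds $\pi$ by the very definition of $A_p$) is more explicit than the paper's one-line summary, but the argument is the same.
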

\begin{proof}
Suppose to the contrary that $\gamma_q \not \subset A_p$. Then there is an arc $\alpha$ of $\gamma_q \setminus A_p$ joining $A_p$ to itself. Thus, $\alpha$ has endpoints in $\gamma_p= \partial A_p$. Lift $\alpha$ to a curve $\tilde \alpha$ in the PRU cover $Z$. Observe that $\tilde \alpha$ is a trail and therefore simple. The preimage of $\gamma_p$ in $Z$ is a bi-infinite path, so there is a polygonal disk $\tilde D$ bounded by $\tilde \alpha$ and an arc of the preimage of $\gamma_p$.
But, there are only two possible points in $\partial \tilde D$ where the interior angles are smaller than $\pi$, namely the endpoints of $\tilde \alpha$. This violates \Cref{ngons}, giving us our desired contradiction.
\end{proof}


\begin{proof}[Proof of \Cref{connected intersection}]
Clearly $q \in \gamma_p \cap \gamma_q$, so to show $\gamma_p \cap \gamma_q$ is connected it suffices to show that any $r \in \gamma_p \cap \gamma_q \setminus \{q\}$ can be joined to $q$ by a segment in the intersection. Let $r \in \gamma_p \cap \gamma_q \setminus \{q\}$.
Let $\alpha \subset \gamma_p$ be an arc from $q$ to $r$ within $\gamma_p$ that does not contain $p$ in its interior. (At least one of the two arcs from $q$ to $r$ in $\gamma_p$ must work.) Observe that $\gamma_q \subset A_p$ by \Cref{containment}. If $\gamma_q$ is a simple closed curve, then \Cref{topological annulus} guarantees that there is an arc $\beta$ of $\gamma_q$ joining $q$ to $r$ that is homotopic rel endpoints to $\alpha$. The same holds if $\gamma_p$ is a lollipop, but for continuity we postpone the argument to the following paragraph. So, either way we get an arc $\beta$ of $\gamma_q$ that is homotopic to $\alpha$. Since $\alpha$ and $\beta$ are homotopic trails joining $q$ to $r$, \Cref{no bigons} guarantees that $\alpha=\beta$ up to reparameterization. The common curve $\alpha=\beta$ is an interval in $\gamma_p \cap \gamma_q$ joining $q$ to $r$ as desired.

Consider the case when $\gamma_q$ is a lollipop and $r \neq q$ is another point of $\gamma_p \cap \gamma_q$. We must show that two arcs of $\gamma_q$ from $q$ to $r$ are homotopic to the two arcs in $\gamma_p$ from $q$ to $r$. We will see that we can reduce to the case when $\gamma_q$ is a simple closed curve by performing a surgery on the annulus $A$. Viewing $\gamma_q$ as a graph, let $x$ denote the vertex of $\gamma_q$ that has degree three (i.e., the place where the stick meets the candy). The segment $\overline{qx}$ is the stick of the lollipop. Let $D$ be a closed topological disk such that $\gamma_q \cap D=\{x\}$ and such that $D$ is contained in the unbounded component of $A_p \setminus \gamma_q$. Let $T$ be a triangle. Let $A'$ be the annulus formed by cutting out $\overline{qx} \cup D$, and then gluing $T$ in its place, where one vertex is sent to $q$, the two adjacent sides are sent to the two arcs formed by cutting along $\overline{qx}$, and the final edge is glued to the arc of $\partial D$ joining $x$ to $x$. See \Cref{fig:deforming_a_lollipop}. \Cref{topological annulus} applies in $A'$, showing that the simple closed curve replacing $\gamma_q$ has arcs homotopic to the two curves in $\gamma_p$ from $q$ to $r$. There is a continuous surjective map $\phi:A' \to A$ that collapses $T$ back to $\overline{qx} \cup D$, obtained by collapsing each leaf of a partial foliation of $T$ (omitting a disk sent to $D$). Post-composing the homotopies with $\phi$ gives the desired statement in $A$. \compat{This argument was rewritten. The original idea to collapse the stick of the lollipop didn't work in the case when $r$ is also a point on the stick of the lollipop (since $q$ and $r$ would be the same point when the stick is collapsed).}
\end{proof}

\begin{figure}[htb]
\centering
\includegraphics[width=5in]{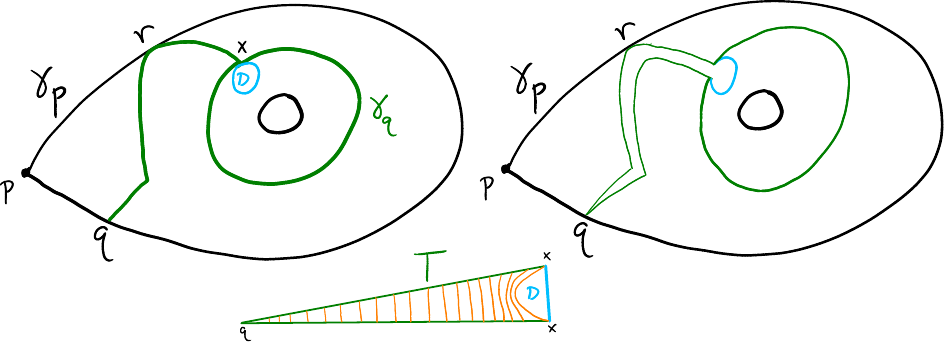}
\caption{Top left: A lollipop $\gamma_q$ in an annulus $A$ before surgery. Top right: The annulus $A'$ formed from $A$ by a surgery that makes $\gamma_q$ a simple closed curve. Bottom: The triangle $T$ glued to $A$ with $\overline{qx} \cup D$ cut out to form $A'$. To reconstruct $A$ from $A'$, we collapse leaves in the orange partial foliation of $T$ to points and send the unfoliated topological disk to $D$. \commb{Figure 25 looks like it’s taken out of an opthalomogist’s surgery manual.}\compat{That's funny. I can't tell if you were bothered or if you feel that we should change it. Eventually almost all of these figures should be improved, but I don't want to do it before the first arxiv post. Moving the point $p$ so it was at the top of the figure might help, then they'd look like teardrops rather than eyes.}}
\label{fig:deforming_a_lollipop}
\end{figure}



\begin{lemma}
\label{nesting}
Assume $\gamma_p$ exists and is a simple closed curve, $q \in \gamma_p \setminus \{p\}$ and $\gamma_q$ is defined. Suppose $r \in \gamma_p \cap \gamma_q$ and $\gamma_r$ exists. Then $\gamma_p \cap \gamma_r \subset \gamma_p \cap \gamma_q$.
\end{lemma}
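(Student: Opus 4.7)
The plan is to apply \Cref{containment} twice and then identify the intersection of the resulting region with $\gamma_p$. First, I would dispose of two trivial reductions: if $\gamma_p$ is a closed trail, then $\gamma_x = \gamma_p$ for every $x \in \gamma_p$ by uniqueness of trails (see \Cref{no bigons}), so $\gamma_p \cap \gamma_r = \gamma_p = \gamma_p \cap \gamma_q$; and if $\gamma_q$ is a closed trail, then $\gamma_r = \gamma_q$ because $r \in \gamma_q$, and the conclusion is again immediate. So I may assume both $\gamma_p$ and $\gamma_q$ are simple closed curves that are not closed trails, postponing the case where $\gamma_q$ is a lollipop.

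Under these assumptions, the region $A_q$ (the union of $\gamma_q$ with the component of $A \setminus \gamma_q$ carrying the angle less than $\pi$ at $q$) is defined. Since $r \in \gamma_q \subset A_q$, \Cref{containment} applied with $q$ and $r$ in the roles of $p$ and $q$ yields $\gamma_r \subset A_q$. Combined with \Cref{connected intersection}, which presents $\alpha_q := \gamma_p \cap \gamma_q$ as a single connected sub-arc of $\gamma_p$, the lemma reduces to the geometric identity
\[
A_q \cap \gamma_p = \alpha_q,
\]
from which $\gamma_p \cap \gamma_r \subset A_q \cap \gamma_p = \alpha_q = \gamma_p \cap \gamma_q$ follows at once.

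To establish this identity I would analyze the local picture at $q$. Because $q \in \gamma_p \setminus \{p\}$, the curve $\gamma_p$ is smooth at $q$ whereas $\gamma_q$ has its unique kink there; this forces $q$ to be an endpoint of $\alpha_q$, for otherwise the two arms of $\alpha_q$ at $q$ would also be the two arms of $\gamma_q$ at $q$ and would make $\gamma_q$ smooth there. Consequently the two arms of $\gamma_q$ at $q$ are: one arm along $\gamma_p$ (into $\alpha_q$) and the complementary arc $\delta_q := \gamma_q \setminus \alpha_q$, which enters the interior of $A_p$. A short homology computation, using that $\gamma_p$ and $\gamma_q$ are homologous core curves of $A$, shows that the simple closed curve $(\gamma_p \setminus \alpha_q) \cup \delta_q$ is null-homotopic in $A$ and hence bounds a topological disk $D$ contained in $A_p$; the sharp wedge of $\gamma_q$ at $q$ opens into the complementary lens $\overline{A_p \setminus D}$, which is therefore $A_q$. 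Intersecting with $\gamma_p$ and noting that $\gamma_p \setminus \alpha_q$ lies in $\partial D \subset D$ yields the desired equality.

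The main obstacle I anticipate is the orientation-tracking step: confirming that the sharp wedge at $q$ opens into the lens rather than into $D$, which requires keeping careful track of which side of $\gamma_p$ contains $\delta_q$ and of the rotational direction of the sharp wedge. The lollipop case for $\gamma_q$ should reduce to the main case after extracting the candy $\gamma_{q'}$ via \Cref{lollipop} and observing that the stick of the lollipop cannot lie on the simple closed curve $\gamma_p$, so that $\gamma_p \cap \gamma_q = \gamma_p \cap \gamma_{q'}$ and the argument then proceeds with $\gamma_{q'}$ in place of $\gamma_q$.
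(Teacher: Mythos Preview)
Your strategy—apply \Cref{containment} to get $\gamma_r\subset A_q$ and then identify $A_q\cap\gamma_p$ with $\gamma_p\cap\gamma_q$—is exactly the paper's, but you take an unnecessarily long route to the identity and leave the decisive step unresolved. The paper bypasses the homology/disk analysis entirely: since $q\in\gamma_p\setminus\{p\}$ and $\gamma_p$ is a trail, the angle of $\gamma_p$ at $q$ on the side \emph{away} from $A_p$ is at least $\pi$; but the sharp wedge of $\gamma_q$ at $q$ has measure strictly less than $\pi$, and since $\gamma_q\subset A_p$ this wedge must therefore open into $A_p$. This one-line local observation gives $A_q\subset A_p$, whence $A_q\cap\gamma_p=A_q\cap\partial A_p\subset\partial A_q=\gamma_q$ immediately. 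This is precisely the ``orientation-tracking step'' you flagged as an obstacle and did not resolve, and it also handles uniformly the case $\gamma_p\cap\gamma_q=\{q\}$, which your arc-based decomposition does not (there neither arm of $\gamma_q$ lies along $\gamma_p$, so $(\gamma_p\setminus\alpha_q)\cup\delta_q$ is not a simple closed curve).

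Your lollipop reduction also has a genuine error. The claim $\gamma_p\cap\gamma_q=\gamma_p\cap\gamma_{q'}$ is false: $q$ lies in $\gamma_p\cap\gamma_q$ but $q\notin\gamma_{q'}$ (it is the degree-one vertex of the lollipop). What one can say is $\gamma_p\cap\gamma_{q'}\subset\gamma_p\cap\gamma_q$, which is enough provided $r\in\gamma_{q'}$—but you must then separately treat the case $r\in\alpha$ (the stick), where $\gamma_r$ is simply the sub-lollipop obtained by trimming the stick and $\gamma_r\subset\gamma_q$ trivially. Moreover, for $r\in\gamma_{q'}$ you cannot rerun the main argument verbatim with $q'$ in place of $q$, because nothing guarantees $q'\in\gamma_p$; the paper instead obtains $A_{q'}\subset A_p$ from the trail condition on $\gamma_q$ at $q'$, which forces the stick $\alpha$ to lie on the large-angle side of $\gamma_{q'}$.
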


\begin{proof}
Assume the hypotheses described in the lemma. We break into several cases.

If $\gamma_p$ is a closed trail, then $\gamma_p = \gamma_q=\gamma_r$ so the conclusion is trivially true. Therefore, we may assume that $\gamma_p$ is a simple closed curve but not a closed trail. Let $A_p$ be the subannulus bounded by $\gamma_p$ such that the interior angle at $p$ is less than $\pi$ as above. \Cref{containment} guarantees that $\gamma_q$ and $\gamma_r$ are contained in $A_p$.

Now suppose that $\gamma_q$ is a simple closed curve. If $\gamma_q$ is a closed trail, then we'd have $\gamma_r=\gamma_q$ and so the conclusion is trivially true. So, assume that $\gamma_q$ is not a closed trail. In this case we can define the region $A_q$ as above. Since $q \in \gamma_p \setminus \{p\}$ and $\gamma_p$ is a trail, the exterior angle of $A_p$ at $q$ is at least $\pi$. Since $\gamma_q \subset A_p$, the annular region $A_q$ must be on the same side of $\gamma_q$ at $q$ as $A_p$. Thus, $A_q \subset A_p$.
\Cref{containment} guarantees that $\gamma_r \subset A_q$. To see
$\gamma_p \cap \gamma_r \subset \gamma_p \cap \gamma_q$
observe that if $x \in \gamma_r \cap \gamma_p$, then $x \in \gamma_r \cap \partial A_p$. Since $\gamma_r \subset A_q$, we have $x \in A_q \cap \partial A_p$. But since $A_q \subset A_p$, any point in both $A_q$ and $\partial A_p$ must lie in $\partial A_q=\gamma_q$.

It remains to handle the case when $\gamma_q$ is not simple. By \Cref{lollipop}, $\gamma_q$ is a lollipop. Then $\gamma_q=\alpha \bullet \gamma_{q'} \bullet \alpha^{-1}$ for some $q' \in \gamma_q$. It is possible that $r \in \alpha$. But in this case $\gamma_r$ is the lollipop formed by removing the arc of $\alpha$ from $q$ to $r$, since this yields a trail. Thus in this case $\gamma_r \subset \gamma_q$ and the conclusion is trivially true. So, we may assume that $r \in \gamma_{q'}$. It is possible that $\gamma_{q'}$ is a closed trail, but in this case we have $\gamma_r=\gamma_{q'} \subset \gamma_q$ which again trivially leads to the desired conclusion. Otherwise $\gamma_{q'}$ is a simple closed curve but not a closed trail, and so $A_{q'}$ is well defined. Since $\gamma_q$ is a trail, the side of $\gamma_{q'}$ where the angle at $q'$ appears with measure less than $\pi$ must not contain $\alpha$. Thus, $A_{q'} \subset A_p$. We have $\gamma_r \subset A_{q'} \subset A_p$, so repeating the argument from the end of the previous paragraph shows that $\gamma_r \cap \gamma_p \subset \gamma_{q'} \cap \gamma_p$ and we also have $\gamma_{q'} \subset \gamma_q$.
\end{proof}

\begin{proof}[Proof of \Cref{closed trails exist}]
Let $p \in A$. Assume $\gamma_p$ exists and that $\gamma_q$ exists for all $q \in \gamma_p$. We will show that some $\gamma_q$ is a closed trail.

First of all, we can assume without loss of generality that $\gamma_p$ is a simple curve. (Otherwise replace $\gamma_p$ by the simple closed subarc guaranteed by \Cref{lollipop}, and observe that the hypotheses still hold.) We may also assume that $\gamma_p$ is not a closed trail, or else the conclusion is trivial.

For $q \in \gamma_p$, let $I_q=\gamma_p \cap \gamma_q$, which is a closed connected subset of $\gamma_p$.
Define
$${\mathcal I} = \{I_q:~q \in \gamma_p\}.$$
This collection is partially ordered by inclusion. \Cref{nesting} guarantees that $r \in I_q$ implies that $I_r \subset I_q$, so any nested family in ${\mathcal I}$ has a lower bound. Thus Zorn's Lemma tells us that ${\mathcal I}$ has a minimal element $I_{\min}$.

We break into several cases. First, it could be that $I_{\min}=\gamma_p$. In this case choose a $q \in \gamma_p \setminus \{p\}$. We see that $I_q=I_{\min}$ and thus $\gamma_q=\gamma_p$. Thus, $\gamma_p$ is a closed trail by \Cref{equality implies trail}.

Now suppose that $I_{\min}$ is a nondegenerate interval. Choose distinct $q$ and $r$ from the interior of $I_{\min}$. Then both $\gamma_q$ and $\gamma_r$ contain $I_{\min}$. Since $\gamma_q$ and $\gamma_r$ contain arcs in two directions leaving $q$ and $r$, respectively, these curves are not lollipops. Let $q'$ and $r'$ denote the endpoints of $I_{\min}$. Each of $\gamma_q$ and $\gamma_r$ is the union of two arcs joining $q'$ to $r'$, namely $I_{\min}$ and its complement. By \Cref{topological annulus}, these two arcs that are complements of $I_{\min}$ are homotopic rel endpoints. Thus by \Cref{no bigons}, they are equal and so we must have $\gamma_q=\gamma_r$. This common curve must be a trail by \Cref{equality implies trail}.

The last possibility is that $I_{\min}$ consists of a single point, call it $q$. We claim that $\gamma_q$ is a trail. Suppose to the contrary that $\gamma_q$ is not a closed trail. We break into subcases.

First, it could be that $\gamma_q$ is a lollipop, so $\gamma_q=\alpha \bullet \gamma_{r} \bullet \alpha^{-1}$. This case is illustrated on the left side of \Cref{fig:closed_trail_argument}. Because $\gamma_q \cap \gamma_p = \{q\}$, the curves $\gamma_p$ and $\gamma_{r}$ are disjoint and bound an annulus $A'$. We will apply the \gaussbonnet to $A'$. The only interior angle whose measure is less than $\pi$ occurs at $p$, and the interior angle at $r$ is at least $2\pi$. So, the quantity on the left side of \eqref{eq:Gauss-Bonnet} is negative. But the Euler characteristic of the annulus is zero, so this is a contradiction. It follows that $\gamma_q$ could not have been a lollipop after all.

\begin{figure}[htb]
\centering
\includegraphics[width=\textwidth]{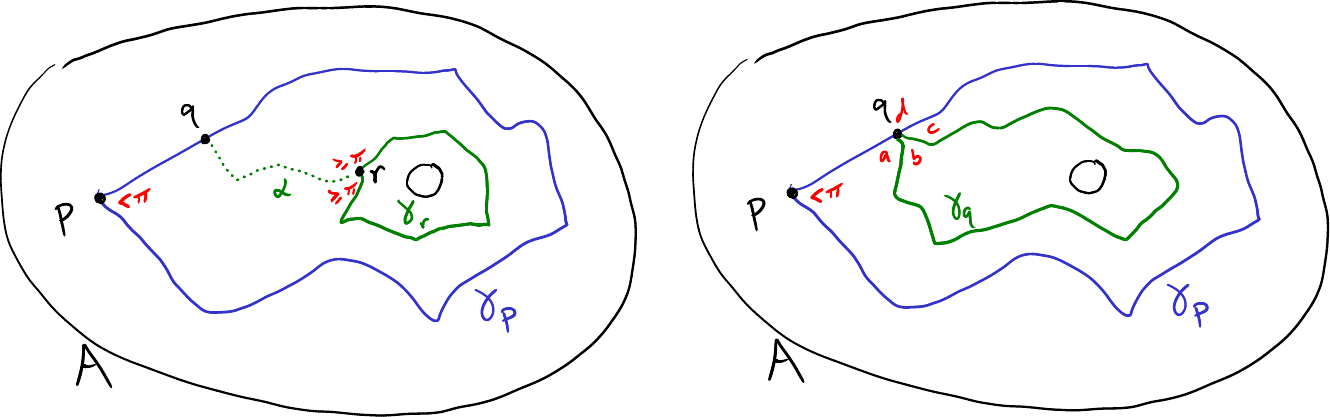}
\caption{Cases of the proof of \Cref{closed trails exist} when $I_{\min}=\{q\}$.}
\label{fig:closed_trail_argument}
\end{figure}

\compat{I stopped using the term ``interior angle'' in this paragraph except when referring to the region $R$. Instead when I refer to measuring angles along $\gamma_p$ and $\gamma_q$, I explicitly state which side the angle should be measured on. Dec 15, 2022.}
Otherwise $\gamma_q$ is a simple closed curve, but not a closed trail. This case is illustrated on the right side of \Cref{fig:closed_trail_argument}. This case is ruled out by another Gauss-Bonnet computation, this time involving the region $R$ between $\gamma_p$ and $\gamma_q$. This region is a topological disk that touches itself at the point $q$, but it has a lift to the zebra plane that does not touch itself so the \gaussbonnet applies. The right hand side of \eqref{eq:Gauss-Bonnet} is therefore $2\pi$. We will show the left side sums to at most $\pi$, giving us our desired contradiction. To get this upper bound, we must sum $\pi-\theta$ where $\theta$ varies over the interior angles in the boundary of the region $R$. Let $\theta_1, \ldots, \theta_m$ denote the measures of angles over all points where $\gamma_p$ bends, measured on the side of $\gamma_p$ containing $R$. Let $\eta_1, \ldots, \eta_n$ denote the measures of angles over all points where $\gamma_q$ bends, again taken using the side of $\gamma_q$ where $R$ resides. Note that in making these definitions, we have intentionally included angles at $q$ that do not appear in the boundary of $R$, and omitted the angles at $q$ that do appear. To fix this, observe that $\gamma_p$ and $\gamma_q$ come together at $q$ to form four angles. We will denote the measures of these angles by $a$, $b$, $c$ and $d$, as illustrated in \Cref{fig:closed_trail_argument}.
If $\theta_1$ is the angle of $\gamma_p$ based at $q$ measured on the side of $\gamma_p$ containing $R$, then $\theta_1=a+b+c$. Similarly, if $\eta_1$ is the angle of $\gamma_q$ based at $q$ measured on the side containing $R$, we have $\eta_1=a+c+d$. The interior angles of $R$ based at $q$ that appear in the boundary of $R$ are given by $a$ and $c$. Therefore, the total contribution of the boundary to the left side of \eqref{eq:Gauss-Bonnet} is given by
\begin{equation}
\star = \sum_{i=1}^m (\pi-\theta_i) + \sum_{i=1}^n (\pi-\eta_i) - (\pi-\theta_1) - (\pi-\eta_1) + (\pi-a) + (\pi - c).
\label{eq:star}
\end{equation}
Suppose $\theta_2$ is the angle of $\gamma_p$ measured at $p$ on the side containing $R$. Since $\gamma_p$ is simple but not a closed trail, $\theta_2<\pi$. Since $\gamma_p$ is a trail, this is the only positive term in the sum $\sum_{i=1}^m (\pi-\theta_i)$ and since this sum adds to an integer multiple of $\pi$ by \Cref{loop contribution}, we see that $\sum_{i=1}^m (\pi-\theta_i) \leq 0$. Since $\gamma_q$ is simple but not a closed trail, we have that $b<\pi$. (The angle $b$ must be the one less than $\pi$, because the complement contains $d$ which has measure at least $\pi$ because $\gamma_p$ is a trail.) Therefore, $\eta_1=a+c+d=(a+b+c+d)-b>a+b+c+d-\pi$, and $\pi-\eta_1<2\pi-a-b-c-d$. The other $\eta_i$ contribute nonpositively to the sum, and since again the sum must be an integer multiple of $\pi$, we have
$$\sum_{i=1}^n (\pi-\eta_i) \leq \pi -a -b -c -d.$$
Also we can simplify the remaining terms in \eqref{eq:star}:
$$- (\pi-\theta_1) - (\pi-\eta_1) + (\pi-a) + (\pi - c)
= a+b+c+d.$$
Plugging all these quantities into \eqref{eq:star}, we see that
$$\star \leq 0 + (\pi -a -b -c -d) + (a+b+c+d) = \pi.$$
This proves that the contribution of the boundary to the left side of \eqref{eq:Gauss-Bonnet} is at most $\pi$. The contribution of any singularities in the interior of $R$ is nonpositive, so the left side of \eqref{eq:Gauss-Bonnet} is at most $\pi$, while the right side is $2\pi$ as indicated above. This is a contradiction, so $\gamma_q$ cannot be simple and not a closed trail.
\end{proof}

\subsection{Uniqueness of closed trails}

As in the previous two subsections, we let $Z$ be a zebra plane, $\delta:Z \to Z$ be a fixed-point free zebra automorphism, and $A=Z/\langle\delta\rangle$ be the annular quotient.

\begin{theorem}
\label{bounded annulus}
If $\gamma_1$ and $\gamma_2$ are distinct closed trails that are core curves of $A$, then they are disjoint and bound a compact annulus $K \subset A$ such that there are no singularities in the interior of $K$ and all interior angles at singularities in $\partial K=\gamma_1 \cup \gamma_2$ are of measure $\pi$.
\end{theorem}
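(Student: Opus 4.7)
The plan is to combine Proposition \ref{simple and disjoint} with the \gaussbonnet applied to an annular region bounded by $\gamma_1 \cup \gamma_2$.

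First I would argue disjointness. Since $\gamma_1$ and $\gamma_2$ are core curves of $A$, their homology classes generate $H_1(A;\Z)$. By \Cref{simple and disjoint}, each covers a simple closed trail, but the covering multiplicity is determined by the homology class, so the multiplicity is $\pm 1$ and each $\gamma_i$ is itself simple. Moreover, if $\gamma_1 \cap \gamma_2 \neq \emptyset$, then \Cref{simple and disjoint} says they cover the same simple closed trail, whence $\gamma_1 = \gamma_2$ up to orientation; since both represent a generator of $H_1(A;\Z)$ with the same orientation induced from the zebra structure, this contradicts the assumption that they are distinct. Hence $\gamma_1 \cap \gamma_2 = \emptyset$.

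Next I would identify $K$. Two disjoint simple core curves in the open annulus $A \cong \bbS^1 \times \R$ decompose $A \setminus (\gamma_1 \cup \gamma_2)$ into three connected components, two non-compact ends and one relatively compact middle component. Let $K$ be the closure of the middle component, so $K$ is a compact annulus with $\partial K = \gamma_1 \cup \gamma_2$, both boundary components being piecewise leaf curves with finitely many pieces (since closed trails pass through only finitely many singularities per period).

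Now I would apply the \gaussbonnet to $K$. Since $\chi(K) = 0$, we obtain
$$\sum_{q \in \partial K}\bigl(\pi - \theta_q\bigr) \;=\; \sum_{p \in \Sigma \cap K^\circ} \pi \alpha(p).$$
At each singularity $q \in \partial K$, the point $q$ lies on one of the trails $\gamma_1$ or $\gamma_2$; the angle condition for trails forces both angles at $q$ to be at least $\pi$, so in particular the interior angle of $K$ satisfies $\theta_q \geq \pi$. Hence each term on the left is $\leq 0$. On the right, every interior singularity $p$ has $\alpha(p) \geq 1$ (because $K^\circ \subset A$ which is a quotient of the zebra plane $Z$ on which $\alpha \geq 1$ at singularities), so each term is $\geq 0$. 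The only way both sides can be equal is if every term vanishes: this forces $\theta_q = \pi$ at every singularity of $\partial K$ and $\Sigma \cap K^\circ = \emptyset$.

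There is no serious obstacle here — the argument is entirely driven by \Cref{simple and disjoint} for disjointness and by the sign constraints in the \gaussbonnet identity. The only thing to take a little care with is confirming that the standard complement-of-disjoint-core-curves topology of an annulus yields the desired compact $K$ with two-component boundary, which is immediate from the product structure on $A$.
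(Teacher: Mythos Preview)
Your proof is correct and follows essentially the same approach as the paper: disjointness via \Cref{simple and disjoint}, then the \gaussbonnet with $\chi(K)=0$ forces both the boundary sum (nonpositive since each $\theta_q \ge \pi$) and the interior sum (nonnegative since $\alpha \ge 1$ at singularities of a zebra plane quotient) to vanish. The paper's proof is terser but identical in substance.
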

\begin{proof}
That $\gamma_1$ and $\gamma_2$ are simple and disjoint follows from \Cref{simple and disjoint}. Because they are simple disjoint core curves, they bound a compact annulus $K$ as described. We apply the \gaussbonnet. The Euler characteristic of the annulus $K$ is zero.
All singularities in the interior of $A$ contribute negatively to the left side of \eqref{eq:Gauss-Bonnet}, because there are no $\pi$-singularities. Also because $\gamma_1$ and $\gamma_2$ are trails, the boundary contributes nonpositively and positively unless all interior angles are $\pi$. We conclude that there can be no singularities in the interior of $K$ and all interior angles on the boundary have measure $\pi$.
\end{proof}

We have the following trivial consequence:

\begin{corollary}
\label{unique case}
If $\gamma$ is a closed trail in a zebra surface $S$ such that $\gamma$ has at least one bending angle larger than $\pi$ on each side, then $\gamma$ is the unique closed trail in the conjugacy class associated to $\gamma$ in $\prpi(S,p_0)$.
\end{corollary}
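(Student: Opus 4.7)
The plan is to transfer the problem to the annular quotient $A = \tilde S/\langle \Delta_\gamma \rangle$ of \Cref{annulus} and then invoke \Cref{bounded annulus}. Suppose, for contradiction, that $\gamma'$ is a closed trail in $S$, distinct from $\gamma$, representing the same conjugacy class $\conj{\gamma} \in \prpi(S,p_0)$. Then $\Delta_{\gamma'}$ is conjugate to $\Delta_\gamma$, so after choosing lifts appropriately we may assume that $\gamma$ and $\gamma'$ both lift to bi-infinite parameterized trails $\tilde\gamma, \tilde\gamma': \R \to \tilde S$ that are invariant under the same deck transformation $\Delta$. Projecting to the annulus $A$ produces closed trails $\bar\gamma, \bar\gamma'$, both of which are core curves (they wrap once, by construction, around the $\langle\Delta\rangle$-quotient).

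Next I would argue that $\gamma \neq \gamma'$ forces $\bar\gamma \neq \bar\gamma'$: the intermediate covering $A \to S$ pushes closed curves to closed curves injectively once the deck transformation has been matched, so coincidence in $A$ would give coincidence in $S$. Now \Cref{bounded annulus} applies to the two distinct core closed trails $\bar\gamma, \bar\gamma'$ in $A$, producing a compact subannulus $K \subset A$ with $\partial K = \bar\gamma \cup \bar\gamma'$, no interior singularities, and the property that \emph{every} interior angle of $K$ at a singular point of $\partial K$ equals $\pi$. In particular, all bending angles of $\bar\gamma$ measured on the side of $\bar\gamma$ facing into $K$ are equal to $\pi$.

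This directly contradicts the hypothesis. Bending angles are local data preserved by the covering $\tilde S \to A \to S$, so $\bar\gamma$ inherits from $\gamma$ the property of having at least one bending angle strictly greater than $\pi$ on each of its two sides. In particular, on the side facing $K$ there must be some bending angle exceeding $\pi$, contradicting the conclusion of \Cref{bounded annulus}. Hence no such $\gamma'$ can exist, and $\gamma$ is unique in its conjugacy class.

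The only real subtlety, and the step I would take most care with, is the bookkeeping in the first paragraph: making sure that distinct closed trails in the same PR free homotopy class do give rise to distinct closed core curves in a common annular quotient. Once the lifts are aligned with the same $\Delta$, the rest is a one-line application of \Cref{bounded annulus} against the asymmetric bending-angle hypothesis.
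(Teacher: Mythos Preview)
Your proposal is correct and follows essentially the same route as the paper: lift to the PRU cover, pass to the annular quotient $A=\tilde S/\langle\Delta_\gamma\rangle$, lift a hypothetical second closed trail to $A$, and apply \Cref{bounded annulus} to obtain the contradiction with the bending-angle hypothesis. The only point the paper makes explicit that you gloss over is the verification that $A$ really is an annulus (equivalently, that $[\gamma]$ is non-polar): since the lift $\tilde\gamma$ is a bi-infinite trail along which $\Delta_\gamma$ translates, $\Delta_\gamma$ cannot have order two, so \Cref{annulus} applies.
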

\begin{proof}
Let $Z$ be the PRU cover of $S$.
Then $\gamma$ has a lift to a trail $\tilde \gamma \subset Z$, which is bi-infinite by \Cref{proper}. The deck transformation $\Delta_\gamma$ preserves $\tilde \gamma$, and since $\tilde \gamma$ is bi-infinite, $\Delta_\gamma$ cannot be order two. It then follows from results in \Cref{sect:curves and deck transformations} that $A=Z/\langle \Delta_\gamma \rangle$ is an annulus. If $\eta$ were a distinct closed trail determining the same conjugacy class, then we could lift $\eta$ to a distinct closed trail on $A$ and \Cref{bounded annulus} would lead to a contradiction to the hypothesized property about angles on each side of $\gamma$.
\end{proof}

\subsection{Foliating quadrilaterals}
We will describe a result that produces a foliation of a quadrilateral. We do this to produce the foliations in cylinders. Concretely, consider an immersed trapezoid in a zebra surface, where the restriction to the interior is an embedding and the two parallel sides have the same image. Assuming the other sides do not meet each other, this results in an embedded annulus in the surface and there is a natural homeomorphism from one parallel side to the other coming from the edge identification. This homeomorphism determines a foliation, because of the following lemma. The statement is depicted on the left side of \Cref{fig:edge_homeomorphism}.
\compat{This paragraph was expanded to explain the purpose of the lemma. Dec 15, 2022.}

\begin{lemma}[Edge homeomorphism foliation]
\label{edge homeomorphism}
Let $P$ be the quadrilateral $abcd$ in a zebra plane $Z$ whose interior angles add to $2\pi$ and whose interior angles are all less than or equal to $\pi$. Let $h:\overline{ab} \to \overline{dc}$ be a homeomorphism such that $h(a)=d$ and $h(b)=c$. Then the collection of segments of leaves
$$\{ \overline{x~h(x)}~:~ x\in \overline{ab}\}$$
is pairwise disjoint, covers $P$, and foliates the interior of $P$. Moreover, the function $\mu:P \to \hat \R$ that sends a point $p \in P$ to the slope of the segment $\overline{x~ h(x)}$ containing $p$ is continuous.
\end{lemma}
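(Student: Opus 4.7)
The plan is to assemble the lemma from four ingredients: absence of interior singularities, existence of each individual segment $\overline{x\,h(x)}$, pairwise disjointness, and the global foliation and continuity statements. Since the angles of $P$ sum to $2\pi$, \Cref{quadrilateral} forces $P$ to have no interior singularities, and each $\sF_m$ restricts to an ordinary foliation of $P^\circ$. For existence, the cases $x=a$ and $x=b$ reduce to the edges $\overline{ad}$ and $\overline{bc}$; for $x$ in the interior of $\overline{ab}$, I would for each slope $m\in\hat{\R}\setminus\{m_{ab}\}$ follow the leaf of $\sF_m$ through $x$ into $P$ to its first exit point $e(x,m)\in\partial P\setminus\overline{ab}^\circ$. \Cref{no bigons} makes $m\mapsto e(x,m)$ injective, \Cref{segment continuity} gives continuity, and analyzing the behavior of $e(x,\cdot)$ as $m\to m_{ab}$ from the two sides shows it sweeps a full traverse of $\partial P\setminus\overline{ab}^\circ$ from $b$ to $a$. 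Intermediate value then furnishes a unique slope $m(x)$ with $e(x,m(x))=h(x)$, thereby defining $\overline{x\,h(x)}$.

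The main obstacle is pairwise disjointness. For distinct $x_1,x_2\in\overline{ab}$ with $x_1$ closer to $a$, the hypotheses $h(a)=d$ and $h(b)=c$ force the four endpoints to lie around $\partial P$ in the unlinked cyclic order $x_1,x_2,h(x_2),h(x_1)$. If the two segments shared a common slope $m$, then $m\ne m_{ab}$ would force a single leaf of $\sF_m$ to meet $\overline{ab}$ in both $x_1$ and $x_2$, violating \Cref{no bigons}. If they had distinct slopes, then \Cref{no bigons} (ruling out a common subarc) combined with \Cref{transversality} reduces any intersection to at most a single transverse crossing; but the standard Jordan curve argument for simple arcs in the disk $P$ with endpoints in unlinked cyclic order around $\partial P$ shows such arcs must cross an even number of transverse times, so the only possibility is zero crossings.

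For the covering and foliation claims, I would parameterize $\gamma:[0,1]\to\overline{ab}$ with $\gamma(0)=a$, $\gamma(1)=b$, and define $\Phi:[0,1]^2\to P$ by sending $(t,s)$ to the point at fractional parameter $s$ along $\overline{\gamma(t)\,h(\gamma(t))}$. Continuity of $\Phi$ follows from \Cref{segment continuity} and injectivity from the disjointness above; the boundary restriction $\Phi|_{\partial [0,1]^2}$ traces $\partial P$ once counterclockwise, so has degree one, and a standard no-retraction-of-disk-to-its-boundary argument forces $\Phi$ to be surjective. A continuous bijection between compact disks is a homeomorphism, so $\Phi$ carries the foliation of $(0,1)^2$ by vertical segments to our collection of leaves restricted to $P^\circ$, establishing the foliation property. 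Continuity of $\mu:P\to\hat{\R}$ then follows from \Cref{slope continuity} combined with the constancy of slope along each segment and the continuity of $\Phi^{-1}$.
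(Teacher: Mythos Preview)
Your strategy is reasonable and in places genuinely different from the paper's, but there is one real gap. The map $\Phi(t,s)$ as ``the point at fractional parameter $s$ along $\overline{\gamma(t)\,h(\gamma(t))}$'' is not well-defined: leaves in a zebra plane carry no canonical length or parameterization, so ``fractional parameter'' has no meaning. You cite \Cref{segment continuity} for continuity of $\Phi$, but that theorem only yields Fell-topology convergence of the arcs $\overline{\gamma(t)\,h(\gamma(t))}$ as closed subsets of $Z$; it does not supply a continuously varying family of parameterizations $[0,1]\to\overline{\gamma(t)\,h(\gamma(t))}$, and without one your boundary-degree argument for surjectivity cannot start. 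The paper resolves exactly this point by introducing an auxiliary foliation $\sF'$ of $P$ by leaves joining $\overline{ad}$ to $\overline{bc}$ (a directional foliation if those edges are parallel, otherwise via \Cref{quadrilateral foliation}) and using intersection with $\sF'$ as the second coordinate: the resulting map $f_1\times f_2:P\to\overline{ab}\times\overline{bc}$ is a continuous bijection from a compact space to a Hausdorff space, hence a homeomorphism. Your degree argument could be repaired with the same device for the $s$-coordinate.

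The remaining steps are a legitimate alternative route, though with one further soft spot. The paper's proof first applies the \surgery to replace a Euclidean model $P'\subset\R^2$ by $P$, yielding a singularity-free zebra plane $Z'$; then convexity of $P$ (\Cref{thm:polygonal convexity}) produces each $\overline{x\,h(x)}$ as a single leaf segment directly, avoiding both your exit-point construction and any worries about singular vertices. For covering, rather than a degree argument, the paper fixes $q\in P^\circ$ and runs an intermediate-value comparison on $\overline{ab}$ between $\mu(x)$ (continuous by \Cref{slope continuity}) and $\nu(x)=\text{slope of }\overline{xq}$ (continuous and monotone by \Cref{triangle2}). Your disjointness argument via the unlinked-endpoint parity count is correct and essentially equivalent to the paper's ``opposite sides'' observation. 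The soft spot in your existence step is the assertion that $e(x,m)$ sweeps all of $\partial P\setminus\overline{ab}^\circ$ as $m$ ranges over $\hat\R\setminus\{m_{ab}\}$: this is not immediate from \Cref{segment continuity} and needs either \Cref{vertex foliations} applied with $x$ as an added vertex, or two applications of \Cref{triangle2} to the triangles $\triangle xbc$, $\triangle xcd$, $\triangle xda$.
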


\begin{figure}[htb]
\centering
\includegraphics[height=1.5in]{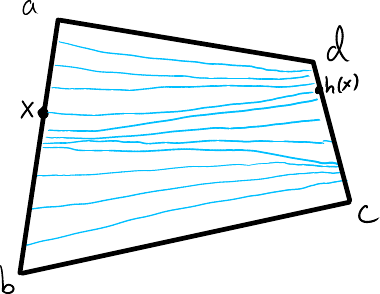}
\caption{A depiction of the statement of \Cref{edge homeomorphism}.}
\label{fig:edge_homeomorphism}
\end{figure}

\begin{proof}
\Cref{quadrilateral} guarantees that there are no singularities in the interior of $P$. We can find a Euclidean polygon $P' \subset \R^2$ with sides of the same slopes. Choosing an appropriate homeomorphism $\partial P \to \partial P'$, we may apply the \surgery to construct a new zebra plane $Z'=(\R^2 \setminus P') \cup P$. Observe that $Z'$ has no singularities. For the proof we consider $P$ to be a subset of $Z'$.  This will simplify the logic below (but is not essential for this argument), because $P$ considered as a subset of $Z$ could have singularities in $\partial P$.

First we will make some remarks about the geometry of $P$. Since all the interior angles are less than $\pi$, $P$ is convex by \Cref{thm:polygonal convexity}. Convexity guarantees that any two points in $P$ can be joined by a trail, but because $Z'$ does not contain singularities, any two points can be joined by segments of leaves. We will be repeatedly use that a distinct pair of leaves can intersect in at most one point.

The above remarks show that for each $x \in \overline{ab}$, the segment $\overline{x~h(x)}$ exists. Furthermore, we claim that distinct segments $\overline{x~h(x)}$ and $\overline{x' h(x')}$ cannot intersect. If they did intersect, then they intersect at exactly one point transversely, because of the stellar structure at the point and \Cref{no bigons}. Therefore, if $\overline{x~h(x)} \cap \overline{x' h(x')} \neq \emptyset$, then the points $x$ and $h(x)$ would have to lie on opposite sides of $\overline{x' h(x')}$, but that is impossible from our definition of $h$.

The restriction of $\mu$ to $\overline{ab}$ is continuous as a consequence of \Cref{slope continuity}.

Now we claim that the collection of segments $\overline{x~h(x)}$ with $x \in \overline{ab}$ covers $P$. It is clear that every point in $\partial P$ is covered. Let $q$ be a point in the interior of $P$. We will show $q$ lies in one of the segments.
Let $m_0$ denote the slope of $\overline{aq}$ and $m_1$ denote the slope of $\overline{bq}$.
Assume without loss of generality that the slope of $\overline{ab}=+\infty$. Then $m_0,m_1 \in \R$ and and $m_0<m_1$ by \Cref{triangle1}. By considering the order of segments emanating from $a$, we see that $m_0<\mu(a)$. Similar considerations at $b$ tell us that $\mu(b)<m_1$.
By \Cref{triangle2}, $\triangle abq$ is foliated by leaves of slope in $[m_0, m_1]$ through $q$, and the function $\nu:\overline{ab} \to [m_0,m_1]$ sending $x$ to the slope of $\overline{xq}$ is continuous. Since both $\nu$ and $\mu|_{\overline{ab}}$ are continuous, take real values, and satisfy
$$\nu(a)=m_0 < \mu(a) \quad \text{and} \quad \mu(b)<\nu(b)=m_1,$$
there must be an $x \in \overline{ab}$ such that $\mu(x)=\nu(x)$. But then the leaf
$\overline{x~h(x)}$ has slope $\nu(x)$ and so passes through $q$. This completes the proof of our claim that $P$ is covered. \commf{Why does the proof not end here?} \compat{I don't know that if you have a partition of a quadrilateral into sets homeomorphic to a closed interval  (say connecting opposite sides), then the partition is a foliation. This might be true, but I'm not sure! Note that we are only working with continuous curves, they might be fractal...}

Since the collection $\{\overline{x~h(x)}\}$ is pairwise disjoint and covers $P$, there is a well defined function $f_1:P \to \overline{ab}$ that sends $p \in P$ to the $x \in \overline{ab}$ such that $p \in \overline{x~h(x)}$. Choose a foliation $\sF'$ by segments of leaves joining $\overline{ad}$ to $\overline{bc}$. (If $\overline{ab}$ and $\overline{cd}$ are parallel, $\sF'$ can be taken to be a directional foliation, otherwise we can use \Cref{quadrilateral foliation} to produce such a foliation.) Define $f_2:P \to \overline{bc}$ to send $p \in P$ to the intersection of the leaf of $\sF'$ through $p$ with $\overline{bc}$. We claim that
$$f_1 \times f_2: P \to \overline{ab} \times \overline{bc}$$
is a homeomorphism. Note that the codomain is naturally homeomorphic to a rectangle in $\R^2$. The map is one-to-one because each $p \in P$ lies in exactly one
$\overline{x~h(x)}$ and exactly one leaf of $\sF'$. The map $f_1 \times f_2$ is surjective  since every $\overline{x~h(x)}$ intersects every leaf of $\sF'$.
Since $\sF'$ is a foliation, we see that $f_2$ is continuous. To see that $f_1$ is continuous observe that
$f_1^{-1}(\overline{xy}\setminus \{x,y\})$
is the quadrilateral with vertices $x$, $h(x)$, $h(y)$ and $h(x)$ with the edges $\overline{x~h(x)}$ and $\overline{y~h(y)}$ removed. This set is open as a subset of $P$. We've shown that $f_1 \times f_2$ is a continuous bijection. Since the domain is compact and the codomain is Hausdorff, $f_1 \times f_2$ is a homeomorphism.
Since $f_1 \times f_2$ is a homeomorphism, the collection of segments of the form $\overline{x~h(x)}=f_1^{-1}(x)$ foliate $P$. \compat{This last paragraph was simplified on Dec 14, 2022 after a discussion with Ferran.}
\end{proof}

\subsection{The closed trail theorem for annuli}
\label{sect:cylinders}
We continue the notation from the previous subsection. The following result guarantees that we can extend certain closed trails to open sets foliated by closed leaves.

\begin{lemma}
\label{open set foliated by closed leaves}
Let $\gamma$ be a closed trail that is a core curve of an annulus $A=Z/\langle \delta \rangle$. Let $A_+$ denote one of the connected components of $A \setminus \gamma$. Suppose all interior angles in $A_+$ at singularities along $\gamma$ have measure $\pi$. Then there is an open annulus $U \subset A_+$ one of whose boundary components is $\gamma$ that is foliated by closed leaves (of possibly varying slope). Furthermore the function $\mu:U \cup \gamma \to \hat \R$ sending a point $p$ to the slope of the closed trail through $p$ is continuous.
\end{lemma}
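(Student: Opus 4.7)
The plan is to work in the zebra plane $Z$ and lift $\gamma$ to a $\delta$-periodic bi-infinite trail $\tilde\gamma\colon\R\to Z$ with $\tilde\gamma(t+T)=\delta(\tilde\gamma(t))$. Let $Z_+$ denote the component of $Z\setminus\tilde\gamma$ lifting $A_+$; by hypothesis, all interior angles on the $Z_+$ side at singularities of $\tilde\gamma$ are exactly $\pi$. Our strategy is to construct a $\delta$-invariant open region $\tilde V\subset Z_+$ adjacent to $\tilde\gamma$ and equip it with a foliation descending to the required foliation of $V=\tilde V/\langle\delta\rangle\subset A_+$.

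To construct $\tilde V$, first decompose $\tilde\gamma$ into maximal constant-slope segments $\{\tilde\gamma_i\}$ (of slopes $m_i$) separated by singularities $\{\tilde s_i\}$. Apply \Cref{trapezoid construction} to each $\tilde\gamma_i$ to obtain a thin trapezoid $T_i\subset Z_+$ with $\tilde\gamma_i$ as one edge; by \Cref{trapezoid foliation}, $T_i$ is foliated by leaves of $\sF_{m_i}$ parallel to $\tilde\gamma_i$. At each $\tilde s_i$, the half-plane sector of angle $\pi$ on the $Z_+$ side contains a small triangle $W_i$ with vertex $\tilde s_i$, which by \Cref{vertex foliations} admits a monotone foliation by segments of leaves emanating from $\tilde s_i$, of slopes varying continuously through the interval between $m_i$ and $m_{i+1}$. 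We choose the side edges of the $T_i$'s at each $\tilde s_i$ to coincide with specific boundary rays of the $W_i$'s, and by \Cref{trapezoid construction} we are free to shrink the trapezoids as needed. Carrying out this construction for one fundamental period and then translating by $\delta$ yields a $\delta$-invariant union $\tilde V=\bigcup_i(T_i\cup W_i)$ which descends to the desired open annulus $V$ with $\gamma$ in its boundary.

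For the foliation of $V$, the local foliations on each $T_i$ and each $W_i$ assemble into a singular foliation of $\tilde V$ (with singularities only at $\{\tilde s_i\}\subset\tilde\gamma$), provided the parameters are chosen so that along each common boundary ray from $\tilde s_i$, the leaf of $\sF_{m_i}$ through a given point of the ray in $T_i$ matches the ray from $\tilde s_i$ in $W_i$ at the same slope. Following a regular leaf across one fundamental period produces a curve in $V$; $\delta$-equivariance of the construction guarantees this curve closes up into a simple closed curve wrapping once around $V$. Each such closed curve is a concatenation of leaf-segments of constant slope (within the trapezoid parts) and ray-segments emanating from singularities (within the wedge parts), and altogether these closed curves exhaust $V$. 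The slope function $\mu$ is constant ($=m_i$) on each $T_i$, varies continuously (and monotonically) across each $W_i$ by \Cref{vertex foliations}, matches across the glued edges, and extends continuously to $\gamma$ with $\mu(p)$ equal to the slope of $\gamma$ at $p$.

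The main obstacle is Step 3: verifying that the local pieces $T_i$ and $W_i$ can simultaneously be chosen $\delta$-equivariantly so that they glue edge-to-edge into a connected open set with a globally consistent foliation, and that the resulting ``leaves'' of the foliation on $\tilde V$ really close up into simple loops on $V$ (rather than spiraling). This requires carefully tying together the heights of the trapezoids across a full fundamental period, using the freedom granted by \Cref{trapezoid construction} at every step and then transporting by $\delta$; the $\pi$-angle hypothesis is used critically here, as it is what makes the wedges $W_i$ half-plane sectors in which \Cref{vertex foliations} produces the required interpolation of slopes between $m_i$ and $m_{i+1}$.
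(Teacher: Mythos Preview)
Your high-level plan—lift to $Z$, build a $\delta$-invariant strip along $\tilde\gamma$ inside $Z_+$, foliate it, and project—matches the paper's. But the implementation has two genuine problems.

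First, a consequence of the hypothesis that you have missed: if the bending angle on the $Z_+$ side at a singularity of $\tilde\gamma$ is exactly $\pi$, then the slope is \emph{unchanged} across that singularity (two rays in $\Pi_n$ separated by angle $\pi$ have the same value under the stellar function $\rho_n$). Hence all your $m_i$ are equal to a single slope $m$, and the wedges $W_i$ are degenerate. More importantly, the lemma asks for a foliation by \emph{closed leaves}—each a leaf of a single $\sF_{m'}$, hence of constant slope—with $\mu$ recording that constant slope; the phrase ``possibly varying slope'' means the slope varies from leaf to leaf, not along a single leaf. Your proposed curves are concatenations of leaf-segments of different slopes glued via rays from singularities; even if they could be made coherent, they would not be leaves, and your $\mu$ records the local slope of a piecewise curve rather than the slope of a closed leaf. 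The radial foliation of $W_i$ by rays from $\tilde s_i$ in any case does not produce disjoint curves crossing from $T_i$ to $T_{i+1}$: all those rays share the point $\tilde s_i$.

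Second, even in the correct constant-slope picture, the real issue remains: why should leaves near $\tilde\gamma$ in $Z_+$ close up under $\delta$ rather than spiral? The paper resolves this in one stroke. Since $\tilde\gamma|_{[0,1]}$ has all left-side bending angles equal to $\pi$, \Cref{trapezoid construction} applies to the \emph{entire} fundamental segment at once, yielding a single rectangle $\tilde R\subset \overline{Z_+}$ with $\tilde\gamma([0,1])$ as one edge and short transverse sides at $\tilde\gamma(0)$ and $\tilde\gamma(1)$. Because the interior angle in $A_+$ at $\gamma(0)=\gamma(1)$ is $\pi$, these two transverse sides project to the same arc in $A$; after trimming to endpoints $a',b'$ with matching images, the deck transformation supplies a homeomorphism $h\colon\overline{\tilde\gamma(0)\,a'}\to\overline{\tilde\gamma(1)\,b'}$. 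Then \Cref{edge homeomorphism} foliates the quadrilateral $a'\,\tilde\gamma(0)\,\tilde\gamma(1)\,b'$ by single leaf-segments $\overline{x\,h(x)}$ of continuously varying slope, each of which projects to a genuine closed leaf in $A_+$. That lemma is exactly the device that handles the closing-up obstacle you flag as ``the main obstacle''; your proposal has no substitute for it.
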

\begin{proof}
Let $\pi:Z \to A$ denote the covering by the zebra plane. Choose a lift $\tilde \gamma:[0,1] \to Z$, which is an arc of a trail.
Let $\tilde A_+=\pi^{-1}(A_+)$. Using \Cref{trapezoid construction}, we can construct a rectangle $\tilde R \subset \tilde A_+$ such that one edge is given by $\tilde \gamma$. Such a rectangle is convex by
\Cref{thm:polygonal convexity} and cannot contain singularities by \Cref{quadrilateral}. Let $a$ and $b$ be the other vertices of $\tilde R$ so that the cyclically ordered vertices are $a$, $\tilde \gamma(0)$, $\tilde \gamma(1)$, and $b$. Because the measure of the interior angle of $A_+$ at the common point $\gamma(0)=\gamma(1)$ is $\pi$, the image under the covering $\pi$ of the rays $\overrightarrow{\tilde \gamma(0) a}$ and $\overrightarrow{\tilde \gamma(1) b}$ must coincide. So, we can choose points $a'$ from the interior of $\overline{\tilde \gamma(0) a}$ and $b'$ from the interior of $\overline{\tilde \gamma(1) b}$ such that the images of $\overrightarrow{\tilde \gamma(0) a'}$ and $\overrightarrow{\tilde \gamma(1) b'}$ coincide in $A$. This necessarily implies that there are no singularities on the interiors of these paths.
Also since they map to the same path in $A$, there is a natural homeomorphism
$h:\overline{\tilde \gamma(0) a'} \to \overline{\tilde \gamma(1) b'}$ obtained by restricting a deck transformation. By convexity of $\tilde R$, we can construct the arc of a trail $\overline{a' b'}$. Let $\tilde Q$ denote the quadrilateral with vertices $a'$, $\tilde \gamma(0)$, $\tilde \gamma(1)$, and $b'$.
Let $\tilde Q^-$ denote $\tilde Q$ with $\overline{a'b'}$ removed.
Then $\tilde Q^- \setminus \tilde \gamma$ contains no singularities.
We define $U=\pi(\tilde Q^-) \setminus \gamma$.
Observe that the restriction of $\pi$ to $\tilde Q^\circ$ factors through the quotient $\tilde Q/h$.
\Cref{edge homeomorphism} guarantees that we can foliate $\tilde R$ by leaves joining $x \in \overline{\tilde \gamma(0) a'}$ to $h(x) \in \overline{\tilde \gamma(1) b'}$ and that the slopes of these leaves vary continuously. For any $x \in \overline{\tilde \gamma(0) a'} \setminus \{\tilde \gamma(0)\}$, the segment $\overline{x~h(x)}$ does not pass through any singularities and therefore projects to a closed leaf in $U$. These closed leaves therefore foliate $U$.
\end{proof}

\begin{lemma}
\label{cylinders closed}
If $K \subset A$ is compact, then the union of all closed trails contained in $K$ is closed.
\end{lemma}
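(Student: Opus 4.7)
Let $p_n \in C$ converge to $p \in A$; I must show $p \in C$. Each $p_n$ lies on some closed trail $\tau_n \subset K$, which by \Cref{lollipop} and \Cref{simple and disjoint} we may take to be simple, and hence a core curve of $A$. If some simple closed trail $\tau$ contains $p_n$ for infinitely many $n$, then $p \in \tau \subset K$ and we are done; otherwise pass to a subsequence along which the $\tau_n$ are pairwise distinct, hence pairwise disjoint by \Cref{simple and disjoint}, and their preimages $\tilde\tau_n \subset Z$ are disjoint $\delta$-invariant bi-infinite trails.

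Next, I would lift by choosing a compact fundamental domain $F$ for the $\delta$-action; the set $F \cap \pi^{-1}(K)$ is compact by proper discontinuity. Pick lifts $\tilde p_n \in F \cap \pi^{-1}(K)$ of $p_n$ and, after a subsequence, arrange $\tilde p_n \to \tilde p$. Let $\tilde A_n \subset \tilde\tau_n$ be the arc of trail from $\tilde p_n$ to $\delta(\tilde p_n)$; this arc projects bijectively onto $\tau_n \subset K$. Granting that the $\tilde A_n$ lie in a common compact subset of $Z$, \Cref{compact extension} applied to the pairs $\{\tilde p_n, \delta(\tilde p_n)\}$ produces an arc of a trail $\tilde A$ from $\tilde p$ to $\delta(\tilde p)$; by \Cref{segment continuity}, $\tilde A_n \to \tilde A$ in the Fell topology, so $\tilde A$ is contained in the compact limit of the $\tilde A_n$ and thus projects into $K$. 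The projection $\gamma$ is a loop through $p$, and the trail condition at $p$ (both angles at least $\pi$) is a closed condition that passes to the limit from the $\tau_n$ by applying \Cref{slope continuity} to the endpoints of the $\tilde A_n$ when $\tilde p$ is non-singular, and by an elementary stellar-neighborhood argument at $\tilde p$ when it is singular. Thus $\gamma$ is a closed trail in $K$ through $p$, placing $p \in C$.

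The main obstacle will be establishing the uniform compactness of the $\tilde A_n$. My strategy is to exploit the fact that the disjoint $\tilde\tau_n$ are linearly ordered via \Cref{cut by a trail}: after a further subsequence, either there exist indices $m, m'$ with all $\tilde A_n$ trapped in the closed $\delta$-invariant strip $S$ bounded by $\tilde\tau_m \cup \tilde\tau_{m'}$, or the $\tilde\tau_n$ accumulate at $\tilde p$ from only one side, in which case a transverse edge of a rectangle around $\tilde p$ produced by \Cref{trapezoid construction}---which each $\tilde A_n$ can cross at most once by \Cref{no bigons}---plays the role of the missing $\tilde\tau_{m'}$. By \Cref{bounded annulus} the quotient $S/\delta$ is a compact annulus in $A$, so $S \cap \pi^{-1}(K)$ is cocompact for $\delta$; since each $\tilde A_n$ is a single $\delta$-period of $\tilde\tau_n$ and crosses any fixed compact transverse section in $\pi(S)$ exactly once, it meets only a uniformly bounded number of $\delta$-translates of the compact piece $F \cap S \cap \pi^{-1}(K)$, yielding the required common compact set.
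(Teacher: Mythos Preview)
Your overall architecture matches the paper's: lift to $Z$, invoke \Cref{compact extension} on a sequence of arcs with converging endpoints, and argue the projection is a closed trail through $p$. Two points deserve comment.

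\textbf{The angle condition at $p$.} You lift a single period, so you must separately verify the trail condition at the gluing point. The paper avoids this by lifting \emph{two} periods, taking $\tilde\ell_n$ from $\delta^{-1}(\tilde p_n)$ to $\delta(\tilde p_n)$. Then $\tilde p_\infty$ lies in the \emph{interior} of the limiting arc $\tilde\ell_\infty$, and writing $\tilde\ell_\infty$ as the concatenation of $\overline{\delta^{-1}(\tilde p_\infty)\,\tilde p_\infty}$ with its $\delta$-translate shows immediately that the projection is a closed trail---no case analysis on whether $\tilde p$ is singular, no appeal to \Cref{slope continuity}. Your argument is salvageable but the two-period trick is cleaner.

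\textbf{The compactness argument.} This is where there is a genuine gap. In the ``both sides'' case, trapping the $\tilde A_n$ in a strip between two $\tilde\tau_m$, $\tilde\tau_{m'}$ and using \Cref{bounded annulus} is a nice idea, though your final step (each $\tilde A_n$ crosses a fixed transverse section exactly once, hence meets only boundedly many $\delta$-translates of a fundamental piece) needs the existence of such a section and some care. The real problem is the monotone case, where the $\tilde\tau_n$ accumulate on $\tilde p$ from one side only. A transverse rectangle edge near $\tilde p$ is a local object; it does not produce a second $\delta$-invariant barrier, and \Cref{bounded annulus} gives you nothing here. Being crossed at most once by each $\tilde A_n$ does not prevent $\tilde A_n$ from wandering far away in $Z$ before returning. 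You have not explained how this local edge ``plays the role of the missing $\tilde\tau_{m'}$'' to yield a compact region containing all $\tilde A_n$.

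The paper's route avoids this dichotomy entirely: cover $K$ by the interiors of finitely many generalized rectangles $\tilde P_1,\dots,\tilde P_k$ in $Z$ (via \Cref{generalized rectangles}). By \Cref{no returning trails}, each closed leaf meets each $\pi(\tilde P_i^\circ)$ in a single interval, so a minimal subcover of $\ell_n$ uses at most $k$ of them; lifting two periods then requires at most $2k+1$ lifted rectangles forming a ``polygonal chain'' (consecutive interiors overlap). Starting from the compact set $\{\delta^{-1}(\tilde p_n)\}\cup\{\delta^{-1}(\tilde p_\infty)\}$ and using proper discontinuity of $\langle\delta\rangle$, only finitely many lifted rectangles lie within $2k$ chain-steps, and their union is the desired compact set. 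This combinatorial bound is uniform in $n$ and does not care how the $\tau_n$ are ordered.

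(A minor point: there is no compact fundamental domain $F$ for $\delta$, since $A$ is an open annulus. What you actually need---and what the paper does---is to choose lifts $\tilde p_n$ converging to some $\tilde p$, which is possible because $K$ is compact.)
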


\commb{Proof of Lemma 9.18, I see nothing wrong with it but maybe it can be made shorter using some facts about Fell topology? Let Cl(K) be the subset of Cl(X) consisting of subsets contained in K. Since K is closed this is a closed subset of Cl(X). It is compact. Any $\ell_n$ as in the proof has a convergent subsequence, to a set $\ell_\infty$. As you write, each $\ell_n$ is a closed curve contained in a leaf of some slope $m_n$. Suppose I can show using something about the Fell topology, that the slopes have to converge, and that this forces $\ell_\infty$ to be a closed curve through $p_\infty$. Then we would be done, right? I am sure the above two facts are true. If you are interested I can try to formalize this. Maybe something similar is happening in the proof of theorem 1.3, case (2)?}

\compat{Hmm. Yes, this is a long argument, and it bothers me that we essentially repeat it twice in the paper. I think what you are saying gets to the heart of it. If we could simplify the argument, that would be great. It is clear that the Fell-limit $\ell_\infty$ has to pass through $p_\infty$. (If it did not, then you could find a small compact ball containing $p_\infty$ in its interior that was disjoint from $\ell_\infty$. Then by definition of the Fell topology, there is an $N$ such that $\ell_n$ doesn't intersect this ball when $n>N$. But, then $p_\infty$ can't be an accumulation point of $\ell_n$.) One the other statement: Convergence of $m_n$ doesn't seem to be the main issue, because you can always pass to a subsequence where slopes converge (since $\hat \R$ is compact).  Anyway, I don't see an easy way to improve things, but we should talk about it.}

\begin{proof}
Let $T$ denote union of all closed trails contained in $K$. We need to show that $T$ is closed, so let $p_\infty \in \overline{T} \setminus T$. We will find a closed trail $\ell_\infty \subset K$ containing $p_\infty$.

\compat{The argument here was modified. We now lift two periods to the zebra plane $Z$. This makes it easier to prove that the limit is a closed trail. Dec 18, 2022.}
Since $p_\infty \in \overline{T}$, there is a sequence $p_n \in T$ converging to $p_\infty$. Let $\ell_n \subset T$ denote the closed trail through $p_n$. By \Cref{bounded annulus}, there can be at most two closed trails in $A$ that are not closed leaves, so we may assume that each $\ell_n$ is a closed leaf. Let $\tilde p_\infty \in \pi^{-1}(p_\infty)$ be a preimage. Then we likewise select preimages $\tilde p_n \in \pi^{-1}(p_n)$ such that $\lim \tilde p_n = \tilde p_\infty$. Let $\tilde \ell_n$ denote the portion of the bi-infinite leaf $\pi^{-1}(\ell_n)$ running from $\delta^{-1}(\tilde p_n)$ to $\delta(\tilde p_n)$. Then $\tilde \ell_n$ is a lift of two periods of $\ell_n$. Assume that
\begin{equation}
\label{closed assumption}
\bigcup_n \tilde \ell_n \quad \text{is contained in a compact subset of $Z$.}
\end{equation}
(We will verify this assumption at the end of the proof.)
Then \Cref{compact extension} guarantees that there is a limiting arc of a trail $\tilde \ell_\infty$ joining $\delta^{-1}(\tilde p_\infty)$ with $\delta(\tilde p_\infty)$.
Observe also that $\tilde p_\infty \in \tilde \ell_\infty$.
(If $\tilde p_\infty \not \in \tilde \ell_\infty$ then there is a compact neighborhood $K$ of $\tilde p_\infty$ that is disjoint from $\ell_\infty$. Since $\tilde \ell_n \to \tilde \ell_\infty$ in $\Cl(Z)$, we also must have that $\tilde \ell_n \cap K = \emptyset$ for $n$ large enough. But this is impossible because $\tilde p_n \in \tilde \ell_n$ converges to $\tilde p_\infty$.)
Thus $\tilde \ell_\infty$ is the concatenation of trail arcs:
$$\tilde \ell_\infty = \overline{\delta^{-1}(\tilde p_\infty) \tilde p_\infty} \bullet \overline{\tilde p_\infty \delta(\tilde p_\infty)}=\overline{\delta^{-1}(\tilde p_\infty) \tilde p_\infty} \bullet \delta\Big(\overline{\delta^{-1}(\tilde p_\infty) \tilde p_\infty}\Big).$$
It follows that the image $\ell_\infty \subset A$ of $\tilde \ell_\infty$ is naturally a parameterized closed curve, with a parameterization $\varphi$ coming from the composition of a homeomorphism $[0,1] \to \overline{\delta^{-1}(\tilde p_\infty) p_\infty}$ and the covering map $\pi:Z \to A$.
The parameterized closed curve $\varphi:\R/\Z \to \ell_\infty$ is a closed trail, because for every $t \in \R/\Z$ there is an open interval $I$ containing $t$ such that $\varphi|_I$ lifts to an injective map $I \to \tilde \ell_\infty$. Therefore, the closed curve $\ell_\infty$ satisfies the angle condition at all points $\varphi(t)$ and so is a closed trail.

It remains to verify \eqref{closed assumption}.
Let $\pi:Z \to A$ be the covering map.
By compactness and using \Cref{generalized rectangles} we construct a finite collection of generalized rectangles $\{\tilde P_1, \ldots, \tilde P_k\}$ in $Z$ such that the restriction of $\pi$ to each $\tilde P_i$ is injective and such that $K \subset \bigcup_{i=1}^k
\pi(P_i^\circ)$. Define
$$\tilde \sP = \big\{\delta^m(\tilde P_i):~\text{$i\in \{1, \ldots, k\}$ and $m \in \Z$}\big\}.$$
\compat{This line used to define $\sP$ as the collection of interiors of polygons. I changed it for consistency with the later proof of \Cref{thm:closed trails}.}

Say that a {\em polygonal chain} is a finite sequence of elements $\tilde Q_1, \tilde Q_2, \ldots, \tilde Q_c \in \tilde \sP$ such that for each $j \in \{1, \ldots, c-1\}$, we have $\tilde Q_j^\circ \cap \tilde Q_{j+1}^\circ \neq \emptyset$. We claim that for each $n$, $\tilde \ell_n$ is covered by the union of interiors of polygons in a polygonal chain of no more than $2k+1$ polygons. Fix $n$. Let $\tilde \gamma_n:\R \to Z$ be a parameterization of the bi-infinite leaf $\pi^{-1}(\ell_n)$ such that $\tilde \gamma_n(t+1)=\delta \circ \tilde \gamma_n(t)$ for all $t \in \R$.
Observe that \Cref{no returning trails} guarantees that $\tilde \gamma_n^{-1}(\tilde P_i^\circ)$ is either the empty set or an open interval.
By the periodicity of $\tilde \gamma_n$, we have
$$\tilde \gamma_n^{-1} \big(\delta^m(\tilde P_i^\circ)\big)=m+\tilde \gamma_n^{-1}(\tilde P_i^\circ) \quad \text{for all $m \in \Z$.}$$
The images of these sets under $\pi$ are all the same, so $\ell_n \cap \pi(\tilde P_i^\circ)$ is either empty or is homeomorphic to an open interval.
Choose a minimal subset ${\mathcal C}$ of $\sP=\{\pi(\tilde P_i^\circ):~i=1, \ldots, k\}$ that covers $\ell_n$. Since $\ell_n$ is homeomorphic to a circle and is being covered by open intervals, we can index the collection ${\mathcal C}$ as
$${\mathcal C}=\{Q_j^\circ:~j \in \Z/c\Z\} \quad \text{with $c=|\mathcal C|$}$$
such that $Q_j^\circ \cap Q_{j'}^\circ \cap \ell_n \neq \emptyset$ if and only if $j-j' \equiv \pm 1\pmod{c}$.
We can iteratively lift elements of $\mathcal C$ to cover $\ell_n$ by a polygonal chain of preimages. Recalling that $\tilde \ell_n$ is a lift of two periods of $\ell_n$, we see that $\tilde \ell_n$ can be covered using the interiors of no more than $2c+1$ elements of $\tilde \sP$. (It may be necessary to lift three copies of the polygon whose interior covers the image in $A$ of the endpoints of $\tilde \ell_n$, because there are three preimages of this point in $\tilde \ell_n$.) We have $c \leq k$, so this gives a covering of $\tilde \ell_n$ as described above with at most $2k+1$ polygons as desired.

We will use the above observation about polygonal chains to produce our compact set containing $\bigcup_n \tilde \ell_n$, and thus verifying \eqref{closed assumption}. We need a basic observation about the deck group $\{\delta^m:~m \in \Z\}$ of the covering $Z \to A$. Observe that for every two compact sets $L_1, L_2 \subset Z$, the collection
\begin{equation}
\label{eq:proper discontinuity}
\{n \in \Z:~\delta^n(L_1) \cap L_2 \neq \emptyset\} \quad \text{is finite.}
\end{equation}
(To see this, note that there is a homeomorphism from $A$ to $\R^2$ modulo a nontrivial translation. Then $Z$ can be identified with $\R^2$ and $\delta$ acts by translation, so \eqref{eq:proper discontinuity} holds.) In our setting, it follows that for any compact set $K \subset Z$, the collection
\begin{equation}
\label{eq:proper discontinuity2}
\{\tilde P \in \tilde \sP:~\tilde P \cap K \neq \emptyset\} \quad \text{is finite,}
\end{equation}
because $\tilde \sP$ is a finite collection of compact sets and their images under the deck group. Now let $K_0=\{\delta^{-1}(\tilde p_n)\} \cup \{\delta^{-1}(\tilde p_\infty)\}$, which is compact because $\lim \delta^{-1}(\tilde p_n)=\delta^{-1}(\tilde p_\infty)$. Then \eqref{eq:proper discontinuity2} guarantees that
the collection $\tilde \sP_0 \subset \tilde \sP$ of all polygons in $\sP$ intersecting $K_0$ is finite.
Then for $j \geq 0$, inductively define
$$\tilde \sP_{j+1}=\{\tilde Q \in \tilde \sP:~\text{there is a $\tilde P \in \tilde \sP_j$ such that $\tilde Q \cap \tilde P \neq \emptyset$}\}.$$
Then \eqref{eq:proper discontinuity2} tells us that if $\tilde \sP_j$ is finite then $\tilde \sP_{j+1}$ is finite. So, by induction we conclude that $\tilde \sP_j$ is finite for all $j \geq 0$. Now observe that $\tilde \sP_{2k}$ contains all polygonal chains that start by intersecting $K_0$ and include at most $2k+1$ polygons. Therefore, from the claim in the previous paragraph, we see that the union of closures of the polygon interiors in $\tilde \sP_{2k}$ is a compact set containing $\bigcup \tilde \ell_n$, verifying \eqref{closed assumption} and completing the proof.
\end{proof}

Recall the definitions related to the standard cylinder $C=[-1,1] \times \bbS^1$ given in the paragraph before the theorem on closed trails, \Cref{thm:closed trails}. We will state a version of this theorem that holds for annuli.

\begin{theorem}[Closed trails in annuli]
\label{thm:closed trails2}
Let $A=Z/\langle\delta\rangle$ be an annulus as above. Then one of the following mutually exclusive statements holds:
\begin{enumerate}
\item[(NR)] (Non-realization case) There is no closed trail in $A$.
\item[(Cyl)] (Cylinder case) There is an embedding $\epsilon:C^\circ \to A$ such that the closed leaves in $A$ are precisely the image under the embedding of the vertical closed leaves of $C^\circ$.
\item[(UT)] (Unique trail case) There is a unique closed trail in $A$, and this closed trail has at least one bending angle greater than $\pi$ on each side.
\end{enumerate}
Furthermore,
\begin{enumerate}
\item If $Z$ is convex, then case {\em (NR)} cannot occur.
\item In case {\em (Cyl)}, define $\sigma$ to be the collection of signs $s \in \{\pm \}$ such that
$\epsilon(H^\circ_s)$ has compact closure. Set
$$\ddot{C} = C^\circ \cup \bigcup_{s \in \sigma} \partial_s C.$$ Then there is an embedding $\ddot{\epsilon}: \ddot{C} \to A$ whose restriction $\ddot{\epsilon}|_{C^\circ}$ satisfies {\em (Cyl)} such that for each $s \in \sigma$, the parameterized curve $\ddot{\epsilon}|_{\partial_s C}$ is a closed trail in $[\gamma]$ passing through a nonempty collection of singularities, and every bending angle made when passing through such a singularity on the side of $\ddot{\epsilon}(C^\circ)$ has measure $\pi$. Furthermore, all closed trails in $A$ are obtained as restrictions of $\ddot{\epsilon}$ to vertical circles in $\ddot{C}$.
\end{enumerate}
\end{theorem}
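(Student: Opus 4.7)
The plan is to establish the trichotomy first and then address statements (1) and (2). If $A$ contains no closed trail, case (NR) holds directly. Otherwise, by \Cref{simple and disjoint}, I may fix a simple closed trail $\gamma_0$, necessarily a core curve of $A$. If $\gamma_0$ has at least one bending angle greater than $\pi$ on each side, then any other closed trail would bound a compact annulus with $\gamma_0$ whose boundary angles all equal $\pi$ by \Cref{bounded annulus}; this contradicts the angle assumption on $\gamma_0$, so $\gamma_0$ is unique and (UT) holds. Otherwise some closed trail has all bending angles equal to $\pi$ on one side, and \Cref{open set foliated by closed leaves} produces a non-empty open subannulus of $A$ foliated by closed leaves; I will show this forces (Cyl).

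To realize (Cyl), I would let $L$ be the union of all closed leaves in $A$. Since any closed leaf has vacuous $\pi$-angle conditions on both sides, \Cref{open set foliated by closed leaves} applied on both sides of each leaf shows $L$ is open. For connectedness, any two closed leaves bound a compact annulus by \Cref{bounded annulus} with no interior singularities; a fundamental domain for $\delta$ in such an annulus is a quadrilateral whose opposite sides are arcs of the two leaves and whose other two sides are identified by $\delta$, and \Cref{edge homeomorphism} then foliates this quadrilateral by segments of leaves descending to closed leaves of $A$. Thus $L$ is an open sub-annulus of the form $(a', b') \times \bbS^1$ in convenient coordinates, and an embedding $\epsilon : C^\circ \to L$ sending vertical fibers to closed leaves is assembled from continuous parameterizations of these leaves; slope continuity from \Cref{edge homeomorphism} ensures this can be done continuously. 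Mutual exclusivity of (Cyl) and (UT) is immediate since (Cyl) produces infinitely many closed trails.

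For statement (1), if $Z$ is convex then $\gamma_p$ exists for every $p \in A$, so \Cref{closed trails exist} yields a closed trail, ruling out (NR). For statement (2), I assume $+ \in \sigma$, so $\overline{\epsilon(H^\circ_+)}$ is compact in $A$. By \Cref{cylinders closed} the union of closed trails contained in this compact set is closed, so every limit of points on closed leaves at heights approaching the top of the cylinder lies on a closed trail in this compact set. I would argue these limits assemble into a single closed trail $\beta_+$: any two distinct boundary closed trails would bound a compact annulus which, by \Cref{bounded annulus} together with the foliation construction used for (Cyl), would be filled by closed leaves, contradicting the maximality of $L$. The trail $\beta_+$ must pass through singularities, for otherwise it would be a closed leaf outside $L$.

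The heart of statement (2) is verifying that every bending angle of $\beta_+$ on the cylinder side equals $\pi$, which I plan to do via the \gaussbonnet applied to the compact annulus bounded by $\beta_+$ and a closed leaf of the cylinder sufficiently close to $\beta_+$. This annulus has Euler characteristic zero and no interior singularities, so the sum of $\pi - \theta_q$ over vertices on its boundary must vanish; the closed-leaf boundary contributes no terms, and every cylinder-side angle $\theta_q$ along $\beta_+$ is at least $\pi$ by the trail angle condition, forcing each $\theta_q = \pi$. The parameterization $\ddot\epsilon|_{\partial_+ C}$ is then obtained as a limit of parameterizations of nearby closed leaves; the analogous construction handles $-\in\sigma$. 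Finally, any closed trail $\beta$ in $A$ outside $L \cup \beta_\pm$ would, via \Cref{bounded annulus} and the foliation argument, force additional closed leaves extending $L$ beyond its present boundary, a contradiction; hence every closed trail is a restriction of $\ddot\epsilon$ to a vertical circle in $\ddot C$. The main obstacle is combining the compactness argument of \Cref{cylinders closed} with the Gauss-Bonnet angle computation to pin down $\beta_\pm$ with the correct angle behaviour; once that is in place, the remaining verifications assemble from the lemmas above.
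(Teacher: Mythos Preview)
Your overall architecture matches the paper's: the trichotomy is split by the number of closed trails, the angle assertion in (UT) comes from \Cref{open set foliated by closed leaves}, statement (1) comes from \Cref{closed trails exist}, and for statement (2) you combine \Cref{cylinders closed} with a Gauss--Bonnet computation (which is exactly the content of \Cref{bounded annulus}) to pin down the boundary trails and their $\pi$ bending angles on the cylinder side. On those points your proposal and the paper are essentially the same argument, just with the paper packaging the Gauss--Bonnet step into \Cref{bounded annulus} and \Cref{unique case}.

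The one genuine gap is your argument that $L$ (the union of closed leaves) is connected. You propose: given two closed leaves $\gamma_1,\gamma_2$, take the compact annulus between them, lift to $Z$, choose a ``quadrilateral fundamental domain whose opposite sides are arcs of the two leaves and whose other two sides are identified by $\delta$,'' and apply \Cref{edge homeomorphism}. But you do not construct those other two sides. For \Cref{edge homeomorphism} to apply they must be segments of leaves, and the four interior angles must all be at most $\pi$. Producing a single leaf-segment transversal from the lift of $\gamma_1$ to the lift of $\gamma_2$ is not automatic: a leaf of some third slope launched from the $\gamma_1$-lift into the (non-compact) strip could in principle fail to hit the $\gamma_2$-lift, instead descending to a closed or recurrent leaf inside the annulus. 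Even if a transversal exists, the angle hypothesis of \Cref{edge homeomorphism} needs checking, since the boundary leaves $\gamma_1$ and $\gamma_2$ may have different slopes. None of this is addressed.

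The paper sidesteps this entirely with a connectedness argument you already have the tools for but did not use here: in the compact annulus $K(\gamma_1,\gamma_2)$, the union of closed trails is open by \Cref{open set foliated by closed leaves} and closed by \Cref{cylinders closed}, hence is all of $K(\gamma_1,\gamma_2)$; \Cref{bounded annulus} then forces every closed trail in the interior to be a closed leaf. You invoke \Cref{cylinders closed} later for the boundary trails, but it is exactly what is needed here as well, and using it removes the need to manufacture a global quadrilateral.
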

\begin{proof}
The three statements (NR), (Cyl) and (UT) are mutually exclusive, because they correspond to different cardinalities of the set of closed trails in $A$. If there is a unique closed trail, it cannot have a side where all bending angles are $\pi$ by \Cref{open set foliated by closed leaves}. So the second assertion in (UT) holds whenever the closed trail is unique. Statement (1) is a consequence of  \Cref{cor: closed trails exist}.

It remains to show that if there is more than one closed trail in $A$, then statement (Cyl) applies and that statement (2) holds. First suppose $\gamma_1$ and $\gamma_2$ are distinct closed trails. Then they bound a compact subannulus $K(\gamma_1, \gamma_2)$. The union of closed trails in $K(\gamma_1, \gamma_2)$ is both open (by \Cref{open set foliated by closed leaves}) and closed (by \Cref{cylinders closed}) and is therefore all of $K(\gamma_1, \gamma_2)$. Furthermore \Cref{bounded annulus} guarantees that all closed trails in $K(\gamma_1, \gamma_2)$ are closed leaves except possibly for $\gamma_1$ and $\gamma_2$. It also follows from \Cref{open set foliated by closed leaves} that the union $U$ of all closed leaves in $A$ is an open sub-annulus foliated by these closed leaves with leaf space homeomorphic to $(-1,1)$. Therefore, there is a homeomorphism $\epsilon:C^\circ \to U$ as described in statement (Cyl).
\compat{Based on Ferran's comments. I added these last two sentences and commented out a paragraph that formally proved these sentences are correct. I guess it is sufficiently obvious. Dec 16, 2022.}

Now consider statement (2). Fix a sign $s$ and suppose $\epsilon(H^\circ_s)$ has compact closure. Since $H^\circ_s$ is not compact and $\epsilon$ is an embedding, there is a point $p \in \overline{\epsilon(H^\circ_s)} \setminus \epsilon(H^\circ_s)$. Then \Cref{cylinders closed} guarantees there is a closed trail $\gamma_s$ through $p$. Furthermore since $\gamma_s$ is not a subset of $U$, we know that $\gamma_s$ is not a closed leaf. Then,
\Cref{unique case} guarantees that bending angles are all $\pi$ on one side of $\gamma_s$,
and \Cref{open set foliated by closed leaves} guarantees that there is an open set on this side with boundary $\gamma_s$ as one boundary that is foliated by closed leaves. Therefore, the foliation extends to include $\gamma_s$, and the leaf space in a neighborhood of $\gamma_s$ is homeomorphic to a half-open interval in $\R$. Let $\ddot{U} = U \cup \bigcup_{s \in \sigma} \gamma_s$ where $\sigma$ is defined as in the statement. We see that $\ddot{U}$ is a surface with boundary $\partial \ddot{U}=\bigcup_{s \in \sigma} \gamma_s$. The set $\ddot{U}$ is foliated by closed trails, where the boundary components are some of the leaves. Thus $\ddot{U}$ is homeomorphic to the space $\ddot{C}$, which is a trivial $\bbS^1$ over an interval. This homeomorphism is $\ddot{\epsilon}$. We have $\ddot{\epsilon}(\partial_s C)=\gamma_s$, and the bending angles have already been discussed above.

Finally, to see all closed trails are images of vertical leaves in $\ddot{C}$ under $\ddot{\epsilon}$, suppose $\gamma'$ is any closed trail. If it is a closed leaf, then it is contained in $U$ and $\ddot{\epsilon}(C^\circ)$ contains $U$. Since distinct leaves are disjoint, $\gamma'$ must be the image of a vertical leaf in $C^\circ$. Otherwise $\gamma'$ passes through singularities. Again \Cref{unique case} guarantees that bending angles are all $\pi$ on one side of $\gamma'$ and \Cref{open set foliated by closed leaves} guarantees that there is an open set on this side with boundary $\gamma'$ as one boundary that is foliated by closed leaves, so $\gamma'$ is contained in the closure of $U$,
and so must be a boundary component. This means that $\gamma'=\gamma_s$ for some sign $s \in \sigma$ as desired.
\end{proof}

\subsection{The closed trail theorem}
In this section, we prove \Cref{thm:closed trails}.

Let $(S,\{\sF_m\})$ be a surface with a zebra structure, and fix a PR free homotopy class $\conj{\gamma}$ which is nontrivial, non-polar, and not a power.
Choose a nonsingular basepoint $p_0$ and let $[\gamma] \in \prpi(S,p_0)$ be a representative of the conjugacy class. Let $\tilde S$ be the PRU cover, and let $\Delta_\gamma$ denote the deck transformation associated to $[\gamma]$. Then $A=\tilde S/\langle\Delta_\gamma\rangle$ is an annulus by \Cref{annulus}. Observe that we have the sequence of covers
$$\tilde S \xrightarrow{\tilde \pi} A \xrightarrow{\hat \pi} S.$$
Let $\hat p_0 \in A$ be a lift of the basepoint $p_0 \in S$ and let $\tilde p_0$ be a lift of $\hat p_0$ to $\tilde S$.

\begin{proposition}
\label{lifting closed trails}
Every closed trail $\tau$ representing $\conj{\gamma}$ has a lift to $A$.
\end{proposition}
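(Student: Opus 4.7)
The plan is to use standard covering space theory together with the lifting property for trails established in \Cref{sect:curves and deck transformations} (which applies even when $\tau$ passes through poles, where ordinary homotopy lifting fails). The key point is that we have flexibility in the choice of lift to $\tilde S$, and we can exploit this flexibility to arrange that one full period of $\tau$ lifts to a displacement by $\Delta_\gamma$ itself rather than by a conjugate.

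First I would set up notation. Pick a point $q_0$ on $\tau$ and parameterize $\tau : [0,1] \to S$ with $\tau(0) = \tau(1) = q_0$. Choose a path $\eta$ from $p_0$ to $q_0$ that has no poles in its interior; the assumption that $\tau$ represents $\conj{\gamma}$ then says the class $[\eta \bullet \tau \bullet \eta^{-1}] \in \prpi(S,p_0)$ is conjugate to $[\gamma]$. Choose any lift $\tilde q_0 \in \tilde S$ of $q_0$ and, using the trail-lifting property, produce the unique lift $\tilde \tau : [0,1] \to \tilde S$ with $\tilde \tau(0) = \tilde q_0$.

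Next I would identify the endpoint displacement. There is a unique deck transformation $D$ of $\tilde S \to S$ with $D(\tilde q_0) = \tilde \tau(1)$; under the isomorphism between $\prpi(S,p_0)$ and the deck group (after conjugating by a lift of $\eta$), the transformation $D$ corresponds to $[\eta \bullet \tau \bullet \eta^{-1}]$ and is therefore in the conjugacy class of $\Delta_\gamma$. Pick a deck transformation $g$ with $D = g\,\Delta_\gamma\, g^{-1}$.

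Finally I would change lifts. Set $\tilde \tau' = g^{-1} \circ \tilde \tau$, which is again a lift of $\tau$ (since deck transformations commute with the covering projection). Its endpoints satisfy
\[
\tilde \tau'(1) = g^{-1} D(\tilde q_0) = g^{-1} g\,\Delta_\gamma\, g^{-1}(\tilde q_0) = \Delta_\gamma\bigl(\tilde \tau'(0)\bigr),
\]
so $\tilde\tau'(0)$ and $\tilde\tau'(1)$ are identified in $A = \tilde S/\langle \Delta_\gamma\rangle$. Hence $\tilde\pi \circ \tilde\tau'$ is a closed curve in $A$ whose image under $\hat\pi$ is $\tau$, i.e., the desired lift.

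The only step that requires care is the very first one: justifying that $\tau$ admits a continuous lift to $\tilde S$ even though $\tau$ may traverse poles where the branching of $\tilde S \to S$ obstructs generic homotopy lifting. This is exactly what is handled by the discussion of trail lifting in \Cref{sect:curves and deck transformations}: trails pass straight through preimages of poles (which are nonsingular in $\tilde S$) in a unique way, so $\tilde\tau$ is well defined.
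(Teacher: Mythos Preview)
Your proof is correct and follows essentially the same idea as the paper's. The paper prepends a conjugating path $\beta_0$ to form the based loop $\beta_0\bullet\eta\bullet\tau\bullet\eta^{-1}\bullet\beta_0^{-1}\in[\gamma]$ and lifts that loop directly to $A$; you instead lift $\tau$ to $\tilde S$ and post-compose with the deck transformation $g^{-1}$ corresponding to $[\beta_0]^{-1}$ before projecting to $A$. These are the same operation viewed from two sides of the covering tower, so the approaches coincide.
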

\begin{proof}
Suppose that $\tau:[0,1] \to S$ is a parameterized closed trail representing $\conj{\gamma}$ with $\tau(0)$ nonsingular. Fix a path $\eta$ starting at $p_0$ and ending at $\tau(0)$. Then $[\eta \bullet \tau \bullet \eta^{-1}]\in \prpi(S, p_0)$ lies in the conjugacy class $\conj{\gamma}$. Since $[\gamma]$ lies in the same conjugacy class, there is a $[\beta_0] \in \prpi(S, p_0)$ such that if $\beta_0 \in [\beta_0]$ is a representative then
\begin{equation}
\gamma=\beta_0 \bullet \eta \bullet \tau \bullet \eta^{-1} \bullet \beta_0^{-1} \quad \text{is in} \quad [\gamma].
\label{eq:gamma}
\end{equation}
This curve $\gamma$ lifts to $A$ and the portion of the lift corresponding to the subpath $\tau$ is the desired lift that is a closed trail.
\end{proof}

This proposition enables us to prove part of \Cref{thm:closed trails}.

\begin{lemma}
\label{NR and UT}
If $A$ contains no closed trails then statement (NR) of \Cref{thm:closed trails} holds.
If $A$ contains only one closed trail, then statement (UT) of \Cref{thm:closed trails} holds.
\end{lemma}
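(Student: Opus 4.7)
My plan is to move both statements into the annulus $A$ via the lifting correspondence supplied by \Cref{lifting closed trails}. For (NR), I will argue by contrapositive: if $\conj{\gamma}$ contained a closed trail on $S$, that proposition would produce a lift which is a closed trail on $A$, contradicting the assumption that $A$ has no closed trails.

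For (UT), I will construct the unique closed trail on $S$ by projecting the unique closed trail $\tilde\tau$ of $A$ down to $S$ via the covering map $\hat\pi:A\to S$. Since $\hat\pi$ is a local homeomorphism, the curve $\tau = \hat\pi\circ\tilde\tau$ inherits the trail angle condition at every point. Using that $\pi_1(A)$ is generated by $\Delta_\gamma$ viewed inside $\prpi(S)$, the free homotopy class of $\tau$ is $[\gamma]^{\pm 1}$; after possibly reversing the orientation of $\tilde\tau$, $\tau$ represents $\conj{\gamma}$.

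The step I anticipate needing the most care is ensuring that $\tau$ is a closed trail traversed once, not a $k$-fold cover (with $k\geq 2$) of some shorter closed trail $\tau_0$. If it were such a cover, then $\conj{\gamma}$ would equal the $k$-th power of $\conj{\tau_0}$, violating the standing hypothesis that $\conj{\gamma}$ is not a power. Concretely, such a cover would correspond to a non-trivial deck transformation of $A\to S$, which exists exactly when $\Delta_\gamma$ is a proper power inside $\prpi(S)$. Once this is handled, uniqueness follows formally: any closed trail $\tau'$ in $\conj{\gamma}$ on $S$ lifts by \Cref{lifting closed trails} to a closed trail on $A$, which must equal $\tilde\tau$ up to reparameterization; projecting back recovers $\tau$.

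For the bending-angle clause of (UT), I will combine the annular analog \Cref{thm:closed trails2}(UT), which already guarantees that the unique closed trail on $A$ has a bending angle exceeding $\pi$ on each side (otherwise \Cref{open set foliated by closed leaves} would produce an open sub-annulus of closed leaves and hence many closed trails on $A$), with the observation that bending angles are a local notion preserved by the local homeomorphism $\hat\pi$. This transfers the property from $\tilde\tau$ to $\tau$ on $S$.
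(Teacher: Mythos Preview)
Your proposal is correct and follows the same outline as the paper's proof, which is essentially a three-line remark: closed trails in $\conj{\gamma}$ lift to $A$ by \Cref{lifting closed trails}, so emptiness and uniqueness transfer, and the bending-angle clause is read off from the (UT) case of \Cref{thm:closed trails2}. You simply spell out more of the details the paper leaves implicit, in particular the role of the ``not a power'' hypothesis in ruling out that the projected trail is a nontrivial cover.

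One small technical inaccuracy worth noting: $\hat\pi:A\to S$ is not a local homeomorphism at preimages of poles (it is doubly branched there). This does not actually damage your argument, because closed trails on both $A$ and $S$ are defined as images of trails on the common PRU cover $\tilde S$; so the correct justification that $\hat\pi\circ\tilde\tau$ is a closed trail on $S$ is to pass through $\tilde S$ rather than to invoke a local-homeomorphism property of $\hat\pi$. With that adjustment your argument is complete and matches the paper's.
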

\begin{proof}
If there are no closed trails in $A$, then there can be no trails in $\conj{\gamma}$ in $S$. Also if there is a unique closed trail in $A$, then the image of this trail in $S$ must be the unique trail in $S$. The statement involving bending angles of the (UT) case from \Cref{thm:closed trails2} implies the bending angle statement for (UT) in \Cref{thm:closed trails}.
\end{proof}

Recall from \Cref{sect:curves and deck transformations} that $\prpi(S,p_0)$ acts on $\tilde S$ as the group of deck transformations of the cover $\tilde S \to S$. For $[\beta] \in \prpi(S,p_0)$, the corresponding deck transformation $\Delta_{\beta}:\tilde S \to \tilde S$
descends to a well-defined deck transformation $\hat \Delta_{\beta}:A \to A$ of the cover $\hat \pi:A \to S$ if and only if $[\beta] \in N(\Gamma)$ where
$$\Gamma=\langle [\gamma] \rangle \quad \text{and} \quad
N(\Gamma)=\{g \in \prpi(S,p_0)~:~ g \Gamma g^{-1}=\Gamma\} \quad \text{is the normalizer of $\Gamma$.}$$
\commf{This should be $\Gamma=\langle [\beta]\rangle$.}\compat{No, this is correct. The fundamental group of $A$ is identified with $\Gamma=\langle [\gamma] \rangle$ since $A=\tilde S/\langle\Delta_\gamma\rangle$. A deck transformation of $\tilde S$ descends to a deck transformation of $A$ if and only if it normalizes the fundamental group of $A$.}
The group $N(\Gamma)$ contains $\Gamma$ as a normal subgroup, and $\beta_1, \beta_2 \in N(\Gamma)$ induce the same deck transformation of $A$ if and only if they lie in the same coset of the quotient group $N(\Gamma)/\Gamma$. Thus the deck group of the covering $\hat \pi$ is $\hat \Delta \cong N(\Gamma)/\Gamma$. For background on this see \cite{Bredon}.

After \Cref{NR and UT}, it remains to consider the case when $A$ contains a cylinder, case (Cyl) of \Cref{thm:closed trails2}. In this case $A$ contains closed leaves.

\begin{proposition}
\label{closed trail image1}
If $\hat \ell$ is a closed leaf in $A$, then either:
\begin{enumerate}
\item The image $\hat \pi(\hat \ell)$ is a simple closed curve and the restriction of $\hat \pi$ to $\hat \ell$ is a homeomorphism onto its image.
\item The image $\hat \pi(\hat \ell)$ is a saddle connection whose endpoints are distinct poles,
and there is a deck transformation $\hat \iota:A \to A$ that is an involution and restricts to an orientation reversing homeomorphism $\hat \ell \to \hat \ell$ that fixes the preimages of the poles and $\hat \pi|_{\hat \ell}$ descends to a homeomorphism $\hat \ell/\hat \iota \to \hat \pi(\hat \ell)$.
\end{enumerate}
\end{proposition}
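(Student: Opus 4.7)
The plan is to analyze the setwise stabilizer $H \leq \hat\Delta$ of the closed leaf $\hat\ell$ in the deck group of $\hat\pi\colon A \to S$, and show that under the non-power hypothesis $H$ is either trivial or equal to $\{1, \hat\iota\}$ for an orientation-reversing involution $\hat\iota$. The key observation is that the restriction $\hat\pi|_{\hat\ell}$ factors through the quotient map $\hat\ell \to \hat\ell/H$ to give a homeomorphism onto $\hat\pi(\hat\ell)$: if $x, y \in \hat\ell$ satisfy $\hat\pi(x) = \hat\pi(y)$, then $g(x) = y$ for some $g \in \hat\Delta$, and $g$ sends $\hat\ell$ to a leaf of $\tilde\sF_m$ through $y$; since pole preimages are non-singular in $A$, the unique leaf of slope $m$ through $y$ is $\hat\ell$ itself, so $g \in H$.

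First, $H$ is finite: the $H$-orbit of any $\hat p \in \hat\ell$ is contained in $\hat\ell$, is discrete in $A$ by proper discontinuity of $\hat\Delta$, and hence finite by compactness of $\hat\ell$. Since $H$ acts on $\hat\ell \cong S^1$ by homeomorphisms, its orientation-preserving subgroup $H^+$ is cyclic and acts freely on $\hat\ell$ (non-trivial rotations of $S^1$ have no fixed points), while any element of $H \setminus H^+$ is an involution with exactly two fixed points on $\hat\ell$.

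The crucial step is to rule out non-trivial $H^+$. Suppose $\delta \in H^+$ has order $k \geq 2$. Parameterize $\hat\ell\colon \R/\Z \to A$ so that $\delta(\hat\ell(t)) = \hat\ell(t + 1/k)$, and let $\tilde\ell\colon [0,1] \to \tilde S$ be a lift of $\hat\ell$ starting at some non-singular $\tilde r$ and ending at $\Delta_\gamma(\tilde r)$. Equivariance of lifts lets us choose a lift $\tilde\delta \in \prpi(S, p_0)$ of $\delta$ so that $\Delta_{\tilde\delta}(\tilde\ell(t)) = \tilde\ell(t + 1/k)$, whence $\tilde\delta^k = \gamma$ (applied to $\tilde r$, and thus in $\prpi$ by freeness of the action on $\tilde r$). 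If $\tilde\delta$ were polar, then $\tilde\delta^2$ would be trivial, so $\tilde\delta^k$ would be either trivial (if $k$ even) or polar (if $k$ odd), contradicting in both cases that $\gamma$ is non-trivial and non-polar. Hence $\tilde\delta$ is non-polar, and setting $\conj{\beta} = \conj{\tilde\delta}$ we obtain $\conj\gamma = \conj\beta^k$ with $k \geq 2$, contradicting the hypothesis that $\conj\gamma$ is not a power. Thus $H^+$ is trivial, and $H$ is either trivial, yielding case (1) (the restriction $\hat\pi|_{\hat\ell}$ is then a continuous bijection from compact $\hat\ell$ onto its Hausdorff image, hence a homeomorphism), or equals $\{1, \hat\iota\}$.

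In case (2), the two fixed points $\hat q_1, \hat q_2$ of $\hat\iota$ on $\hat\ell$ have non-trivial stabilizer in $\hat\Delta$, so by \Cref{deck group} any lift of $\hat\iota$ fixing a lift $\tilde q_i$ of $\hat q_i$ is polar, making $\hat q_i$ a pole preimage. The quotient $\hat\ell/\hat\iota$ is an arc from $\hat\pi(\hat q_1)$ to $\hat\pi(\hat q_2)$ whose interior maps homeomorphically into a single leaf of $\sF_m$, so $\hat\pi(\hat\ell)$ is a leaf of $\sF_m$ together with its two singular (polar) endpoints, i.e., a saddle connection. The main obstacle in this last step is to verify that $\hat\pi(\hat q_1) \neq \hat\pi(\hat q_2)$: this follows from a prong count, using that a pole $p$ has $\alpha(p) = -1$ and therefore admits exactly $\alpha(p) + 2 = 1$ prong of the directional foliation $\sF_m$, so no single leaf of $\sF_m$ can have both its endpoints at the same pole, which would require two prongs of slope $m$ there.
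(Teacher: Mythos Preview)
Your overall strategy---analyze the setwise stabilizer $H\le \hat\Delta$ of $\hat\ell$, show $H$ is trivial or $\{1,\hat\iota\}$, and identify $\hat\pi(\hat\ell)$ with $\hat\ell/H$---is sound and in fact cleaner than the paper's case split on whether $\hat\pi(\hat\ell)$ meets a pole. Your argument that $H^+$ must be trivial (else $\gamma$ is a $k$th power) is correct, and your prong count for distinctness of the two poles is a point the paper leaves implicit.

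There is, however, a genuine gap in your opening paragraph. You assert that if $x,y\in\hat\ell$ satisfy $\hat\pi(x)=\hat\pi(y)$ then $g(x)=y$ for some $g\in\hat\Delta$. This would be automatic if $\hat\pi\colon A\to S$ were a normal cover, but in general it is not: its deck group is $N(\Gamma)/\Gamma$, and $\Gamma=\langle[\gamma]\rangle$ need not be normal in $\prpi(S,p_0)$. What you actually know is that lifting to $\tilde S$ produces some $\tilde g$ in the deck group of the (normal) cover $\tilde S\to S$ sending a lift $\tilde x$ to a lift $\tilde y$; because $\tilde g$ is a zebra automorphism it preserves the bi-infinite leaf $\tilde L=\tilde\pi^{-1}(\hat\ell)$, but you still owe the step that $\tilde g$ normalizes $\Gamma$ and hence descends to $\hat\Delta$. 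This is exactly the point the paper handles explicitly (``$\tilde\iota$ reverses the orientation of $\tilde\ell$, and so conjugates $\Delta_\gamma$ to its inverse''). The missing argument is short but not vacuous: the orientation-preserving stabilizer of $\tilde L$ acts freely on $\tilde L\cong\R$ (a fixed point would force a polar involution, which reverses orientation of $\tilde L$), hence is abelian by H\"older, so if $\tilde g$ preserves orientation it commutes with $\gamma$; if it reverses orientation, conjugation by $\tilde g$ is the $-1$ automorphism of this infinite cyclic group, so $\tilde g\gamma\tilde g^{-1}=\gamma^{-1}$. Either way $\tilde g\in N(\Gamma)$, and your factorization $\hat\ell/H\xrightarrow{\sim}\hat\pi(\hat\ell)$ follows. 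Once you insert this, the proof is complete.
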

\begin{proof}
First suppose $\hat \pi(\hat \ell)$ contains no poles. Then $\hat \pi(\hat \ell)$ must be a closed leaf of one of the directional foliations of $S$. Therefore, $\hat \pi(\hat \ell)$ is a simple closed curve. The restriction of $\hat \pi$ to $\hat \ell$ gives a covering map to $\hat \pi(\hat \ell)$. If this map were of degree $d > 1$, then the parameterized curve $\hat \pi \circ \hat \ell$ which lies in $\conj{\gamma}$ would be a $d$-fold power of the parameterization of the image, making $\conj{\gamma}$ a power, a contradiction. Therefore $\phi|_{\hat \ell}$ is injective and since it is clearly a local homeomorphism, it is a homeomorphism. \compat{In general, \href{https://math.stackexchange.com/questions/55138/is-a-bijective-local-homeomorphism-a-global-homeomorphism-what-about-diffeomorp}{a bijective local homeomorphism is a homeomorphism}.}

Now suppose that $p \in \hat \pi(\hat \ell)$ is a pole. Let $\hat p \in \hat \ell$ be a preimage on $\hat \ell$. The preimage $\tilde \ell=\tilde \pi^{-1}(\hat \ell)$ in $\tilde S$ is a bi-infinite leaf, and the deck transformation $\Delta_\gamma$ translates along $\tilde \ell$. Let $\tilde p \in \tilde S$ be a preimage of $\hat p$. Since $\tilde p$ projects to a pole on $S$, there is an involutive deck transformation $\tilde \iota:\tilde S \to \tilde S$ of the cover $\tilde S \to S$ that fixes $\tilde p$. Observe that $\tilde \iota$ reverses the orientation of $\tilde \ell$, and so conjugates $\Delta_\gamma$ to its inverse. Thus from remarks above, $\tilde \iota$ descends to a deck transformation $\hat \iota:A \to A$ of $\hat \pi$. The transformation $\hat \iota$ induces an orientation reversing homeomorphism of $\hat \ell$, and therefore must have exactly two fixed points: $\hat p$ and some other point $\hat q \in \hat \ell$. Then since $\hat \iota$ is nontrivial and fixes the regular point $\hat q$, the image $q=\hat \pi(\hat q)$ must also be a pole. If there were more poles on $\hat \pi(\hat \ell)$, then we'd get more involutive deck transformations reversing the orientation of $\hat \ell$, and the composition of two would give a nontrivial deck transformation acting as an orientation preserving homeomorphism of $\hat \ell$. But then again the map $\hat \pi|_{\hat \ell}:\hat \ell \to \hat \pi(\hat \ell)$ would factor through a finite covering of the circle making $\conj{\gamma}$ a power. Thus, $p$ and $q$ must be the only poles on $\hat \pi(\hat \ell)$, which must be a saddle connection joining them. Again,
the natural map from $\hat \ell/\hat \iota$ to $\hat \pi(\hat \ell)$ is an injective local homeomorphism and so is a homeomorphism.
\end{proof}

\begin{proposition}
\label{closed trail image2}
If $\hat \ell_1$ and $\hat \ell_2$ are closed leaves in $A$, then the images $\hat \pi(\hat \ell_1)$ and $\hat \pi(\hat \ell_2)$ are either disjoint or they coincide. If $\hat \pi(\hat \ell_1)=\hat \pi(\hat \ell_2)$ then there is a deck transformation $\hat \delta \in \hat \Delta$ such that the restriction of $\hat \delta$ to $\hat \ell_1$ is a homeomorphism to $\hat \ell_2$.
\end{proposition}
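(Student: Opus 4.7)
The strategy is to build $\hat\delta$ as the descent of a deck transformation $D$ of the normal PRU cover $\tilde S\to S$, whose deck group $G=\prpi(S,p_0)$ acts transitively on fibers. Assume the images intersect; pick $p\in\hat\pi(\hat\ell_1)\cap\hat\pi(\hat\ell_2)$, preimages $\hat p_i\in\hat\ell_i$, and lifts $\tilde p_i\in\tilde S$ on lifts $\tilde\ell_i\subset\tilde S$ of $\hat\ell_i$. Each $\tilde\ell_i$ is a $\Delta_\gamma$-invariant bi-infinite leaf, and by transitivity on fibers there is $D\in G$ with $D(\tilde p_1)=\tilde p_2$.

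I will first show that $D(\tilde\ell_1)=\tilde\ell_2$. Since deck transformations preserve every $\tilde\sF_m$, the leaf $D(\tilde\ell_1)$ has the same slope $m_1$ as $\tilde\ell_1$ and passes through $\tilde p_2$; let $m_2$ be the slope of $\tilde\ell_2$. If $m_1=m_2$ then uniqueness of the leaf of a given slope through a non-singular point forces $D(\tilde\ell_1)=\tilde\ell_2$. Assume instead that $m_1\neq m_2$. Then $D(\tilde\ell_1)$ and $\tilde\ell_2$ cross transversely at $\tilde p_2$, so $\hat\pi(\hat\ell_1)$ and $\hat\pi(\hat\ell_2)$ cross transversely at $p$. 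In the main case where both images are simple closed curves via \Cref{closed trail image1}(1), they represent the same free homotopy class $\conj\gamma$ up to orientation; since the algebraic intersection number of freely homotopic simple closed curves on an oriented surface is zero, a single transverse crossing is impossible and so there are at least two crossings, whence the bigon criterion produces a bigon $B\subset S$ bounded by arcs of the two curves. Lifting $B$ to $\tilde S$ yields a bigon bounded by arcs of two distinct bi-infinite leaves, violating \Cref{no bigons}. The cases where one or both $\hat\pi(\hat\ell_i)$ is a saddle connection are handled analogously, by using the involution $\hat\iota$ of \Cref{closed trail image1}(2) to double the saddle connection into a closed loop and then repeating the bigon argument.

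It remains to show $D\in N(\Gamma)$ for $\Gamma=\langle\Delta_\gamma\rangle$, so $D$ descends to $\hat\delta\in\hat\Delta$. The invariance $\Delta_\gamma\tilde\ell_2=\tilde\ell_2$ combined with $D(\tilde\ell_1)=\tilde\ell_2$ implies $D^{-1}\Delta_\gamma D$ preserves $\tilde\ell_1$, so $D^{-1}\Delta_\gamma D\in\mathrm{Stab}_G(\tilde\ell_1)$. I would identify this stabilizer using \Cref{closed trail image1}: in case~(1) the map $\tilde\ell_1\to\hat\pi(\hat\ell_1)$ factors through the homeomorphism $\hat\ell_1\to\hat\pi(\hat\ell_1)$, so the stabilizer equals the deck group of $\tilde\ell_1\to\hat\ell_1$, which is $\Gamma$ since $\conj\gamma$ is not a power; in case~(2) the stabilizer is the infinite dihedral group $\langle\Delta_\gamma,\tilde\iota\rangle$ whose only infinite-order elements are the powers of $\Delta_\gamma$. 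In either case $D^{-1}\Delta_\gamma D=\Delta_\gamma^k$ for some $k\in\Z$, and since $D$ restricts to a homeomorphism $\tilde\ell_1\to\tilde\ell_2$ intertwining the $\Delta_\gamma^k$- and $\Delta_\gamma$-actions, comparing the quotients (a single circle on each side) forces $|k|=1$. Thus $D\in N(\Gamma)$ descends to $\hat\delta\in\hat\Delta$ with $\hat\delta(\hat\ell_1)=\hat\ell_2$, and in particular $\hat\pi(\hat\ell_1)=\hat\pi(\hat\ell_2)$.

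The most delicate step will be the saddle-connection version of the bigon argument: when an image is an arc between two poles rather than a simple closed curve, the bigon criterion does not apply verbatim. I would either make the doubling via $\hat\iota$ precise (producing a genuine closed loop to which the criterion applies) or extract the contradiction by exhibiting two distinct intersection points of $\Delta_\gamma$-invariant lifts in $\tilde S$ directly, carefully keeping track of the branching of $\tilde S\to S$ over the poles.
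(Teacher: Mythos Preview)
Your approach is correct in outline but differs from the paper's, and one step needs a small repair.

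\textbf{Comparison with the paper.} Both arguments begin by ruling out $m_1\neq m_2$ via the bigon criterion and \Cref{no bigons}, and both leave the saddle-connection case slightly informal. After that the routes diverge. The paper works entirely in $A$ and $S$: once the images coincide it chooses compatible parameterizations of $\hat\ell_1,\hat\ell_2$ with $\hat\pi\circ\hat\ell_1(t)=\hat\pi\circ\hat\ell_2(t)$, takes paths $\hat\eta_i$ in $A$ from $\hat p_0$ to $\hat\ell_i(0)$, and sets $\beta=\hat\pi(\hat\eta_2)\bullet\hat\pi(\hat\eta_1^{-1})$. Because each $\hat\ell_i$ is a core curve of $A$, the loops $\hat\pi(\hat\eta_i\bullet\hat\ell_i\bullet\hat\eta_i^{-1})$ already represent $[\gamma]^{\pm1}$, so conjugation by $[\beta]$ carries $[\gamma]^{\pm1}$ to $[\gamma]^{\pm1}$ and $\Delta_\beta\in N(\Gamma)$ drops to $\hat\Delta$ immediately, with no stabilizer analysis. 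Your route instead lifts to $\tilde S$, takes an arbitrary $D\in G$ with $D(\tilde p_1)=\tilde p_2$, and then has to verify $D\in N(\Gamma)$ after the fact by computing $\mathrm{Stab}_G(\tilde\ell_1)$. This works, and nicely exposes where the ``not a power'' hypothesis enters (it forces $\mathrm{Stab}_G(\tilde\ell_1)=\Gamma$ in case~(1)), but it is longer than the paper's direct construction.

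\textbf{The gap.} Your argument that $|k|=1$ does not hold as written. From $D^{-1}\Delta_\gamma D=\Delta_\gamma^k$ you get that $D$ descends to a homeomorphism $\tilde\ell_1/\langle\Delta_\gamma^k\rangle\to\tilde\ell_2/\langle\Delta_\gamma\rangle=\hat\ell_2$, but $\tilde\ell_1/\langle\Delta_\gamma^k\rangle$ is a circle for every $k\neq0$, so ``comparing quotients'' does not pin down $|k|$. The fix is to run the same stabilizer argument on $\tilde\ell_2$: since $D\Delta_\gamma D^{-1}$ preserves $\tilde\ell_2$ you get $D\Delta_\gamma D^{-1}=\Delta_\gamma^j$ for some $j$, and then $\Delta_\gamma=\Delta_\gamma^{jk}$ forces $jk=1$, hence $k=\pm1$. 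With this correction your proof goes through.
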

\begin{proof}
Suppose $q \in \hat \pi(\hat \ell_1) \cap \hat \pi(\hat \ell_2)$ is a point common to the images.

First, if $\hat \ell_1$ and $\hat \ell_2$ have distinct slopes then because $\hat \pi(\hat \ell_1)$ and $\hat \pi(\hat \ell_2)$ intersect transversely, they would have to bound a bigon
\cite[Bigon Criterion]{FM}. This bigon lifts to $\tilde S$ in contradiction to \Cref{no bigons}. Thus they must have the same slope. It also follows that
$\hat \pi(\hat \ell_1)=\hat \pi(\hat \ell_2)$ since both these are closed leaves or saddle connections joining poles through a common point and of the same slope.

Parameterize $\hat \ell_1$ and $\hat \ell_2$ so that they start at preimages of $q$ and satisfy
\begin{equation}
\label{eq:parameterized the same}
\hat \pi \circ \hat \ell_1(t)=\hat \pi \circ \hat \ell_2(t) \quad \text{for all $t$}.
\end{equation}
For $i \in \{1,2\}$, let $\hat \eta_i$ be a curve in $A$ from the basepoint $\hat p_0$ to $\hat \ell_i(0)$. Set $\hat \gamma_i=\hat \eta_i \bullet \hat \ell_i \bullet \hat \eta_i^{-1}$ for $i=1,2$. Because of our choice of parameterizations for the $\hat \ell_i$, the two curves $\hat \pi(\hat \gamma_i)$ both represent $[\gamma]$ or $[\gamma^{-1}]$.
Now consider the closed curve $\beta=\hat \pi(\hat \eta_2) \bullet \hat \pi(\hat \eta_1^{-1})$. From \eqref{eq:parameterized the same} it follows that $\beta \bullet \hat \pi(\hat \gamma_1) \bullet \beta^{-1}$ is homotopic to $\hat \pi(\hat \gamma_2)$. Since each $[\hat \pi(\hat \gamma_i)] \in \{[\gamma_i^{\pm 1}]\}$, the deck transformation $\Delta_\beta$ descends to a deck transformation of $A$. Furthermore, since $\beta \bullet \pi(\hat \eta_1)$ is homotopic to $\pi(\hat \eta_2)$, it carries $\hat \ell_1(0)$ to $\hat \ell_2(0)$. Thus, because the closed leaves $\hat \ell_1$ and $\hat \ell_2$ have the same slope and this deck transformation of $A$ sends a point on one to a point on the other, it must send $\hat \ell_1$ to $\hat \ell_2$.
\end{proof}

\begin{lemma}
\label{quotient lemma}
Assume $A$ contains a closed leaf. Let $\hat \epsilon:C^\circ \to A$ be the embedding guaranteed to exist from statement (Cyl) of \Cref{thm:closed trails2}.
The deck group $\hat \Delta$ of the covering $\hat \pi:A \to S$ preserves $\hat \epsilon(C^\circ)$. The map
$$\varphi: \hat \epsilon(C^\circ) / \hat \Delta \to S; \quad [\hat p] \mapsto \hat \pi(\hat p)$$
is a homeomorphism onto its image.
\end{lemma}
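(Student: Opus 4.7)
The plan is to verify each of the three claims separately: that $\hat\Delta$ preserves $\hat\epsilon(C^\circ)$, that the induced map $\varphi$ is well-defined and bijective onto its image, and finally that it is an open map (hence a homeomorphism onto its image).

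First, I would observe that by the construction in case (Cyl) of \Cref{thm:closed trails2}, the set $\hat\epsilon(C^\circ)$ is exactly the union of all closed leaves in $A$. Each deck transformation $\hat\delta \in \hat\Delta$ covers the identity on $S$, and so its lift to $A$ preserves each of the lifted directional foliations and therefore sends closed leaves to closed leaves. Hence $\hat\delta\bigl(\hat\epsilon(C^\circ)\bigr)=\hat\epsilon(C^\circ)$, and the action of $\hat\Delta$ restricts. The map $\varphi$ is well-defined because $\hat\pi\circ\hat\delta=\hat\pi$ for every $\hat\delta\in\hat\Delta$, and it is continuous by the universal property of the quotient.

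For injectivity, I would combine Propositions \ref{closed trail image1} and \ref{closed trail image2}. Suppose $\hat p_1,\hat p_2\in\hat\epsilon(C^\circ)$ satisfy $\hat\pi(\hat p_1)=\hat\pi(\hat p_2)$, and let $\hat\ell_i$ be the closed leaf of $A$ containing $\hat p_i$. Since $\hat\pi(\hat\ell_1)$ and $\hat\pi(\hat\ell_2)$ share the common point $\hat\pi(\hat p_1)$, \Cref{closed trail image2} yields $\hat\pi(\hat\ell_1)=\hat\pi(\hat\ell_2)$ together with a deck transformation $\hat\delta\in\hat\Delta$ such that $\hat\delta|_{\hat\ell_1}$ is a homeomorphism onto $\hat\ell_2$. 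Now $\hat\delta(\hat p_1)$ and $\hat p_2$ both lie in $\hat\ell_2$ and project to the same point of $S$. In case (1) of \Cref{closed trail image1} the restriction $\hat\pi|_{\hat\ell_2}$ is injective, forcing $\hat\delta(\hat p_1)=\hat p_2$; in case (2), that proposition provides an involutive deck transformation $\hat\iota\in\hat\Delta$ with $\hat\iota(\hat\ell_2)=\hat\ell_2$ through which $\hat\pi|_{\hat\ell_2}$ factors, so either $\hat\delta(\hat p_1)=\hat p_2$ or $\hat\iota\circ\hat\delta(\hat p_1)=\hat p_2$. In either subcase some element of $\hat\Delta$ carries $\hat p_1$ to $\hat p_2$, so $[\hat p_1]=[\hat p_2]$.

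To conclude, it suffices to show that the continuous injection $\varphi$ is open onto its image. The PRU covering $\pi:\tilde S\to S$ factors as $\hat\pi\circ\tilde\pi$, where $\tilde\pi:\tilde S\to A$ is an unbranched covering and $\pi$ is an open map (branched coverings between surfaces are open). Hence for $V\subset A$ open, $\hat\pi(V)=\pi\bigl(\tilde\pi^{-1}(V)\bigr)$ is open in $S$, showing that $\hat\pi$ itself is open. Consequently, for any open $U\subset\hat\epsilon(C^\circ)$, the image $\hat\pi(U)$ is open in $\hat\pi\bigl(\hat\epsilon(C^\circ)\bigr)$ with the subspace topology; together with the quotient topology on $\hat\epsilon(C^\circ)/\hat\Delta$, this gives that $\varphi$ is open onto its image, and therefore a homeomorphism onto its image. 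The main obstacle is the injectivity step, since it is the only place where the potentially branched behavior of $\hat\pi$ over poles forces us to use both cases of \Cref{closed trail image1} and the involution $\hat\iota$; everything else reduces to openness of branched coverings and invariance of the closed-leaf locus.
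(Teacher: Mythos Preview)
Your proof is correct and follows the paper's argument almost exactly for the invariance of $\hat\epsilon(C^\circ)$ and for injectivity (the paper combines \Cref{closed trail image1} and \Cref{closed trail image2} in the same way, with the same two-case split). The only genuine difference is in the final step: the paper argues that $\varphi$ is a \emph{local homeomorphism}, checking separately that at a preimage of a pole the involution $\hat\iota$ from \Cref{closed trail image1}(2) undoes the $2$-to-$1$ branching, whereas you deduce that $\varphi$ is open directly from openness of the branched covering $\hat\pi$. Your route is slightly slicker here since it avoids singling out the branch points, while the paper's route makes the local structure at poles more explicit; both achieve the same conclusion with comparable effort.
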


\commb{Lemma 9.24. I am probably missing something simple but I don’t understand the statement that the image of $\hat{\epsilon}$ is $\hat{\Delta}$ invariant. Couldn’t the image of $\hat{\epsilon}$ be sent away from itself by some elements of $\hat{\Delta}$? I don’t see in the proof where this point is discussed. I would think this is what happens in the second case of Prop. 9.23. Similarly if the image of $\hat{epsilon}$ is not preserved by $\hat{Delta}$ then I don’t understand the quotient appearing in the displayed equation. This confusion is also preventing me from understand Prop. 9.25.}

\compat{I neglected to explain why $\hat \epsilon(C^\circ)$ is $\hat \Delta$-invariant. I added a new first paragraph to the proof. Thanks for pointing this out!}

\begin{proof}
By statement (Cyl) of \Cref{thm:closed trails2}, we know $\hat \epsilon(C^\circ)$ is the union of all closed leaves in $A$. Since $\hat \Delta$ consists of homeomorphisms of $A$ that preserve the zebra structure, each deck transformation must preserve the set $\hat \epsilon(C^\circ)$. \compat{Added this paragraph Dec 31, 2022.}

We now claim that $\varphi$ is a local homeomorphism. Because $\hat \Delta$ is the deck group of the branched covering $\hat \pi$, $\varphi$ is a local homeomorphism except possibly at the preimages of poles. If $p \in \hat \epsilon(C^\circ)$ is such that $\hat \pi(p)$ is a pole, then statement (2) of \Cref{closed trail image1} guarantees that there is a nontrivial deck transformation in $\hat \Delta$ that fixes $p$. Thus, $\varphi$ is also a local homeomorphism at preimages of poles and so is a local homeomorphism.

Then to prove that $\varphi$ is a homeomorphism onto its image, it suffices to show that it is injective. Let $\hat p_1$ and $\hat p_2$ be two points of $\hat \epsilon(C^\circ)$ such that $\hat \pi(\hat p_1)=\hat \pi(\hat p_2)$. Let $\hat \ell_i$ denote the closed leaf in $A$ through $\hat p_i$ for $i \in \{1, 2\}$. Then there is a deck transformation $\hat \delta$ carrying $\hat \ell_1$ to $\hat \ell_2$ by homeomorphism by \Cref{closed trail image2}. If $\hat \pi(\hat \ell_1)$ is simple closed curve then restrictions of $\hat \pi$ to $\hat \ell_1$ and $\hat \ell_2$
are injective by statement (1) of \Cref{closed trail image1}. Thus $\hat \delta(\hat p_1)=\hat p_2$. If $\hat \pi(\hat \ell_1)$ is a saddle connection joining poles then there is an involutive deck transformation $\hat \iota$ preserving $\hat \ell_1$ and by statement (2) of
\Cref{closed trail image1}, we either have $\hat \delta(\hat p_1)=\hat p_2$ or $\hat \delta \circ \hat \iota(\hat p_1)=\hat p_2$.
\end{proof}

\begin{lemma}
\label{TF and C}
If $A$ contains a closed leaf then either statement (TF) or statement (Cyl) of \Cref{thm:closed trails} holds.
\end{lemma}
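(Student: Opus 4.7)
The plan is to push the cylindrical structure from $A$ down to $S$ and analyze the resulting quotient. By \Cref{thm:closed trails2} applied to $A$, there is an embedding $\hat{\epsilon} : C^\circ \to A$ whose image is the union of all closed leaves in $A$. By \Cref{quotient lemma}, the induced map $\varphi : Q \to S$, where $Q := \hat{\epsilon}(C^\circ)/\hat{\Delta}$, is a homeomorphism onto its image $W \subset S$. Combined with \Cref{lifting closed trails} and \Cref{closed trail image1}, the image $W$ is the union of projections of closed leaves of $A$, which in $S$ are either closed leaves in $\conj{\gamma}$ or (in the polar case) saddle connections between two poles. The strategy is to classify $Q$ topologically and dispatch the cases (TF) and (Cyl).

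A crucial step is to show that $\hat{\Delta}$ acts faithfully on the leaf space of $\hat{\epsilon}(C^\circ)$, identified with $(-1,1)$. If some non-trivial $\hat{\delta} \in \hat{\Delta}$ were to preserve every closed leaf, then on a non-polar leaf $\hat{\ell} \cong \bbS^1$ it would act as an orientation-preserving fixed-point-free homeomorphism, hence a non-trivial rotation; but then $\hat{\pi}|_{\hat{\ell}}$ would factor through a non-trivial cyclic cover, forcing $\conj{\gamma}$ to be a power and contradicting the hypothesis. Therefore $\hat{\Delta}$ injects into the homeomorphism group of $(-1,1)$ via the leaf-space action. Classifying properly discontinuous actions on $\R$, the leaf-space quotient $(-1,1)/\hat{\Delta}$ is either $\R$, $\bbS^1$, a half-line, or a closed interval; the latter two occur only when $\hat{\Delta}$ contains involutions fixing polar leaves as in \Cref{closed trail image1}(2). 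Correspondingly $W$ is topologically either a cylinder, a torus, a disk ($\R^2$), or a sphere (pillowcase), with orbifold cone points of order $2$ at each fixed point corresponding to a pole of $S$.

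If $W$ is a torus with no cone points, then $W$ is simultaneously open and compact in the Hausdorff space $S$, hence equals $S$ by connectedness. Closed leaves must avoid singularities, so they can cover $S$ only if $\Sigma = \emptyset$, and the Euler--Poincar\'e formula (\Cref{Euler-Poincare}) then confirms $\chi(S) = 0$ so that $S$ is the torus, giving case (TF). In every other case, the set of closed leaves of $S$ in $\conj{\gamma}$ is obtained by removing from $W$ the finitely many saddle connections arising as projections of polar leaves; a direct inspection shows this residual set is homeomorphic to $C^\circ$ (removing a proper arc from $\R^2$, or removing two polar arcs from the pillowcase sphere, each yields a cylinder, while a cylinder or an annulus with no polar leaves is already of the right form). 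This produces the embedding $\epsilon : C^\circ \to S$ required by (Cyl). The main technical subtlety is carrying out these topological identifications in the polar cases and verifying that the embedding respects the vertical-leaf structure of $C^\circ$.
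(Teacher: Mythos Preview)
Your approach is essentially the same as the paper's: pass to the annulus $A$, invoke the cylinder structure from \Cref{thm:closed trails2}, use \Cref{quotient lemma} to push down to $S$, and classify the possible $\hat\Delta$-actions on the leaf space $(-1,1)$. The paper carries out the case analysis by hand (trivial, $\Z/2$, $\Z$, infinite dihedral) and in each non-toral case exhibits the cylinder in $S$ by restricting $\hat\pi\circ\hat\epsilon$ to an explicit half of $C^\circ$ serving as a fundamental domain, whereas you package the same cases as the four $1$-orbifold quotients and then delete the polar saddle connections from $W$; these are equivalent constructions.

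Two points need tightening. First, ``hence a non-trivial rotation'' is not automatic: a fixed-point-free orientation-preserving circle homeomorphism need not be a rotation. The clean fix is to invoke \Cref{closed trail image1}(1) directly: for a non-polar leaf $\hat\ell$ the map $\hat\pi|_{\hat\ell}$ is already a homeomorphism, so any deck transformation preserving $\hat\ell$ is the identity on $\hat\ell$ and hence trivial. (This is the same mechanism the paper uses, lifted to $\tilde S$, to show orientation-preserving elements act \emph{freely} on the leaf space, which is what you actually need---faithfulness alone is not enough.) Second, you invoke the classification of properly discontinuous actions on $\R$ without saying why the leaf-space action is properly discontinuous. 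This follows because $\hat\Delta$ acts properly discontinuously on $\hat\epsilon(C^\circ)$ (it is a deck group away from pole preimages, with finite isotropy at those) and the leaves $\{t\}\times\bbS^1$ are compact, so for any interval $J\subset(-1,1)$ the set $\{g:g\cdot J\cap J\neq\emptyset\}$ equals $\{g:g(\hat\epsilon(J\times\bbS^1))\cap\hat\epsilon(J\times\bbS^1)\neq\emptyset\}$, which is finite. With these two patches your argument goes through.
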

\begin{proof}
Suppose $A$ contains a closed leaf. Then statement (Cyl) of \Cref{thm:closed trails2} applies and we get an embedding $\epsilon:C^\circ \to A$ that sends vertical closed leaves to closed leaves. The deck group $\hat \Delta$ of the covering $\hat \pi:A \to S$ acts on $ \epsilon(C^\circ) \subset A$ and sends closed leaves to closed leaves by \Cref{closed trail image2}. We will argue that the conclusion of \Cref{quotient lemma} implies one of the statements must hold. To do this we break into cases depending on the nature of the action of $\hat \Delta$.

First suppose the deck group $\hat \Delta$ of the covering $\hat \pi:A \to S$ is trivial. Then we have $\hat \epsilon(C^\circ) / \hat \Delta=\hat \epsilon(C^\circ)$. The composition
$\varphi \circ \hat \epsilon=\hat \pi \circ \hat \epsilon:C^\circ \to S$ is the desired embedding satisfying (Cyl). It includes every closed leaf by \Cref{lifting closed trails}.

The action of $\hat \Delta$ on $\epsilon(C^\circ)$ induces an action on the space of leaves, which is homeomorphic to an open interval. Let $I$ denote this leaf space.

The next simplest case is when $\hat \Delta=\langle \hat \iota \rangle$ where $\hat \iota$ acts on $I$ as an orientation-reversing homeomorphism. In this case $\hat \iota$ must fix a unique point on $I$, and so there is a closed leaf $\ell \subset A$ that is fixed by $\hat \iota$. Furthermore $\hat \iota$ acts on $\ell$ as an orientation-reversing homeomorphism so there are two fixed points in $\ell$. The images of these fixed points must be poles, and $\hat \pi(\ell)$ must be a saddle connection joining these poles. Let $H$ denote one of the two components of $\epsilon(C^\circ) \setminus \ell$. Then we can define $C' = \epsilon^{-1}(H)$ which is a sub-cylinder of $C^\circ$. Using \Cref{quotient lemma}, we see that
$$\hat \pi \circ \epsilon|_{C'}:C' \to S$$
is an embedding of a cylinder satisfying statement (Cyl) of \Cref{thm:closed trails}. In this case, one of the boundaries must be the saddle connection $\hat \pi(\ell)$.

Now suppose there is a nontrivial element $\hat \delta \in \hat \Delta$ that acts on $I$ as an orientation preserving homeomorphism. We claim that the action of $\hat \delta$ on $I$ is fixed-point free. If $\hat \delta$ has a fixed point in $I$, then because it is a deck transformation, it would have to act on the corresponding closed leaf $\hat \ell \subset A$ as an orientation-preserving homeomorphism. Let $\tilde \ell=\tilde \pi^{-1}(\hat \ell)$, which is a bi-infinite leaf, and let $[\delta] \in \prpi(S,p_0)$ denote a preimage of $\hat \delta$. Then the deck transformations associated to $[\gamma]$ and $[\delta]$ both act as orientation-preserving homeomorphisms of $\tilde \ell$, and the quotient $\tilde \ell/\langle [\gamma], [\delta]\rangle$ must be homeomorphic to a circle and so $\langle [\gamma], [\delta] \rangle \cong \Z$. But then, either $[\gamma]$ is a power (violating a hypothesis on $[\gamma]$), or $[\delta] \in \Gamma$ (violating that $\hat \delta$ was nontrivial).

Assuming it is nontrivial, the group of all $\hat \delta \in \hat \Delta$ that act on $I$ as an orientation preserving homeomorphism must be isomorphic to $\Z$. Letting $\hat \delta$ be a generator, we see that $\hat \epsilon(C^\circ) / \langle \hat \delta \rangle$ is homeomorphic to a torus. If this is the full $\hat \Delta$, then \Cref{quotient lemma} tells us that statement (TF) holds. It could be that in addition there is a $\hat \iota \in \hat \Delta$ which acts on $I$ as an orientation reversing homeomorphism. In this case, $\hat \iota$ and $\hat \delta$ must generate $\hat \Delta$, since the composition of any two elements whose actions on $I$ reverse orientation would have to lie in the group $\langle \hat \delta \rangle$. Again $\hat \iota$ must have a unique fixed point in $I$ and $\hat \iota$ must act on the corresponding closed leaf $\hat \ell_0$ as an orientation-reversing homeomorphism with two fixed points which are mapped to poles under $\hat \pi$. The image $\hat \pi(\hat \ell_0)$ is a saddle connection $\sigma_0$ joining the poles. Recalling that $\hat \delta$ was a generator for the orientation-preserving subgroup of $\hat \Delta$, let $\hat j=\hat \delta \circ \hat \iota$ which also acts as an orientation reversing homeomorphism on $I$, and so is an involution
fixing some leaf $\hat \ell_1$. Again $\hat \pi(\hat \ell_1)$ is a saddle connection joining two poles of $S$. Observe that $\hat j$ sends $\hat \ell_0$ to $\hat \delta(\ell_0)$ and so the fixed leaf $\hat \ell_1$ must lie strictly between $\hat \ell_0$ to $\hat \delta(\ell_0)$ . The region from $\hat \ell_0$ to $\hat \ell_1$ forms a fundamental domain for the action of $\langle \hat \delta \rangle$, and so $S=\hat \epsilon(C^\circ) / \langle \hat \delta, \hat \iota \rangle$ is a sphere with four poles. Let $R$ be the region between $\hat \ell_0$ and $\hat \ell_1$, and let $C' = \hat \epsilon^{-1}(R)$ be a subcylinder. Then the map
$$\epsilon':C' \to S; \quad \epsilon'=\hat \pi \circ \hat \epsilon$$
satisfies statement (Cyl) of \Cref{thm:closed trails}, and the boundary of $\epsilon'(C')$ consists of the two saddle connections $\sigma_0$ and $\sigma_1$.
\end{proof}

To complete the proof of \Cref{thm:closed trails}, it remains to prove statements (1) and (2). Statement (1) follows directly from \Cref{closed trails exist}, so (2) remains. We'll prove (2) using the corresponding statement (2) from \Cref{thm:closed trails2}. The difficulty is that it is easier for a subset of the smaller surface $S$ to be compact than for its preimage in $A$. The following result is the main tool we use for dealing with this difficulty.

\begin{lemma}
\label{intersection bound}
Let $P \subset S$ be a generalized rectangle with $n$ vertices. Let $\gamma_1, \gamma_2 \subset S$ be two disjoint closed leaves that bound an annulus $C$ in $S$ containing no singularities. Let $\# (P \cap \gamma_i)$ denote the number of connected components of $P \cap \gamma_i$. Then,
$$\Big|\# (P \cap \gamma_1) - \# (P \cap \gamma_2)\Big| \leq n.$$
\end{lemma}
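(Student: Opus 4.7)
The plan is to foliate the annulus $C$ by closed leaves $\{\ell_t : t \in [0,1]\}$ with $\ell_0 = \gamma_1$ and $\ell_1 = \gamma_2$, and to track the function $n(t) = \#(P \cap \ell_t)$ as $t$ varies. The foliation exists because the set of points of $C$ lying on closed leaves is both open (by \Cref{open set foliated by closed leaves} applied at $\gamma_1$) and closed (by \Cref{cylinders closed}), hence all of $C$ by connectedness; the slope $m(t)$ of $\ell_t$ need not be constant in $t$.

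The function $n(t)$ is locally constant except at finitely many critical $t$-values, those where $\ell_t$ meets $\partial P$ non-transversely. Since all edges of the generalized rectangle $P$ are horizontal or vertical, a non-transverse meeting occurs either (i) at a vertex of $P$ lying in $C$, or (ii) along an entire edge $e$ of $P$ in $C$ whose slope equals $m(t)$, in which case $\ell_t$ must coincide with the closed-leaf extension of $e$ and therefore also passes through the two endpoint vertices of $e$. Consequently the non-transverse intersection $\ell_t \cap \partial P$, viewed as a subset of $\partial P$, decomposes into connected components; each such component is either an isolated vertex or an entire edge together with its two endpoint vertices, and in either case contains at least one vertex of $P$ in $C$.

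The core estimate is that each connected critical component contributes at most $\pm 1$ to the jump $n(t^+) - n(t^-)$. At an isolated vertex $v$, a local analysis in a stellar chart centered at $v$ (the interior of $P$ occupies one of the four quadrants bounded by the horizontal and vertical edges) shows that passing $\ell_t$ through $v$ either leaves $n$ unchanged, because the endpoint of a local arc merely migrates from one incident edge to the other, or changes $n$ by exactly $\pm 1$ via a small arc appearing or disappearing at $v$. For a whole-edge component, the topological content across the critical value is the simultaneous appearance or disappearance of a single arc of $P \cap \ell_t$ running parallel to $e$, so the combined vertex-and-tangency event at this single $t$-value still contributes only $\pm 1$ jointly.

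Summing over all critical $t$-values and noting that distinct connected critical components contain disjoint subsets of the vertex set of $P$, I obtain $|n(0) - n(1)| \leq$ (total number of connected critical components) $\leq$ (vertices of $P$ in $C$) $\leq n$. The main obstacle I anticipate is verifying the $\pm 1$ bound in the combined case (ii): showing that a tangency along an edge together with the two coincident vertex crossings at its endpoints produces only a single topological event rather than contributing independently. The key point, which must be argued carefully, is that all three events share the newly created or destroyed arc of $P \cap \ell_t$, so the naive sum of $\pm 1$ per vertex overcounts the actual change.
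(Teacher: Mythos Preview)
Your sweep argument is a genuinely different route from the paper's, and with enough care it can be made to work, but it is substantially more elaborate and the gaps you flag are real.

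The paper's proof is a short static argument on the connected components $C_0$ of $C \cap P$, with no foliation of $C$ and no sweep. The two claims are: (1) no $C_0$ can have two boundary arcs coming from the same $\gamma_i$, because a curve in $C_0$ joining them would be isotopic in $C$ to an arc of $\gamma_i$, and lifting to $\tilde S$ this contradicts \Cref{no returning trails}; and (2) any $C_0$ with at most one boundary arc must contain a vertex of $P$, because otherwise that single arc would join an edge of $P$ to itself, contradicting \Cref{no bigons}. Components with one arc from each $\gamma_i$ contribute equally to both counts; the remaining components number at most $n$.

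Your approach, by contrast, requires three nontrivial ingredients that the paper avoids entirely. First, the foliation of $C$ by closed leaves: the results you cite (\Cref{open set foliated by closed leaves}, \Cref{cylinders closed}) are stated for the annular cover $A = Z/\langle\delta\rangle$, so you must first lift $C$ to $A$, run the open-and-closed argument there, and push back down. Second, local constancy of $n(t)$ at transverse times: in the purely topological zebra setting this is not automatic and needs the continuity of the slope function together with a local product structure for the foliation. Third, the $\pm 1$ bound per critical component, especially in your case (ii) where an entire edge is swallowed: you correctly identify this as the crux, and while the analysis you sketch is right (the edge event and its two endpoint-vertex events share a single nascent arc), turning this into a proof in the absence of smooth Morse theory takes work. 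None of this is wrong, but the paper's two-line dichotomy on components of $C \cap P$ buys the same conclusion with none of it.
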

\begin{proof}
This has to do with the ways the annulus $C$ can intersect $P$. Let $C_0$ be a connected component of $C \cap P$. Say a boundary arc of $C_0$ is a maximal arc of $\gamma_1$ or $\gamma_2$ contained in $\partial C_0$. We claim that at least one of the following statements holds for each $C_0$.
\begin{enumerate}
\item The component $C_0$ either has exactly two boundary arcs with one from $\gamma_1$ and one from $\gamma_2$.
\item The component $C_0$ has fewer than two boundary arcs and $C_0$ contains a vertex of $P$.
\end{enumerate}
The conclusion is immediate, from these statements, because components $C_0$ with two boundary arcs contribute equally to both $\# (P \cap \gamma_1)$ and $\# (P \cap \gamma_2),$ while there can be at most $n$ components with fewer than two boundary arcs.

Now fix $C_0$. Suppose first that $C_0$ has at least two boundary arcs from $\gamma_1$. Then, there is a curve $\beta \subset C_0$ joining these two boundary arc from $\gamma_1$. Since $C$ is an annulus, $C$ is isotopic within $C$ to an arc $\beta'$ of $\gamma_1$ joining the two arcs. Lift $P$ to a polygon $\tilde P \subset \tilde S$. Since $\beta'$ is isotopic to $\beta$, there is a lift $\tilde \beta'$ of $\beta'$ which is a segment of a leaf joining the two lifts of arcs of $\beta_1$. But then $\tilde \beta'$ must exit $\tilde P$ and later return to $\tilde P$ in violation of \Cref{no returning trails}. The same argument works of course when $C_0$ has two boundary arcs from $\gamma_2$, so this proves that (1) must hold when $C_0$ has two or more boundary arcs.

Now suppose $C_0$ has at most one boundary arc. If it has zero boundary arcs then $C_0$ must equal $P$, and so (2) holds trivially. Now suppose $C_0$ has one boundary arc and contains no vertices. Then the one boundary arc must join an edge of $P$ to itself. But this violates \Cref{no bigons}. Thus if $C_0$ has only one boundary arc, it must contain a vertex.
\end{proof}

\begin{proof}[Proof of \Cref{thm:closed trails}]
As discussed above, it remains to prove (2). Let $\epsilon:C^\circ \to S$ be as in statement (Cyl). Let $H_s^\circ \subset C^\circ$ be one of the two halves and suppose that $\epsilon(H_s^\circ) \subset K$ where $K \subset S$ is compact. For $t \in (-1,1)$ let $\ell_t=\epsilon(\{t\} \times \bbS^1)$ be the corresponding closed leaf. The set $\overline{\epsilon(H_s^\circ)} \setminus \epsilon(H_s^\circ)$ consists of accumulation points of $\ell_t$ as $t \to s$. To simplify notation, assume $s=+1$ so that $H_+^\circ=[0,1) \times \bbS^1$. The other case will work in a symmetric way.

Let ${\mathcal F}=\{P_0, \ldots, P_k\}$ be a finite collection of generalized rectangles in $S$ whose interiors cover $K$. Choose a point $q \in \overline{\epsilon(H_+^\circ)} \setminus \epsilon(H_+^\circ)$. We may assume that $q \in P_0^\circ$. We claim that $\ell_t$ intersects $P_0$ for $t$ sufficiently close to $+1$. Observe that $q \not \in \ell_0$ since $\ell_0 \subset \epsilon(H_+^\circ)$. Therefore, we can find a stellar neighborhood $N$ of $q$ with
$N \subset P_0$ and $N \cap \ell_0 =\emptyset$. Since $q$ is an accumulation point of $\ell_t$ as $t \to 1$, we must have that $\ell_{t_0} \cap N \neq \emptyset$ for some $t_0$. Let $\gamma$ be a segment in $N$ of a leaf joining $q$ to $\ell_{t_0} \cap N$, which is minimal in the sense that $\gamma \cap \ell_{t_0}$ consists only of the other endpoint of $\gamma$ which we'll denote by $r$. We claim that if $t$ is between $t_0$ and $1$, then $\ell_t$ must pass through $\gamma$. To see this consider the set $B=\epsilon([0,t] \times \bbS^1)$. Observe that $q \not \in B$ but $r \in B$, so there must be a point in $\gamma \cap \partial B$. Since $\gamma \subset N$ and $N \cap \ell_0=\emptyset$, the point of $\gamma \cap \partial B$ must lie in $\gamma \cap \ell_t$. Therefore, $\ell_t$ passes through $N$ and hence also $P_0$ as desired.

Let $\hat \epsilon:C^\circ \to A$ be a lift of $\epsilon$ to $A$. This lift exists, because we can lift a closed leaf by \Cref{lifting closed trails} and the fundamental group lifting criterion tells us that we can extend to the lift $\hat \epsilon$. (We remark that $\hat \epsilon$ may not be the same map as the map from (Cyl) of \Cref{thm:closed trails2} because it may have a smaller image; see the cases in the proof of \Cref{TF and C}.) Define $\hat \ell_t=\hat \epsilon(\{t\} \times \bbS^1)$, which is a lift of $\ell_t$. We can also lift $r$ to a point on $\hat \ell_{t_0}$ and lift $\gamma$ to a segment $\hat \gamma$. This determines a lift $\hat q$ of $q$.

Now define $\hat {\mathcal P}$ to be the collection of all lifts of the $P_i$ to $A$. Let $\hat P_0$ be the lift of $P_0$ to $A$ such that the lift carries $q$ to $\hat q$. Then for $t$ sufficiently close to $1$, we have $\hat \ell_t \cap \hat P_0 \neq \emptyset$.

We can think of $\hat {\mathcal P}$ as a graph ${\mathcal G}$ where the vertices are elements of $\hat {\mathcal P}$ and there is an edge between two lifts whenever they intersect. Considering the covering $\tilde S \to S$, observe that for any two lifts of the generalized polygons $\tilde P_i, \tilde P_j \subset \tilde S$, the collection
$$\{[\delta] \in \prpi(S,p_0)~:~ \Delta_\delta(\tilde P_i) \cap \tilde P_j \neq \emptyset\}$$
is finite. This implies that all vertices of the graph ${\mathcal G}$ have finite degree.

Observe that by compactness, $\ell_0 \subset S$ passes through finitely many of the polygons in ${\mathcal F}$ counting multiplicity. Then \Cref{intersection bound} gives us an $N$ such that each $\ell_t$ with $t \in (0,1)$ intersects at most $N$ elements of ${\mathcal F}$ counting multiplicity.

The union $\hat U$ of all the element of $\hat {\mathcal P}$ that can be reached from $\hat P_0$ by a path in ${\mathcal G}$ passing through at most $N$ edges is a finite union, and is therefore compact. Also observe that $\hat U$ contains each $\hat \ell_t$ for $t$ sufficiently close to $1$. This is because for $t$ large, $\hat \ell_t$ passes through $\hat P_0$. Then, it moves through at most $N$ sequentially overlapping elements of $\hat {\mathcal P}$ counting multiplicity before closing up. Thus $\hat \ell_t \subset \hat U$. Thus
$\hat \epsilon\big([t_0,1) \times \bbS^1\big) \subset \hat U$ and so
$\hat \epsilon(H_+^\circ) \subset \hat U \cup \hat \epsilon\big([0,t_0] \times \bbS^1\big),$
which is compact.

We've shown that if $\epsilon(H_+^\circ)$ has compact closure, then so does $\hat \epsilon(H_+^\circ)$. The same works for negative signs. As in statement (2), let $\sigma$ denote the collection of signs so that $\epsilon(H_s^\circ)$ has compact closure, or equivalently now $\hat \epsilon(H_s^\circ)$ has compact closure. Then by statement (2) of \Cref{thm:closed trails2}, there is an embedding $\ddot{\hat \epsilon}:\ddot{C} \to A$ of
the partial closure
$\ddot{C}=C^\circ \cup \bigcup_{s \in \sigma} \partial_s C$
into $A$ whose image is $\hat \epsilon(C^\circ) \cup \bigcup_{s \in \sigma} \hat \epsilon(H_s^\circ)$. (Note that the image of $\hat \epsilon$ may not include all closed leaves of $A$, but is always a sub-cylinder, and so we can define $\ddot{\hat \epsilon}$ regardless.)
We define $\ddot{\epsilon}=\hat \pi \circ \ddot{\hat \epsilon}:\ddot{C} \to S$, and observe that it satisfies statement (2), because it satisfies the corresponding statement of \Cref{thm:closed trails2}.\end{proof}

\subsection{Noncompact translation surfaces}
\label{sect:noncompact translation surface}

\begin{proof}[Proof of \Cref{noncompact translation surface}]
Let $S$ be a noncompact translation surface whose universal cover $\tilde S$ is geodesically convex.
Fix a nontrivial deck transformation $\delta:\tilde S \to \tilde S$. We will show $\delta$ is a hyperbolic isometry. Let $p \in S$ be a basepoint and $\tilde p \in \tilde S$ be a preimage.
From covering space theory, there is loop $\gamma \subset S$ based at $p$ such that given any point $\tilde q \in \tilde S$, the point $\delta(\tilde q)$ is the endpoint of the lifted concatenation $\widetilde{\gamma \bullet \alpha}$ starting at $\tilde p$, where $\tilde \alpha$ is a path from $\tilde p$ to $\tilde q$ whose image in $S$ is $\alpha$. From our hypotheses and \Cref{thm:closed trails}, there is a closed geodesic $g:[0,1] \to S$ in the free homotopy class $[\gamma]$, and so we can replace $\gamma$ by a concatenation of the form $\beta \bullet g \bullet \beta^{-1}$ where $\beta$ is a path joining $p$ to $g(0)$. Let $\tilde q$ be any endpoint of the lift $\widetilde{\beta \bullet \bar g}$ where $\bar g$ starts at $g(0)$ and wraps around $g$ any number of times, stopping anywhere on the curve.
The set of all $\tilde q$ that are attainable in this way is a lift of the universal cover of $g$ and is therefore a geodesic.
We have that $\delta(\tilde q)$ is the endpoint of the lift $\widetilde{\beta \bullet g \bullet \bar g}$, so $\delta$ translates along this geodesic. By the equivalence between translating along a geodesic and an isometry being hyperbolic described in \Cref{sect:translation surfaces}, we see that $\delta$ is hyperbolic.
\end{proof}

\section{Triangulating closed surfaces}
\label{sect:loop}

The goal of this section is to show that closed zebra surfaces without full cylinders can be leaf triangulated. Once this is stablished, we give a short proof of \Cref{conj:zebra case} in~\Cref{secc:proof-conj-zebra case}.

\subsection{Triangulating polygons and cylinders}
    \label{ssec:triangulating-polygons-cylinders}
Polygons in zebra surfaces were defined in \Cref{sect:polygons}. A {\em diagonal} of a polygon is a segment of a leaf whose interior is contained in the interior of the polygon and whose endpoints are vertices of the polygon.

\begin{lemma}
\label{triangulating polygons}
Suppose $P$ is a polygon in a zebra surface $S$ whose interior contains only nonsingular points. Then $P$ has a triangulation such that the vertex set is the same as the vertices of $P$ and the edge set consists of the edges of $P$ together with a collection of diagonals with disjoint interiors.
\end{lemma}
\begin{proof}
By \Cref{ngons}, $P$ has at least $3$ sides. If the polygon has 3 sides, we are done. By induction, it suffices to prove that given any polygon $P$ with four or more sides, a diagonal can be found.

We will now observe that by lifting to a branched cover $S'$ of $S$, we can assume $P$ is convex. To construct this cover, first construct a double branched cover, with double branching over the vertices, and with no branching in the interior of $P$. (Such a cover can always be produced if the number of points branched over is even, but you can always add an additional point in the interior of an edge.) Then $P$ can be lifted to $S'$, and all exterior angles will measure at least $\pi$, because the exterior angles at a vertex $v \in P$ have all been increased by at least the cone angle at $v$. Therefore $P$ is convex by \Cref{thm:polygonal convexity}.

Suppose $P$ has $n \geq 4$ edges. By \Cref{ngons}, $P$ has at least three
angles of measure less than $\pi$. Choose a pair of distinct vertices $(v,w)$ of $P$ such that both arcs of $\partial P \setminus \{v,w\}$ contain points where the interior angle is less than $\pi$. Since $P$ is convex, there is a trail arc $\overline{vw}$ is contained in $P$.
From our choice of points, $\overline{vw}$ can be neither boundary arc, and so must pass through the interior of the polygon. A maximal segment through the interior of $P$ gives
the desired diagonal, completing the argument in this case and proving that any polygon without interior singularities has a triangulation. \compatnew{Proof improved based on both your comments and questions. June 6, 2023.}
\end{proof}

We now turn our attention to triangulating cylinders.
Recall the standard cylinder $C=[-1,1] \times \bbS^1$ where $\bbS^1=\R/\Z$ with boundary components $\partial_\pm C=\{\pm 1\} \times \bbS^1$. We'll say that a {\em subcylinder} in a zebra surface $S$ is an immersion $\iota:C \to S$ such that
\begin{itemize}
\item The restriction of $\iota$ to $C^\circ=(-1,1) \times \bbS^1$ is an embedding
\item For $x \in (-1,1)$, $\iota(\{x\} \times \bbS^1)$ is a closed leaf.
\item For $s \in \{\pm\}$, $\iota(\partial_s)$ is a closed trail such that all angles on the side of $\iota(C^\circ)$ are $\pi$. \compat{I had written $\ddot{\epsilon}$ for $\iota$, but this was an error. Both of you requested `remind the notation of theorem 1.3', but I think this request stemmed from this error. I don't think this is needed now, but please let me know if you disagree.}
\end{itemize}
Note that for a subcylinder, we require that the immersion is defined on both boundary components but do not require that the boundary components pass through singularities. Assuming $S$ has at least one singularity, \Cref{thm:closed trails} guarantees that any subcylinder is contained in a cylinder, and for closed surfaces cylinders are naturally maximal subcylinders in a fixed PR free homotopy class by \Cref{cor:closed trails}.

As with cylinders, a subcylinder is {\em full} if every slope $m \in \hat \R$ is realized by some closed leaf or trail $\iota(\{x\} \times \bbS^1)$ with $x \in [-1,1]$.

Let $V \subset C$ be a finite set that intersects both boundary components and let $\iota:C \to S$ be a subcylinder. We'll say that a {\em triangulation of the subcylinder} with vertex set $V$ is a topological triangulation of $C$ with vertex set $V$ such that restrictions of $\iota$ to edges give parameterizations of arcs of trails in $S$. A subcylinder cannot contain singularities in its interior and as the interior angles at singularities in the boundary are always $\pi$, it does not make sense to think of a subcylinder as having any singularities in terms of its intrinsic geometry. So, we do not require that $V$ have any relation to the singularities of the surface.
With this definition, each component of $\partial_\pm C \setminus V$ is necessarily an edge. These are the only edges that might be arcs of trails on $S$; the others are segments of leaves.

\begin{lemma}
\label{triangulating cylinders}
Let $\iota:C \to S$ be a subcylinder and let $V$ be a finite subset of $\partial C$ that intersects both boundary components of $C$.
If the subcylinder $\iota$ is not full, then it has a triangulation whose vertex set is $V$.
\end{lemma}
\begin{proof}
Choose a slope $m$ that is not realized by the closed trails making up the subcylinder. Let $p \in V$. Consider the ray $r$ of slope $m$ emanating from $p$ and traveling into the interior $\iota(C^\circ)$.

We claim that $r$ must reach the boundary component that does not contain $p$.
It cannot exit through the boundary component containing $p$ because it would create a bigon. So, if it does not exit as claimed, then it never exits. Assume this is the case, and we will derive a contradiction. Pulling back the ray, we get a path $\gamma:[0,+\infty) \to C$ such that $\iota \circ \gamma\big([0,+\infty)\big)$ is the ray and $\iota \circ \gamma(0)=p$. Let $\pi_x:C \to [-1,1]$ be the projection from $C$ onto the $x$-coordinate. Note that whenever $r$ crosses a fiber $\pi_x^{-1}(t)$, it cannot return because this would create a bigon. Therefore $\pi_x \circ \gamma$ is strictly monotonic. Assume without loss of generality that $\pi_x \circ \gamma$ is strictly increasing. Then $\pi_x \circ \gamma\big([0,+\infty)\big)=[-1, c)$ for some $c \in (-1,1]$. Choose any point $q \in \pi_x^{-1}(c)$ and let $s$ be a segment of a leaf of slope $m$ that contains $q$ in its interior. Since $s$ is transverse to $\pi_x^{-1}(c)$, there is a $c'<c$ such that $\pi_x^{-1}(c')$ intersects $s$. Let $Q$ be the quadrilateral formed from arcs of $s$ and
$\pi_x^{-1}(c')$ and $\pi_x^{-1}(c)$ viewed as segments joining $s$ to itself.
Since $r$ crosses $\pi_x^{-1}(c')$, $r$ eventually crosses into $Q$. But, $r$ can never exit because $c \not \in \pi_x \circ r\big([0,+\infty)\big)$ and can't cross $s$. This violates the properness of the lift of $r$ to the PRU cover, proving the claim; see \Cref{leaves are proper maps}.

Let $p'$ be the point at which $r$ exits $C$. Then $p$ and $p'$ are from distinct boundary components of $C$. \commb{They could be the same in the topology of $S$. There are distinct on $C$.} \compat{I clarified. We are triangulating $C$ not $\iota(C)$...}
Cutting along $r=\overline{pp'}$ makes the cylinder into a polygon with vertex set consists of $V \setminus \{p,p'\}$ and two copies of both $p$ and $p'$, which can be triangulated by \Cref{triangulating polygons}. This won't be the triangulation we want unless $p' \in V$.
Choose a $q \in V$ in the boundary component containing $p'$. Let $T \subset C$ be a triangle of
the triangulation with vertex $q$. Then one edge of $T$ must join $q$ to a point on the other boundary component (because all trails to points on the same component are parallel). Therefore, there is an edge $\overline{qy}$ of $T$ with $y$ in the opposite boundary from $q$. This $y$ must lie in $V$ because $p'$ lies on the component containing $q$. Now we cut the cylinder along $\overline{qy}$ and see a polygon and take $V$ to be the vertex set. We can triangulate this polygon using \Cref{triangulating polygons} to obtain the desired triangulation of $C$.
\end{proof}

\subsection{Minimal triangulations}
\label{sect:minimal triangulations}
We devote this subsection to a proof of the following result: \compat{In response to Barak: I've been teaching too much `Intro to Proofs'. In such a course, a {\em direct proof} of the implication $A \implies B$ assumes that $A$ is true and proves that $B$ is true. I deleted the word.}

\begin{theorem}
\label{no full implies leaf}
Let $S$ be a closed zebra surface that has no poles but has at least one singularity.
If $S$ has no full cylinders, then it has a leaf triangulation.
\end{theorem}

Fix a closed zebra surface $S$ that has no poles and at least one singularity for the remainder of this subsection.
Let $\Sigma \subset S$ denote the singular set.
The hypothesis that there are no full cylinders will only appear in the proof of \Cref{no full implies leaf}.

We consider a class of triangles more general than leaf triangulations. Say that a triangulation $\sT$ of $S$ is {\em preleaf} if
its vertex set $\sV(\sT)$ contains $\Sigma$ and all edges are segments of leaves. Then $\sV_0(\sT) = \sV(\sT) \setminus \Sigma$ is a finite collection of nonsingular points. Observe:

\begin{proposition}
The surface $S$ has a preleaf triangulation.
\end{proposition}
\begin{proof}
Construct triangles covering each singular point with a vertex at the singular point, and cover the remainder of the surface with generalized rectangles. By compactness, this covering can be taken to be finite. Then the union of the boundaries of the polygons in the cover divides the surface into polygons. Each such polygon can can be triangulated using \Cref{triangulating polygons}.
\end{proof}

We will say that a preleaf triangulation $\sT$ is {\em minimal} if it minimizes the cardinality of $\sV_0(\sT)$ among all preleaf triangulations of $S$. To prove \Cref{no full implies leaf}, we will end up showing that on a surface without full cylinders, $\sV_0(\sT)=\emptyset$.

\begin{proposition}
Suppose $\sT$ is a minimal preleaf triangulation of $S$ and $p \in \sV_0(\sT)$.
Then there is an edge of $\sT$ that joins $p$ to itself.
\end{proposition}
\begin{proof}
If this is not true, then the union of triangles with vertex $p$ forms a polygon in $S$ with $p$ in its interior. This polygon can be triangulated using only its boundary vertices by \Cref{triangulating polygons}. This removes $p$ from the triangulation, which contradicts minimality.
\end{proof}

Say a vertex $p \in \sV_0(\sT)$ of the triangulation has {\em minimal complexity} if $\sT$ realizes the minimum number of edges emanating from $p$ among all preleaf triangulations with vertex set $\sV(\sT)$. We count an edge from $p$ to itself twice in this count, since it emanates from $p$ in two ways.

\begin{proposition}
\label{exactly one edge}
If $\sT$ is a minimal preleaf triangulation of $S$ and $p \in \sV_0(\sT)$ has minimal complexity, then there is exactly one edge joining $p$ to itself.
\end{proposition}

\begin{proof}
\compat{Improvements were made to the wording of this proof based on Barak's comments. June 6.}
Suppose to the contrary that there are at least two such edges. We break into cases depending on how these edges are arranged.

First suppose there is a triangle $T$ all of whose vertices are $p$. Let $\alpha$, $\beta$ and $\gamma$ denote the angles of $T$, and use $m_\alpha, m_\beta, m_\gamma \in (0, \pi)$ to denote their respective measures. Consider the circle's worth of rays emanating from $p$, $U_p$. (This is the zebra analog of the unit tangent space.) We use angles on $U_p$ to explain the contradiction in this case. Because $p$ is nonsingular, $U_p$ can be identified with $\R/2 \pi \Z$. There is a closed interval $I(\alpha) \subset U_p$ of length $m_\alpha$
consisting of rays that enter $T$ through the angle $\alpha$. Similarly, there are also intervals $I(\beta), I(\gamma) \subset U_p$ associated to $\beta$ and $\gamma$, respectively.
Observe that for any edge $e$ of $T$, the two rays that emanate from $p$ and travel along $e$ constitute an antipodal pair of points in $U_p$. This is because these rays have the same slope, but are distinct as rays on the surface.
\compat{They could fail to be distinct if there was a pole along the edge, but we ruled that out. I deleted this comment from the text, because it probably makes things more confusing...}
Therefore
$$U_p \setminus \big(I(\alpha) \cup I(\beta) \cup I(\gamma)\big) =
\big(\pi + I(\alpha)^\circ\big)  \cup
\big(\pi + I(\beta)^\circ\big)  \cup
\big(\pi + I(\gamma)^\circ\big),$$
i.e., this complementary region consists of points antipodal to points in the union
of the interiors of the intervals. See \Cref{fig:triangle tangent space}. Now choose the edge $e$ of $T$ joining the angles $\alpha$ and $\beta$. Let $T'$ be the triangle opposite $e$ from $T$ and let $q \in S$ be the vertex of $T'$ opposite $e$.
One possibility is that $q=p$, but in this case, because the total of the interior angles of $T$ and $T'$ is $2\pi$, all rays in $U_p$ must immediately enter $T \cup T'$. Therefore, the intervals associated to the angles of $T'$ must be the complement of the intervals associated to $T$. It follows that each edge of $T$ is the same as an edge of $T'$, and therefore $S$ is a torus with no singularities contradicting the assumption that $S$ had at least one singularity. Now suppose $q \neq p$. This is the situation depicted in \Cref{fig:triangle tangent space}. Let $\alpha'$ and $\beta'$ be the angles of $T'$ adjacent to the angles $\alpha$ and $\beta$.
Observe that $m_{\alpha'} \leq m_\beta$, because the interval $I(\alpha') \subset U_p$ associated to $\alpha'$ is contained in $\pi+I(\beta)$.
Thus $m_\alpha + m_\alpha' < \pi$ and by a similar argument we also have $m_\beta+m_{\beta'}<\pi$. By \Cref{thm:polygonal convexity}, $Q=T \cup T'$ is convex in the sense that a connected preimage in the universal cover is convex. The edge $e$ is one diagonal, and the other diagonal $e'$ is also a segment of a leaf because the trail joining these two vertices cannot be a trail running in either direction around $\partial Q$, because the two sums $m_\alpha+m_{\alpha'}$ and $m_\beta+m_{\beta'}$ are both less than $\pi$. Replacing the edge $e$ with $e'$ reduces the number of edges emanating from $p$ by one, which contradicts the assumption that $p$ has minimal complexity. This rules out the possibility that there is a triangle all of whose vertices are $p$.

\begin{figure}
\includegraphics[height=1in]{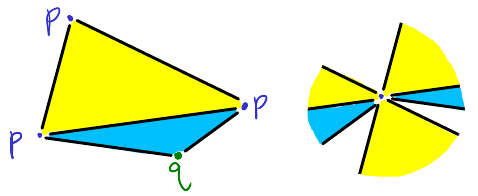}
\caption{Left: A triangle $T$ all of whose vertices are the same point is shown in yellow, and a triangle $T'$ is shown in blue.
Right: The two triangles from the viewpoint of $p$.}
\label{fig:triangle tangent space}
\end{figure}

The second possibility is that there are distinct edges $e$ and $e'$ joining $p$ to itself, but that no such pair of edges are adjacent in the cyclic ordering around $p$. Focus on one such edge $e$ and let $T_1$ and $T_2$ be the triangles sharing this edge. Let $\alpha_1$ and $\beta_1$ be the angles of $T_1$ along $e$, and let $\alpha_2$ and $\beta_2$ be the same for $T_2$, with $(\alpha_1, \alpha_2)$ and $(\beta_1, \beta_2)$ being adjacent angles. From the argument above with $Q=T_1 \cup T_2$, if $m_{\alpha_1}+m_{\alpha_2}<\pi$ and $m_{\beta_1}+m_{\beta_2}<\pi$, then the diagonal $e$ of $Q$ can be replaced with the other diagonal of $Q$. Note that the condition that edges joining $p$ to itself cannot be adjacent implies that this other diagonal does not have $p$ as either vertex, so this again contradicts the hypothesis that $p$ has minimal complexity.
We conclude that one of the pair of angles sums to at least $\pi$, and so the total of all four angles adjacent to $e$, $m_{\alpha_1}+m_{\alpha_2}+m_{\beta_1}+m_{\beta_2}$, exceeds $\pi$. The same holds for the four angles adjacent to $e'$. Moreover, since $e$ and $e'$ are not adjacent in the cyclic ordering around $p$, these collections of four angles must be disjoint. This gives eight distinct angles based at $p$ whose sum exceeds $2 \pi$, and this contradicts the hypothesis that $p$ is nonsingular ($p \in \sV_0(\sT)$).
\end{proof}

Now suppose we are in the setting where there is exactly one edge $e$ of $\sT$ that joins $p \in \sV_0(\sT)$ to itself. Then $e$ is a closed leaf. Orient $e$ and let $\conj{e}$ denote the PR free homotopy class of $e$. Since $S$ also has a singularity, the homotopy class $\conj{e}$ lies in the ``Cylinder case'' of \Cref{thm:closed trails}. Thus, $e$ then lies in the interior of the cylinder associated to $\conj{e}$.

Let $U \subset S$ be the open subset consisting of the union of $\{p\}$, the interiors of edges with $p$ as an endpoint, and the interiors of triangles with vertex $p$. Observe that there is a retraction of $U$ onto $e$, which sends the two triangles with edge $e$ onto $e$, and maps every other point to $p$. Thus $U$ is an annulus with the loop $e$ being a core curve.

\begin{proposition}
\label{maximal subcylinder}
There is a unique maximal subcylinder $\iota:C \to S$ of the cylinder determined by $\conj{e}$
such that $e \subset \iota(C^\circ) \subset U$. Both boundary components of this subcylinder contain vertices of $\sT$.
Every edge $e'$ of $\sT$ in the boundary of $U$ is either contained entirely in $\iota(\partial C)$, is disjoint from $\iota(\partial C)$, or we have $e' \cap \iota(\partial C) \subset \partial e'$.
\end{proposition}
\begin{proof}
\compat{This proof was completely rewritten based on the issue raised by Barak. June 7, 2022.}
As noted above, the homotopy class $\conj{e}$ is realized by core curves of a cylinder.  The surface $S$ is closed, so statement (2) of \Cref{thm:closed trails} guarantees that there is a continuous map $\ddot{\epsilon}:C \to S$ defined on the full standard cylinder $C=[-1, 1] \times \bbS^1$ such that:
\begin{itemize}
\item The restriction of $\ddot{\epsilon}$ to $C^\circ$ is an embedding.
\item For $t \in (-1,1)$, $\ddot{\epsilon}(\{t\} \times \bbS^1)$ is a closed leaf.
\item For each sign, $\ddot{\epsilon}(\{\pm 1\} \times \bbS^1)$ is a closed trail passing through a nonempty set of singularities.
\end{itemize}
Let $\gamma_t$ denote $\ddot{\epsilon}(\{t\} \times \bbS^1)$.

Let $t_0$ be such that $\gamma_{t_0}=e \subset U$. Set
$$b=\sup J_+ \quad \text{where} \quad J_+ = \{t \in [t_0,1]:~\ddot{\epsilon}([t_0, t] \times \bbS^1) \subset U\}.$$
Observe that the sets $\ddot{\epsilon}([t_0, t])$ are increasing so $J_+$ is an interval containing $t_0$ and $\ddot{\epsilon}\big([t_0, b)\big) \subset U$. On the other hand, $1 \not \in J_+$ because $\gamma_1$ contains a singularity and $U$ does not. More generally, $J_+$ is a half-open interval $[t_0, b)$. (If $t \in J_+$, then by applying \Cref{open set foliated by closed leaves} to the annulus $U$, we can extend the curve $\gamma_t$ to a foliation of a neighborhood of the curve contained in $U$.) Therefore, $\gamma_b$ must intersect $\partial U$, which consists of edges of the triangles making up $U$. Since the edges are segments of leaves, each edge is either contained in $\gamma_b$ or is intersected transversely. But, transverse intersections cannot occur because they would imply that the edge also intersects some $\gamma_t$ with $t \in [t_0, b)$ but by construction each such $\gamma_t \subset U$. We conclude that the collection of vertices $V_b = \gamma_b \cap \sV(\sT)$ is nonempty. Similarly, define
$$a=\inf J_- \quad \text{where} \quad J_- = \{t \in [-1,t_0]:~\ddot{\epsilon}([t, t_0] \times \bbS^1) \subset U\}.$$
By a symmetric argument $V_a = \gamma_a \cap \sV(\sT)$ is nonempty.

Define $\iota$ to be a reparameterization of $\ddot{\epsilon}$ restricted to $[a,b] \times \bbS^1$:
$$\iota:C \to S \quad \text{by} \quad \iota(t,y)=\ddot{\epsilon}(\tfrac{a+b}{2}+ \tfrac{(b-a)t}{2}, y).$$
We claim that $\iota$ is the maximal subcylinder desired. To see this observe that because $\iota(C)$ touches both boundaries of $U$, the difference $U \setminus \iota(C)$ is a finite collection of interiors of polygons. Such polygons cannot contain closed leaves, so all the closed leaves in $U$ must be contained in $\iota(C^\circ)=\ddot{\epsilon}\big((a,b) \times \bbS^1\big)$. The other two statements in the proposition follow from the analysis of $\gamma_a$ and $\gamma_b$ and the discussion of edges in the previous paragraph.
\end{proof}

\begin{proof}[Proof of \Cref{no full implies leaf}]
Let $S$ be a closed zebra surface that has no poles but has at least one singularity. Also assume that $S$ has no full cylinders. We will show it has a leaf triangulation.

Suppose to the contrary that there is no leaf triangulation. Then if $\sT$ is a minimal preleaf triangulation, the set $\sV_0(\sT)$ of nonsingular vertices is nonempty. As above, select a $p \in \sV_0(\sT)$. We can assume without loss of generality that $\sT$ is chosen such that $p$ has minimal complexity. By \Cref{exactly one edge}, there is exactly one edge $e$ from $p$ to itself. We define the neighborhood $U$ of $p$ as above \Cref{maximal subcylinder} and this proposition gives us that there is a subcylinder $\iota:C \to S$
such that $e \subset \iota(C^\circ) \subset U$ and that $\sV(\sT) \cap \iota(\partial_\pm C) \neq \emptyset$ for each choice of sign.

By hypothesis, the subcylinder $\iota$ cannot be full. Therefore, \Cref{triangulating cylinders} guarantees that $\iota$ can be triangulated where the vertex set is $\sV(\sT) \cap \iota(\partial C)$. Note that $p$ is not in the vertex set of this triangulation, but all vertices are elements of $\sV(\sT)$. Also observe that the difference $U \setminus \iota(C)$ is a finite union of interiors of polygons with vertices in the set $\sV(\sT) \setminus \{p\}$. By \Cref{triangulating polygons}, we can triangulate each such polygon, thus obtaining an alternate triangulation of $U$ that does not use $p$ as a vertex. By changing the triangulation $\sT$ by replacing the portion of the triangulation covering $U$ with our new triangulation of $U$, we reduce the number of vertices by one thereby showing that $\sT$ is not a minimal preleaf triangulation. This is our desired contradiction.
\end{proof}

\subsection{Producing new triangulations}
    \label{ssec:producing-new-triangulations}

In \Cref{sect:minimal triangulations} we showed that there are leaf triangulations of closed zebra surfaces with no poles and at least one singularity with angle $3 \pi$ or more.
Here our goal is to show that there are many triangulations as long as the closed surface meets a few basic requirements which now allow poles.

\compat{The definitions in the next three paragraphs were made more formal. Hopefully it helps. June 10, 2023.}
To state our results, we will generalize the notions of a leaf saddle connection and a leaf triangulation given in the introduction. Let $\sV \subset S$ be a finite set containing all singularities with angle greater than $2 \pi$, but possibly with some additional nonsingular points and poles that we treat as marked points. Let $\tilde S$ denote the PRU-cover of $S$, and let $\tilde \sV \subset \tilde S$ denote the collection of preimages of points in $\sV$.

A {\em $\tilde \sV$-saddle connection} $\tilde \sigma \subset \tilde S$ is a closed segment of a leaf (possibly the full leaf) such that $\tilde \sigma \cap \tilde \sV$ consists of the two endpoints of $\tilde \sigma$. Denote this pair of endpoints $\partial \tilde \sigma$. The interior of $\tilde \sigma$ is $\tilde \sigma \smallsetminus \partial \tilde \sigma$. A {\em $\tilde \sV$-triangulation} is a triangulation of $\tilde S$ whose edges are $\tilde \sV$-saddle connection. By \Cref{triangle1}, the triangles in a $\tilde \sV$-triangulation have no singularities in their interiors.

A {\em $\sV$-saddle connection} on $S$ is a path $\sigma:[0,1] \to S$ whose lifts to $\tilde S$ are $\tilde \sV$-saddle connections. The {\em interior} of $\sigma$ is $\sigma\big((0,1)\big)$, the image of the interior of a lift. Note that the interior of $\sigma$ can contain a pole $p \in S \setminus \sV$, but in this case $\sigma$ ``bounces off $p$,'' i.e., $\sigma$ has a reparameterization $\sigma'$ such that $\sigma'(\frac{1}{2})=p$ and $\sigma'(t)=\sigma'(1-t)$ for all $t \in [0,1]$. If the interior of $\sigma$ does not contain a pole, then $\sigma$ is a segment of a leaf of $S$ and so the restriction $\sigma|_{(0,1)}$ is injective. A {\em $\sV$-triangulation} is a collection $\sT$ of $\sV$-saddle connections with pairwise disjoint interiors such that the collection $\tilde \sT$ of lifts to $\tilde \sV$-saddle connections on $\tilde S$ is the edge set of a $\tilde \sV$-triangulation.
Note that since the complementary regions of $\tilde \sT$ are required to be triangles, every pole $p \in S \setminus \sV$ must lie in the interior of a $\sV$-saddle connection in $\sT$. (Further, this saddle connection in $\sT$ containing the pole $p$ must be unique because interiors are required to be disjoint.) Examples of $\sV$-triangulations are shown in \Cref{fig:v triangulation}.

\begin{figure}
\includegraphics[height=1in]{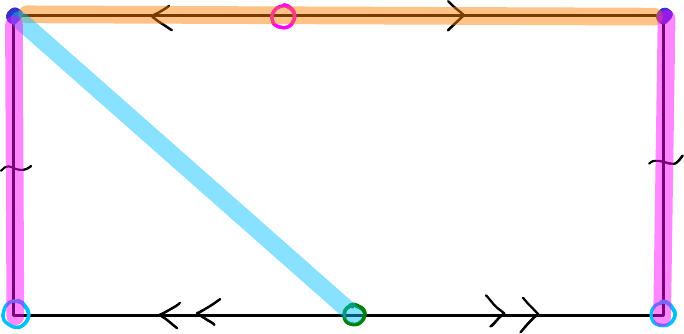}
\hspace{0.5in}
\includegraphics[height=1in]{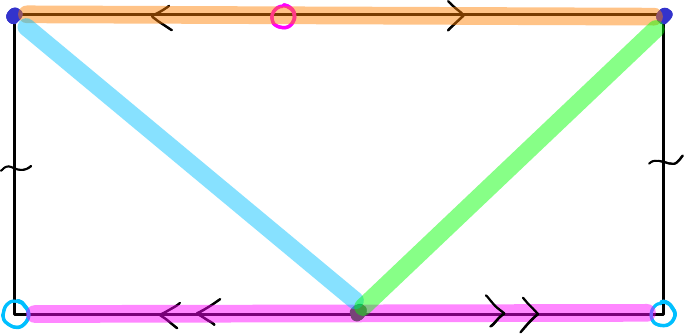}
\caption{Two $\sV$-triangulations of a dilation pillowcase (a sphere with four poles).
The set $\sV$ is indicated by filled in circles. On the left $\sV$ consists of just one pole, and on the right it consists of two poles. The other poles are marked with open circles. \compat{Added June 10, 2023.}}
\label{fig:v triangulation}
\end{figure}

\begin{theorem}
\label{thm:triangulation}
Let $S$ be a closed zebra surface and let $\sV \subset S$ be a finite nonempty set including every singularity of $S$ with angle greater than $2 \pi$. Assume that any full subcylinder in $S$ has a point of $\sV$ in its interior. \compat{I changed cylinder to subcylinder in the previous sentence. Barak, thanks for pointing this out.} If $\sS_0$ is any collection of $\sV$-saddle connections with pairwise disjoint interiors, then there is a $\sV$-triangulation $\sT$ containing $\sS_0$.
\end{theorem}

Let $\sS_0$ be a collection of $\sV$-saddle connections with pairwise disjoint interiors. A {\em maximal} collection of $\sV$-saddle connections with pairwise disjoint interiors is a collection that is maximal with respect to the inclusion partial order. The fact that there are maximal collections, follows from the fact that there is an upper bound on the size of such a collection that only depends on $S$ and $\sV$:
\compat{Previously there was a Zorn's Lemma argument that said that maximal collections exist. But, we need not just that the collection exists but that it is finite so that we can analyze the complementary regions. This argument is painful but it gives precisely the number of edges in a maximal collection of $\sV$-saddle connections. There might be a softer argument that gives finiteness without giving the exact value... June 10, 2023.}

\begin{proposition}
\label{bound implies triangulation}
Let $S$ and $\sV$ be as in \Cref{thm:triangulation}. As in this theorem, assume that
$\sS_0$ is any collection of $\sV$-saddle connections with pairwise disjoint interiors.
Let $\alpha:S \to \Z_{\geq -1}$ be the singular data function of $S$. Let $P \subset S$ be the collection of poles. Then,
\begin{equation}
\label{eq:edge bound}
|\sS_0| \leq 3 |\sV|  + \tfrac{1}{2}|P \setminus \sV| + \tfrac{3}{2} \sum_{v \in \sV} \alpha(v).
\end{equation}
Moreover, equality is attained if and only if $\sS_0$ is a $\sV$-triangulation.
\end{proposition}
\begin{proof}
Suppose that $|\sS_0|$ is greater than or equal to the expression on the right side of
\eqref{eq:edge bound}. Let $C$ be the subsurface in the complement of the union of the saddle connections in $\sS_0$. Based on our hypothesis, it suffices to show the comparison in \eqref{eq:edge bound} is equality, that every element of $\sV$ is a vertex of some $\sS_0$, and that all components of $C$ are triangles that contain no poles in their interiors.

Let $c$ denote the number of connected components of $C$, and write $C=\bigsqcup_{i=1}^c C_i$. Each $C_i \subset C$ has a boundary consisting of one or more boundary components. As a consequence of \Cref{loop contribution}, the sum of the interior angles of $C_i$ must be an integer multiple of $\pi$, say $n_i \pi$ for some $n_i \geq 1$. When a boundary component is traversed, we alternate between edges and vertices. Let $e_i$ denote the number of edges in $\partial C_i$ counting multiplicity, which also equals the number of vertices in $\partial C_i$. By an application of the \gaussbonnet, we see that
$$e_i-n_i - \sum_{v \in \Sigma \cap C_i^\circ} \alpha(v) = 2 \chi(C_i),$$
where $\Sigma \subset S$ denotes the singular set.

Now consider summing over all components of $C$. We have $\sum_{i} \chi(C_i)=\chi(C)$.
If a $\sigma \in \sS_0$ does not contain a pole in its interior, it appears with multiplicity two as an edge in the union $\bigcup_{i=1}^c \partial C_i$. On the other hand, if $\sigma$ has a pole in its interior, then it will have multiplicity one in the union. \compat{In response to Ferran's concerns, I clarified the previous two sentences. I hope it is more clear now? I'm trying to use the same notion of edges and vertices as the previous paragraph.} So,
$$\sum e_i=2|\sS_0| -|P \cap (\bigcup \sS_0 \setminus \sV)|.$$
We therefore have
\begin{equation}
\label{eq:S_0}
2|\sS_0| -|P \cap (\bigcup \sS_0 \setminus \sV)| - \sum_i n_i - \sum_{v \in \Sigma \cap C^\circ} \alpha(v) = 2 \chi(C).
\end{equation}
To simplify this equation, we make a few observations. First $\pi \sum_{i=1}^c n_i$, gives the sum of the cone angles over points $\sV \setminus C^\circ$. (The points in $\sV \cap C^\circ$ do not appear as endpoints of saddle connections in $\sS_0$.) So, we have
$$\sum_{i=1}^c n_i = \sum_{v \in \sV \setminus C^\circ} \big(\alpha(v)+2\big)=2 |\sV \setminus C^\circ| + \sum_{v \in \sV \setminus C^\circ} \alpha(v).$$
Also, $\Sigma \cap C^\circ$ consists of the disjoint union of $\sV \cap C^\circ$ with its nonsingular points removed and the set $P \cap C^\circ$. If $p$ is nonsingular, then $\alpha(v)=0$ while $\alpha(p)=-1$ for poles, so
$$\sum_{v \in \Sigma \cap C^\circ} \alpha(v) = - |P \cap C^\circ| + \sum_{v \in \sV \cap C^\circ} \alpha(v).$$
Combining these identities and the fact that $P \setminus \sV$ splits into $P \cap C^\circ$
and $P \cap (\bigcup \sS_0 \setminus \sV)$, we see that
$$|P \cap (\bigcup \sS_0 \setminus \sV)| + \sum_{i=1}^c n_i + \sum_{v \in \Sigma \cap C^\circ} \alpha(v) =
2 |\sV \setminus C^\circ| + |P \setminus \sV|-2 |P \cap C^\circ| +
\sum_{v \in \sV} \alpha(v).
$$
Plugging this into \eqref{eq:S_0}, we obtain the identity
\begin{equation}
\label{eq:S_1}
2 |\sS_0| = 2 \chi(C) + 2 |\sV \setminus C^\circ| + |P \setminus \sV|-2 |P \cap C^\circ| +
\sum_{v \in \sV} \alpha(v).
\end{equation}
Therefore, by hypothesis, we have
\begin{equation}
\label{inequaltity 1}
2 \chi(C) + 2 |\sV \setminus C^\circ| + |P \setminus \sV|-2 |P \cap C^\circ| +
\sum_{v \in \sV} \alpha(v) \geq 6 |\sV|  + |P \setminus \sV| + 3 \sum_{v \in \sV} \alpha(v).
\end{equation}

Using that $\sV = (\sV \setminus C^\circ) \sqcup (\sV \cap C^\circ)$ and simplifying \eqref{inequaltity 1} yields
$$\chi(C) \geq |\sV \cap C^\circ|+ |P \cap C^\circ| + 2 | \sV | + \sum_{v \in \sV} \alpha(v) .$$
Now note that because each component $C_i$ is a surface with boundary, we have $\chi(C_i) \leq 1$. Thus $\chi(C)=\sum_{i=1}^c \chi(C_i) \leq c$. Furthermore by \Cref{loop contribution}, the sum of the interior angles of each component $C_i$ is at least $\pi$. The total cone angle over points in $\sV$ is $\pi \sum_{v \in \sV} \big(\alpha(v)+2)\big)$. Therefore, we have
$c \leq 2 |\sV| + \sum_{v \in \sV} \alpha(v)$. We conclude that
$$2 |\sV| + \sum_{v \in \sV} \alpha(v) \geq c \geq \chi(C) \geq |\sV \cap C^\circ|+ |P \cap C^\circ| + 2 | \sV | + \sum_{v \in \sV} \alpha(v)$$
and so all inequalities above must be equality and $\sV \cap C^\circ=P \cap C^\circ=\emptyset$. It follows that each component $C_i$ has no elements of $\sV$ or $P$ in its interior and $\chi(C_i)=1$. Thus $C_i$ is a topological disk. Since $\pi c$ is the sum of the cone angles over $\sV$, the sum of the interior angles of $C_i$ must be $\pi$. It follows that each $C_i$ is a triangle as desired.
\end{proof}

\Cref{thm:triangulation} follows directly by taking $\sS$ to be a maximal collection containing $\sS_0$ and using the following:
\commf{Why do we need \Cref{lem:triangulation}? Theorem \Cref{thm:triangulation} follows from  \Cref{bound implies triangulation}, right?} \compat{At this point we know that a collection of $\sV$-saddle connections with cardinality equal to the upper bound from \Cref{bound implies triangulation} is a $\sV$-triangulation. But, a priori, a maximal collection (in the sense of the partial order on collections) may not attain this upper bound.}

\begin{lemma}
\label{lem:triangulation}
Under the hypotheses of \Cref{thm:triangulation}, if $\sS$ is a maximal collection of $\sV$-saddle connections with pairwise disjoint interiors, then $\sS$ is a $\sV$-triangulation of $S$. \commb{Can they intersect at a pole which is in their interior.} \compat{No. I'm not sure why I didn't use the term $\sV$-triangulation here. This should now be clear from the definition above: They cannot intersect at an interior pole.}
\end{lemma}

The remainder of the section will be devoted to proving this lemma, so we fix $S$, $\sV$, and $\sS$ maximal. A key observation is the following:

\begin{proposition}
\label{convex cover}
Let $S$ be a closed zebra surface. Let $\sV$ be a finite nonempty subset of $S$ that contains all singularities with angle larger than $2 \pi$. Suppose that any full cylinder in $S$ has a point of $\sV$ in its interior. There is a finite branched cover $\phi:S' \to S$, such that the local degrees of $\phi$ satisfy the following statements:
\begin{enumerate}
\item The local degree of $\phi$ at any preimage of a pole in $S \setminus \sV$ is two.
\item The local degree at any preimage of a point in $\sV$ is at least two.
\item The local degree at any preimage of a pole in $\sV$ is at least three.
\item The local degree at any other point is one.
\end{enumerate}
\end{proposition}
\begin{proof}
We use a sequence of covers. We can always construct a double branched cover branched over an even number of points. (To do this, cut the surface open along disjoint curves joining the points in pairs, make two copies, and glue each copy to the other along the edges formed by the cuts. \compat{In response to Barak, I spelled out the construction.})
Assuming $S$ has poles, let $S_1$ be a double branched cover branched over the poles, and also branched over a singular point with angle greater than $2 \pi$ if necessary to make the number of branched points even. If $S$ has no poles, take $S_1=S$. Then in either case, $S_1$ is a branched cover that has no poles.

Let $\sV_1 \subset S_1$ be the preimage of $\sV$. Since $S_1$ has no poles, the Gauss-Bonnet Theorem guarantees that the genus of $S_1$ is at least one. If $\sV_1$ has odd cardinality, then let $S_2$ be an unbranched double cover of $S_1$. If the cardinality is even, take $S_2=S_1$. Letting $\sV_2 \subset S_2$ be the preimage of $\sV_1$, we see that the cardinality of $\sV_2$ is even in either case.

To complete the construction, let $S'$ be a double cover of $S_2$ branched over the points in $\sV_2$. It is a simple exercise to check that the statements (1)-(4) hold.
\end{proof}

The surface $S'$ of \Cref{convex cover} inherits a pullback zebra structure. The first virtue of $S'$ is as follows:

\begin{corollary}
\label{virtue1}
The singular set of $S'$ is $\phi^{-1}(\sV)$, and $S'$ has no poles. The surface $S'$ has a leaf triangulation, and the universal cover $\tilde S'$ is convex.
\end{corollary}
\begin{proof}
Since all preimages of poles have local degree at least two, $S'$ has no poles. Statement (1) guarantees that preimages of poles are nonsingular. Statements (2) and (3) guarantee that preimages of points in $\sV$ are singular. Statement (4) guarantees that $S'$ has no other singular points, so $\phi^{-1}(\sV)$ is the singular set. This completes the proof of the first statement.

Note that every cylinder in $S'$ covers a subcylinder in $S$. Because the preimages of points in $\sV$ are all singular, no covered subcylinder in $S$ can have a point of $\sV$ in its interior. Therefore, the hypothesis on full cylinders in $S$ guarantees that $S'$
has no full cylinders. Then $S'$ has a leaf triangulation by \Cref{no full implies leaf}. The universal cover $\tilde S'$ is convex by \Cref{thm:convex}.
\end{proof}

The second virtue of $S'$ is:
\begin{proposition}
\label{virtue2}
Let $R \subset S$ be a connected component of $S \setminus \bigcup_{\sigma \in \sS} \sigma$.
Let $R' \subset S'$ be a connected component $\phi^{-1}(R)$. Then, the external angles of $R'$ all measure at least $\pi$.
\end{proposition}
\begin{proof}
\compat{This proof was rewritten. Jun 10, 2023.}
Let $\alpha'$ be an internal angle of $R'$, and let $\mathit{ext}(\alpha')$ denote the measure of the associated external angle. Let $v' \in \phi^{-1}(\sV)$ denote the vertex of $\alpha'$. Let $\alpha$ and $v$ be the respective images of $\alpha'$ and $v'$ under $\phi$. Then $\alpha$ is an internal angle of $R$ based at $v \in \sV$. Let $m_\alpha$ denote the measure of $\alpha$ and let $m_\beta$ denote the measure of the cone angle at $v$. Then, $m_\alpha \leq m_\beta$ and and there is at least one edge $\sigma \in \sS$ that has $v$ as an endpoint. The measure of $\alpha'$ is also $m_\alpha$. Since $v$ is a vertex, the local degree of $\phi$ at $v'$ is at least two, so the cone angle at $v'$ is at least $2 m_\beta$. We conclude
$$\mathit{ext}(\alpha') \geq 2 m_\beta - m_\alpha \geq m_\beta \geq \pi.$$
\end{proof}

We have the following direct consequence of the above two propositions:

\begin{corollary}
\label{R convexity}
Let $R' \subset S'$ be as in \Cref{virtue2}, and let $\tilde R' \subset \tilde S'$ be a connected component of the preimage of $R'$ in the universal cover $\tilde S'$ of $S'$. Then the closure of $\tilde R'$ is convex.
\end{corollary}
\begin{proof}
Let $X \subset \tilde S'$ denote the closure of $\tilde R'$.
Let $x,y \in X$. We need to show that $x$ and $y$ can be joined by an arc of a trail contained entirely in the closure of $\tilde R'$. We know $\tilde S'$ is convex by \Cref{virtue1}, so there is an arc of a trail $\tau$ joining $x$ to $y$ in $\tilde S'$.
If $\tau \not \subset X$, then we can select a connected component $\tau_0$ of $\tau \setminus \overline{\tilde R'}$. Together with an arc in $\partial \tilde R'$, $\tau_0$ bounds a polygonal disk $D$. We will obtain a contradiction by arguing that $D$ has at most two interior angles whose measure are less than $\pi$ in violation of \Cref{ngons}. Those two possible interior angles are the two intersection points of $\tau_0$ with $\partial \tilde R'$. The interior angles based in the interior of $\tau_0$ measure at least $\pi$ because $\tau_0$ is an arc of a trail.
The interior angles in the interior of the arc $\partial D \cap \partial \tilde R'$ measure at least $\pi$ by \Cref{virtue2} because they are internal angles of $\tilde R'$.
\end{proof}

The following is a first step towards \Cref{lem:triangulation}.

\begin{proposition}
\label{no interior vertices}
If $v \in \sV$, then $v$ is the endpoint of a $\sV$-saddle connection in $\sS$. If $p \in S \smallsetminus \sV$ is a pole, then $p$ is in the interior of a $\sV$-saddle connection in $\sS$.
\end{proposition}
\begin{proof}
Suppose the first statement involving $v \in \sV$ is false. Then $v$ lies in the interior of $S \setminus \bigcup_{\sigma \in \sS} \sigma$. Let $R$ be the component containing $v$. Let $R'$ be a connected preimage in $S'$, and let $\tilde R'$ be a connected preimage of $R'$ in $\tilde S'$. Let $\tilde v' \in \tilde R'$ be a preimage of $v$. Let $X$ be the closure of $\tilde R'$, and let $\tilde w' \in X$ be a preimage of a vertex in $\overline{R}$ that is distinct from $\tilde v'$. (Such a point exists if $R$ has a boundary. If $\partial R=\emptyset$ then $\tilde R'=\tilde S'$ which has infinitely many preimages of $v$.) By \Cref{R convexity}, there is an arc of a trail $\tau \subset X$ whose endpoints are $\tilde v'$ and $\tilde w'$. The initial leaf $\tilde \ell'$ of $\tau$ that emanates from $\tilde v'$ must be in the interior $\tilde R'$ of $X$ and terminate at a singularity $\tilde x' \in \partial \tilde R'$. The image $\ell$ of $\tilde \ell'$ in $S$ is a segment of a leaf starting at $v$ that travels through $R$ and terminates at a point $x \in \sV \cap \partial R$ by \Cref{virtue1}. Therefore $\ell$ is a $\sV$ saddle connection whose interior is disjoint from the interiors of connections in $\sS$.
This violates the maximality of $\sS$.

If $p \in S \smallsetminus \sV$ is a pole that is not contained in the interior of an $\sV$ saddle connection, then $p$ also lies lies in the interior of a component $R$ as above. The same argument as above gives a leaf segment $\ell$ from $p$ to an $x \in \sV \cap \partial R$ whose interior is contained in $R$. Then we can add to $\sS$ the $\sV$-saddle connection which travels from $x$ to $p$ along $\ell$ and then doubles back to $x$. This again violates the maximality of $\sS$.
\end{proof}

\begin{proposition}
\label{R simply connected}
If $R$ is a connected component of $S \setminus \bigcup_{\sigma \in \sS} \sigma$, then $R$ is simply connected.
\end{proposition}
\begin{proof}
By \Cref{no interior vertices}, the open set $R$ contains no vertices or poles, and $\partial R$ consists of a finite nonempty union of edges. Thus $R$ is homeomorphic to a punctured finite type surface. We break into two cases.

If $R$ has genus zero, then it is simply connected if and only if it has only one boundary component. Suppose to the contrary that $R$ has two boundary components. Take a connected preimage $\tilde R' \subset \tilde S'$. Then $\tilde R'$ also has at least two boundary components. Let $X$ denote the closure of $\tilde R'$, and choose singularities $\tilde v'$ and $\tilde w'$ from distinct boundary components. By convexity of $X$, there is an arc of a trail $\tilde \tau' \subset X$ that joins the two points. Let $\tilde \tau'_0 \subset \tilde \tau'$ be a minimal subarc joining distinct boundary components of $\tilde R'$. Then $\tilde \tau'_0$ starts and ends at singularities, whose images in $R$ are in $\sV$ by \Cref{virtue1}. The rest of the image $\tau_0$ in $\bar R$ is contained in $R$ which has no singularities or poles so must be a segment of a leaf. This violates the maximality of $\sS$.

The second case covers when $R$ has positive genus. Thus, the fundamental group of $R$ contains a free group of rank two. Let $\alpha$ be an interior angle of $R$, and let $v \in \sV$ be the point at which the angle is based. Let $R^\ast$ denote $R \cup \{v\}$ topologized in such a way that continuous paths emanating from $v$ must enter $R$ through the interior angle $\alpha$. (It is a priori possible that $R^\ast$ has multiple interior angles based at $v$.) This doesn't change the homotopy type of $R$. Loops starting at $v$ and returning to $v$ that are homotopic to the boundary lie in a subgroup of the fundamental group isomorphic to $\Z$, so there is a loop $\gamma$ starting at $v$ that is not homotopic to the boundary. Fix such a $\gamma$. Let $\tilde \gamma'$ be a lift of $\gamma$ to $\tilde R'$.
Since $\gamma$ is not homotopic to a boundary, $\tilde \gamma'$ connects distinct boundary components of $\tilde R'$. Let $\tilde \tau'$ be the trail arc joining the endpoints of $\tilde \gamma'$. Therefore, there is a minimal subarc $\tilde \tau'_0 \subset  \tilde \tau'$ that joins distinct boundary components and has singular endpoints. Repeating the argument from the previous paragraph, we see that the image $\tau_0$ is a $\sV$-saddle-connection whose interior is contained in $R$, violating the maximality of $\sS$.
\end{proof}

To prove \Cref{thm:triangulation}, it remains to complete the following.

\begin{proof}[Proof of \Cref{lem:triangulation}]
Assume $S$ and $\sV \subset S$ satisfy the hypotheses of \Cref{thm:triangulation}. Let $\sS$ be a maximal collection of $\sV$-saddle connections with disjoint interiors. We already showed that every pole in $S \setminus \sV$ is contained in the interior of a connection in $\sS$ in \Cref{no interior vertices}. Let $R$ be a component of $S \setminus \bigcup_{\sigma \in \sS} \sigma$. It remains to show that each such $R$ is a triangle. By {no interior vertices}, the open set $R$ contains no singularities and is bounded by a finite union of connections from $\sS$. By \Cref{R simply connected}, $R$ is simply connected. Therefore, its connected preimage $R' \subset S'$ is a polygon. (This follows from \Cref{virtue2}, which together with \Cref{ngons} guarantees that $\partial R'$ is a simple closed curve. The boundary of $R$ is naturally a loop but is not a priori simple.) Note that because of the aforementioned properties of $R$, the restriction of the covering $S' \to S$ to $R' \to R$ is a homeomorphism.

Suppose to the contrary that $R$ was not a triangle. then $R'$ would not be a triangle. But, we can triangulate $R'$ using \Cref{triangulating polygons}. Any diagonals added descend to $\sV$-saddle connections whose interiors are contained in $R$ and thus disjoint from the interiors of connections in $\sV$. This would violate maximality of $\sS$, giving us our contradiction. This proves that
$\sS$ is the collection of edges in a triangulation as needed.
\end{proof}

\subsection{Proof of \texorpdfstring{\Cref{conj:zebra case}}{Theorem \ref{conj:zebra case}}}
    \label{secc:proof-conj-zebra case}

\begin{proof}[Proof of \Cref{conj:zebra case}]
Let $S$ be a closed zebra surface with at least one singularity that is not a pole.
We are tasked to prove that the four statements (a)-(d) are logically equivalent.
The implication (a) implies (b) is \Cref{thm:convex}. The implication (b) implies (c) is \Cref{cor:closed trails}. The implication (c) implies (d) follows directly from \Cref{full cylinder}. It remains to prove that (d) implies (a), i.e., if $S$ has no full cylinders, then it has a leaf triangulation.

If $S$ is a closed surface with no full cylinders, then we can take $\sV$ to be the set of singularities of $S$ with angle at least $3 \pi$. By hypothesis, $\sV$ is nonempty. \Cref{thm:triangulation} guarantees that the empty collection of $\sV$-saddle connections can be extended to a $\sV$-triangulation, which from our choice of $\sV$ is the same as a leaf triangulation.
\end{proof}

\section{Questions}
\label{sect:questions}

Hopf tori give examples of zebra structures on the torus without singularities such that all closed leaves lie in one homotopy class. \Cref{torus cover convex} says that if there are two nonhomotopic closed leaves then every nontrivial free homotopy class of closed curves contains a closed leaf. But we are not sure of the following:

\begin{question}
Is there a zebra torus without singularities that has no closed leaves?
\end{question}

Every dilation structure on a closed surface has a cylinder \cite{BGT}.

\begin{question}
Does every zebra structure on a closed surface contain a cylinder?
\end{question}

Two zebra structures $(S_1, \{\sF_m^1\})$ and $(S_2, \{\sF_m^2\})$ should be considered {\em isomorphic} if there is an orientation-preserving homeomorphism $\phi:S_1 \to S_2$ such that  $\sF_m^1$ coincides with the pullback of $\sF_m^2$ for each $m \in \hat \R$. The following question has been studied in the context of dilation surfaces (and more general complex affine structures) in \cite{ABW} building off work in \cite{V93}.

\begin{question}
Given a closed surface $S$ and a singular data function $\alpha$, is there a way to understand the ``stratum'' of all compatible zebra structures on $S$ up to isomorphism? Is there a natural topology? Is this stratum naturally an orbifold modeled on a function space?
\end{question}

\begin{question}
\label{q:uniformization}
Is there a notion of uniformization for zebra surfaces? That is, given a zebra surface, is there a homeomorphism to a Riemann surface that respects angles?
\end{question}

We now make several definitions that we consider analogous to the affine automorphism group of a translation or dilation surface, the derivative mapping from the affine automorphism group to $\GL(2,\R)$, and the Veech group of these surfaces.

Let $S$ be an oriented topological surface. Let $Z(S)$ denote the collection of all zebra structures on $S$. Then each element of $Z(S)$ is an indexed family of singular foliations $\{\sF_m:~m \in \hat \R\}$ on $S$. \Cref{sect:homeo action} described an action of $\Homeo_+(\hat \R)$ on $Z(S)$, which we will denote by $\{\sF_m\} \mapsto \varphi(\{\sF_m\})$
for $\varphi \in \Homeo_+(\hat \R)$. Also if $h:S \to S$ is an orientation-preserving homeomorphism, then there is a natural action of $h$ on $Z(S)$ that sends $\{\sF_m\}$ to $h(\{\sF_m\})=\{h_\ast(\sF_m)\}$, where $h_\ast(\sF_m)$ denotes the pushforward of the foliation under $h$. We call $h$ a {\em stellar automorphism} of $(S, \{\sF_m\})$ if there is a $\varphi \in \Homeo_+(\hat \R)$ such that $h(\{\sF_m\})=\varphi(\{\sF_m\})$. Let $\Aut^\medstar_+(S, \{\sF_m\})$ denote the group of stellar automorphisms of $(S, \{\sF_m\})$. Observing that $\varphi$ is uniquely determined by $h$, we see there is a natural group homomorphism
\begin{equation}
\label{eq:derivative}
D:\Aut^\medstar_+(S, \{\sF_m\}) \to \Homeo_+(\hat \R); \quad h \mapsto \varphi,
\end{equation}
that we call the {\em stellar derivative}. The {\em stellar group} of $(S, \{\sF_m\})$
is the image of $D$, which we denote by $G^\medstar_+(S, \{\sF_m\}) \subset \Homeo_+(\hat \R)$. The kernel of $D$ is the {\em zebra automorphism group} $\Aut(S, \{\sF_m\})$, the group of zebra automorphisms as defined in \Cref{sect:topological}.

The following question is answered in the context of closed dilation surfaces by \cite[Theorem 1]{DFG19}.
\begin{question}
When is $G^\medstar_+(S, \{\sF_m\})$ a discrete subgroup of $\Homeo_+(\hat \R)$?
\end{question}
We also wonder about discreteness of $\Aut^\medstar_+(S, \{\sF_m\})$.
Note however that discreteness of both $G^\medstar_+(S, \{\sF_m\})$ and $\Aut(S, \{\sF_m\})$ implies the discreteness of $\Aut^\medstar_+(S, \{\sF_m\})$ since the following sequence is exact:
$$1 \to \Aut(S, \{\sF_m\}) \to \Aut^\medstar_+(S, \{\sF_m\}) \xrightarrow{D} G^\medstar_+(S, \{\sF_m\}) \to 1.$$
Note also that if the answer to \Cref{q:uniformization} is affirmative, then $\Aut(S, \{\sF_m\})$ is isomorphic to a subgroup of a group of automorphisms of a Riemann surface and is therefore finite when $S$ is a closed surface of genus at least two.

The examples we understand of interesting subgroups of stellar groups all come from translation and dilation surfaces and are therefore contained in $\SL(2,\R) \subset \Homeo_+(\hat \R)$.
\begin{question}
Is every $G^\medstar_+(S, \{\sF_m\})$ conjugate within $\Homeo_+(\hat \R)$ to a subgroup of $\SL(2,\R)$? Is $G^\medstar_+(S, \{\sF_m\})$ always conjugate into a group of homeomorphism acting smoothly on the circle?
\end{question}

The map $D$ of \eqref{eq:derivative} gives an action of $\Aut^\medstar_+(S, \{\sF_m\})$ on the circle $\hat \R$.

\begin{question}
Because we can deform the zebra structure, it is natural to wonder when the stellar derivative $D$ of a structure $(S, \{\sF_m\})$ is {\em locally rigid up to deformations of the zebra structure}, meaning that there is an open set of representations from $\Aut^\medstar_+(S, \{\sF_m\})$ to $\Homeo_+(\hat \R)$ containing $D$ such that for any representation $D'$ in the open set, there is another zebra surface $(S', \{\sF_m'\})$ and an injective homomorphism $\psi:\Aut^\medstar_+(S, \{\sF_m\}) \to \Aut^\medstar_+(S', \{\sF_m'\})$ such that $D' \circ \psi^{-1}$ is the restriction of the stellar derivative of $(S', \{\sF_m'\})$ to the image of $\psi$. This is an instance of a collection of natural questions regarding the rigidity of the action of $\Aut_+^\medstar(S,\{\sF_m\})$ on the circle. See \cite{Mann} for background on rigidity questions for group actions on the circle.
\compat{I think it is okay as phrased, but there are some issues. For example, it would be surprising (in light of facts about Veech groups of closed translation surfaces) if this group was always finitely generated. I think in Mann's survey finitely generated is assumed. My other issue is that I'm not clear what the topology should be on representations. Is the $C^0$ topology the only one that makes sense?}
\end{question}

There is a natural group homomorphism sending elements of $\Aut^\medstar_+(S, \{\sF_m\})$ to the mapping class group of $S$.

\begin{question}
Which mapping classes arise from elements of $\Aut^\medstar_+(S, \{\sF_m\})$? In particular, are all reducible mapping classes realizable as the stellar automorphism of a zebra surface?
\end{question}

All finite-order and pseudo-Anosov elements of the mapping class group are realized by affine automorphisms of half-translation surfaces. In contrast, the only reducible mapping classes that arise from affine automorphisms of translation and dilation surfaces are multitwists \cite{W21}.

\compat{I tried to think about what we might ask about the dynamics of stellar automorphisms. This was the old (vaguely phrased) question 10.7. The only thing that comes to mind is the proving a stellar automorphism in a pseudo-Anosov mapping class is topologically conjugate to a standard (Markov) pseudo-Anosov. But we intend to do this, so maybe it is better not to ask it here.}

The forgetful maps from half-translation and half-dilation structures to zebra structures leads to interesting questions.

\begin{question}
Is there ever a homeomorphism $\phi:S \to S$ with $S$ a closed half-translation surface that gives a stellar automorphism of the induced zebra structure, but does not give an affine automorphism of $S$? What about for half-dilation surfaces?
\end{question}

\begin{question}
Characterize the zebra structures that arise from half-translation (or half-dilation) structures.
\end{question}

There is a conjectural geometric characterization of dilation structures that arise from translation structures \cite[\S 1.5]{BGT}.

\section*{Acknowledgments}

\compat{Inserted two sentences to thank Guillaume.}
We'd like to thank Jon Chaika who directed us in exploring the natural wonders in Utah, which partially inspired this project. We'd also like to thank David Aulicino and Jane Wang for helpful conversations. We'd like to thank Guillaume Tahar for suggestions for the proof of \Cref{conj:zebra case}. We feel these ideas could have lead to an alternate approach to this result. W. P. Hooper was supported by a grant from the Simons Foundation and by a PSC-CUNY Award, jointly funded by The Professional Staff Congress and The City University of New York. F. Valdez would like to thank the following grants: CONACYT Ciencia B\'asica CB-2016 283960 and UNAM PAPIIT IN-101422. B. Weiss was supported by grants BSF 2016256, ISF 2019/19, and ISF-NSFC 3739/21.

\bibliographystyle{amsalpha}
\bibliography{references}

\end{document}